\newtheorem{theorem}{Theorem}[section] 
\newtheorem{proposition}[theorem]{Proposition}
\newtheorem{lemma}[theorem]{Lemma}
\newtheorem{corollary}[theorem]{Corollary}
\newtheorem{definition}[theorem]{Definition}
\newtheorem{example}[theorem]{Example}
\newtheorem{remark}[theorem]{Remark}
\def\sqr#1#2{%
	\fboxsep0pt%
	\fboxrule#2pt%
	\fbox{\hbox to #1pt{\hss\vbox to #1pt{\vss}}}}
\def\qed{\unskip\nobreak\hfil\penalty50\hskip2em\hbox{}\nobreak
      \hfil\sqr{5.5}{0.2}\parfillskip=\z@ \finalhyphendemerits=0 \par}
\newenvironment{proof}[1][ ]{\ifthenelse{\equal{#1}{ }}{\def\MH@pfName{.}}{\def\MH@pfName{ #1.}}%
	\trivlist\item[\hskip\labelsep{\it Proof\MH@pfName}]\ignorespaces}{\phantom{a}\nobreak\qed\endtrivlist}
\newcommand{\A}{{\bf \mathcal A}}
\newcommand{\Ab}{{\mathbb A}}
\newcommand{\B}{{ \bf \mathcal B }}
\def\Borel{\mathop{\rm Borel}}
\newcommand{\Bb}{{ \mathbb B }}
\newcommand{\C}{{\mathcal C}}
\newcommand{\D}{{\mathcal D}}
\newcommand{\E}{{ \mathbf E }}
\def\Epsilon{{\mathcal E}}
\newcommand{\F}{{ \mathcal F }}
\newcommand{\G}{{ \mathcal G }}
\newcommand{\HH}{{ \mathcal H}}
\def\g{\mathfrak g}
\def\h{{\mathfrak h}}
\def\k{{\mathfrak k}}
\def\m{{\mathfrak m}}
\def\PP{{\mathbf P}}
\def\Q{\mathbf Q}
\newcommand{\HHH}{{\mathbf H}}
\newcommand{\R}{{\mathbf R}}
\newcommand{\CR}{{\mathcal R}}
\newcommand{\LL}{{\mathbf L}}
\newcommand{\Lo}{{\mathcal L}}
\newcommand{\CL}{{\mathbb L}}
\newcommand{\X}{{\mathbb X}}
\newcommand{\Y}{{\mathbb Y}}
\newcommand{\Z}{{\mathbb Z}}
\newcommand{\1}{{\bf 1}}
\def\loc{\mathop{\rm{loc}}}
\def\Ric{\mathop{\rm Ric}}
\def\comp{\mathop{\rm Comp}}
\def\Diff{\mathop{\rm Diff}}
\def\ad{\mathop{\mathrm{ad}}}
\def\det{\mathop{\mathrm{det}}}
\def\cf{{\rm {c.f. }}}
\def\eg{\textit{e.g. }}
\def\ie{\textit{i.e. }}
\def\Diff{\mathop{\mathrm {Diff}}}
\def\ker{\mathop{\mathrm {ker}}}
\def\Image{\mathop{\mathrm {Image}}}
\def\dim{\mathop{\mathrm {dim}}}
\def\Id{{\mathop{\mathrm {Id}}}}
\def\id{{\mathop{\mathrm {id}}}}
\def\GLM{\mathop {\mathrm {GLM}}}
\def\span{\mathop {\mathrm {span}}}
\def\trace{\mathop{\mathrm {trace}}}
\def\rank{\mathop{\mathrm {rank}}}
\def\dom{\mathop{\mathrm{Dom}}}
\def\paral{/\kern-0.3em/}
\def\parals_#1{/\kern-0.3em/_{\!#1}}
\begin{document}

\title{The Geometry of   Filtering\\(Preliminary Version)}
\author{K. D. Elworthy, Yves Le Jan,  and Xue-Mei Li}

\maketitle

\chapter*{Introduction}

\footnote{ K. D. Elworthy,  Mathematics Institute, University of Warwick, Coventry CV4 7AL, U.K. E-mail: K.D.Elworthy@warwick.ac.uk}
\footnote{Xue-Mei Li, Mathematics Institute, University of Warwick, Coventry CV4 7AL, U.K. E-mail: Xue-Mei.Hairer@warwick.ac.uk}
 \footnote{ Y. LeJan,  D\'epartment de Mathematques, Universit\'e Paris Sud 11, 91405 Orsay, France.  E-mail:  Yves.LeJan@math.u-psud.fr}

Filtering is the science of  finding the law of a process given a partial observation of it. The main objects we study here are diffusion processes. These are naturally associated with second order linear differential operators which are semi-elliptic and so introduce a possibly degenerate Riemannian structure on the state space.  In fact much of what we discuss is simply about two such operators intertwined by a smooth map, the ``projection from the state space to the observations space", and does not involve any stochastic analysis.

From the point of view of stochastic processes our purpose is to present and to study the underlying geometric structure which allows
us to perform the filtering  in a Markovian framework with the resulting conditional law being that of a Markov process. This geometry is determined by the symbol of the operator on the state space which projects to a symbol on the observation space. The projectible symbol induces a (possibly non-linear and partially defined) connection  which lifts the observation process to the state space and gives a  decomposition of the operator on the state space and of the noise.  As is standard we can recover the classical filtering theory in which the observations are not usually Markovian by application of the Girsanov-Maruyama-Cameron -Martin Theorem.

This structure we have is examined in relation to a number of geometrical topics. In one direction this leads to a generalisation of Hermann's theorem on the fibre bundle structure of certain Riemannian submersions. In another it gives a novel description of generalised Weitzenb\"ock curvature. It also applies to infinite dimensional state spaces such as arise naturally for stochastic flows of diffeomorphisms defined by stochastic differential equations, and for certain stochastic partial differential equations. 

%???This geometric structure generalizes that of a Riemannian submersion of two manifolds
%$p:N\to M$ 
%and in the case when $N=TM$ the projectible symbol induces a semi-Riemannian metric and a semi-connection on $TN$. One of our principal examples leads to a generalized Weitzenb\"ock formula. Applied in infinite dimensional situations when $N$ is the space of diffeomorphisms, stochastic flows comes into this picture and this relates to the work  in \cite{Elworthy-LeJan-Li-book}.???

%One key observation is that a diffusion operator is uniquely determined by a first order operator and ... 

\medskip

Let $M$ be a smooth manifold. Consider  a smooth second order semi-elliptic
differential operator $\Lo   $ such that $\Lo   1\equiv 0$. In a local chart, such an operator takes the following form %added
\begin{equation} \label{loc-1}
\Lo={1\over 2}\sum_{i,j=1}^n a^{ij}{\partial \over \partial x^i}{\partial \over \partial x^j}+\sum b^i{\partial \over \partial x^i}
\end{equation}
where the $a^{ij}$'s and $b^i$'s are smooth functions and the matrix $(a^{ij})$ is positive semi-definite.

 Such differential operators are called diffusion operators. An elliptic
diffusion operator induces a Riemannian metric on $M$. In the degenerate case we shall have to assume that the ``symbol" of $\Lo$ (essentially the matrix $[a^{ij}]$ in the representation (\ref{loc-1})) has constant rank and so determines a sub-bundle $E$ of the tangent bundle $TM$ together with a Riemannian metric on $E$.  In
Elworthy-LeJan-Li \cite{Elworthy-LeJan-Li-Tani} and
\cite{Elworthy-LeJan-Li-book} it was
 shown that a diffusion operator in H\"ormander form\index{H\"ormander!form}, satisfying this condition, induces a linear
 connection on $E$ which is adapted to the Riemannian metric induced on $E$, but not necessarily torsion free. It was also shown that all metric
 connections on $E$ can be constructed by some choice of H\"ormander form for a given $\Lo$ in this way. The use of such
 connections has turned out to be instrumental in the decomposition of noise
and calculation of covariant derivatives of the derivative flows.

 A related construction of connections can extend to principal fibre bundles $P$,
indeed to more general situations, such as foliated manifolds and
stratified manifolds. An equivariant differential operator
 on $P$ induces naturally a diffusion operator on the base manifold.
 Conversely given  a connection on $P$  one can lift horizontally a
 diffusion operator on the base manifold of the form of  sum of
 squares of vector fields by simply lifting up the vector fields. It
still need to be shown that the lift is independent of  choices
of its H\"ormander form.  Consider now a diffusion  operator  not given in H\"ormander
 form. Since it has no zero order term we can associate with it an operator $\delta$
which send differential one forms to functions. In  Proposition
\ref{pr:delta} a class of such  operators are described, each
of which determines a diffusion operator. Horizontal lifts of diffusion
 operators  can then be defined in terms of the $\delta$ operator. This construction extends to situations where there is no equivariance and we have only partially defined 
 and non-linear connections.

The connections discussed here arise in much more general situations, including for foliations though these are not discussed in this volume,  We show that given a smooth $p:N\to M$: a diffusion operator
 $\B $  on $N$ which  lies over a diffusion operator $\A$ on $M$ satisfying a "cohesiveness" property  gives rise to a \emph{semi-connection},  a partially defined, non-linear, connection which can be
characterised by the property that, with respect to it,  $\B  $
can be written as the direct sum of the horizontal lift of its induced
 operator and a vertical diffusion operator. Of particular importance are examples where $p:N\to M$ is a principal bundle. In that case the vertical component of $\B$ induces differential operators on spaces of sections of associated vector bundles: we observe that these are zero-order operators, and can have geometric significance. 
 
This geometric significance  and the relationship between these partially defined connections and the metric connections determined by the H\"ormander form as in \cite{Elworthy-LeJan-Li-Tani} and
\cite{Elworthy-LeJan-Li-book} is seen when taking $\B$ to be the generator of the diffusion given on the frame bundle $\GLM$ of $ M$  by the  action of the derivative flow of a stochastic differential equation on $M$.  The semi- connection determined by $\B$ is then equivariant and is the \emph{adjoint} of the metric connection induced by the SDE in a sense extending that of Driver \cite{Driver92} and described in \cite{Elworthy-LeJan-Li-book}. The zero-order operators induced on differential forms as mentioned above turn out to be generalised Weitzenb\"ock curvature operators,in the sense of \cite{Elworthy-LeJan-Li-book}, reducing to the classical ones when $M$ is Riemannian  for particular choices of stochastic differential equations for Brownian motion on $M$. Our filtering then
reproduces the conditioning results for derivatives of stochastic flows in \cite{Elworthy-Yor}and \cite{Elworthy-LeJan-Li-book}. 

Our approach is also applied to the case where $M$ is compact and  $N$ is its diffeomorphism group, $\Diff(M)$ , with $P$ evaluation at a chosen point of $M$. The operator $\B$
is taken to be the generator of the diffusion process on $\Diff(M)$ arising from a stochastic flow.  However our constructions can be made in terms of the reproducing Hilbert space of vector fields on $M$ defined by the flow. From this we see that stochastic flows are essentially determined by a class of semi-connections on the bundle
$p:\Diff(M)\to M$ and smooth stochastic flows whose one point motions have a cohesive generator determine semi- connections on all natural bundles over $M$.  Apart from these geometrical aspects of stochastic flows we also obtain a skew product decomposition which, for example, can be used to find conditional expectations of functionals of such flows given knowledge of the one point motion from our chosen point in $M$. 
% Apart from classical filtering problems we concentrate on two basic examples. One is the example of the derivative
%flow of a stochastic differential equation just mentioned. Here we are able to 

%Suppose that the symbol of the semi-elliptic  differential operator  $\A $ on
%a manifold $M$ has a $C^1$ factorisation, 
%$$2\sigma_x^\A =X(x)X(x)^*$$
%for some $C^1$ bundle map  $X: \R^m\to TM$
%with Image $X(x)=\Image[\sigma_x^\A ]$.
% In this section we consider operators together with such a
% factorisation or equivalently in H\"ormander form\index{H\"ormander!form} for
%\begin{equation}\label{A}
%\A ={1\over 2}\sum_{j=1}^m
%\LL_{X^j}\LL_{X^j}+\LL_{A}.
%\end{equation}
%When $\sigma_x^\A $ has constant rank then $X$ can
% (and will be) chosen to be $C^\infty$. In this case  $X$ induces a
%semi-connection on $E=\cup_x Im(\sigma_x^\A )$
% as described in Elworthy-LeJan-Li \cite{Elworthy-LeJan-Li-book}.
%This connection $\breve \nabla$ is adapted to the Riemannian metric on $M$ induced by
% $X(x): \R^m\to T_xM$. As shall be
% shown below the  connection on $\pi: P\to M$ determined by $\A $
% by Theorem \ref{theorem:connection} with $P$ the  linear frame bundle is the adjoint connection of $\breve \nabla$.
A feature of our approach is that  in general we use canonical processes as solutions of martingale problems to describe our processes, rather than stochastic differential equations and semi-martingale calculus, unless we are explicitly dealing with the latter. This leads to some some new constructions, for example of integrals along the paths of our diffusions in Section \ref{se:int-pred-proc}, which are valid more generally than in the very regular cases we discuss here.

In more detail:
In Chapter One we describe various representations of diffusion operators and when they are available. We also define the notion of such an operator being 
\emph{along a distribution}. In Chapter Two we introduce the notion of \emph{semi-connection} which is fundamental for what follows, show how these are induced by certain intertwined pairs of diffusion operators and how they relate to a canonical decomposition of such operators. We also have a first look at the topological consequences on $p:N\to M$ of having $\B$ on $N$ over some $\A$ on $M$ which posses  hypo-ellipticity type properties. This is a minor extension of part of Hermann's theorem, \cite{Hermann},  for Riemannian submersions. In Chapter Three we specialise to the case of principal bundles, introduce the example of derivative flow, and show how the generalised Wietzenbock curvatures arise.

It is not really until Chapter Four that stochastic analysis  plays a major role. Here we describe methods of conditioning functionals of the $\B$-process given information about its projection onto $M$. We also use our decomposition of $\B$ and resulting decomposition  of the $\B$-process  to describe the  conditional $\B$-process. In the equivariant case of principal bundles the decomposition of the process can be considered as a skew product decomposition. In Chapter \ref{ch:nonlinear-filtering} we show how our constructions can apply to classical filtering problems, where the projection of the
$\B$-process is non-Markovian. We can follow the classical approach and obtain, in Theorem \ref{th:kushner},  a version of Kushner's formula for non-linear filtering in somewhat greater generality than is standard. This requires some discussion of analogues of \emph {innovations} processes in our setting.

We return to more geometrical analysis in Chapter Six, giving further extensions of 
Hermann's theorem and analysing the consequences of the horizontal lift of $A$ commuting with $B$, thereby
 extending the discussion in \cite{Berard-Bergery-Bourguignon}. In particular we see that such commutativity, plus hypo-ellipticity conditions on $\A$, gives a bundle structure and a diffusion operator on the fibre which is preserved by the trivialisations of the bundle structure. This leads to an extension of the "skew-product" decomposition given in \cite{Elworthy-Kendall} for Brownian motions on the total space of Riemannian submersions with totally geodesic fibres. In fact the well known theory for Riemann
 submersions, and the special case arising from Riemannian symmetric spaces  is presented in Chapter Seven.
 
 Chapter Eight is where we describe the theory for the diffeomorphism bundle $p:\Diff(M)\to M$ with a stochastic flow  of diffeomorphism on $M$. Initially this is done independently of stochastic analysis and in terms of reproducing kernel Hilbert spaces of vector fields on $M$. The correspondence between such Hilbert spaces and stochastic flows is then used to get results for flows and in particular skew-product decompositions
 of them.

 In the Appendices we present the Girsanov Theorem in a way which does not rely on 
 having to use  conditions such as Novikov's criteria for it to remain valid. This has been known for a long time, but does not appear to be as well known as it deserves. We also look at conditions for degenerate, but  smooth, diffusion operators to have smooth H\"ormander forms, and so to have stochastic differential equation representations for their associated processes. Finally we discuss semi-martingales and $\Gamma$-martingales along a sub-bundle of the tangent bundle with a connection.
 
 For Brownian motions on the total spaces of Riemannian submersions much of our basic discussion, as in the first two and a half  Chapters,  of skew-product decompositions is very close to that in \cite{Elworthy-Kendall} which was taken further by Liao in \cite{Liao89}.  A major difference from Liao's work is that for degenerate diffusions we use the semi-connection determined by our operators rather than an arbitrary one, so obtaining canonical decompositions. The same holds for the very recent
	 work of Lazaro-Cami \& Ortega, \cite{ LazaroCami-Ortega-reduction} where they are motivated by the reduction and reconstruction of Hamiltonian systems and consider similar decompositions for semi-martingales. An extension  of \cite{Elworthy-Kendall}  in a different direction, to shed light on the Fadeev-Popov procedure for gauge theories in theoretical physics was given by Arnaudon \&Paycha in \cite{Arnaudon-Paycha}.  Much of the equivariant theory presented here was announced with some sketched proofs in \cite {Elworthy-LeJan-Li-principal}.

\subsection*{Key Words}
semi-elliptic,  second order differential operator, H\"ormander forms, connection,  semi-connection,
diffusion processes, Girsanov theorem,  intertwined diffusions, conditioned laws, filtering, Weitzenb\"ock curvature, skew-product decomposition,
stochastic flows, manifolds, Riemannian submersions, bundles, principal bundles, Diffeomorphism bundles.

 \tableofcontents

\chapter{Diffusion Operators}
If $\Lo$ is a second order differential operator on a manifold $M$,
 denote by ${\sigma^\Lo: T^*M\to TM} $ its symbol \index{operator!symbol} \index{symbol} determined by
$$df\left( \sigma^\Lo(dg)\right) =\frac 12\Lo\left(
fg\right) -\frac 12(\Lo f) g-\frac 12f (\Lo g),$$
for $C^2$ functions  $f, g$. We will often write $\sigma^\Lo(\ell^1, \ell^2)$ for $\ell^1\sigma^\Lo(\ell^2)$ and consider $\sigma^\Lo$ as a bilinear form on $T^*M$. Note that it is symmetric. The operator is said to be { semi-elliptic}\index{semi-elliptic} \index{operator!semi-elliptic} if
 $ \sigma ^\Lo(\ell^1, \ell^2) \geqslant  0$ for all $\ell^1, \ell^2 \in T_u M^*$, all $u\in M$,  and elliptic if the inequality holds strictly. Ellipticity is equivalent to
 $\sigma^\Lo$ being  onto. 
 \begin{definition}
A semi-elliptic  smooth second order differential operator $\Lo$ is said to be a {\it diffusion operator}\index{diffusion!operator}\index{operator!diffusion} if $\Lo 1=0$.
\end{definition}
%changed
\section{Representations of Diffusion Operators}
% stuff added
Apart from local representations as given by equation \ref{loc-1} there are several global ways to represent a diffusion operator $\Lo$. One is
to take a connection $\nabla$ on $TM$. Recall that a \emph{connection} on $TM$ gives, or is given by, a \emph{ covariant derivative operator} $\nabla$ acting on vector fields. For each $C^r$ vector field $U$ on $M$ it gives a $C^{r-1}$ section $\nabla_-U$ of $\CL(TM;TM)$. In other words for each $x\in M$ we have a linear map $v\mapsto \nabla_v U$ of $T_xM$ to itself. This covariant derivative of $U$ in the direction $v$ satisfies the usual rules. In particular it is a derivation with respect to multiplication by differentiable functions $f:M\rightarrow \R$, so that $\nabla_vfU=df(v)U(x)+f(x)\nabla_vU$.
Given any smooth vector bundle $\tau E\rightarrow M$ over $M$ a \emph {connection on }$E$ gives a similar covariant derivative acting on sections $U$ of $E$. This time $v\rightarrow \nabla_vU$ is in $\CL(T_xM;E_x)$, where $E_x$ is the fibre over $x$ for $x\in M$.  Such connections always exist. %end of new stuff

Then we can write
\begin{equation}
\label{representation-1}
\Lo f(x)={\trace}_{T_xM}\nabla_-(\sigma^\Lo(df))+df(V^0(x))
\end{equation}
for some smooth vector field $V^0$ on $M$. The trace is that of the mapping $v\mapsto \nabla_v(\sigma^\Lo(df))$
from $T_xM$ to itself. To see this it is only necessary to check that
the right hand side has the correct symbol since the symbol determines the diffusion operator up to a first order term.

If a smooth `square root' to $2\sigma^\Lo$ can be found we have a H\"{o}rmander representation. The `square root' is a smooth $X: M\times \R^m\to TM$ with each $X(x)\equiv X(x,-): \R^m\to T_xM$ linear, such that
$$2\sigma_x^\Lo=X(x)X(x)^*: T_x^*M\to T_xM.$$
Thus there is a smooth vector field $A$ with 
\begin{equation}
\label{representation-2}
\Lo={1\over 2}\sum_{j=1}^m \LL_{X^j}\LL_{X^j}+\LL_A,
\end{equation}
where $\LL_V$ denotes Lie differentiation with respect to a vector field $V$, so $\LL_V f(x)=df_x(V(x))$, and $X^j(x)=X(x)(e_j)$ for $\{e_j\}$ an orthonormal basis of $\R^m$.
{\bf If $\sigma^\Lo$ has constant rank such $X$ may be found.}  Otherwise it is only known that locally Lipschitz square roots exist (see the discussions in Appendix A).
% may need changing
 In that case $\LL_{X^j}\LL_{X^j}$ is only defined almost surely everywhere and the vector field $A$ can only be assumed measurable and locally bounded.
 %inserted sentence follows
 Nevertheless uniqueness of the martingale problem still holds (see below). Also there is still the hybrid representation, given a connection $\nabla$ on $TM$:
\begin{equation} \label{representation-3}
\Lo f(x)={1\over 2}\sum_{j=1}^m \nabla _{X^j(x)}(df)(X^j(x))+df(V^0(x)).
\end{equation}
for $V^0$ locally Lipschitz.
%addition:
%\begin{definition}A diffusion operator will be said to be {\bf amenable} if it has a (possibly infinite) H\"ormander representation\index{H\"ormander!representation}. In the infinite case we require the convergence of sum of the squares of the $C^r$-norms taken over any compact domain $K$ of $M$, i.e.: $$ \sum_j^\infty|X^j|^2_{C^r(K)}<\infty$$ for $ r=1,2,...$.
%\end{definition}

The choice of a H\"ormander representation\index{H\"ormander!representation} for a diffusion operator, if it exists, determines a locally defined  stochastic flow of diffeomorphisms $\{\xi_t:0\leqslant t<\zeta\}$ whose one point motion solves the martingale problem for the diffusion operator. In particular on bounded measurable compactly supported $f:M\to \R$
the associated (sub)Markovian semigroup is given by $ P_tf=\E(f\circ \xi_t)$. See also Appendix II.

Despite the discussion above we can always write $\Lo$ in the following form:
\begin{equation}
\Lo=\sum_{ij=1}^N a_{ij}(\cdot)\LL_{X^i}\LL_{X^j}+\LL_{X^0},
\end{equation}
where $N$ is a finite number, $a^{ij}$ and $X^k$  are respectively smooth functions and smooth vector fields  with $a_{ij}=a_{ji}$.

\section{The Associated First Order Operator}

 Denote by $C^r \Lambda^p\equiv C^r \Lambda^p T^*M$, $r\geqslant  0$,  the  space of $C^r$ smooth  differential p-forms on a manifold $N$. To each diffusion operator $\Lo$ we shall
associate an operator $\delta^\Lo$, see Elworthy-LeJan-Li \cite{Elworthy-LeJan-Li-Tani}, \cite{Elworthy-LeJan-Li-book} \cf Eberle \cite{Eberle-book}. The horizontal lift of 
$\Lo$ will then be defined in terms of a lift of $\delta^\Lo$.
\begin{proposition}
\label{pr:delta}
For each diffusion operator $\Lo$ there is a unique  smooth linear differential operator
$\delta ^\Lo: C^{r+1} \Lambda^1 \to C^r \Lambda^0$
such that
\begin{enumerate}
\item[(1)]  $\delta ^\Lo\left( f\phi \right) =df\sigma ^\Lo( \phi ) 
+f \cdot \delta ^\Lo\left( \phi \right) $
\item[(2)]  $\delta ^\Lo\left( df\right) =\Lo f .$
\end{enumerate}
Equivalently $\delta^\Lo$ is determined by either one of the following:
\begin{eqnarray}\label{delta-1}
\delta^\Lo(fdg)&=&\sigma^\Lo(df, dg)+f\Lo g\\
\delta^\Lo(fdg)&=&{1\over 2}\Lo(fg)-{1\over 2} g \Lo f
+{1\over 2} f\Lo g.\label{delta-2}
\end{eqnarray}

\end{proposition}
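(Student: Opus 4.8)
The plan is to get uniqueness almost for free from properties (1)--(2), then to \emph{construct} $\delta^\Lo$ using the representation (\ref{representation-1}), and finally to read off the two equivalent formulas (\ref{delta-1}), (\ref{delta-2}) by a short computation. First, for uniqueness: if $\delta_1$ and $\delta_2$ both satisfy (1) and (2), subtract the two instances of (1) to see that $D:=\delta_1-\delta_2$ satisfies $D(f\phi)=f\,D(\phi)$ for all $f\in C^\infty(M)$, i.e. $D$ is $C^\infty(M)$-linear and hence tensorial: $D(\phi)(x)$ depends only on $\phi_x\in T_x^*M$. Property (2) gives $D(df)=0$ for every $f$, and since the covectors $(dx^i)_x$ span $T_x^*M$ this forces $D_x=0$ at each point, so $D\equiv 0$.

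For existence I would fix a connection $\nabla$ on $TM$ and take $V^0$ to be the vector field supplied by (\ref{representation-1}), so that $\Lo f = {\trace}_{T_xM}\nabla_-(\sigma^\Lo(df)) + df(V^0)$. Viewing $\sigma^\Lo(\phi)$ as a vector field for $\phi\in C^{r+1}\Lambda^1$, I would then \emph{define}
$$\delta^\Lo(\phi)(x) := {\trace}_{T_xM}\nabla_-\bigl(\sigma^\Lo(\phi)\bigr) + \phi(V^0).$$
This is visibly a smooth first-order linear differential operator $C^{r+1}\Lambda^1\to C^r\Lambda^0$, the single lost derivative being the one taken of $\phi$ inside the trace term. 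Property (2) is exactly (\ref{representation-1}). Property (1) follows from the Leibniz rule $\nabla_v(fY)=df(v)\,Y(x)+f\,\nabla_vY$ together with the observation that $\trace\bigl(v\mapsto df(v)\,Y(x)\bigr)=df(Y(x))$: applying this to $Y=\sigma^\Lo(\phi)$, and using that $\sigma^\Lo$ is fibrewise linear so $\sigma^\Lo(f\phi)=f\,\sigma^\Lo(\phi)$, gives ${\trace}\,\nabla_-\bigl(f\sigma^\Lo(\phi)\bigr)=df\,\sigma^\Lo(\phi)+f\,{\trace}\,\nabla_-\bigl(\sigma^\Lo(\phi)\bigr)$, whence $\delta^\Lo(f\phi)=df\,\sigma^\Lo(\phi)+f\,\delta^\Lo(\phi)$. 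By the uniqueness already proved, $\delta^\Lo$ does not depend on the choice of $\nabla$.

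Finally, for the equivalent characterisations: taking $\phi=dg$ in (1) and using (2) gives $\delta^\Lo(f\,dg)=df\,\sigma^\Lo(dg)+f\Lo g=\sigma^\Lo(df,dg)+f\Lo g$, which is (\ref{delta-1}); substituting the defining identity $\sigma^\Lo(df,dg)=\frac12\Lo(fg)-\frac12(\Lo f)g-\frac12 f\Lo g$ for the symbol turns this into (\ref{delta-2}). Conversely, each of (\ref{delta-1}) and (\ref{delta-2}), prescribed on forms $f\,dg$ and extended $\R$-linearly, satisfies (1) and (2) on the $\R$-span of such forms, and locally every smooth $1$-form is $\sum_i \phi_i\,d(x^i)$, so this span is everything; the uniqueness argument then shows the prescription is consistent and agrees with $\delta^\Lo$. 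The one step that is not purely formal is existence --- one must exhibit a genuine operator, not merely a formula on generators --- and I expect that well-definedness to be the main obstacle; the route through a connection and (\ref{representation-1}) finesses it cleanly, since $\sigma^\Lo(\phi)$ is a bona fide vector field whose covariant trace is intrinsically defined, whereas defining $\delta^\Lo$ chart-wise by (\ref{delta-1}) would instead require checking directly that $\sum_i f_i\,dg_i=0$ implies $\sum_i\bigl(\sigma^\Lo(df_i,dg_i)+f_i\Lo g_i\bigr)=0$ and that the result is chart-independent.
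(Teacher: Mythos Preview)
Your proof is correct and follows essentially the same route as the paper: construct $\delta^\Lo$ via a connection using the formula $\delta^\Lo\phi=\trace\nabla(\sigma^\Lo\phi)+\phi(V^0)$, verify (1) and (2), and then deduce uniqueness. The only minor difference is in the uniqueness step: the paper observes directly that every $1$-form can be written globally as $\sum f_i\,dg_i$ (e.g.\ via an immersion $M\hookrightarrow\R^m$), so (1) and (2) determine $\delta^\Lo$ on all of $C^{r+1}\Lambda^1$, whereas you argue that the difference $D$ of two candidates is tensorial and vanishes on exact differentials; both are fine and amount to the same thing.
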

\begin{proof}
Take a connection $\nabla$ on $TM$ then, as in (\ref{representation-1}), $\Lo$ can be written as
$\Lo f=\trace \nabla \sigma^\Lo (df)+\LL_{V^0}f$
for some smooth vector field $V^0$. Set
$$\delta^\Lo \phi=\trace \nabla (\sigma^\Lo\phi)+\phi(V^0).$$
Then $\delta^\Lo (df)=\Lo f$ and $$\delta^\Lo (f\phi)
=\trace \nabla (f (\sigma^\Lo\phi))+f\phi(V^0)
=f\delta^\Lo \phi+df(\sigma^\Lo \phi).$$
Note that a general $C^r$ 1-form $\phi$ can be written as 
$\phi=\sum_{j=1}^k f_i dg_i$ for some $C^r$ function $f_i$ and smooth $g_i$, for example, by taking $(g^1,\dots, g^m): M\to \R^m$ to be an immersion. This shows that (1) and (2)  determine $\delta^\Lo$ uniquely. Moreover since $\Lo$ is a smooth operator so is
$\delta^\Lo$.
\end{proof} 
\begin{remark}
 If the diffusion operator $\Lo$ has a  representation
\[\Lo=\sum_{j=1}^m a_{ij}\LL_{X^j}\LL_{X^j} +
\LL_{X^0} \]
for some smooth vector fields $X^i$ and smooth functions $a_{ij}$, $i,j =0,1,\dots, m$ then
$$\delta ^\Lo
  =  \sum_{j=1}^m a_{ij}\LL_{X^j}\iota_{X^j}+\iota _{X^0},$$
where $\iota_A$ denotes the interior product of the vector field $A$ with a differential form.  One can check directly that $\delta^\Lo (df)=\Lo f$ and that (1) holds. 
%start
In particular in a local chart, for the representation given in equation  (\ref{loc-1}) we see that 
$\delta ^\Lo$ is given by
 $$\delta ^\Lo\phi= \sum_{j=1}^m a_{ij}\frac{\partial}{\partial x_i}\phi_j(x)+\sum b^i\phi_i(x)$$ where $\phi$ has the representation $$ \phi_x=\sum \phi_j(x) ~dx^i$$
%end
\end{remark}

%\subsection{Diffusion Operators Along a Distribution, Cohesiveness}
\section{Diffusion Operators Along a Distribution}\label{se:distribution}
Let $N$ be a smooth manifold. By a {\bf distribution}\index{distribution}
 $S$ in $N$ we mean a family $\{S_u: u\in N\}$ where $S_u$ is a linear subspace of $T_uN$; for example $S$ could be a sub-bundle of $TN$. 
 Given such a distribution $S$ let $S^0=\cup_u S_u^0$ for {\bf  $S_u^0$ the  annihilator of} $S_u$ in $T_u^*N$.
\begin{definition}
\label{def:op-along}
\item  Let $S$ be a distribution in $TN$. Denote by $C^rS^0$ the set of $C^r$ 1-forms which vanish on $S$. A diffusion operator $\Lo$ on $N$ is said to be {\it \bf along $S$}\index{along $S$!diffusion operator} if $\delta^\Lo \phi=0$ for $\phi\in C^1S^0$.
\end{definition}

%moved to later
%\begin{enumerate}
%\item  Let $S$ be a distribution in $TN$. Denote by $C^rS^0$ the set of $C^r$ 1-forms which vanish on $S$. A diffusion operator $\Lo$ on $N$ is said to be {\it \bf along $S$} if $\delta^\Lo \phi=0$ for $\phi\in C^1S^0$.
%\item
%A diffusion generator $\Lo$ on a manifold is said to be {\it \bf cohesive} if
%\begin{enumerate}
%\item [(i)] $\sigma_x^\Lo$, $x\in X$, has constant non-zero rank and
%\item[(ii)] $\Lo$ is along the image of $\sigma^\Lo$.
%\end{enumerate}
%\end{enumerate}

%\begin{remark}
%\label{re:cohesive}
%From Theorem 2.1.1 in Elworthy-LeJan-Li \cite{Elworthy-LeJan-Li-book} we see that if the rank of $\sigma^\Lo_x$ is bigger than $1$ for all $x$ then $\Lo$ is  cohesive if and only if it has a representation
%$$\Lo={1\over 2}\sum_{j=1}^m \LL_{X^j}\LL_{X^j}$$
%where $E_x=\span\{X^1(x), \dots X^m(x)\}$ has constant rank.
%\end{remark}

%Using Proposition  \ref{pr:delta}, we see that the following are equivalent:
%a) $\B$ is along $S$, b)
% $\B g=\sigma^\B(dg)=0$ for all $g$ such that $dg\in S^0$,  c) $\B (fg)=\B(g)=0$ for $f$ and $g$ with $dg\in S^0$.

 Suppose $\Lo$ is along $S$ and take $\phi \in C^rS^0$.  By Proposition  \ref{pr:delta} and the symmetry of $\sigma^\Lo$, $0=(df)(\sigma^\Lo(\phi))=\phi(\sigma^\Lo (df)$ giving $\phi_x\in \Image[\sigma_x^\Lo]^0$. 
This proves  Remark~\ref{re:3.4}~(i):
\begin{remark}
\label{re:3.4}
\begin{enumerate}
\item [(i)]
 if $\delta^\Lo \phi=0$ for all $\phi\in C^1S^0$,
then $\sigma^\Lo  \phi=0$ for all such $\phi$ and $\Image[\sigma^\Lo_x]\subset \cap_{\phi \in C^1S^0}[\ker \phi_x]$  for all
$x \in N$. \item[(ii)]  If $S$ is a sub-bundle of $TN$ and $\Lo$ is along $S$ then without ambiguity we can define $\delta^\Lo \phi$ 
 for $\phi$ a $C^0$ section of $S^*$ by $\delta^\Lo \phi:=\delta^\Lo \tilde \phi$ for 
 any 1-form $\tilde \phi$ extending $\phi$. Recall that $S^*$ is canonically isomorphic to the quotient $T^*N/S^0$.
\end{enumerate}
\end{remark}
\begin{definition}
If $$S_x=\cap_{\phi \in C^1S^0}[\ker \phi_x]$$
for all $x$ we say $S$ is a {\bf regular distribution}.\index{distribution!regular} \end{definition}
Clearly sub-bundles are regular. 
\begin{proposition}
\label{pr:along}
\begin{enumerate}
\item[(1)]
 Let $S$ be a regular distribution of $N$ and $\Lo$ an operator written in H\"{o}rmander form:  \begin{equation}\label{op-B}
\Lo={1\over 2}\sum_{j=1}^m\LL_{Y^j}\LL_{Y^j} +
\LL_{Y^0}
\end{equation}
where the vector fields $Y^0$ and  $Y^j, j=1,\dots, m$ are $C^0$ and $C^1$ respectively. Then $\Lo$ is along $S$ if
 and only if $Y^i$ are sections of $S$. 
 \item[(2)]
 If $\B$ is along a smooth sub-bundle $S$ of $TN$ then for any connection $\nabla^S$ on $S$ we can write $\B$ as
 $$\B f={\trace}_{S_x}\nabla^S_- \big(\sigma^\B(df)\big)+\LL_{X^0}f.$$
Also we can find smooth sections $X^0,\dots, X^m$ of $S$ and smooth functions $a_{ij}$ such that
$$\B=\sum_{i,j}a_{ij}(\cdot)\LL_{X^i}\LL_{X^j}+\LL_{X^0}.$$ 
\end{enumerate}
 \end{proposition}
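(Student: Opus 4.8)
The plan is to prove the two parts essentially independently, each time reducing the claim to the defining identities (1)--(2) of Proposition~\ref{pr:delta} together with the Remark formula $\delta^\Lo = \sum a_{ij}\LL_{X^j}\iota_{X^j} + \iota_{X^0}$ for operators in (generalized) H\"ormander form. For part (1), the key observation is that $\delta^\Lo\phi = \frac12\sum_j \LL_{Y^j}(\phi(Y^j)) + \phi(Y^0)$ when $\Lo$ is written as in \eqref{op-B}; this follows from the Remark (with $m$ terms, $a_{jj}=\frac12$, $a_{0j}=0$, and the extra $\iota_{Y^0}$ term), noting $\iota_V\phi = \phi(V)$ for a $1$-form $\phi$. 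First I would prove the ``if'' direction: if each $Y^i$ is a section of $S$ and $\phi\in C^1S^0$, then $\phi(Y^j)\equiv 0$ for all $j\ge 1$ and $\phi(Y^0)\equiv 0$, so every term of $\delta^\Lo\phi$ vanishes; hence $\Lo$ is along $S$. For the ``only if'' direction I would use regularity of $S$: suppose $\Lo$ is along $S$, fix $x\in N$, and pick any $\phi\in C^1S^0$. From $\delta^\Lo\phi = 0$ and $\delta^\Lo(f\phi) = df\,\sigma^\Lo(\phi) + f\,\delta^\Lo\phi$, and since (by Remark~\ref{re:3.4}(i)) $\sigma^\Lo\phi = 0$, we get $\delta^\Lo(f\phi)=0$ for all $f$; applying this with the expansion $\delta^\Lo(f\phi) = \frac12\sum_j\LL_{Y^j}(f\,\phi(Y^j)) + f\,\phi(Y^0)$ and choosing $f$ to vanish at $x$ with prescribed differential, one isolates $\sum_j \phi(Y^j)(x)\,df_x(Y^j(x)) = 0$ for all such $df_x$, then varies; the cleanest route is to observe directly that $\delta^\Lo\phi=0$ already forces $\phi(Y^0)(x) = -\frac12\sum_j \LL_{Y^j}(\phi(Y^j))(x)$, and then test against $f\phi$ for varying $f$ to conclude each $\phi(Y^j)\equiv 0$, whence $Y^j(x)\in\bigcap_{\phi\in C^1S^0}\ker\phi_x = S_x$ by regularity, and similarly $Y^0(x)\in S_x$.

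For part (2), assume $\B$ is along a smooth sub-bundle $S$. By Remark~\ref{re:3.4}(i), $\Image[\sigma^\B_x]\subset S_x$ for all $x$, so $\sigma^\B(df)$ is a section of $S$ for every $f$, and more generally $\sigma^\B\phi$ is a section of $S$ for every $1$-form $\phi$; thus $\sigma^\B$ factors as a bundle map $T^*N\to S$, symmetric as a bilinear form on $S^0$'s complement, i.e. it descends to a map $S^*\cong T^*N/S^0 \to S$. Given any connection $\nabla^S$ on the bundle $S$, I would mimic the proof of \eqref{representation-1}: define the candidate operator $f\mapsto \trace_{S_x}\nabla^S_-(\sigma^\B(df)) + \LL_{X^0}f$, where the trace is of the map $S_x\to S_x$, $v\mapsto \nabla^S_v(\sigma^\B(df))$, and check it has the same symbol as $\B$ (the symbol only sees the leading term and $\sigma^\B$ is unchanged), so the difference is a first-order operator $\LL_{X^0}$ with $X^0$ a priori a vector field on $N$; but applying $\delta^\B$ to elements of $C^1S^0$ and using that $\B$ is along $S$ forces $\iota_{X^0}\phi = 0$ for $\phi\in C^1S^0$, i.e. $X^0$ is a section of $S$. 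Finally, for the last representation I would diagonalize/decompose $\sigma^\B$ locally as a finite sum of $Y\otimes Y$ with $Y$ smooth sections of $S$ (possible since $\sigma^\B$ is a smooth positive semi-definite form valued in $S$, and one can write any such as a finite sum of squares of smooth sections using a partition of unity, without needing constant rank — this is the content of the displayed ``always'' representation $\Lo = \sum a_{ij}\LL_{X^i}\LL_{X^j} + \LL_{X^0}$ near the end of Section~1.1, now carried out within $S$), and absorb the resulting first-order discrepancy into $\LL_{X^0}$ as before, checking via $\delta^\B$ that $X^0$ lands in $S$.

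The main obstacle I anticipate is the ``only if'' direction of part (1) in the genuinely $C^1$ (non-smooth) setting: one must extract pointwise vanishing of each $\phi(Y^j)$ from the single scalar identity $\delta^\B\phi = 0$ plus its twisted versions $\delta^\B(f\phi)=0$, and the argument testing against varying $f$ needs the $Y^j(x)$ and the differentials $df_x$ to interact nondegenerately; the honest way is to note $\delta^\B(f\phi) - f\,\delta^\B\phi = df\,\sigma^\B(\phi)$ is \emph{also} forced to be zero (since $\sigma^\B\phi=0$), which collapses the twisting and instead one must differentiate $\delta^\B(f\phi)$ directly. I expect the correct bookkeeping is: from $\delta^\B\phi = \frac12\sum_j\LL_{Y^j}(\phi(Y^j)) + \phi(Y^0) = 0$ and the analogous identity with $\phi$ replaced by $f\phi$, subtract to get $\frac12\sum_j (\LL_{Y^j}f)\,\phi(Y^j) = 0$ identically; since this holds for \emph{all} smooth $f$, and the $\LL_{Y^j}f$ can be made to separate the values $\phi(Y^j)(x)$ at any point where the $Y^j(x)$ are not too degenerate — and where they are degenerate one reduces to a sub-collection — we get $\phi(Y^j)\equiv 0$ for each $j$, hence $\phi(Y^0)\equiv 0$ as well, and regularity finishes it. A careful treatment of the degenerate locus (linear-dependence among the $Y^j(x)$) is where the only real care is needed; everything else is a routine symbol computation.
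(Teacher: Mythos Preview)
Your ``if'' direction in part~(1) and your treatment of part~(2) match the paper's argument (the paper extends $\nabla^S$ to all of $TN$ by adding an arbitrary connection on a complement, which is essentially what you do). The difference lies in the ``only if'' direction of part~(1).

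You correctly arrive at $\sum_j (\LL_{Y^j}f)\,\phi(Y^j)=0$ for all $f$, which is the statement that $\sum_j \phi(Y^j(x))\,Y^j(x)=0$ at each point. But you then try to separate the individual $\phi(Y^j)$ by varying $f$ and worry about the locus where the $Y^j(x)$ are linearly dependent. This concern is unnecessary: simply apply $\phi_x$ to the vector identity $\sum_j \phi(Y^j(x))\,Y^j(x)=0$ to obtain $\sum_j (\phi(Y^j(x)))^2=0$, hence $\phi(Y^j)\equiv 0$ for every $j$ and every $\phi\in C^1S^0$. Regularity then gives $Y^j(x)\in S_x$, and plugging back into $\delta^\Lo\phi=0$ leaves $\phi(Y^0)=0$, so $Y^0$ is also a section of $S$.

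The paper organises this even more efficiently by working with the bundle map $Y(x):\R^m\to T_xN$, $Y(x)e=\sum_j e_j Y^j(x)$, so that $2\sigma^\Lo_x=Y(x)Y(x)^*$. The elementary linear-algebra fact $\Image[Y(x)Y(x)^*]=\Image[Y(x)]$ (which is exactly the positivity trick above) then gives $Y^j(x)\in \Image[Y(x)]=\Image[\sigma^\Lo_x]\subset S_x$ for $j\ge 1$ in one line, with no case analysis. Your testing-against-$f$ route works once you notice the positivity step, but the paper's formulation via $\Image(YY^*)=\Image(Y)$ makes the absence of any degeneracy issue transparent from the start.
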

\begin{proof}
For part (1), if $Y^i$ are sections of $S$, take $\phi\in C^1S^0$ then
$$\delta^\Lo \phi={1\over 2}\sum_{j=1}^m\LL_{Y^j}\phi(Y^j) +
\phi(Y^0)=0$$ and so $\Lo$ is along $S$.

Conversely suppose $\Lo$ is along $S$. Define a $C^1$ bundle map $Y: \R^m\to TN$ by $Y(x)(e)=\sum_{j=1}^m Y^j(x)e_j$
for $\{e_j\}_{j=1}^m$ an orthonormal base of $\R^m$. Then
$$2\sigma_x^\Lo=Y(x)Y(x)^*$$
and
$$\Image[Y(x)]=\Image[\sigma_x^\Lo]\subset S,$$
by Remark~\ref{re:3.4}.
Now
$$\delta^\Lo\phi ={1\over 2}\sum\LL_{Y^j}(\phi(Y^j))+\phi(Y^0)=\phi(Y^0),$$
which can only vanish for all $\phi\in C^1S^0$ if $Y^0$ is a section of $S$. 
 Thus $Y^1, \dots, Y^m$, and $Y^0$ are all sections of $S$.

 For part (2), we use (\ref{representation-1}) and take $\nabla$ there to be the direct sum of $\nabla^S$ with an arbitrary connection on a complementary bundle, obtaining $\sigma^\B$ has image in $S$ by Remark \ref{re:3.4}(i).
 \end{proof}

\section{Lifts of Diffusion Operators}

Let $p:N\to M$ be a smooth map and $E$ a sub-bundle of $TM$. Let $S$ be a sub-bundle of $TN$ transversal to the fibre of $p$, \ie $VT_uN\cap S=\{0\}$ all $u\in N$
 and such that $T_yp$ maps $S_y$ isomorphically onto $E_{p(y)}$, for each
 $y$.

\begin{lemma}
\label{le:lift}
Every smooth 1-form on $N$ can be written as a linear combination
of sections of the form $\psi+\lambda p^*(\phi)$ for
 $\lambda: N\to {\Bbb R}$ smooth, $\phi$ a 1-form on $M$, and $\psi$
annihilates $S$. In particular any 1-form annihilating $VTN$
is of the form $\lambda p^*(\phi)$. If $E=TM$ then $\psi$ is uniquely determined.
\end{lemma}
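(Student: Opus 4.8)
The plan is to construct the decomposition fibrewise and then verify smoothness. Fix $y \in N$. Since $S$ is transversal to the fibre and $T_yp$ maps $S_y$ isomorphically onto $E_{p(y)} \subseteq T_{p(y)}M$, the subspace $S_y$ has a well-defined complement issue only in the direction of $VT_yN$ plus a complement of $E_{p(y)}$ in $T_{p(y)}M$. First I would choose an auxiliary smooth splitting: a smooth sub-bundle $G$ of $TN$ complementary to $S \oplus VTN$ when $E \neq TM$ (and $G = 0$ when $E = TM$), together with a smooth complement $F$ to $E$ in $TM$; such bundles exist by standard partition-of-unity arguments. Then at each $y$ we have $T_yN = S_y \oplus VT_yN \oplus G_y$, and correspondingly a decomposition of the dual $T_y^*N = S_y^\circ\text{-part} \oplus \cdots$. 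The key point is to write a given 1-form $\theta$ as the sum of its component that annihilates $S_y$ (call it the "$\psi$-part") and a remainder supported on $S_y$, and then to express the $S_y$-component through a pullback form on $M$ using the isomorphism $T_yp|_{S_y}$.

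The key steps in order: (1) Establish the pointwise linear algebra: given $\theta \in T_y^*N$, set $\bar\theta_y \in E_{p(y)}^*$ to be the form on $E_{p(y)}$ corresponding to $\theta|_{S_y}$ under the isomorphism $T_yp : S_y \to E_{p(y)}$; extend $\bar\theta_y$ to an element of $T_{p(y)}^*M$ using the complement $F$ (i.e. declare it zero on $F_{p(y)}$). (2) Then $p^*(\text{that extension})$ restricted to $S_y$ agrees with $\theta|_{S_y}$, so $\psi := \theta - p^*(\cdots)$ annihilates $S_y$; this gives the decomposition at each point with $\lambda \equiv 1$, and the "linear combination" in the statement comes from the fact that $\bar\theta_y$ need not be globally of the form $\phi_{p(y)}$ for a single 1-form $\phi$ on $M$ — one writes $\bar\theta$ locally as $\sum \lambda_i (\phi_i)_{p(y)}$ via an immersion $(g^1,\dots,g^m):M \to \R^m$ as in the proof of Proposition~\ref{pr:delta}, with $\lambda_i = \bar\theta(\partial/\partial g^i)$ smooth on $N$. (3) Check smoothness of all the assignments $\theta \mapsto \psi$, $\theta \mapsto \lambda_i$: these are compositions of the smooth bundle maps $T_yp|_{S_y}$, its inverse, the projections associated with the smooth splittings, and $p^*$, hence smooth. (4) For the "in particular" clause: a 1-form $\psi'$ annihilating $VTN$ descends through $Tp$ on the $S \oplus G$ part; when restricted along any section of $S$ it is determined by a form on $M$, and combined with step (2) one sees the $\psi$-part must itself be expressible via pullbacks, forcing the form to be $\sum \lambda_i p^*(\phi_i)$; this is cleanest to argue locally. (5) For the uniqueness clause when $E = TM$: here $S$ is transversal to $VTN$ and maps onto all of $TM$, so $T_yN = S_y \oplus VT_yN$ with no $G$; given two decompositions $\psi_1 + \lambda_1 p^*\phi_1 = \psi_2 + \lambda_2 p^*\phi_2$, restrict to $S_y$ to see the pullback parts agree (since $p^*$ is injective on forms when $Tp$ is surjective on the relevant subspace), hence $\psi_1 = \psi_2$.

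I expect the main obstacle to be the bookkeeping in step (2)–(4): phrasing the local representation $\bar\theta = \sum \lambda_i (\phi_i)\circ p$ so that the $\lambda_i$ are genuinely smooth \emph{on $N$} and so that the decomposition is $C^\infty$ rather than merely continuous. The immersion trick from Proposition~\ref{pr:delta} handles the existence of finitely many $\phi_i$, but one must be careful that the coefficients extracted are smooth and that only finitely many are needed globally — this is fine since an immersion into $\R^m$ gives a global frame-like spanning set for $T^*M$ with $m$ fixed. The genuinely delicate case is $E \neq TM$, where the choice of complement $F$ to $E$ enters; the resulting decomposition is not canonical (which is why only existence, not uniqueness, is claimed), but one should check that the constructed $\psi$ indeed annihilates $S$ regardless of the choice, which follows immediately from step (1). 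The rest — linearity, the "linear combination of sections" phrasing, and the $VTN$-annihilator special case — is routine once the pointwise picture is set up.
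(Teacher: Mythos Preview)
Your proof is correct and follows the same strategy as the paper's: split off the $S$-component of a given 1-form and express that component via pullbacks from $M$. The paper's execution differs only in packaging --- it chooses Riemannian metrics on $M$ and $N$ making $Tp|_S$ an isometry, so that for a local frame $v^1,\dots,v^p$ of $E$ the vectors $p^*((v^j)^*)^\#$ trivialise $S$, and then dualises and uses a partition of unity; your global complementary bundles and the immersion $M\hookrightarrow\R^m$ achieve the same thing. Your auxiliary bundles $G$ and $F$ are harmless but unnecessary: all that is needed is any complement to $S$ in $TN$, and any extension of $\bar\theta_y\in E_{p(y)}^*$ to $T_{p(y)}^*M$. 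For step~(4) the cleanest route is simply to note that a form annihilating $VTN=\ker Tp$ factors pointwise through $Tp$, giving a section of $p^*(T^*M)$ which the immersion then writes as $\sum\lambda_i\,p^*(dg^i)$.
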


\begin{proof}
Take Riemannian metrics on $M$ and $N$ such that
the isomorphism between $S$ and $p^*(E)$ given by $Tp$ is isometric.
Fix $y_0\in N$. Take a neighbourhood $V$ of $p(y_0)$ in $M$ over which
$E$ is trivializable. Let $v^1, v^2, \dots, v^p$ be a trivialising
family of sections over $V$. Set $U=p^{-1}(V)$. If $\phi^j=(v^j)^*$,
the dual 1-form to $v^j$, $j=1$ to $p$, over $V$ then
$\{p^*(\phi^j)^{\#}, j=1 \hbox{ to } p\}$ gives a trivialization
of $S$ over $U$.
[Indeed
 $p^*(\phi^j)_{y}(-)=\phi^j_{p(y)}(T_yp -)=\langle(T_yp)^*(v^j), -\rangle$.]
Since any vector field over $V$ can therefore be written as one orthogonal
 to  $S$ plus a linear combination of the $p^*(\phi^j)^{\#}$, by duality the
 result  holds for forms with support in $U$. The global result follows
 using a partition of unity. 

 For the uniqueness note that if $E=TM$ then $TN=VTN+S$.
\end{proof} 

\begin{proposition}
\label{pr:lift}
Let $\A $ be a diffusion operator on $M$ along the sub-bundle $E$ of $TM$. There is a unique lift of $\A $ to a smooth diffusion generator $\A ^S$ along the transversal bundle $S$. Write
 $\bar \delta=\delta^{\A ^S}$. Then $\A ^S$ is
 determined by
\begin{enumerate}
\item[(i)]
$\bar \delta(\psi)=0$ if $\psi$ annihilates $S$.
\item[(ii)]
$\bar \delta\; (p^*\phi)=(\delta^\A \phi)\circ p$, \quad for $\phi\in \Omega^1(M)$.
\end{enumerate}
Moreover (iii) for $y\in N$ let $h_y: E_{p(y)}\to T_yN$ be the right inverse of $T_yp$ with image $S_y$. Then 
\begin{enumerate}
\item [(a)]
$\sigma_y^{\A^S} =h _y\; \sigma^\A \; h_y^*$
\item[(b)]
If $\A $ is given by
\begin{equation}
\label{op-hormander}
\A =\sum_{i,j=1}^N a_{ij}\LL_{X^i}\LL_{X^j} +
\LL_{X^0}\end{equation}
where $X^1,\dots, X^N$ and $X^0$ are sections of $E$ then
\begin{equation}
\label{op-lift-h}
\A ^S=\sum_{i,j=1}^N (a_{ij}\circ p) \;\LL_{\bar X^i}\LL_{\bar X^j}
 +\LL_{\bar X^0}
\end{equation}
for $\bar X^j(y)=h_y(X^j(p(y))$.
\end{enumerate}
\end{proposition}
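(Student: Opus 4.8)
The plan is to construct $\A^S$ explicitly via the $\delta$-operator characterization, then verify it is the unique diffusion operator with the stated properties. First I would show existence: by Lemma~\ref{le:lift}, every smooth 1-form on $N$ decomposes as a linear combination of pieces $\psi + \lambda\, p^*\phi$, with $\psi$ annihilating $S$. So if we prescribe $\bar\delta$ by (i) on the $S^0$-part and by (ii) on the pullback part, extending $\R$-linearly, the only thing to check is well-definedness: whenever $\sum_k \lambda_k p^*\phi_k + \sum_k \psi_k = 0$ as a 1-form, we need $\sum_k (\lambda_k)(\delta^\A\phi_k)\circ p = 0$ as a function on $N$. This should follow because $\A$ is along $E$: applying $T_yp$ identifies things fibrewise, and the freedom in the decomposition is exactly killed by the hypothesis $\delta^\A\phi = 0$ when $\phi$ annihilates $E$ (note $p^*\phi$ annihilates $S$ precisely when $\phi$ annihilates $E$, since $Tp: S_y \to E_{p(y)}$ is an isomorphism). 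I would also need to confirm that the resulting $\bar\delta$ satisfies the Leibniz rule (1) of Proposition~\ref{pr:delta} with respect to some symbol — this is where formula (iii)(a), $\sigma_y^{\A^S} = h_y\,\sigma^\A\, h_y^*$, gets pinned down: the symbol is forced by requiring $\bar\delta(f\,dg) = \sigma^{\A^S}(df,dg) + f\,\delta^{\A^S}(dg)$, and one checks $\sigma^{\A^S}$ so defined is positive semi-definite (hence $\A^S$ is genuinely semi-elliptic) because $\sigma^\A$ is.

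Next I would establish the properties (iii). For (a), the symbol identity follows by pairing: for 1-forms $\psi_1,\psi_2$ on $N$, write $\psi_i = \psi_i' + \lambda_i p^*\phi_i$; since $\sigma^{\A^S}$ annihilates $S^0 = $ (forms vanishing on $S$) — which I get from Remark~\ref{re:3.4}(i) applied to the fact that $\A^S$ is along $S$ — only the pullback parts contribute, and $\sigma^{\A^S}(p^*\phi_1, p^*\phi_2)_y = (\sigma^\A(\phi_1,\phi_2))\circ p$ translates directly into $h_y \sigma^\A h_y^*$ once one notes $p^*\phi|_{S_y}$ corresponds to $h_y^*\phi$ under $Tp|_{S_y}$. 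For (b), given the Hörmander-type representation of $\A$ with sections $X^i$ of $E$, the candidate $\sum (a_{ij}\circ p)\LL_{\bar X^i}\LL_{\bar X^j} + \LL_{\bar X^0}$ with $\bar X^j = h_\cdot(X^j\circ p)$ is a diffusion operator along $S$ (its vector fields are sections of $S$ by construction, so Proposition~\ref{pr:along}(1) applies), and one computes its $\delta$-operator via the Remark following Proposition~\ref{pr:delta}: $\delta = \sum (a_{ij}\circ p)\LL_{\bar X^j}\iota_{\bar X^j} + \iota_{\bar X^0}$. Checking this satisfies (i) is immediate since $\iota_{\bar X^j}\psi = \psi(\bar X^j) = 0$ when $\psi$ annihilates $S$; checking (ii) amounts to $\iota_{\bar X^j}(p^*\phi)_y = \phi(T_yp\, \bar X^j(y)) = \phi(X^j(p(y))) = (\iota_{X^j}\phi)\circ p$ together with the fact that $\LL_{\bar X^j}$ applied to a function of the form $g\circ p$ equals $(\LL_{X^j} g)\circ p$ (since $\bar X^j$ is $p$-related to $X^j$). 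Hence (b)'s operator has the same $\delta$ as $\A^S$, so by uniqueness they coincide.

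Finally, uniqueness: if $\A^S_1, \A^S_2$ are two diffusion operators along $S$ with the same $\bar\delta$ on all 1-forms, then $\delta^{\A^S_1} = \delta^{\A^S_2}$, and by Proposition~\ref{pr:delta} property (2), $\A^S_1 f = \delta^{\A^S_1}(df) = \delta^{\A^S_2}(df) = \A^S_2 f$ for all $f$. And properties (i)–(ii) determine $\bar\delta$ on a spanning set of 1-forms by Lemma~\ref{le:lift}, hence everywhere by linearity; so (i)–(ii) characterize $\A^S$ among diffusion operators along $S$. Smoothness of $\A^S$ follows since $\delta^{\A^S}$ is built from smooth data ($h_y$ depends smoothly on $y$ as $S$ is a smooth sub-bundle, and $\sigma^\A$, $V^0$ are smooth).

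I expect the main obstacle to be the well-definedness check in the existence step: one must verify that the prescription $\bar\delta(\psi + \lambda p^*\phi) = \lambda\,(\delta^\A\phi)\circ p$ does not depend on the (non-unique, when $E \neq TM$) choice of decomposition from Lemma~\ref{le:lift}. The resolution is precisely that two decompositions differ by a 1-form that is simultaneously of the form $\sum \mu_k p^*\eta_k$ (pullback) and annihilates $S$; such $\eta_k$-combinations pull back to forms annihilating $S$, which forces the relevant combination of $\eta_k$'s to annihilate $E$ (using surjectivity of $T_yp|_{S_y}$ onto $E_{p(y)}$), whence $\delta^\A$ of it vanishes because $\A$ is along $E$ — but making this rigorous requires care with partitions of unity and the local trivializations, since the decomposition is only local a priori. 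An alternative, cleaner route avoiding this is to define $\A^S$ by formula (iii)(b) using one fixed Hörmander-type representation of $\A$ (which exists by Proposition~\ref{pr:along}(2)), prove it satisfies (i)–(ii) as above, and then get representation-independence for free from the uniqueness argument; I would likely present it this way.
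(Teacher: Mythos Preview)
Your proposal is correct, and the ``alternative, cleaner route'' you settle on at the end is exactly the paper's proof: represent $\A$ as in (\ref{op-hormander}) via Proposition~\ref{pr:along}(2), define $\A^S$ by (\ref{op-lift-h}), verify that its $\delta$-operator satisfies (i)--(ii), and invoke Lemma~\ref{le:lift} for uniqueness. Your longer discussion of the direct construction and its well-definedness obstacle is more detailed than what the paper records, but you correctly identify that the H\"ormander-form route sidesteps it; the paper does not spell out (iii)(a) separately, treating it as implicit in the construction, while your verification via Remark~\ref{re:3.4}(i) and $h_y^*(p^*\phi)=\phi$ is a clean way to make it explicit.
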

\begin{proof}
 Lemma \ref{le:lift}  ensures that (i) and (ii) determine $\bar\delta$
uniquely as a smooth operator on smooth 1-forms if it exists. 
On the other hand we can represent $\A$ as in (\ref{op-hormander})
and define $\A^S$ be (\ref{op-lift-h}). It is straightforward to check that
then $\delta^{A^S}$ satisfies (i) and (ii).

By definition and the observation after (\ref{op-lift-h}) this must
 be  the horizontal lift, if it is a diffusion generator.  
  On the other hand  if $\A $ is given by (\ref{op-hormander}) we use it to define
 $\A^S$ by  (\ref{op-lift-h}).
It is easy to see that   $\delta^{\A^S}$
 satisfies (i) and (ii) and so
 $\delta^{\A^S}=\bar \delta$. From this
$\bar \A=\A^S$ and $\A^S $ is a smooth
 diffusion generator.
 \end{proof}

In the terminology of section~\ref{se:distribution} $S_u=\ker[T_up]$, sometimes written as $VT_uN$, is a distribution.
\begin{definition}
When an operator $\B$ is along the vertical distribution $\ker[Tp]$ we say $\B$ is {\bf vertical}\index{vertical},  and when
 there is a horizontal distribution such as $\{H_u: u\in N\}$ as given by Proposition
 \ref{pr:h-map} below and $\B$ is along that horizontal
 distribution we say $\B$ is {\bf horizontal} \index{horizontal}.
\end{definition}

\begin{proposition}
\label{pr:vert-eq}
Let $\B$  be a smooth diffusion operator on $N$ and
 $p: N\to M$ any smooth map, then the following  conditions are
 equivalent:
\begin{enumerate}
\item [(1)]
The operator $\B$ is vertical.
\item [(2)]
The operator  $\B$ has a expression of the form of
$\sum_{j=1}^m a^{ij}\LL_{Y^i}\LL_{Y^j} +
\LL_{Y^0}$
where $a^{ij}$ are smooth functions and $Y^j$ are smooth sections of the vertical tangent bundle of $TN$.
\item[(3)]  $\B(f\circ p )=0$ for all $C^2$ $f:M\to \R$.
 \end{enumerate}

\end{proposition}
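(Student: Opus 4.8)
The plan is to prove the cycle of implications $(1)\Rightarrow(2)\Rightarrow(3)\Rightarrow(1)$. The first implication carries the structural content, and the last is the one where semi-ellipticity of $\B$ enters essentially.

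For $(1)\Rightarrow(2)$: by definition, $\B$ being vertical means $\B$ is along the distribution $VTN:=\ker[Tp]$ in the sense of Definition~\ref{def:op-along}. I would first note that $VTN$ is a regular distribution: the inclusion $VT_uN\subseteq\bigcap_{\phi\in C^1S^0}\ker\phi_u$ is trivial, and conversely a vector $v\notin\ker T_up$ is detected by a suitable $\phi\in C^1S^0$, obtained by choosing a $1$-form $m$ on $M$ with $m(T_up\,v)\neq 0$, pulling it back, and multiplying by a cut-off function near $u$. By Remark~\ref{re:3.4}(i) it then follows that $\Image[\sigma^\B_u]\subseteq VT_uN$ for all $u$. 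When $Tp$ has locally constant rank, so that $VTN$ is a smooth sub-bundle of $TN$, Proposition~\ref{pr:along}(2) applies directly and produces smooth functions $a_{ij}$ together with smooth sections $X^0,\dots,X^m$ \emph{of $VTN$} such that $\B=\sum_{ij}a_{ij}\LL_{X^i}\LL_{X^j}+\LL_{X^0}$; sections of $VTN$ are precisely vertical vector fields, so this is (2). In general one repeats this in charts of $N$ adapted to $p$ over the open dense locus where $Tp$ has locally constant rank (there $\Image\sigma^\B\subseteq VTN$ forces the second-order part to involve only vertical directions, and $\B(f\circ p)=0$, which is immediate from (1), kills the horizontal part of the drift), and patches the local expressions by a partition of unity arranged to have finitely many colours so that finitely many vector fields suffice.

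For $(2)\Rightarrow(3)$: if $Y$ is vertical and $f\in C^2(M)$ then $\LL_Y(f\circ p)=d(f\circ p)(Y)=df(Tp\,Y)=0$, so applying this twice in $\B=\sum_{ij}a^{ij}\LL_{Y^i}\LL_{Y^j}+\LL_{Y^0}$ gives $\B(f\circ p)=0$. For $(3)\Rightarrow(1)$: I would first show $\sigma^\B(d(g\circ p))=0$ for every smooth $g$ on $M$. By the defining identity of the symbol and (3) applied to $g,g'$ and $gg'$ one gets $2\sigma^\B(d(g\circ p),d(g'\circ p))=\B((gg')\circ p)-(\B(g\circ p))(g'\circ p)-(g\circ p)\B(g'\circ p)=0$; taking $g'=g$, and using that $\sigma^\B$ is positive semi-definite (semi-ellipticity of $\B$), Cauchy--Schwarz forces $\sigma^\B(d(g\circ p))=0$. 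Now for an arbitrary $\phi\in C^1S^0$, Lemma~\ref{le:lift} together with the device from the proof of Proposition~\ref{pr:delta} (writing a $1$-form on $M$ as $\sum_i f_i\,dg_i$ via an immersion) expresses $\phi$ as a finite sum $\phi=\sum_k u_k\,d(g_k\circ p)$ with $u_k$ real-valued on $N$ and $g_k$ smooth on $M$; then properties (1)--(2) of $\delta^\B$ give $\delta^\B\phi=\sum_k\big(du_k(\sigma^\B(d(g_k\circ p)))+u_k\B(g_k\circ p)\big)=0$, the first terms vanishing by the previous step and the second by (3). Hence $\delta^\B$ annihilates $C^1S^0$, i.e.\ $\B$ is along $VTN$, which is (1).

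The step I expect to be the main obstacle is $(1)\Rightarrow(2)$: conditions (1) and (3) are ``soft''---they only say that $\delta^\B$ kills $C^1S^0$, resp.\ that $\B$ annihilates pulled-back functions---whereas (2) demands an actual representation by \emph{vertical} vector fields, and making that transition is precisely where one needs $\ker Tp$ to behave like a sub-bundle, which is the content imported from Proposition~\ref{pr:along}(2). A secondary point worth noting is that in $(3)\Rightarrow(1)$ the semi-ellipticity of $\B$ is genuinely used, via Cauchy--Schwarz, to pass from the bilinear vanishing $\sigma^\B(d(g\circ p),d(g'\circ p))=0$ to the vanishing of $\sigma^\B(d(g\circ p))$ itself.
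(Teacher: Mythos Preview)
Your proof is correct and follows essentially the same route as the paper. The paper argues $(1)\Leftrightarrow(3)$ directly (with $(1)\Rightarrow(3)$ called trivial and $(3)\Rightarrow(1)$ done exactly as you do it, via Lemma~\ref{le:lift} to write $\phi=\sum f\,p^*(dg)$, then semi-ellipticity to kill $\sigma^\B(p^*dg)$, then property~(1) of $\delta^\B$), and then simply cites Proposition~\ref{pr:along} for $(1)\Leftrightarrow(2)$. Your cycle $(1)\Rightarrow(2)\Rightarrow(3)\Rightarrow(1)$ uses the same ingredients in a slightly different order.

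One point worth noting: you are more careful than the paper about the hypothesis needed for $(1)\Rightarrow(2)$. Proposition~\ref{pr:along}(2), which both you and the paper invoke, requires $S=VTN$ to be a smooth sub-bundle, i.e.\ $p$ to have constant rank; the paper's proof simply cites Proposition~\ref{pr:along} without comment, whereas you flag this and sketch a workaround on the open dense set where the rank is locally constant. In the applications in the paper $p$ is always a submersion (indeed usually a bundle projection), so the issue does not arise in practice, but your caution is justified given that the statement says ``any smooth map''.
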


\begin{proof}
(a). From (1) to (3) is trivial. From (3) to (1) note that
every $\phi$ which vanishes on vertical vectors is a linear combination of elements of the form $fp^*(dg)$ for some smooth $g:M\to \R$ by Lemma \ref{le:lift}. To show that $\B$ is vertical we only need to show that $\delta^\B(fp^*(dg))=0$. But $\B(g\circ p)=0$ implies 
$\delta^\B(p^*(dg))=0$ and also 
$p^*(dg)\sigma^\B(p^*(dg))={1\over 2}\B(g\circ p)^2-(g\circ p)\B(g\circ p)=0$. By semi-ellipticity of $\B$, $\sigma^\B(p^*(dg))=0$. 
Thus assertion (1) follows since $\delta ^\B\left( fp^*(dg) \right) =df\sigma ^\B( p^*(dg) ) +f \cdot \delta ^\B\left( p^*(dg) \right) $
from Proposition \ref{pr:delta}(1), and so (1) and (3) are equivalent.

 Equivalence of (1) and (2) follows from Proposition \ref{pr:along}.
\end{proof}

\begin{remark}
\label{re:vertical}
\begin{enumerate}
\item [(1)]If $\B$ is vertical, then by Proposition \ref{pr:delta}, for all  $C^2$ functions $f_1$ on $N$ and $f_2$ on $M$,
$\displaystyle{\B\left(f_1 (f_2\circ p )\right)
 =(f_2\circ p ) \B f_1}$;
\item[(2)] If $\B$ and $\B^\prime$ are both over a diffusion operator  $\A$ of constant rank nonzero rank  such that  $\A$ is along the image of $\sigma^\A$, then $\B-\B'$ is not in general vertical,  although $(\B-\B')(f\circ p)=0$ for all $C^2$ function $f:M\to \R$,  since it may not be semi-elliptic. For example take $p:\R^2\to \R$ to be the projection $p(x,y)=x$ with $\A={\partial^2 \over \partial x^2}$, $\B={\partial^2 \over \partial x^2}+{\partial^2 \over \partial y^2}$. Let 
$\B'={\partial^2 \over \partial x^2}+{\partial^2 \over \partial y^2}+{\partial^2 \over \partial x \partial y}$. Then $\B$ is also over $\A$ but
$\B-\B'=-{\partial^2 \over \partial x \partial y}$ is not vertical.
\end{enumerate}
\end{remark}

\chapter{Decomposition of  Diffusion Operators }
\label{ch:deco-1}
Consider a smooth map $p:N\to M$ between smooth manifolds $M$ and $N$.
% We shall assume that $p$ is surjective from section \ref{sec-} on.
By a {\bf lift}  of a diffusion operator $\A $ on $M$ over
  $p$ we mean a diffusion operator $\B  $ on $N$ such that
\begin{equation}\label{op-lift}
\B  (f\circ p)=(\A f)\circ p
\end{equation}
for all $C^2$ functions $f$ on $M$. 
In this situation we adopt the following terminology:
\begin{definition}
 If (\ref{op-lift}) holds we  say that {\bf $\B$ is over $\A$}, or that $\A$ and $\B$ are {\bf intertwined} by $p$.  A diffusion operator $\B$ on $N$ is said to be {\bf projectible} (over $p$), or {\it $p$-projectible},  if it is over some diffusion operator $\A$. \end{definition}

 Recall that the pull back $p^*\phi$ of a 1-form $\phi$ is defined by
$$p^*(\phi)_u=\phi_{p(u)} (Tp(-))=(Tp)^*\phi_{p(u)}.$$  For our map $p: N\to M$, a diffusion operator $\B$ is over $\A$ if and only if
\begin{equation}
\delta^\B\left(p^*\phi)\right)=(\delta^\A\phi)(p),
\end{equation}
for all $\phi \in C^1\wedge^1 T^*M$.

% The standard assumption in this section is:
% \begin{assumption}\label{assumption-co}
% In this section $\B$ is a lift of $\A$ where $\A$ is assumed to be a smooth cohesive diffusion operator on $M$.
%\end{assumption}

%In the terminology of section~\ref{se:distribution} $S_u=\ker[T_up]$, sometimes written as $VT_uN$, is a distribution.
%moved later
%\begin{definition}
%When an operator $\B$ is along the vertical distribution $\ker[Tp]$ we say $\B$ is {\bf vertical},  and when
% there is a horizontal distribution such as $\{H_u: u\in N\}$ as given by Proposition
% \ref{pr:h-map} and $\B$ is along that horizontal
% distribution we say $\B$ is {\bf horizontal}.
%\end{definition} 
%Note that  the vertical distribution $\ker[Tp]$ is regular as  $ \ker[Tp]$ is annihilated by all differential 1-forms of the form $\theta\circ Tp$. 

\section{The Horizontal Lift Map}
\label{se:deco-1-1}

\begin{lemma}
\label{le:commutative}
Suppose that $\B$ is over $\A$. Let $\sigma ^\B$ and $\sigma ^\A$
be respectively the symbols for $\B$ and $\A$. Then  
\begin{equation}
\label{commuta}
(T_up)  \sigma_u^\B  (T_up)^*=\sigma_{p(u)}^\A, \qquad
\forall  u\in N,
\end{equation}
\ie the following diagram is commutative :

\begin{picture}(200, 80)(-40,-57)
\put(30,0) {$T_u ^*N$} \put(85,8){$\sigma_u^\B  $}
\put(55,2){\vector(1,0){70}} \put(128,0){$T_uN$}
\put(45,-45){\vector(0,1){40}} \put(10,-28){$(T_up)^*$}
\put(28,-55){$T_{p(u)} ^*M$} \put(135,-4){\vector(0,-1){40}}
\put(128,-55){$T_{p(u)}M$.} \put(65,-52){\vector(1,0){60}} \put(86,
-65){$\sigma_{p(u)}^\A $} \put(138,-28){$T_up$}
\end{picture}
\end{lemma}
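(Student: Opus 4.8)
The plan is to use that the symbol $\sigma^\Lo$ of a diffusion operator is tensorial: $\sigma^\Lo_x(\ell^1,\ell^2)$ depends only on the covectors $\ell^1,\ell^2\in T^*_xM$, not on functions representing them. Hence it suffices to verify (\ref{commuta}) when both covectors are differentials of functions, and to use that $(T_up)^*$ sends such covectors to differentials of the pulled-back functions, namely $(T_up)^*(df)_u=d(f\circ p)_u$.

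First I would fix $u\in N$ and $\ell^1,\ell^2\in T^*_{p(u)}M$, and choose $C^2$ functions $f,g$ near $p(u)$ with $df_{p(u)}=\ell^1$ and $dg_{p(u)}=\ell^2$. Then
$$\big\langle \ell^1,\, (T_up)\,\sigma_u^\B\,(T_up)^*\ell^2\big\rangle = \sigma^\B_u\big(d(f\circ p)_u,\, d(g\circ p)_u\big),$$
which by the defining relation for the symbol equals
$$\frac12 \B\big((f\circ p)(g\circ p)\big)(u) - \frac12 \big(\B(f\circ p)\big)(u)\,(g\circ p)(u) - \frac12 (f\circ p)(u)\,\big(\B(g\circ p)\big)(u).$$
Since $(f\circ p)(g\circ p)=(fg)\circ p$, applying the intertwining hypothesis $\B(h\circ p)=(\A h)\circ p$ to $h=fg$, $h=f$ and $h=g$ turns the last display into
$$\frac12 \A(fg)(p(u)) - \frac12 (\A f)(p(u))\,g(p(u)) - \frac12 f(p(u))\,(\A g)(p(u)) = \sigma^\A_{p(u)}(\ell^1,\ell^2).$$

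As every covector at a point is the differential there of some smooth function, this identifies the two bilinear forms $(T_up)\sigma^\B_u(T_up)^*$ and $\sigma^\A_{p(u)}$ on $T^*_{p(u)}M$ for every $u\in N$, which is exactly (\ref{commuta}); the commutativity of the displayed diagram is merely a reformulation. There is essentially no obstacle here: the only points needing a word of justification are the tensoriality of the symbol and the identity $(T_up)^*(df)=d(f\circ p)$. Alternatively one could run the same computation through the equivalent characterisation $\delta^\B(p^*\phi)=(\delta^\A\phi)\circ p$ together with $\sigma^\Lo(df,dg)=\delta^\Lo(f\,dg)-f\,\Lo g$ from Proposition \ref{pr:delta}, but the direct route from the definition of the symbol is the most transparent.
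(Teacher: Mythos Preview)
Your proof is correct and is essentially the same as the paper's: both expand $\sigma^\B_u(d(f\circ p),d(g\circ p))$ via the defining identity for the symbol, apply the intertwining relation $\B(h\circ p)=(\A h)\circ p$ to $h=f,g,fg$, and recover $\sigma^\A_{p(u)}(df,dg)$. The only cosmetic difference is that the paper runs the chain of equalities starting from $\sigma^\A$ rather than from $\sigma^\B$, and leaves the tensoriality of the symbol implicit.
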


\begin{proof} 
Let $f$ and $g$ be two smooth functions on $M$. Then for $u\in N$,
$x=p(u)$,
\begin{eqnarray*}
\left( df_{x}\right) \sigma _{x}^\A\left( dg_{x}\right)
 &=&{1\over 2} \A (fg)(x)-{1\over 2}(f\A g)(x)
  -{1\over 2} (g\A f)(x)  \\
&=&\frac 12\B \left((f g)\circ p\right)(u)  
 -\frac 12 f\circ p \B(g\circ p )(u)
-{1\over 2}g\circ p \B( f\circ p )(u)  \\
&=&d\left( g\circ p\right)_u \sigma _u^{\B  }\left( d\left(
f\circ p \right)_u \right) \\
&=&\left( dg\circ T_up \right) \sigma _u^{\B  }\left( df\circ
T_up \right),
\end{eqnarray*}
which gives the desired equality.
\end{proof}

For $x$ in $M$, set $E_x:= \Image[\sigma_x^\A]\subset T_xM$. If $\sigma ^\A$ has constant rank, \ie $\dim[E_x]$ is independent of $x$, then $E:=\cup_x E_x$ is a smooth sub-bundle of $TM$.

\begin{proposition}
\label{pr:h-map}
 Assume $\sigma^\A $ has constant rank and $\B$ is over $\A$. Then there is a
unique, smooth,  horizontal lift map $h_u: E_{p(u)}\to T_uN$, $u\in N$,
 characterised by
\begin{equation}
\label{h-lift-1}
h_u\circ \sigma^{\A}_{p(u)}
=\sigma^\B_u (T_up )^*.
\end{equation}
In particular
\begin{equation}
\label{h-lift-2}
h_u(v) =\sigma^\B_u\left( (T_up )^*\alpha\right)
\end{equation}
where $\alpha\in T_{p (u)}^*M$  satisfies $\sigma^\A_{p(u)}(\alpha)=v$.
\end{proposition}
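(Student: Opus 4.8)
The plan is to construct the map $h_u$ explicitly by the formula in \eqref{h-lift-2} and then check (a) that it is well-defined, (b) that it satisfies the characterising identity \eqref{h-lift-1}, (c) that it is linear and smooth, and (d) that it is unique. The constant rank hypothesis on $\sigma^\A$ is what makes this possible: it guarantees $E$ is a genuine smooth sub-bundle, and it will also be the source of smoothness of $h$.

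First I would address well-definedness of \eqref{h-lift-2}. Given $v\in E_{p(u)}=\Image[\sigma^\A_{p(u)}]$, choose $\alpha\in T^*_{p(u)}M$ with $\sigma^\A_{p(u)}(\alpha)=v$; such $\alpha$ exists by definition of $E$. If $\alpha'$ is another choice, then $\alpha-\alpha'\in\ker\sigma^\A_{p(u)}$, and I must show $\sigma^\B_u((T_up)^*(\alpha-\alpha'))=0$. Set $\beta=\alpha-\alpha'$. Using Lemma~\ref{le:commutative}, $(T_up)\sigma^\B_u((T_up)^*\beta)=\sigma^\A_{p(u)}(\beta)=0$, so $\sigma^\B_u((T_up)^*\beta)\in VT_uN$. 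To upgrade "vertical" to "zero" I would use semi-ellipticity of $\B$ together with the Cauchy--Schwarz-type inequality for the symbol: since $\sigma^\B$ is a positive semi-definite bilinear form on $T^*N$, $\sigma^\B_u((T_up)^*\beta)=0$ will follow once I check $\beta^\B((T_up)^*\beta,(T_up)^*\beta)=0$, i.e. $\big((T_up)^*\beta\big)\sigma^\B_u\big((T_up)^*\beta\big)=0$; but that equals $\beta\big((T_up)\sigma^\B_u(T_up)^*\beta\big)=\beta(\sigma^\A_{p(u)}\beta)=0$ again by Lemma~\ref{le:commutative}. So the positive-semi-definite form vanishes on $(T_up)^*\beta$, hence (by semi-definiteness) $\sigma^\B_u((T_up)^*\beta)=0$. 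This shows $h_u(v)$ is independent of the choice of $\alpha$.

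Next, linearity of $h_u$ in $v$ is immediate from the formula once well-definedness is established (a linear choice of $\alpha$ can be made locally, or one argues directly). That $h_u$ lands in a space that deserves to be called "horizontal" and that $T_up\circ h_u=\id_{E_{p(u)}}$ follows by applying $T_up$ to \eqref{h-lift-2} and invoking Lemma~\ref{le:commutative}: $T_up\, h_u(v)=(T_up)\sigma^\B_u(T_up)^*\alpha=\sigma^\A_{p(u)}(\alpha)=v$. The characterising identity \eqref{h-lift-1} is just the formula \eqref{h-lift-2} read as an equality of maps $E_{p(u)}\to T_uN$ precomposed with the surjection $\sigma^\A_{p(u)}\colon T^*_{p(u)}M\to E_{p(u)}$. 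For smoothness in $u$, I would use the constant-rank hypothesis to choose, locally near any point of $M$, a smooth family of right inverses (splittings) $\alpha=\alpha(x,v)$ to $\sigma^\A_x$ depending smoothly and linearly on $v\in E_x$ — possible precisely because $\Image\sigma^\A$ is a sub-bundle — and then $h_u(v)=\sigma^\B_u((T_up)^*\alpha(p(u),v))$ is a composition of smooth maps. Uniqueness is clear: \eqref{h-lift-1} determines $h_u$ on the image of $\sigma^\A_{p(u)}$, which is all of $E_{p(u)}$.

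The main obstacle is the well-definedness step — specifically the passage from "$\sigma^\B_u((T_up)^*\beta)$ is vertical" to "it is zero". This is exactly where semi-ellipticity of $\B$ is essential (and where the second part of Remark~\ref{re:vertical}(2) warns us that dropping it breaks things); the argument via the vanishing of the quadratic form $\beta^\B$ on $(T_up)^*\beta$ and the positive-semi-definiteness of $\sigma^\B$ is the crux. Everything else — linearity, the splitting property $T_up\circ h_u=\id$, smoothness via a local smooth splitting of $\sigma^\A$, and uniqueness — is routine bookkeeping given Lemma~\ref{le:commutative} and the constant-rank assumption.
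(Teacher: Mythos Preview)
Your proof is correct and follows essentially the same route as the paper: the crux is well-definedness of \eqref{h-lift-2}, reduced to showing $\sigma^\B_u((T_up)^*\beta)=0$ whenever $\beta\in\ker\sigma^\A_{p(u)}$, which you obtain (as the paper does) by using Lemma~\ref{le:commutative} to see the quadratic form $\sigma^\B((T_up)^*\beta,(T_up)^*\beta)$ vanishes and then invoking positive semi-definiteness of $\sigma^\B$. You supply more detail than the paper on linearity, the lift property $T_up\circ h_u=\id$, smoothness via a local smooth right inverse of $\sigma^\A$, and uniqueness, all of which the paper leaves implicit.
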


\begin{proof}
Clearly (\ref{h-lift-2}) implies (\ref{h-lift-1}) by Lemma
\ref{le:commutative} and so it suffices to prove $h_u$ is well defined by (\ref{h-lift-2}). For this we only need to show
$\sigma^\B ((T_up )^*(\alpha))=0$ for every  $\alpha$ in $\ker[\sigma_{p(u)}^\A]$.
Now  $\sigma^\A \alpha=0$  implies that
$$(Tp) ^*(\alpha)\sigma^\B ((Tp )^*\alpha)=0,$$
by Lemma \ref{le:commutative}.
Considering $\sigma^\B$ as a semi-definite bilinear form this implies $\sigma_u^\B (T_up)^*\alpha$ vanishes as required.
\end{proof} 

\medskip

Note that  the vertical distribution $\ker[Tp]$ is regular as  $ \ker[Tp]$ is annihilated by all differential 1-forms of the form $\theta\circ Tp$. 

Let $H_u=\Image[h_u]$ and $H=\sqcup_u H_u$. Set $F_u=(T_up)^{-1} [E_{p(u)}]$ so we have a 
splitting 
\begin{equation}
\label{split}
F_u=H_u+VT_uN
\end{equation}
where $VT_uN=\ker[T_uP]$ the `vertical' tangent space at $u$ to $N$. 
In the elliptic case $p$ is a submersion, the vertical tangent spaces have constant rank, and $F:=\sqcup_u F_u$ is a smooth sub-bundle of $TN$. In this case we have a splitting of $TN$, a {\bf connection} in the terminology of Kolar-Michor-Slovak \cite{Kolar-Michor-Slovak}. 
In general we will define a {\bf semi-connection}\index{connection!semi-} on $E$ to be a sub-bundle $H_u$ of $TN$ such that $T_up$ maps each fibre $H_u$ isomorphically to $E_{p(u)}$. In the equivariant case considered in Chapter \ref{ch:deco-2} such objects are called $E$-connections by Gromov. For the case when $:N\to M$ is the tangent bundle projection , or the orthonormal frame bundle note that the "partial connections" as defined by Ge in \cite{Ge-92} are rather different from the semi-connections  we would have: they give parallel translations along  $E$-horizontal paths which send vectors in $E$ to vectors in $E$, and preserve the Riemannian metric of $E$ , whereas the parallel transports of  our semi-connections do not in general  preserve the fibres of $E$, nor any Riemannian metric, and they act on all tangent vectors.

\begin{lemma}
\label{le:lift-2}
 Assume $\sigma^\A $ has constant rank and $\B$ is over $\A$. For all $u\in N$ the image of $\sigma_u^\B$ is in $F_u$. 
\end{lemma}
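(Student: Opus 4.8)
The plan is to show that $\sigma_u^\B$ maps $T_u^*N$ into $F_u = (T_up)^{-1}[E_{p(u)}]$, i.e. that $T_up$ sends every vector $\sigma_u^\B(\ell)$, $\ell\in T_u^*N$, into $E_{p(u)} = \Image[\sigma_{p(u)}^\A]$. This is essentially a direct consequence of the commutativity relation \eqref{commuta} of Lemma~\ref{le:commutative}, but the subtlety is that $(T_up)^*: T_{p(u)}^*M\to T_u^*N$ need not be surjective, so one cannot simply write an arbitrary $\ell\in T_u^*N$ as $(T_up)^*\alpha$. I would therefore split $\ell$ using Lemma~\ref{le:lift}.

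First I would fix $u\in N$ and $\ell\in T_u^*N$. By Lemma~\ref{le:lift} (applied with $S = H$, or indeed any transversal bundle — though here it is cleanest to apply it in the form where $\psi$ annihilates $VTN$ and hence is of the form $\lambda\,p^*\phi$; more precisely by the first sentence of that lemma any $1$-form is a linear combination of terms $\psi + \lambda p^*\phi$ with $\psi$ annihilating a transversal $S$), it suffices by linearity of $\sigma_u^\B$ to treat the two cases $\ell = (p^*\phi)_u = (T_up)^*\phi_{p(u)}$ and $\ell = \psi_u$ with $\psi\in C^1 S^0$ separately — but in fact the transversal bundle is not yet available in this generality, so instead I would argue directly: it suffices to show $T_up\,\sigma_u^\B(\ell) \in E_{p(u)}$, and since $E_{p(u)} = \Image[\sigma_{p(u)}^\A]$ is exactly the annihilator-of-annihilator, I only need $\beta\bigl(T_up\,\sigma_u^\B(\ell)\bigr) = 0$ for every $\beta\in T_{p(u)}^*M$ with $\sigma_{p(u)}^\A(\beta) = 0$. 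Compute: $\beta\bigl(T_up\,\sigma_u^\B(\ell)\bigr) = \bigl((T_up)^*\beta\bigr)\bigl(\sigma_u^\B(\ell)\bigr) = \sigma_u^\B\bigl((T_up)^*\beta,\ell\bigr)$, using symmetry of $\sigma^\B$.

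Now I would use semi-ellipticity of $\B$: by Cauchy--Schwarz for the semi-definite bilinear form $\sigma_u^\B$,
\begin{equation*}
\bigl|\sigma_u^\B\bigl((T_up)^*\beta,\ell\bigr)\bigr|^2 \leqslant \sigma_u^\B\bigl((T_up)^*\beta,(T_up)^*\beta\bigr)\cdot \sigma_u^\B(\ell,\ell).
\end{equation*}
The first factor on the right equals $\bigl(T_up\,\sigma_u^\B(T_up)^*\bigr)(\beta,\beta) = \sigma_{p(u)}^\A(\beta,\beta) = \beta\bigl(\sigma_{p(u)}^\A(\beta)\bigr) = 0$ by Lemma~\ref{le:commutative} and our choice of $\beta$. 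Hence $\sigma_u^\B\bigl((T_up)^*\beta,\ell\bigr) = 0$, which gives $\beta\bigl(T_up\,\sigma_u^\B(\ell)\bigr) = 0$ for all such $\beta$, so $T_up\,\sigma_u^\B(\ell)\in \cap_\beta \ker\beta = \Image[\sigma_{p(u)}^\A] = E_{p(u)}$, i.e. $\sigma_u^\B(\ell)\in F_u$. The only genuinely non-routine point is the Cauchy--Schwarz step exploiting positive semi-definiteness of the symbol — everything else is formal manipulation of \eqref{commuta}; note this is the same trick already used in the proof of Proposition~\ref{pr:h-map}, so one could alternatively just cite that argument.
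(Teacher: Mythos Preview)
Your proof is correct and essentially the same as the paper's. The paper argues by contradiction but the core step is identical: for $k$ in the annihilator of $E_{p(u)}$ one shows $k\bigl(T_up\,\sigma^\B(\alpha)\bigr)=\alpha\bigl(\sigma^\B((T_up)^*k)\bigr)=0$; the only cosmetic difference is that the paper invokes Proposition~\ref{pr:h-map} (writing $\sigma^\B((T_up)^*k)=h_u\sigma^\A(k)=0$) rather than redoing the Cauchy--Schwarz step, which as you note is exactly the content of that proposition's proof. Your initial detour through Lemma~\ref{le:lift} is unnecessary and you rightly abandon it.
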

\begin{proof}
Suppose $\alpha \in T_u^*N$ with $\sigma^\B(\alpha)\not \in F_u$. Then there exists $k$ in the annihilator of $E_{p(u)}$ such that $k\left(T_up\,\sigma^\B(\alpha)\right)\not =0$.
However $$k\left(T_up\,\sigma^\B(\alpha)\right)=\alpha\left(\sigma^\B((T_up)^*(k))\right)
=\alpha\, h_u \sigma^\A_{p(u)}(k)$$
by Proposition \ref{pr:h-map}; while $\sigma^\A_{p(u)}(k)=0$ because for all $\beta\in T_{p(u)}^*M$,
$$\beta\, \sigma^\A_{p(u)}(k)=k\, \sigma^\A_{p(u)}(\beta)=0$$
giving a contradiction.
\end{proof}

\begin{proposition}
\label{pr:h-funct-1}
Let $\A$ be a diffusion operator on $M$ with $\sigma^\A$ of constant rank. For $i\in\{1,2\}$, let $p^i: N^i\to M$ be smooth maps and $\B^i$ be diffusion operators on $N^i$ over $\A$.
Let $F: N^1\to N^2$ be a smooth map with $p^2\circ F=p^1$. Assume $F$ intertwines $\B^1$ and $\B^2$. Let $h^1$, $h^2$ be the horizontal lift maps determined by $\A, \B^1$ and  $\A,\B^2$. Then
 \begin{equation}
 \label{map-lift}
h^2_{F(u)}=T_uF(h_u^1), \qquad u\in N^1;
\end{equation} 
\ie the diagram

\begin{picture}(600,100)(0,0)
\put(85,78){$T_uN^1$}
 \put(140, 87){$T_uF$}
 \put(112, 82){\vector(1,0){76}}
 \put(194, 78){$T_{F(u)}N^2$}
\put(143, 17){$E_{p^1(u)}$}
\put (150,30){\vector(-1,1){45}}
\put(150,30){\vector(1,1){45}}
\put(108,47){$h^1_u$}
\put(180,47){$h^2_{F(u)}$}
\end{picture}
commutes for all $u\in N$.
\end{proposition}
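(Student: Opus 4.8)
The plan is to verify the characterising identity of Proposition~\ref{pr:h-map} for $h^2$ directly, using the one for $h^1$ together with the intertwining hypotheses. Concretely, $h^2_{F(u)}$ is the unique map $E_{p^2(F(u))}\to T_{F(u)}N^2$ satisfying $h^2_{F(u)}\circ\sigma^\A_{p^2(F(u))}=\sigma^{\B^2}_{F(u)}(T_{F(u)}p^2)^*$. Since $p^2\circ F=p^1$, we have $p^2(F(u))=p^1(u)$, so the source spaces match. It therefore suffices to show that the candidate $T_uF\circ h^1_u$ satisfies the same equation, i.e. that
\begin{equation}
\label{eq:to-check}
T_uF\,h^1_u\,\sigma^\A_{p^1(u)}=\sigma^{\B^2}_{F(u)}\,(T_{F(u)}p^2)^*,
\end{equation}
and then invoke uniqueness. (I should also note that $T_uF(h^1_u)$ indeed lands in the horizontal space $H^2_{F(u)}$ and is a right inverse of $T_{F(u)}p^2$ on $E_{p^1(u)}$ — this is automatic once \eqref{eq:to-check} holds, because composing \eqref{eq:to-check} with $T_{F(u)}p^2$ and using Lemma~\ref{le:commutative} for $\B^2$ gives $\sigma^\A_{p^1(u)}$ back, matching $\sigma^\A$ applied to the right-hand side of the defining property of $h^2$.)

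The key steps in order: first, rewrite the left-hand side of \eqref{eq:to-check} using the characterisation \eqref{h-lift-1} for $h^1$: $h^1_u\,\sigma^\A_{p^1(u)}=\sigma^{\B^1}_u(T_up^1)^*$, so the left side becomes $T_uF\,\sigma^{\B^1}_u\,(T_up^1)^*$. Second, use that $F$ intertwines $\B^1$ and $\B^2$: by Lemma~\ref{le:commutative} applied to $F:N^1\to N^2$, one has $T_uF\,\sigma^{\B^1}_u\,(T_uF)^*=\sigma^{\B^2}_{F(u)}$. Third, use the chain rule relation $T_up^1=T_{F(u)}p^2\circ T_uF$, hence $(T_up^1)^*=(T_uF)^*\circ(T_{F(u)}p^2)^*$. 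Substituting, $T_uF\,\sigma^{\B^1}_u\,(T_up^1)^*=T_uF\,\sigma^{\B^1}_u\,(T_uF)^*\,(T_{F(u)}p^2)^*=\sigma^{\B^2}_{F(u)}\,(T_{F(u)}p^2)^*$, which is exactly the right-hand side of \eqref{eq:to-check}. Finally, since $\sigma^\A$ has constant rank, $\sigma^\A_{p^1(u)}$ is surjective onto $E_{p^1(u)}$, so a linear map on $E_{p^1(u)}$ is determined by its precomposition with $\sigma^\A_{p^1(u)}$; thus $T_uF\,h^1_u$ and $h^2_{F(u)}$ agree on all of $E_{p^1(u)}$.

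The main obstacle — really the only thing requiring care — is making sure that Lemma~\ref{le:commutative} genuinely applies to the map $F$: it requires $\B^1$ to be over $\B^2$ via $F$, which is precisely the stated hypothesis that $F$ intertwines $\B^1$ and $\B^2$, so there is no gap, but one must state it cleanly since the lemma was phrased for the fixed map $p$. Everything else is a formal manipulation of adjoints and the chain rule; no analysis or new estimates are needed. I expect the write-up to be about half a page.
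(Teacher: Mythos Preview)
Your proposal is correct and follows essentially the same approach as the paper: both apply Lemma~\ref{le:commutative} to the intertwining map $F$ to obtain $\sigma^{\B^2}_{F(u)}=T_uF\,\sigma^{\B^1}_u\,(T_uF)^*$, combine this with the chain-rule identity $(T_up^1)^*=(T_uF)^*(T_{F(u)}p^2)^*$, and then invoke the characterisation of the horizontal lift from Proposition~\ref{pr:h-map}. The only cosmetic difference is that the paper picks a representative $\alpha$ with $\sigma^\A(\alpha)=v$ and uses \eqref{h-lift-2} directly, whereas you verify the defining relation \eqref{h-lift-1} and appeal to surjectivity of $\sigma^\A$ onto $E$; these are equivalent phrasings of the same argument.
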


\begin{proof}
Since $F$ intertwines $\B^1$ and $\B^2$,  Lemma \ref{le:commutative} gives
$$\sigma^{\B^2}_{F(u)}=T_uF\circ \sigma_u^{\B^1}
\circ (T_uF)^*.$$
Now take $\alpha \in T_{p^1(u)}^*M$ with $\sigma_{p^1(u)}^\A (\alpha)=v$,
some given $v\in E_{p^1(u)}$. From (\ref{h-lift-2})
\begin{eqnarray*}
h^2_{F(u)}(v)&=&\sigma_{F(u)}^{\B  ^2}((Tp^2)^*\alpha)\\
&=&T_uF\circ \sigma_u^{\B^1}\circ(T_uF)^*(Tp^2)^*\alpha\\
&=&T_uF\circ \sigma_u^{\B^1}(T_up^1)^*\alpha\\
&=&T_uh^1_u(v)
\end{eqnarray*}
as required.
\end{proof} 
%%%$(\phi \circ T\pi)\sigma^\B(\pi^* u)=0$ implies $\pi^*u=0$.
\begin{definition} 
\label{basic-symbol}
A diffusion operator $\B$ on $N$ will be said to have {\bf projectible symbol}\index{symbol!projectible} for $p:N\to M$ if there exists a map $\eta:T^*M\to TM$ such that for all $u\in N$ the diagram:

\begin{picture}(200, 80)(-40,-57)
\put(30,0) {$T_u ^*N$} \put(85,8){$\sigma_u^\B  $}
\put(55,2){\vector(1,0){70}} \put(128,0){$T_uN$}
\put(45,-45){\vector(0,1){40}} \put(10,-28){$(T_up)^*$}
\put(28,-55){$T_{p(u)} ^*M$} \put(135,-4){\vector(0,-1){40}}
\put(128,-55){$T_{p(u)}M$.} \put(65,-52){\vector(1,0){60}} \put(86,
-65){$\eta_{p(u)}$} \put(138,-28){$T_up$}
\end{picture}
\bigskip

\noindent  commutes, \ie if $(T_up)\sigma_u^\B (T_up)^*$ depends only on $p(u)$.
\end{definition}

In this case we also get a uniquely defined horizontal lift map as in Proposition \ref{pr:h-funct-1} defined by equation (\ref{map-lift}) using $\eta$ instead of the symbol of $\A$. 
This situation arises naturally in the standard non-linear filtering literature as described later see  chapter \ref{ch:nonlinear-filtering}.

\section{Example: The Horizontal Lift Map of SDEs}

Let us consider the horizontal lift connection in more detail  when $\B$ and $\A$ are given by stochastic differential equations. 
For this write $\A $ and $\B  $ in H\"ormander form\index{H\"ormander!form}
 corresponding to factorisations
 $\sigma_x^{\A}=X(x)X(x)^*$
and $\sigma_x^{\B}=\tilde X(x)\tilde X(x)^*$
for
$$X(x):\R^m\to T_xM, \hspace{0.7in} x\in M$$
$$\tilde X(u):\R^{\tilde m}\to T_uN, \hspace{0.7in} u\in N.$$
Then $X(x)$ maps onto $E_x$ for each $x\in M$. Define $Y_x: E_x\to \R^m$
to be its right inverse: $Y(x)=\Big[X(x)\big |_{\ker X(x)^\perp}\Big]^{-1}$.
\begin{lemma}
\label{factorization-connection}
For each $u\in N$ there is a unique linear $\ell_u:\R^m\to \R^{\tilde m}$
such that $\ker \ell_u=\ker X(x)$ and the diagram\\
\hspace {0.2in}
\begin{picture}(500, 140)(0,-90)
\put(30,0) {$T_u ^*N$}
\put(100,10){$\tilde X(u)^*$}
\put(55,2){\vector(1,0){100}}
\put(167,0){$\R^{\tilde m}$}
\put(45,-65){\vector(0,1){60}}
\put(50,-35){$(T_u p)^*$}
\put(30,-75){$T_x ^*M$}
\put(175,-65){\vector(0,1){60}}
\put(167,-75){$\R^m$}
\put(65,-73){\vector(1,0){90}}
\put(100, -87){$X(x)^*$}
\put(180,-35){$\ell_u$}
\put(185,2){\vector(1,0){100}}
\put(220,10){$\tilde X(u)$}
\put(285,0){$T_uN$}
\put(292,-5){\vector(0,-1){60}}
\put(300,-35){$T_up$}
\put(220, -87){$X(x)$}
\put(285,-75){$T_xM$}
\put(185,-73){\vector(1,0){100}}
\end{picture}
commutes, for $x=p(u)$, i.e.
 $\displaystyle{\sigma_x^\A 
=T_up\circ\sigma_x^\B  (T_up)^*}$
and  $\displaystyle{X(x)=T_up\circ \tilde X(u)\circ \ell_u}$.
In particular the horizontal lift map is given by
$h_u=\tilde X(u)\ell_uY(p(u))$.
\end{lemma}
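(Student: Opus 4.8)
\section*{Proof proposal for Lemma~\ref{factorization-connection}}

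The plan is to construct $\ell_u$ directly on the two pieces of the orthogonal splitting $\R^m=\ker X(x)\oplus(\ker X(x))^{\perp}$, where $x=p(u)$. Since $X(x)$ has image $E_x$, the adjoint $X(x)^*\colon T_x^*M\to\R^m$ has kernel $E_x^0$ and image exactly $(\ker X(x))^{\perp}$, and $\ker\sigma_x^\A=\ker X(x)^*=E_x^0$ because $\sigma_x^\A=X(x)X(x)^*$. Commutativity of the left-hand square \emph{forces} $\ell_u\big(X(x)^*\alpha\big)=\tilde X(u)^*(T_up)^*\alpha$ for all $\alpha\in T_x^*M$, and the requirement $\ker\ell_u=\ker X(x)$ forces $\ell_u$ to vanish on $\ker X(x)$; since $\R^m$ is the sum of these two subspaces this pins $\ell_u$ down, giving uniqueness. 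It then remains to show that these prescriptions are consistent and do define a map with all the stated properties.

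First I check that $\alpha\mapsto\tilde X(u)^*(T_up)^*\alpha$ genuinely factors through $X(x)^*$, i.e.\ that $\tilde X(u)^*(T_up)^*$ annihilates $E_x^0=\ker X(x)^*$: for $\beta\in E_x^0$ one has $\sigma_x^\A\beta=0$, and then, writing $\sigma_u^\B=\tilde X(u)\tilde X(u)^*$ and using Lemma~\ref{le:commutative},
$$\big|\tilde X(u)^*(T_up)^*\beta\big|^2=\big\langle\sigma_u^\B(T_up)^*\beta,\ (T_up)^*\beta\big\rangle=\big\langle T_up\,\sigma_u^\B(T_up)^*\beta,\ \beta\big\rangle=\langle\sigma_x^\A\beta,\ \beta\rangle=0.$$
Hence $\ell_u\colon\R^m\to\R^{\tilde m}$, defined to be $\tilde X(u)^*(T_up)^*\alpha$ on $X(x)^*\alpha\in(\ker X(x))^{\perp}$ and $0$ on $\ker X(x)$, is well defined and linear, and depends smoothly on $u$ (one may write $\ell_u=\tilde X(u)^*(T_up)^*\big(X(x)^+\big)^*$ with $X(x)^+$ the Moore--Penrose inverse for chosen metrics). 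The identity $\sigma_x^\A=T_up\,\sigma_u^\B(T_up)^*$ is just Lemma~\ref{le:commutative}. For the right-hand square, $X(x)=T_up\,\tilde X(u)\,\ell_u$ is trivial on $\ker X(x)$ and, on $(\ker X(x))^{\perp}$, follows for $w=X(x)^*\alpha$ from $T_up\,\tilde X(u)\,\ell_u w=T_up\,\sigma_u^\B(T_up)^*\alpha=\sigma_x^\A\alpha=X(x)X(x)^*\alpha=X(x)w$. Finally $\ker\ell_u=\ker X(x)$: if $\ell_u w=0$ and $w=w_0+X(x)^*\alpha$ with $w_0\in\ker X(x)$, then $\tilde X(u)^*(T_up)^*\alpha=0$, so $\sigma_u^\B(T_up)^*\alpha=0$, so $\sigma_x^\A\alpha=T_up\,\sigma_u^\B(T_up)^*\alpha=0$, so $X(x)^*\alpha=0$ and $w=w_0\in\ker X(x)$.

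For the closing assertion, fix $v\in E_x$ and set $w=Y(p(u))v\in(\ker X(x))^{\perp}$, so $X(x)w=v$; picking $\alpha$ with $X(x)^*\alpha=w$ gives $\sigma_x^\A\alpha=X(x)X(x)^*\alpha=X(x)w=v$, so by (\ref{h-lift-2}),
$$h_u(v)=\sigma_u^\B(T_up)^*\alpha=\tilde X(u)\tilde X(u)^*(T_up)^*\alpha=\tilde X(u)\,\ell_u\big(X(x)^*\alpha\big)=\tilde X(u)\,\ell_u\,Y(p(u))\,v,$$
which is the formula claimed. The only real work here is keeping the various kernels and images of $X(x)$, $X(x)^*$ and $\sigma_x^\A$ straight in the diagram; the one substantive input is the displayed norm-squared identity, which is precisely the device already used to prove Proposition~\ref{pr:h-map}, so I do not expect a genuine obstacle.
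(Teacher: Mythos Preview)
Your proof is correct and follows essentially the same route as the paper's: define $\ell_u$ to vanish on $\ker X(x)$ and to equal $\tilde X(u)^*(T_up)^*\alpha$ on $X(x)^*\alpha\in(\ker X(x))^\perp$, then check well-definedness via the norm-squared identity (which the paper abbreviates by citing the proof of Proposition~\ref{pr:h-map}). Your write-up is in fact more explicit than the paper's---you spell out the verification of $\ker\ell_u=\ker X(x)$ and the derivation of $h_u=\tilde X(u)\ell_u Y(p(u))$ in full, whereas the paper leaves these and the commutativity of the right-hand square as exercises once the perimeter and left square are known to commute.
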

\begin{proof} The larger square commutes by Lemma
 \ref{le:commutative}. For the rest we need to construct $\ell_u$.
 It suffices to define $\ell_u$ on $[\ker X(x)]^\perp$.
Note that $[\ker X(x)]^\perp =\Image X(x)^*$ in $\R^m$.
 We only have  to show that  $\alpha \in \ker X(x)^*$ implies
$$\tilde X(u)^*(T_up)^*\alpha=0.$$  In fact for such $\alpha$
  the proof of part (i) of  Proposition \ref{pr:h-map} is valid
and therefore $(T_up)^*\alpha\in \ker \sigma_u^\B  $.
  However since $\tilde X(u)$ is injective on the image of $\tilde X(u)^*$ we see $\ker \sigma_u^\B  =\ker \tilde X(u)^.$.
Thus $\ell_u$ is defined with $ker \ell_u=\ker X(x)$ and
such that the left hand square of the diagram commutes. Since the perimeter
commutes it is easy to see from the construction of $\ell_u$
that the right hand side also commutes. The uniqueness of $\ell_u$
with kernel equal that of $X(x)$ is clear since  on $[\ker X(x)]^\perp$
 $\ell_u(e)   =\tilde X(u)^*(T_up)^*X(x)(e)$.
\end{proof}

{\bf Note.} 
%When $N=P$, our principal bundle, and $\B  $
% is equivariant with $dim E_x=p$ constant taking  $E_x=Image X(x)$  in
% Theorem \ref{theorem:connection}, this shows that
%$$h_u=\tilde X(u)\ell_uY(x)$$
%for $Y(x):E_x\to \R^m$ the adjoint of $X(x)$ using the inner product induced on $E_x$. 
The horizontal lift of $X(x)$, which can be used to
construct a H\"ormander form\index{H\"ormander!form}  representation $X^V$ of $\A^H$, as in
Proposition \ref{th:deco} and Theorem
 \ref{th:op-deco} below is given by:
$$X^V(u): \R^m\to T_uP$$
$$X^V(u)=h_uX(u)=\tilde X(u)\ell_u$$
since $Y_xX(x)$ is the projection onto $\ker X(x)^\perp$.
(In the terminology of Elworthy-LeJan-Li \cite{Elworthy-LeJan-Li-book}
$X^V$ does not involve the `redundant noise'.)
Furthermore consider the special case  that $\tilde m=m$ and also that $\tilde X$ and $X$ are {\it p-related}, i.e.
$$T_up(\tilde X(u)e)=X(p(u))e, \hspace {0.7 in} u\in N, e\in \R^m.$$
Then $\ell_u$ is the projection of $\R^m$ onto $[\ker X(p(u))]^\perp$:
$$\ell_u=Y(p(u))X(p(u))$$
giving\begin{equation}
\label{eq:hor-lift}
h_u=\tilde{X}(u)Y(p(u))
\end{equation}
In this case the `diffusion coefficients' $X^V$, above, is
obtained from $\tilde X$ by restriction to the `relevant noise'
for $X$.

\section{Lifts of Cohesive Operators \& Decomposition Theorem \; }
\label{se:lift-dif}

A diffusion generator $\Lo$ on a manifold is said to be {\it \bf cohesive} if
\begin{enumerate}
\item [(i)] $\sigma_x^\Lo$, $x\in X$, has constant non-zero rank and
\item[(ii)] $\Lo$ is along the image of $\sigma^\Lo$.
\end{enumerate}

\begin{remark}
\label{re:cohesive}
From Theorem 2.1.1 in Elworthy-LeJan-Li \cite{Elworthy-LeJan-Li-book} we see that if the rank of $\sigma^\Lo_x$ is bigger than $1$ for all $x$ then $\Lo$ is  cohesive if and only if it has a representation
$$\Lo={1\over 2}\sum_{j=1}^m \LL_{X^j}\LL_{X^j}$$
where $E_x=\span\{X^1(x), \dots X^m(x)\}$ has constant rank.
\end{remark}

\begin{proposition}
\label{pr:cohesive}
Let $\B$ be a smooth diffusion operator on $N$ over $\A$ with $\A$ cohesive.  The following are equivalent:
\begin{enumerate}
\item [(i)] $\B=\A^H$
\item[(ii)]  $\B$ is cohesive and $T_up$ is injective on the image of $\sigma_u^\B$ for all $u\in N$.
\item[(iii)]
$\B$ can be written as 
$$\B={1\over 2}\sum_{j=1}^m \LL_{\tilde X^j}\LL_{\tilde X^j}+\LL_{\tilde X^0}$$
where $\tilde X^0, \dots, \tilde X^m$ are smooth vector fields on $N$ lying over smooth vector fields $X^0, \dots, X^m$ on $M$, \ie $T_up(\tilde X^j(u))=X^j(p(u))$ for $u\in N$
for all $j$.
\end{enumerate}
\end{proposition}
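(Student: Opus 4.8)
The plan is to establish the cycle of implications $(iii)\Rightarrow(i)\Rightarrow(ii)\Rightarrow(iii)$, using the characterisation of the horizontal lift $\A^H$ via the $\delta$-operator (Proposition~\ref{pr:lift}) and the constant-rank machinery of Section~\ref{se:deco-1-1}. First I would record the elementary observation that since $\A$ is cohesive its symbol has constant rank, so the sub-bundle $E=\cup_x E_x$ with $E_x=\Image[\sigma_x^\A]$ is well-defined and the horizontal lift map $h_u: E_{p(u)}\to T_uN$ of Proposition~\ref{pr:h-map} exists; this $h$ is what produces the transversal sub-bundle $S=H=\sqcup_u H_u$ along which $\A^H$ is defined.

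For $(iii)\Rightarrow(i)$: given the H\"ormander representation $\B={1\over2}\sum_j\LL_{\tilde X^j}\LL_{\tilde X^j}+\LL_{\tilde X^0}$ with $\tilde X^j$ lying over $X^j$, cohesiveness of $\A$ together with Remark~\ref{re:cohesive} forces $\A={1\over2}\sum_j\LL_{X^j}\LL_{X^j}$ (after absorbing $X^0$, using that $\A$ is along $E$), and $E_x=\span\{X^j(x)\}$. Then I would check that each $\tilde X^j(u)$ is in fact $h_u(X^j(p(u)))$: this follows because $\sigma^\B_u=\tilde X(u)\tilde X(u)^*$ and, by the $p$-relatedness, $T_up\circ\tilde X(u)=X(p(u))$, so formula~(\ref{h-lift-2}) identifies $h_u$ with $\tilde X(u)$ composed with the appropriate right inverse of $X(p(u))$ --- exactly the computation in Lemma~\ref{factorization-connection} and equation~(\ref{eq:hor-lift}) in the $p$-related case. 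Hence $\B$ is the horizontal lift of $\A$ along $H$ in the H\"ormander sense, and by the uniqueness clause of Proposition~\ref{pr:lift} (applied with $S=H$) we get $\B=\A^H$. The implication $(i)\Rightarrow(iii)$ is then immediate: write $\A$ in the cohesive H\"ormander form and take $\tilde X^j=$ horizontal lifts, which lie over the $X^j$ by construction of $h$.

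For $(i)\Rightarrow(ii)$: if $\B=\A^H$ then Proposition~\ref{pr:lift}(iii)(a) gives $\sigma_u^\B=h_u\,\sigma^\A_{p(u)}\,h_u^*$, so $\Image[\sigma_u^\B]=h_u(E_{p(u)})=H_u$, which has constant rank equal to $\rank\sigma^\A\neq0$; moreover $T_up$ restricted to $H_u$ is an isomorphism onto $E_{p(u)}$, hence injective on $\Image[\sigma_u^\B]$. That $\B$ is along its symbol image $H$ follows since $\A^H$ is by definition a diffusion operator along the transversal bundle $S=H$ (Proposition~\ref{pr:lift}). For the converse $(ii)\Rightarrow(i)$: suppose $\B$ is cohesive with $T_up$ injective on $\Image[\sigma_u^\B]$. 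By Lemma~\ref{le:lift-2}, $\Image[\sigma_u^\B]\subset F_u=H_u\oplus VT_uN$; injectivity of $T_up$ on it, combined with the fact that $\ker(T_up|_{F_u})=VT_uN$, forces $\Image[\sigma_u^\B]\cap VT_uN=\{0\}$ and, comparing ranks via Lemma~\ref{le:commutative} ($T_up\,\sigma_u^\B\,(T_up)^*=\sigma^\A_{p(u)}$ has rank $=\rank E_{p(u)}=\rank\B$), we get $\Image[\sigma_u^\B]=H_u$ exactly. Then $T_up$ carries $\sigma_u^\B$ isomorphically onto $\sigma^\A_{p(u)}$ so $\sigma_u^\B=h_u\sigma^\A_{p(u)}h_u^*=\sigma^{\A^H}_u$; since a diffusion operator along a sub-bundle is determined by its symbol up to a first-order term along that sub-bundle, and both $\B$ and $\A^H$ lie over $\A$ hence have the same projection of first-order part, I would conclude $\B=\A^H$ (invoking Proposition~\ref{pr:along}(2) to pin down the drift within $H$, and the intertwining relation~(\ref{op-lift}) to match the $M$-component).

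The main obstacle is the last matching-of-drifts step in $(ii)\Rightarrow(i)$: equality of symbols only determines the operators up to a vector field, and one must show this residual vector field is both horizontal (so that $\B$ stays along $H$) and projects to zero (so it agrees with the lift of $\A$'s drift). The cleanest route is probably to avoid drifts altogether by reducing to the H\"ormander picture --- use Proposition~\ref{pr:along}(1) to write $\B={1\over2}\sum\LL_{Y^j}\LL_{Y^j}+\LL_{Y^0}$ with all $Y^j$ sections of $H$, observe each $Y^j$ is $h_u$ applied to some vector field downstairs, and then $T_up$-relatedness plus the intertwining~(\ref{op-lift}) identifies those downstairs vector fields as a H\"ormander system for $\A$, whence $\B=\A^H$ by the construction~(\ref{op-lift-h}). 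This simultaneously delivers $(iii)$, so in practice I would run the argument as $(i)\Leftrightarrow(iii)$ and $(i)\Leftrightarrow(ii)$ with the H\"ormander form as the common pivot.
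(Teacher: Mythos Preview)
Your treatment of $(i)\Leftrightarrow(ii)$ is sound and considerably more explicit than the paper, which dispatches both $(iii)\Rightarrow(ii)$ and $(ii)\Rightarrow(i)$ with a single ``Clearly''. In particular your observation that $H_u\subset\Image[\sigma_u^\B]$ always (since $h_u$ is built from $\sigma^\B$ via~(\ref{h-lift-1})), together with the rank count coming from Lemma~\ref{le:commutative}, correctly pins down $\Image[\sigma_u^\B]=H_u$ under hypothesis~$(ii)$; and your final H\"ormander-form route through Proposition~\ref{pr:along} is the right way to match the residual drift.

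There is, however, a genuine gap in your $(iii)\Rightarrow(i)$ step. You assert that each $\tilde X^j(u)$ equals $h_u(X^j(p(u)))$, appealing to equation~(\ref{eq:hor-lift}) in the $p$-related case. But that formula gives $h_u=\tilde X(u)Y(p(u))$, hence $h_u(X^j(p(u)))=\tilde X(u)\big[Y(p(u))X(p(u))\big]e_j$, and $Y(p(u))X(p(u))$ is only the orthogonal projection of $\R^m$ onto $[\ker X(p(u))]^\perp$; it equals the identity only when the $X^j(p(u))$ are linearly independent. Nothing in statement~$(iii)$ as written forces this. Concretely: take $N=\R^2$, $M=\R$, $p(x,y)=x$, and $\tilde X^1=\partial_x+\partial_y$, $\tilde X^2=\partial_x-\partial_y$, both lying over $X^1=X^2=\partial_x$. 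Then $\B=\tfrac12(\tilde X^1)^2+\tfrac12(\tilde X^2)^2=\partial_x^2+\partial_y^2$ is over $\A=\partial_x^2$ and satisfies~$(iii)$ literally, yet $\A^H=\partial_x^2\neq\B$ and~$(ii)$ fails since $T_up$ is not injective on $\Image[\sigma^\B_u]=\R^2$. So $(iii)$ as stated does not imply $(i)$ or $(ii)$; the intended reading is evidently that the $\tilde X^j$ are sections of $H$ (equivalently that $\span\{\tilde X^j(u)\}$ has the same rank as $\span\{X^j(p(u))\}$), which is exactly how the implication $(i)\Rightarrow(iii)$ manufactures them and how the paper uses~$(iii)$ later (e.g.\ Proposition~\ref{pr:cohesive-diffusion}). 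Under that reading your argument is fine --- and indeed the paper's own ``clearly'' for $(iii)\Rightarrow(ii)$ tacitly relies on it as well.
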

\begin{proof}
If (i) holds take smooth $X^1, \dots X^m$ with $\A={1\over 2}\sum_{j=1}^m \LL_{X^j}\LL_{X^j}+\LL_{X^0}$, by Proposition \ref{pr:along}, and set $\tilde X^j(u)=h_u X^j(p (u))$
to see (iii) holds.
Clearly (iii) implies (ii) and (ii) implies (i), so the three statements are equivalent.
\end{proof}
\begin{definition}
\label{no-vert}
If any of the equivalent conditions of the proposition holds we say that $\B$ {\it has no vertical part}.
\end{definition}

Recall that is $S$ is a distribution, $S^0$ denotes the set of annihilators of $S$.
\begin{lemma}
\label{le:symbols-2}
For $\ell\in H_u^0$ and $k\in (V_u TN)^0$, some $u\in N$ we have:
\begin{enumerate}
\item [A.] $\ell\sigma^\B (k)=0$
\item[B.]  $\sigma^\B(k)=\sigma^{\A^H}(k)$
\item [C.] $\sigma^{\A^H}(\ell)=0$.
\end{enumerate}
In particular $H_u$ is the orthogonal complement of $VT_uN\cap \Image (\sigma_u^\B)$ in $\Image (\sigma_u^\B)$ with its inner product induced by $\sigma_u^\B$.
\end{lemma}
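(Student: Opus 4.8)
The plan is to work entirely at the level of symbols, exploiting the characterisation of $\A^H$ via its $\delta$-operator from Proposition~\ref{pr:lift} together with the horizontal lift map $h_u$ from Proposition~\ref{pr:h-map}. First I would fix $u\in N$ and write $x=p(u)$. For claim (A), take $\ell\in H_u^0$ and $k\in(V_uTN)^0$. Since $k$ annihilates the vertical tangent space, Lemma~\ref{le:lift} shows $k=(T_up)^*\beta$ for some $\beta\in T_x^*M$ (pointwise; one only needs the pointwise statement about forms annihilating $VTN$). Then $\sigma_u^\B(k)=\sigma_u^\B((T_up)^*\beta)=h_u\sigma_x^\A(\beta)$ by the defining property (\ref{h-lift-1})--(\ref{h-lift-2}) of the horizontal lift map, so $\sigma_u^\B(k)\in H_u=\Image[h_u]$. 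Applying $\ell\in H_u^0$ gives $\ell\,\sigma_u^\B(k)=0$, which is (A).

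For claim (B), I would again use $k=(T_up)^*\beta$ and compute both sides. On the one hand $\sigma_u^\B(k)=h_u\sigma_x^\A(\beta)$ as above. On the other hand, by Proposition~\ref{pr:lift}(iii)(a) the symbol of $\A^H$ is $\sigma_u^{\A^H}=h_u\,\sigma_x^\A\,h_u^*$; so $\sigma_u^{\A^H}(k)=h_u\sigma_x^\A h_u^*(T_up)^*\beta$. Since $h_u$ is a right inverse of $T_up$ (its image is $S_u=H_u$ and $T_up\circ h_u=\mathrm{id}$ on $E_x$), we have $h_u^*(T_up)^*=(T_up\circ h_u)^*=\mathrm{id}$ on the relevant subspace, giving $\sigma_u^{\A^H}(k)=h_u\sigma_x^\A(\beta)$. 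Hence the two symbols agree on $k$, which is (B). Claim (C): for $\ell\in H_u^0$, by definition $\sigma_u^{\A^H}(\ell)=h_u\sigma_x^\A h_u^*\ell$, and $h_u^*\ell\in T_x^*M$ is the pullback of $\ell$ along $h_u$, i.e. $\ell\circ h_u$ as a functional on $E_x$; but $\ell$ vanishes on $H_u=\Image[h_u]$, so $h_u^*\ell=0$ and therefore $\sigma_u^{\A^H}(\ell)=0$.

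For the final sentence, I would argue as follows. Write $G_u:=\Image(\sigma_u^\B)\subset T_uN$, equipped with the inner product for which $\sigma_u^\B:T_u^*N\to G_u$ is the (surjective) metric-dual map; concretely, $\langle\sigma_u^\B(a),\sigma_u^\B(b)\rangle:=\sigma_u^\B(a,b)$. By Lemma~\ref{le:lift-2}, $G_u\subset F_u=H_u\oplus VT_uN$, and the horizontal lift computation above shows that $H_u=h_u(E_x)=\{\sigma_u^\B((T_up)^*\beta):\beta\in T_x^*M\}\subset G_u$. Now I claim $H_u$ is precisely the orthogonal complement inside $G_u$ of $VT_uN\cap G_u$. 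One inclusion: if $w\in VT_uN\cap G_u$, write $w=\sigma_u^\B(k')$ for some $k'$; since $w$ is vertical we may in fact choose $k'\in(VT_uN)^0{}^\perp$... more cleanly, take any $h\in H_u$, write $h=\sigma_u^\B((T_up)^*\beta)$, and note $\langle h,w\rangle=\sigma_u^\B((T_up)^*\beta,k')=((T_up)^*\beta)(\sigma_u^\B(k'))=\beta(T_up(w))=0$ because $w$ is vertical; so $H_u\perp(VT_uN\cap G_u)$. Dimension count: $\dim H_u=\dim E_x=\mathrm{rank}\,\sigma^\A$; and since $T_up$ restricted to $G_u$ has image $E_x$ (by the intertwining $T_up\sigma_u^\B(T_up)^*=\sigma_x^\A$ of Lemma~\ref{le:commutative}, the image is at least $E_x$, and it is at most $E_x$ by Lemma~\ref{le:lift-2}) and kernel exactly $VT_uN\cap G_u$, we get $\dim G_u=\dim H_u+\dim(VT_uN\cap G_u)$. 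Hence $H_u$ is exactly the orthogonal complement, as claimed.

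The main obstacle I anticipate is bookkeeping with the duals: making precise that $h_u^*(T_up)^*$ acts as the identity on the subspace where it matters, and that $h_u^*\ell = \ell\circ h_u$ vanishes when $\ell\in H_u^0$ — these are straightforward once one is careful that $h_u:E_x\to H_u\subset T_uN$ and that $E_x$ carries the inner product from $\sigma_x^\A$ so that all the adjoints are taken with respect to consistent pairings. A secondary point needing care is justifying that the inner product on $\Image(\sigma_u^\B)$ is well-defined (independence of the choice of preimage under $\sigma_u^\B$), which follows from symmetry and semi-definiteness of $\sigma_u^\B$ exactly as in the standard construction.
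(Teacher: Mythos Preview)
Your proof is correct and follows essentially the same approach as the paper: for A and B you write $k=(T_up)^*\beta$ and use the defining relation (\ref{h-lift-1}) together with $\sigma_u^{\A^H}=h_u\sigma_x^\A h_u^*$ from Proposition~\ref{pr:lift}(iii)(a), and for C you use that same formula with $h_u^*\ell=0$. The paper's proof is terser but identical in substance; your treatment of the ``In particular'' clause (orthogonality plus a rank count via $T_up|_{\Image\sigma_u^\B}$) is more explicit than the paper's, which leaves that step to the reader.
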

\begin{proof}
Set $x=p(u)$. For  part A and  part B  it suffices to take $k=\phi\circ T_up$
some $\phi\in T_x^*M$. Then by (\ref{h-lift-1}),
$\sigma_u^\B(\phi\circ T_u p)=h_u \circ \sigma^\A_x(\phi)$ giving part A, and also part B by Proposition \ref{pr:lift} (iii)(a) since $\phi=h_u^*(\phi\circ T_u p)$, part C comes directly from Proposition \ref{pr:lift} (iii)(a).
\end{proof}
\begin{theorem}
\label{th:deco}
For $\B$ over $\A$ with $\A$  cohesive there is a unique decomposition
$$\B=\B^1+\B^V$$
where $\B^1$ and $\B^V$ are smooth diffusion generators with $\B^V$ vertical and $\B^1$ over $\A$ having no vertical part. In this decomposition $\B^1=\A^H$, the horizontal lift of $\A$ to $H$.
\end{theorem}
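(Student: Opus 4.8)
The plan is to establish existence by writing down $\B^V := \B - \A^H$ and checking it has the required properties, then to establish uniqueness separately. For existence, note that $\A^H$ is a diffusion operator over $\A$ by Proposition \ref{pr:lift}, so $(\B - \A^H)(f\circ p) = \B(f\circ p) - \A^H(f\circ p) = (\A f)\circ p - (\A f)\circ p = 0$ for all $C^2$ functions $f$ on $M$; by Proposition \ref{pr:vert-eq} (equivalence of (1) and (3)) this would make $\B - \A^H$ vertical \emph{provided it is a genuine diffusion operator}, i.e.\ provided its symbol is semi-elliptic. So the crux of existence is the semi-ellipticity of $\sigma^\B - \sigma^{\A^H}$.

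For that I would use Lemma \ref{le:symbols-2}. Fix $u\in N$ and decompose $T_u^*N$ using the splitting $F_u = H_u \oplus VT_uN$ inside $T_uN$: any covector can be written relative to this. The key facts are: $\Image \sigma_u^{\A^H} = H_u$ with inner product $h_u$-transported from $\sigma^\A_{p(u)}$ (Proposition \ref{pr:lift}(iii)(a)); $\Image \sigma_u^\B \subseteq F_u$ (Lemma \ref{le:lift-2}); and by Lemma \ref{le:symbols-2}, $H_u$ is the $\sigma_u^\B$-orthogonal complement of $VT_uN \cap \Image\sigma_u^\B$ inside $\Image\sigma_u^\B$, with parts A, B, C giving precise compatibility: $\sigma^\B$ and $\sigma^{\A^H}$ agree on $(VT_uN)^0$, the bilinear form $\sigma^{\A^H}$ vanishes whenever one argument is in $H_u^0$, and $\ell\sigma^\B(k)=0$ for $\ell\in H_u^0$, $k\in(VT_uN)^0$. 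These together should show that in a basis adapted to the orthogonal splitting of $\Image\sigma_u^\B$ into its $H_u$-part and its vertical part, $\sigma_u^\B$ is block diagonal with one block exactly $\sigma_u^{\A^H}$; hence $\sigma_u^\B - \sigma_u^{\A^H}$ is the other (vertical) block, which is a restriction of a positive semi-definite form and therefore positive semi-definite. This is the main obstacle — carefully extracting the block-diagonal structure from parts A, B, C — but it is exactly what the last sentence of Lemma \ref{le:symbols-2} is designed to deliver.

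For uniqueness, suppose $\B = \B^1 + \B^V = \tilde\B^1 + \tilde\B^V$ with both $\B^1, \tilde\B^1$ over $\A$ having no vertical part and $\B^V, \tilde\B^V$ vertical. By Proposition \ref{pr:cohesive}, condition (ii), "no vertical part" forces $\B^1 = \A^H = \tilde\B^1$: indeed any diffusion operator over $\A$ with no vertical part equals $\A^H$ by the equivalence (i)$\Leftrightarrow$(ii) there. Hence $\B^V = \B - \A^H = \tilde\B^V$ as well, and the decomposition is unique with $\B^1 = \A^H$ as claimed. The only thing to double-check is that "no vertical part" in the theorem statement is being used in the sense of Definition \ref{no-vert}, which it is, so Proposition \ref{pr:cohesive} applies directly and uniqueness is essentially immediate once existence is in hand.

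In summary: (1) define $\B^V = \B - \A^H$; (2) verify $\B^V(f\circ p)=0$ by the intertwining relations; (3) prove $\sigma^{\B^V}$ is semi-elliptic using the block-diagonal structure from Lemma \ref{le:symbols-2}, so $\B^V$ is a genuine vertical diffusion operator; (4) conclude existence with $\B^1 = \A^H$; (5) get uniqueness from Proposition \ref{pr:cohesive}(ii). Step (3) is where all the geometric content lives and is the step I expect to require the most care.
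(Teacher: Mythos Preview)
Your proposal is correct and follows essentially the same route as the paper: set $\B^V=\B-\A^H$, use Lemma~\ref{le:symbols-2} to verify semi-ellipticity via the decomposition of any cotangent vector as $\ell+k$ with $\ell\in H_u^0$, $k\in (VT_uN)^0$ (the paper computes $(\ell+k)\sigma^{\B^V}(\ell+k)=\ell\sigma^\B(\ell)\geqslant 0$ directly, which is exactly your ``block-diagonal'' picture), then invoke Proposition~\ref{pr:vert-eq} for verticality. The only place to tighten is uniqueness: when you apply Proposition~\ref{pr:cohesive} to $\B^1$, the $\A^H$ appearing there is a priori the lift to the horizontal bundle determined by $\B^1$, not by $\B$; the paper closes this by noting (via Remark~\ref{re:3.4}(i), since $\sigma^{\B^V}$ has vertical image) that $\sigma^{\B^V}((T_up)^*\alpha)=0$, so $\B$ and $\B^1$ induce the same semi-connection and the two lifts coincide.
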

\begin{proof}
Set $\B^V=\B-\A^H$. To see that $\B^V$ is semi-elliptic take $u\in N$ and observe that any element of $T_u^*N$ can be written as $\ell +k$ where $\ell \in H_u^0$ and $k \in (VT_uN)^0$ by Lemma \ref{le:symbols-2} and
$$(\ell+k)\sigma^\B(\ell+k)=\ell \sigma^\B(\ell)\geqslant  0.$$
Since  $\B^V(f\circ p )=0$ any $f\in C^2(M; \R)$ Proposition \ref{pr:vert-eq} implies $\B^V$ is vertical.\\
Uniqueness holds since the semi-connections determined by $\B$ and $\B'$ are the same by Remark \ref{re:3.4}(i) applied to $\B^V$ and so by Proposition \ref{pr:cohesive} we must have $\B^1=\A^H$.
\end{proof}
For $p$ a Riemannian submersion and $\B$ the Laplacian, 
Berard-Bergery and Bourguignon \cite{Berard-Bergery-Bourguignon}
define $\B^V$ directly by
$\B^V f(u)=\Delta_{N_x}(f|_{N_x})(u)$ for $x=p(u)$ and $N_x=p^{-1}(x)$ with $\Delta_{N_x}$ the Laplace-Beltrami operator of $N_x$.\\
\begin{example}
\begin{enumerate}
\item Take $N=S^1\times S^1$ and $M=S^1$ with $p$ the projection on the first factor. Let
$$\B=\frac{1}{2}\big(\frac{\partial^2}{\partial x^2} +\frac{\partial^2}{\partial y^2})+\tan \alpha\frac{\partial^2}{\partial x\partial y}.$$
Here $0<\alpha<\frac{\pi}{4}$ so that $\B$ is elliptic. Then $\A=\frac{1}{2}\frac{\partial^2}{\partial x^2}$ \\ and $\B^V=\frac{1}{2}(1-(\tan\alpha)^2)\frac{\partial^2}{\partial y^2}$ with $\A^H=\frac{1}{2}(\frac{\partial^2}{\partial x^2} +(\tan\alpha)^2\frac{\partial^2}{\partial y^2})+\tan\alpha \frac{\partial^2}{\partial x\partial y}$.  
This is easily checked since, with this definition $A^H$ has H\"ormander form\index{H\"ormander!form} $$\A^H=\frac{1}{2}(\frac{\partial}{\partial x}+\tan \alpha\frac{\partial}{\partial y})^2$$ and 
so is a diffusion operator which has no vertical part. Also $\B^V$ is clearly vertical and elliptic.
 Note that this is an example of a Riemannian submersion: several more of a similar type can be found in \cite {Berard-Bergery-Bourguignon}. In this case the horizontal distribution is integrable and if $\alpha$ is irrational the foliation it determines has dense leaves.
 \item Take $N=\R^3$ with
 \index{Heisenberg group}%
  \emph{Heisenberg group} structure. This is defined by 
 $$ (x,y,z)\cdot(x',y', z')= \big(x+x',y+y', z+z'+\frac{1}{2}(xy'-yx')\big).$$
 Let $X,Y,Z$ be the left-invariant vector fields which give the standard basis for $\R^3$ at the origin. As operators: 
\begin{eqnarray*}
X(x,y,z)&=&\frac{\partial}{\partial x}-\frac{1}{2}y\frac{\partial}{\partial z}, \hspace{.3in}
 Y(x,y,z)=\frac{\partial}{\partial y}+\frac{1}{2}x\frac{\partial}{\partial z}\\
Z(x,y,z)&=&\frac{\partial}{\partial z}. 
\end{eqnarray*}

 Take $\B$ to be half the sum of the squares of $X,Y$, and $Z$. This is  half  the left invariant Laplacian: $$\B=\frac{1}{2}\left(\frac{\partial^2}{\partial x^2}+\frac{\partial^2}{\partial y^2}+ (1+\frac{1}{4}(x^2+y^2))\frac{\partial^2}{\partial z^2}+\frac{1
}{2} (x\frac{\partial^2}{\partial y\partial z}-y\frac{\partial^2}{\partial x\partial z})\right).$$

Take $M=\R^2$ and $p:\R^3\to \R^2$ to be the projection on the first $2$ co-ordinates.
Then \begin{eqnarray*}
\A&=&\frac{1}{2}(\frac{\partial^2}{\partial x^2}+\frac{\partial^2}{\partial y^2}), \qquad
 \A^H=\frac{1}{2}(X^2+Y^2);\\
\B^V&=&\frac{1}{2}Z^2=\frac{1}{2}\frac{\partial^2}{\partial z^2}.
\end{eqnarray*}
Note that the horizontal lift  $\tilde \sigma$, of a smooth curve $\sigma:[0,T]\to M$ with $\sigma(0)=0$, is given by 
 \begin{equation}
\label{eq:Heis.hor.lift}
\tilde{\sigma}(t)=\left(\sigma^1(t),\sigma^2(t), \frac{1}{2}\int_0^t \left(\sigma^1(t)d\sigma^2(t)-\sigma^2(t) d\sigma^1(t) \right)\right).
\end{equation}
Thus the ``vertical" component of the horizontal lift is the area integral of the curve.  Equation (\ref{eq:Heis.hor.lift}) remains valid for the horizontal lift of Brownian motion on $\R^2$ , or more generally for any continuous semi-martingale, provided it is interpreted as a Stratonovich equation ( or equivalently an Ito equation in the Brownian motion case).
		This example is also that of a Riemannian submersion. In this case the horizontal distributions are not integrable. Indeed the Lie brackets satisfy  $[X,Y]=Z$ and \emph{H\"ormander's  condition} for hypoellipticity: a diffusion operator $\Lo$ satisfies H\"ormander's condition if for some (and hence all) H\"ormander form representation such as in equation (\ref{op-B}) the  vector fields $Y^1,\dots,Y^m$ together with their iterated Lie brackets span the tangent space at each point of the manifold. For an enjoyable discussion of the Heisenberg group and the relevance of this example to ``Dido's problem" see \cite{Montgomery-book}. See also \cite {Baudoin-book},\cite{Bismut-book}, and \cite{Gromov-CC}.
\end{enumerate}
\end {example}

\bigskip

Recall that $F\equiv \sqcup_u F_u=\cup_u (T_up )^{-1}[E_{p (u)}]$, we can now strengthen Lemma \ref{le:lift-2} which states that $\Image[\sigma^\B_u]\subset F_u$.
\begin{corollary}
\label{co:deco}
If $\B$ is over $\A$ with $\A$ cohesive, then $\B$ is along $F$.
\end{corollary}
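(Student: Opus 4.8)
The plan is to show that $\delta^\B \phi = 0$ for every $C^1$ section $\phi$ of $F^0$, i.e. every $C^1$ 1-form annihilating $F$. Since $F$ is a smooth sub-bundle of $TN$ (in the constant-rank situation it is the preimage under $T_up$ of the sub-bundle $E$), being along $F$ is exactly this condition. The key observation is that $F^0 = (T_up)^*[E^0]$: a 1-form $\phi$ annihilates $F_u = (T_up)^{-1}[E_{p(u)}]$ if and only if $\phi_u = (T_up)^*\beta$ for some $\beta \in E_{p(u)}^0 \subset T_{p(u)}^*M$. Indeed $\phi$ vanishing on $VT_uN = \ker T_up$ forces $\phi_u = (T_up)^*\beta$ by Lemma~\ref{le:lift}, and then $\phi$ vanishing on all of $F_u$ forces $\beta$ to annihilate $E_{p(u)} = \Image[\sigma^\A_{p(u)}]$.

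Given this, I would argue pointwise at $u \in N$ with $x = p(u)$. Write $\phi_u = (T_up)^*\beta$ with $\beta \in E_x^0$. I want $\delta^\B(\phi)(u) = 0$. There are two natural routes. The cleaner one uses the decomposition $\B = \A^H + \B^V$ of Theorem~\ref{th:deco}, so $\delta^\B = \delta^{\A^H} + \delta^{\B^V}$. Since $\B^V$ is vertical, $\delta^{\B^V}\phi = 0$ for any $\phi$ annihilating $VTN \supset$ (well, $\phi$ annihilates $F \supset VTN$, so in particular it annihilates the vertical bundle), hence $\delta^{\B^V}\phi = 0$ by Proposition~\ref{pr:vert-eq}/Definition of vertical. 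For the horizontal part: $\A^H$ is along $H$, and $\phi$ restricted to $H_u$ is $\beta \circ T_up|_{H_u}$, which is $\beta$ pulled back through the isomorphism $T_up: H_u \to E_x$; since $\beta$ annihilates $E_x$, $\phi$ annihilates $H_u$, so $\delta^{\A^H}\phi = 0$ because $\A^H$ is along $H$. Adding, $\delta^\B\phi(u) = 0$.

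Alternatively, and perhaps more in the spirit of a direct strengthening of Lemma~\ref{le:lift-2}, one can compute $\delta^\B(\lambda\, p^*\phi)$ for $\phi \in \Omega^1(M)$ with $\phi_x \in E_x^0$ using Proposition~\ref{pr:delta}(1): $\delta^\B(\lambda\, p^*\phi) = d\lambda\, \sigma^\B(p^*\phi) + \lambda\, \delta^\B(p^*\phi)$. The second term equals $\lambda\, (\delta^\A\phi)(p)$ since $\B$ is over $\A$, and $\delta^\A\phi = 0$ because $\A$ is along $E$ and $\phi \in C^1E^0$. For the first term, $\sigma^\B_u(p^*\phi) = \sigma^\B_u((T_up)^*\phi_x) = h_u \sigma^\A_x(\phi_x) = 0$ by Proposition~\ref{pr:h-map} and the fact that $\phi_x \in E_x^0 = \Image[\sigma^\A_x]^0 \subset \ker \sigma^\A_x$ (using symmetry of $\sigma^\A$). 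So both terms vanish. A general $C^1$ section of $F^0$ near $u$ is a combination of such $\lambda\, p^*\phi$ (choosing $\phi$'s spanning $E^0$ locally, via Lemma~\ref{le:lift} applied to the sub-bundle setting), giving $\delta^\B\phi = 0$ everywhere.

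The main obstacle—really the only nontrivial point—is the identification $F^0 = (T_up)^*[E^0]$ and the verification that an arbitrary $C^1$ section of $F^0$ is a local $C^\infty(N)$-combination of pullbacks of 1-forms in $C^1E^0$; this is a routine consequence of Lemma~\ref{le:lift} together with local trivializability of $E$ (hence of $E^0$ as a sub-bundle of $T^*M$), but it is the step that needs care with the partition-of-unity argument. Everything else is an immediate assembly of Proposition~\ref{pr:h-map}, Theorem~\ref{th:deco}, and the definition of ``along a sub-bundle.'' I would present the second (direct) route as the proof, since it avoids invoking the full decomposition theorem and makes transparent that this is exactly Lemma~\ref{le:lift-2} upgraded from ``$\Image[\sigma^\B] \subset F$'' to ``$\delta^\B$ kills $F^0$.''
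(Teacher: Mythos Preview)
Your first route is exactly the paper's proof, stated there in one line: since $H_u\subset F_u$ and $VT_uN\subset F_u$, both $\A^H$ and $\B^V$ are along $F$, hence so is their sum $\B=\A^H+\B^V$ from Theorem~\ref{th:deco}.

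Your second route is correct and genuinely different: it bypasses the decomposition theorem entirely, using only the intertwining relation $\delta^\B(p^*\phi)=(\delta^\A\phi)\circ p$, cohesiveness of $\A$, and the horizontal-lift identity $\sigma^\B_u(T_up)^*=h_u\sigma^\A_{p(u)}$ from Proposition~\ref{pr:h-map}. The trade-off is that you must establish the representation of $C^1$ sections of $F^0$ as local $C^\infty(N)$-combinations of pullbacks $p^*\phi$ with $\phi\in C^1E^0$; this is indeed routine (pointwise $F_u^0=(T_up)^*[E_x^0]$ by the linear-algebra identity $(\ker A)^0=\Image A^*$ combined with $(A\cap B)^0=A^0+B^0$, and the section-level statement follows by locally trivialising $E^0$), but it is an extra step the paper avoids by leaning on Theorem~\ref{th:deco}. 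Since the corollary appears immediately after that theorem, the paper's choice is the more economical one in context; your direct argument would be preferable if one wanted the result before, or independently of, the full decomposition.
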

\begin{proof}
Since $H_u\in F_u$ and  $VT_uN\subset F_u$ both $\B^1$ and $B^V$ are along $F$.
\end{proof}
%horizontal lift associated with SDEs

\section{Diffusion Operators with Projectible Symbols}

\label{se-basic symbol}

Given $p:N\to M$ as before,  suppose now that we have a diffusion operator $\B$ on $M$ with a projectible symbol, c.f. Definition \ref {basic-symbol}. This means that
$\sigma^{\B}$ lies over some positive semi-definite linear map $\eta:T^*M\to TM$.\emph{ Assume that $\eta$ has constant rank}. We will show that in this case we also have a decomposition of $\B$. To do this first choose some  cohesive diffusion operator $\A$ on $M$ with $\sigma^{\A}=\eta$. In general there is no canonical way to do this, though if $\eta$ were non-degenerate we could choose $\A$ to be a multiple of the Laplace-Beltrami operator of the induced metric on $M$. 

From above we also have an induced semi-connection with horizontal sub-bundle $H$, say, of $TN$. 

\begin {definition} We will say that $\B$ {\bf descends cohesively}\index{ cohesive!descends} (over $p$) if it has a projectible symbol and there exists a horizontal vector field, $b^H$, such that $$\B-\mathbf{ L}_{b^H}$$
is  projectible  over $p$.
\end{definition}
The following is a useful observation. Its proof is immediate from the two lemmas and proposition which are given after it:
\begin{proposition}\label{pr-descends-1}
 If $\B$ descends cohesively then for each choice of $\A$ satisfying $\sigma^\A_{p(u)}=T_up\sigma_u^\B (T_up)^*$ there is a horizontal vector field $b^H$
such that $\B-\mathbf{ L}_{b^H}$ lies over $\A$.
\end{proposition}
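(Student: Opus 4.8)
The plan is to reduce Proposition~\ref{pr-descends-1} to the definition of ``descends cohesively'' together with the uniqueness of the horizontal lift map under a different choice of cohesive operator on $M$. By definition, since $\B$ descends cohesively there is \emph{some} cohesive $\A_0$ on $M$ and a horizontal vector field $b_0^H$ (horizontal with respect to the semi-connection $H_0$ induced by $\A_0$ and $\B - \LL_{b_0^H}$) such that $\B - \LL_{b_0^H}$ lies over $\A_0$. Note that $\sigma^{\A_0} = \eta$ by Lemma~\ref{le:commutative} applied to $\B - \LL_{b_0^H}$ over $\A_0$, because adding a first-order term does not change the symbol: $\sigma^{\B - \LL_{b_0^H}} = \sigma^\B$, and $T_up\,\sigma_u^\B(T_up)^* = \eta_{p(u)}$ by the projectible-symbol hypothesis. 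So any admissible $\A$ in the statement has $\sigma^\A = \eta = \sigma^{\A_0}$.

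Now fix an arbitrary cohesive $\A$ with $\sigma^\A = \eta$. Since $\sigma^\A = \sigma^{\A_0}$, the two operators differ by a first-order term: $\A - \A_0 = \LL_{W}$ for a smooth vector field $W$ on $M$, and since both $\A$ and $\A_0$ are along $E = \Image[\eta]$, by Remark~\ref{re:3.4}(i) applied to $\LL_W$ (which is along $E$) we get $W$ is a section of $E$. The key point is that the horizontal sub-bundle $H$ is determined entirely by $\sigma^\B$ and $Tp$ via equation~(\ref{h-lift-1})--(\ref{h-lift-2}): $h_u(v) = \sigma_u^\B((T_up)^*\alpha)$ where $\sigma_x^\A(\alpha) = v$, and because $\sigma^\A = \eta$ depends only on $\eta$, the map $h_u$ and hence $H_u = \Image[h_u]$ is \emph{independent} of which cohesive $\A$ with symbol $\eta$ we pick. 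Therefore $H_0 = H$, and ``horizontal vector field'' means the same thing for $\A$ as for $\A_0$.

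It remains to produce the horizontal $b^H$ for the given $\A$. Set $b^H = b_0^H + h(W\circ p)$, i.e.\ $b^H(u) = b_0^H(u) + h_u(W_{p(u)})$; this is a horizontal vector field since $W$ is a section of $E$ and $h$ maps $E_{p(u)}$ into $H_u$, and it is smooth by smoothness of $h$ and $W$. Then
\begin{equation}
\B - \LL_{b^H} = (\B - \LL_{b_0^H}) - \LL_{h(W\circ p)}.
\end{equation}
The first term lies over $\A_0$ by hypothesis. For the second, $h(W\circ p)$ is a horizontal lift of the vector field $W$ on $M$, so $T_up(h_u(W_{p(u)})) = W_{p(u)}$, which gives $\LL_{h(W\circ p)}(f\circ p)(u) = d f_{p(u)}(W_{p(u)}) = (\LL_W f)(p(u))$ for all $C^2$ functions $f$ on $M$; that is, $\LL_{h(W\circ p)}$ lies over $\LL_W$. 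Adding, $\B - \LL_{b^H}$ lies over $\A_0 + \LL_W = \A$, as required.

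The main obstacle is the bookkeeping in the second paragraph: one must be careful that ``horizontal'' is unambiguous, i.e.\ that the semi-connection $H$ does not depend on the auxiliary choice of $\A$ with $\sigma^\A = \eta$ — this is exactly why the construction is canonical at the level of symbols and is the one genuine point to verify; everything else is the routine observation that first-order perturbations of the base operator are matched by horizontal lifts of the corresponding vector fields. (The paper defers the clean version of these facts to two lemmas and a proposition stated after Proposition~\ref{pr-descends-1}, and the argument above is their combined content.)
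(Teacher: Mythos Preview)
Your overall strategy is sound and the key structural observation---that the horizontal lift map $h_u$ and hence the horizontal sub-bundle $H$ depend only on $\sigma^\B$ and $\eta$, not on the particular cohesive $\A$ with symbol $\eta$---is exactly right and is the crucial point. However, there is a sign error in the final step. With $\A=\A_0+\LL_W$ and $b^H=b_0^H+h(W\circ p)$ you get
\[
(\B-\LL_{b^H})(f\circ p)=(\B-\LL_{b_0^H})(f\circ p)-\LL_{h(W\circ p)}(f\circ p)=(\A_0 f)\circ p-(\LL_W f)\circ p,
\]
so $\B-\LL_{b^H}$ lies over $\A_0-\LL_W$, not over $\A$. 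The fix is trivial: take $b^H=b_0^H-h(W\circ p)$ instead. (A minor point: your appeal to Remark~\ref{re:3.4}(i) to conclude $W\in\Gamma E$ is slightly misdirected---that remark concerns the symbol---but the argument you intend is simply that $\delta^{\LL_W}\phi=\phi(W)$ vanishes on $C^1E^0$ because $\delta^\A$ and $\delta^{\A_0}$ both do, and $E$ is a sub-bundle.)

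Your route differs from the paper's. The paper proceeds directly: for \emph{any} cohesive $\A$ with symbol $\eta$, Lemma~\ref{le-der-1} shows $f\mapsto\B(\tilde f)-\widetilde{\A f}$ is a derivation, Lemma~\ref{le-der-2} identifies it with a section $b\in C^\infty\Gamma p^*TM$, and Proposition~\ref{pr-descends-2} then gives $b^H=h_u(b(u))$ once $b$ is known to take values in $E$. The ``descends cohesively'' hypothesis enters only to guarantee this last $E$-valuedness. Your argument instead starts from the witness $(\A_0,b_0^H)$ and transports it to the new $\A$ by correcting with the horizontal lift of $W=\A-\A_0$. Your approach is more hands-on and avoids the derivation machinery; the paper's approach builds reusable infrastructure (the lemmas are used again for Theorem~\ref{the-extended-deco}) and makes more transparent exactly where the cohesive-descent hypothesis is consumed.
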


%
%Let $p^*C^\infty M$ be the space of functions on $N$ of the form $f\circ p$ for $f\in %C^\infty M$.
\begin{lemma}\label {le-der-1} 
Assume that $\eta$ has constant rank. If $f$ is a function on $M$ let  $\tilde f=f\circ p$.  For any choice of $\A$ with symbol $\eta$ the map $$f\mapsto \B(\tilde f)-\widetilde{\A(f)}$$
is a derivation from $ C^\infty M$ to $ C^\infty N$ where any $f\in C^\infty M$ acts on $C^\infty N$ by multiplication by $\tilde f$.
\end {lemma}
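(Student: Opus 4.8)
The plan is to verify the Leibniz rule directly, exploiting the two defining properties of $\delta^\Lo$ from Proposition~\ref{pr:delta} together with the chain rule for the symbol under $p$. Write $D(f) := \B(\widetilde f) - \widetilde{\A(f)}$ for $f \in C^\infty M$. Since $\B 1 = 0$ and $\A 1 = 0$, we have $D(1) = 0$, so it suffices to check $D(fg) = \widetilde f\, D(g) + \widetilde g\, D(f)$ for $f, g \in C^\infty M$. First I would expand $\B(\widetilde{fg}) = \B(\widetilde f \widetilde g)$ using the symbol identity defining $\sigma^\B$: namely $\B(\widetilde f \widetilde g) = 2\, d\widetilde f(\sigma^\B(d\widetilde g)) + \widetilde f\, \B(\widetilde g) + \widetilde g\, \B(\widetilde f)$, and similarly $\A(fg) = 2\, df(\sigma^\A(dg)) + f\,\A(g) + g\,\A(f)$ on $M$.

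The key point is then that the two quadratic cross-terms cancel after pulling back. Indeed $d\widetilde f = p^*(df) = (Tp)^* df$, so $d\widetilde f(\sigma^\B(d\widetilde g)) = df\big((T_up)\,\sigma_u^\B\,(T_up)^* dg\big)$ at each $u \in N$, and by hypothesis (projectibility of the symbol) $(T_up)\,\sigma_u^\B\,(T_up)^* = \eta_{p(u)} = \sigma^\A_{p(u)}$ since $\A$ was chosen with $\sigma^\A = \eta$. Hence $d\widetilde f(\sigma^\B(d\widetilde g)) = \widetilde{df(\sigma^\A(dg))} = \widetilde{dg(\sigma^\A(df))}$ by symmetry of $\sigma^\A$. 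Subtracting $\widetilde{\A(fg)}$ from $\B(\widetilde{fg})$, the $2\,d\widetilde f(\sigma^\B(d\widetilde g))$ term is exactly cancelled by $\widetilde{2\,df(\sigma^\A(dg))}$, leaving
$$D(fg) = \widetilde f\,\B(\widetilde g) + \widetilde g\,\B(\widetilde f) - \widetilde f\,\widetilde{\A(g)} - \widetilde g\,\widetilde{\A(f)} = \widetilde f\, D(g) + \widetilde g\, D(f),$$
which is the derivation property with the stated module structure.

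To conclude that $D$ is genuinely a derivation into $C^\infty N$ (rather than merely additive-plus-Leibniz on products of globally defined functions), I would note $\B$ and $\A$ are smooth operators so $D(f) \in C^\infty N$, that $D$ is clearly $\R$-linear, and that the Leibniz identity just established extends from products to all of $C^\infty M$ by the usual localisation argument: any germ at a point can be written in terms of coordinate functions via a chart, and both sides of the identity are local, so the algebra identity suffices. The only mild subtlety — and the step I expect to need the most care — is confirming that the symbol-chain-rule identity $(T_up)\sigma_u^\B(T_up)^* = \sigma^\A_{p(u)}$ is available here: this is precisely the content of projectibility of $\sigma^\B$ (Definition~\ref{basic-symbol}) combined with the choice $\sigma^\A = \eta$, so it is given, but one should be explicit that it is this hypothesis, and not the stronger assumption that $\B$ itself lies over $\A$, that is being used. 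Everything else is routine bookkeeping with the defining properties of diffusion operators.
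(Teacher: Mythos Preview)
Your proof is correct and follows essentially the same route as the paper: expand $\B(\widetilde{fg})$ and $\A(fg)$ via the symbol identity, use projectibility of $\sigma^\B$ together with $\sigma^\A=\eta$ to cancel the cross-terms, and read off the Leibniz rule. The final localisation paragraph is unnecessary---a derivation is by definition an $\R$-linear map satisfying the Leibniz rule on products, so once you have linearity and $D(fg)=\widetilde f\,D(g)+\widetilde g\,D(f)$ for all $f,g\in C^\infty M$ you are done.
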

\begin {proof}
The map is clearly linear and 
for  smooth $f,g:M\to \R$ we have $$\widetilde{\eta(df,dg)}=\sigma^\B( d\tilde f, d\tilde g )$$  so by definition of symbols: \begin{equation*}
\B(\tilde f \tilde g)-\widetilde{{\A(fg)}}= \B(\tilde f)\tilde g+\B(\tilde g)\tilde f - \widetilde{\A(f)}\tilde g-\widetilde{\A(g)}\tilde f
\end{equation*}
as required.
\end{proof}
Let $\mathfrak{D}$ denote the space of derivations from $C^\infty M$ to $C^\infty N$ using the  above action.
Note that for $p^*TM\to N$ the pull back of $TM $ over $p$, the space $C^\infty\Gamma p^*TM$ of smooth sections of $p^*TM$ can be considered as the space of smooth functions $V:N\to TM$ with $V(u)\in T_{p(u)}M$ for all $u\in N$. We can then define
 $$\Theta:C^\infty\Gamma p^*TM\to \mathfrak D$$
 by $$\Theta (V)(f)(u)=df_{p(u)}(V(u)).$$
 \begin {lemma} \label{le-der-2}
Assume that $\eta$ has constant rank  The map $\Theta:C^\infty\Gamma p^*TM\to \mathfrak D$ is a linear bijection.
 \end{lemma}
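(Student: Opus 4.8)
The plan is to dispatch linearity and injectivity of $\Theta$ directly, and to get surjectivity by the standard route: every element of $\mathfrak D$ is a local operator, so it is determined in local coordinates, where Hadamard's lemma forces it to have the form $\Theta(V)$.

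Linearity is immediate from the formula for $\Theta$. For injectivity, suppose $\Theta(V)=0$. Fixing $u\in N$, we get $df_{p(u)}(V(u))=0$ for every $f\in C^\infty M$; since the differentials $\{df_{p(u)}:f\in C^\infty M\}$ span $T^*_{p(u)}M$, this forces $V(u)=0$, and as $u$ was arbitrary, $V=0$.

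For surjectivity, let $D\in\mathfrak D$. First I would record two elementary consequences of the Leibniz rule. Applying it to $1=1\cdot 1$ gives $D(1)=2D(1)$, so $D$ annihilates constants. Secondly, $D$ is \emph{local}: if $f$ vanishes on an open set $U\subseteq M$, then $D(f)$ vanishes on $p^{-1}(U)$; indeed, given $x_0\in U$, choose a bump function $\chi$ supported in $U$ with $\chi(x_0)=1$, so $\chi f\equiv 0$ on $M$, whence $0=D(\chi f)(u)=D(\chi)(u)\,f(p(u))+\chi(p(u))\,D(f)(u)=D(f)(u)$ for every $u$ with $p(u)=x_0$. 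Now fix $u_0\in N$, put $x_0=p(u_0)$, choose coordinates $(x^1,\dots,x^m)$ near $x_0$ vanishing at $x_0$, and (using a cutoff equal to $1$ near $x_0$) regard the $x^i$ — and the Hadamard remainders below — as global smooth functions on $M$. By Hadamard's lemma, near $x_0$ one has $f=f(x_0)+\sum_i x^i h_i$ with $h_i$ smooth and $h_i(x_0)=\partial f/\partial x^i(x_0)$. Applying $D$, using that it kills constants, the Leibniz rule, locality, and evaluating at $u_0$ (where $x^i(p(u_0))=0$), we obtain
\begin{equation*}
D(f)(u_0)=\sum_i D(x^i)(u_0)\,\frac{\partial f}{\partial x^i}(x_0).
\end{equation*}
Hence, setting $V(u_0):=\sum_i D(x^i)(u_0)\,\frac{\partial}{\partial x^i}\big|_{x_0}\in T_{x_0}M$, we get $\Theta(V)(f)(u_0)=D(f)(u_0)$ for all $f$. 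The injectivity already proved shows $V(u_0)$ is independent of the chart; the maps $u\mapsto D(x^i)(u)$ are smooth on $N$ and the coordinate frame pulls back smoothly, so $V$ is a smooth section of $p^*TM$, and $\Theta(V)=D$ since the two agree at every point of $N$.

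The one place that needs care — and the step I expect to be the main obstacle to writing out cleanly — is the bookkeeping with cutoff functions that legitimizes the local identity for $D(f)(u_0)$: the Hadamard decomposition holds only on a neighbourhood of $x_0$, so one must first pass to globally defined functions agreeing with it near $x_0$ and only then invoke locality of $D$ (and the same care is needed to see that the formula persists for $u$ near $u_0$, which is what gives smoothness of $V$). Once that is organised, everything else is routine.
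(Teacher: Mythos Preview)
Your proof is correct and follows essentially the same route as the paper's: the paper fixes $u\in N$, observes that $f\mapsto D(f)(u)$ is a point-derivation at $p(u)$ and hence corresponds to a tangent vector $V(u)\in T_{p(u)}M$, and then argues smoothness by ``suitable choices of $f$''. You have simply unpacked the standard identification of point-derivations with tangent vectors via locality and Hadamard's lemma, and made the smoothness argument explicit in coordinates; the underlying idea is identical. (Incidentally, neither argument uses the constant-rank hypothesis on $\eta$.)
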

\begin {proof}
Let $\mathfrak{d}\in \mathfrak D$. Fix $u\in N$. The map from $C^\infty M$ to $ \R$ given by $f\mapsto \mathfrak {d}f(u) $ is a derivation at $p(u)$, here the action of any $f\in C^\infty M$ on $\R$ is multiplication by $f(p(u))$, and so corresponds to a tangent vector, $V(u)$ say, in $T_{p(u)}M$. Then  $\mathfrak {d}f(u)=df_{p(u)}(V(u))$. By assumption $\mathfrak {d}f(u)$ is smooth in $u$, and so by suitable choices of $f$ we see that $V$ is smooth. Thus $\Theta (V)=\mathfrak {d}$ and $\Theta$ has an inverse.
\end{proof}
From these lemmas we see there exists $b\in C^\infty\Gamma p^*TM$ with the property that
 \begin{equation}
 \label{b1}
\left(\B\tilde f-\widetilde{\A f}\right)(u)=df_{p(u)}\big(b(u)\big)
\end{equation} for all $u\in N$ and $f\in C^\infty M$. \emph{Assume that $b$ has image in the subbundle $E$ of $TM$ determined by $\eta$.}  Using the horizontal lift map $h$ determined by $\B$ define  a vector field $b^H$
on $N$: $$b^H(u)=h_u\big(b(u)\big).$$
\begin {proposition}\label{pr-descends-2}
 Assume that $\eta$ has constant rank and 
that $b$ has image in the subbundle $E$  determined by $\eta$.
The vector field $b^H$  is such that $\B-b^H$ is over $\A$.
\end{proposition}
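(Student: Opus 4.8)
The plan is to verify directly that $\B - \mathbf{L}_{b^H}$ acts on pulled-back functions the way $\A$ does, i.e.\ that $(\B - \mathbf{L}_{b^H})(f\circ p) = (\A f)\circ p$ for every $C^\infty$ function $f$ on $M$. Since a diffusion operator over $p$ is exactly one satisfying this identity (the defining relation (\ref{op-lift})), and since $\B$ already has projectible symbol so that $\B - \mathbf{L}_{b^H}$ is still a diffusion operator, this is all that needs checking. First I would write out $\mathbf{L}_{b^H}(f\circ p)(u) = d(f\circ p)_u(b^H(u)) = df_{p(u)}(T_up\, b^H(u))$. The key point is that $T_up\, b^H(u) = T_up\, h_u(b(u)) = b(u)$, because $h_u$ is by construction a right inverse of $T_up$ on $E_{p(u)}$ (Proposition \ref{pr:h-map}) and $b(u)$ lies in $E_{p(u)}$ by the standing assumption that $b$ has image in $E$. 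Hence $\mathbf{L}_{b^H}(f\circ p)(u) = df_{p(u)}(b(u))$.

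Now I invoke equation (\ref{b1}), which says precisely that $df_{p(u)}(b(u)) = (\B\tilde f)(u) - \widetilde{\A f}(u)$ where $\tilde f = f\circ p$. Substituting, $\mathbf{L}_{b^H}(f\circ p)(u) = (\B(f\circ p))(u) - (\A f)(p(u))$, which rearranges to $(\B - \mathbf{L}_{b^H})(f\circ p) = (\A f)\circ p$. That is exactly the statement that $\B - \mathbf{L}_{b^H}$ is over $\A$.

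The only genuine subtlety — and it is minor — is ensuring that $\B - \mathbf{L}_{b^H}$ is a legitimate diffusion operator, i.e.\ semi-elliptic with no zeroth-order term. It has no zeroth-order term since both $\B$ and $\mathbf{L}_{b^H}$ annihilate constants; and subtracting a first-order operator does not change the symbol, so $\sigma^{\B - \mathbf{L}_{b^H}} = \sigma^\B$ is still positive semi-definite. So there is no obstacle here at all: once Lemmas \ref{le-der-1} and \ref{le-der-2} have been established (producing the section $b$ and equation (\ref{b1})), and once $h$ is in hand from the projectible-symbol construction, the proposition is an immediate unwinding of definitions. I do not expect any step to be hard; the content of the result was already packaged into the two preceding lemmas.
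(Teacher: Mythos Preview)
Your proof is correct and follows essentially the same route as the paper's: compute $(\B-\LL_{b^H})(f\circ p)$ by combining equation~(\ref{b1}) with the identity $T_up\,h_u(b(u))=b(u)$. Your version is slightly more explicit in spelling out why $T_up\circ h_u$ is the identity on $E_{p(u)}$ and in checking that $\B-\LL_{b^H}$ remains a diffusion operator, but there is no substantive difference in approach.
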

\begin{proof} For $f\in C^\infty M$, $$(\B-b^H)(\tilde f)=\widetilde{\A f}+df(b(-))-df\circ Tp(b^H(-))=\widetilde{\A f}$$
using the fact that $Tp \big(b^H (-)\big)=b(-)$.
\end {proof}
%\begin {definition} We will say that $\B$ {\it \bf descends cohesively (over $p$)} if it has a basic symbol and $b$, as defined by (\ref{b1}), has image in the subbundle $E$ of $TM$ determined by the symbol induced on $M$ for some (and hence every) choice of cohesive $\A$  with that symbol.
%\end {definition}
We can now extend the decomposition theorem:
\begin{theorem} \label{the-extended-deco}
Let $\B$ be a diffusion operator on $N$ which descends cohesively over $p:N\to M$. Then $\B$ has a unique decomposition: $$ \B=\B^H+\B^V$$
into the sum of diffusion operators such that
\begin{enumerate}
\item [(i)]  $\B^V$  is vertical 
\item[(ii)]  $\B^H$ is cohesive and $T_up$ is injective on the image of $\sigma_u^{\B^H}$ for all $u\in N$.
\end {enumerate}
With respect to the induced semi-connection $\B^H$ is horizontal.
\end{theorem}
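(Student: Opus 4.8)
The plan is to reduce Theorem \ref{the-extended-deco} to the already-established Theorem \ref{th:deco} by peeling off the horizontal vector field which obstructs projectibility. Since $\B$ descends cohesively, it has a projectible symbol with $\sigma^\B$ lying over $\eta$ of constant rank; fix a cohesive operator $\A$ on $M$ with $\sigma^\A=\eta$ (as permitted in the discussion preceding the theorem), and let $H$ be the induced semi-connection on $N$ with horizontal lift map $h$. By Lemmas \ref{le-der-1} and \ref{le-der-2} there is a section $b\in C^\infty\Gamma p^*TM$ satisfying (\ref{b1}), and by the cohesive-descent hypothesis together with Proposition \ref{pr-descends-1} (or directly from the definition) we may assume $b$ has image in $E$, so that $b^H(u)=h_u(b(u))$ is a well-defined horizontal vector field. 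Proposition \ref{pr-descends-2} then gives that $\B_0:=\B-\mathbf{L}_{b^H}$ is a diffusion operator over $\A$.

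Now apply Theorem \ref{th:deco} to $\B_0$ over the cohesive operator $\A$: there is a unique decomposition $\B_0=\B_0^1+\B_0^V$ with $\B_0^V$ vertical, $\B_0^1=\A^H$ over $\A$ with no vertical part. Setting $\B^H:=\B_0^1+\mathbf{L}_{b^H}$ and $\B^V:=\B_0^V$ yields $\B=\B^H+\B^V$. Property (i) is immediate since $\B^V=\B_0^V$ is vertical. For (ii), note that $\B^H=\A^H+\mathbf{L}_{b^H}$ differs from $\A^H$ only by the first-order term $\mathbf{L}_{b^H}$, so $\sigma^{\B^H}=\sigma^{\A^H}$; since $b^H$ is horizontal, $\B^H$ is along $H$, hence along the image of its symbol, so $\B^H$ is cohesive, and $T_up$ is injective on $\Image\sigma_u^{\B^H}=\Image\sigma_u^{\A^H}=H_u$ by the characterization in Proposition \ref{pr:h-map}. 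The final sentence — that $\B^H$ is horizontal with respect to the induced semi-connection — is precisely the statement that $\B^H$ is along $H$, which we have just verified.

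For uniqueness, suppose $\B=\tilde\B^H+\tilde\B^V$ is another such decomposition. Since $\tilde\B^V$ is vertical, $\tilde\B^V(f\circ p)=0$, so $\tilde\B^H(f\circ p)=\B(f\circ p)$, and by Lemma \ref{le-der-1} applied to $\tilde\B^H$ the section $b$ associated to $\tilde\B^H$ via (\ref{b1}) relative to $\A$ coincides with the one for $\B$; moreover $\sigma^{\tilde\B^H}$ lies over $\eta$ as well. The constant-rank condition (ii) and semi-ellipticity force $\Image\sigma_u^{\tilde\B^H}$ to be a complement to $VT_uN\cap\Image\sigma_u^{\tilde\B^H}$; combined with injectivity of $T_up$ on that image and Lemma \ref{le:symbols-2}, the symbol $\sigma^{\tilde\B^H}$ is determined, hence $\tilde\B^H$ and $\B$ induce the same semi-connection $H$. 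Then $\tilde\B^H-\mathbf{L}_{b^H}$ is over $\A$ with no vertical part, so by Proposition \ref{pr:cohesive} it equals $\A^H$, giving $\tilde\B^H=\B^H$ and $\tilde\B^V=\B^V$.

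The main obstacle is the uniqueness argument: one must verify carefully that condition (ii), rather than the stronger "no vertical part," still pins down the horizontal symbol uniquely, and in particular that the horizontal vector field $b^H$ extracted from $\tilde\B^H$ agrees with the one extracted from $\B$ before one can invoke Proposition \ref{pr:cohesive}. The existence half is essentially bookkeeping on top of the preparatory lemmas, but the interplay between the choice of $\A$ (non-canonical) and the resulting $\B^H$, $\B^V$ needs a remark that the decomposition is in fact independent of that choice — which follows because any two choices of $\A$ with symbol $\eta$ differ by a vector field on $M$ whose horizontal lift gets absorbed consistently into $b^H$.
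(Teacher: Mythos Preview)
Your proof is correct and follows essentially the same route as the paper: peel off the horizontal drift $b^H$ via Proposition~\ref{pr-descends-2}, apply Theorem~\ref{th:deco} to $\B-\LL_{b^H}$, and set $\B^H=\A^H+\LL_{b^H}$. The paper's uniqueness argument is much terser---it simply observes that any second decomposition of $\B$ gives, after subtracting the same $b^H$, a second decomposition of $\B-\LL_{b^H}$ to which the uniqueness clause of Theorem~\ref{th:deco} applies---whereas you spell out why $\tilde\B^H$ must induce the same semi-connection; both arguments rest on the same fact that $\sigma^{\tilde\B^V}$ vanishes on $(T_up)^*T^*M$, so $\sigma^{\tilde\B^H}(T_up)^*=\sigma^\B(T_up)^*$ and hence $\Image\sigma^{\tilde\B^H}_u=H_u$. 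Your sentence invoking Lemma~\ref{le:symbols-2} is a little garbled (the clause ``$\Image\sigma_u^{\tilde\B^H}$ is a complement to $VT_uN\cap\Image\sigma_u^{\tilde\B^H}$'' is vacuous as written), but the intended content---that injectivity of $T_up$ on the image forces that image to be exactly $H_u$---is correct and is what the paper uses implicitly. Your closing remark on independence of the choice of $\A$ is a useful addition not made explicit in the paper.
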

\begin {proof}
Using the notation of the previous proposition we know that $\B-b^H$ is over  a cohesive  diffusion operator $\A$. By Theorem \ref{th:deco} we have a canonical decomposition
$$\B-b^H=\B^1+\B^V,$$ leading to $$ \B=(b^H+\B^1)+\B^V.$$
If we set $\B^H=b^H+\B^1$ we have a decomposition as required.
On the other hand if we have two such decompositions of $\B$ we get two decompositions of $\B-b^H$. Both components of the latter must agree by the uniqueness in Theorem \ref{th:deco}, and so we obtain uniqueness in our situation.
\end {proof}
Extending Definition \ref{no-vert} we could say that a diffusion operator $\B^H$ satisfying condition (ii) in the theorem {\bf has no vertical part}. 

Note that if we drop the hypothesis that $b^H$ is horizontal, or equivalently that $b$ in Proposition \ref{pr-descends-2} has image in $E$,  we still get a decomposition by taking an arbitrary lift of $b$ to be $b^H$ but we will no longer have uniqueness.

\section{Horizontal lift of paths \& completeness of semi-connections \;}
A semi-connection on $p:N\to M$ over a sub-bundle $E$ of $TM$ gives a procedure for horizontally  
lifting paths on $M$ to paths on $N$  as for ordinary connections but now we require the original path to have derivatives in $E$; such paths may be called {\bf E-horizontal}.

\begin{definition}\label{def:horizontal lifts of paths}
A Lipschitz path $\tilde{\sigma}$ in $N$ is said to be a {\bf horizontal  lift} of a path $\sigma$ in $M$ if  \begin{itemize}
  \item $p\circ\tilde{\sigma}=\sigma$
  \item The derivative of $\tilde{\sigma}$ almost surely takes values in the horizontal subbundle $H$ of $TN$.
 \end{itemize}
 \end{definition}
 
 Note that a Lipschitz path  $\sigma:[a,b]\to M$ with $\dot{\sigma}(t)\in\E_{\sigma(t)}$ for almost all $a\leqslant t\leqslant b$ has at most one horizontal lift from any starting point
 $u_a$ in $p^{-1}(\sigma (a))$. To see this first note that any such lift must satisfy\begin{equation}
\label{eq;deterministic lift}
\dot{\tilde{\sigma}}(t)=h_{\tilde{\sigma}(t)}\dot{\sigma}(t).
\end{equation}
This equation can be extended to give an ordinary differential equation on all of $N$. For example  take a smooth embedding $j:M\to \R^m$ into some Euclidean space. Set $\beta(t)=j(\sigma(t))$. Let $X(x): \R^m\to E_x$ be the adjoint of the restriction of the derivative  $T_xj$ of $j$ to $E_x$, using some Riemannian metric on $E$. Then  $\sigma$ satisfies the differential equation
 \begin{equation}
\label{eq:schwartz}
\dot{x}(t)=X(x(t))(\dot{\beta}(t))
\end{equation}
 and it is easy to see that the  horizontal lifts of $\sigma$ are precisely the solutions of 
 $$ \dot{u}(t)=h_{u(t)} X(p(u(t)))(\dot{\beta}(t))$$
 starting from points above $\sigma(a)$ and lasting until time $b$.
 
 In the generality in which we are working there may not be any such  solutions, for example because of "holes" in $N$. We define the semi-connection to be {\bf complete}\index{connection!complete} if every Lipschitz path $\sigma$  with derivatives in $E$ almost surely, has a horizontal lift starting from any point above the starting point of $\sigma$.
 
 Note that completeness is assured if the fibres of $N$ are compact, or if an $X$, with values in $E$, and $\beta$,  can be found so that $\sigma$ is a solution to equation (\ref{eq:schwartz}) and there is a complete metric on $N$ for which the horizontal lift of $X$ is bounded on the inverse image of $\sigma$ under $p$. In particular the latter will hold if $p$ is a principal bundle and we have an equivariant semi-connection as in the next chapter. It will also hold if there is a complete metric on $N$ for which  the  horizontal lift map  $h_u\in \mathbb{L}(E_{p(u)};T_uN)$ is uniformly bounded for $u$ in the image of $\sigma$.

\section{Topological Implications} \label{se:topology}

Although our set up of intertwining diffusions with a cohesive $\A$ seems quite general it implies strong topological restrictions if the manifolds are compact and more generally. Here we partially extend the approach Hermann used for Riemannian submersions in \cite{Hermann} with a more detailed discussion in Chapter \ref{ch:comm} below.

For this let $\D^0(x)$ \index{$\D^0(x)$}%
 be the set of points $z\in M$ which can be reached by  Lipschitz curves $\sigma:[0,t]\rightarrow M$ with $\sigma(0)=x_0$ and $\sigma(t)=z$ with 
 derivative in $E$ almost surely.  Its closure $\D'(x)$ \index{$\D'(x)$}%
  relates to the propagation set for the maximum principle for $\A$, and to the support of the $\A$- diffusion as in Stroock-Varadhan \cite{Stroock-Varadhan-deg}, see Taira\cite {Taira}.

\begin{theorem}
\label{th:topology}
 For $\B$ and $\A$ as before with $\A$ cohesive take $x_0\in M$ and $z\in \D^0(x_0)$. 
 Assume the induced semi-connection is complete.
 Then if $p^{-1}(x_0)$ is a submanifold of $N$ so is $p^{-1}(z)$ and they are diffeomorphic. Also if $z$ is a regular value of $p$ so is $x$.
 \end {theorem}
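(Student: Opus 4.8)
The plan is to exploit the horizontal lifting of $E$-horizontal paths provided by the (complete) semi-connection induced by $\B$ over $\A$. Since $z\in\D^0(x_0)$, pick a Lipschitz curve $\sigma:[0,t]\to M$ with $\sigma(0)=x_0$, $\sigma(t)=z$, and $\dot\sigma(s)\in E_{\sigma(s)}$ for almost all $s$. By completeness, for every $u\in p^{-1}(x_0)$ there is a unique horizontal lift $\tilde\sigma^u:[0,t]\to N$ with $\tilde\sigma^u(0)=u$ and $p\circ\tilde\sigma^u=\sigma$. Define $\Phi:p^{-1}(x_0)\to p^{-1}(z)$ by $\Phi(u)=\tilde\sigma^u(t)$. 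Running $\sigma$ backwards (the time-reversed curve is still $E$-horizontal, and horizontal lifts of it are the time-reversals of horizontal lifts of $\sigma$, by uniqueness and equation (\ref{eq;deterministic lift})) produces the inverse map $p^{-1}(z)\to p^{-1}(x_0)$, so $\Phi$ is a bijection.

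The substantive point is smoothness of $\Phi$ and its inverse, i.e.\ that $\Phi$ is a diffeomorphism between $p^{-1}(x_0)$ and $p^{-1}(z)$ once these are submanifolds. Here I would use the ODE description of horizontal lifts given just before the theorem: embed $M\hookrightarrow\R^m$ by $j$, set $\beta=j\circ\sigma$, let $X(x):\R^m\to E_x$ be the adjoint of $T_xj|_{E_x}$, so that $\sigma$ solves (\ref{eq:schwartz}) and its horizontal lifts solve $\dot u(s)=h_{u(s)}X(p(u(s)))\dot\beta(s)$ on $N$. The vector field (time-dependent, through $\dot\beta$) driving this equation is smooth in $u$ because $h$ is smooth (Proposition \ref{pr:h-map}) and $X$ is smooth; hence by smooth dependence of solutions of ODEs on initial conditions, $u\mapsto\tilde\sigma^u(t)$ is smooth on the open set of $N$ where the lift exists up to time $t$, which by completeness contains all of $p^{-1}(x_0)$. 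Restricting this smooth map of (an open subset of) $N$ to the submanifold $p^{-1}(x_0)$, and noting its image lies in $p^{-1}(z)$, gives that $\Phi$ is smooth as a map into $N$; since $p^{-1}(z)$ is shown to be a submanifold, $\Phi$ is smooth into $p^{-1}(z)$ as well. The same argument applied to the reversed curve shows $\Phi^{-1}$ is smooth, so $\Phi$ is a diffeomorphism. In particular this simultaneously shows that if $p^{-1}(x_0)$ is a submanifold then so is $p^{-1}(z)$: the flow map is a diffeomorphism of a neighbourhood of $p^{-1}(x_0)$ in $N$ onto a neighbourhood of $p^{-1}(z)$ (its inverse being the reverse-lift flow), carrying $p^{-1}(x_0)$ onto $p^{-1}(z)$, so the submanifold property transports.

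For the last assertion, that $z$ a regular value of $p$ forces $x_0$ (the statement writes $x$) to be one as well: first note that $\D^0$ defines an equivalence-type relation in the sense that reachability is symmetric along $E$-horizontal curves, so $x_0\in\D^0(z)$ too, and it suffices to argue in one direction. If $z$ is a regular value then $T_vp$ is surjective for all $v\in p^{-1}(z)$. Given $u\in p^{-1}(x_0)$, write $v=\Phi(u)$; the diffeomorphism $\Phi$ of neighbourhoods satisfies $p\circ\Phi=\sigma(t)\equiv z$ only fibrewise — more usefully, the reverse-lift flow $\Psi$ (lifting the time-reversed $\sigma$ from $t$ back to $0$) is a diffeomorphism from a neighbourhood of $v$ onto a neighbourhood of $u$ with $p\circ\Psi$ agreeing with the base flow of $\sigma$, which covers a neighbourhood of $z$ diffeomorphically onto a neighbourhood of $x_0$. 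Chasing $T_up = T(p\circ\Psi^{-1})\circ T_u\Psi^{-1}{}^{-1}$ through these identifications expresses $T_up$ as a composition of $T_vp$ with isomorphisms and the derivative of the base flow near $z$, hence $T_up$ is surjective; as $u$ was arbitrary, $x_0$ is a regular value.

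The main obstacle I expect is the bookkeeping in the last paragraph: relating $T_up$ to $T_vp$ cleanly requires being careful that the horizontal-lift flow on $N$ genuinely covers the corresponding flow on $M$ along $\sigma$, i.e.\ that $p$ intertwines the two ODEs, and then that the derivative of the $M$-flow at $z$ is an isomorphism onto a neighbourhood of $x_0$ — which is just the standard fact that flows of (time-dependent Lipschitz) ODEs are local diffeomorphisms, but needs the driving curve $\beta$ to be genuinely Lipschitz so that Carathéodory/flow-regularity applies. Everything else is a routine application of uniqueness of horizontal lifts (already established from (\ref{eq;deterministic lift})) and smooth dependence on initial conditions.
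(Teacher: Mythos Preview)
Your proposal is correct and follows essentially the same route as the paper: build the holonomy map between fibres as the time-$t$ flow of the horizontal-lift ODE on $N$ (the paper writes this ODE via a factorisation $\sigma^\A=XX^*$ rather than your embedding $j$, but these are equivalent formulations already discussed in the text), invoke completeness plus smooth dependence on initial data to get a local diffeomorphism of $N$ carrying one fibre to the other, and use that this flow intertwines $p$ with the induced base flow on $M$ to transfer the regular-value property. Your final paragraph is a bit tangled notationally---the clean statement is simply $p\circ\eta_t=\phi_t\circ p$ with $\eta_t$, $\phi_t$ local diffeomorphisms, whence $T_up$ and $T_{\eta_t(u)}p$ have the same rank---but the idea is right and matches the paper's one-line ``the diffeomorphism commutes with $p$''.
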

 \begin {proof}
Let $\sigma ;[0,T]\to M$ be a Lipschitz $E$-horizontal path from $x$ to $z$. There is 
 a smooth factorisation
 $\sigma_x^\A=X(x)X(x)^*$ for $X(x)\in \Lo(\R^m;T_xM)$, $x\in M$. Take the horizontal lift  $\tilde X: \underline \R^,\to TN$ of $X$.

By the completeness hypothesis the time dependent ODE on $N$, 
 $$ {dy_s\over ds}= \tilde{X}(y_s)X\Big(\sigma(s)|_{[\ker X(x_0)]^\perp}\Big)^{-1}(\dot \sigma(s))$$
 will have solutions from each point above $\sigma(0)$ defined up to time $T$ and so  a flow giving the required diffeomorphism of fibres. Moreover, by the usual lower semi-continuity property of the "explosion time",  this holonomy flow \index{flow!holonomy} gives a diffeomorphism of a neighbourhood of  $p^{-1}(x)$ in $N$ with a neighbourhood of the fibre above $z$.  The diffeomorphism commutes with $p$. Thus if one of $x$ and $z$ is a regular value so is the other.
 \end{proof}
% We will say that $E$ is "H-full" if $\D^0(x)$ is dense in $M$ for all $x\in M$. This will hold under 

 \begin{corollary}\label {co:hor}
 Assume the conditions of the theorem and that $E$ satisfies
the standard H\"{o}rmander condition that the Lie algebra of vector fields generated by sections of $E$ spans each tangent space $T_yM$ after evaluation at $y$.  Then $p$ is a submersion all of whose fibres are diffeomorphic.
\end{corollary}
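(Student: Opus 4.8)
The plan is to derive the corollary from Theorem~\ref{th:topology} by first showing that the H\"ormander (bracket-generating) condition forces the accessibility set $\D^0(x)$ to be all of $M$ for every $x$. Indeed, since the Lie algebra of vector fields generated by smooth sections of $E$ spans $T_yM$ at each $y$, the Chow--Rashevskii theorem guarantees that any two points of the connected manifold $M$ are joined by a piecewise-smooth path with velocity in $E$; such a path is Lipschitz with derivative in $E$ almost everywhere, so $z\in\D^0(x)$. Thus $\D^0(x)=M$ for all $x\in M$, and this relation is symmetric because the time-reversal of a horizontal Lipschitz path is again one.

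With this in hand I would seed the argument using Sard's theorem: the critical values of $p$ form a set of measure zero, so $p$ admits a regular value $z_0\in M$, and then $p^{-1}(z_0)$ is a submanifold of $N$ by the implicit function theorem. Fix now an arbitrary $x\in M$. Because $z_0\in M=\D^0(x)$ and the induced semi-connection is complete (a standing hypothesis carried over from Theorem~\ref{th:topology}), that theorem applies with base point $x$ and target $z_0$, and yields two things: $p^{-1}(x)$ is a submanifold of $N$ diffeomorphic to $p^{-1}(z_0)$, and moreover $x$ is a regular value of $p$. Since $x$ was arbitrary, $T_up$ is surjective for every $u\in N$, i.e. $p$ is a submersion. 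Finally, fixing one base point $x_0$ and letting $z$ range over $M=\D^0(x_0)$, the diffeomorphism clause of Theorem~\ref{th:topology} shows $p^{-1}(z)\cong p^{-1}(x_0)$ for all $z$, so all fibres are mutually diffeomorphic.

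The part requiring the most care will be the accessibility step, i.e. a clean invocation of Chow--Rashevskii in the Lipschitz/almost-everywhere category used to define $\D^0$; everything after that is bookkeeping. I would also flag two routine but essential points: one is tacitly assuming $M$ connected, since otherwise distinct components could carry non-diffeomorphic fibres, and one needs Sard's theorem — or, in an infinite-dimensional setting, a Fredholm/Sard--Smale substitute — to furnish the initial regular value. Granting these, the corollary is essentially a repackaging of Theorem~\ref{th:topology}.
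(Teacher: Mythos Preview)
Your proposal is correct and follows essentially the same route as the paper: invoke Chow--Rashevskii to get $\D^0(x)=M$ for every $x$, then appeal to Theorem~\ref{th:topology}. You are in fact more explicit than the paper, which compresses the whole argument into the Chow step and leaves the Sard-plus-propagation reasoning (which you spell out) to the reader; your flags about connectedness of $M$ and the need for a regular value via Sard are apt and are exactly the details the paper suppresses here (the Sard step surfaces explicitly only in the next corollary).
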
 
\begin{proof}
The H\"{o}rmander condition implies that $\D^0(x)=M$ for all $x\in M$ by Chow's theorem (\eg see Sussmann \cite{Sussmann} or \cite {Gromov-CC}. In \cite{Gromov-CC} Gromov shows that under this condition any two points of $M$ can be joined by a smooth E-horizontal curve.
\end{proof}

 \begin {corollary}
 \label{cor:top}
 Assume the conditions of the theorem and that $\D^0(x)$ is dense in $M$ for all $x\in M$ and $p:N\to M$ is  
 proper. Then $p$ is a locally trivial bundle over $M$.
 \end{corollary}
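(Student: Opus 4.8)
The plan is to use the extra hypotheses to force $p$ to be a submersion, and then to invoke Ehresmann's fibration theorem, properness being used a second time there. Throughout, the hypotheses of Theorem~\ref{th:topology} are in force; in particular, whenever $x_0\in M$, $z\in\D^0(x_0)$ and $z$ is a regular value of $p$, then $x_0$ is a regular value of $p$.

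\emph{Step 1: the set $R\subseteq M$ of regular values of $p$ is open and dense.} Density is Sard's theorem: the image under $p$ of the critical set $C=\{u\in N:T_up\text{ not surjective}\}$ has measure zero in $M$. Openness is where properness enters: $C$ is closed in $N$, and a proper map into a locally compact Hausdorff space is a closed map, so $p(C)$ is closed in $M$ and $R=M\setminus p(C)$ is open.

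\emph{Step 2: $p$ is a submersion.} Fix $x_0\in M$. By hypothesis $\D^0(x_0)$ is dense in $M$, and $R$ is open and dense, so $\D^0(x_0)\cap R\neq\emptyset$; choose $z$ in it. Then $z\in\D^0(x_0)$ is a regular value of $p$, and Theorem~\ref{th:topology} then gives that $x_0$ is a regular value of $p$. Since $x_0\in M$ was arbitrary, $T_up$ is surjective for all $u\in N$.

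\emph{Step 3: conclude.} Now $p$ is a proper submersion, hence a locally trivial fibre bundle by Ehresmann's theorem. If one wants to avoid citing it, the local triviality can be produced by the same flow device used for Theorem~\ref{th:topology}: over a chart $U\cong\R^n$ about a point of $M$, lift the coordinate vector fields of $U$ to vector fields on $p^{-1}(U)$ (possible since $p$ is a submersion), and use properness to see that the flows persist long enough to build a $p$-commuting diffeomorphism $p^{-1}(U)\cong U\times p^{-1}(x_0)$; on components of $M$ not meeting $p(N)$ the statement is vacuous since $p$ is both open and closed. I expect the only genuinely delicate point to be Step 1 — the upgrade, via properness, of the merely dense set of regular values coming from Sard to an \emph{open} dense set, which is exactly what allows it to be intersected with the dense set $\D^0(x_0)$; the remainder is just assembling Theorem~\ref{th:topology} and Ehresmann.
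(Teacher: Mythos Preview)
Your proof is correct and follows essentially the same route as the paper's: show regular values are open (properness) and dense (Sard), intersect with the dense $\D^0(x_0)$ to get a regular value $z\in\D^0(x_0)$, apply Theorem~\ref{th:topology} to conclude $p$ is a submersion, then invoke Ehresmann (which the paper phrases as ``a well known consequence of the inverse function theorem that a proper submersion is a locally trivial bundle''). Your Step~1 spells out the openness argument more explicitly than the paper, which simply asserts it follows from properness.
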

 \begin {proof}
 Take $x\in M$. The set $Reg(p)$ of regular values of $p$ is open by our properness assumption. It is also non-empty, even dense in $M$, by Sard's theorem,  and so since $\D^0(x)$ is dense, there exists  a regular value $z$ which is in $\D^0(x)$. It follows from the theorem that $x\in Reg(p)$, and so $p$ is a submersion. However it is a well known consequence of the inverse function theorem that a proper submersion is a locally trivial bundle.
  \end{proof}

 Note that we only need $Reg(p)$ to be open, rather than $p$ proper, to ensure that $p$ is a submersion. The density of $\D^0(x)$ can hold
because of global behaviour, for example if $M$ is a torus and $E$ is  tangent to the foliation given by an irrational flow.

\chapter{Equivariant Diffusions on Principal Bundles}
\label{ch:deco-2}
Let $M$ be a smooth finite dimensional manifold and
 $\displaystyle{P\left( M,G\right)} $ a
 principal fibre bundle over $M$ with structure group $G$ a  Lie group.
 Denote by  $\displaystyle{\pi :P\rightarrow M}$  the projection and $R_a$
 right  translation by $a$. Consider on $P$ a diffusion generator
 $\B$, which is {\bf equivariant}, i.e. for all $f \in C^2(P;\R)$,
$$\B f\circ R_a=\B(f\circ R_a),
\qquad \; a\in G.$$
Set $\displaystyle{f^a(u)=f(ua)}$. Then the above equality can be written as
$\displaystyle{\B f^a=\left( \B f\right) ^a}$.
 The operator $\B$ induces an
operator $\A $ on the base manifold $M$. Set
 \begin{equation}
\label{operator-equivariant-induced}
\A f(x)=\B\left( f\circ \pi \right) (u),
\hskip 20pt u\in \pi^{-1}(x), f\in C^2(M),
\end{equation}
which  is well defined since
 $$\B\left( f\circ \pi\right) \left( u\cdot a\right)
 =\B\left( \left( f\circ \pi \right)^a\right) (u)
=\B\left( \left( f\circ \pi \right) \right) (u).$$

\section{Invariant Semi-connections on Principal Bundles}

\begin{definition}
Let $E$ be a sub-bundle of $TM$ and $\pi:P\to M$ a principal $G$-bundle.
An {\bf invariant semi-connection } over $E$,  or  {\bf principal semi-connection } in the terminology of  Michor, on $\pi:P\to M$ is a smooth sub-bundle
$H^ETP$ of $TP$ such that

\begin{enumerate}
\item[(i)]
$T_u\pi$ maps the fibres $H^ET_uP$ bijectively onto $E_{\pi(u)}$
for all $u\in P$.
\item[(ii)]
$H^ETP$ is $G$-invariant.
\end{enumerate}
\end{definition}

{\bf Notes. }
\begin{enumerate}
\item  Such a semi-connection determines and is determined by, a smooth
horizontal lift:
$$h_u: E_{\pi(u)}\to T_uP$$
such that
(i). $T_u\pi \circ h_u(v)=v$, for all $v\in E_x\subset T_xM$;\\
(ii). $h_{u\cdot a}=T_uR_a\circ h_u$.

\item The action of $G$ on $P$ induces a homomorphism of the Lie algebra
 $\g$ of $G$ with the algebra of left invariant vector fields
on $P$: if $A\in \g$,
$$A^*(u)=\left.{d\over dt}\right|_{t=0}\; u \exp(tA), \qquad u\in P,$$
and $A^*$ is called the fundamental vector field corresponding to $A$.

Using the splitting (\ref{split}) of $F_u$ our semi-connection determines, (and is determined by), a `semi-connection one-form' $\varpi\in \Lo(H+VTN;\g)$ which vanishes on $H$ and has $\varpi(A^*(u))=A$.

\item  Let $F$ be an associated vector bundle to $P$ with fibre $V$.
 An $E$ semi-connection on $P$ gives a covariant derivative
$\nabla_w Z\in F_x$ for $w\in E_x$, $x\in M$ where $Z$ is a section of $F$.
This is defined, as usual for connections, by
$$\nabla_w Z=u\big(d(\tilde Z)(h_u(w)) \big),$$
$u\in \pi^{-1}(x)$. Here $\tilde Z: P\to V$ is
$$\tilde Z(u)=u^{-1}Z\left(\pi(u)\right)$$
considering $u$ as an isomorphism $u:V\to F_{\pi(u)}$.
This agrees with the `semi-connections on $E$' defined in
Elworthy-LeJan-Li \cite{Elworthy-LeJan-Li-book}
 when $P$ is taken to be the linear frame bundle of $TM$  and $F=TM$.
\end{enumerate}

\begin{theorem}
\label{theorem:connection}
Assume $\sigma^\A $ has constant rank. Then $\sigma^\B$
 gives rise to an invariant semi-connection on  the principal bundle $P$ whose
 horizontal map is given by (\ref{h-lift-2}).
\end{theorem}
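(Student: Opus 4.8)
The plan is to show that the horizontal subbundle $H = \sqcup_u H_u$ constructed in Proposition~\ref{pr:h-map} (via the horizontal lift map $h_u = \sigma_u^\B \circ (T_up)^*$, appropriately interpreted through a right inverse of $\sigma^\A_{p(u)}$) is both a genuine smooth subbundle of $TP$ with $T_u\pi$ restricting to an isomorphism $H_u \to E_{\pi(u)}$, and that it is $G$-invariant. The first part is essentially already in hand: by Proposition~\ref{pr:h-map} the map $h_u$ is well-defined, smooth in $u$, and satisfies $T_u\pi \circ h_u = \mathrm{id}_{E_{\pi(u)}}$ because of the intertwining relation $(T_u\pi)\sigma_u^\B(T_u\pi)^* = \sigma_{\pi(u)}^\A$ from Lemma~\ref{le:commutative}; constant rank of $\sigma^\A$ is what guarantees $E$ is a bona fide subbundle and hence that $H$, being the image of a smooth bundle map of constant rank, is itself a smooth subbundle. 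So the real content is verifying condition~(ii) of the definition of invariant semi-connection, namely $h_{ua} = T_uR_a \circ h_u$ for all $a \in G$, equivalently $H_{ua} = T_uR_a(H_u)$.

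For the equivariance step I would argue as follows. First observe that $\pi \circ R_a = \pi$, so $T_{ua}\pi \circ T_uR_a = T_u\pi$; this already tells us $T_uR_a$ maps the vertical bundle to the vertical bundle and carries $F_u = (T_u\pi)^{-1}[E_{\pi(u)}]$ onto $F_{ua}$. The key input is that equivariance of $\B$ forces equivariance of its symbol: applying the defining formula for $\sigma^\B$ together with $\B(f\circ R_a) = (\B f)\circ R_a$, one gets for $f,g \in C^2(P;\R)$ that $df_{ua}\,\sigma^\B_{ua}(dg_{ua}) = d(f\circ R_a)_u\,\sigma^\B_u(d(g\circ R_a)_u)$, which in intrinsic terms reads
$$\sigma^\B_{ua} = T_uR_a \circ \sigma^\B_u \circ (T_uR_a)^*, \qquad a \in G.$$
This is the analogue of Lemma~\ref{le:commutative} for the right action. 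Then I would simply chase the characterization~(\ref{h-lift-2}): given $v \in E_{\pi(ua)} = E_{\pi(u)}$, pick $\alpha \in T_{\pi(u)}^*M$ with $\sigma^\A_{\pi(u)}(\alpha) = v$, and compute
$$h_{ua}(v) = \sigma^\B_{ua}\big((T_{ua}\pi)^*\alpha\big) = T_uR_a\,\sigma^\B_u\,(T_uR_a)^*(T_{ua}\pi)^*\alpha = T_uR_a\,\sigma^\B_u\,(T_u\pi)^*\alpha = T_uR_a\big(h_u(v)\big),$$
where the third equality uses $(T_{ua}\pi)\circ T_uR_a = T_u\pi$ applied to adjoints. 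This gives exactly $h_{ua} = T_uR_a \circ h_u$, hence $H$ is $G$-invariant, and the horizontal map is~(\ref{h-lift-2}) as claimed.

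The main obstacle, such as it is, is the bookkeeping in passing from the scalar/functional identity expressing equivariance of $\B$ to the intrinsic operator identity $\sigma^\B_{ua} = T_uR_a\,\sigma^\B_u\,(T_uR_a)^*$; this is the same kind of computation as the proof of Lemma~\ref{le:commutative} but with the diffeomorphism $R_a$ in place of the projection $p$, and one must be careful that $R_a$ being a diffeomorphism makes $(T_uR_a)^*$ an isomorphism so no degeneracy issues arise. Everything else — smoothness, the constant-rank subbundle structure, the splitting (\ref{split}) being preserved — follows from the constant-rank hypothesis on $\sigma^\A$ exactly as in Section~\ref{se:deco-1-1}, together with the observation that $\sigma^\A$ itself is automatically well-defined on $M$ and of the assumed constant rank because $\B$ is equivariant (as noted around~(\ref{operator-equivariant-induced})).
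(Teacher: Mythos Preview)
Your proposal is correct and follows essentially the same approach as the paper: both verify that $h_u$ is a lift via Lemma~\ref{le:commutative}, then establish equivariance by combining the identity $(T_uR_a)^*(T_{ua}\pi)^* = (T_u\pi)^*$ (from $\pi\circ R_a=\pi$) with the symbol equivariance $\sigma^\B_{ua} = T_uR_a\,\sigma^\B_u\,(T_uR_a)^*$ coming from equivariance of $\B$. The only cosmetic difference is that the paper runs the chain of equalities starting from $T_uR_a\circ h_u$ and expresses things via $(\sigma^\A_x)^{-1}$, whereas you start from $h_{ua}(v)$ and phrase it via a chosen preimage $\alpha$; these are the same computation.
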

\begin{proof}
It has been shown that $h_u$ is well defined by
(\ref{h-lift-2}).  Next we show $h_u$ defines a semi-connection.
 As noted earlier, $h$  defines a  semi-connection if
(i) $T_u\pi \circ h_u(v)=v$,
 $v\in E_x\subset T_xM$ and (ii) $h_{u\cdot a}=T_uR_a\circ h_u$.
The first is immediate by Lemma \ref{le:commutative} and for
the second observe
$\pi \circ R_a=\pi $.
So $T\pi \circ TR_a=T\pi $ and $\left( T\pi \right) ^{*}=\left( TR_a\right)
^{*}\cdot \left( T\pi \right) ^{*}$ while the following diagram

\begin{picture}(500, 140)(0,-90)
\put(30,0) {$T_u ^*P$}
\put(100,10){$\sigma_u^\B$}
\put(65,2){\vector(1,0){100}}
\put(167,0){$T_uP$}
\put(45,-65){\vector(0,1){60}}
\put(50,-35){$(T_uR_a)^*$}
\put(30,-75){$T_{u\cdot a}^*P $}
\put(175,-4){\vector(0,-1){60}}
\put(167,-75){$T_{ua}P$}
\put(65,-73){\vector(1,0){90}}
\put(100, -87){$\sigma_{u\cdot a}^\B$}
\put(177,-35){$T_u R_a$}
\end{picture}
commutes by equivariance of $\B  $. Therefore
\begin{eqnarray*}
T_uR_a\circ h_u &=&T_uR_a\cdot \sigma _u^{\B  }\left( T_u\pi \right)
^{*}\circ \left( \sigma _x^{\A  }\right) ^{-1} \\
&=&T_uR_a\cdot \sigma _u^{\B  }\circ \left( T_uR_a\right) ^{*}\circ
\left( T_{u\cdot a}\pi \right) ^{*}\circ \left( \sigma _x^{\A  %
}\right) ^{-1} \\
&=&\sigma _{u\cdot a}^{\B  }\circ \left( T_{u\cdot a}\pi \right)
^{*}\circ \left( \sigma _x^{\A  }\right) ^{-1}=h_{u\cdot a}.
\end{eqnarray*}
\end{proof}

Curvature forms \index{curvature!form} and holonomy groups \index{holonomy group} etc for semi-connections are defined analogously to those associated two connections, we note the following: 

\begin{proposition}
\label{pr:h-funct-2}
In the situation of Proposition \ref{pr:h-funct-1} suppose
 $\A $ is elliptic, $p^1$, $p^2$ are principal bundles with groups $G^1$ and $G^2$ respectively, and $F$ is a homomorphism of principal bundles with corresponding homomorphism $f:G^1\to G^2$. Let $\Gamma^1$ and $\Gamma^2$ be the semi-connections on $N^1$, $N^2$ determined by $\B^1$ and $\B^2$. Then
\begin{enumerate}
\item[(i)]
$\Gamma^2$ is the unique semi-connection on $p^2:N^2\to M$ such that $TF$ maps
the horizontal subspaces of $TN^1$ into those of $TN^2$.
\item[(ii)] If $\omega^j$, $\Omega^j$ are the semi-connection and curvature form of $\Gamma^j$, for $j=1,2$, then
$$F^*(\omega^2)=f_*\circ \omega^1$$
and
$$F^*(\Omega^2)=f_*\circ \Omega^1$$
for $f_*: \underline{g}_1\to  \underline{g}_2$ the homomorphism of Lie
 algebras induced by $f$.
\item[(iii)]
Moreover $f:G^1\to G^2$ maps the $\Gamma^1$ holonomy group at $u\in N^1$ onto the $\Gamma^2$ holonomy group at $F(u)$ for each $u \in N^1$ and similarly for the restricted holonomy groups.
\end{enumerate}
\end{proposition}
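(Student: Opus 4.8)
\textbf{Proof proposal for Proposition \ref{pr:h-funct-2}.}

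The plan is to reduce everything to the already-established functoriality of the horizontal lift map (Proposition \ref{pr:h-funct-1}) and then to propagate this through the definitions of connection form, curvature, and holonomy. First I would treat (i). Since $\A$ is elliptic, $E=TM$ and each horizontal subspace $H^j_u$ is a genuine linear complement to the vertical bundle, so a semi-connection on $p^j$ is literally a connection. By Proposition \ref{pr:h-funct-1}, applied with $p^1,p^2,\B^1,\B^2$ and the bundle map $F$ (which satisfies $p^2\circ F=p^1$ and intertwines $\B^1,\B^2$ by hypothesis), we get $h^2_{F(u)}=T_uF\circ h^1_u$ for all $u$; hence $T_uF(H^1_u)=H^2_{F(u)}$, which is exactly the stated mapping property. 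For uniqueness: any connection $\Gamma$ on $p^2$ with $TF(H^1_u)\subseteq H^2_{F(u)}$ must, since $T_uF$ restricted to $H^1_u$ is injective onto a subspace of $H^2_{F(u)}$ of the same dimension $\dim M$, satisfy $TF(H^1_u)=H^2_{F(u)}$; and because $F$ is a principal-bundle homomorphism, the images $\{F(u)a : u\in N^1, a\in G^2\}$ meet every fibre of $N^2$, so $G^2$-equivariance of $\Gamma$ forces $H^2$ to be determined on all of $N^2$ by its values along the image of $F$. Thus $\Gamma=\Gamma^2$.

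Part (ii) is then a formal consequence. To compute $F^*(\omega^2)$ at $u\in N^1$ on a tangent vector $\xi\in T_uN^1$, decompose $\xi=h^1_u(v)+A^*(u)$ with $v\in T_{p^1(u)}M$ and $A\in\g_1$. By (i), $T_uF$ sends $h^1_u(v)$ to $h^2_{F(u)}(v)$, which is $\omega^2$-horizontal, so it contributes nothing; and $T_uF(A^*(u))=(f_*A)^*(F(u))$ because $F$ intertwines the right actions via $f$ and hence the infinitesimal generators. Therefore $(F^*\omega^2)(\xi)=\omega^2\big((f_*A)^*\big)=f_*A=f_*(\omega^1(\xi))$, i.e. $F^*(\omega^2)=f_*\circ\omega^1$. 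The curvature identity follows by applying $F^*$ to the structure equation $\Omega^2=d\omega^2+\tfrac12[\omega^2,\omega^2]$, using that $F^*$ commutes with $d$ and that $f_*$ is a Lie algebra homomorphism so it respects the bracket term: $F^*\Omega^2=d(f_*\omega^1)+\tfrac12[f_*\omega^1,f_*\omega^1]=f_*(d\omega^1+\tfrac12[\omega^1,\omega^1])=f_*\circ\Omega^1$.

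For part (iii), the key point is that $F$ maps $\Gamma^1$-horizontal paths in $N^1$ to $\Gamma^2$-horizontal paths in $N^2$: indeed if $\tilde\sigma$ is horizontal then $\dot{\tilde\sigma}(t)\in H^1_{\tilde\sigma(t)}$, so by (i) $\tfrac{d}{dt}(F\circ\tilde\sigma)=T F(\dot{\tilde\sigma}(t))\in H^2_{F(\tilde\sigma(t))}$; and $F\circ\tilde\sigma$ lies over $p^2\circ F\circ\tilde\sigma=p^1\circ\tilde\sigma$, the same base loop. Given a loop $\gamma$ at $p^1(u)$ in $M$ (E-horizontal in the degenerate case, but here $E=TM$), lifting it through $u$ and through $ua$, and comparing endpoints, shows that if the $\Gamma^1$-holonomy takes $u$ to $u\cdot g$ for $g\in G^1$, then the $\Gamma^2$-holonomy takes $F(u)$ to $F(u\cdot g)=F(u)\cdot f(g)$; hence $f$ carries the $\Gamma^1$-holonomy group at $u$ into the $\Gamma^2$-holonomy group at $F(u)$. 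Surjectivity onto the $\Gamma^2$-holonomy group follows because every $\Gamma^2$-horizontal lift of a loop at $p^2(F(u))=p^1(u)$ starting at $F(u)$ is of the form $F\circ\tilde\sigma$ for the $\Gamma^1$-horizontal lift $\tilde\sigma$ of that same loop starting at $u$, by uniqueness of horizontal lifts of paths (Section on horizontal lift of paths). Restricting to null-homotopic loops gives the statement for the restricted holonomy groups. The main obstacle is the bookkeeping in part (i)'s uniqueness claim and in part (iii)—specifically, making sure that a principal-bundle homomorphism has enough image (saturating all fibres under the $G^2$-action) to pin down $\Gamma^2$ globally and to realise every $\Gamma^2$-horizontal lift as the $F$-image of a $\Gamma^1$-horizontal lift; once that is clear, everything else is routine pullback algebra.
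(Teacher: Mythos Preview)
Your proof is correct and follows the same approach as the paper: invoke Proposition \ref{pr:h-funct-1} to get that $TF$ carries horizontal subspaces to horizontal subspaces, and then derive (i)--(iii) by the standard principal-bundle arguments. The paper simply cites Kobayashi--Nomizu (Proposition 6.1, p.~79) for the latter, whereas you have written out those details explicitly; the content is the same.
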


\begin{proof}
Proposition \ref{pr:h-funct-1} assures us that $TF$ maps
horizontal to horizontal. Uniqueness together with (ii), (iii) come
as in Kobayashi-Nomizu \cite{Kobayashi-Nomizu-II}
(Proposition 6.1 on p79).\end{proof}

\section{Decompositions of Equivariant Operators}
\label{se-decomposition-2}
Take a basis ${A_1, \dots, A_n}$ of $\g$ with
corresponding fundamental vector fields $\{A_i^*\}$. Write the semi-connection 1-form as $\varpi=\sum\varpi^k\,A_k$ so that  $\varpi^k$ are
real valued, partially defined, 1-forms on $P$. 

In our equivariant situation we can give a more detailed description of the decomposition in Proposition \ref{th:deco}.

 \begin{theorem}
\label{th:op-deco}
 Let $\B$ be an equivariant  operator on $P$ and $\A $ be the induced operator on the base manifold. Assume that $\A$ is cohesive and let $\B=\A^H+\B^V$ be the decomposition of Proposition \ref{th:deco}. Then $\B^V$ has a unique expression of the form
 $\displaystyle{\sum \alpha^{ij} \Lo   _{A_i^*}\Lo   _{A_j^*}
  +\sum\beta^k\Lo   _{A_k^*}}$,
 where $\displaystyle{\alpha^{ij}}$ and $\displaystyle{\beta^k}$ are
 smooth functions on $P$, given by
$\displaystyle{\alpha^{k\ell}
=\varpi^k\left(\sigma^\B  (\varpi ^\ell)\right)}$,
and $\displaystyle{\beta^\ell=\delta^\B  (\varpi^\ell)}$
for  $\varpi$  the semi-connection 1-form on $P$.
Define  $\alpha: P\to \g\otimes\g$ and
 $\beta: P\to \g$ by
\begin{equation}\label{alpha}
\alpha(u)=\sum\alpha^{ij}(u)A_i\otimes A_j,  \qquad
\beta(u)=\sum\beta^k(u) A_k.
\end{equation}
These are independent of the choices of basis of $\g$ and are equivariant:
$$\alpha(ug)=\left(ad(g)\otimes ad(g)\right) \alpha(u)$$
and
$$\beta(ug)=ad(g)\beta(u).$$
 \end{theorem}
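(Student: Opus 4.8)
The plan is to first establish the local formula for $\B^V$ in terms of the semi-connection form, then deduce the invariance properties of $\alpha$ and $\beta$ from the equivariance of $\B$ and of the semi-connection. By Theorem~\ref{th:deco} we have $\B = \A^H + \B^V$ with $\B^V$ vertical, so $\B^V$ is along the vertical distribution $VTP$, which is trivialised by the fundamental vector fields $\{A_i^*\}$ (since $\pi:P\to M$ is a principal bundle). Hence by Proposition~\ref{pr:along}(2) and Proposition~\ref{pr:vert-eq}, $\B^V$ has an expression $\sum \alpha^{ij}\LL_{A_i^*}\LL_{A_j^*} + \sum\beta^k\LL_{A_k^*}$ for smooth functions $\alpha^{ij}, \beta^k$ on $P$; the point is to identify these coefficients. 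I would do this by pairing with the semi-connection one-forms $\varpi^\ell$: since $\varpi^\ell(A_k^*) = \delta^\ell_k$ and $\varpi^\ell$ vanishes on $H$, we have $\delta^{\B^V}(\varpi^\ell) = \sum_k \beta^k \varpi^\ell(A_k^*) = \beta^\ell$ (using that the $\LL_{A_i^*}(\varpi^\ell(A_j^*))$ terms vanish because $\varpi^\ell(A_j^*)$ is the constant $\delta^\ell_j$), and similarly $\varpi^k(\sigma^{\B^V}(\varpi^\ell)) = \alpha^{k\ell}$ by the Hörmander-form expression for the symbol. Then I would argue that $\delta^{\B}(\varpi^\ell) = \delta^{\B^V}(\varpi^\ell)$ and $\sigma^\B(\varpi^\ell) = \sigma^{\B^V}(\varpi^\ell)$ on the relevant forms: indeed $\sigma^{\A^H}$ annihilates $H^0 \ni \varpi^\ell$ by Lemma~\ref{le:symbols-2}(C), and correspondingly the $\A^H$-contribution to $\delta$ applied to $\varpi^\ell$ vanishes since $\A^H$ is horizontal (along $H$) and $\varpi^\ell \in C^1 H^0$. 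This gives the stated formulas $\alpha^{k\ell} = \varpi^k(\sigma^\B(\varpi^\ell))$ and $\beta^\ell = \delta^\B(\varpi^\ell)$.

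Uniqueness of the expression follows because the $A_i^*$ are pointwise linearly independent and span $VTP$, so the second-order part determines the symmetric matrix $(\alpha^{ij})$ (symmetrising, which is harmless since $\LL_{A_i^*}\LL_{A_j^*}$ appears symmetrised in the symbol) and then the first-order part determines $(\beta^k)$; alternatively, the pairing formulas above already exhibit the coefficients intrinsically. For basis-independence: if $\{A_i'\}$ is another basis with $A_i = \sum_j c_{ij} A_j'$, then $A_i^* = \sum_j c_{ij} (A_j')^*$ and $\varpi'^k = \sum_\ell (c^{-1})_{k\ell}\varpi^\ell$ transforms contragradiently, so $\alpha = \sum \alpha^{ij} A_i\otimes A_j = \sum \varpi^i(\sigma^\B(\varpi^j)) A_i\otimes A_j$ and $\beta = \sum \delta^\B(\varpi^k) A_k$ are manifestly independent of the choice — these are just the intrinsic objects $\varpi\otimes\varpi$ contracted with $\sigma^\B$, and $\delta^\B\circ\varpi$, viewed as taking values in $\g\otimes\g$ and $\g$ respectively.

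For equivariance, the key input is that the semi-connection is invariant (Theorem~\ref{theorem:connection}), which translates into $R_a^*\varpi = \mathrm{ad}(a^{-1})\circ\varpi$, i.e. $R_a^*\varpi^k = \sum_\ell (\mathrm{ad}(a^{-1}))_{k\ell}\varpi^\ell$ in components, together with the equivariance of $\B$, which by Lemma~\ref{le:commutative}-type reasoning gives $\sigma^\B_{ua} = T_uR_a\circ\sigma^\B_u\circ(T_uR_a)^*$ and $\delta^\B(R_a^*\phi) = (\delta^\B\phi)\circ R_a$. Combining: $\alpha^{k\ell}(ua) = \varpi^k_{ua}(\sigma^\B_{ua}(\varpi^\ell_{ua}))$; pulling back by $R_a$ and using the transformation rules turns this into $\sum_{p,q}(\mathrm{ad}(a))_{kp}(\mathrm{ad}(a))_{\ell q}\,\varpi^p(\sigma^\B(\varpi^q))(u)$ — I need to track the $\mathrm{ad}(a)$ versus $\mathrm{ad}(a^{-1})$ carefully here — which gives $\alpha(ua) = (\mathrm{ad}(a)\otimes\mathrm{ad}(a))\alpha(u)$; similarly $\beta^\ell(ua) = \delta^\B(\varpi^\ell)(ua)$ becomes $\sum_q (\mathrm{ad}(a))_{\ell q}\delta^\B(\varpi^q)(u)$, giving $\beta(ua) = \mathrm{ad}(a)\beta(u)$.

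The main obstacle I anticipate is bookkeeping rather than conceptual: getting the $\mathrm{ad}(a)$ versus $\mathrm{ad}(a^{-1})$ (equivalently, $R_a^*$ versus $(R_a)_*$) conventions consistent throughout, and verifying cleanly that the decomposition $\B = \A^H + \B^V$ is compatible with the $\delta$ and $\sigma$ computations — specifically that the $\A^H$-part contributes nothing when one feeds in forms annihilating $H$. The latter is exactly Lemma~\ref{le:symbols-2} for the symbol; for $\delta$ one should note $\A^H$ is along $H$ (it is horizontal by Theorem~\ref{th:deco}), so $\delta^{\A^H}$ vanishes on $C^1 H^0$ by Definition~\ref{def:op-along}, which is precisely what is needed.
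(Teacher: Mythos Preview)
Your proposal is correct and follows essentially the same approach as the paper: express $\B^V$ in the fundamental vector fields via Proposition~\ref{pr:vert-eq}, read off the coefficients by pairing with the $\varpi^\ell$ (using that $\varpi^\ell(A_k^*)$ is the constant $\delta^\ell_k$), invoke the horizontality of $\A^H$ to replace $\sigma^{\B^V},\delta^{\B^V}$ by $\sigma^\B,\delta^\B$ on these forms, and then derive basis-independence and equivariance from the intrinsic description $\alpha=\varpi(\sigma^\B(\varpi))$, $\beta=\delta^\B(\varpi)$ together with $(R_g)^*\varpi=\mathrm{ad}(g^{-1})\varpi$. The only difference is presentational: the paper computes $\delta^{\B^V}$ via its explicit H\"ormander-type formula $\sum\alpha^{ij}\LL_{A_i^*}\iota_{A_j^*}+\sum\beta^k\iota_{A_k^*}$ rather than citing Definition~\ref{def:op-along} directly, but the content is the same.
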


\begin{proof}
Since every vertical vector field is a linear combination of the fundamental
 vertical vector fields,  Proposition \ref{pr:vert-eq},
shows that   $$\B^V
 =\sum \alpha^{i,j}\Lo   _{A_i^*}\Lo   _{A_j^*}
+\sum \beta^k \Lo   _{A_k^*}$$
for certain functions $\alpha^{ij}$, $\beta^k$.
For $f,g: P\to \R$ setting
 $\sigma:=\sigma^{\B  -\A ^H}$,
 \begin{eqnarray*}
df\left(\sigma(dg)\right)&=&{1\over 2}
\sum \alpha^{i,j}\Lo   _{A_i^*}\Lo   _{A_j^*}(fg)
-{1\over 2}\sum g\alpha^{i,j}\Lo   _{A^*_i}\Lo   _{A^*_j}(f)\\
&&
-{1\over 2}\sum f\alpha^{i,j}\Lo   _{A^*_i}\Lo   _{A^*_j}(g)\\
&=&\sum \alpha^{i,j}\Lo   _{A^*_i}(f)\Lo   _{A^*_j}(g)\\
&=&\sum \alpha^{i,j}df({A_i}^*)dg({A_j}^*).
 \end{eqnarray*}
  Since  $\varpi(A_k^*)=A_k$,  we see that
  $ \varpi^k(A_{\ell}^*)=\delta_{k\ell}$ and
 $$\varpi^k(\sigma(\varpi^\ell))
=\sum \alpha^{i,j}\delta_{ik}\delta_{j\ell}=\alpha^{k\ell}.$$
Since $\A ^H$ is horizontal $\sigma^{\A ^H}$ has image
in the horizontal tangent bundle and so is annihilated by $\varpi^k$.
Thus
\begin{equation}
\label{alpha-2}
\alpha^{k\ell}
=\varpi^k\left(\sigma^\B  (\varpi ^\ell)\right).
\end{equation}
Note that by the characterisation, Proposition \ref{pr:delta},
$$\delta^{\B^V}=
\sum \alpha^{i,j}\Lo   _{{A_i}^*}{\iota}_{{A_j}^*}
+\sum\beta^k \iota_{{A_k}^*}.$$
Since $\varpi^\ell({A^*}^\ell)$ is identically $1$,
it follows that $\delta^{\B^V}(\varpi)=\beta^\ell$.
 Again $\displaystyle{\delta^{\A ^H}(\varpi^\ell)=0}$
and  so
\begin{equation}
\label{beta-2}
\beta^\ell=\delta^\B  (\varpi^\ell)
\end{equation}
 as required.

For the last part $\alpha$ and $\beta$ can be considered as
 obtained from the extension of the symbol $\sigma^\B  $ and
$\delta^\B  $ to   $\g$-valued two and one forms
 respectively:  $\alpha=\varpi(-)\sigma^\B  \varpi(-)$ and
 $\beta=\delta^\B  (\varpi(-))$.  
  To make this precise consider
$\sigma_u^\B  $  as a bilinear form and so as a linear map
$$\sigma_u^\B  : T_u^*P\otimes T_u^*P\to \R.$$
The extension is the trivial one given by
$$\sigma_u^\B   \otimes \1\otimes \1:
T_u^*P\otimes T_u^*P\otimes\g\otimes\g\to
\R\otimes\g\otimes\g
\simeq \g\otimes\g$$
 using the identification of  $T_u^*P \otimes \g$ with
  $L(T_uP;\g)$.
Similarly the extension of $\displaystyle{\delta^\B  }$ is
$$\delta^\B  _u\otimes \1:  T_u^*P\otimes \g
\to \R\otimes\g\simeq \g.$$
 Thus $$\alpha(u)(\omega\otimes \omega)
=\left(\sigma^\B \otimes \1\otimes \1\right)\left (P_{23}
\omega\otimes \omega\right)$$
where $P_{23}: T^*P\otimes \g\otimes T^*P\otimes \g
\to T^*P\otimes T^*P\otimes \g \otimes\g$ is
the standard permutation and
 $\beta_u(\omega)=(\delta^\B_u\otimes \1)(\omega)$.

The equivariance of $\displaystyle{\varpi}$
$$(R_g)^*\varpi =ad(g^{-1})(\varpi), \hskip 15pt g\in G$$
is equivalent to the invariance of $\varpi$ when considered
as a section of $T^*M\otimes \g$ under
$$TR_g\otimes ad(g): T^*M\otimes \g\to
T^*M\otimes \g, \hskip 25pt g\in G.$$
 \end{proof}

\begin{remark}
\label{re:alpha}
\begin{enumerate}
\item[(a)] For any equivariant operator of the form $\B=\sum_{i,j}\alpha^{ij}L_{A_i^*}L_{A_j^*}+\sum \beta^k L_{A_k^*}$ with $(\alpha^{ij}(u))$
positive semi-definite for each $u\in P$ we can define maps $\alpha$ and $\beta$ by (\ref{alpha}). Note that $\alpha(u)$ is essentially  the symbol of $\B$ restricted to the fibre $P_{\pi(u)}$ through $u$:
$$\sigma_u^{\B}|_{P_{\pi(u)}}:  T_u^*P_{\pi(u)} \to T_uP_{\pi(u)}$$
with $\varpi_u$ identifying $T_uP_{\pi(u)}$ with $\mathfrak g$. Similarly $\beta$ determines $\delta^\B$ on a basis of sections of $(VTP)^*$.
\item[(b)] Let $\{u_t: 0\leqslant t\leqslant \zeta\}$ be a $\B$-diffusion on $P$. By (\ref{alpha-2}), $2\alpha^{kl}(u_t)$ is the derivative of the bracket 
$\Big\langle \int_0^\cdot \varpi^k_{u_s}\circ du_s, 
\int_0^\cdot \varpi^l_{u_s}\circ du_s \Big\rangle$ of
the integrals of $\omega^k$ and $\omega^l$ along $\{u_t: 0\leqslant t<\zeta\}$.
See chapter \ref{ch:intertwined} below for a detailed discussion. Thus
$\alpha(u_t)$ is the derivative of the tensor quadratic variation:
$$\alpha(u_t)={1\over 2} {d \over dt}\int_0^t \Big(\varpi_{u_t}\circ du_t \otimes \varpi_{u_t} \circ du_t\Big).$$
Moreover by (\ref{beta-2}) and Lemma \ref{le:mart-1} below $\int_0^t \beta(u_s) ds$ is the bounded variation part of $\int_0^t \varpi_{u_s}\circ du_s$.
\item[(c)] If we fix $u_0\in P$ and take an inner product on $\g$ we can diagonalise  $\alpha(u_0)$ to write 
$$\alpha(u_0)=\sum_n \mu_n A_n\otimes A_n$$
where $\{A_n: n=1, \dots \dim(\g)\}$ is an orthonormal basis. The
$\mu_n$ are the eigenvalues of $\alpha(u_0)^{\#}:\g\to \g$ obtained using the isomorphism:
\begin{eqnarray*}
\g\otimes \g& \to & \LL(\g;\g)\\
a\otimes b&\mapsto& (a\otimes b)^{\#},
\end{eqnarray*}
where $(a\otimes b)^{\#}(v)=\langle b, v \rangle a$.

Note that for $g\in G$, 
$\alpha(u_0\cdot g)=\sum_n \mu_n \ad(g)A_n\otimes \ad(g)A_n$.
When the inner product is $\ad(G)$-invariant then $\{\ad(g)A_n\}_{n=1}^{\dim(\g)}$ is still orthonomal and the $\{\mu_n\}_n$ are the eigenvalues of $\alpha(u_0\cdot g)^{\#}$. They are therefore independent of the choice of $u_0$ in a given fibre, (but depend on the inner product chosen).

\end{enumerate}
\end{remark}

\section{Derivative Flows and Adjoint Connections} 
\label{se:deri-flow}

Let $\A$ on $M$ be given in H\"ormander form\index{H\"ormander!form}
\begin{equation}
\label{op-deri}
\A={1\over 2}\sum_{j=1}^m \Lo   _{X^j}\Lo   _{X^j}
+\Lo   _{A}
\end{equation}
for some smooth vector fields $X^1,\dots X^m$, $A$. As before let
$E_x=\span\{X^1(x),\dots, X^m(x)\}$ and assume
 $\dim E_x$ is constant, denoted by $p$, giving a sub-bundle $E\subset TM$. The vector fields
$\{X^1(x),\dots, X^m(x)\}$ determine a vector bundle map
$$X:\underline {\R}^m\to   TM$$
with $\sigma^\A=X(x)X(x)^*$.

We can, and will, consider $X$ as a map $X: \underline{\R}^m\to E$.
Let $Y_x$ be the right inverse $[X(x)|_{\ker X(x)^\perp}]^{-1}$ of $X(x)$ and $\langle, \rangle_x$ the inner product, induced on $E_x$ by $Y_x$. Then $X$ projects the flat connection on $\R^m$ to a metric connection $\breve \nabla$ on $E$ defined by
\begin{equation}
\label{connection-1}
\breve \nabla _v U=X(x)d[y\mapsto Y_y U(y)](v), \qquad U\in C^1\Gamma E, v\in T_yM,
\end{equation}
(In \cite{Elworthy-LeJan-Li-book} we have studied the properties of this construction together with the SDE induced by $X$, and there $\breve \nabla$ is referred as the LW connection \index{connection!LW}%
\index{LW connection}%
 for the SDE.)
Moreover any connection $\nabla$ on a subbundle $E$ of $TM$ has an adjoint semi-connection \index{adjoint!semi-connection}%
 $ \nabla^\prime$ on $TM$ over $E$ defined by
$$\nabla^\prime_UV=\nabla_V U+[U,V], \qquad U\in \Gamma E, V\in \Gamma TM.$$

Let $\pi: GLM\to M$ \index{$GLM$} be the frame bundle \index{frame bundle} of $M$, so $u\in \pi^{-1}(x)$ is a linear isomorphism $u:\R^n\to T_xM$. It is a principal bundle with group $GL(n)$. If $g\in GL(n)$ and $\pi(u)=x$ then $u\cdot g: \R^n\to T_xM$ is just the composition of $u$ with $g$. 

%\subsection{The Equi-variant Operator on $GLM$}

Any smooth vector field $A$ on $M$ determines smooth vector fields $A^{TM}$ and $A^{GL}$ on $TM$ and $GLM$ respectively as follows: Let $\eta_t: t\in (-\epsilon, \epsilon)$ be a (partial) flow for $A$ and $T\eta_t$  its derivative. Then $v\mapsto T\eta_t(v)$ is a partial flow on $TM$ and $u\mapsto T\eta_t\circ u$  one on $GLM$, Let $A^{TM}$ and $A^{GL}$ be the vector fields generating  these flows.
In fact $A^{TM}$ is $\tau\circ TA: TM\to TTM$ where $\tau:TTM\to TTM$ is the canonical twisting map:
$$\tau(x,v, w,v^\prime)=(x,v,v^\prime,w)$$
in local coordinates.

Using this, the choice of our  H\"{o}rmander form representation
induces a diffusion operator $\B$ on $GLM$ by setting 
$$\B={1\over 2}\sum \LL_{(X^j)^{GL}} \LL_{(X^j)^{GL}}+\LL_{A^{GL} }.$$
Then $\pi$ intertwines $\B$ and $\A$.
For $w\in E_x$, set
$$Z^w(y)=X(y)Y_x(w).$$

\begin{theorem}
\label{th:conn-deri}
Assume the diffusion operator $\A$ given by (\ref{op-deri}) is cohesive
and let $\B$ be the operator on $GLM$ determined by $\A$. Let $E$ be the image of $\sigma^\A$, a vector bundle. 

\begin{enumerate}
\item[(a)] The semi-connection $\nabla$ induced by $\B  $ is the adjoint of $\breve \nabla$ given by (\ref{connection-1}). Consequently
$\nabla_wV=L_{Z^w}V$ for any vector field $V$ and $w\in E_x$,

\item[(b)] For $u\in GLM$, identifying ${\mathfrak gl}(n)$ with $\Lo(\R^n;\R^n)$,
\begin{eqnarray*}
\alpha(u)&=&{1\over 2}\sum \left(u^{-1}(-)\breve \nabla_{u(-)}X^p\right)
\otimes \left(u^{-1}(-)\breve\nabla_{u(-)} X^p\right),\\
\beta(u)&=&-{1\over 2}\sum u^{-1}
\breve \nabla_{\breve \nabla_{u(-)}X^p}X^p
-{1\over 2} u^{-1} {\Ric }^{\#}{u(-)}+u^{-1}\breve\nabla_{u(-)}A.
\end{eqnarray*}
Here ${\Ric}^{\#}:TM \to E$ is the Ricci curvature \index{curvature!Ricci} of $\breve \nabla$ considered as an operator from $TM$ to  $E$, defined by 
$${\Ric}^{\#}(v)=\sum_{j=1}^m \breve R\big(v, X^j(x)\big)X^j(x)$$
for $\breve R$ the curvature operator \index{curvature!operator}\index{operator!curvature} of $\breve \nabla$.
\end{enumerate}
\end{theorem}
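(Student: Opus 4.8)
The plan is to first identify the horizontal lift map attached to $\B$, deduce (a) from it, and then feed the lift map into the formulas of Theorem~\ref{th:op-deco} to get (b). The $GL$-lifts $(X^j)^{GL}$ and $A^{GL}$ are invariant under $R_g$ (their flows $u\mapsto T\eta_t\circ u$ commute with right translation), so $\B$ is equivariant and Theorem~\ref{theorem:connection} produces an invariant semi-connection. Since each $(X^j)^{GL}$ is $\pi$-related to $X^j$ (the flow of $(X^j)^{GL}$ covers that of $X^j$) and the two H\"ormander forms use the same number $m$ of generators, equation~(\ref{eq:hor-lift}) together with Lemma~\ref{factorization-connection} gives $h_u=\tilde X(u)\,Y_{\pi(u)}$ with $\tilde X(u)(e)=\sum_j e_j(X^j)^{GL}(u)$; by $\mathbf R$-linearity of the $GL$-lift in the vector field this is exactly $h_u(w)=(Z^w)^{GL}(u)$ for $w\in E_{\pi(u)}$, where $Z^w(y)=X(y)Y_{\pi(u)}(w)$. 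For (a) I would then insert this into the covariant-derivative formula $\nabla_w V=u\big(d\tilde V(h_u(w))\big)$, $\tilde V(u)=u^{-1}V(\pi(u))$: writing the flow of $(Z^w)^{GL}$ as $u\mapsto T\eta_t\circ u$ and using the classical identity $(\LL_Z V)(x)=\tfrac{d}{dt}\big|_0 T\eta_{-t}V(\eta_t(x))$ collapses the formula to $\nabla_w V=\LL_{Z^w}V$.

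It then remains to identify $\LL_{Z^w}V$ with the adjoint $\breve\nabla'$ of $\breve\nabla$. The key point is that $Z^w$ is a section of $E$ with $Z^w(\pi(u))=w$ and $\breve\nabla_v Z^w=0$ for every $v$: indeed $Z^w(y)=X(y)(Y_xw)$ with $Y_xw$ lying in the range of the orthogonal projection $P(y):=Y_yX(y)$ of $\mathbf R^m$ onto $(\ker X(y))^\perp$, so by (\ref{connection-1}) $\breve\nabla_v Z^w=X(x)(dP_x v)(Y_xw)$, and differentiating $P^2=P$ shows $(dP_x v)$ maps $\Image P(x)$ into $\ker P(x)=\ker X(x)$. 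Hence, using $Z^w$ as the section of $E$ extending $w$, $\breve\nabla'_w V=\breve\nabla_{V(x)}Z^w+[Z^w,V](x)=[Z^w,V](x)=\LL_{Z^w}V=\nabla_w V$, which proves (a). One should also recall that the adjoint is well defined as a semi-connection over $E$, i.e.\ $\breve\nabla'_U V|_x$ depends only on $U(x)$ — a short computation, carried out in \cite{Elworthy-LeJan-Li-book}.

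For (b) I would use $\sigma^\B_u=\tfrac12\sum_j(X^j)^{GL}(u)\otimes(X^j)^{GL}(u)$ and, by the Remark following Proposition~\ref{pr:delta}, $\delta^\B=\tfrac12\sum_j\LL_{(X^j)^{GL}}\iota_{(X^j)^{GL}}+\iota_{A^{GL}}$, so by Theorem~\ref{th:op-deco} $\alpha(u)=\tfrac12\sum_j\varpi\big((X^j)^{GL}(u)\big)\otimes\varpi\big((X^j)^{GL}(u)\big)$ and $\beta(u)=\tfrac12\sum_j\LL_{(X^j)^{GL}}\big(\varpi((X^j)^{GL})\big)(u)+\varpi\big(A^{GL}(u)\big)$. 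The basic computation is that for any section $W$ of $E$ one has $\varpi_u(W^{GL}(u))=u^{-1}\breve\nabla_{u(-)}W$: split $W^{GL}(u)=h_u(W(x))+\big(W-Z^{W(x)}\big)^{GL}(u)$; the first summand is horizontal, hence killed by $\varpi$, while the second is the fundamental vector field of $u^{-1}\big(\nabla(W-Z^{W(x)})\big)_x\,u$, and $\big(\nabla(W-Z^{W(x)})\big)_x=\breve\nabla_{(-)}W$ because $W-Z^{W(x)}$ vanishes at $x$ (so its linearisation is connection-independent) and $\breve\nabla_{(-)}Z^{W(x)}=0$ by the previous step. Taking $W=X^j$ gives the stated formula for $\alpha(u)$, and taking $W=A$ (which is a section of $E$ by Proposition~\ref{pr:along}, since $\A$ is cohesive) gives the term $u^{-1}\breve\nabla_{u(-)}A$ of $\beta(u)$.

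The last and hardest piece is the remaining term of $\beta$. Flowing along $(X^j)^{GL}$ and recognising $\tfrac{d}{dt}\big|_0$ of a pullback as a Lie derivative gives $\LL_{(X^j)^{GL}}\big(\varpi((X^j)^{GL})\big)(u)=u^{-1}\big(\LL_{X^j}(\breve\nabla_{(-)}X^j)\big)(u(-))$. Expanding the Lie derivative of the $(1,1)$-tensor $\breve\nabla_{(-)}X^j$ using the torsion in the Lie bracket and the Ricci identity for $\breve\nabla$ yields $\LL_{X^j}(\breve\nabla_{(-)}X^j)(v)=\breve\nabla_v\breve\nabla_{X^j}X^j-\breve\nabla_{\breve\nabla_v X^j}X^j+\breve R(X^j,v)X^j-\breve{\mathbb{T}}(X^j,\breve\nabla_v X^j)$, where $\breve{\mathbb{T}}$ is the torsion of $\breve\nabla$; summing over $j$ and using $\sum_j\breve R(X^j,v)X^j=-\Ric^\#(v)$ produces the claimed $-\tfrac12\sum_j\breve\nabla_{\breve\nabla_{u(-)}X^j}X^j-\tfrac12\Ric^\#(u(-))$ \emph{provided} $\breve\nabla_v\big(\sum_j\breve\nabla_{X^j}X^j\big)=\sum_j\breve{\mathbb{T}}(X^j,\breve\nabla_v X^j)$. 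This identity — which is exactly where the special structure of the LW connection enters, and which I expect to be the main obstacle — I would establish from the explicit form $\breve\nabla_v X^j=X(x)(dP_x v)e_j$ together with $P^2=P$, $P^*=P$ and $X(y)P(y)=X(y)$, which also give closed $dP$-expressions for $\breve{\mathbb{T}}$ and for $\sum_j\breve\nabla_{X^j}X^j$ so that both sides reduce to the same $\mathbf R^m$-valued bilinear form; alternatively one can quote the corresponding frame-bundle computation in \cite{Elworthy-LeJan-Li-book}.
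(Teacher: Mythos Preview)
Your proof of (a) is correct and more direct than the paper's. The paper identifies the horizontal lift the same way, but then works at the level of parallel transport: it lifts a horizontal curve $\sigma$ in $M$ to $GLM$ via the flow of the time-dependent field $Z^{\dot\sigma(t)}$, checks (using an auxiliary torsion-free connection) that this is indeed the $\B$-horizontal lift, and then quotes Lemma~1.3.4 of \cite{Elworthy-LeJan-Li-book} to recognise $TS^\sigma_{0,t}$ as the parallel transport of the adjoint of $\breve\nabla$. Your approach bypasses curves entirely by proving the key pointwise fact $\breve\nabla_v Z^w=0$ and plugging it into the definition of the adjoint; this is cleaner and self-contained, at the cost of the short $P^2=P$ calculation.

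For (b) the two approaches are genuinely different. The paper works stochastically: it runs the derivative flow $u_t=T\xi_t\circ u_0$ on $GLM$, writes $\varpi(\circ du_t)=u_t^{-1}\hat D u_t/dt$ as a Stratonovich differential, reads $\alpha(u)$ from the tensor quadratic variation via Remark~\ref{re:alpha}(b), and extracts $\beta(u)$ as the bounded-variation part by an It\^o--Stratonovich conversion that again invokes formulae from \cite{Elworthy-LeJan-Li-book}. Your route is purely deterministic: you evaluate $\varpi(W^{GL})=u^{-1}\breve\nabla_{u(-)}W$ algebraically (your splitting $W^{GL}=h_u(W(x))+(W-Z^{W(x)})^{GL}$ is exactly the right device, using $\breve\nabla Z^{W(x)}=0$ from part~(a)), and then compute $\beta$ by differentiating this along $(X^j)^{GL}$. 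The reward is that no stochastic calculus is needed; the price is the identity
\[
\breve\nabla_v\Bigl(\sum_j\breve\nabla_{X^j}X^j\Bigr)=\sum_j\breve{T}\bigl(X^j,\breve\nabla_vX^j\bigr),
\]
which you correctly isolate as the crux. This identity is true and your $dP$ strategy works: writing $\breve\nabla_vX^j=X(x)(dP_xv)e_j$ and computing both sides in terms of $P$, $dP$ and $d^2P$, the relations $P^2=P$, $P^*=P$, $X P=X$ collapse everything to the same expression; alternatively it is contained in the frame-bundle calculations of \cite{Elworthy-LeJan-Li-book}, which is what the paper's stochastic argument is implicitly using. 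Either way, the identity is where the specific structure of the LW connection enters, just as you anticipated.
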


\begin{proof}
The first part can be deduced from the stochastic flow results in chapter \ref{ch:flow}
but we give a direct proof here.
 Let  $\pi_t^e$ be the flow of $X(\cdot)(e)$.  It induces a linear map
 $\tilde X(u): \R^m\to T_uGLM$
 on the general linear bundle $GLM$:
\begin{eqnarray*}
%\label{horizontal-lift-of-X}
\tilde X(\cdot)e&=&[X(\cdot)(e)]^{GL}\\
\tilde X(u)(e)&=&{d\over dt}(TS^e_t\circ u)|_{t=0},\hspace{0.3in}
 u\in GLM.
\end{eqnarray*}
We can apply lemma \ref{factorization-connection} with  $\tilde \R^m=\R^m$ and so $\ell_u=Y(p(u))X(p(u))$.
 If  $x= p(u)$ and  $e\perp \hbox{ker}[X(x)]$ then the horizontal lift
map $h_u$ defined by Theorem \ref{theorem:connection} is
\begin{equation}
\label{lift-derivative-flow}
h_u\left(X(x)(e)\right)=\tilde X(u)\left(\ell_u(e)\right)
=\left.{d\over dt}\right\vert_{t=0} \left(T\pi_t^e\circ u\right).
\end{equation}
Note this will not hold in general if $e\in \hbox{ker} [X(x)]$.

Let $\sigma: [0,T]\to M$ be a $C^1$ curve with
 $\dot \sigma(t)\in E_{\sigma(t)}$ each $t$. Then
$$Z^{\dot\sigma(t)}(x):=X(x)Y_{\sigma(t)}\dot \sigma(t).$$
Let $S_{s,t}^\sigma$ be the flow, from time $s$ to time $t$,
of the time dependent vector field $Z^{\dot \sigma(t)}$.
Now $S_{s,t}^\sigma(\sigma(s))=\sigma(t)$ for $ 0\leqslant s\leqslant t\leqslant T$.
Also, for any  torsion free connection  and any $v\in T_{\sigma(s)}M$
$$\left. {D\over dt}\right\vert _{t=s} TS_{s,t}^\sigma(v)
=\nabla Z^{\dot \sigma(t)}\left(TS_{s,t}^\sigma(v)\right)|_{t=s}
=\nabla_vZ^{\dot \sigma(s)}.$$
Thus $${D\over dt} TS_{0,t}^\sigma(v)
= \nabla_{TS_{0,t}^\sigma} Z^{\dot \sigma(t)}.$$
If $\varpi$ is the connection form of this torsion free connection then
\begin{eqnarray*}
\varpi\left({D\over dt}TS_{0,t}^\sigma\circ u_0\right)
&=& [e\mapsto  \left(TS_{0,t}^\sigma\circ u_0\right)^{-1}
{D\over dt} TS_{0,t}^\sigma(u_0(e))]\\
&=&[e \mapsto  \left(TS_{0,t}^\sigma\circ u_0\right)^{-1}
   \nabla_{TS_{0,t}^\sigma u_0(e)} Z^{\dot \sigma(t)}]\\
&=&\varpi\left(h_{TS_{0,t}^\sigma\circ u_0}(\dot \sigma(t))\right)
\end{eqnarray*}
by (\ref{lift-derivative-flow}),  showing that  the vertical parts of
 ${d\over dt} \left(TS_{0,t}^\sigma\circ u_0\right)$
and $h_{TS_{0,t}^\sigma\circ u_0}(\dot \sigma(t))$ equal.

On the other hand, using this auxiliary connection, the horizontal parts of
${d\over dt} \left(T S_{0,t}^\sigma\circ u_0\right)$
and $h_{TS_{0,t}^\sigma\circ u_0}(\dot \sigma(t))$
are both equal to the horizontal lift of $\dot \sigma(t)$.
Thus
$${d\over dt} \left(TS_{0,t}^\sigma\circ u_0\right)
=h_{TS_{0,t}^\sigma\circ u_0} (\dot \sigma(t))$$
and so $\{TS_{0,t}^\sigma\circ u_0: 0\leqslant t\leqslant T\}$ is the horizontal
lift of $\{\sigma(t): 0\leqslant t \leqslant T\}$ with respect to the semi-connection
induced by $\B  $. However  by Lemma 1.3.4 in
Elworthy-LeJan-Li \cite{Elworthy-LeJan-Li-book},
 $TS_{0,t}^\sigma(v)$ of $S_{0,t}^\sigma$
is  the parallel  translation of $v$ along  $\sigma$  by the adjoint
 semi-connection  $\hat \nabla$  of the LeJan-Watanabe connection on $E$
associated to $X$ and  $\{TS_{0,t}^\sigma\circ u_0: 0\leqslant t\leqslant T\}$ is
the horizontal lift of $\{\sigma(t): 0\leqslant t \leqslant T\}$ with respect
 to $\hat \nabla$. This proves the first claim.
And $\nabla_wV=L_{Z^w}V $ by  Lemma 1.3.4  of
Elworthy-LeJan-Li \cite{Elworthy-LeJan-Li-book}.

For the last part let $\varpi: H\oplus VTGLM\to \g=L(\R^n;\R^n)$ be the semi-connection 1-form. For $u_0\in GLM$, set
$u_t=T\xi_t \circ u_0$ % start
where $\{\xi_t\}$ is a local flow \index{flow!of SDE} for the stochastic differential equation \begin{equation}
\label{SDE-1}
dx_t=X(x_t)\circ dB_t +A(x_t)dt 
\end{equation}
on $M$ where $\{B_t\}$ is a Brownian motion on $\R^m$. (This defines the \emph{derivative flow} \index{flow!derivative} on $GLM$.)
%end

As for ordinary connections
$$\varpi(\circ du_t)=u_t^{-1}{\hat D\over dt}(u_t-)\in \Lo(\R^n;\R^n).$$
Here, on the right hand side $u_t$ is differentiated as a process of linear maps $u_t\in \Lo(\R^n; T_{x_t}M)$ over $(x_t)$. 
[It suffices to check the equality for $C^1$ curves $(u_t)$ with $x_t=\pi(u_t)$ having $\dot x_t\in E_{x_t}$, $t\geqslant  0$. For this we can write $u_t=\tilde x_t\cdot g_t$ for $\tilde x_t$ a horizontal lift of $\{x_t\}$ and $g_t\in G$. Then observe that
${\hat D\over dt}(u_t-)=\tilde x_t{d\over dt} (\tilde x_t^{-1} u_t-)$.]
However as in \cite{Elworthy-LeJan-Li-book},
$$u_t^{-1}{\hat D\over dt}(u_t-)=u_t^{-1}\breve \nabla_{u_t-} X\circ dB_t+
u_t^{-1}\breve \nabla_{u_t-} A dt.$$

From this the formula for $\alpha(u)$ follows by Remark \ref{re:alpha}(b). For $\beta(u)$ we need to identify the bounded variation part of 
$\int_0^t \varpi(\circ du_t)$. For this write
$$u_t^{-1}\breve \nabla_{u_t-} X\circ dB_t=u_0^{-1} T_{x_0}\xi_t^{-1}
\hat {\parals_t} \circ \hat {\parals_t}^{-1} \breve \nabla_{T\xi_t\circ u_0}X\circ dB_t$$
where $\hat{\parals_t}$ is the parallel translation along $\{\xi_s(x_0): 0\leqslant s\leqslant t\}$ using our semi-connection, which is the adjoint of $\breve \nabla$ by Theorem \ref{th:conn-deri}. As in \cite{Elworthy-LeJan-Li-book}
$$\hat {\parals_t}^{-1} \breve \nabla_{T\xi_t\circ u_0}X\circ dB_t
=\hat {\parals_t}^{-1} \breve \nabla_{T\xi_t u_0}XdB_t -{1\over 2} \hat {\parals_t}^{-1} {\Ric}^{\#}(T\xi_t\circ u_0-)dt$$
while 
$$u_0^{-1}T\xi_t^{-1} \hat{ \parals_t}=u_0^{-1}-\int_0^t u_0^{-1} T\xi_s^{-1} \breve \nabla_{\hat{{/\kern-0.2em/_{\!}}_s}-}X\circ dB_s-\int_0^t u_0^{-1} T\xi_s^{-1} \breve \nabla_{\hat{{/\kern-0.2em/_{\!}}_s}-}A ds$$
giving the formula claimed for $\beta$.
\end{proof}
\subsubsection{Example: Gradient Brownian SDE}\label{ex: gbm-spheres}
An isometric immersion $j:M\to \R^m$ of a Riemannian manifold $M$ determines a stochastic differential equation on $M$:
$$ dx_t=X(x_t)\circ dB_t$$
where $X(x):\R^m\to T_xM$ is the orthogonal projection and $B_.$ is a Brownian motion on $\R^m$.  More precisely
 $$ X(x)(e)=\nabla [y\mapsto\langle j(y), \rangle](x).$$
 It is well known that the solutions of the SDE are Brownian motions on $M$, see \cite{ Elworthy-book},\cite{Rogers-Williams-II}, \cite{Elworthy-Stflour}, and the equation is often called a "gradient Brownian SDE" \index{Gradient Brownian!SDE}%
 . Moreover the LW connection \index{ connection!LW} given by equation (\ref{connection-1}) is the Levi-Civita connection \index{connection!Levi-Civita}%
, (by the classical construction of the latter), see \cite{Elworthy-LeJan-Li-book}. Since the adjoint of the Lev-Civita connection  is itself, Theorem \ref{th:conn-deri}, shows that our connection induced on $GLM$ by the derivative flow \index{flow!derivative} of a gradient Brownian system is also the Levi-Civita connection.
 Almost by definition,
 \begin{equation}
\label{eq:SFF}
\langle \nabla_vX^p,w\rangle _{\R^m}=\langle \mathbf{a}(v,w),e_p\rangle_{\R^m}
\end{equation}
where $\mathbf{ a}:TM\times TM\to \R^m$ is the \emph{ second fundamental form } \index{second fundamental form} of the immersion with \begin{equation}
\label{eq;shape}
\nabla _vX(e)=\mathbf{ A}(v,n_xe) \qquad v\in T_xM,x\in M, e\in \R^m
\end{equation}
for $n_x:\R^m\to T_xM^\perp$ the projection and $\mathbf{A }: TM\oplus T M^\perp\to TM$ the \emph{shape operator} \index{shape operator}%
 given by $$ \langle\mathbf{A}(v,e), w\rangle_{\R^m}=\langle \mathbf{a}(v,w),e_p\rangle_{\R^m}.$$
 Here $T M^\perp$ refers to the normal bundle of $M$ and $T_x M^\perp$ to  the normal space at $x$ to $M$, though we are considering its elements as being in the ambient space $\R^m$. 
 Thus the vertical operator in the decomposition of the generator of the derivative flow  \index{derivative!flow} on $GLM$ for gradient flows \index{flow!gradient} is given by Theorem \ref{th:conn-deri} with 
 $$ \alpha(u)=\frac{1}{2}\sum_{j=1}^{m-n}u^{-1}\mathbf A(u-,l^j)\otimes u^{-1} \mathbf A(u-,l^j)$$
$$\beta(u)=-\frac{1}{2}\sum_{j=1}^{m-n}\mathbf A(\mathbf A(u-,l^j),l^j)-\frac{1}{2}u^{-1}Ric^\# (u-)$$
at a frame $u$ over a point $x$. Here $l^1,...,l^{m-n}$ denotes an orthonormal base for $T_xM^\perp$
 
 For the standard embedding of $S^n$ in $\R^{n+1}$ we have 
 $$\mathbf{a}(u,v)=\langle u,v\rangle x$$
 for $u,v\in T_xS^n$. Also the Ricci curvature \index{curvature!Ricci} is given by $Ric^{\#}(v)=(n-1)v$ for all $v\in TM$. Thus for the standard gradient SDE on $S^n$, at any frame $u$ we have\begin{eqnarray}\label {eq:sphere}
\alpha(u) & = &\frac{1}{2} \Id\otimes \Id \\
\beta (u) & = &-\frac{1}{2}n \;\Id.  
\end{eqnarray}

\section{Associated Vector Bundles \& Generalised Weitzenb\"ock Formulae \;}
\label{se:Weitz}

 As before let $\pi: P\to M$ be a smooth principal $G$-bundle and $\rho: G\to {\Bbb L}(V;V)$ a $C^\infty$ representation of $G$ on some separable Banach space $V$. There is then the (possibly weakly) associated vector bundle
$\pi^\rho: F\to M$ where $F=P\times V/\sim$ for the equivalence relation
given by $(u,e)\sim (ug, \rho(g^{-1})e)$ for $u\in P$, $e\in V$, $g\in G$.
If $[(u,e)]\in F$ denotes the equivalence class of $(u,e)$ we can identify
any $u\in P$ with a linear isomorphism
$$\bar{\bf u}: V\to F_{\pi(u)}$$
by
\begin{equation}
\label{associated-map}
 \bar{\bf u} (e)=[(u,e)].
\end{equation}
Consider the set of smooth maps from $P$ to $V$,  equivariant  by $\rho$:
$$M_\rho(P;V)=\{\hbox{smooth } Z : P\to V, Z(ug)=\rho(g)^{-1}Z(u), \; u\in P, g\in G\}.$$
There is the standard bijective correspondence ${\mathfrak F}^\rho $ between $M_\rho(P,V)$ and $\Gamma(F)$, the space of smooth sections of $F$ defined by
$${\mathfrak F}^\rho(Z)(x)=\bar{\bf u}[Z(u)], \qquad u\in \pi^{-1}(x), Z\in M_\rho(P;V).$$
Via this map, an equivariant diffusion generator  $\B$
 on $P$ induces a differential operator $\B^\rho\equiv  {\mathfrak F}^\rho(\B)$ on $ \Gamma(F)$, of order at most $2$, by
 \begin{equation}
 {\mathfrak F}^\rho( \B)({\mathfrak F}^\rho(Z) )={\mathfrak F}^\rho[\B(Z)], \quad Z\in M_\rho(P;V).
 \end{equation}
Here $\B  $ has been extended trivially to act on $V$-valued
 functions. 
 Note that the definition makes sense since,
$$\B(Z)(ug)=\B  \left(Z\circ R_g\right)(u)
=\B  \left(\rho(g)^{-1}Z\right)(u)
=\rho(g)^{-1}\B  (Z)(u).$$

For such a representation $\rho$ let 
$$\rho_*: \g\to \Lo (V;V)$$
be the induced representation of the Lie algebra $\g$
(the derivative of $\rho$ at the identity). 

\begin{theorem}
\label{th:comp}
When $\B  $ is a vertical equivariant diffusion generator  the induced operator on sections of any associated vector bundle is a zero order operator. With the notation of Theorem \ref{th:op-deco},
 the zero order operator in $\Gamma(F)$ induced by $\B  $ is represented by  $\lambda^\rho: P\to \Lo(V;V)$ for
\begin{equation}
\label{operator-vert}
\lambda^\rho(u)=\rho_*(\beta(u))
+\comp\circ(\rho_*\otimes \rho_*)(\alpha(u)),
\hspace{0.7in} u\in P
\end{equation}
for $\comp: \Lo(V;V) \otimes \Lo(V;V)\to \Lo (V;V)$
the composition map $A\otimes B\mapsto AB$.
\end{theorem}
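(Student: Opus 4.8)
The plan is to push everything down to a computation on $\rho$-equivariant $V$-valued functions. Since $\B$ is vertical, Proposition~\ref{pr:vert-eq} writes it as a combination of the $\LL_{Y^i}\LL_{Y^j}$ and $\LL_{Y^0}$ with all $Y$'s vertical, and every vertical field is a combination of the fundamental fields $A_k^*$; the equivariance argument in the proof of Theorem~\ref{th:op-deco} then pins down the coefficients, giving $\B=\sum_{i,j}\alpha^{ij}\LL_{A_i^*}\LL_{A_j^*}+\sum_k\beta^k\LL_{A_k^*}$ for the basis-independent, $\mathrm{Ad}$-equivariant maps $\alpha,\beta$ of~(\ref{alpha}). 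Recall that $\B$ acts componentwise on $V$-valued maps and that $\B^\rho$ is, via ${\mathfrak F}^\rho$, the operator $Z\mapsto\B Z$ on $M_\rho(P;V)$. The zero-order property is then immediate: for $f\in C^\infty M$ we have $f\cdot{\mathfrak F}^\rho(Z)={\mathfrak F}^\rho((f\circ\pi)Z)$, and Remark~\ref{re:vertical}(1) (applied componentwise) gives $\B((f\circ\pi)Z)=(f\circ\pi)\,\B Z$, so $\B^\rho(fs)=f\,\B^\rho(s)$. It therefore remains only to compute $\B Z$ pointwise and recognise it as pointwise multiplication by an element of $\Lo(V;V)$.

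Next I would express the vertical derivatives of $Z\in M_\rho(P;V)$ through $\rho_*$. Differentiating the equivariance relation $Z(u\exp(tA))=\rho(\exp(tA))^{-1}Z(u)$ at $t=0$ gives $\LL_{A^*}Z=\rho_*(A)Z$ (the precise signs being governed by the conventions chosen for $M_\rho(P;V)$ and for $\rho_*$). For the second-order term one must \emph{not} iterate this formula, since $\LL_{A_j^*}Z$ is no longer $\rho$-equivariant; instead one differentiates the two-parameter family $Z\big(u\exp(tA_i)\exp(sA_j)\big)=\rho(\exp(tA_i))^{-1}\rho(\exp(sA_j))^{-1}Z(u)$, first in $s$ and then in $t$ at the origin, obtaining $\LL_{A_i^*}\LL_{A_j^*}Z=\rho_*(A_i)\rho_*(A_j)Z$. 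A useful sanity check here is that antisymmetrising must return $\LL_{[A_i^*,A_j^*]}Z=\LL_{[A_i,A_j]^*}Z=\rho_*([A_i,A_j])Z$, consistent with $A\mapsto A^*$ being a Lie algebra homomorphism for the right $G$-action.

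Substituting these identities into $\B$, and using the symmetry $\alpha^{ij}=\alpha^{ji}$ (so that the quadratic term is a genuine symmetric product in the $\rho_*(A_i)$), gives
$$\B Z(u)=\Big(\sum_{i,j}\alpha^{ij}(u)\,\rho_*(A_i)\rho_*(A_j)+\sum_k\beta^k(u)\,\rho_*(A_k)\Big)Z(u)=\lambda^\rho(u)\,Z(u)$$
with $\lambda^\rho(u)=\comp\circ(\rho_*\otimes\rho_*)(\alpha(u))+\rho_*(\beta(u))\in\Lo(V;V)$. To close, I would check that $\lambda^\rho$ transforms by conjugation, $\lambda^\rho(ug)=\rho(g)^{-1}\lambda^\rho(u)\rho(g)$, and hence represents a section of $\mathrm{End}(F)$: this follows from the $\mathrm{Ad}$-equivariance of $\alpha$ and $\beta$ (Theorem~\ref{th:op-deco}) together with $\rho_*\circ\mathrm{Ad}(g)=\mathrm{Ad}(\rho(g))\circ\rho_*$, which is also exactly what forces $\B Z\in M_\rho(P;V)$, so that $\B^\rho s={\mathfrak F}^\rho(\lambda^\rho Z)$ is well defined. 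The main obstacle is the second-order computation — getting the ordering of the $\rho_*(A_i)$'s and all the signs right — and the commutator identity above is the natural safeguard against slips there.
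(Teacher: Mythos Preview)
Your approach is essentially the paper's: show the zero-order property via Remark~\ref{re:vertical}, then compute $\LL_{A^*}Z$ using the equivariance of $Z$, iterate, and substitute into $\B=\sum\alpha^{ij}\LL_{A_i^*}\LL_{A_j^*}+\sum\beta^k\LL_{A_k^*}$.

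One point worth tidying up: your caution that ``one must not iterate this formula, since $\LL_{A_j^*}Z$ is no longer $\rho$-equivariant'' is unnecessary. The paper simply iterates. The point is that $\rho_*(A_j)\in\Lo(V;V)$ is a \emph{constant} linear map, so for any $V$-valued function $W$ on $P$ one has $\LL_{A_i^*}(\rho_*(A_j)W)=\rho_*(A_j)\,\LL_{A_i^*}W$; equivariance of $W$ plays no role in this step. Thus $\LL_{A_i^*}\LL_{A_j^*}Z=(-\rho_*(A_j))(-\rho_*(A_i))Z=\rho_*(A_j)\rho_*(A_i)Z$ directly, and your two-parameter detour (with its attendant ordering worries) is avoidable. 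Note also that the paper's computation gives $\LL_{A^*}Z=-\rho_*(A)Z$ with a minus sign, coming from $\rho(\exp tA)^{-1}=\rho(\exp(-tA))$; your stated first-order formula has the opposite sign, so you should track that through (the quadratic term is insensitive to it, and the symmetry of $\alpha^{ij}$ makes the ordering of the $\rho_*(A_i)$'s irrelevant there).
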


\begin{proof}
The operator $\B^\rho $ is a zero order operator if
$\F^\rho (\B)(S)(x_0)=\F^\rho(\B)(S')$
 whenever two sections $S$ and $S^\prime$ of $F$ agree at $x_0$.
This holds if  $\displaystyle{\B (fZ)=f\B(Z)}$
for any invariant function $f:P\to R$ and $V$-valued function $Z$ on $P$.
But this holds by Remark \ref{re:vertical}.

For the representation (\ref{operator-vert}), suppose $Z: P\to V$ is equivariant:
$$Z(u\circ g)=\rho(g)^{-1} Z(u), \qquad g\in G.$$
Then
\begin{eqnarray*}
\Lo   _{A_j^*}(Z)(u)
&=&{d\over dt}\, Z( u\cdot  e^{A_jt}))|_{t=0}\\
&=&{d\over dt}\, \rho( e^{-A_jt}) Z(u)|_{t=0}\\
&=&-\rho_*(A_j)Z(u).
\end{eqnarray*}
Iterating we have
$$\B  (Z)(u)=
%{1\over 2}
\sum \alpha^{ij}(u)\rho_*(A_j)\rho_*(A_i)Z(u)
+\sum \beta_k \rho_*(A_k)Z(u)
$$
proving (\ref{operator-vert}).
\end{proof}

From this theorem we easily have the following estimate, which combined with the discussions below, when applied to the associated bundle $\wedge F$ to the orthonormal bundle, shows that the Weitzenb\"ock curvature \index{curvature!Weitzenb\"ock} is positive if the curvature is.
\begin{corollary}
\label{co:ortho-repre}
If $\rho$ is an orthogonal representation, \i.e. $(\rho_*(\alpha))^*=-\rho_*(\alpha)$ for all $\alpha\in \mathfrak {g}$, then $\lambda^{\rho}(v,v)\leqslant 0$ for all $v\in V$. 
\end{corollary}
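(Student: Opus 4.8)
The plan is to substitute the formula (\ref{operator-vert}) for $\lambda^\rho(u)$ and, for fixed $u\in P$, evaluate the quadratic form $\lambda^\rho(v,v):=\langle\lambda^\rho(u)v,v\rangle$ summand by summand, using the $\rho$-invariant inner product $\langle\cdot,\cdot\rangle$ on $V$ with respect to which, by hypothesis, $\rho_*(\xi)$ is skew-adjoint for every $\xi\in\g$.

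First I would dispose of the first-order term: since $\rho_*(\beta(u))^{*}=-\rho_*(\beta(u))$ we have $\langle\rho_*(\beta(u))v,v\rangle=\tfrac12\big\langle\big(\rho_*(\beta(u))+\rho_*(\beta(u))^{*}\big)v,v\big\rangle=0$ for every $v\in V$. For the second-order term I would diagonalise $\alpha(u)$ exactly as in Remark \ref{re:alpha}(c): for a suitable inner product on $\g$ we may write $\alpha(u)=\sum_n\mu_n\,A_n\otimes A_n$ with $\{A_n\}$ an orthonormal basis of $\g$ and $\mu_n\geqslant0$, the nonnegativity of the $\mu_n$ being precisely the statement --- Remark \ref{re:alpha}(a) --- that $\alpha(u)$, read through the identification of $T_uP_{\pi(u)}$ with $\g$ by $\varpi_u$, is the restriction of the positive semi-definite symbol $\sigma_u^{\B}$ to the fibre through $u$. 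Then $\comp\circ(\rho_*\otimes\rho_*)(\alpha(u))=\sum_n\mu_n\,\rho_*(A_n)^2$, and skew-adjointness gives
$$\Big\langle\comp\circ(\rho_*\otimes\rho_*)(\alpha(u))\,v,\,v\Big\rangle=\sum_n\mu_n\,\langle\rho_*(A_n)v,\rho_*(A_n)^{*}v\rangle=-\sum_n\mu_n\,|\rho_*(A_n)v|^2\leqslant0.$$
Adding the two contributions yields $\lambda^\rho(v,v)\leqslant0$.

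There is no real obstacle here; the statement is essentially sign bookkeeping. The two points worth a line of care are recording why $\alpha(u)$ is positive semi-definite (Remark \ref{re:alpha}(a), i.e.\ positive semi-definiteness of $\sigma^\B$ as a bilinear form) and observing that the ordering inside $\comp\circ(\rho_*\otimes\rho_*)$ is immaterial because the matrix $(\alpha^{ij})$ is symmetric. One may equally avoid the diagonalisation altogether, writing the term for any basis as $\sum_{i,j}\alpha^{ij}\rho_*(A_i)\rho_*(A_j)$ and using the elementary fact that $\sum_{i,j}\alpha^{ij}\langle w_i,w_j\rangle\geqslant0$ for arbitrary vectors $w_i\in V$ --- here $w_i=\rho_*(A_i)v$ --- whenever $(\alpha^{ij})$ is positive semi-definite, which again reduces the inequality to skew-adjointness of $\rho_*$.
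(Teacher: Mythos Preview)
Your proof is correct and follows essentially the same route as the paper: diagonalise $\alpha(u)$ as in Remark \ref{re:alpha}(c), use skew-adjointness of $\rho_*(\beta(u))$ to kill the first-order term, and use skew-adjointness of each $\rho_*(A_n)$ together with $\mu_n\geqslant 0$ to make the second-order term nonpositive. Your justification of $\mu_n\geqslant 0$ via Remark \ref{re:alpha}(a) is in fact cleaner than the paper's text, which contains a sign slip (it writes ``since $\mu_k\leqslant 0$'' where $\mu_k\geqslant 0$ is what is needed and what holds).
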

\begin{proof}
Write $\alpha=\sum_k \mu_k A_k\otimes A_k$ where $\{A_k\}$ is as in Remark \ref{re:alpha}(c).  Then for $v\in F$,
\begin{eqnarray*}
&&\langle \comp\circ(\rho_*\otimes \rho_*)(\alpha(u))(v),v\rangle
=\langle \sum \mu_k  [ \rho_* A_k]^2 (v), v\rangle\\
& =&-\sum \mu_k \langle  \rho_* (A_k)(v),  \rho_* (A_k)(v)\rangle\leqslant 0,
\end{eqnarray*}
since $\mu_k\leqslant 0$. The result follows from (\ref{operator-vert}) since $\rho_*(\beta(u))$ is skew symmetric.
\end{proof}

The situation of Corollary \ref{co:ortho-repre} arises when considering the derivative
flow \index{flow!derivative} for an SDE on a Riemannian manifold whose flow consists of isometries \index{flow!of isometries}; for example canonical SDE's on symmetric spaces as in \cite{Elworthy-LeJan-Li-book}.

Quantitative estimates can be obtained by some representation theory.
For example suppose $G=O(n)$ with $\rho$ the standard representation on $\R^n$. Consider the representation $\wedge^k\rho$ on $\wedge^k \R^n$.

We use the following conventions, as in \cite{Elworthy-LeJan-Li-book}. Let $V$ be an $N$ dimensional real inner product space. 
 For $1\leqslant i\leqslant n$, 
$$a_1\wedge \dots \wedge a_n={1\over n!} \sum_\pi {\rm sgn} \,(\pi) a_{\pi(1)} \otimes \dots \otimes a_{\pi(n)},$$
\begin{equation}
\iota_v (u_1\wedge \dots \wedge u_q)= \sum_{j=1}^q
(-1)^{j+1}\langle v,u_j\rangle
u_1\wedge \dots \wedge \widehat{u_j}\wedge \dots \wedge u_q
\end{equation}
$\langle \otimes a_i, \otimes b_i\rangle=n!\Pi_i\langle a_i,  b_i\rangle$, and $\langle \wedge a_i, \wedge b_i\rangle=\det (\langle a_i, b_j\rangle)$.
Let $\wedge V$ stand for the exterior algebra of $V$ and  $a_j^*$ the ``creation operator"on $\wedge V$ given by $a_j^*v=e_j\wedge v$ for $(e_1,\dots, e_N)$  an orthonormal basis for $\wedge V$. 
%start
 Let $a_j$ be its adjoint, the ``annihilation operator" given by $a_j=\imath_{e_j}$. Note the commutation law:\begin{equation} \label{commutation}
 a_ia^*_j+a^*_ja_i=d_{i j}
\end{equation}

For linear forms 
we have the corresponding operators:  $(a^j)^* \phi(v)=\phi(a_jv)$
and $(a^j\phi)(v)=\phi(a_j^*v)$.
In particular $a^j\phi(v)=\phi(e_j\wedge v)$ and
$(a^j)^*\phi(v)=e_j^*\wedge \phi$.
%end

If $A: V\to V$ is a linear map  on $V$, there are the
 operators $\wedge A$ and $(d\Lambda) (A)$  on $\wedge V$,  which restricted to  $\wedge^p V$ are:
 $$(d\Lambda )(A) \,(u_1\wedge\dots \wedge u_p)  =
\sum_1^p u_1\wedge\dots \wedge u_{j-1}\wedge  Au_j
\wedge  u_{j+1}\wedge\dots \wedge u_p,$$
and also
$$(\wedge A) (u_1\wedge\dots \wedge u_p) =Au_1\wedge\dots \wedge A u_p.$$ 

Note that since $\alpha(u)$ is symmetric, $(\rho_*\otimes \rho_*)\alpha(u): V\otimes V\to V\otimes V$ has
\begin{eqnarray}
(\rho_*\otimes \rho_*) \alpha(u)(v^1\wedge v^2)
& = &\sum_{i,j}\alpha^{ij}(u)\rho_*(A_i)\otimes \rho_*(A_j)(v^1\wedge v^2) \\
&=& \sum_{ij}\alpha^{ij}(u) A_i v^1\wedge A_j v^2.
\label{rho-star}
\end{eqnarray}
and so $(\rho_*\otimes \rho_*)\alpha(u)$ restricts to a map of $\wedge^2 V$ to itself.

\begin{corollary}
\label{co:}
Take the Hilbert-Schmidt inner product on ${\mathfrak so}(n)$ and let
$0\leqslant \mu_{1}(x)\leqslant \dots \leqslant \mu(x)_{{1\over 2}n(n-1)}$
be the eigenvalues of $\alpha$ on the fibre $p^{-1}(x)$, $x\in M$, as described in Remark \ref{re:alpha}(c). Then for all $V\in \wedge^k\R^n$, 
$$-{1\over 2}k(n-k)\mu_{{1\over 2}n(n-1)}(x)
\leqslant \Big\langle \lambda^{\wedge^k}(u)V, V\Big\rangle
\leqslant -{1\over 2}k(n-k)\mu_{1}(x).$$
\end{corollary}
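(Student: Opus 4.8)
The plan is to combine the formula for $\lambda^\rho$ from Theorem \ref{th:comp} with a Casimir computation on $\wedge^k\R^n$. First, apply Theorem \ref{th:comp} with $\rho=\wedge^k$ acting on $V=\wedge^k\R^n$, so that $\lambda^{\wedge^k}(u)=\rho_*(\beta(u))+\comp\circ(\rho_*\otimes\rho_*)(\alpha(u))$. Since $G=O(n)$ preserves the induced inner product on $\wedge^k\R^n$ the representation is orthogonal, so $\rho_*(\beta(u))$ is skew-symmetric and contributes nothing to $\langle\lambda^{\wedge^k}(u)V,V\rangle$ (the argument already used in Corollary \ref{co:ortho-repre}). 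Next, diagonalise $\alpha(u)=\sum_m\mu_m A_m\otimes A_m$ with $\{A_m\}$ an orthonormal basis of ${\mathfrak so}(n)$ for the Hilbert--Schmidt inner product, as in Remark \ref{re:alpha}(c); the $\mu_m=\mu_m(x)$ are the stated eigenvalues, independent of $u$ in the fibre because the Hilbert--Schmidt form is $\ad(O(n))$-invariant, and $\mu_m\ge 0$ by Remark \ref{re:alpha}(a). This reduces the claim to estimating
$$\langle\lambda^{\wedge^k}(u)V,V\rangle=\sum_m\mu_m\langle\rho_*(A_m)^2V,V\rangle=-\sum_m\mu_m\,\|\rho_*(A_m)V\|^2 .$$

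The key step is to show the weights $\|\rho_*(A_m)V\|^2$ have constant sum, namely $\sum_m\|\rho_*(A_m)V\|^2=\tfrac12 k(n-k)\|V\|^2$; equivalently $\sum_m\rho_*(A_m)^2=-\tfrac12 k(n-k)\,\Id$ on $\wedge^k\R^n$. Since this operator is the Casimir element it is independent of the chosen orthonormal basis, so I would compute it with the standard basis $E_{ab}$ ($a<b$) of ${\mathfrak so}(n)$. Viewing $\wedge^k\R^n\subset(\R^n)^{\otimes k}$ and writing $\rho_*(A)=\sum_{i=1}^k A^{(i)}$ for the derivation action on the $i$-th slot, one gets $\sum_m\rho_*(A_m)^2=\sum_i\big(\sum_m A_m^2\big)^{(i)}+\sum_{i\ne j}\big(\sum_m A_m\otimes A_m\big)^{(i,j)}$. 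A direct computation gives $\sum_m A_m^2=-\tfrac{n-1}{2}\Id$ on $\R^n$, while $\sum_m A_m\otimes A_m$ acts as $\tfrac12\Id$ on the antisymmetric subspace $\wedge^2\R^n$ of any pair of slots (the metric-contraction part of this operator is supported on symmetric tensors and so annihilates $\wedge^k\R^n$, leaving only the transposition part, which is $-\Id$ on $\wedge^2$). Summing over the $k$ diagonal terms and the $k(k-1)$ off-diagonal terms gives $\sum_m\rho_*(A_m)^2=-\tfrac{k(n-1)}{2}\Id+\tfrac{k(k-1)}{2}\Id=-\tfrac{k(n-k)}{2}\Id$ on $\wedge^k\R^n$, as needed.

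Finally, since $\mu_1(x)\le\mu_m\le\mu_{\frac12 n(n-1)}(x)$ for every $m$ and all the weights $\|\rho_*(A_m)V\|^2$ are non-negative, sandwiching gives
$$-\mu_{\frac12 n(n-1)}(x)\sum_m\|\rho_*(A_m)V\|^2\ \le\ \langle\lambda^{\wedge^k}(u)V,V\rangle\ \le\ -\mu_1(x)\sum_m\|\rho_*(A_m)V\|^2 ,$$
and substituting $\sum_m\|\rho_*(A_m)V\|^2=\tfrac12 k(n-k)$ for $\|V\|=1$ (the general case following by homogeneity) yields the asserted inequalities. The only delicate point is the Casimir computation — in particular the observation that the metric-contraction term in $\sum_m A_m\otimes A_m$ drops out on $\wedge^k\R^n$ so that the Casimir eigenvalue is exactly $-\tfrac12 k(n-k)$; everything else is bookkeeping and the orthogonality of the representation.
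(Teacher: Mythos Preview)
Your argument is correct. The overall architecture matches the paper's exactly: eliminate the $\rho_*(\beta(u))$ term by skew-symmetry, diagonalise $\alpha(u)=\sum_m\mu_m A_m\otimes A_m$ with respect to the Hilbert--Schmidt inner product, and reduce everything to the single identity $\sum_m\rho_*(A_m)^2=-\tfrac12 k(n-k)\,\Id$ on $\wedge^k\R^n$, after which sandwiching by $\mu_1\le\mu_m\le\mu_{\frac12 n(n-1)}$ finishes the proof.

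Where you diverge from the paper is in how you obtain the Casimir constant $-\tfrac12 k(n-k)$. The paper proceeds abstractly: it introduces the trace form $\beta(A,B)=\trace\big((d\wedge^k)A\,(d\wedge^k)B\big)$, computes that it equals $\tfrac{(n-2)!}{(k-1)!(n-k-1)!}\trace(AB)$, passes to a $\beta$-dual basis, and then invokes irreducibility of $\wedge^k\R^n$ as an $\mathfrak{so}(n)$-representation (citing Boerner) so that the Casimir element is the scalar $\dim\mathfrak{so}(n)/\dim\wedge^k\R^n$ from Humphreys. Multiplying these two factors out yields $\tfrac12 k(n-k)$. Your route is a direct slot-by-slot computation on $(\R^n)^{\otimes k}$: the diagonal terms contribute $k\cdot\big(-\tfrac{n-1}{2}\big)$ and the $k(k-1)$ off-diagonal terms each contribute $+\tfrac12$ on antisymmetric tensors, because $\sum_m A_m\otimes A_m=-\tfrac12\tau+\tfrac12 P$ with $\tau$ the transposition and $P$ the metric-contraction projector, and $P$ vanishes on $\wedge^k$. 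This is more elementary---in particular it avoids the appeal to irreducibility of $\wedge^k\R^n$---at the cost of a short explicit calculation. Both approaches are sound; yours is self-contained, the paper's is more structural.

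One minor phrasing point: when you write ``leaving only the transposition part, which is $-\Id$ on $\wedge^2$'' you mean that $\tau=-\Id$ on $\wedge^2$, hence the $-\tfrac12\tau$ contribution is $+\tfrac12\Id$; the sentence as written could momentarily confuse a reader into thinking the off-diagonal contribution is $-1$. The arithmetic that follows makes your intended meaning clear.
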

\begin{proof}
Following Humphreys \cite{Humphreys}, \S6.2, consider the bilinear form $\beta$ on ${\mathfrak so}(n)$ given by
$$\beta(A,B)=\trace\Big((d\wedge^k)(A),(d\wedge^k)(B)\Big)
={(n-2)!\over (k-1)!(n-k-1)!} \trace(AB)$$
by a short calculation using elementary matrices.
By Remark \ref{re:alpha}(c) since our inner product on ${\mathfrak so}(n)$ is $\ad(O(n))$-invariant we can write 
$$\alpha(u)=\sum_{l=1}^{{1\over 2}n(n-1)} \mu_l(x)A_l(u)\otimes A_l(u)$$
with $x=p(u)$ and $\{A_l(u)\}_l$ an orthonormal base for ${\mathfrak so}(n)$ at each $u\in P$.

For each $u\in P$, set $$A_l'(u)={(k-1)!(n-k-1)!\over (n-2)!}A_l(u)$$ to ensure $\beta(A_l'(u), A_j(u))=\delta_{lj}$ for each $u$.

Then \begin{eqnarray*}
&&\Big\langle \comp \circ (\rho^{\wedge ^k}_*\otimes \rho^{\wedge^k}_*)(\alpha(u)V, V\Big\rangle= 
\sum \mu_l(x) \Big\langle (d\wedge^k)A_l(u)\circ (d\wedge^k) A_l(u)V,V\Big\rangle \\
&=&\Big[{(k-1)!(n-k-1)!\over (n-2)!}\Big]^{-1} \Big\langle (d\wedge^k)A_l(u)\circ (d\wedge^k) A_l'(u)V,V\Big\rangle \\
&\le&-{(n-2)!\over (k-1)!(n-k-1)!}\big\langle c_{\wedge^k}V, V\big\rangle,
\end{eqnarray*}
where $$c_{\wedge^k}=(d\wedge^k)A_l(u)\circ (d\wedge^k) A_l'(u),$$
the Casimir element of our representation $d\wedge^k$ of ${\mathfrak so}(n)$. Since the representation is irreducible, (for example see \cite {Boerner-book} Theorem 15.1 page 278),

 this element is a scalar, and we have, see Humphreys \cite{Humphreys}
$$c_{\wedge^k}={\dim {\mathfrak so}(n)\over \dim \wedge^k \R^n}
={1\over 2}n(n-1)/{n(n-1)\dots(n-k+1)\over k!}.$$
Thus $\lambda^{\wedge^k}(u)
\leqslant -{1\over 2}k(n-k)\mu_1$. The lower bound follows in the same way.
\end{proof}

When $\B$ has an equivariant H\"ormander form\index{H\"ormander!form} representation the zero order operator $\F^\rho(V)$ can be given  in a simple way by (\ref{F-rho-2}) below. This was noted for the classical Weitzenb\"ock curvature terms using derivative flows in Elworthy \cite{Elworthy-flows}.

\begin{proposition}
Suppose $\B$ lies over a cohesive operator $\A$ and has a smooth H\"ormander form\index{H\"ormander!form}:
$\B  ={1\over 2}\sum \Lo   _{Y^j}\Lo   _{Y^j}
+\sum\beta_k\Lo   _{Y^0}$ with the vector fields $Y^j$, $j=1,\dots, m$, being $G$-invariant. Let $(\eta_t^j)$ be the flow of $Y^j$. For a representation $\rho$ of $G$ with associated vector bundle $\pi^\rho: F\to M$ the zero order operator $\F^\rho(\B^V)$ corresponding to the vertical component of $\B$ is given by
\begin{equation}
\label{F-rho-2}
\F^\rho(\B^V)(x_0)={1\over 2}\sum_{j=1}^m {D^2\over d t^2} \overline{
\eta_t^j(u_0)}\Big|_{t=0} \circ (\bar u_0)^{-1}
+{D\over dt} \overline{
\eta_t^0(u_0)}\Big|_{t=0} \circ (\bar u_0)^{-1}
\end{equation}
for any $u_0\in \pi^{-1}(x_0)$.
\end{proposition}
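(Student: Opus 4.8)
The plan is to combine two earlier results with a direct covariant-derivative computation. Theorem~\ref{th:comp} tells us $\F^\rho(\B^V)$ is a zero-order operator represented by $\lambda^\rho(u)=\rho_*(\beta(u))+\comp\circ(\rho_*\otimes\rho_*)(\alpha(u))$, with $\alpha,\beta$ the maps of Theorem~\ref{th:op-deco}; equivalently, for any $u_0\in\pi^{-1}(x_0)$ the endomorphism $\F^\rho(\B^V)(x_0)$ of $F_{x_0}$ equals $\bar u_0\circ\lambda^\rho(u_0)\circ\bar u_0^{-1}$. Both sides of (\ref{F-rho-2}) are linear endomorphisms of $F_{x_0}$, so it suffices to evaluate them on $\bar u_0(v)$ for an arbitrary $v\in V$; thus I must show
\begin{equation*}
\frac{1}{2}\sum_{j=1}^m \frac{D^2}{dt^2}\Big|_{t=0}\overline{\eta_t^j(u_0)}(v)+\frac{D}{dt}\Big|_{t=0}\overline{\eta_t^0(u_0)}(v)=\bar u_0\big[\lambda^\rho(u_0)v\big].
\end{equation*}

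First I would set up the pieces. Since each $Y^j$ is $G$-invariant it descends to $X^j:=T\pi\circ Y^j$ on $M$, and $\A=\frac{1}{2}\sum_j\Lo_{X^j}\Lo_{X^j}+\Lo_{X^0}$; as $\A$ is cohesive, Proposition~\ref{pr:along}(1) gives that each $X^j$ is a section of $E$, so each $c_j(t):=\pi(\eta_t^j(u_0))$ is $E$-horizontal (with $\dot c_j=X^j(c_j)$) and the covariant derivatives in (\ref{F-rho-2}), taken with respect to the semi-connection induced on the associated bundle, are meaningful. By Corollary~\ref{co:deco}, $\B$ is along the sub-bundle $H+VTP$ of $TP$ on which $\varpi$ lives; in particular $Y^0,\dots,Y^m$ are sections of $H+VTP$ (Proposition~\ref{pr:along}(1)), so $B_j:=\varpi(Y^j):P\to\g$ is a well-defined smooth map for each $j$. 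The computational engine is the identity, valid for a $C^1$ curve $w_t$ in $P$ projecting to an $E$-horizontal curve $c(t)=\pi(w_t)$ in $M$ and a $C^1$ curve $v_t$ in $V$:
\begin{equation*}
\frac{D}{dt}\Big|_{t=0}\overline{w_t}(v_t)=\overline{w_0}\Big[\dot v_0+\rho_*\big(\varpi(\dot w_0)\big)v_0\Big];
\end{equation*}
one proves it by writing $w_t=\hat c(t)\cdot g(t)$ with $\hat c$ the horizontal lift of $c$ from $w_0$, noting that parallel transport in the associated bundle along $c$ carries $\overline{\hat c(0)}$ to $\overline{\hat c(t)}$ and that $\dot g(0)=\varpi(\dot w_0)$, exactly as for ordinary connections.

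Applying the identity with $w_t=\eta_t^j(u_0)$ and constant $v_t\equiv v$ gives, for each $t$, $\frac{D}{ds}\big|_{s=t}\overline{\eta_s^j(u_0)}(v)=\overline{\eta_t^j(u_0)}\big[\rho_*(B_j(\eta_t^j(u_0)))v\big]$; applying it once more (now with the $t$-dependent inner argument $v_t=\rho_*(B_j(\eta_t^j(u_0)))v$) yields $\frac{D^2}{dt^2}\big|_{t=0}\overline{\eta_t^j(u_0)}(v)=\bar u_0\big[\rho_*(\Lo_{Y^j}B_j(u_0))v+\rho_*(B_j(u_0))^2v\big]$, and likewise $\frac{D}{dt}\big|_{t=0}\overline{\eta_t^0(u_0)}(v)=\bar u_0[\rho_*(B_0(u_0))v]$. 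Summing, the left-hand side above equals
\begin{equation*}
\bar u_0\Big[\rho_*\Big(\frac{1}{2}\sum_j\Lo_{Y^j}B_j(u_0)+B_0(u_0)\Big)v+\frac{1}{2}\sum_j\rho_*(B_j(u_0))^2v\Big].
\end{equation*}

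It remains to identify this with $\bar u_0[\lambda^\rho(u_0)v]$. From the Hörmander form one reads off $\sigma^\B=\frac{1}{2}\sum_j Y^j\otimes Y^j$ and, by the Remark following Proposition~\ref{pr:delta}, $\delta^\B=\frac{1}{2}\sum_j\Lo_{Y^j}\iota_{Y^j}+\iota_{Y^0}$. Writing $\varpi=\sum_k\varpi^k A_k$, Theorem~\ref{th:op-deco} gives $\alpha^{ij}(u_0)=\varpi^i(\sigma^\B(\varpi^j))(u_0)=\frac{1}{2}\sum_k\varpi^i(Y^k)\varpi^j(Y^k)(u_0)$, hence $\alpha(u_0)=\frac{1}{2}\sum_k B_k(u_0)\otimes B_k(u_0)$, and $\beta^k(u_0)=\delta^\B(\varpi^k)(u_0)$, hence $\beta(u_0)=\frac{1}{2}\sum_j\Lo_{Y^j}B_j(u_0)+B_0(u_0)$. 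Since $\comp\circ(\rho_*\otimes\rho_*)(a\otimes b)=\rho_*(a)\rho_*(b)$, substituting these into $\lambda^\rho(u_0)=\rho_*(\beta(u_0))+\comp\circ(\rho_*\otimes\rho_*)(\alpha(u_0))$ reproduces exactly the displayed expression, which finishes the proof. The only genuinely delicate step is the two-fold covariant differentiation: correctly reading $\varpi(\dot w_0)$ when $\dot w_0$ lies in $H+VTP$ but is neither horizontal nor vertical, and carrying the $t$-dependence of the inner $V$-valued argument through the second derivative; everything else is unwinding Theorems~\ref{th:comp} and~\ref{th:op-deco}.
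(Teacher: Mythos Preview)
Your proof is correct and follows essentially the same route as the paper's: both compute $\alpha$ and $\beta$ from the H\"ormander form via Theorem~\ref{th:op-deco} (equivalently Remark~\ref{re:alpha}(b)), identify $\rho_*(\varpi(Y^j(u_t^j)))$ with $(\bar u_t^j)^{-1}\frac{D}{dt}\bar u_t^j$, and assemble using Theorem~\ref{th:comp}. The only organisational difference is that you iterate your covariant-derivative identity to reach $\frac{D^2}{dt^2}$ directly, whereas the paper differentiates the relation $(\bar u_t^j)^{-1}\frac{D}{dt}\bar u_t^j=\rho_*\varpi(Y^j(u_t^j))$ through parallel translation to obtain the same cross-term cancellation.
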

\begin{proof}
Set $u_t^j=\eta_t^j(u_0) \in P$ and $\sigma(t)=\pi(u_t^j)$ so $\bar u_t^j\in \Lo(V; F_{\sigma(t)})$. From Remark \ref{re:alpha}(b)
$$\alpha(u_0)={1\over 2} \sum_{j=1}^m \varpi (Y^j(u_0))\otimes
\varpi (Y^j(u_0))$$
and so 
$$(\rho_*\otimes \rho_*)\alpha(u_0) 
={1\over 2} \sum_{j=1}^m (\bar u_0)^{-1} {D\over dt} \bar u_t^j\Big|_{t=0} \otimes (\bar u_0)^{-1} {D\over dt} \bar u_t^j\Big|_{t=0}$$
as in the proof of Theorem \ref{th:conn-deri}.

Also from equation (\ref{beta-2})
$$\beta(u_0)={1\over 2} \sum_{j=1}^m \Lo_{Y^j} \big(\varpi (Y^j(-)\Big)(u_0)+ {1\over 2}  \big(\varpi (Y^0(-)\Big)(u_0).$$
Let $(\parals_t)$ denotes parallel translation in $F$ along $\sigma$. Then
\begin{eqnarray*}
\rho_* \Lo_{Y^j} \big(\varpi (Y^j(-)\Big)(u_0)
&=&{d\over dt} \rho_* \varpi \Big(Y^j(u_t^j)\Big)\Big|_{t=0}\\
&=& {d\over dt} \Big( (\bar u_t^j)^{-1} {D\over dt} \bar u_t^j\Big)\Big|_{t=0}\\
&=&{d\over dt} \Big( (\parals_t^{-1} \overline{ u_t^j})^{-1} \parals_t^{-1} {D\over dt} \overline{  u^j_t}\Big)\Big|_{t=0}\\
&=& -\bar u_0^{-1} {D\over dt} \overline{  u_t^j}\Big|_{t=0} 
\circ \bar u_0^{-1} {D\over dt} \overline{ u_t^j}\Big|_{t=0}
+{\bar u_0}^{-1} {D^2\over dt^2} \overline{ u_t^j}\Big|_{t=0}
\end{eqnarray*}
leading to the required result via Theorem \ref{th:comp}.
\end{proof}

To examine particular examples we will need to have detailed information
about the zero order operators determined by a vertical diffusion generator.
For this suppose $\B$ is vertical and given by
$$\B  ={1\over 2}\sum \alpha^{ij}\Lo   _{A_i^*}\Lo   _{A_j^*}
+\sum\beta_k\Lo   _{A_k^*}$$
for $\alpha: P\to \g\otimes \g$ and
$\beta: P\to \g$ as in Theorem \ref{th:op-deco}
and (\ref{alpha}).

Motivated by the Weitzenb\"ock formula for the Hodge-Kodaira Laplacian on differential forms, see Corollary \ref{co:derivative} below,  \cite{Rosenberg}, \cite{CFKS87}, we shall examine in more detail the case of the exterior power $\wedge \rho: G\to \LL(\wedge V; \wedge V)$ of a fixed representation $\rho$ showing that $\lambda^{\wedge \rho}$ has expressions in terms of annihilation and creation operators which are structurally the same as these of the Weitzenb\"ock curvature \index{curvature!Weitzenb\"ock} (which are shown to be a special case in Corollary \ref{co:derivative}).

\begin{lemma}
\label{le:Weitzenbock}
If $\B$ is a vertical operator on $P$ and $(e_i, i=1,2,\dots, N)$ is an orthonormal basis of $V$, the zero order operator  on the associated bundle $\wedge F\to M$ is represented by $\lambda ^{\wedge \rho}: P\to \LL(\wedge^pV; \wedge^p V)$ with
\begin{eqnarray*}
\lambda ^{\wedge \rho}(u)
&=& {1\over 2} \sum_{i,j,k,l=1} ^N
 \left \langle \left( (\rho_* \otimes \rho_* )\alpha(u)\right)(e_j\otimes e_l), e_i \otimes e_k \right \rangle  a_i^* a_j  a_k^* a_l \\
 &&+\sum_{i,j=1} ^N \langle ( \rho_* \beta(u))e_j, e_i\rangle a_i^* a_j, \qquad u\in P
\end{eqnarray*}
\end{lemma}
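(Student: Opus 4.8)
The plan is to apply Theorem~\ref{th:comp} with the representation $\wedge\rho$ in place of $\rho$, and then to translate the abstract formula $\lambda^{\wedge\rho}(u)=\rho^{\wedge}_*(\beta(u))+\comp\circ(\rho^{\wedge}_*\otimes\rho^{\wedge}_*)(\alpha(u))$ into creation/annihilation operator language. Here $\rho^{\wedge}_*=(\wedge\rho)_*=d\Lambda\circ\rho_*$, i.e. for $A\in\g$ the operator $(\wedge\rho)_*(A)$ acting on $\wedge^pV$ is the derivation $(d\Lambda)(\rho_*(A))$, which on a decomposable element $u_1\wedge\dots\wedge u_p$ acts by the Leibniz rule $\sum_j u_1\wedge\dots\wedge\rho_*(A)u_j\wedge\dots\wedge u_p$. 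So the whole content is a computation of how $(d\Lambda)$ of a linear map, and a composite of two such, is expressed via the $a_i^*,a_j$.

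First I would record the elementary identity: for any linear map $C:V\to V$ with matrix $C_{ij}=\langle Ce_j,e_i\rangle$ in the orthonormal basis $(e_i)$, the derivation $(d\Lambda)(C)$ on $\wedge V$ equals $\sum_{i,j}C_{ij}\,a_i^*a_j$. This is checked by testing both sides on $e_{k_1}\wedge\dots\wedge e_{k_p}$: the left side spreads $C$ over each factor by Leibniz, and $a_i^*a_j$ removes an $e_j$ (if present) and inserts $e_i$, with the signs arranging exactly into the wedge; the commutation law \eqref{commutation} keeps the bookkeeping consistent. Applying this with $C=\rho_*(\beta(u))$ gives the linear term $\sum_{i,j}\langle(\rho_*\beta(u))e_j,e_i\rangle\,a_i^*a_j$ immediately.

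For the quadratic term, write $\alpha(u)=\sum_{l}\mu_l A_l\otimes A_l$ as in Remark~\ref{re:alpha}(c) so that $(\rho_*\otimes\rho_*)\alpha(u)=\sum_l\mu_l\,\rho_*(A_l)\otimes\rho_*(A_l)$ and $\comp\circ(\rho^{\wedge}_*\otimes\rho^{\wedge}_*)(\alpha(u))=\sum_l\mu_l\,(d\Lambda)(\rho_*A_l)\circ(d\Lambda)(\rho_*A_l)$. Using the matrix identity above for each factor, $(d\Lambda)(\rho_*A_l)=\sum_{i,j}\langle\rho_*(A_l)e_j,e_i\rangle a_i^*a_j$, so the composite becomes $\sum_{i,j,k,l}\big(\sum_m\mu_m\langle\rho_*(A_m)e_j,e_i\rangle\langle\rho_*(A_m)e_l,e_k\rangle\big)a_i^*a_j a_k^*a_l$. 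The coefficient in parentheses is precisely $\langle\big((\rho_*\otimes\rho_*)\alpha(u)\big)(e_j\otimes e_l),e_i\otimes e_k\rangle$ by the defining expansion of $\alpha(u)$ and bilinearity; note the $\frac12$ arises from Theorem~\ref{th:op-deco}/\eqref{alpha} where $\B^V=\frac12\sum\alpha^{ij}\Lo_{A_i^*}\Lo_{A_j^*}+\dots$ (or equivalently is absorbed in the normalisation of $\sigma^{\B}$), so I would keep careful track of it and pull it out front, yielding the stated $\frac12\sum_{i,j,k,l}\langle\dots\rangle a_i^*a_j a_k^*a_l$. Summing the two pieces gives the claimed formula.

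The main obstacle is purely combinatorial: verifying the matrix identity $(d\Lambda)(C)=\sum_{i,j}C_{ij}a_i^*a_j$ with the correct signs, and making sure that when one composes two such derivations one does \emph{not} get the naive $\sum C_{ij}C_{kl}a_i^*a_ja_k^*a_l$ by accident but rather that this \emph{is} correct because $(d\Lambda)(C)\circ(d\Lambda)(C)$ is literally the operator composition (it is \emph{not} $(d\Lambda)(C^2)$), so no reordering via \eqref{commutation} is needed at this stage — the ordering $a_i^*a_ja_k^*a_l$ is exactly the composition order. I would also double-check the factor of $\frac12$ against the conventions fixed in Theorem~\ref{th:op-deco} and Remark~\ref{re:alpha}, since that is the one place a spurious constant could sneak in. Everything else is bookkeeping with the conventions \eqref{commutation} and the definitions of $a_i^*,a_i,(d\Lambda),\comp$ already set up in the text.
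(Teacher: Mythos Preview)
Your approach is essentially the paper's: invoke Theorem~\ref{th:comp} for the representation $\wedge\rho$, use the standard identity $(d\Lambda)(C)=\sum_{i,j}\langle Ce_j,e_i\rangle\,a_i^*a_j$ (the paper quotes this as equation~(\ref{tensor-op}) from \cite{CFKS87} rather than re-verifying it), and then expand the composite. The paper uses a general basis expansion $\alpha(u)=\sum_{n,m}a_{n,m}(u)A_n\otimes A_m$ rather than your diagonalisation from Remark~\ref{re:alpha}(c), but that is cosmetic.

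There is, however, one concrete error in your bookkeeping. You write that the coefficient $\sum_m\mu_m\langle\rho_*(A_m)e_j,e_i\rangle\langle\rho_*(A_m)e_l,e_k\rangle$ is ``precisely'' $\langle(\rho_*\otimes\rho_*)\alpha(u)(e_j\otimes e_l),e_i\otimes e_k\rangle$, and then look for the $\tfrac12$ in Theorem~\ref{th:op-deco}. It is not there: Theorem~\ref{th:op-deco} writes $\B^V=\sum\alpha^{ij}\LL_{A_i^*}\LL_{A_j^*}+\sum\beta^k\LL_{A_k^*}$ with no $\tfrac12$, and Theorem~\ref{th:comp} gives $\lambda^\rho(u)=\rho_*(\beta(u))+\comp\circ(\rho_*\otimes\rho_*)(\alpha(u))$ with no $\tfrac12$ either. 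The factor of $\tfrac12$ comes from the paper's tensor-product inner product convention stated just before the lemma, $\langle u_1\otimes v_1,u_2\otimes v_2\rangle=2\langle u_1,u_2\rangle\langle v_1,v_2\rangle$ (the $n!$ convention for $n=2$). Under that convention your coefficient equals $\tfrac12\langle(\rho_*\otimes\rho_*)\alpha(u)(e_j\otimes e_l),e_i\otimes e_k\rangle$, and that is where the $\tfrac12$ in the statement originates. Once you correct the source of this constant, your argument matches the paper's.
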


\begin{proof} 
Recall that if $A\in \LL(V;V)$ then
\begin{equation} 
\label{tensor-op}
d\Lambda(A)=\sum_{i,j=1} ^N\langle  Ae_j, e_i\rangle a_i^* a_j,
\end{equation}
\eg see Cycon-Froese-Kirsch-Simon \cite{CFKS87}.
Consequently
\begin{equation}
\label{tensor-op-2}
d \Lambda (\rho_*  \beta(u))
=\sum_{i,j=1} ^N \langle \rho_*  \beta(u)e_j, e_i\rangle a_i^* a_j
\end{equation}

On the other hand by Theorem \ref{th:op-deco} and  (\ref{alpha}),  we can represent $\alpha$ as:
$$\alpha(u)=\sum_{n,m }  a_{n,m}(u)A_n\otimes A_m$$
where $\{A_i\}_{i=1}^N$ is a basis of $\g$. So
\begin{eqnarray*}
&& \comp \circ (\wedge\rho_*\otimes \wedge\rho_*)(\alpha(u))\\
&=&\comp\circ \sum_{m,n} a_{n,m}(u) \, d\Lambda (\rho_* A_m)\otimes d\Lambda(\rho_* A_n)\\\
&=&\sum_{m,n} a_{n, m}(u) d\Lambda (\rho_* A_m)\circ d\Lambda (\rho_* A_n)\\
%&=&\sum_{m,n} a_{n,m}(u)\sum_{i,j=1}^N\langle \rho_* A_m e_j, e_i \rangle a_i^* a_j \left(d\Lambda (\rho_*  A_n)\right)  \\
&=&\sum_{m,n}a_{n,m}(u)\sum_{i,j,k,l=1}^N\langle \rho_* A_m e_j, e_i \rangle \langle \rho_* A_n e_l, e_k \rangle  a_i^* a_j  a_k^* a_l  \\
&=&{1\over 2}\sum_{m,n}a_{n,m}(u) \sum_{i,j,k,l=1}^N\left \langle \left(\rho_* A_m \otimes \rho_* A_n \right)(e_j\otimes e_l), e_i \otimes e_k \right \rangle  a_i^* a_j  a_k^* a_l  \\
&=&{1\over 2}\sum_{i,j,k,l=1}^N\left \langle (\rho_*\otimes \rho_*) \alpha(u)(e_j\otimes e_l), e_i \otimes e_k \right \rangle  a_i^* a_j  a_k^* a_l,
\end{eqnarray*}
%start
since our convention for the inner product on tensor products gives $$ \langle u_1\otimes v_1,u_2 \otimes v_2\rangle =2\langle u_1,u_2\rangle\langle v_1,v_2 \rangle.$$
%end
The desired conclusion follows. 
\end{proof}

\begin{theorem}
\label{th:Weit}
 Let $R(u):\wedge^2V\to \wedge^2V$ be the restriction of $(\rho_*\otimes \rho_*)\alpha(u): V\otimes V\to V\otimes V$, then
\begin{eqnarray*}
&&\lambda^{\wedge \rho}(u)
=-\sum_{i<k, j<l} \left \langle R(u)(e_j\wedge e_l), e_i \wedge e_k \right \rangle  a_i^*  a_k^* a_j a_l\\ &&
+{1\over 2}\sum_{i,j,l=1}^N\left \langle (\rho_*\otimes \rho_*)\alpha(u)(e_j\otimes e_l), e_i \otimes e_j \right \rangle  a_i^* a_l
+\sum_{i,j} \left \langle \rho_* \beta(u)e_j, e_i\right\rangle (a_i)^*a_j.
\end{eqnarray*}
This can be rewritten as:
\begin{equation}
\label{Weit-1}
\lambda^{\wedge \rho}(u)
=-\sum_{i<k, j<l} \left \langle R(u)(e_j\wedge e_l), e_i \wedge e_k \right \rangle  a_i^*  a_k^* a_j a_l +{1\over 2}d\wedge (Z^\rho(u) ) + d\wedge( \rho_* \beta(u)).
\end{equation}
where $Z^\rho(u)\in \LL(V;V)$ is defined by
$$\langle Z^\rho(v_1), v_2\rangle
=\sum_{j=1}^N \big\langle (\rho_*\otimes \rho_*)(\alpha(u))(e_j\otimes v_1),
v_2\otimes e_j\big \rangle _{V\otimes V}.$$
\end{theorem}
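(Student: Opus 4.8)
The plan is to start from the explicit formula for $\lambda^{\wedge\rho}(u)$ provided by Lemma~\ref{le:Weitzenbock}, namely
\[
\lambda^{\wedge \rho}(u) = {1\over 2} \sum_{i,j,k,l=1}^N \big\langle (\rho_*\otimes\rho_*)\alpha(u)(e_j\otimes e_l), e_i\otimes e_k\big\rangle\, a_i^*a_ja_k^*a_l + \sum_{i,j=1}^N \langle \rho_*\beta(u)e_j,e_i\rangle\, a_i^*a_j,
\]
and to rewrite the quartic term by splitting it according to whether the creation/annihilation indices coincide. The second (linear in $a^*a$) term is already in the desired shape: by the identity $d\wedge(A)=\sum_{i,j}\langle Ae_j,e_i\rangle a_i^*a_j$ recalled in the proof of Lemma~\ref{le:Weitzenbock} it equals $d\wedge(\rho_*\beta(u))$, so it requires no further work.

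The heart of the argument is manipulating the quartic sum. First I would use the commutation relation $a_ja_k^* + a_k^*a_j = \delta_{jk}$ from (\ref{commutation}) to write $a_i^*a_ja_k^*a_l = a_i^*a_k^*a_ja_l \cdot(-1) + \delta_{jk}a_i^*a_l$, i.e. $a_i^*a_ja_k^*a_l = -a_i^*a_k^*a_ja_l + \delta_{jk}a_i^*a_l$ (being careful about signs: $a_ja_k^* = \delta_{jk} - a_k^*a_j$, so $a_i^*a_ja_k^*a_l = \delta_{jk}a_i^*a_l - a_i^*a_k^*a_ja_l$). Substituting this into the quartic sum breaks it into two pieces: a genuinely quartic piece $-{1\over2}\sum \langle(\rho_*\otimes\rho_*)\alpha(u)(e_j\otimes e_l),e_i\otimes e_k\rangle a_i^*a_k^*a_ja_l$ and a quadratic piece ${1\over2}\sum_{j=k}\langle(\rho_*\otimes\rho_*)\alpha(u)(e_j\otimes e_l),e_i\otimes e_j\rangle a_i^*a_l$. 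The quadratic piece is exactly ${1\over2}d\wedge(Z^\rho(u))$ by the definition of $Z^\rho$ given in the statement together with the $d\wedge$ formula, so that term falls out immediately.

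For the quartic piece I would exploit antisymmetry: since $a_i^*a_k^* = -a_k^*a_i^*$ and $a_ja_l = -a_la_j$, the product $a_i^*a_k^*a_ja_l$ is antisymmetric separately in $(i,k)$ and in $(j,l)$ and vanishes when $i=k$ or $j=l$. Hence only the components of $(\rho_*\otimes\rho_*)\alpha(u)$ antisymmetrized in its two input slots and two output slots survive; these are precisely the matrix elements of the restriction $R(u):\wedge^2 V\to\wedge^2 V$ (using $e_j\wedge e_l = {1\over2}(e_j\otimes e_l - e_l\otimes e_j)$ and the normalization $\langle e_i\wedge e_k, e_j\wedge e_l\rangle = \delta_{ij}\delta_{kl}-\delta_{il}\delta_{kj}$ from the conventions set up before the lemma). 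Collapsing the four-fold sum over all indices into the sum over $i<k$, $j<l$ with the $R(u)$ matrix element produces the combinatorial factor that matches $-\sum_{i<k,j<l}\langle R(u)(e_j\wedge e_l),e_i\wedge e_k\rangle a_i^*a_k^*a_ja_l$; the factor of ${1\over2}$ from the commutator step is absorbed by the $2$-to-$1$ overcounting when passing from ordered pairs to unordered pairs, combined with the factor-of-two in the tensor inner product convention $\langle u_1\otimes v_1, u_2\otimes v_2\rangle = 2\langle u_1,u_2\rangle\langle v_1,v_2\rangle$.

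The first displayed equation in the theorem then records exactly this intermediate form, and (\ref{Weit-1}) is obtained by recognizing the second sum as ${1\over2}d\wedge(Z^\rho(u))$ and the third as $d\wedge(\rho_*\beta(u))$. I expect the main obstacle to be purely bookkeeping: tracking the signs coming from the fermionic commutation relation and the several factors of $2$ coming simultaneously from (a) the commutator identity, (b) the passage from ordered to unordered index pairs, and (c) the nonstandard inner-product normalization on $V\otimes V$; these must conspire to give the clean coefficients $-1$, ${1\over2}$, $1$ in (\ref{Weit-1}), and getting all three consistent is where care is needed. No deep idea is required beyond the index manipulation, so the proof is essentially a careful computation.
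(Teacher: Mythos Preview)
Your proposal is correct and follows essentially the same route as the paper: start from Lemma~\ref{le:Weitzenbock}, apply the commutation relation $a_ja_k^*=\delta_{jk}-a_k^*a_j$ to split the quartic sum into a genuinely quartic piece and a quadratic piece, then use the antisymmetry of $a_i^*a_k^*a_ja_l$ in $(i,k)$ and $(j,l)$ together with the tensor inner-product normalization to rewrite the quartic piece in terms of the restriction $R(u)$ on $\wedge^2V$. The identification of the remaining terms with $\tfrac12 d\wedge(Z^\rho(u))$ and $d\wedge(\rho_*\beta(u))$ is exactly as you describe.
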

\begin{proof}
This follows from Lemma \ref{le:Weitzenbock} since
\begin{eqnarray*}
&&{1\over 2} \sum_{i,j,k,l=1}^N\left \langle (\rho_*\otimes \rho_*)\alpha(u)(e_j\otimes e_l), e_i \otimes e_k \right \rangle  a_i^* a_j  a_k^* a_l\\
=&&-{1\over 2} \sum_{i,j,k,l=1}^N\left \langle (\rho_*\otimes \rho_*)\alpha(u)(e_j\otimes e_l), e_i \otimes e_k \right \rangle  a_i^* a_k^*a_j   a_l\\
&&+{1\over 2} \sum_{i,j,l=1}^N\left \langle (\rho_*\otimes \rho_*)\alpha(u)(e_j\otimes e_l), e_i \otimes e_j \right \rangle  a_i^* a_l\\
=&&-\sum_{j<l; i<k}^N\left \langle R(u)(e_j\wedge e_l), e_i \wedge e_k \right \rangle  a_i^*  a_k^* a_j a_l \\
&&+{1\over 2} \sum_{i,j,l=1}^N\left \langle (\rho_*\otimes \rho_*) \alpha(u)(e_j\otimes e_l), e_i \otimes e_j \right \rangle  a_i^* a_l.
\end{eqnarray*}
\end{proof}

\begin{remark}
\label{re:Weit}
\begin{enumerate}
\item[(a)] Note that the second term in (\ref{Weit-1}) in general depends on
the symmetric part of $(\rho_*\otimes \rho_*)(\alpha(u))$ as well as on $R$.
\item[(b)] If we write 
$$\alpha(u)=\sum \mu_k(u) A_k(u)\otimes A_k(u)$$
as in Remark \ref{re:alpha}(c),
Then $Z^\rho(u)$ in (\ref{Weit-1}) has
$$Z^\rho(u)=2\sum_k \mu_k(u) \Big(\rho_*(A_k(u))\rho_*(A_k(u))\Big).$$
\end{enumerate}
\end{remark}

\begin{corollary}
\label{co:derivative}
For the derivative process in $GLM$ of a cohesive generator $\A$ given in H\"{o}rmander form without a drift, the zero order operator induced by the vertical diffusion on the exterior bundles $\wedge TM$  is the generalized  Weitzenb\"ock curvature \index{curvature!Weitzenb\"ock}  given by:
\begin{equation}
-{1\over 2} d\wedge^q ({  \Ric}^{\#})(V) - \sum_{1\leqslant i\leqslant k\leqslant n\atop1\leqslant j<l\leqslant p}    R_{ikjl}a_l^*a_j^*a_ka_i V
\end{equation}
for all $V\in \wedge^q TM$.
Here $   R_{ikjl}=\left\langle   R(e_i,e_k)e_l,e_j\right\rangle, 1\leqslant i,k\leqslant n, 1\leqslant j, l \leqslant p$
for $R$ the curvature tensor of the associated connection.\end{corollary}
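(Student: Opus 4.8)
The plan is to specialize the general machinery of Theorem \ref{th:op-deco} and Theorem \ref{th:Weit} to the case $P = GLM$ with $G = GL(n)$ (or $O(n)$, reduced via a Riemannian metric), $\rho$ the standard representation on $V = \R^n$, and $\B$ the derivative-flow generator associated to a cohesive H\"ormander form $\A = \frac12\sum_j \LL_{X^j}\LL_{X^j}$ without drift. First I would invoke Theorem \ref{th:conn-deri} to identify $\alpha$ and $\beta$ explicitly: with no drift term $A$, the formulas there give $\beta(u) = -\frac12\sum_j u^{-1}\breve\nabla_{\breve\nabla_{u(-)}X^j}X^j - \frac12 u^{-1}\Ric^{\#}u(-)$ and $\alpha(u) = \frac12\sum_j (u^{-1}\breve\nabla_{u(-)}X^j)\otimes(u^{-1}\breve\nabla_{u(-)}X^j)$. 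The key observation is that the adjoint semi-connection $\nabla$ on $GLM$ is a genuine metric connection here (since $\B$ is the derivative flow of an SDE for which the LW connection is metric), so $\rho_*\beta$ and $R(u)$ are the honest curvature data of that connection.

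Next I would plug these into formula (\ref{Weit-1}) of Theorem \ref{th:Weit}. The term $d\wedge(\rho_*\beta(u))$ splits into two pieces: the Ricci piece $-\frac12 d\wedge^q(\Ric^{\#})$, which is exactly the first term in the claimed formula, and a piece coming from $-\frac12\sum_j u^{-1}\breve\nabla_{\breve\nabla_{u(-)}X^j}X^j$. The crucial cancellation — and this is the main obstacle — is that this latter piece must combine with the $\frac12 d\wedge(Z^\rho(u))$ term to leave only the curvature two-form term $-\sum R_{ikjl}a_l^*a_j^*a_k a_i$. By Remark \ref{re:Weit}(b), $Z^\rho(u) = 2\sum_j \rho_*(A_j(u))^2$ in diagonalized coordinates, which in terms of the connection data is $\sum_j (\breve\nabla_{-}X^j)\circ(\breve\nabla_{-}X^j)$-type second-order composition; matching this against $\sum_j \breve\nabla_{\breve\nabla X^j}X^j$ requires the second Bianchi-type / torsion bookkeeping relating $\breve\nabla_{\breve\nabla_v X^j}X^j$ and $\breve\nabla_v\breve\nabla_{-}X^j$, and is precisely the computation that produces the curvature tensor $R$ rather than some other combination of covariant derivatives of $X$. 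I would carry this out by writing $R(e_i,e_k) = \sum_j [\text{expression in } \breve\nabla X^j]$ using the defining identity of the curvature of $\breve\nabla$ from \cite{Elworthy-LeJan-Li-book}, then comparing coefficient-by-coefficient with the expansion of $R(u)$ restricted to $\wedge^2 V$ in the first term of (\ref{Weit-1}).

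Finally I would verify the index ranges: the sums run $1 \le i \le k \le n$ and $1\le j < l \le p$ because $p = \dim E_x = \rank \sigma^\A$ is the rank of the bundle $E$ to which the relevant noise directions belong (the $X^j$ span $E_x$), so the annihilation operators $a_j, a_l$ effectively only see indices up to $p$, while the creation operators $a_i^*, a_k^*$ act on all of $\wedge TM$; the ordering $i\le k$ (with the diagonal permitted, absorbing the factor-of-two bookkeeping from the tensor inner-product convention $\langle u_1\otimes v_1, u_2\otimes v_2\rangle = 2\langle u_1,u_2\rangle\langle v_1,v_2\rangle$) and $j<l$ comes from antisymmetrizing $R(u)$ on $\wedge^2 V$ as in the proof of Theorem \ref{th:Weit}. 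The last bit of work is to confirm that for the particular choice of $\rho = \Id$ (so $\rho_* = \Id$ and $\rho$ is the standard representation of $GL(n)$ on $\R^n$, restricting to $O(n)$), the operator $\lambda^{\wedge^q\rho}$ transported back to $\wedge^q TM$ via the frame isomorphisms $\bar{\mathbf u}$ is independent of the frame $u\in\pi^{-1}(x)$ — which follows from the equivariance of $\alpha$ and $\beta$ established in Theorem \ref{th:op-deco} together with the fact that $\wedge^q\rho$ intertwines the $GL(n)$-action with the change-of-frame action on $\wedge^q TM$. Assembling these three ingredients — the explicit $\alpha,\beta$ from Theorem \ref{th:conn-deri}, the Weitzenb\"ock-type expansion (\ref{Weit-1}), and the curvature-identity cancellation — yields the stated formula.
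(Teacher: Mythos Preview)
Your overall architecture is right --- invoke Theorem \ref{th:conn-deri} for $\alpha,\beta$, then feed into formula (\ref{Weit-1}) --- and that is exactly what the paper does. But you have mislocated the two nontrivial inputs, and this matters.

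First, the cancellation you flag as the ``main obstacle'' is in fact immediate and does not involve any Bianchi or torsion identity. With $\rho=\Id$ and $\alpha(u)=\tfrac12\sum_p A_p\otimes A_p$ where $A_p := u^{-1}\breve\nabla_{u(-)}X^p\in\mathfrak{gl}(n)$, Remark \ref{re:Weit}(b) gives $Z^\rho(u)=\sum_p A_p\circ A_p$. But $A_p\circ A_p$ applied to $v\in\R^n$ is $u^{-1}\breve\nabla_{u(u^{-1}\breve\nabla_{u(v)}X^p)}X^p = u^{-1}\breve\nabla_{\breve\nabla_{u(v)}X^p}X^p$, just by unwinding the composition. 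So $\tfrac12 d\wedge(Z^\rho(u))$ is \emph{identically} minus the $\breve\nabla_{\breve\nabla X^p}X^p$ part of $d\wedge(\rho_*\beta(u))$; they cancel to zero, not to a curvature term.

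Second, and this is the real content you are missing: the curvature tensor enters through the \emph{first} term of (\ref{Weit-1}), via the identification of $R(u):=(\rho_*\otimes\rho_*)\alpha(u)\big|_{\wedge^2 V}$ with $\tfrac12$ times the curvature operator $\mathcal R$ of $\breve\nabla$. This is not proved in the present paper; it is quoted from Corollary C.5 of \cite{Elworthy-LeJan-Li-book}. Once you have $R(u)=\tfrac12\mathcal R$, the relation $\langle\mathcal R(u\wedge v),w\wedge z\rangle=\langle R(u,v)z,w\rangle$ converts the first term of (\ref{Weit-1}) directly into the $R_{ikjl}$ sum. Your plan to ``write $R(e_i,e_k)=\sum_j[\text{expression in }\breve\nabla X^j]$'' is a possible alternative route to this identification, but it is a separate computation from the $Z^\rho$--$\beta$ cancellation, and you should present it as such rather than conflating the two steps.
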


\begin{proof} 
By Theorem \ref{th:conn-deri},
$$\alpha(u)={1\over 2}\sum \left(u^{-1}    \nabla_{u(-)}X^p\right)
\otimes \left(u^{-1}\nabla_{u(-)} X^p\right), \qquad u\in GLM.$$
By Corollary C.5 in \cite{Elworthy-LeJan-Li-book} the restriction of $\alpha$ to anti-symmetric tensors is 
${1\over 2}\CR$.
%$$\sum \left(u^{-1}   \nabla_{u(-)}X^p\right)\wedge\left(u^{-1}\nabla_{u(-)} X^p\right)=   \RR,$$

By the relation between the curvature tensor and the curvature oeprator:
$$\langle    \CR(u\wedge v), w\wedge z\rangle =\langle     R(u, v)z,w\rangle,$$
 the first term in $\lambda^\rho(u)$ of Lemma \ref{le:Weitzenbock}  is:
$$-2\sum_{i<k,j<l}^N\left \langle    \CR(u)(e_j\wedge e_l), e_i \wedge e_k \right \rangle  a_i^*  a_k^* a_j a_l=-2\sum_{i<k, j<l}^N     R_{jlki}a_i^*a_k^*a_ja_l.  $$
By (ii) of Remark \ref{re:Weit}, the second term is
%&&{1\over 2} \sum_{i,j,l=1}^N\left \langle (\rho_*\otimes \rho_*)\alpha(u)(e_j\otimes e_l), e_i \otimes e_j \right \rangle  a_i^* a_l\\
%=&&{1\over 2}\sum_{p=1}^m\sum_{i,l=1}^N\left \langle e_i, \left( u^{-1}   \nabla_{u(-)}X^p\right)^2 e_l \right \rangle  a_i^* a_l =
$${1\over 2}d\wedge ^q \left(\sum_{p=1}^mu^{-1}    \nabla_{   \nabla_{u(-)}X^p} X^p\right).$$
The required result follows since
$$\beta(u)=-{1\over 2}\sum_{p=1}^m u^{-1} \left(   \nabla_{   \nabla_{u(-)}X^p}X^p\right)
- {1\over 2}u^{-1} \left ({\Ric }^{\#}{u(-)}\right).$$
\end{proof}

Corollary \ref{co:derivative} reflects the results in \cite{Elworthy-LeJan-Li-book}, Theorem 2.4.2, concerning Weitzenb\"ock formula for H\"ormander form\index{H\"ormander!form} operators on differential forms. In particular it gives another approach to the result that when $\breve \nabla$ is the Levi-Civita connection, as holds for gradient stochastic differential equations, the generator induced on differential forms by the derivative process is the Hodge-Kodaira Laplacian up to a first order term.

Note that if $\B$ is the operator on $GLM$ determined  by the H\"{o}rmander form (\ref{op-deri}) of $\A$ then for a representation $\rho: GL(M)\to \Lo(V;V)$ with associated $\pi^\rho: GL(n)\to \Lo(V;V)$ 
the induced operator $\F^\rho(\B)$ on sections of $\pi^\rho$ is also given by the `H\"{o}rmander form' 
${1\over 2}\sum_j \Lo_{X^j}\Lo_{X^j}+\Lo_A$, where for any $C^1$ vector field $Y$ on $M$ and any $C^1$ section $U$ of $\pi^\rho$ the Lie derivative $\Lo_YU \in \Gamma F$ is given by
$$(\Lo_YU)(x)
=\bar{\mathbf u} {d\over dt}
\Big(\overline {{\mathbf {T\eta_t^Y}} \circ {\mathbf u}}\Big)^{-1}
U\Big(\eta_t^Y(x)\Big)\Big|_{t=0}$$ for $x\in M$, $u$ a frame at $x$, and $(\eta_t^Y)$ the flow of $Y$, using the notation of (\ref{associated-map}). 

Indeed by (\ref{associated-map}), for $Z(u)=\bar{\mathbf u} U(\pi(u))$, so $U=\F^\rho(Z)$,
$$\F^\rho(\B)(U)
=\F^\rho\Big[\Big( {1\over 2}\sum_j \Lo_{(X^j)^{GL}}\Lo_{(X^j)^{GL}}+\Lo_{A^{GL}}\Big) (Z)\Big]$$
while 
$\Lo_{(X^j)^{GL}}(Z)(u)={d\over dt} Z(T\eta_t^{X^j}\circ u)\Big|_{t=0}$ so that
$$\F^\rho\Big[ \Lo_{(X^j)^{GL}}(Z)\Big](x)
=\bar{\mathbf u}{d\over dt} Z(T\eta_t^{X^j}\circ u)\Big|_{t=0}
=\Lo_{X^j} (U)(x).$$
This representation of $\F^\rho(\B)$ was noted in the case of the operator induced on differential forms by a stochastic flow\ index{flow!on differential forms} in \cite{Elworthy-LeJan-Li-book}, and for the case of the Hodge-Kodaira Laplacian in Elworthy \cite{Elworthy-flows}.

\begin{example}
Let $P$ be the orthonormal frame bundle for a Riemannian metric on $M$. Let $C: \R^n\to \R^n$ be a symmetric map,  define 
$$\alpha =\sum_{i,j}\trace \langle C(A_i -), A_j-\rangle A_i\otimes A_j,$$
where $\{A_i=\sqrt 2 e_i\wedge e_j\}$ is an orthonormal basis of $\mathfrak so(n)$.   Then  
$$\comp \circ \alpha =-{1\over 4}(\trace C)\id+{1\over 4}(2-n)C.$$
Let ${\Ric}^{\#}: TM\to TM$ be the Ricci curvature \index{curvature!Ricci}(for the Levi-Civita connection, say). When applied to $C(u)=u\Ric_{\pi(u)}^{\#}(u^{-1}-)$ for $u\in P$ with $\Ric$ positive, we see it defines a vertical operator on the orthonormal frame bundle with coefficients $\alpha$ as given above, $\beta=0$. Its associated 
zero order term on vertical fields  is then ${1\over 4}(2-n)\Ric_{\pi(u)}^{\#}-{1\over 4}k$, where $k$ is the scalar curvature.
\end{example}

\begin{proof}
First observe that $$\alpha ={1\over 2} \left(d\otimes^2 C\right)\left( \sum_i A_i\otimes A_i\right).$$
Then we use the elementary fact about elementary matrices $\{E_{ij}\}$:
$$E_{ij}CE_{i'j'}=C_{ji'}E_{ij'}$$
and take the basis of $\mathfrak g$ to be $\{\sqrt 2 e_i\wedge e_j, i<j\}$. Recall  that
$e_i\wedge e_j=1/2(E_{ij}-E_{ji})$.
\end{proof}

\begin{remark}  We have seen  in Corollary \ref{co:derivative} that there is zero order operator 
on the associated bundle $\wedge F\to M$ represented by the Weitzenb\"ock curvature\index{curvature!Weitzenb\"ock}  of a given connection. On the other hand given a curvature operator \index{curvature!operator} $\CR$ of a metric connection, or more generally an operator which has the same symmetry properties as a curvature tensor, is there a canonical vertical diffusion operator on $GLM$ which induces zero order operators on differential forms which have the form of the Weitzenb\"ock curvatures of R? A vertical operator with such a zero order term always exist since we can take $\CR$ in a diagonal form:
\begin{equation}
\label{diagonal-form}
\CR(u)=\sum_{n=1}^N  A_n(u)\wedge A_n(u),
\end{equation}
for some $A_n: GLM\to {\mathfrak gl}(n)$ which are $\ad(G)$-invariant, e.g. by taking an isometric embedding (\eg see \cite{Elworthy-LeJan-Li-book}.  In this case let $(e^j)$ be a basis of $E_{\pi(u)}$ define
\begin{equation}\label{symbols-5}
\begin{array}{lll}
\alpha(u)&=& {1\over 2}\sum_{n=1}^N  A_n(u)\otimes A_n(u),\\
\beta(u)&=&-{1\over 2}\sum_{n=1}^N  (A_n(u))^2-{1\over 2}
 \sum_{j=1}^p R(-, e^j)e^j,
\end{array}
\end{equation}
see Remark \ref{re:Weit}(b). Then $\alpha$ is positive and we can define an operator with its coefficients $\alpha$ and $\beta$ given as above. 

For a discussion of the representation of $\CR$ in the form of (\ref{diagonal-form}) see Kobayashi-Nomizu \cite{Kobayashi-Nomizu-II} (Notes 17 and 18). In particular there is a discussion there of the number $N$ required and of a rigidity theorem originating from Chern, See also Berger-Bryant-Griffiths \cite{Berger-Bryant-Griffiths}.

When $M$ is Riemannian with positive semi-definite curvature operator $\CR: \wedge^2TM \to \wedge^2 TM$ there is a canonical construction. For this take the orthonormal  frame bundle $\pi: OM\to M$, with $G=O(n)$. We will use the isomorphism of $\wedge^2\R^n$ with ${\mathfrak so}(n)$ under which $e_p\wedge e_q$ corresponds to ${1\over 2} (E_{[p,q]}-E_{[q,p]})$ for $e_1,\dots, e_n$ a fixed basis of $\CR^n$ and $E_{[p,q]}$ the elementary matrix so $E_{[p,q]}(v)=v_q e_p$. Set 
$A_{[p,q]}={1\over \sqrt 2}[E_{[p,q]}-E_{[q,p]}]$ so $\{A_{[p,q]}: 1\leqslant p<q\leqslant n\}$ forms an orthonormal basis for ${\mathfrak so}(n)$. Define
$$\alpha: OM\to {\mathfrak so}(n)\times  {\mathfrak so}(n)$$
by 
$$\alpha(u)=\sum_{1\leqslant p\leqslant q\leqslant n, 1\leqslant p'\leqslant q'\leqslant n}
\Big\langle \CR(\wedge^2(u)(e_p\wedge e_q)), \wedge^2(u)(e_{p'}\wedge e_{q'})) \Big\rangle_{\pi(u)} A_{[p,q]}\otimes A_{[p',q']}.$$
Our representation $\rho$ is just the identity map and, by (\ref{rho-star})
and Bianchi's identity, the restriction of $\alpha(u): \R^n\otimes \R^n\to \R^n\otimes \R^n$ to $\wedge^2\R^n$ is just $\CR$ itself. In the notation of (\ref{Weit-1}) we see 
$$\langle Z^\rho(v^1), v^2\rangle
=-4 \Ric(v^1, v^2).$$
If we take $\beta=0$, we obtain from (\ref{Weit-1}) that 
$$\lambda^{\wedge^\rho}(u)=
-\sum_{i<k, j<l} R_{jlik} a_i^* a_k^* a_j a_l -2\; (d\wedge){\Ric}^{\#}.$$
To get the full Weitzenb\"ock term, extend $\alpha$ over GLM by equivariance and define $\beta(u)$, for $u\in GLM$,  by
$\beta(u)={3\over 2}u^{-1} {\Ric}^{\#}(u-)$ as in (\ref{symbols-5}).
\end{remark}

%CHERN HAS A RESULT RELATING Weitzenb\"ock TO HOLONOMY-MAYBE WE GET THIS TOO???\cite {WA Poor}

\chapter{Projectible Diffusion Processes}
\label{ch:intertwined}

Let $M^+$ be the Alexandrov one point compactification of a smooth manifold $M$. Consider the space $\C_{y_0}M^+$ of processes $(y_t)$ with life time $\zeta$ on $N^+$ such that $t\to y_t$ is continuous with $y_t=\Delta$ when $t\geqslant \zeta$. Let $\Lo$ be a diffusion operator on $M$ and let $\{\PP_{y_0}, y_0\in M^+\}$ be the family of $\Lo$-diffusion measures \index{diffusion!measure} in the sense of \cite{Ikeda-Watanabe}, \ie the solution to the martingale problem on $\C(M^+)$ so the canonical process $(y_t, 0\leqslant t<\zeta)$ with the system of diffusion measures $\{\PP^\Lo_{y_0}, y_0\in N^+\}$ is a strong Markov process on $M^+$. 
 Denote by $\E$ mathematical expectation with respect to the measure $\PP_{y_0}$.  
We may add to these notations the relevant subscripts or superscripts indicating the diffusion operator or the Markov process concerned, \eg  $\{\PP^\Lo_{y_0}\}, \zeta^\Lo$, $\E^{\Lo, y_0}$ or even $\E^{y_0}$.

  For $y_0\in M$ and $f\in \C_c^\infty M$, the space of smooth functions on $M$ with compact support, let
\begin{equation}
M_t^{df}:=M_t^{df,\Lo}:= f(y_{t\wedge \zeta})-f(y_0)-\int_0^{t\wedge \zeta} \Lo f(y_s)ds\end{equation}
Then $(M_t^{df}: 0\leqslant t<\infty)$ is a martingale on the probability space $(\C(M),\PP^\Lo_{y_0})$ with respect to the $\{\F_t^{y_0}\}$, where $\F_t^{y_0}=\sigma\{y_s; 0\leqslant s\leqslant t\}$. Moreover it has bracket
$$\langle M^{df}\rangle_t=2\int_0^{t\wedge \zeta} \sigma^\Lo((df)_{y_s}, (df)_{y_s})ds.$$
This definition extends to the case of $C^2$ functions $f$ but then $M_t^{df}$ is only defined for $0\leqslant t<\zeta^\Lo$ and is a local martingale.

\section{Integration of predictable processes}\label{se:int-pred-proc}
%
%\begin{definition}\label{de: mart.part}
%A  $T^*M$ valued random process $\alpha$ on the canonical probability space $\C(M^+)$
%which is predictable with $\alpha_t(y_.)\in T_{y_t}^*M$, $y_.\in \C(M^+)$,
%is in $L^2_\Lo$ if     
%$$\E \int_0^{t\wedge \zeta} \alpha_s (\sigma^\Lo \alpha_s )\; ds<\infty$$
%for all $t\ge0$.

%It  is in  $L^2_{\Lo,loc}$  if for any compact subset $K$ of $M$ if
%$$\E \int_0^{t\wedge \zeta}\chi_ K(y_s)\alpha_s (\sigma^\Lo \alpha_s )\; ds<\infty$$
%for all $t\ge0$.
%\end{definition}

\begin{proposition}
\label{pr:mart-1}
Let $\tau$ be a stopping time with $\tau <\zeta$ and let $\{\alpha_t: 0\leqslant t<\tau\}$ be a $\F_*^{y_0}$ predictable process in $T^*M$  
 such that $\alpha_t\in T^*_{y_t}M$ for each $t\in[0,\tau)$, and for each compact subset of $M$ we have
$$ \int_0^{\tau}\chi_ K(y_s) \alpha_s (\sigma^\Lo \alpha_s )\; ds<\infty$$
almost surely.
  
Then there is a unique local martingale 
$\{M_t^\alpha~:~0\le~t<~\tau\}$ such that for all $f\in \C_c^\infty M$,
\begin{equation}
\label{bracket}
\left\langle M^\alpha, M^{df}\right\rangle_t =2\int_0^{t} \sigma^\Lo \left(\alpha_s, (df)_{y_s}\right)ds,   \qquad  t<\zeta.
\end{equation}
%For any predictable $\{\lambda_t: 0\leqslant t<\zeta\}$, we shall write 
%$$M_t^{\lambda\alpha}:=\int_0^t \lambda_s dM_s^\alpha.$$
\end{proposition}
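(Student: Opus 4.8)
The plan is to build $M^\alpha$ by hand as a finite sum of It\^o integrals against the ``fundamental'' local martingales associated to an immersion $j\colon M\to\R^m$, for which the stated bracket formula for $M^{df}$ provides all the needed data, and then to obtain uniqueness from the predictable representation property of the well-posed martingale problem for $\Lo$ (which the paper has already asserted holds, even in the degenerate case without a smooth H\"ormander form).

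\textbf{Existence.} Fix a smooth immersion $j=(j^1,\dots,j^m)\colon M\to\R^m$, so that $\{(dj^k)_x\}_{k=1}^m$ spans $T_x^*M$ for every $x\in M$. Each $j^k$ is $C^2$, so by the extension of the definition of $M^{df}$ recalled above, $M^{dj^k}$ is a continuous local martingale on $[0,\zeta)$ and, by polarisation from the stated bracket formula, $\langle M^{dj^k},M^{dj^l}\rangle_t=2\int_0^t\sigma^\Lo\big((dj^k)_{y_s},(dj^l)_{y_s}\big)\,ds$. Using a fixed Riemannian metric, let $L\colon T^*M\to\underline{\R}^m$ be the smooth bundle map which is the (pseudo-inverse) right inverse of $c\mapsto\sum_k c_k\,dj^k$; then $\alpha^\flat_s:=L_{y_s}(\alpha_s)\in\R^m$ is predictable and $\sum_k(\alpha^\flat_s)_k\,(dj^k)_{y_s}=\alpha_s$. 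Set
$$M_t^\alpha:=\sum_{k=1}^m\int_0^t(\alpha^\flat_s)_k\,dM^{dj^k}_s .$$
To see this is a well-defined continuous local martingale on $[0,\tau)$, take an exhaustion $K_n\uparrow M$ by compacts and $\sigma_n:=\tau\wedge n\wedge\inf\{t\colon y_t\notin K_n\}$; since the path leaves every compact as $t\to\zeta$ and $\tau<\zeta$, $\sigma_n\uparrow\tau$, and on $[0,\sigma_n)$ the total quadratic variation of the integrand is $2\int_0^{\sigma_n}\sigma^\Lo(\alpha_s,\alpha_s)\,ds=2\int_0^{\sigma_n}\chi_{K_n}(y_s)\sigma^\Lo(\alpha_s,\alpha_s)\,ds<\infty$ by hypothesis, so each stopped integral is a bona fide $L^2$ martingale and $M^\alpha$ is a local martingale.

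\textbf{The bracket and independence of choices.} For $f\in\C_c^\infty M$, again by polarisation $\langle M^{dj^k},M^{df}\rangle_t=2\int_0^t\sigma^\Lo\big((dj^k)_{y_s},(df)_{y_s}\big)\,ds$, so
$$\langle M^\alpha,M^{df}\rangle_t=\sum_k\int_0^t(\alpha^\flat_s)_k\,d\langle M^{dj^k},M^{df}\rangle_s=2\int_0^t\sigma^\Lo\Big(\sum_k(\alpha^\flat_s)_k(dj^k)_{y_s},(df)_{y_s}\Big)\,ds=2\int_0^t\sigma^\Lo\big(\alpha_s,(df)_{y_s}\big)\,ds,$$
using bilinearity of $\sigma^\Lo$ and $\sum_k(\alpha^\flat_s)_k(dj^k)_{y_s}=\alpha_s$. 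This is the asserted identity, and it also shows the construction does not depend on $j$ or on the chosen right inverse: two candidates differ by a local martingale orthogonal to every $M^{df}$, $f\in\C_c^\infty M$, which is the situation handled next.

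\textbf{Uniqueness, and the main obstacle.} It remains to show that a continuous local martingale $D$ on $[0,\tau)$ with $D_0=0$ and $\langle D,M^{df}\rangle\equiv0$ for all $f\in\C_c^\infty M$ vanishes. Here I would invoke the predictable representation property for the well-posed martingale problem of $\Lo$: on the canonical filtered space the family $\{M^{dj^k}\}_{k=1}^m$ stably generates all continuous local martingales, so $D_t=\sum_k\int_0^t H^k_s\,dM^{dj^k}_s$ for predictable $H^k$. Writing $\psi_s=\sum_k H^k_s(dj^k)_{y_s}$, the hypothesis gives $0=\langle D,M^{df}\rangle_t=2\int_0^t\sigma^\Lo\big(\psi_s,(df)_{y_s}\big)\,ds$ for all $f$; choosing $f$ that agree locally with coordinate functions forces $\psi_s\in\ker\sigma^\Lo_{y_s}$ for a.e.\ $s$, whence $\langle D\rangle_t=2\int_0^t\sigma^\Lo(\psi_s,\psi_s)\,ds=0$ and $D\equiv0$. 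I expect this last step to be the crux: the existence half is routine It\^o calculus, but the reduction ``$\langle D,M^{df}\rangle=0$ for all $f\ \Rightarrow\ D=0$'' genuinely needs the representation property, i.e.\ the well-posedness of the martingale problem, rather than just elementary stochastic calculus; a minor additional point is the predictable selection of $\alpha^\flat$, which the smooth bundle right-inverse $L$ composed with the continuous path $s\mapsto y_s$ takes care of.
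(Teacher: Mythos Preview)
Your existence argument is essentially the paper's: it too fixes an embedding $(f_1,\dots,f_m):M\to\R^m$, writes $\alpha_t=\sum_j g^j_t\,(df_j)_{y_t}$ with predictable coefficients obtained from a right inverse of the coframe map, sets $M^\alpha_t=\sum_j\int_0^t g^j_s\,dM^{df_j}_s$, and reads off the bracket. The paper additionally passes through a partition of unity so that the $f_j$ may be taken in $C_c^\infty M$, but this is cosmetic.

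The uniqueness arguments diverge. You invoke the predictable representation theorem for the well-posed martingale problem, write any orthogonal local martingale $D$ as $\sum_k\int H^k\,dM^{dj^k}$, and kill it by showing its quadratic variation vanishes. The paper instead gives the direct Dellach\'erie argument: if a local martingale $K$ is orthogonal to every $M^{df}$ with $f\in C_c^\infty M$ and is not identically zero, one can stop it so that $(1+K^0_{\tau\wedge t})\,\PP^\Lo_{y_0}$ is a second solution of the martingale problem, contradicting uniqueness. Both routes rest on well-posedness, but the paper's is more self-contained---it proves exactly the orthogonality implication needed, without appealing to the (stronger) representation theorem as a black box. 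Your route is perfectly valid and arguably cleaner once one is willing to quote that theorem; the paper's has the advantage of exhibiting precisely how uniqueness of the martingale problem enters.
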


\begin{proof}
We can write 
\begin{equation}
\label{rep-alpha}
\alpha_t=\sum_{j=1}^m g_t^j\cdot {df_j}(y_t),
\end{equation}
where the functions $g^j$ are predictable real valued processes, \eg by taking $(f_1,\dots, f_m): M\to \R^m$ to be an embedding and
$g_t^j=\alpha_t\circ X^j$, for $X(x)=\sum_{i=1}^m  X^i(x)e_i$ the projection from $\R^m$ to $T_xM$. Using a partition of unity, at the cost of having an infinite, but locally finite sum, we can assume that the $f_j$ in the representation are all in $\C_c^\infty M$.
Define
\begin{equation}
M_t^\alpha:=\sum_j \int_0^t g_s^j dM_s^{df_j}.\end{equation}
Clearly (\ref{bracket}) holds. 
For uniqueness suppose $K$ is a local martingale orthogonal to $M^{df}$ for all $f\in C_c^\infty M$.

Then $K$ 
vanishes since the martingale problem for $\Lo$ is well posed by an argument attributed to Dellach\'erie (see Rogers-Williams \cite{Rogers-Williams-II}, the end of the proof of theorem 2.5.1). In fact it it were not zero we could take a suitable stopping time $\tau$ to ensure $(1+K^0_{\tau\wedge t})\PP_{x_0}^\Lo$ solves the martingale problem up to time $t$ since
$$K^0_{\tau\wedge t} M_s^{df} \equiv K_{\tau\wedge t}^0 \left(f(x_s)-f(x_0)-\int_0^s \Lo f(x_s) ds\right), \qquad 0\leqslant s\leqslant t$$
is a uniformly integrable martingale.
%Furthermore if (\ref{bracket}) also holds for some local martingale $\tilde M^\alpha$, then
%$$\langle \tilde M^\alpha, M^\alpha\rangle_t=2\int_0^t\sigma^\Lo(\alpha_s, \alpha_s)ds,$$
%which gives the uniqueness of $M^\alpha$ and so $M^\alpha$ is independent of the choice of the representation (\ref{rep-alpha}) of $\alpha$.
\end{proof}

%\begin{definition}
%A  $T^*M$ valued random process $\alpha$ on the canonical probability space $\C(M^+)$ is in $L^2_\Lo$ if
%it is predictable with $\alpha_t(\sigma)\in T_{\sigma_t}^*M$, $\sigma\in \C(M^+)$,
%and 
%$$\E \int_0^{t\wedge \zeta} \alpha_s (\sigma^\Lo \alpha_s )\; ds<\infty$$
%for all $t\ge0$.
%\end{definition}
%It is easy to check that this defines $(M^\alpha_t)$ independently of the representation (\ref{rep-alpha}), for example by looking at the increasing process of the difference of the two resulting local martingales.

%To see uniqueness suppose $\langle M^\alpha, M^{df} \rangle\equiv 0$ for all $f\in \C_c^\infty M$.  But then 
%$$\langle M^\alpha, M^\alpha\rangle_t=\sum_{j} \int_0^t g^j_s d \langle M^\alpha, M^{df^j}\rangle_s=0$$
%and so $M^\alpha_t\equiv 0$ as required.

We will often write
\begin{equation}
M_t^\alpha=\int_0^t \alpha_s \, d\{y_s\}
\end{equation}
bringing out the fact it is the martingale part of the Stratonovitch integral 
$\int_0^t \alpha_s\circ dy_s$ of $(\alpha_t)$ along the diffusion process $(y_t)$ when that integral is defined \eg when $(\alpha_t)$ is a continuous semi-martingale. Indeed

\begin{lemma}
\label{le:mart-1}
Let $\alpha$ be a $C^2$ 1-form then
\begin{equation}
M_t^\alpha=\int_0^t \alpha_{y_s} \circ dy_s -\int_0^t  \big(\delta^\Lo \alpha\big) (y_s) ds,
\qquad 0\leqslant t<\zeta.
\end{equation}
\end{lemma}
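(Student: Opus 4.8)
The plan is to verify the identity by reducing to the defining properties of $M_t^\alpha$ and $\delta^\Lo$, rather than computing in coordinates. First I would establish the statement for the special case $\alpha = f\, dg$ with $f, g \in \C_c^\infty M$ (or more generally $f, g \in C^2$, working on $[0,\zeta)$). For such $\alpha$, the classical Stratonovich integral satisfies $\int_0^t f(y_s)\circ d(g(y_s))$, and since $g(y_s) = g(y_0) + M_s^{dg} + \int_0^s \Lo g(y_r)\,dr$ is a semi-martingale decomposition, I can expand $\int_0^t f(y_s)\circ dg(y_s) = \int_0^t f(y_s)\, dM_s^{dg} + \int_0^t f(y_s)\Lo g(y_s)\,ds + \frac12\langle f(y_\cdot), M^{dg}\rangle_t$. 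The first term is exactly $M_t^{f\,dg} = M_t^\alpha$ by the definition $M_t^\alpha = \sum_j \int_0^t g_s^j\, dM_s^{df_j}$ used in Proposition~\ref{pr:mart-1}. For the joint bracket term, I would use that $f(y_\cdot)$ has martingale part $M^{df}$, so $\langle f(y_\cdot), M^{dg}\rangle_t = \langle M^{df}, M^{dg}\rangle_t = 2\int_0^t \sigma^\Lo(df_{y_s}, dg_{y_s})\,ds$ by (\ref{bracket}).

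Combining, $\int_0^t (f\,dg)_{y_s}\circ dy_s = M_t^\alpha + \int_0^t \big(f\Lo g + \sigma^\Lo(df,dg)\big)(y_s)\,ds = M_t^\alpha + \int_0^t \delta^\Lo(f\,dg)(y_s)\,ds$, where the last equality is precisely (\ref{delta-1}) from Proposition~\ref{pr:delta}. This is the claimed formula for $\alpha = f\,dg$. The second step is to pass to a general $C^2$ one-form $\alpha$: as noted in the proof of Proposition~\ref{pr:delta} and Proposition~\ref{pr:mart-1}, any such $\alpha$ can be written (using an embedding and a partition of unity) as a locally finite sum $\alpha = \sum_j f_j\, dg_j$ with $g_j$ smooth and $f_j$ of class $C^2$ with compact support. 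Both sides of the desired identity are linear in $\alpha$ and are local in the sense that they only depend on the values of $\alpha$ near $y_s$, so the identity extends from the building blocks $f_j\, dg_j$ to $\alpha$ by summing; $\delta^\Lo$ being linear (Proposition~\ref{pr:delta}), $\delta^\Lo\alpha = \sum_j \delta^\Lo(f_j\,dg_j)$, and the Stratonovich and martingale integrals likewise add.

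I expect the main technical point to be the justification of the Stratonovich cross-variation computation — specifically writing $\int f(y_s)\circ dg(y_s)$ as the sum of an Itô integral against $dM^{dg}$, a bounded-variation term, and one-half the bracket $\langle f(y_\cdot), g(y_\cdot)\rangle$, and then identifying that bracket with $\langle M^{df}, M^{dg}\rangle$ (the bounded-variation parts contribute nothing to the bracket). This is standard semi-martingale calculus once one knows $y_\cdot$ is a semi-martingale under $\PP^\Lo_{y_0}$ on $[0,\zeta)$, which follows from the martingale-problem formulation and the local-martingale property of $M^{df}$; the only care needed is localization up to the lifetime $\zeta$ and the handling of the locally-finite (rather than finite) sum in the partition-of-unity representation, which is harmless since near any point only finitely many terms are nonzero. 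A minor alternative would be to avoid Stratonovich calculus entirely and simply \emph{define} $\int_0^t \alpha_{y_s}\circ dy_s := M_t^\alpha + \int_0^t (\delta^\Lo\alpha)(y_s)\,ds$, checking consistency with the usual Stratonovich integral for $C^2$ forms via the case $\alpha = f\,dg$; but since the lemma is stated for $C^2$ forms where the Stratonovich integral already has its classical meaning, the verification above is the honest route.
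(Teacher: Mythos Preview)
Your proof is correct and follows essentially the same route as the paper: both reduce to the building block $\alpha=f\,dg$ (the paper writes it as $\lambda\alpha$ with $\alpha$ exact), compute the Stratonovich integral via the semi-martingale decomposition of $g(y_\cdot)$ and the bracket formula $\langle M^{df},M^{dg}\rangle_t=2\int_0^t\sigma^\Lo(df,dg)\,ds$, identify the finite-variation part with $\delta^\Lo(f\,dg)$ using (\ref{delta-1}), and then pass to general $C^2$ one-forms by a locally finite representation. The only cosmetic difference is that the paper organises the computation starting from $M_t^{\lambda\alpha}=\int_0^t\lambda(y_s)\,dM_s^\alpha$ and converting Itô to Stratonovich, whereas you start from the Stratonovich integral and extract the Itô martingale part; the content is identical.
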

\begin{proof}
This is clear for an exact 1-form. Suppose $\lambda: M\to \R$ is $C^2$ and $\alpha$ is exact, then for $t<\zeta$,
\begin{eqnarray*}
M_t^{\lambda \alpha}
&=&\int_0^t \lambda(y_s) dM_s^\alpha
=\int_0^t \lambda(y_s)\circ dM_s^\alpha-{1\over 2} \left\langle \int_0^\cdot d\lambda(y_s)dy_s, M^\alpha_\cdot \right\rangle_t\\
&=& \int_0^t \lambda(y_s) \,\alpha_{y_s}\circ dy_s -\int_0^t \lambda(y_s)  \big(\delta^\Lo \alpha\big) (y_s) ds
-{1\over 2} \langle M^{d\lambda}_\cdot, M^\alpha_\cdot\rangle_t\\
&=& \int_0^t \lambda(y_s)\,\alpha_{y_s}\circ dy_s -\int_0^t   \delta^\Lo (\lambda \alpha) (y_s) ds
\end{eqnarray*}
since $M^{d\lambda}$ is the martingale part of $\lambda(y_s)$ and 
$$\langle M^{d\lambda}, M^\alpha\rangle_t=2\int_0^t\sigma^\Lo (d\lambda_s, \alpha_s)ds.$$ This proves the result for general $\alpha$ by taking a suitable representation.
\end{proof}

Let $S_x$ be the image of $\sigma_x^\Lo$ in $T_xM$ and let $S:=\cup_x S_x$. By a predictable $S^*$-valued process $(\alpha_t)$ over $(y_t: 0\leqslant t<\zeta)$ we mean a process $(\alpha_t: 0\leqslant t)$ such that
\begin{enumerate}
\item [(i)] $\alpha_t\in S_{y_t}^*$ for all $0\leqslant t<\zeta$
\item[(ii)] $(\alpha_t\circ \sigma_{y_t}^\Lo,  0\leqslant t<\zeta)$ is a predictable process in $TM$, canonically identified with $T^{**}M$.
\end{enumerate}
Note that condition (ii) is equivalent to
\begin{enumerate}
\item[(ii)'] there exists a predictable $(\bar \alpha_t)$ in $T^*M$ over $(y_t)$ such that $\bar \alpha_t|_{S_{y_t}}=\alpha_t$ for all $0\leqslant t<\zeta$.
\end{enumerate}
That (ii') implies (ii) is immediate. To see (ii) implies (ii') first note that $\alpha_t\circ \sigma_{y_t}^\Lo\in S_{y_t}$ for each $t$ since $\alpha_t\circ \sigma^\Lo_{y_t}=\sigma_{y_t}^\Lo(\tilde \alpha_t)$ for any extension $\tilde \alpha_t$ of $\alpha_t$ to $T_{y_t}^*M$. We can then choose a measurable selection $\bar \alpha_t$ in $T_{y_t}^*M$ with $\sigma_{y_t}^\Lo(\bar \alpha_t)=\alpha_t\circ \sigma_{y_t}^\Lo$. This process $\bar \alpha_t$ will satisfy the requirements of (ii') since
\begin{equation}
\label{selection}
\bar \alpha_t\sigma^{\Lo}_{y_t}=\sigma^{\Lo}_{y_t}\bar \alpha_t
=\alpha_t\sigma^{\Lo}_{y_t}.
\end{equation}

In fact (\ref{selection}) is a reflection of the fact that $\sigma^\Lo_y$ extends to a linear isomorphism $\sigma^\Lo_y: S_y^*\to S_y$ canonically. In particular $\sigma^\Lo_{y_t}(\alpha_t)$ is well defined. 
%We can now extend Definition \ref{de:mart.part}:
\begin{definition}
If $(\alpha_t)$ satisfies (i) and (ii) we will say it  is in $L_\Lo^2$ if $$\int_0^t\alpha_s\sigma_{y_s}^\Lo(\alpha_s)ds~<~\infty$$
for all $t\geqslant 0$,
and will say it is in $L^2_{\Lo,loc}$ if for any compact subset $K$ of $M$ 
$$\E \int_0^{t\wedge \zeta}\chi_ K(y_s)\alpha_s (\sigma^\Lo \alpha_s )\; ds<\infty$$
for all $t\geqslant 0$.
\end{definition}
\begin{remark}\label{re:Pchange}
Suppose the processes associated to diffusion operators $\Lo$ and $\Lo+\LL_b$ are  both non-explosive, where $b$ is a locally bounded measurable vector field on $M$. Assume that there exists a $T^*M$- valued process $b^\#_.$ defined on the canonical probabilty space $\C_{y_0}M$ such that $\PP^{\Lo}$-almost surely:
\begin {enumerate}
\item $2\sigma^{\Lo}(b^\#_s)=b(y_s)$ 
\item $ \int_0^tb^\#_s\sigma^{\Lo}(b^\#_s)ds<\infty$ 
\end{enumerate}
Then, by the GMCM-theorem, as in the Appendix section \ref{se-GMCM theorem}, we have  on $\C([0,T];M)$, 
$$\PP^{\Lo+\LL_b}=Z_t\PP^{\Lo}$$
where         $ Z_t= \exp\{M^{b^\#}_t-\int_0^tb^\#_s\sigma^{\Lo}(b^\#_s)ds\}.$
In an obvious notation, for suitable $\alpha$, as canonical processes we have, almost surely, 
$$\int_0^t\alpha_s d\{y_s\}^{\Lo}=\int_0^t\alpha_s d\{y_s\}^{\Lo+\LL_b}-\int_0^t\alpha(b(u_s))ds.$$
\end{remark}
\begin{lemma}
\label{le:mart-2}
Suppose $\sigma^\Lo$ has image in a subset $S$ of $TM$. Then $(M^\alpha_t)$ depends only on the restriction of $\alpha_s$ in $\Lo(T_{y_s}M; \R)$ to $S_{y_s}$, $0\leqslant s<\zeta$. In particular (\ref{bracket}) defines uniquely a local martingale for each predictable $S^*$-valued process $(\alpha_t)$ over $(y_t)$for which the right hand side of (\ref{bracket}) is always finite almost surely.
\end{lemma}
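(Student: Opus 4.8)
The plan is to deduce both assertions from the bracket characterisation of $M^\alpha$ together with the well-posedness of the martingale problem for $\Lo$, exactly in the spirit of the uniqueness argument already used in the proof of Proposition \ref{pr:mart-1}.

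First I would settle the dependence statement. Let $(\alpha_t)$ and $(\alpha'_t)$ be two predictable $T^*M$-valued processes over $(y_t)$, both satisfying the hypotheses of Proposition \ref{pr:mart-1}, and agreeing on $S$, i.e. $\alpha_s|_{S_{y_s}}=\alpha'_s|_{S_{y_s}}$ for all $0\leqslant s<\zeta$. Since $\sigma^\Lo$ has image in $S$, for every $f\in\C_c^\infty M$ we have $\sigma^\Lo_{y_s}((df)_{y_s})\in S_{y_s}$, hence
$$\sigma^\Lo(\alpha_s,(df)_{y_s})=\alpha_s\big(\sigma^\Lo_{y_s}(df)_{y_s}\big)=\alpha'_s\big(\sigma^\Lo_{y_s}(df)_{y_s}\big)=\sigma^\Lo(\alpha'_s,(df)_{y_s}).$$
By (\ref{bracket}) the local martingale $M^\alpha-M^{\alpha'}$ then has vanishing bracket with $M^{df}$ for every $f\in\C_c^\infty M$, i.e. is orthogonal to all such $M^{df}$. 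The Dellach\'erie-type argument recalled in the proof of Proposition \ref{pr:mart-1}, which uses only that the martingale problem for $\Lo$ is well posed, forces $M^\alpha=M^{\alpha'}$. Thus $M^\alpha$ depends only on the restriction of $\alpha_s$ to $S_{y_s}$.

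For the "in particular" clause, let $(\alpha_t)$ be a predictable $S^*$-valued process over $(y_t)$ for which the right-hand side of (\ref{bracket}) is a.s.\ finite. Using condition (ii)$'$ stated just before the definition of $L^2_\Lo$, choose a predictable $T^*M$-valued process $(\bar\alpha_t)$ over $(y_t)$ with $\bar\alpha_t|_{S_{y_t}}=\alpha_t$; by (\ref{selection}) we then have $\sigma^\Lo_{y_t}(\bar\alpha_t)=\sigma^\Lo_{y_t}(\alpha_t)$ under the canonical isomorphism $\sigma^\Lo_{y_t}:S^*_{y_t}\to S_{y_t}$, and in particular $\bar\alpha_t(\sigma^\Lo\bar\alpha_t)=\alpha_t(\sigma^\Lo\alpha_t)$ and $\sigma^\Lo(\bar\alpha_s,(df)_{y_s})=(df)_{y_s}(\sigma^\Lo_{y_s}\alpha_s)=\sigma^\Lo(\alpha_s,(df)_{y_s})$. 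Hence (after the usual localisation by stopping times, and writing $\alpha$ locally via an embedding as in (\ref{rep-alpha})) the integrability hypothesis of Proposition \ref{pr:mart-1} holds for $\bar\alpha$, so $M^{\bar\alpha}$ is defined; set $M^\alpha:=M^{\bar\alpha}$. By the first part this is independent of the chosen extension, and the displayed identity above shows it satisfies (\ref{bracket}) with $\alpha_s$ on the right. Uniqueness of a local martingale satisfying (\ref{bracket}) is once more the orthogonality argument of Proposition \ref{pr:mart-1}.

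I do not anticipate a serious obstacle; the only delicate point is checking that the finiteness hypothesis on the right-hand side of (\ref{bracket}) genuinely transfers to the integrability condition needed to invoke Proposition \ref{pr:mart-1} for the $T^*M$-extension $\bar\alpha$. This is handled by (\ref{selection}) — which equates $\bar\alpha_s(\sigma^\Lo\bar\alpha_s)$ with $\alpha_s(\sigma^\Lo\alpha_s)$ — together with a local representation $\alpha_t=\sum_j g^j_t\,(df_j)_{y_t}|_{S_{y_t}}$ obtained from an embedding, after which everything is reduced to the already-established case of $T^*M$-valued integrands.
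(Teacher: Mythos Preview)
Your proposal is correct and follows essentially the same route as the paper: show that if two $T^*M$-valued integrands agree on $S$ then their difference has vanishing bracket with every $M^{df}$ (since $\sigma^\Lo$ has image in $S$), invoke the Dellach\'erie uniqueness argument from Proposition \ref{pr:mart-1} to conclude the martingales coincide, and then use condition (ii)$'$ to extend an $S^*$-valued process to $T^*M$ and define $M^\alpha$ via the extension. Your additional remarks on transferring the integrability hypothesis through (\ref{selection}) are a welcome elaboration but do not change the strategy.
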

\begin{proof}
For $T^*M$-valued $\F_*^{y_0}$ predictable processes $(\alpha^1_t, 0\leqslant t<\zeta)$ and $(\alpha^2_t, 0\leqslant t<\zeta)$ over $(y_t, 0\leqslant t<\zeta)$ which agree on $S$ we see $$\langle M^{\alpha^1}-M^{\alpha^2}, M^{df} \rangle_t
=2\int_0^{t\wedge \zeta}  \sigma (\alpha_s^1-\alpha_s^2, (df)_{y_s}) \,ds=0$$
for all $f\in \C_c^\infty M$. Therefore $M^{\alpha^1}=M^{\alpha^2}$.
On the other hand this also shows that if $\alpha_s\in S_{y_s}^*$ for all $s$, we can use condition (ii)' above to choose a predictable process $\{\bar \alpha_s: 0\leqslant s<\zeta\}$ with values in $T^*M$ over $(y_t)$ and set $M_\cdot^\alpha=M^{\bar \alpha}_\cdot$ without ambiguity.
\end{proof}

\begin {example} {\it Canonical Brownian motion associated to a cohesive diffusion.}\label{ex:BM} For simplicity assume that our $\Lo$-diffusion from a given point $y_0$ is non-explosive.
If $\Lo$ is cohesive with sub-bundle $E$ of TM, take a metric connection $\Gamma$ for $E$, using the metric determined by $2\sigma^{\Lo}$. Let $$\alpha_s(\sigma):=(\paral^\sigma_s)^{-1}: E_{\sigma(s)}\to E_{y_0}$$ be the inverse of parallel translation, $\paral^\sigma_s$,   along $\sigma$ from $E_{\sigma(0)}$ to $E_{\sigma(s)}$, for $\PP^{y_0}$ almost all paths  
$\sigma$ in $M$. Each component of this with respect to an orthonormal basis for $E_{y_0}$ clearly  lies in $L^2_{\Lo}$. With the obvious extension of our notation to the vector space valued  case define an $E_{y_0}$-valued process $B_t: t\geqslant0$ by $$B_t=M^\alpha_t=\int_0^t(\paral)^{-1}d\{y_s\}.$$
It is easy to check from its quadratic variation that it is a Brownian motion on the inner product space $E_{y_0}$. Moreover (as described in \cite{Elworthy-LeJan-Li-book}) it has the same filtration as the canonical process on $\C_{y_0}M$ up to sets of measure zero.  It is the martingale part of the stochastic anti-development $\int_0^t(\paral_s)^{-1}dy_s$of our $\Lo$-diffusion from $y_0$. The use of a different metric connection would  change it by a random rotation, so  this process is  defined on the canonical probability space $\big\{\C_{y_0}M, \F^{y_0}, \PP^{y_0}\big\}$ and up to such rotations depends only on it.
We have,  for $\alpha$ as usual:
  \begin{equation}\label{eq:mart-6}
\int_0^t\alpha_sd\{y_s\}=\int_0^t\left(\alpha_s \circ \paral_s \right)dB_s.
\end{equation}
Using the definitions in the Appendix \ref{se:semi-mart} we see that if our diffusion process $y_.$ is a $\Gamma$-martingale then \begin{equation}
\int_0^t\alpha_sd\{y_s\}=\big(\Gamma\big)\int_0^t\alpha_s dy_s.
\end{equation}
Note that there is always some metric connection $\Gamma$ on $E$ for which a cohesive diffusion process is a $\Gamma$-martingale, by section 2.1 of \cite{Elworthy-LeJan-Li-book}.
\end {example}

\section{Horizontality and filtrations}

  We can characterise horizontality of a diffusion operator or process in terms of filtrations using the following lemma:
\begin{lemma}
\label{le:cohesive}
Suppose $p:N\to M$ is a smooth map, $\B$ a smooth diffusion operator over a smooth diffusion operator $\A$, and also 

\begin{enumerate}
\item [(i)] $\sigma^\A$ and $\sigma^\B$ have constant rank and 
\item[(ii)] the filtration generated by $u_\cdot$ and $p(u_\cdot)$ agree up to sets of  $\PP_{u_0}^\B$-measure zero for some $u_0\in N$. \end{enumerate}
Then $\rank \sigma_u^\B=\rank \sigma_{p(u)}^\A$, all $u\in N$.
\end{lemma}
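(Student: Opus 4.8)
The plan is to compare the two diffusions at the level of their martingale parts and quadratic variations, using the filtration hypothesis to transfer rank information from $M$ back to $N$. First I would fix $u_0\in N$ and run the $\B$-diffusion $u_\cdot$ from $u_0$, with projection $y_\cdot = p(u_\cdot)$, which by the intertwining relation (\ref{op-lift}) is an $\A$-diffusion from $p(u_0)$. Since $\B$ is smooth and $\sigma^\B$ has constant rank, at a \emph{generic} point of the path the rank of $\sigma^\B_{u_t}$ is the fixed number $r_\B := \rank\sigma^\B$; similarly $\rank\sigma^\A_{y_t} = r_\A := \rank\sigma^\A$ along $y_\cdot$. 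The inequality $r_\A \leqslant r_\B$ is automatic from Lemma \ref{le:commutative}, since $(T_up)\sigma^\B_u(T_up)^* = \sigma^\A_{p(u)}$ forces $\Image[\sigma^\A_{p(u)}]$ to be a quotient of $\Image[\sigma^\B_u]$. The content is the reverse inequality, and this is where hypothesis (ii) must be used: without it one can strictly lose rank in the fibre direction (as in Remark \ref{re:vertical}(2), where $\sigma^\B$ sees an extra vertical direction invisible to $p$).

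The key step is to realize the quadratic variation of $u_\cdot$ in terms of that of $y_\cdot$. For $C^2_c$ functions $g$ on $N$, the martingale $M^{dg}$ has bracket $\langle M^{dg}\rangle_t = 2\int_0^t \sigma^\B((dg)_{u_s},(dg)_{u_s})\,ds$; for $f$ on $M$ and $g = f\circ p$, Lemma \ref{le:commutative} gives $\sigma^\B((d(f\circ p))_{u},(d(f\circ p))_u) = \sigma^\A((df)_{p(u)},(df)_{p(u)})$, so the ``horizontal-directional'' part of the quadratic variation of $u_\cdot$ is captured by functions pulled back from $M$. Now invoke (ii): since the filtration of $u_\cdot$ equals that of $y_\cdot = p(u_\cdot)$ up to null sets, every $M^{dg}$ (for $g\in C^2_c(N)$) is a martingale with respect to $\F^{y_\cdot}_\cdot = \sigma\{y_s : s\leqslant t\}$. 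By the martingale representation property of the $\A$-diffusion — more precisely, by the argument that any $\F^{y_\cdot}$-martingale orthogonal to all $M^{df}$, $f\in C^\infty_c(M)$, vanishes (the Dellachérie-type argument cited in the proof of Proposition \ref{pr:mart-1}) — each $M^{dg}$ must lie in the $L^2$-closure of stochastic integrals against $\{M^{df} : f\in C^\infty_c(M)\}$. Consequently $\langle M^{dg}\rangle_t$ is absolutely continuous with density controlled by the $\sigma^\A((df)_{y_s}, (df)_{y_s})$; comparing Radon–Nikodym densities $ds$-a.e. along the path yields, for $ds\times d\PP$-almost every $(s,\omega)$, that $\sigma^\B_{u_s}$ has rank at most $r_\A$. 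Since $\rank\sigma^\B$ is the constant $r_\B$ everywhere and the $\B$-diffusion from $u_0$ has support meeting a neighbourhood of $u_0$ (and one may vary $u_0$), this forces $r_\B \leqslant r_\A$, hence equality; as $r_\B$ and $r_\A$ are the constant ranks, $\rank\sigma^\B_u = \rank\sigma^\A_{p(u)}$ for all $u\in N$.

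The main obstacle I anticipate is making the ``comparison of quadratic variations forces a rank bound'' step rigorous and pointwise in $u$, rather than merely almost everywhere along one diffusion path. The clean way is: if $\rank\sigma^\B_{u_1} > r_\A$ at some $u_1\in N$, pick $\alpha\in T^*_{u_1}N$ with $\sigma^\B_{u_1}(\alpha)\neq 0$ but $T_{u_1}p\,\sigma^\B_{u_1}(\alpha) = 0$ (possible since $T_{u_1}p$ restricted to $\Image[\sigma^\B_{u_1}]$ has rank $\leqslant r_\A < \rank\sigma^\B_{u_1}$, by the commutative diagram and a dimension count on $\ker$). Extend $\alpha$ to a $C^2$ $1$-form $\tilde\alpha$ and consider $M^{\tilde\alpha}$ as in Proposition \ref{pr:mart-1}: its bracket picks up a contribution $2\int\sigma^\B(\tilde\alpha_{u_s},\tilde\alpha_{u_s})\,ds$ that is strictly positive near $s$ where $u_s$ is near $u_1$, yet $M^{\tilde\alpha}$ is orthogonal to every $M^{d(f\circ p)}$ (because $\sigma^\B(\tilde\alpha, d(f\circ p)) = \tilde\alpha(\sigma^\B(T p)^* df) = \tilde\alpha(h_{u}\sigma^\A df)$, and one arranges this to vanish — or more directly $Tp\,\sigma^\B\tilde\alpha = 0$ kills the pairing after pushing forward). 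Then $M^{\tilde\alpha}$ is a non-trivial $\F^{u_\cdot} = \F^{y_\cdot}$-martingale orthogonal to all $M^{df}$, contradicting the well-posedness argument. The delicate point is ensuring the $\B$-diffusion actually visits a neighbourhood of the hypothetical $u_1$ with positive probability — this follows from the support theorem for hypoelliptic-type diffusions, or one argues instead that the set where $\rank\sigma^\B > r_\A$ is open (by lower semicontinuity combined with the constancy of $\rank\sigma^\B$ it is either empty or all of $N$), so if nonempty it has positive diffusion measure from a suitable starting point, again contradicting (ii). Tightening this localization is the step that will require the most care.
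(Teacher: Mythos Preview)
Your approach is sound in outline and reaches the conclusion, but it takes a longer route than the paper. The paper bypasses the orthogonality construction entirely: it chooses metric connections on $\Image\sigma^\B$ and $\Image\sigma^\A$ (extended to $TN$, $TM$) and forms the stochastic anti-developments of $u_\cdot$ and $p(u_\cdot)$. The martingale parts are then Brownian motions of dimensions $\tilde p=\rank\sigma^\B$ and $p=\rank\sigma^\A$ respectively, each generating the same filtration as the process it came from (cf.\ Example~\ref{ex:BM}). Hypothesis~(ii) now says two Brownian motions of dimensions $\tilde p$ and $p$ generate the same filtration, and the classical martingale representation theorem forces $p=\tilde p$. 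This packages your representation/orthogonality step into a single dimension count, with no need to build a specific contradicting martingale.

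Two remarks on your version. First, your ``main obstacle'' is not one: since $\rank\sigma^\B$ is constant by hypothesis, if $r_\B>r_\A$ anywhere it holds everywhere, in particular at the starting point $u_0$, so no support or localisation argument is needed---just work at small times near $u_0$. Second, for the orthogonality $\langle M^{\tilde\alpha}, M^{d(f\circ p)}\rangle=0$ to hold along the entire path (not merely at $u_0$), you must choose the $1$-form $\tilde\alpha$ to lie in $H^0$, the annihilator of the horizontal sub-bundle, on a neighbourhood---the pointwise condition $T_{u_0}p\,\sigma^\B_{u_0}(\tilde\alpha_{u_0})=0$ at a single point is not enough. This is possible precisely because $H$ is a smooth sub-bundle of rank $r_\A$ inside $\Image\sigma^\B$ (Lemma~\ref{le:symbols-2}), and $r_\A<r_\B$ leaves room for a nonzero section of $H^0$ with $\sigma^\B(\tilde\alpha)\neq 0$. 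With that adjustment your argument goes through, but the anti-development shortcut is cleaner.
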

\begin{proof}
Set $p=\rank \sigma^\A_x$ and $\tilde p=\rank \sigma_u^\B$. By assumption $p$
and $\tilde p$ do not depend on $x\in M$ and $u\in N$. Take connections on $\Image \sigma^\B$ and $\Image \sigma^\A$ which are metric for the metrics induced by the symbols. Extend these connections to $TN$ and $TM$. The martingale part of the stochastic anti-development of $(u_\cdot)$ will be a Brownian motion stopped at $\zeta^\B$ of dimension $\tilde p$ and that of $( p(u_\cdot) )$ will be one of dimension $p$. By (ii) these have the same filtration up to sets of measure zero. But this implies $p=\tilde p$ by the martingale representation theorem, as required.
\end{proof}
\begin{proposition}
\label{pr:cohesive-diffusion}
The following are equivalent for $\B$ over $\A$ when $\A$ is cohesive:
\begin{enumerate}
\item [(a)] $\B=\A^H$
\item[(b)] $\B$ is cohesive and the filtration generated by its associated diffusion $(u_\cdot)$ agrees with that of $p(u_\cdot)$ up to sets of $\PP_{u_0}^\B$-measure zero for given $u_0$ in $N$.
\end{enumerate}
\end{proposition}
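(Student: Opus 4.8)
The plan is to prove the two implications separately, using the filtration-dimension bookkeeping from Lemma~\ref{le:cohesive} in one direction and the decomposition theorem in the other.

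First, suppose (a) holds, so $\B=\A^H$. By Proposition~\ref{pr:cohesive}, $\B$ is cohesive and $T_up$ is injective on $\Image[\sigma_u^\B]$ for all $u\in N$; moreover $h$ maps $E_{p(u)}$ isomorphically onto $\Image[\sigma_u^\B]=H_u\cap\Image[\sigma_u^\B]$, so $\rank\sigma_u^\B=\rank\sigma_{p(u)}^\A$ everywhere. Now I would run the stochastic anti-development argument: pick a metric connection on $E=\Image\sigma^\A$ for the metric $2\sigma^\A$ and, via the horizontal lift, the corresponding connection on $\Image\sigma^\B$ (using that $T_up$ restricts to an isometry $\Image\sigma_u^\B\to E_{p(u)}$). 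Extend both to $TN$, $TM$. The canonical Brownian motion $B_\cdot$ associated to the $\B$-diffusion $(u_\cdot)$, as in Example~\ref{ex:BM}, has the same filtration as $(u_\cdot)$; likewise the canonical Brownian motion $\bar B_\cdot$ of the $\A$-diffusion $(p(u_\cdot))$ has the filtration of $(p(u_\cdot))$. Because $\B$ has no vertical part and $h$ intertwines the symbols, $B_\cdot$ is exactly (the anti-development of) the horizontal lift, and $\bar B_\cdot=Tp\circ(\text{parallel transport})\circ dB$ recovers $B$ up to the fixed isometry; concretely the two Brownian motions generate the same filtration. Hence the filtrations of $(u_\cdot)$ and $(p(u_\cdot))$ agree up to null sets, giving (b). (The non-explosion caveat is handled as in Example~\ref{ex:BM}, or by localising with stopping times.)

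Conversely, assume (b): $\B$ is cohesive and $\F^{u_\cdot}_\cdot=\F^{p(u_\cdot)}_\cdot$ up to $\PP^\B_{u_0}$-null sets. Since $\A$ is cohesive its symbol has constant rank, and $\B$ cohesive gives $\sigma^\B$ constant rank, so hypotheses (i)--(ii) of Lemma~\ref{le:cohesive} hold and we conclude $\rank\sigma_u^\B=\rank\sigma_{p(u)}^\A$ for all $u$. By Theorem~\ref{th:deco} write $\B=\A^H+\B^V$ with $\B^V$ vertical. It suffices to show $\B^V=0$, equivalently $\sigma^{\B^V}=0$ pointwise (a vertical diffusion operator with vanishing symbol has $\delta^{\B^V}\equiv0$ hence is zero). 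By Lemma~\ref{le:symbols-2}, $H_u$ is the $\sigma_u^\B$-orthogonal complement of $VT_uN\cap\Image\sigma_u^\B$ inside $\Image\sigma_u^\B$, and $\Image\sigma_u^{\B^V}=VT_uN\cap\Image\sigma_u^\B$; so $\rank\sigma_u^\B=\rank\sigma_u^{\A^H}+\rank\sigma_u^{\B^V}=\rank\sigma_{p(u)}^\A+\rank\sigma_u^{\B^V}$. Combined with the rank equality from Lemma~\ref{le:cohesive} this forces $\rank\sigma_u^{\B^V}=0$, i.e. $\B^V=0$, so $\B=\A^H$ and (a) holds. An alternative to this last bookkeeping step, if one prefers not to invoke Lemma~\ref{le:symbols-2} for the rank additivity, is to argue directly that $T_up$ is injective on $\Image\sigma_u^\B$: if not, some nonzero $v\in VT_uN\cap\Image\sigma_u^\B$ exists, the $\B$-diffusion then carries a one-dimensional noise component killed by $Tp$, and by the martingale representation theorem the filtration of $(u_\cdot)$ strictly contains that of $(p(u_\cdot))$, contradicting (b); then Proposition~\ref{pr:cohesive} (ii)$\Rightarrow$(i) gives $\B=\A^H$.

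The main obstacle is the forward direction: one must be careful that "same filtration" is genuinely symmetric and that the canonical Brownian motion of the downstairs diffusion is recovered from that of the upstairs one. The clean way is to note that when $\B=\A^H$, the horizontal lift of paths (Section on horizontal lifts of paths, equation~(\ref{eq;deterministic lift})) shows $u_\cdot$ is a measurable functional of $p(u_\cdot)$ together with the starting point $u_0$ — the anti-development solves an ODE driven by the downstairs path — so $\F^{u_\cdot}_t\subseteq\F^{p(u_\cdot)}_t$; the reverse inclusion is automatic since $p(u_\cdot)$ is a functional of $u_\cdot$. This pathwise-lifting observation is really the heart of the matter and sidesteps any delicate Brownian-filtration comparison; the rank computations are then routine.
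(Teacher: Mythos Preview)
Your argument is essentially sound, but there is a real gap in the main route you take for (b)$\Rightarrow$(a). You write ``a vertical diffusion operator with vanishing symbol has $\delta^{\B^V}\equiv 0$ hence is zero''. As a general statement about vertical diffusion operators this is false: a vertical diffusion operator with vanishing symbol is Lie differentiation by a vertical vector field $V$, and there is no reason for $V$ to vanish. What rescues the claim here is the \emph{cohesiveness of $\B$}, which you have assumed but not used at this point. Since $\sigma^{\B^V}=0$ gives $\sigma^\B=\sigma^{\A^H}$ with image $H$, cohesiveness of $\B$ means $\B$ is along $H$; as $\A^H$ is also along $H$, the difference $\B^V$ annihilates $C^1H^0$ as well as $C^1(VTN)^0$, and these together span all $1$-forms (Lemma~\ref{le:lift}). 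Only then do you get $\delta^{\B^V}=0$. You should make this explicit.

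That said, the paper's own proof of (b)$\Rightarrow$(a) is shorter and avoids the decomposition theorem entirely: Lemma~\ref{le:cohesive} gives $\rank\sigma_u^\B=\rank\sigma_{p(u)}^\A$; since $H_u\subset\Image\sigma_u^\B$ always (from equation~(\ref{h-lift-1})) and $\dim H_u=\rank\sigma_{p(u)}^\A$, equality of dimensions forces $\Image\sigma_u^\B=H_u$, so $T_up$ is injective on $\Image\sigma_u^\B$; Proposition~\ref{pr:cohesive}(ii)$\Rightarrow$(i) then gives $\B=\A^H$ directly. Your ``alternative'' paragraph is essentially this, but you justify injectivity via a heuristic filtration argument rather than the one-line dimension count. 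For (a)$\Rightarrow$(b), your ``clean way'' at the end---$u_\cdot$ is the horizontal lift of $p(u_\cdot)$ starting at $u_0$, hence measurable in $p(u_\cdot)$---is exactly the paper's argument, which invokes the H\"ormander form (iii) of Proposition~\ref{pr:cohesive} and the associated SDE; the anti-development discussion before it is correct but unnecessarily elaborate.
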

\begin{proof}
If (b) holds, Lemma \ref{le:cohesive} shows that $\Image [\sigma_u^\B]=H_u$ for each $u \in N$, since by (\ref{h-lift-1}) we always have $H_u\subset \Image [\sigma_u^\B]$. Thus (b) implies criterion (ii) of Proposition \ref{pr:cohesive}. Also (b) follows from (iii) of Proposition \ref{pr:cohesive} by considering the stochastic differential equation driven by horizontal lifts $\tilde X^0, \dots, \tilde X^m$.
\end{proof}

\section{The Filtering Equation}
\label{se-filt}
Let $p:N\to M$ be a smooth surjective map. Suppose that $\B$ is over $\A$. However we do not assume $\sigma^\A$ of constant rank. Let $\{\PP^\B_{u_0}\}$  and  $\{\PP_{x_0}^\A\}$ be, respectively,  the solutions to the martingale problem for $\B$ and $\A$ on the canonical spaces $\C(M^+)$ and $\C(N^+)$. Denote by $(u_t)$ and $(x_t)$ the corresponding canonical processes with life time $\zeta^N$ and $\zeta^M$ respectively. 
Note that $\zeta^\B\leqslant \zeta^\A\circ p$ almost surely with respect to $\PP_{u_0}^\B$. We shall assume that the paths of the diffusion on $N$ do not explode before their projections on $M$ do, more precisely  $\zeta^M\circ p=\zeta^N$ almost surely with respect to $\PP_{u_0}^\B$ for each $u_0$, equivalently,
\begin{itemize}
\item  {\bf Assumption S. }
$$\C_{u_0}^pM^+:=\{\sigma:[0,\infty)\to M^+: \lim_{t\to \zeta^\B}p(u_t)=\Delta \hbox { when } \zeta^\B<\infty\}$$
has full  $\PP_{u_0}^\B$ measure for each $u_0\in N$.
\end{itemize}
Denote by the following the filtrations induced by the processes indicated:
\begin{eqnarray*}
\F_t^{u_0}& =&\sigma (u_s, 0\leqslant s\leqslant t), \qquad \F^{u_0}=\sigma (y_s, 0\leqslant s< \infty) \\
\F_t^{x_0}&=&\sigma (x_s, 0\leqslant s\leqslant t), \qquad\F^{x_0}=\sigma (x_s, 0\leqslant s< \infty)\\
\F_t^{p(u_0)}&=&\sigma (p(u_s), 0\leqslant s\leqslant t), \qquad\F^{x_0}=\sigma (p(u_s), 0\leqslant s< \infty).
\end{eqnarray*}
\begin{proposition}
\label{pr:mart-mea}
Under Assumption S, $p_\ast(\PP^\B_{u_0})=\PP^\A_{p(u_0)}$ and
 $P_t^\B(f\circ p)=P_t^\A(f\circ p)$ for all $f\in \C_c^\infty (M)$.
\end{proposition}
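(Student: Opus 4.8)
The plan is to show that $p_*(\PP^\B_{u_0})$ solves the martingale problem for $\A$ started at $p(u_0)$ and then invoke well-posedness of that martingale problem (which holds for diffusion operators as noted in Chapter One) to conclude $p_*(\PP^\B_{u_0}) = \PP^\A_{p(u_0)}$.

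First I would set up the measure $\Q := p_*(\PP^\B_{u_0})$ on the canonical path space $\C(M^+)$, noting that Assumption S is exactly what guarantees that the pushed-forward paths $t \mapsto p(u_t)$ lie in $\C(M^+)$ with lifetime $\zeta^M \circ p = \zeta^\B$, so that $\Q$ is genuinely a probability measure on that space. Clearly $\Q$ is concentrated on paths starting at $p(u_0)$. Next, for $f \in \C_c^\infty(M)$, I would consider $f \circ p$ (extended suitably near the cemetery point). Since $f\circ p \in \C^2(N)$ is constant — zero — outside a set whose image under $p$ is compact, the martingale property from Chapter Four gives that
$$M_t^{d(f\circ p), \B} = (f\circ p)(u_{t\wedge \zeta^\B}) - (f\circ p)(u_0) - \int_0^{t\wedge \zeta^\B} \B(f\circ p)(u_s)\, ds$$
is a $\PP^\B_{u_0}$-martingale with respect to $\F_t^{u_0}$. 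Here I use that $\B$ is over $\A$, so $\B(f\circ p) = (\A f)\circ p$; hence the above equals $f(x_{t\wedge\zeta}) - f(x_0) - \int_0^{t\wedge\zeta}(\A f)(x_s)\, ds$ evaluated along $x_\cdot = p(u_\cdot)$.

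The key step is then to transfer this martingale property through the pushforward. Because $p(u_\cdot)$ is $\F_\cdot^{u_0}$-adapted, the filtration $\F_\cdot^{p(u_0)}$ it generates is a sub-filtration of $\F_\cdot^{u_0}$, so $M_t^{d(f\circ p),\B}$ — being $\F_\cdot^{p(u_0)}$-measurable and a martingale for the larger filtration — is also a martingale for $\F_\cdot^{p(u_0)}$. Translating via $p_*$, this says precisely that under $\Q$ the canonical coordinate process $(x_t)$ satisfies: $f(x_{t\wedge\zeta}) - f(x_0) - \int_0^{t\wedge\zeta}(\A f)(x_s)\, ds$ is a martingale for every $f \in \C_c^\infty(M)$. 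That is, $\Q$ solves the martingale problem for $\A$ with initial point $p(u_0)$, so $\Q = \PP^\A_{p(u_0)}$ by uniqueness. Finally, the semigroup identity $P_t^\B(f\circ p) = P_t^\A f \circ p$ for $f \in \C_c^\infty(M)$ follows immediately: $P_t^\B(f\circ p)(u_0) = \E^{\B,u_0}[(f\circ p)(u_t)] = \E^{\A, p(u_0)}[f(x_t)] = (P_t^\A f)(p(u_0))$, using the pushforward identity just established. (One should write $P_t^\A(f\circ p)$ as a slight abuse meaning $(P_t^\A f)\circ p$.)

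The main obstacle I anticipate is the bookkeeping around explosion and the one-point compactification: one must check that Assumption S really does make $p$ map $\C_{u_0}^\B$-almost-every path in $\C(N^+)$ to a bona fide path in $\C(M^+)$ with the right lifetime, so that $p_*$ is well-defined into the correct canonical space, and that the truncated integrals $\int_0^{t\wedge\zeta}$ behave consistently under the projection — in particular that $\zeta^M \circ p = \zeta^\B$ a.s.\ rather than merely $\zeta^\B \leqslant \zeta^\A \circ p$. Everything else is essentially formal: the intertwining relation $\B(f\circ p) = (\A f)\circ p$ is given, the martingale characterization of diffusion measures is from Chapter Four, and well-posedness of the martingale problem for $\A$ is available. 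I would not expect to need the constant-rank hypothesis here, consistent with the remark in the statement's preamble that $\sigma^\A$ need not have constant rank.
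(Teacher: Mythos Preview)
Your proposal is correct and follows essentially the same approach as the paper: show that $p_*(\PP^\B_{u_0})$ solves the martingale problem for $\A$ via the identity $M_t^{df,\A}\circ p = M_t^{d(f\circ p),\B}$ coming from the intertwining relation, then invoke uniqueness. The only cosmetic difference is that where you pass to the sub-filtration $\F^{p(u_0)}_*$ by the tower-property observation that an $\F^{u_0}_*$-martingale adapted to $\F^{p(u_0)}_*$ is automatically an $\F^{p(u_0)}_*$-martingale, the paper writes out that same step as an explicit computation against $\F^{x_0}_s$-measurable test functions $G$.
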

\begin{proof}
If $p(u_0)=x_0$, $f\in \C_K^\infty(M)$, we only need to show that $M_t^{df,\A}$ is a martingale with respect to $p^*(\PP^\B_{u_0})$. Using Assumption S,
\begin{eqnarray*}
M_t^{df, \A}(p(u))&=&f(p(u_t))-f(p(u_0))-\int_0^t \A f\circ p(u_s)) ds\\
&=&f(p(u_t))-f(p(u_0))-\int_0^t \left(\B (f\circ p) \right) (u_s) ds\\
&=&M_t^{d(f\circ p), \B}
\end{eqnarray*}
is a martingale with respect to $(\F_t^{u_0})$ and $\PP^\B_{u_0}$. Take $s\leqslant t$ and let $G$ be a $\F_s^{x_0}$-measurable function. Then
\begin{eqnarray*}
&&\E^{p_\ast(\PP^\B_{u_0})}\left\{M_t^{df,\A} G\right\}
=\E^{\PP^\B_{u_0}}\left\{M_t^{df,\A}(p(u_\cdot))  G(p(u_\cdot))\right\}\\
&=&\E^{\PP^\B_{u_0}}\left\{M_t^{d(f\circ p),\B}  G\circ p\right\}
=\E^{\PP^\B_{u_0}}\left\{M_s^{d(f\circ p),\B}  G\circ p)\right\}\\
&=&\E^{p_\ast(\PP^\B_{u_0})}\left\{M_s^{df,\A}  G\right\}
\end{eqnarray*}
and the result follows from the uniqueness of the martingale problem.\end{proof}

%\begin{proposition}
%Assume that $\G_*\subset \F_*$. Then
%$$\E\Big\{f|\G_t\Big\}
%=\E\Big\{ \E\Big\{f|\F\Big\} | \G_t\Big\}
%=\E\Big\{ \E\Big\{f|\G\Big\} | \F_t\Big\}$$
%\end{proposition}
%\begin{proof}
%Note that the first identity is trivial since
%\begin{eqnarray*}
%\E\Big\{ \E\Big\{f|\G_t\Big\} | \F\Big\}& = & 
%\E\Big\{f|\G_t\Big\}=
%\E\Big\{ \E\Big\{f|\G_t\}\Big\} | \F_t\Big\}\\
%&=&\E\Big\{ \E\Big\{f|\F\Big\} | \G_t\Big\} ,\end{eqnarray*}
%We only need to show that
%$ \E\Big\{ \E\Big\{f |\G \}\Big\} | \F_t\Big\}$
%is $\G_t$ measurable.
%\begin{eqnarray*}
%\E\Big\{ \E\Big\{f|\F\Big\} | \G_t\Big\}& = & 
%\E\Big\{ \E\Big\{f|\F\Big\} | \G_t\Big\}
%\E\Big\{f|\G_t\Big\}=
%\E\Big\{ \E\Big\{f|\G_t\}\Big\} | \F_t\Big\},\end{eqnarray*}
%\end{proof}
We will need the following elementary lemma:

\begin{lemma}
\label{le:mart-4}
Let $(\Omega, \F, \F_t, \PP\}$ be a filtered probability space and $\G_*$ a sub-filtration of $\F_*$ with the property that for all $s\geqslant  0$, 
\begin{equation}\label{cond-1}
\E\{A|\G_s\}=\E\{\E\{A|\F_s\}| \G\}, \qquad \forall A\in \F,
\end{equation}
where $\G=\vee_s \G_s$.
Then 
\begin{description}
\item[(i)] $(\E\{M_t|\G\}, t\geqslant  0)$ is a $\G_*$-martingale whenever $(M_t: t\geqslant  0)$ is an $\F_*$-martingale;
\item[(ii)] For all $\G$-measurable and integrable $H$ $$\E \big\{H|\F_s\}=\E\{H|\G_s\};$$
\item[(iii)] $\E\big\{\E\{A|\F_s\}| \G\big\}=\E\big\{\E\{A|\G\}| \F_s\big\}, \qquad \forall A\in \F$.
\end{description}
\end{lemma}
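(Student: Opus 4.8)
The plan is to establish the three conclusions in order, each time exploiting the hypothesis \eqref{cond-1} that conditional expectations given $\G_s$ factor through conditioning on $\F_s$. First I would prove (i): given an $\F_*$-martingale $(M_t)$, set $N_t=\E\{M_t\mid\G\}$. For $s\leqslant t$ I would compute $\E\{N_t\mid\G_s\}$ by writing, via \eqref{cond-1} applied to $A=M_t$, that $\E\{N_t\mid\G_s\}=\E\{\E\{M_t\mid\F_s\}\mid\G\}$; then the $\F_*$-martingale property gives $\E\{M_t\mid\F_s\}=M_s$, so this equals $\E\{M_s\mid\G\}=N_s$. Adaptedness of $N_t$ to $\G_t$ is not automatic; here one should note that $\G_t\subseteq\G$ and that $N_t=\E\{M_t\mid\G\}$ is $\G$-measurable, and more care is needed --- in fact the natural reading is that $\G_t$-adaptedness follows because $\E\{M_t\mid\G\}$ can be rewritten using \eqref{cond-1} with $s=t$ as something $\G_t$-measurable when $M_t$ is $\F_t$-measurable; I would make this precise by observing $\E\{M_t\mid\G_t\}=\E\{\E\{M_t\mid\F_t\}\mid\G\}=\E\{M_t\mid\G\}=N_t$, so $N_t$ is indeed $\G_t$-measurable.

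For (ii), let $H$ be $\G$-measurable and integrable. I would show $\E\{H\mid\F_s\}=\E\{H\mid\G_s\}$ by checking that the right-hand side has the defining property of the left: it is $\F_s$-measurable (since $\G_s\subseteq\F_s$), and for any bounded $\F_s$-measurable $G$ one has $\E\{G\,\E\{H\mid\G_s\}\}=\E\{\E\{G\mid\G_s\}\,H\}$; but $\E\{G\mid\G_s\}=\E\{\E\{G\mid\F_s\}\mid\G_s\}=\E\{G\mid\G_s\}$ trivially, so instead I would argue directly: $\E\{H\mid\F_s\}$ is $\G$-measurable would be needed --- the cleanest route is to apply \eqref{cond-1} with $A$ replaced by (an approximation of) $H$, noting that for $\G$-measurable bounded $A$, \eqref{cond-1} reads $\E\{A\mid\G_s\}=\E\{\E\{A\mid\F_s\}\mid\G\}=\E\{A\mid\F_s\}$ provided $\E\{A\mid\F_s\}$ is already $\G$-measurable. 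I would therefore first prove the sub-claim that for $\G$-measurable integrable $A$, $\E\{A\mid\F_s\}$ is $\G$-measurable: indeed by \eqref{cond-1}, $\E\{A\mid\G_s\}=\E\{\E\{A\mid\F_s\}\mid\G\}$, and one checks $\E\{A\mid\F_s\}$ and $\E\{\E\{A\mid\F_s\}\mid\G\}$ have the same $\F_s$-conditional expectation, hence coincide by a monotone-class/$L^2$-projection argument, giving $\G$-measurability; then \eqref{cond-1} collapses to (ii).

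For (iii), I would apply (ii) with $H=\E\{A\mid\G\}$, which is $\G$-measurable and integrable for $A\in\F$ (first reduce to bounded $A$). Part (ii) gives $\E\{\E\{A\mid\G\}\mid\F_s\}=\E\{\E\{A\mid\G\}\mid\G_s\}$, and the right-hand side equals $\E\{A\mid\G_s\}$ by the tower property since $\G_s\subseteq\G$. On the other hand, \eqref{cond-1} itself says $\E\{A\mid\G_s\}=\E\{\E\{A\mid\F_s\}\mid\G\}$. Chaining these two identities yields $\E\{\E\{A\mid\F_s\}\mid\G\}=\E\{\E\{A\mid\G\}\mid\F_s\}$, which is (iii). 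The main obstacle I anticipate is the measurability bookkeeping in (i) and (ii) --- specifically, verifying that conditional expectations given $\F_s$ of $\G$-measurable random variables are again $\G$-measurable, which is the structural content making \eqref{cond-1} usable; once that sub-lemma is in place, all three parts are short. I would prove everything first for bounded $A$ (or bounded martingales) and pass to the general integrable case by truncation and dominated convergence for conditional expectations.
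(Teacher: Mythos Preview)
Your arguments for (i) and (iii) are correct and match the paper's. The issue is (ii), where you abandon the right approach and take a detour with a genuine gap.

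You begin (ii) correctly: to show $\E\{H\mid\F_s\}=\E\{H\mid\G_s\}$, verify that $\E\{H\mid\G_s\}$ satisfies the defining property, i.e.\ that $\E\{G\,\E\{H\mid\G_s\}\}=\E\{GH\}$ for every bounded $\F_s$-measurable $G$. You correctly note $\E\{G\,\E\{H\mid\G_s\}\}=\E\{\E\{G\mid\G_s\}\,H\}$. At this point you are one line from done: since $H$ is $\G$-measurable, $\E\{GH\}=\E\{\E\{G\mid\G\}\,H\}$, so it suffices to know $\E\{G\mid\G_s\}=\E\{G\mid\G\}$ for $\F_s$-measurable $G$. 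But this is exactly \eqref{cond-1} applied to $A=G$: it reads $\E\{G\mid\G_s\}=\E\{\E\{G\mid\F_s\}\mid\G\}=\E\{G\mid\G\}$. That is the paper's proof. You wrote down the trivial tower identity $\E\{G\mid\G_s\}=\E\{\E\{G\mid\F_s\}\mid\G_s\}$ instead, dismissed it, and changed strategy.

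The sub-claim route you pivot to --- proving that $\E\{A\mid\F_s\}$ is $\G$-measurable for $\G$-measurable $A$ --- is in fact equivalent to (ii), so you cannot expect it to be easier. Your proposed proof of it is circular: you assert that $\E\{A\mid\F_s\}$ and $\E\{\E\{A\mid\F_s\}\mid\G\}$ ``have the same $\F_s$-conditional expectation, hence coincide.'' But two random variables with the same $\F_s$-conditional expectation need not coincide, and checking that the latter has $\F_s$-conditional expectation equal to $\E\{A\mid\F_s\}$ amounts to showing $\E\{\E\{X\mid\G\}\mid\F_s\}=X$ for $X=\E\{A\mid\F_s\}$, which is essentially (ii)/(iii) again. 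No monotone-class or $L^2$-projection argument rescues this.

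In short: go back to your first approach for (ii) and apply \eqref{cond-1} to the $\F_s$-measurable test function $G$, not to $H$.
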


\begin{proof}
For (i) set $N_t=\E\{M_t |\G\}$, $0\leqslant t<\infty$. By (\ref{cond-1}), $(N_t)$ is $\G_t$ measurable.
 For $s\leqslant t$ suppose that $f$ is $\G_s$-measurable and bounded. Then
$\E (N_tf)=\E(M_t f)=\E(M_s f)=\E(N_s f)$.
For (ii), let $H$ and $F$ be bounded measurable functions
with $\G$-measurable and $\F_s$-measurable representations. Then
$$\E\big\{H|F\big\}=\E\big\{H|\E\big\{F|\G\big\} \big\}
=\E\big\{H|\E\big\{F|\G_s\big\} \big\} =\E\big\{H|\E\big\{F|\G\big\} \big\}
$$
using (\ref{cond-1}). Thus
$\E \big\{H|\F_s\}=\E\{H|\G_s\}$ as required.
Part (iii) follows from (ii) on taking $H=\E\big\{\E\{A|\G\}$ and using equation (\ref{le:mart-4}).
\end{proof}
Part (ii) of the following proposition says that the filtration $\F_*^{p(u_0)}$ is \emph{immersed in} the filtration $\F_*^{u_0}$ in the terminology of Tsirelson \cite {Tsirelson-01}.

\begin{proposition}
\label{pr:mart-3}
%Assume Assumption S.
\begin{description}
\item [(i)]
 For fixed $t>0$ let $f$ be a bounded $\F_t^{u_0}$-measurable function. Then $$\E \left\{f | \F^{p(u_0)}\right\}=\E \left\{f |\F^{p(u_0)}_t\right\}.$$
\item[(ii)]
  All $\F_*^{p(u_0)}$
martingales are $\F_*^{u_0}$ martingales. In fact if $f=G\circ p$ for $G$ an integrable functional on $C(M^+)$ with respect to $\PP^\A$, we have
$$\E^{u_0}\{f| \F_t^{p(u_0)}\}=\E^{u_0}\{f | \F_t^{u_0}\}.$$
\end{description}
\end{proposition}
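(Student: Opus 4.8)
The plan is to derive both parts from the key hypothesis established earlier (Proposition \ref{pr:mart-mea}) that $p_\ast(\PP^\B_{u_0})=\PP^\A_{p(u_0)}$, together with a verification of the compatibility condition (\ref{cond-1}) of Lemma \ref{le:mart-4} for the pair of filtrations $\G_\ast = \F_\ast^{p(u_0)}$ (the ``coarse'' filtration) sitting inside $\F_\ast = \F_\ast^{u_0}$ (the ``fine'' filtration). Once (\ref{cond-1}) is checked, part (ii) of the proposition is exactly Lemma \ref{le:mart-4}(i) applied to an $\F_\ast^{p(u_0)}$-martingale (which, pulled back via $p$, is built from a martingale for the $\A$-problem by Proposition \ref{pr:mart-mea}), and the displayed identity $\E^{u_0}\{G\circ p \mid \F_t^{p(u_0)}\}=\E^{u_0}\{G\circ p\mid \F_t^{u_0}\}$ is Lemma \ref{le:mart-4}(ii) with $H=G\circ p$. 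Part (i) is then Lemma \ref{le:mart-4}(ii) again, now read as: for $f$ bounded and $\F_t^{u_0}$-measurable, $\E\{f\mid\F^{p(u_0)}\}=\E\{\E\{f\mid\F_t^{u_0}\}\mid\F^{p(u_0)}\}=\E\{\E\{f\mid\F_t^{u_0}\}\mid\F_t^{p(u_0)}\}$, where the first equality uses (\ref{cond-1}) with $A=f$ and $s=t$ and the second uses part (ii) applied to the $\F_t^{u_0}$-measurable (hence $\G$-after-conditioning) function $\E\{f\mid\F_t^{u_0}\}$. Actually it is cleaner to observe directly that (\ref{cond-1}) with $s=t$ gives $\E\{f\mid\G_t\}=\E\{\E\{f\mid\F_t\}\mid\G\}$; but since $\E\{f\mid\F_t\}=f$ when $f$ is $\F_t$-measurable, this reads $\E\{f\mid\G_t\}=\E\{f\mid\G\}$, which is precisely (i). So the entire proposition reduces to verifying (\ref{cond-1}).

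To verify (\ref{cond-1}), i.e. $\E^{u_0}\{A\mid\F^{p(u_0)}_s\}=\E^{u_0}\{\E^{u_0}\{A\mid\F_s^{u_0}\}\mid\F^{p(u_0)}\}$ for all bounded $A$, I would use the Markov property of the canonical $\B$-diffusion together with the fact that the projected process $(p(u_t))$ is itself Markov — this last being exactly the content of Proposition \ref{pr:mart-mea} (it says $(p(u_t))$ under $\PP^\B_{u_0}$ is the $\A$-diffusion from $p(u_0)$, hence Markov for its own filtration $\F^{p(u_0)}_\ast$). The standard fact here is: if $\F^{p(u_0)}_\ast$ is a sub-filtration of $\F^{u_0}_\ast$ such that $(p(u_t))$ is Markov with respect to $\F^{u_0}_\ast$ (not merely its own filtration), then the tower/compatibility identity (\ref{cond-1}) holds. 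And $(p(u_t))$ is $\F^{u_0}_\ast$-Markov because $(u_t)$ is: for a bounded measurable $G$ on path space, $\E^{u_0}\{G((p(u_r))_{r\ge s})\mid \F_s^{u_0}\} = \E^{u_s}\{G((p(u_r))_{r\ge 0})\}$ depends only on $u_s$, and by Proposition \ref{pr:mart-mea} applied at the (a.s.\ defined) value $u_s$ it equals $\E^{p(u_s)}_\A\{G((x_r)_{r\ge 0})\}$, a function of $p(u_s)$ alone — hence $\F_s^{p(u_0)}$-measurable. This is the standard argument that a Markov function of a Markov process produces a sub-filtration satisfying the hypotheses of Lemma \ref{le:mart-4}; I would cite it or spell out the short conditioning computation that deduces (\ref{cond-1}) for an arbitrary bounded $A=\Phi((u_r)_{r\le s})\cdot\Psi((p(u_r))_{r\ge s})$ first, then pass to general $A$ by a monotone-class argument.

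The main obstacle — and the only genuinely substantive point — is the measurability and regularity needed to make ``$\E^{u_s}\{\,\cdot\,\}$ is a function of $u_s$ that further factors through $p(u_s)$'' precise and applicable inside conditional expectations; concretely, one wants a regular version of the transition kernels of both $(u_t)$ and $(x_t)$, and one must handle Assumption S so that explosion does not spoil the Markov property of the projection. These are exactly the hypotheses packaged into the setup of Section \ref{se-filt} (well-posedness of both martingale problems on the one-point-compactified path spaces, Assumption S guaranteeing $\zeta^M\circ p = \zeta^N$, and Proposition \ref{pr:mart-mea}), so the verification is routine but needs to be carried out invoking those. Everything else is a formal consequence of Lemma \ref{le:mart-4}. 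Thus the write-up will be: (1) observe $(p(u_t))$ is $\F^{u_0}_\ast$-Markov using the Markov property of $(u_t)$ and Proposition \ref{pr:mart-mea}; (2) deduce (\ref{cond-1}); (3) quote Lemma \ref{le:mart-4}(i),(ii) to read off (ii) and the displayed formula; (4) specialize (\ref{cond-1}) at $s=t$ with $A=f$ to get (i).
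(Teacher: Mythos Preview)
Your proposal is correct and essentially the same as the paper's. The paper proves (i) directly by a cylinder-function computation---using the Markov property of $u_\cdot$ and the identity $P^\B_t(g\circ p)=(P^\A_t g)\circ p$ from Proposition~\ref{pr:mart-mea} to integrate out future values of $p(u_\cdot)$---and then deduces (ii) from (i) via Lemma~\ref{le:mart-4}; you package the same computation as ``$(p(u_t))$ is $\F^{u_0}_*$-Markov, hence (\ref{cond-1}) holds,'' then read off both (i) and (ii) from Lemma~\ref{le:mart-4}, which is just a reorganisation of the same ingredients.
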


\begin{proof}
(i) Write $f=F(u_s: 0\leqslant s\leqslant t)$ for $F$ a bounded measurable function on $\C(N^+)$. Let $G$
be bounded measurable functions of $\{p(u_s): 0\leqslant s\leqslant t \}$ 
 and $g^1,\dots, g^k$ bounded Borel functions on $M$, with $h^1,\dots, h^k$ positive real numbers. By the Markov property of $u_\cdot$ and of $p(u_\cdot)$,
  \begin{eqnarray*}
&&\E \left(F(u_s: 0\leqslant s\leqslant t)   \, G\,g^1\circ p(u_{t+h^1})\cdot \dots
\cdot g^k\circ p(u_{t+h^1+\dots+h^k}) \right)\\
=&&\E\left(F(u_s: 0\leqslant s\leqslant t)    G \; P_{h^1}^\A  \left( g^1 P_{h^2}^\A (g^2\dots P_{h^k}^\A g^k)\right) (p(u_t))\right).
\end{eqnarray*}
Therefore,
$$\E \left\{F(u_s: 0\leqslant s\leqslant t) | \F^{p(u)}\right\}=\E \left\{F(u_s: 0\leqslant s\leqslant t) | \F^{p(u)}_t\right\}$$
as required.

Part (ii) is immediate from (i) by Lemma \ref{le:mart-4}.
\end{proof}

As in \S\ref{se:deco-1-1} set $E_x=\Image \sigma_x^\A$ with 
$h_u:E_{p(u)}\to T_uN$ the horizontal lift defined by ({\ref{h-lift-1}), although now we have no constant rank assumption and so no smoothness of $\h$. Also let $E_u^\B=\Image \sigma_u^\B$.
For an $\F_*^{x_0}$ -predictable $E^*$-valued process $\phi_t:=\phi_t(\sigma_\cdot)$, $0\leqslant t<\zeta^\A$ along $(\sigma_t: 0\leqslant t<\zeta^\A)$ let $(p^*(\phi_t): 0\leqslant t<\zeta^\B)$ be the pull back restricted to be an $(E^\B)^*$-valued process along $(u_t: 0\leqslant t<\zeta^\B)$ defined by 
$$p^*(\phi_t)(u_\cdot)=\phi_t(p(u))\circ T_{u_t} p: {E^\B}_{u_t} \to \R.$$
Since $\phi_t$ has a predictable extension $\bar \phi_t$ so does $p^*(\phi_t)$ and so the latter is predictable. Moreover $p^*(\phi_t)\sigma^\B(p^*\phi_t)=\phi_t\sigma^\A(\phi_t)$ by Lemma \ref{le:commutative}
showing $\phi_\cdot$ is in $L^2_\A$ if and only if $p^*(\phi_\cdot)$ is in $L^2_\B$.
For such $\phi$ we have the following intertwining:

\begin{proposition}
\label{pr:mart-2}
Let $\phi$ be a predictable $L^2_\A$-valued process.
\begin{enumerate}
\item[(1)]  For $\PP_{u_0}^\B$ almost surely all sample paths, $M_t^{\A, \phi}\circ p=M_t^{\B, p^*(\phi)}$ for $t<\zeta^\B$. 
\item[(2)] If $\alpha\in L^2_\B$ with $\alpha_t\circ h_t=0$ almost surely, then $\langle M_t^\alpha, M_t^{df\circ Tp}\rangle=0$ and  $\E^{\B,u_0} \{M_t^\alpha | \F^{p(u_0)}\}=0$ for all $C^1$ functions $f$ on $M$ .
\end{enumerate}
\end{proposition}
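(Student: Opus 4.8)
The plan is to reduce both parts to computations of brackets of the basic martingales $M^{df}$ and an appeal to the characterisation \eqref{bracket}, together with Proposition \ref{pr:mart-3}(ii) for the conditional-expectation statements. For part (1), the key point is that the defining bracket relation \eqref{bracket} determines $M^\alpha$ uniquely, so it suffices to check that the process $t\mapsto M_t^{\A,\phi}\circ p$, which a priori lives on $\C(M^+)$ pulled back to $\C_{u_0}N^+$ via $p$, satisfies the bracket identity characterising $M^{\B,p^*(\phi)}$. First I would note, as already observed in the excerpt just before the statement, that $p^*(\phi_t)\sigma^\B(p^*\phi_t)=\phi_t\sigma^\A(\phi_t)$ by Lemma \ref{le:commutative}, so $p^*(\phi)\in L^2_\B$ and $M^{\B,p^*(\phi)}$ is well-defined. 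Then for $f\in\C_c^\infty(M)$ I would use Proposition \ref{pr:mart-mea} (under Assumption S) to identify $M_t^{d(f\circ p),\B}$ with $M_t^{df,\A}\circ p$, and compute
$$
\big\langle M^{\A,\phi}\circ p,\; M^{d(f\circ p),\B}\big\rangle_t
= \big\langle M^{\A,\phi},\; M^{df,\A}\big\rangle_t\circ p
= 2\int_0^t \sigma^\A\big(\phi_s,(df)_{p(u_s)}\big)\,ds,
$$
while by Lemma \ref{le:commutative} the integrand equals $\sigma^\B\big(p^*(\phi_s),(d(f\circ p))_{u_s}\big)$, which is exactly the bracket \eqref{bracket} characterising $M^{\B,p^*(\phi)}$. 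Since such $f\circ p$ are enough to pin down a local martingale by the uniqueness in Proposition \ref{pr:mart-1} / Lemma \ref{le:mart-2}, this gives $M_t^{\A,\phi}\circ p = M_t^{\B,p^*(\phi)}$ for $t<\zeta^\B$.

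For part (2), the hypothesis $\alpha_t\circ h_t=0$ says $\alpha_t$ annihilates the horizontal subspace $H_{u_t}$. I would compute, for $f\in C^1(M)$,
$$
\big\langle M^\alpha, M^{df\circ Tp}\big\rangle_t
= 2\int_0^t \sigma^\B\big(\alpha_s,\, (df)_{p(u_s)}\circ T_{u_s}p\big)\,ds
= 2\int_0^t \alpha_s\Big(\sigma_{u_s}^\B (T_{u_s}p)^* (df)_{p(u_s)}\Big)\,ds,
$$
and then invoke the defining relation \eqref{h-lift-1} of the horizontal lift, $\sigma_u^\B (T_up)^* = h_u\circ\sigma_{p(u)}^\A$, to rewrite the integrand as $\alpha_s\big(h_{u_s}\sigma^\A_{p(u_s)}(df)_{p(u_s)}\big)$, which vanishes since $\alpha_s\circ h_{u_s}=0$. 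Here one should be a little careful that part (2) is stated without the constant-rank assumption, so $h$ need not be smooth; but \eqref{h-lift-1} and \eqref{h-lift-2} still define $h_u$ pointwise on $\Image\sigma^\A_{p(u)}$ (the argument of Proposition \ref{pr:h-map} only used semi-definiteness of $\sigma^\B$), so the identity $\sigma_u^\B(T_up)^*=h_u\circ\sigma^\A_{p(u)}$ remains valid as an equality of maps at each point, which is all that is needed inside the time integral. Hence $\langle M^\alpha, M^{df\circ Tp}\rangle_t=0$.

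It remains to pass from orthogonality to the conditional-expectation statement $\E^{\B,u_0}\{M_t^\alpha\mid\F^{p(u_0)}\}=0$. The cleanest route is: by part (1) and the well-posedness of the $\A$-martingale problem, the $\F_*^{p(u_0)}$-martingales are, up to the image measure, generated by the stochastic anti-development of $p(u_\cdot)$, i.e. by integrals of the form $M_t^{\B,p^*(\phi)}=M_t^{\A,\phi}\circ p$; and by the martingale representation theorem every square-integrable $\F_*^{p(u_0)}$-martingale is a stochastic integral against these. Since $M^\alpha$ is orthogonal to every $M^{df\circ Tp}$, hence (by the representation \eqref{rep-alpha} of $\phi$ through functions $f_j$ and Lemma \ref{le:mart-2}) to every $M^{\B,p^*(\phi)}$, it is orthogonal to all $\F_*^{p(u_0)}$-martingales; combined with Proposition \ref{pr:mart-3}(ii) (so that conditioning on $\F^{p(u_0)}$ is the same as on $\F_t^{p(u_0)}$ for the relevant functionals) this forces $\E^{\B,u_0}\{M_t^\alpha\mid\F^{p(u_0)}\}$ to be a constant martingale equal to its value at $0$, namely $0$. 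The main obstacle I anticipate is precisely this last step: making rigorous the claim that $\F_*^{p(u_0)}$-martingales are spanned by the $M^{df\circ Tp}$ when $\sigma^\A$ does not have constant rank, so one cannot simply quote a clean Brownian representation as in Example \ref{ex:BM}; one has to argue via the generated $\sigma$-algebra and the uniqueness of the $\A$-martingale problem (as in the Dellach\'erie argument used in the proof of Proposition \ref{pr:mart-1}), rather than via an explicit anti-development.
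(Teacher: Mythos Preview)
Your argument for part~(1) has a real gap. The uniqueness clause in Proposition~\ref{pr:mart-1} is stated on $N$: a local martingale is pinned down by its brackets with $M^{\B,dg}$ for \emph{all} $g\in C_c^\infty(N)$, not just for $g=f\circ p$ with $f\in C_c^\infty(M)$. You only verified the bracket identity against functions pulled back from $M$, and then asserted ``such $f\circ p$ are enough to pin down a local martingale''. That is not what Proposition~\ref{pr:mart-1} gives you. To salvage your route you would need to know in advance that $M^{\B,p^*(\phi)}$ is $\F^{p(u_0)}_*$-adapted, so that the difference $M^{\A,\phi}\circ p - M^{\B,p^*(\phi)}$ is an $\F^{p(u_0)}_*$-local martingale and the Dellach\'erie argument can be run on $M$; but establishing that adaptedness already amounts to the paper's direct proof. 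The paper simply writes $\phi_t=\sum_j g_t^j(x_\cdot)(df^j)_{x_t}$ via \eqref{rep-alpha}, notes that $M^{\A,df^j}\circ p = M^{\B,d(f^j\circ p)}$ (this is the computation in Proposition~\ref{pr:mart-mea}), and concludes term by term that
\[
M_t^{\A,\phi}\circ p
=\sum_j\int_0^t g_s^j(p(u_\cdot))\,dM_s^{\B,d(f^j\circ p)}
=M_t^{\B,p^*(\phi)}.
\]
This avoids any appeal to uniqueness on $N$.

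For part~(2) your bracket computation is exactly the paper's: use $\sigma_u^\B(T_up)^*=h_u\circ\sigma^\A_{p(u)}$ from \eqref{h-lift-1} (which, as you correctly note, holds pointwise without any constant-rank assumption) to get $\langle M^\alpha,M^{d(f\circ p)}\rangle=0$. For the conditional expectation, your route via martingale representation is more honest than the paper's rather compressed argument, and it is precisely the method used in the very next result, Proposition~\ref{pr:mart-4}: set $N_t=\E\{M_t^\alpha\mid\F^{p(u_0)}\}$, observe via Proposition~\ref{pr:mart-3} that $\bar N_t:=N_t\circ p^{-1}$ is an $\F^{x_0}_*$-martingale, and then invoke the Dellach\'erie uniqueness argument on $M$ (exactly as in the proof of Proposition~\ref{pr:mart-1}) to conclude $\bar N=0$ from $\langle\bar N,M^{\A,df}\rangle=0$. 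Your closing paragraph already identifies this as the right way to handle the lack of a clean Brownian representation when $\sigma^\A$ does not have constant rank, so here you are on solid ground.
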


\begin{proof}
For $\phi=df$, (1) follows from $p^*(df)_u=d(f\circ p)_u$ as in the proof of Proposition~\ref{pr:mart-mea}.
For general $\phi$, taking a predictable extension if necessary, write $\phi_t(x)=\sum_1^m g_t^i(x_\cdot)(df^j)_{x_t}$ for smooth functions $f^j: M\to \R$ and real valued predictable $\{g_t^j: 0\leqslant t<\zeta^\A\}$. Therefore
$$M_t^{\A,\phi}\circ p=\sum_{j=1}^m \int_0^t g_s^j \left(p(u_\cdot)\right) \;dM_t^{\B, p^*(df^j)}=M_t^{\B, p^*(\phi)}$$ for all $t<\zeta^\B$, giving (1).
For (2) let $F:N\to \R$ be a smooth measurable function with respect to $\F^{p(u_0)}$. Then $F=f(p(u_\cdot))$ for some measurable function $f: M\to \R$. 
\begin{eqnarray*}
\E^{\B,u_0}\left(M_t^\alpha f(p(u_\cdot))\right)&=&{1\over 2} \E^{\B,u_0}
\langle M_t^\alpha, M_t^{df\circ Tp}\rangle
={1\over 2}\E^{\B,u_0} \int_0^t \sigma^\B(\alpha_s, df\circ Tp(u_s))ds.
\end{eqnarray*}
If $\alpha_t h_{u_t}=0$ almost surely for all $t$, we apply (\ref{h-lift-1}) to see
$$\sigma^\B(\alpha_s, df\circ Tp(u_s))=\alpha_s\sigma^{\B}_{u_s}\Big(T^*p(df)\Big) =\alpha_s h_{u_s}\sigma^\A_{p(u_s)}df=0$$ 
and thus $\E^{\B,u_0}(M_t^\alpha f(p(u_\cdot)))=0$ giving (2).
\end{proof}

For $\alpha\in L^2_\B$ define $\beta_s\equiv \E^{\B,u_0}\{\alpha_s\circ h_{u_s} |p(u_\cdot)=x_\cdot\}, 0\leqslant s<\zeta$ to be the unique, up to equivalence, element of $L^2_\A$ such that
\begin{equation}
\label{conditional}
\E^{\B,u_0}\Big(\alpha_a\circ h_{u_s}\sigma^\A(\phi_s(p(u_\cdot)))\Big)
=\E^{\A, p(u_0)} \Big(\beta_s\sigma^\A(\phi_s)\Big).
\end{equation}
for any $\phi\in L^2_\A$. To see such an element exists and is unique recall that 
$$\alpha_s\circ h_{u_s}\sigma_{p(u_s)}^\A=\alpha_s \sigma^\B_{u_s}(t_{u_s}p)^*$$
which is an $\F^{u_0}_*$-predictable process with values in $E_{p(u_s)}\subset T_{p(u_s)}M$ at each time $s$, and by Proposition~\ref{pr:mart-3}, (\ref{conditional}) is equivalent to
\begin{equation}
\label{conditional-2}
\beta_s(p(u_\cdot))\sigma^\A_{p(u_s)}
=\E^{\B,u_0}\Big\{\alpha_s\sigma_{u_s}^\B(T_{u_s}p)^* |\F^{p(u_0)}\Big\}
\end{equation}
in the sense of Elworthy-LeJan-Li \cite{Elworthy-LeJan-Li-book}. The predictable projection theorem and the results of \cite{Elworthy-LeJan-Li-book} shows that there is a unique, up to indistinguishability, $\F^{p(u_\cdot)}$-predictable $TM$ versiob $\{\gamma_t: 0\leqslant t<\zeta\}$ say, over $\{p(u_t): 0\leqslant t<\zeta\}$, of the right hand side of (\ref{conditional-2}). By applying the uniqueness part of this projection theorem to $\{\phi_s(\gamma_s): 0\leqslant s<\zeta\}$ when $\phi_\cdot$ is $\F^{p(u_\cdot)}_*$-predictable, $T^*M$-valued over $p(u_\cdot)$ and $\phi_t$ vanishes on $E_{p(u_t)}$ for all $0\leqslant t<\rho$ with probability $1$, we see $\gamma_t\in E_{p(u_u)}$ for all $0\leqslant t<\zeta$ almost surely. Now set $\beta_s(p(u_\cdot))=[\sigma_{p(u_s)}^\A]^{-1}\gamma_s$ in $E^*_{p(u_s)}$.

\begin{proposition}
\label{pr:mart-4}
For any $\alpha_\cdot$ in $L^2_\B$ we have
$$\E^{\B, u_0}\left\{M_t^\alpha \, |\, p(u_\cdot)=x_\cdot \right\}=
\int_0^T \E ^{\B, u_0}\left\{\alpha_s\circ h_{u_s}\, | \, p(u_\cdot)=x_\cdot\right\} d\{x_s\}.$$
\end{proposition}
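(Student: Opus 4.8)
The plan is to reduce the identity for a general $\alpha_\cdot\in L^2_\B$ to the characterizing bracket relation (\ref{bracket}) for the conditioned process, exploiting the intertwining results of Propositions~\ref{pr:mart-2}, \ref{pr:mart-3}, and the definition (\ref{conditional}) of $\beta_s=\E^{\B,u_0}\{\alpha_s\circ h_{u_s}\,|\,p(u_\cdot)=x_\cdot\}$. First I would set $L_t:=\E^{\B,u_0}\{M_t^\alpha\,|\,\F^{p(u_0)}\}$ and show it is an $\F^{p(u_0)}_*$-martingale: by Proposition~\ref{pr:mart-3}(i) the hypothesis (\ref{cond-1}) of Lemma~\ref{le:mart-4} holds for $\G_*=\F^{p(u_0)}_*$ inside $\F^{u_0}_*$, so Lemma~\ref{le:mart-4}(i) applies and $L_\cdot$ is a martingale with respect to the projected filtration. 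Since $\F^{p(u_0)}_*$ is (up to null sets) the filtration of $x_\cdot=p(u_\cdot)$ under $\PP^\A_{p(u_0)}$ by Proposition~\ref{pr:mart-mea}, $L_\cdot$ is a martingale over the $\A$-diffusion, so by Proposition~\ref{pr:mart-1} and Lemma~\ref{le:mart-2} it is determined by its brackets against the $M^{df,\A}$, $f\in\C_c^\infty M$.

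Next I would compute $\langle L, M^{df,\A}\rangle_t$. The key observation is that the right-hand side candidate, $R_t:=\int_0^t\beta_s\,d\{x_s\}=M_t^{\A,\beta}$, also is an $\F^{p(u_0)}_*$-martingale (by construction $\beta_\cdot\in L^2_\A$), and by (\ref{bracket}) has $\langle R,M^{df,\A}\rangle_t=2\int_0^t\sigma^\A(\beta_s,(df)_{x_s})\,ds$. So it suffices to show $\langle L,M^{df,\A}\rangle_t=2\int_0^t\sigma^\A(\beta_s,(df)_{x_s})\,ds$ as well. For this, write $M^{df,\A}\circ p=M^{d(f\circ p),\B}$ as in Proposition~\ref{pr:mart-mea}; then $\langle L,M^{df,\A}\rangle$ can be identified by a conditional-expectation argument with the projection of $\langle M^\alpha,M^{d(f\circ p),\B}\rangle_t=2\int_0^t\sigma^\B(\alpha_s,(d(f\circ p))_{u_s})\,ds$. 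Using (\ref{h-lift-1}) one has $\sigma^\B(\alpha_s,d(f\circ p)_{u_s})=\alpha_s\sigma^\B_{u_s}(T_{u_s}p)^*(df)=\alpha_s h_{u_s}\sigma^\A_{p(u_s)}(df)$, and then the defining relation (\ref{conditional}) (with $\phi_s=(df)$) converts the conditional expectation of this into $\E^{\A,p(u_0)}(\beta_s\sigma^\A((df)_{x_s}))$, giving exactly the desired bracket. One then concludes $L_t=R_t$ a.s. for each $t$, hence (by continuity / the usual modification argument) as processes.

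The step I expect to be the main obstacle is making rigorous the passage from the pathwise bracket $\langle M^\alpha,M^{d(f\circ p),\B}\rangle$ on $N$ to the bracket $\langle L,M^{df,\A}\rangle$ on $M$: one needs to justify that conditioning on $\F^{p(u_0)}$ commutes appropriately with taking quadratic covariation, i.e. that $\langle \E^{\B,u_0}\{M^\alpha|\F^{p(u_0)}\},\,M^{df,\A}\circ p\rangle_t=\E^{\B,u_0}\{\langle M^\alpha,M^{d(f\circ p),\B}\rangle_t\,|\,\F^{p(u_0)}\}$ after projecting. This is where Proposition~\ref{pr:mart-3}(ii) (immersion of $\F^{p(u_0)}_*$ in $\F^{u_0}_*$, so that $\F^{p(u_0)}_*$-martingales remain $\F^{u_0}_*$-martingales) and the predictable projection theorem do the work: $M^{df,\A}\circ p$ is already $\F^{p(u_0)}_*$-measurable, so testing $L$ against it and using that $M^\alpha - L$ is orthogonal to all bounded $\F^{p(u_0)}$-martingales (a consequence of the tower property and immersion) yields the identification. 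A short computation with stopping times, exactly parallel to the uniqueness argument in the proof of Proposition~\ref{pr:mart-1}, then closes the gap; everything else is routine given the machinery already assembled in this section.
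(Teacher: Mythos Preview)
Your proposal is correct and follows essentially the same route as the paper: set $N_t=\E^{\B,u_0}\{M_t^\alpha\,|\,\F^{p(u_0)}\}$, use Proposition~\ref{pr:mart-3} and Lemma~\ref{le:mart-4} to see it is an $\F^{p(u_0)}_*$-martingale, then identify it via its brackets against $M^{dg,\A}$ for $g\in\C_c^\infty M$ using $M^{dg,\A}\circ p=M^{d(g\circ p),\B}$, equation~(\ref{h-lift-1}), and the definition~(\ref{conditional}) of the conditional expectation $\beta_s$. The step you flag as the main obstacle---that $\langle N,M^{dg,\A}\circ p\rangle_t$ equals the $\F^{p(u_0)}_t$-conditional expectation of $\langle M^\alpha,M^{d(g\circ p),\B}\rangle_t$---is exactly the first displayed equality in the paper's proof, which the paper asserts with minimal justification; your invocation of the immersion property (Proposition~\ref{pr:mart-3}(ii)) is the right way to make it precise.
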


\begin{proof}
Set $N_t=\E\{M_t^\alpha\, |\, \F^{p(u_0)}\}$ and write $N_t(u)=\bar N_t(p(u))$ for $\{\bar N_t\}$ a $\F_t^{x_0}$-measurable function. By Proposition \ref{pr:mart-3}, $(N_t)$ is an $\F_*^{p(u_\cdot)}$-martingale and we see $(\bar N_t)$ is an $\F_*^x$ martingale. Take
$g\in \C_c^\infty M$ then by Proposition \ref{pr:mart-2},
\begin{eqnarray*}
\langle \bar N, M^{\A, dg}\rangle_t \circ p(u)
&=& \E^{\B, u_0} \left\{\langle M^{\alpha}, M^{d(g\circ p)}\rangle_t |\F_t^{p(u_0)}\right\}(u)\\
&=&\E^{\B, u_0} \left\{\sigma^\B_{u_t} \left(\alpha_t, (T_{u_t}p)^*(dg)\right) | \F_t^{p(u_0)}\right\}\\
&=&\E^{\B, u_0} \left\{\alpha_t \circ h_{u_t}\sigma^\A_{p(u_t)}(dg)| \F^{p(u_0)}_t\right\}
%&=&\E^{\B, u_0} \left\{\alpha_t h_{u_t} | \F^{p(u_0)}_t\right\}\sigma^\A_{p(u_t)}(dg) 
\end{eqnarray*}
by equation (\ref{h-lift-1}). By Proposition \ref{pr:mart-1} and the definition above of the conditional expectation,
$\bar N_t(p(u_\cdot))=M^{\A,\beta}$ for $\beta\circ p(u_\cdot)=\E^{\B, u_0} \left\{\alpha_t \circ h_{u_t} | \F^{p(u_0)}\right\}$ and so
$$\bar N_t(x_\cdot)=\int_0^t \E \left\{\alpha_s\circ h_{u_s} |p(u_\cdot)=x_\cdot\right\} d\{x_s\}$$
as required.
\end{proof}

\section{A family of Markovian kernels}
For a probability measure $\mu_0$ on $N^+$ let the measures $\mu_t$ on $N^+$ be the flow of $u_t$ under $\PP_{\mu_0}^\B$ and set $\nu_t=p_*(\mu_t)$ on $M^+$. Let $\eta_{\mu_0}$ be the law of $u_\cdot\mapsto (p(u_\cdot), u_0)$ on $\C(M^+)\times N^+$ under $\PP_{\mu_0}^\B$ so
$$\eta_{\mu_0}(A, \Gamma)=\int_{y\in M^+} \PP_y^{\A}(A)\,\rho^y_{\mu_0}(\Gamma)\,\nu(dy), \qquad A\in \B(M^+), \Gamma\in\B(N^+)$$
where $\rho_{\mu_0}^y$ arises from a disintegration of $\mu_0$
$$\mu_0(\Gamma)=\int_{y\in M^+} \rho^y_{\mu_0}(\Gamma)\,\nu(dy), \qquad \Gamma\in \B(N^+).$$

For a measurable $f:N^+\to \R$, integrable with respect to $\mu_t$ set
\begin{equation}
\pi_t^{\mu_0,\sigma}f(v)=\E_{\mu_0}^\B\{f(u_t)| p(u_0)=\sigma, u_0=v\}.
\end{equation}It is defined for $\eta_{\mu_0}$ almost all $(\sigma,v)$ in $C(M^+)\times N^+$. In particular for $\PP_{\nu_0}^\A$-almost all $\sigma$ it is defined for $\rho_{\mu_0}^{\sigma(0)}$-almost all $v\in N^+$. We could use the convention that
$$\pi_t^{\mu_0,\sigma}f(v)=0$$ if $p(v)\not =\sigma(0)$.
With this convention, if we define $\theta_t\sigma(s)=\sigma(t+s)$ we see that for $\PP_{v_0}^\A$-almost all $\sigma$ the map $y\mapsto \pi_t^{\mu_t, \theta_t\sigma}f(y)$ is defined for $\mu_t$-almost all $y$ in $N^+$. 

Further for $u_0\in N$ and $f: N^+\to \R$ bounded measurable define $$\pi_tf(u_0): \C_{p(u_0)}M^+\to \R,$$
$\PP^\A_{p(u_0)}$-almost surely, by
\begin{equation}
\pi_tf(u_0)(\sigma)=\E\Big\{f(u_t)|p(u_\cdot)=\sigma\Big\}=\pi_t^{\delta_{u_0},\sigma}f(u_0).
\end{equation}
This can be extended, as in \cite{Elworthy-LeJan-Li-book}, to the case of predictable process in vector bundles over $N$, and to define
$$\pi_t(\alpha\circ h_{u_\cdot})(u_0): \C_{p(u_0)}M^+\to \R$$
as $\E^{\B, u_0}\{\alpha_s h_{u_s} |p(u_\cdot)=x_\cdot\}$, defined above.

\section{The filtering equation}

\begin{theorem}
\label{th:mart-2}
\begin{enumerate}
\item [(1)]If $f$ is $\C_c^2N$, or more generally if $f$ is $C^2$ with $\B f$ and $\sigma^\B(df,df)\circ \h$ bounded, then
\begin{equation}
\pi_tf(u_0)=f(u_0)+\int_0^t \pi_s(\B f)(u_0) ds+
\int_0^t  \pi_s(df\circ h_{u_\cdot}) (u_0)d\{x_s\}.\end{equation}
In particular $\{\pi_t f(u_0): t\geqslant  0\}$ is a continuous $\F^{p(u_0)}_*$ semi-martingale.
\item[(2)] For bounded measurable $f: M^+\to \R$ and $\PP^\A_{v_0}$ almost all $\sigma$ in $C(M^+)$, for each $s,t\geqslant  0$
\begin{equation}
\label{cocycle-1}
S^{\mu_0,\sigma}_{t+s}f(v)
=\pi_t^{\mu_0,\sigma}\pi_s^{\theta_t\sigma,\, \mu_t}f(v)
\end{equation}
for $\rho_{\mu_0}^{\sigma(0)}$ almost all $v$ in $N^+$.
\item[(3)] Moreover there exists a family of probability measures $Q_\nu^{\mu_0, \sigma}$ on $C(N^+)$ define for $\eta_{u_0}$-almost surely all $(\sigma, v)$ such that if
$F: \C(N^+)\to \R$ is of the form
$$F(u_\cdot)=f_1(u_{t_1})\dots f_n(u_{t_n})$$
some $0\leqslant t_1<t_2<\dots t_n$ and bounded measurable $f_j: N^+\to \R$, $j=1,2,\dots, n$ then
\begin{eqnarray*}
\int_{u\in\C(N^+)}F(u)Q_v^{\mu_0, \sigma}(du)
&=&S_{t_1}^{\mu_0, \sigma}\Big(f_1S_{t_2-t_1}^{\mu_{t_1}, \theta_{t_1}\sigma}\Big(f_2\dots S_{t_n-t_{n-1}}^{\mu_{t_n}, \theta_{t_{n-1}}\sigma}f_n\Big)(v)\\
&=&\E^\B_{\mu_0}\{F(u_\cdot)| p(u_\cdot)=\sigma_0\}
\end{eqnarray*}
$\eta_{\mu_s}$-almost surely in $(\sigma, v)$.
\end{enumerate}

\end{theorem}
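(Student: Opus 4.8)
The plan is to treat the three parts in order, building each on the previous one and on the martingale machinery of Propositions~\ref{pr:mart-1}--\ref{pr:mart-4}.

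\emph{Part (1): the filtering equation.} First I would apply the martingale problem for $\B$: for $f\in \C_c^2 N$ the process $M_t^{df,\B}=f(u_t)-f(u_0)-\int_0^t\B f(u_s)\,ds$ is a local martingale, and more generally a true martingale under the stated boundedness hypotheses. The idea is to take the conditional expectation $\E^{\B,u_0}\{\cdot\,|\,\F^{p(u_0)}\}$ of the identity $f(u_t)=f(u_0)+\int_0^t\B f(u_s)\,ds+M_t^{df,\B}$. Using Proposition~\ref{pr:mart-3}(i) the conditioning on the whole $\sigma$-algebra $\F^{p(u_0)}$ reduces to conditioning on $\F_t^{p(u_0)}$, so the left side becomes $\pi_tf(u_0)$ and the first two terms on the right become $f(u_0)+\int_0^t\pi_s(\B f)(u_0)\,ds$, after justifying the interchange of conditional expectation and the $ds$-integral via Fubini and the predictable projection theorem (as referenced after~(\ref{conditional-2})). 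The remaining term $\E^{\B,u_0}\{M_t^{df,\B}\,|\,\F^{p(u_0)}\}$ is handled by decomposing $df = df\circ T_up\circ (\text{something}) \ldots$; more precisely one writes $\sigma^\B(df) = h_{u}\sigma^\A(\,\cdot\,)+(\text{piece killed by }h^*)$ using the splitting of Lemma~\ref{le:symbols-2}, so that $M_t^{df,\B}$ splits into a part whose conditional expectation is computed by Proposition~\ref{pr:mart-4} (giving $\int_0^t\pi_s(df\circ h_{u_\cdot})(u_0)\,d\{x_s\}$) plus a part $M_t^\alpha$ with $\alpha_s\circ h_{u_s}=0$, which has zero conditional expectation by Proposition~\ref{pr:mart-2}(2). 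Continuity and the semimartingale property then follow since the right side is manifestly a continuous $\F_*^{p(u_0)}$-semimartingale.

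\emph{Part (2): the cocycle identity.} Here the plan is to use the Markov property of $(u_t)$ together with the conditional Markov property of $(u_t)$ given $p(u_\cdot)=\sigma$. Fix $s,t$ and a bounded measurable $f$. Then
$$S^{\mu_0,\sigma}_{t+s}f(v) = \E^\B_{\mu_0}\{f(u_{t+s})\,|\,p(u_\cdot)=\sigma,\,u_0=v\}.$$
I would condition first on $\F_t^{u_0}\vee\F^{p(u_0)}$: by the Markov property of the $\B$-diffusion, $\E^\B\{f(u_{t+s})\,|\,\F_t^{u_0}\vee\F^{p(u_0)}\}$ depends only on $u_t$ and on the future increments of $p(u_\cdot)$ after time $t$, and equals $\pi_s^{\mu_t,\theta_t\sigma}f(u_t)$ by the (time-homogeneous) definition of $\pi$ applied at the shifted starting configuration $u_t$ with shifted observation path $\theta_t\sigma$. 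Taking a further conditional expectation given $p(u_\cdot)=\sigma$, $u_0=v$ turns the outer $u_t$-dependence into $\pi_t^{\mu_0,\sigma}(\cdot)(v)$ applied to $y\mapsto\pi_s^{\mu_t,\theta_t\sigma}f(y)$, which is exactly (\ref{cocycle-1}). The one technical point is the $\eta_{\mu_0}$-almost-sure identification of the regular conditional distributions, which is handled by the disintegration formula for $\eta_{\mu_0}$ given just before the theorem and a standard monotone-class argument.

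\emph{Part (3): consistency and construction of $Q_v^{\mu_0,\sigma}$.} Given parts (1) and (2), the operators $S^{\mu_0,\sigma}_t$ form a (time-inhomogeneous, path-indexed) family of Markovian kernels on $N^+$, and the displayed iterated formula for $\int F\,dQ$ defines finite-dimensional distributions. I would verify Kolmogorov consistency of these finite-dimensional distributions using the cocycle identity (\ref{cocycle-1}) of part (2) --- marginalising out an intermediate time $t_j$ collapses $S_{t_{j+1}-t_j}^{\mu_{t_j},\theta_{t_j}\sigma}$ composed with $S_{t_j-t_{j-1}}^{\mu_{t_{j-1}},\theta_{t_{j-1}}\sigma}$ into $S_{t_{j+1}-t_{j-1}}^{\mu_{t_{j-1}},\theta_{t_{j-1}}\sigma}$ --- so that Kolmogorov's extension theorem produces, for $\eta_{\mu_0}$-almost all $(\sigma,v)$, a probability measure $Q_v^{\mu_0,\sigma}$ on $C(N^+)$ with the stated finite-dimensional distributions. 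That this equals the genuine conditional law $\E^\B_{\mu_0}\{F(u_\cdot)\,|\,p(u_\cdot)=\sigma\}$ follows by checking the defining property on product functionals $F$: iterating the Markov property of $(u_t)$ exactly as in part (2) shows $\E^\B_{\mu_0}\{f_1(u_{t_1})\cdots f_n(u_{t_n})\,|\,p(u_\cdot)=\sigma\}$ equals the nested expression, and product functionals are measure-determining.

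\emph{Main obstacle.} The genuinely delicate point is the measure-theoretic bookkeeping around the conditional Markov property in parts (2) and (3): one must check that the regular conditional distribution of $(u_t)$ given the observation path $p(u_\cdot)=\sigma$ is itself Markov (with the kernels $S^{\mu_t,\theta_t\sigma}$) for $\eta_{\mu_0}$-almost every $\sigma$, and that the various $\mu_0$-, $\mu_t$- and $\eta$-null sets on which the identities may fail can be chosen consistently across all times simultaneously --- this is where the disintegration of $\eta_{\mu_0}$ and a careful separability/monotone-class argument are needed. The analytic content of part~(1), by contrast, is essentially routine once Propositions~\ref{pr:mart-2} and~\ref{pr:mart-4} are in hand.
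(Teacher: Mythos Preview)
Your overall strategy is correct and matches the paper's, but you are over-engineering Parts~(1) and~(3).

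For Part~(1): the paper simply writes $f(u_t)=f(u_0)+\int_0^t\B f(u_s)\,ds+M_t^{df}$, takes the conditional expectation, and applies Proposition~\ref{pr:mart-4} \emph{directly} with $\alpha_s=(df)_{u_s}$ to get $\E\{M_t^{df}\mid p(u_\cdot)=x_\cdot\}=\int_0^t\pi_s(df\circ h_{u_\cdot})\,d\{x_s\}$. Your intermediate splitting of $df$ via Lemma~\ref{le:symbols-2} and Proposition~\ref{pr:mart-2}(2) is unnecessary --- Proposition~\ref{pr:mart-4} already computes the conditional expectation of the \emph{full} martingale $M_t^{df}$, absorbing both the ``horizontal'' and ``vertical'' contributions at once. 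More importantly, Lemma~\ref{le:symbols-2} sits in the cohesive setting (constant-rank $\sigma^\A$), whereas the filtering section explicitly drops that hypothesis, so invoking it here is out of scope.

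For Part~(3): the paper bypasses Kolmogorov extension entirely. Since $C(N^+)$ is Polish, regular conditional probabilities of $\PP^\B_{\mu_0}$ given $(p(u_\cdot),u_0)$ exist automatically; these \emph{are} the measures $Q_v^{\mu_0,\sigma}$, and the nested-kernel formula for cylinder functionals then follows from the Markov property exactly as in Part~(2). Your Kolmogorov-consistency route works, but it builds a measure only to identify it afterwards with the regular conditional probability you could have taken from the start. Part~(2) is handled the same way in both.
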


\begin{proof}
(1). By definition of $M^{df}$ we have 
$$f(u_t)=f(u_0)+\int_0^t \B f(u_s) ds+M_t^{df}$$
so 
\begin{equation}
\label{filt-32}
\pi_t f(u_0)=f(u_0)+\int_0^t \pi_s \B f (u_0) ds+\E \left\{M_t^{df,\B} \,| \, p(u_\cdot)=x\right\}
\end{equation}
and part (1) follows from Proposition \ref{pr:mart-4}.

 (2). We observed above that the right hand side of (\ref{cocycle-1}) is well defined
for $\PP^\A_{\mu_0}$ almost all $\sigma$. The equation then follows from the Markov property.

(3). The existence of regular conditional probabilities in our situation implies the existence of the probabilities $\Q_\nu^{\mu_0, \sigma}$ as required, together with a standard use of the Markov property.
\end{proof}

\begin{remark}
 A description of the $\Q^{\mu_0, \sigma}_v$ is given in the next section, in the case where $\A$ is cohesive.
\end{remark}

Recall we have the decomposition $F_u=H_u+VT_uN$ for each $u\in N$, and $F=\sqcup F_u$. If $\ell\in F_u^*$ there is a corresponding decomposition 
$$\ell=\ell^H+\ell^V\in F_u^*,$$
where $\ell^H$ vanishes on $VT_uN$ and $\ell^V$ on $H_u$.
For $\ell \in T_u^*N$ write $\ell^V=(\ell|{F_u})^V$ and $\ell^H=(\ell|{F_u})^H$.

\begin{corollary}
\label{co:8.14}
Suppose $\A$ is cohesive.  If $f$ is $\C_c^3N$ then there is the
Stratonovitch equation
\begin{equation}
\pi_tf(u_0)(x_\cdot)
=f(u_0)+\int_0^t \pi_s(\B^Vf)(u_0)ds+\int_0^t
\pi_s( df_{u_0} \circ h_{u_o})\circ dx_s.\qquad\qquad
\label{filt-31}
\end{equation}
\end{corollary}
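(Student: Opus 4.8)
The plan is to convert the It\^o-type filtering equation of Theorem~\ref{th:mart-2}(1) into the Stratonovich form (\ref{filt-31}) by identifying the correction term, and to use the cohesiveness hypothesis to see that this correction term contributes exactly $\int_0^t \pi_s(\B^V f)(u_0)\,ds$. First I would start from the equation of Theorem~\ref{th:mart-2}(1),
\begin{equation*}
\pi_t f(u_0)=f(u_0)+\int_0^t \pi_s(\B f)(u_0)\,ds+\int_0^t \pi_s(df\circ h_{u_\cdot})(u_0)\,d\{x_s\},
\end{equation*}
which is valid since $f\in \C_c^3 N\subset \C_c^2 N$. The last term is the martingale-part integral $M^{\A,\gamma}$ along $(x_s)$ of the $E^*$-valued predictable process $\gamma_s=\pi_s(df\circ h_{u_\cdot})(u_0)$, in the sense of Section~\ref{se:int-pred-proc}. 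By Example~\ref{ex:BM} (the canonical Brownian motion $B$ of the cohesive operator $\A$) this martingale-part integral is a genuine stochastic integral against $B$, and the Stratonovich integral $\int_0^t \gamma_s\circ dx_s$ differs from it by one half the bracket $\langle \int_0^\cdot d[\,s\mapsto\gamma_s]\,,\,B\rangle_t$; equivalently, by Lemma~\ref{le:mart-1} applied formally along $(x_s)$, by a term of the form $\int_0^t (\delta^\A \tilde\gamma_s)(x_s)\,ds$ for an appropriate extension $\tilde\gamma_s$. The $C^3$ hypothesis on $f$ is what guarantees $s\mapsto \gamma_s$ is itself a continuous semi-martingale with a computable bracket, so that this conversion is legitimate.

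Next I would compute that correction term explicitly. Since $\gamma_s=\pi_s(df\circ h_{u_\cdot})(u_0)$ and $\pi_s$ is a conditional expectation, the bracket $\langle\gamma_\cdot, B\rangle$ is obtained by pushing the computation inside the conditional expectation and using that along the $\B$-diffusion $(u_s)$ the form $df\circ h$ has It\^o differential governed by $\B$; the bracket of $d(df\circ h_{u_s})$ with $d\{x_s\}=T_{u_s}p\,d\{u_s\}$ picks out exactly the horizontal-horizontal part of $\sigma^\B$ paired with the second derivative of $f$. The key algebraic point is Lemma~\ref{le:symbols-2}: $H_u$ is $\sigma_u^\B$-orthogonal to $VT_uN\cap\Image\sigma_u^\B$, and on $H_u$ the symbols $\sigma^\B$ and $\sigma^{\A^H}$ agree. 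Combined with the decomposition $\B=\A^H+\B^V$ of Theorem~\ref{th:deco}, the difference between $\pi_s(\B f)$ and the horizontal piece $\pi_s(\A^H f)$ that one recovers from the Stratonovich rewriting of the horizontal-lift integral is precisely $\pi_s(\B^V f)$. So collecting terms: the $\int_0^t\pi_s(\B f)(u_0)\,ds$ term splits as $\int_0^t\pi_s(\A^H f)(u_0)\,ds+\int_0^t\pi_s(\B^V f)(u_0)\,ds$, the first of these recombines with the correction term to make the It\^o integral into the Stratonovich integral $\int_0^t \pi_s(df_{u_0}\circ h_{u_0})\circ dx_s$, and the second survives as the $\int_0^t\pi_s(\B^V f)(u_0)\,ds$ term in (\ref{filt-31}).

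The main obstacle I expect is the careful justification of the bracket computation \emph{inside} the conditional expectation $\pi_s$ --- that is, showing $\langle \pi_\cdot(df\circ h_{u_\cdot})(u_0),\, \cdot\rangle$ is what one naively gets by conditioning the bracket of $df\circ h_{u_\cdot}$ with $d\{x_\cdot\}$. This requires knowing that $s\mapsto \pi_s(df\circ h_{u_\cdot})(u_0)$ is a semi-martingale with the expected decomposition, which in turn should follow by applying Theorem~\ref{th:mart-2}(1) (or its vector-bundle-valued extension sketched after Proposition~\ref{pr:mart-4}) to the components of $df\circ h$ rather than to $f$ itself, and then invoking the predictable projection theorem and the representation (\ref{eq:mart-6}) of martingale-part integrals against the canonical Brownian motion $B$. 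Once the regularity bookkeeping is in place, the identification is a direct application of Lemma~\ref{le:symbols-2}, Theorem~\ref{th:deco}, and Lemma~\ref{le:mart-1}, with no further geometric input needed. One should also note at the end that both sides of (\ref{filt-31}) are continuous $\F^{p(u_0)}_*$-semi-martingales, so the identity of their It\^o decompositions established above yields the Stratonovich identity as claimed.
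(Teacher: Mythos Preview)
Your strategy---condition first via Theorem~\ref{th:mart-2}(1) to obtain the It\^o form on $M$, then convert the martingale-part integral $\int\gamma_s\,d\{x_s\}$ to Stratonovich by computing the bracket of the \emph{conditioned} integrand $\gamma_s=\pi_s(df\circ h_{u_\cdot})$---is in principle sound, but the obstacle you flag is real: it forces a second pass through the filtering machinery, now for a bundle-valued process, just to establish that $\gamma_\cdot$ is a semi-martingale with the right bracket.

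The paper reverses the order of operations and thereby sidesteps this entirely. Starting from the pre-conditioned identity~(\ref{filt-32}), it decomposes $df=(df)^H+(df)^V$ using the splitting $F_u=H_u\oplus VT_uN$. Proposition~\ref{pr:mart-4} gives $\E\{M^{df}_t\mid p(u_\cdot)\}=\E\{M^{(df)^H}_t\mid p(u_\cdot)\}$, since $(df)^V\circ h=0$. Then Lemma~\ref{le:mart-1} is applied \emph{upstairs on $N$} to the honest $C^2$ one-form $(df)^H$ (here is where $f\in C^3_c$ enters), giving
\[
M^{(df)^H}_t=\int_0^t(df)^H_{u_s}\circ du_s-\int_0^t\delta^\B\big((df)^H\big)(u_s)\,ds.
\]
The algebraic core is the identity $\delta^\B\big((df)^H\big)=\A^H f$, which follows directly from $\B=\A^H+\B^V$ and the definition of ``along a distribution'': $\delta^{\B^V}$ annihilates $(df)^H$ because $\B^V$ is vertical, and $\delta^{\A^H}$ annihilates $(df)^V$ because $\A^H$ is horizontal, so $\delta^\B((df)^H)=\delta^{\A^H}((df)^H)=\delta^{\A^H}(df)=\A^H f$. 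Thus $\B f-\A^H f=\B^V f$ survives in the $ds$ term, and since $(df)^H_u=(df\circ h_u)\circ T_up$ with $T_up\circ du_s=\circ\,dx_s$, conditioning the Stratonovich integral yields~(\ref{filt-31}) directly.

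In short, the paper does the Stratonovich conversion \emph{before} conditioning, where Lemma~\ref{le:mart-1} applies cleanly and no bracket of a conditioned process is ever needed; your route computes that same correction \emph{after} conditioning, which is correct but buys the result at the price of the extra regularity argument you anticipate.
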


\begin{proof}
We use (\ref{filt-32}). By Proposition~\ref{pr:mart-4}, $$\E\{M_t^{df}\, | \,p(u_\cdot)=x_\cdot\}=\E\{M_t^{df^H}\, | \,p(u_\cdot)=x_\cdot\}.$$   Note that
$$M_t^{df^H}=\int_0^t (df^H)_{u_s} \circ du_s
-\int_0^t \delta^\B (df^H)(u_s) ds$$
by Lemma \ref{le:mart-1}. Furthermore
$$\delta^\B (df^H)=\delta^{\B^V} (df^H)+\delta^{\A^H} (df^H)
=\delta^{\A^H} (df^H)=\delta^{\A^H} (df)=\A^H(f)$$
since $df^H$ vanishes on vertical vectors and $df=df^H+df^V$ while $df^V$ vanishes on horizontal vectors, so $\delta^{\A^H} (df^V)=0$. This gives 
$$\pi_tf(u_0)(x_\cdot)
=f(u_0)+\int_0^t \pi_s(\B^V f)(u_0)(x_\cdot) ds+
\E\left\{\int_0^t (df)_{u_s}^H \circ du_s\; \big| \;p(u_\cdot)=x_\cdot\right\} $$
Finally (\ref{filt-31}) follows since $df^H_u=p^*(df\circ h_u)=df\circ  h_u \circ T_up$ and $T_up\circ du_t=\circ dx_t$.
\end{proof}

\section{Approximations}
Assume now that the law of $u_t$ under $\PP_{u_0}^\A$ is given by
$$P_t^\A(u_0, A)=\int_A P_t^\A(u_0, v)dv, \qquad A\in \B(M)$$
for $p_t^\A(u_0,v)$ a smooth density with respect to some fixed, smooth, strictly positive measure on $M$ to which `$dv$' refers. This is the case if  $\A$ is hypoelliptic. 

Consider the conditional probability 
$$q_t^{u_0,b}(V)=\PP_{u_0}^\B\{u_t\in V|p(u_t)=b\},\qquad V\in \B(N)$$
defined for $p_t^\A(u_0,-)$ almost sure all $b$ in $M$. There is the disintegration of $p_t^\B(u_0,-)$
$$p_t^\B(u_0,V)=\int _{b\in M} q_t^{u_0, b}(V) p_t^\A(p(u_0), db)$$
and the formula
$$\mu_t^{\mu_0,b}(V)=\lim_{\epsilon\downarrow 0}
(p_t^\A(p(u_0),b)]^{-1}
\int_V p_{t-\epsilon}^\B (u_0, dv) p_\epsilon^\A(p(v),b).$$ 
Take a nested sequence $\{\Pi^\ell\}_{l=1}^\infty$ of partitions of $[0,t]$
$$\Pi^l=\{0=t_0^l<t_1^l<\dots < t_{k_l}^l=t\},$$
say, with union dense in $[0,t]$. For any continuous bounded $f: N^+\to \R$ there is the following approximation scheme to complete $\pi_tf(u_0)$:
\begin{proposition}
\begin{eqnarray*}
\pi_tf(u_0)(\sigma)
&=&\lim_{l\to \infty}
\int q_{t_1^l}^{u_0, \sigma(t_1^l)}(dv_1)q_{t_2^l-t_1^l}^{v_1, \sigma(t_2^l)}(dv_2)
\dots q_{t_{l_k}-t_{l_k-1}} ^{v_{k_l-1}, \sigma(t)}(dv_{k_l})f(v_{k_l})
\\
&=&\lim_{l\to \infty} \E_{u_0}^\B
\big\{ f(u_t) \;|\; p(u_{t_j}^l)=\sigma(t_j^l), \qquad 1\leqslant j\leqslant k_l\}.
\end{eqnarray*}

\end{proposition}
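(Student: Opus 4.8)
The plan is to show that the iterated conditional expectations over the finite partitions $\Pi^l$ converge to $\pi_t f(u_0)(\sigma)$. First I would recast the right-hand side of each line as a genuine conditional expectation: by the disintegration formula for $p_t^\B$ and the Markov property of $(u_\cdot)$ under $\PP_{u_0}^\B$, the integral against the kernels $q^{v,b}$ is exactly $\E_{u_0}^\B\{f(u_t)\mid p(u_{t_j}^l)=\sigma(t_j^l),\ 1\le j\le k_l\}$, which is the content of the second equality; this is a standard iteration of regular conditional probabilities and needs only that the $q_t^{u_0,b}$ are bona fide Markov kernels, which follows from the existence of the smooth densities $p_t^\A$ assumed at the start of the section (equivalently, hypoellipticity of $\A$). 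So the real work is the first equality, i.e. the convergence $\E_{u_0}^\B\{f(u_t)\mid \sigma_{t_1^l},\dots,\sigma_{t_{k_l}^l}\}\to\E_{u_0}^\B\{f(u_t)\mid p(u_\cdot)=\sigma\}=\pi_t f(u_0)(\sigma)$.

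Next I would invoke the martingale-convergence theorem for conditioning: let $\G^l=\sigma(p(u_{t_j}^l):1\le j\le k_l)$ and $\G=\vee_l\G^l$. Since the partitions are nested and their union is dense in $[0,t]$, $\G^l\uparrow$ and, because $s\mapsto p(u_s)$ is continuous $\PP_{u_0}^\B$-a.s., $\G=\sigma(p(u_s):0\le s\le t)=\F_t^{p(u_0)}$ up to null sets. By Lévy's upward theorem, $\E_{u_0}^\B\{f(u_t)\mid\G^l\}\to\E_{u_0}^\B\{f(u_t)\mid\F_t^{p(u_0)}\}$ a.s.\ and in $L^1$, for $f$ bounded measurable. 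Finally, since $f(u_t)$ is $\F_t^{u_0}$-measurable, Proposition~\ref{pr:mart-3}(i) gives $\E_{u_0}^\B\{f(u_t)\mid\F_t^{p(u_0)}\}=\E_{u_0}^\B\{f(u_t)\mid\F^{p(u_0)}\}=\pi_t f(u_0)(\sigma)$, which identifies the limit. The convergence is for $\PP^\A_{\nu_0}$-almost every $\sigma$ (equivalently $p_t^\A(p(u_0),-)$-a.e.\ for the fixed-time kernels), matching the hypotheses under which the $q^{v,b}$ were defined.

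The step I expect to be the main obstacle is making precise the passage $\G=\F_t^{p(u_0)}$ and ensuring the exceptional null sets are handled uniformly — one must check that the regular conditional probabilities $q_t^{u_0,b}$, which are only defined for $p_t^\A(u_0,-)$-a.e.\ $b$, can be composed along the partition without the successive null sets accumulating; this is where the strict positivity and smoothness of the densities $p_t^\A$ is used, together with the Chapman–Kolmogorov identity for $p_\cdot^\A$ to see that the composed kernel is the correct disintegration of $p_t^\B(u_0,-)$ against $p_t^\A(p(u_0),-)$. Once the kernels are identified as conditional expectations with respect to $\G^l$, the convergence is purely the bounded martingale convergence theorem and requires no further estimates, so continuity of $f$ is needed only to get an everywhere (rather than a.e.-$\mu_t$) statement and could in principle be relaxed to bounded measurability with an a.e.\ conclusion.
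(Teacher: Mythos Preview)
Your proposal is correct and follows essentially the same route as the paper: identify the iterated kernel expression with the finite-partition conditional expectation via the Markov property, then apply the upward martingale convergence theorem to the increasing filtration generated by the partition evaluations. The paper phrases the martingale on the $M$-path space $(C(M^+),\PP_{x_0}^\A)$, writing $S^l f=\E\{\pi_t f(u_0)\mid Q^l\}$ for the cylinder $\sigma$-algebras $Q^l$, whereas you work on the $N$-path space under $\PP_{u_0}^\B$ and invoke Proposition~\ref{pr:mart-3}(i) to identify the limit; these are equivalent by the tower property, and the paper's version avoids your extra step through $\F_t^{p(u_0)}$ but is otherwise the same argument.
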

\begin{proof}
The two versions of the right hand sides are equal before taking limits.
For $\l=1,2,\dots,$, set
$$S^l f(\sigma)=\E_{u_0}
\big\{f(u_t) \;|\;p(u_{t_j^l})=\sigma(t_j^l), \quad 1\leqslant j\leqslant k_l\}.$$
It is defined for $\PP_{x_0}^\A$-almost all $\sigma$ in $C(M^+)$, where $x_0=p(u_0)$. Let $Q^l$ be the $\sigma$-algebra on $C(M^+)$ generated by $\sigma\mapsto (\sigma(t_1^l), \dots, \sigma(t_j^l))$. Directly from the definitions we see
$$\pi_t^lf=\E \{\pi_t(f)(u_0) \;|\; Q^l\},$$
and so $\{S^lf\}_{l=1}^\infty$ is an $Q^*$-martingale. it is bounded and so converges $\PP_{x_0}^\A$-almost surely. Since $\vee_l Q^l$ is the Borel $\sigma$-algebra the limit is $\pi_tf(u_0)$ as required.
\end{proof}

\section{Krylov-Veretennikov Expansion}
\label{se:expansion}
Suppose $\A=\sum_{j=1}^m \LL_{X^j} \LL_{X^j}+ \LL_{A}$ for smooth vector fields $\{X^j\}_{j=1}^m$ and $A$. We will now take $\{x_t: 0\leqslant t<\zeta\}$ to be the solution to the stochastic differential equation
\begin{equation}
\label{sde}
dx_t=X(x_t)\circ dB_t+A(x_t)dt,
\end{equation}
with $x_0$ given, for a Brownian motion $B_\cdot$ on $\R^m$, rather than the canonical process. Here $X(x): \R^m\to T_xM$ is the map given by
$$X(x)(a^1,\dots, a^m)=\sum_{j=1}^m  a^j X^j(x), \quad x \in M.$$

\noindent 
Let $\{P_t: t\geqslant  0\}$ be the sub-Markovian semi-group generated by $\B$. Let $f\in \C_c^\infty N$. Assume $P_t f\in \C^\infty N$.

\noindent  As in the proof Theorem \ref{th:mart-2}, from
$$P_{t-s}f(u_s)=P_t f(u_0)+\int_0^s d(P_{t-r} f)_{u_r} d\{u_r\}, \quad 0\leqslant s\leqslant t$$ we obtain 
\begin{eqnarray*}
\pi_s P_{t-s} (f) (u_0) (x_\cdot) &=&P_t f(u_0)+\int_0^s \E \left\{ d(P_{t-r} f)_{u_r}\circ h_{u_r} \, |\, p(u_\cdot)=x_\cdot \right\} d\{x_r\} \\
&=& P_tf(u_0)+\int_0^s \E \left\{ d(P_{t-r} f)_{u_r}\circ h_{u_r} \, |\, p(u_\cdot)=x_\cdot \right\}X(x_r) dB_r
\end{eqnarray*}
so that $\pi_s P_{t-s}f (u_0), 0\leqslant s\leqslant t, $ is a continuous $\F^{x_0}_*$ semi-martingale.
Therefore
\begin{eqnarray*}
\pi_tf(u_0)-P_tf(u_0)
&=&\int_0^t d_s (\pi_s P_{t-s}f(u_s))\\
&=&\int_0^t  \E \left\{ d(P_{t-r} f)\circ h_{u_r} \, |\, p(u_\cdot)=x_\cdot \right\}X(x_r) dB_r\\
&=&\int_0^t S_r\big[d(P_{t-r}f)\circ h_{-} \circ X^k(p(-))\big] (u_0)dB_r^k
\end{eqnarray*}
giving a `Clark-Ocone' formula for $\pi_t f(u_0)$. Iterating this procedure formally, 
\begin{eqnarray*}
&&\pi_tf(u_0)=P_tf(u_0)+\int_0^t S_r\big[d(P_{t-r}f)\circ h_{-} \circ X(p(-))\big] (u_0)dB_r\\
&&+\int_0^t\int_0^r \pi_s\Big[dP_{r-s}\big[ d(P_{t-r}f)\circ h_-\circ X^k(p(-))\big] h_- \circ X^j(p(-)\Big] dB^j_s dB^k_r\\
&&=\dots,
\end{eqnarray*}
we obtain the Wiener chaos expansion of $\pi_tf(u_0)(x_\cdot)$.

\section{Conditional Laws}
\label{se:cond-law}
It will be convenient to extend the notation of section \ref{se-filt}.
For $0\leqslant l<r<\infty$ let $\C(l,r;N^+)$ and $\C(l,r;M^+)$ be respectively the space of continuous paths $u: [l,r]\to N^+$ and $x: [l,r]\to M^+$ which remain at $\Delta$ from the time of explosion; and $\C_{u_0}(l,r;N^+)$ and $\C_{x_0}(l,r;M^+)$ the paths from $u_0\in N^+$ and $x_0\in M^+$ respectively,
 Let $\{\PP^{(l,r),\B}_{u_0}\}$ and $\{\PP^{(l,r),\A}_{x_0}\}$ be the associated diffusion measures.

\noindent  
The conditional law of $\{u_s: l\leqslant s\leqslant r\}$ given $\{p(u_s): l\leqslant s\leqslant r\}$ will be given by probability kernels
$\sigma\mapsto \Q_{\sigma, u_0}^{l,r}$ defined $\PP^{(l,r);\A}$ almost surely from $\C_{p(u_0)}(l,r;M^+)$ to $\C_{u_0}^p(l,r;N^+)$ for each $u_0\in N$, where 
$\C_{u_0}^p(l,r;N^+)$ is the subspace of $\C_{u_0}(l,r;N^+)$ whose paths satisfy Assumption S. The defining property is that for integrable $f: \C_{u_0}(l,r;N^+)\to \R$ 
\begin{equation}
\label{cond-law}
\E\left\{ f(u_\cdot)\; |\; p(u_s)=\sigma_s, l\leqslant s \leqslant r\right\}
=\int_{y\in \C_{u_0}(l,r;N^+)}  f(y) d\Q_{\sigma, u_0}^{l,r} (y).
\end{equation}
To obtain the conditional law take the decomposition $\B=\A^H+\B^V$ of Proposition \ref{th:deco}. Represent the diffusion corresponding to $\A$ by a stochastic differential equation
\begin{equation}
\label{sde-1}
d x_t'=X(x_t')\circ dB_t+X^0(x_t')dt.
\end{equation}
Take a connection $\nabla^V$ on $VTN$ and let
\begin{equation}
(\nabla^V) \qquad  dz_t=V(z_t)dW_t+V^0(z_t)dt
\end{equation}
be an It\^o equation whose solutions are $\B^V$-diffusions. Here $(W_t)$ is the canonical Brownian motion on $\R^m$ for some $m$, independent of $(B_\cdot)$, the map $V: M\times \R^m\to TM$ takes values in $\ker[Tp]$, and $V$ and $V^0$ are locally Lipschitz. For such a representation of $\B^V$ diffusions see the Appendix B. Let $\tilde X: N\times \R^m\to H$ and $\tilde X^0: N\to H$ be the horizontal lifts of $X$ and $X^0$ respectively using Proposition \ref{pr:h-map}. The solution to
\begin{eqnarray*}
(\nabla^V) \qquad dy_t&=&\tilde X(y_t)\circ dB_t+\tilde X^0(y_t)dt+V(y_t)dW_t+V^0(y_t)dt, \\
y_l&=&u_0, \qquad u_0\in N, \quad l\leqslant t\leqslant r.
\end{eqnarray*}
has law $\PP_{u_0}^{(l,r),\B}$. Noting that $\tilde X(u)=h_u X(p(u))$ for $u\in M$,

\begin{eqnarray}
\label{sde-5}
(\nabla^V) \qquad dy_t&=&h_{y_t}\circ dx_t'+V(y_t)dW_t+V^0(y_t)dt,\\
y_l&=&u_0,  \quad  l\leqslant t\leqslant r,
\nonumber
\end{eqnarray}
where $x_t'=p(y_t)$ so that $(x_t')$ is a solution to (\ref{sde-1}) starting from $p(u_0)$ at time $l$. Without changing the law of $y_\cdot$ we can replace $x'$ by the canonical process $x_\cdot$. Then 
\begin{theorem}
\label{pr:regcond-1}
Consider the solution $(y_t)$ as a process defined on the probability space  $\C_{p(u_0)}(l,r; M^+)\times \C_0\R^m$ with product measure,
$$y:[l,r]\times \C_{p(u_0)}(l,r; M^+)\times \C_0\R^m\to N^+,$$
and define $\Q^{l,r}_{\sigma, u_0}$ to be the law of $y(\sigma,-): \C_0\R^m\to \C_{u_0}(l,r;N^+)$. For bounded measurable $f: \C_{u_0}(l,r;N^+)$, 
$$\E\left\{ f(u_\cdot) \;| \;p(u_s)=\sigma_s, l\leqslant s \leqslant r\right\}
=\int_{y\in \C_{u_0}(l,r;N^+)}  f(y) d\Q_{\sigma, u_0}^{l,r} (y).$$
\end{theorem}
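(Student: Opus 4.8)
The plan is to verify the defining property \eqref{cond-law} of the conditional kernels $\Q^{l,r}_{\sigma,u_0}$ by checking that the process $y(\sigma,-)$ constructed on the product space $\C_{p(u_0)}(l,r;M^+)\times\C_0\R^m$ has the right joint law with its own projection. First I would observe that, by construction of \eqref{sde-5} via the SDE $(\nabla^V)\ dy_t=h_{y_t}\circ dx'_t+V(y_t)dW_t+V^0(y_t)dt$ with $x'_\cdot=p(y_\cdot)$, the solution process $(y_t)$ has law $\PP^{(l,r),\B}_{u_0}$: this is exactly the decomposition $\B=\A^H+\B^V$ of Theorem~\ref{th:deco}, since the drift/diffusion in the $h_{y_t}\circ dx'_t$ term realises the horizontal lift $\A^H$ (using $\tilde X(u)=h_uX(p(u))$ and Proposition~\ref{pr:h-map}), while the $V,V^0$ terms realise the vertical generator $\B^V$ by the choice of It\^o equation on $VTN$; independence of $(W_t)$ from $(B_t)$ ensures the two pieces add at the level of generators. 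Crucially, $p(y_t)=x'_t$ solves \eqref{sde-1}, so under the product measure the $M^+$-valued marginal of $y$ is precisely the canonical process $x_\cdot$ with its $\PP^\A$-law, and we may indeed replace $x'$ by $x_\cdot$ without changing the law of $y$.

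Next I would compute conditional expectations. Fix a bounded measurable $f:\C_{u_0}(l,r;N^+)\to\R$ and a bounded measurable test functional $G$ on $\C_{p(u_0)}(l,r;M^+)$. On the product space, $y=y(\sigma,\omega)$ with $\sigma$ distributed as $\PP^{(l,r),\A}_{p(u_0)}$ and $\omega$ an independent Wiener path, and $p(y(\sigma,\omega))=\sigma$ by the previous paragraph. Hence
$$
\E^{\PP^\B_{u_0}}\!\bigl\{f(u_\cdot)\,G(p(u_\cdot))\bigr\}
=\int G(\sigma)\Bigl(\int f\bigl(y(\sigma,\omega)\bigr)\,d\Q^{l,r}_{\sigma,u_0}\Bigr)\,\PP^{(l,r),\A}_{p(u_0)}(d\sigma),
$$
using the definition of $\Q^{l,r}_{\sigma,u_0}$ as the pushforward of Wiener measure under $\omega\mapsto y(\sigma,\omega)$ and Fubini's theorem, which applies because $y$ is jointly measurable in $(\sigma,\omega)$ (it is obtained as a limit of Picard iterates / as a strong solution depending measurably on the driving path $x_\cdot$). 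Since this holds for all such $G$, and since $\sigma\mapsto\int f\,d\Q^{l,r}_{\sigma,u_0}$ is $\F^{p(u_0)}$-measurable, this is exactly the statement that the inner integral is a version of $\E\{f(u_\cdot)\mid p(u_s)=\sigma_s,\ l\leqslant s\leqslant r\}$, which is \eqref{cond-law}.

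The main obstacle, and the point requiring the most care, is the measurable-selection/regularity issue hidden in ``$y(\sigma,-)$'': one must check that the SDE \eqref{sde-5} driven by the \emph{canonical} path $x_\cdot$ (rather than by a Brownian motion) genuinely has a solution depending measurably on $(\sigma,\omega)$, and that its law as a function of $\sigma$ is a regular conditional law. Here I would invoke that $(x_t)$ is itself an It\^o-process solution of \eqref{sde-1}, rewrite \eqref{sde-5} back in terms of $(B_t,W_t)$ as in the displayed equation just before Theorem~\ref{pr:regcond-1}, and use standard existence/uniqueness for SDEs with locally Lipschitz coefficients (with the horizontal lift $h$ smooth by Proposition~\ref{pr:h-map}, since $\A$ is cohesive) together with Assumption~S to ensure the solution lives in $\C^p_{u_0}(l,r;N^+)$. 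The pathwise uniqueness then gives, via a Yamada--Watanabe type argument, that the law of $y$ given the $\sigma$-component is the claimed pushforward kernel, and the disintegration of $\PP^\B_{u_0}$ over $\PP^\A_{p(u_0)}=p_*\PP^\B_{u_0}$ (Proposition~\ref{pr:mart-mea}) identifies it as the regular conditional law. The remaining verifications — measurability in $\sigma$, the cocycle consistency with the kernels $\Q^{\mu_0,\sigma}_v$ of Theorem~\ref{th:mart-2}(3) — are routine once this is in place.
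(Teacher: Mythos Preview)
Your argument is correct and follows essentially the same route as the paper: test against an arbitrary bounded measurable functional of $p(u_\cdot)$, use that $p(y(\sigma,\omega))=\sigma$ on the product space, and apply Fubini together with $p_*\PP^\B_{u_0}=\PP^\A_{p(u_0)}$ (Proposition~\ref{pr:mart-mea}) to identify $\sigma\mapsto\int f\,d\Q^{l,r}_{\sigma,u_0}$ as the conditional expectation. The paper's proof is a compact four-line chain of equalities doing exactly this, with the setup you spell out (that $y$ has law $\PP^\B_{u_0}$, that one may replace $x'$ by the canonical process, etc.) already absorbed into the discussion preceding the theorem statement; your extra care about joint measurability and the Yamada--Watanabe-type justification is reasonable but not something the paper makes explicit.
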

\begin{proof}
Take a measurable function $\alpha: \C_{p(u_0)}(l,r;M^+)\to \R$. Then
\begin{eqnarray*}
&&\E^{\PP^\B_{u_0}} \left(\alpha(p(u)) \int_{y\in \C_{u_0}(l,r;N^+)}  f(y) \,d\Q_{p(u), u_0}^{l,r} (y)\right)\\
&=&\E^{\PP^\A_{p(u_0)}}  \left(\alpha(x) \int_{y\in \C_{u_0}(l,r;N^+)}  f(y) \,d\Q_{x, u_0}^{l,r} (y)\right)\\
&=&\E^{\PP^\A_{p(u_0)}}  \left(\alpha(x) \int_{\C_0\R^m}  f(y(x,\omega)) \,d\PP(\omega))\right)\\
&=&\int_{C_{p(u_0)}(l,r;M^+)\times C_0\R^m}
  \left(\alpha(x)   f(y(x,\omega)) \, d\PP^{\A}_{p(u_0)}d\PP(\omega))\right)\\
  &=&\E f(u)\alpha(p(u)),
\end{eqnarray*}
 as required.
\end{proof}

Note that Theorem \ref{pr:regcond-1} is equivalent to the statement that $\omega\mapsto\Q^{l,r}_{p(\omega), u_0}$, $\omega\in\C_{u_0}(l,r;N^+)$, is a regular conditional probability of $\PP^{(l,r), \B}_{u_0}$ given $p$. 

\begin {remark}
Let $(\xi^l _t (\cdot,\cdot), l\leqslant t<\infty)$ be a measurable flow for (\ref{sde-1}) and $(\eta^l _t (\sigma,\cdot,),  0 \leqslant t<\infty)$  one for (\ref{sde-5}) with $x'$ replaced by $\sigma\in \C_{p(u_0)}(l,r;M^+)$. For $\omega \in \Omega$, the underlying probability space for the Brownian motion $\B$, define $\Q^{l,r}_\omega$, from the space of bounded measurable functions on $N^+$ to itself, by 
$$\Q^{l,r}_\omega(f)(u_0)=\E f\left(\eta^l _r (\xi ^l_r (p(u_0),\omega), u_0)\right).$$  
A direct calculation shows that 
$$\Q^{l,r} _\omega \Q^{r,s}_\omega =\Q^{l,s}_\omega$$ for 
$0\leqslant l\leqslant r\leqslant s<\infty$. Thus their adjoints  on a suitable dual space would form an evolution.
\end {remark} 

\noindent  More generally, letting
 $\Borel(X)$ stand for the Borel $\sigma$-algebra of a topological space $X$:
\begin{proposition}
\label{th:regcond-2}
Let $\varphi$ be a measurable map from  $C_{x_0}(l,r;M^+)$ to some measure space, and let $$\PP^{(l,r),\varphi}_{x_0}: \C_{x_0}(l,r;M^+)\times \Borel(\C_{x_0}(l,r;M^+))\to [0,1]$$ be a regular conditional probability for $\PP^{(l,r)}_{x_0}$ given $\varphi$. For $u_0$  with $p(u_0)=x_0$ set 
$$\ Q^{l,r,\varphi\circ p}_{ u_0}(\omega,A)
=\int_{C_{x_0}(l,r;M^+)}\Q^{l,r}_{\sigma, u_0}(A)\PP^{(l,r),\varphi}_{x_0}(p(\omega),d\sigma)$$
for $\omega\in  \C_{u_0}(l,r; N^+)$ and $A \in \Borel\left(\C_{u_0}(l,r;N^+) \right)$. Then$\ Q^{l,r,\varphi\circ p}_{u_0}$ is a regular conditional probability of $\PP^{(l,r),\B}_{u_0}$ given $\varphi\circ p$.
\end {proposition}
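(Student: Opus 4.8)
The plan is to verify the defining property of a regular conditional probability directly: namely that $\omega \mapsto Q^{l,r,\varphi\circ p}_{u_0}(\omega,A)$ is $\sigma(\varphi\circ p)$-measurable in $\omega$ for each fixed $A$, and that for every bounded measurable $\Psi$ on the target space of $\varphi$ and every $A\in\Borel(\C_{u_0}(l,r;N^+))$,
\begin{equation*}
\E^{\PP^{(l,r),\B}_{u_0}}\Big\{\Psi(\varphi\circ p)\,\1_A\Big\}
=\E^{\PP^{(l,r),\B}_{u_0}}\Big\{\Psi(\varphi\circ p)\,Q^{l,r,\varphi\circ p}_{u_0}(\cdot,A)\Big\}.
\end{equation*}
The measurability is the easier half: $Q^{l,r}_{\sigma,u_0}(A)$ is jointly measurable in $\sigma$ (being a regular conditional probability from Theorem \ref{pr:regcond-1}), $\PP^{(l,r),\varphi}_{x_0}(p(\omega),\cdot)$ depends on $\omega$ only through $\varphi(p(\omega))$, so the integral defining $Q^{l,r,\varphi\circ p}_{u_0}(\omega,A)$ is $\sigma(\varphi\circ p)$-measurable by a standard monotone-class argument on the integrand.

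For the integral identity I would first reduce to $A$ a cylinder set and $\Psi$ bounded continuous, then extend by a monotone class / Dynkin argument. The core computation chains together the two towers. Writing $x_0=p(u_0)$, start on the right-hand side: by definition of $Q^{l,r,\varphi\circ p}_{u_0}$ and by Theorem \ref{pr:regcond-1} (the fact that $\Q^{l,r}_{\sigma,u_0}$ is a regular conditional probability of $\PP^{(l,r),\B}_{u_0}$ given $p$, together with Proposition \ref{pr:mart-mea} which gives $p_*(\PP^{(l,r),\B}_{u_0})=\PP^{(l,r),\A}_{x_0}$), push everything down to $\C_{x_0}(l,r;M^+)$:
\begin{equation*}
\E^{\PP^{(l,r),\B}_{u_0}}\Big\{\Psi(\varphi\circ p)\,Q^{l,r,\varphi\circ p}_{u_0}(\cdot,A)\Big\}
=\E^{\PP^{(l,r),\A}_{x_0}}\Big\{\Psi(\varphi(x_\cdot))\!\!\int\!\Q^{l,r}_{\sigma,u_0}(A)\,\PP^{(l,r),\varphi}_{x_0}(x_\cdot,d\sigma)\Big\}.
\end{equation*}
Now apply the defining property of $\PP^{(l,r),\varphi}_{x_0}$ as a regular conditional probability for $\PP^{(l,r),\A}_{x_0}$ given $\varphi$ to the $\sigma(\varphi)$-measurable integrand $x_\cdot\mapsto\Psi(\varphi(x_\cdot))$ times the bounded measurable function $\sigma\mapsto\Q^{l,r}_\sigma(A)$: this collapses the inner integral, yielding $\E^{\PP^{(l,r),\A}_{x_0}}\{\Psi(\varphi(x_\cdot))\,\Q^{l,r}_{x_\cdot,u_0}(A)\}$. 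Finally invoke Theorem \ref{pr:regcond-1} once more, in the other direction, to recognise this as $\E^{\PP^{(l,r),\B}_{u_0}}\{\Psi(\varphi(p(u_\cdot)))\,\1_A\}$, since $x_\cdot\mapsto\Psi(\varphi(x_\cdot))$ is a bounded functional on path space and $\omega\mapsto\Q^{l,r}_{p(\omega),u_0}$ is a version of the conditional law given $p$.

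The main obstacle is purely bookkeeping rather than conceptual: one must be careful about the nesting of conditionings — the identity $\E\{\,\cdot\mid\varphi\circ p\}=\E\{\E\{\,\cdot\mid p\}\mid\varphi\circ p\}$ is exactly the kind of tower relation that needs $\varphi\circ p$ to be a coarsening of $p$, which holds trivially here, but the regular-conditional-probability versions of these statements require care that the null sets match up and that the chosen versions are jointly measurable so that the iterated integrals make sense. A secondary point is checking that Assumption S is preserved, i.e. that $\Q^{l,r}_{\sigma,u_0}$ and hence $Q^{l,r,\varphi\circ p}_{u_0}$ are supported on $\C^p_{u_0}(l,r;N^+)$; this is inherited from Theorem \ref{pr:regcond-1} since $\PP^{(l,r),\varphi}_{x_0}(x_\cdot,\cdot)$ is carried by paths along which $\PP^{(l,r),\A}_{x_0}$-a.s. behaviour holds. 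Once these are in place the chain of equalities above is routine.
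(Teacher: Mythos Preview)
Your proposal is correct and follows essentially the same route as the paper: both arguments boil down to the tower identity $\E^{\B}\{\chi_A\mid\varphi\circ p\}=\E^{\A}\{\E^{\B}\{\chi_A\mid p\}\mid\varphi\}\circ p$, combined with the identification of $\Q^{l,r}_{\sigma,u_0}(A)$ as $\E^{\B}\{\chi_A\mid p=\sigma\}$ from Theorem~\ref{pr:regcond-1}. The paper compresses this into a three-line chain of conditional expectations, whereas you unpack it via the integral characterisation of conditional expectation and are more explicit about measurability and null-set issues; the substance is the same.
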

\begin {proof} By definition
\begin {eqnarray*}
\Q^{l,r,\varphi\circ p}_{ u_0}(\omega,A)&=&\E^{(l,r),\A, x_0}\left\{\Q^{l,r}_{p(-), u_0}(A)| \varphi\right\}p(\omega)\\
&=&\E^{(l,r),\A, x_0}\left\{\E^{(l,r),\B, u_0}\{\chi_A|p=-\}|\varphi\right\}p(\omega)\\
&=&\E^{(l,r),\B, u_0}\{\chi_A|\varphi \circ p\}(\omega).
\end {eqnarray*}
\end {proof}

\begin{corollary}
\label{cor:regcond-2}
For $\varphi$ as in Theorem \ref{th:regcond-2} suppose that the canonical process on $M^+$ with law $\PP^{(0,T),\varphi}_{x_0}(\sigma,-)$ is a semi-martingale for almost all $\sigma$, in its own filtration $\F^{x_0}_t,0\leqslant t\leqslant T$,  for $\PP^{(0,T),\A}_{x_0}$ almost all $\sigma$. Then the solution $y(\sigma,-)$ to the equation
\begin{eqnarray}
\label{sde-6}
(\nabla^V) \qquad dy_t&=&h_{y_t}\circ d\sigma_t+V(y_t)dW_t+V^0(y_t)dt, \\
y_l&=&u_0, \quad  0\leqslant t\leqslant T
\nonumber
\end{eqnarray}
where $\sigma_t , 0\leqslant t\leqslant T$ is run with law $\PP^{(0,T),\varphi}_{x_0}(\sigma,-)$,
is a version of the $\B$-diffusion from $u_0$ conditioned by $\varphi\circ p$.
\end{corollary}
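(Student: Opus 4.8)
The strategy is to combine Theorem~\ref{pr:regcond-1} with Proposition~\ref{th:regcond-2}, using the semimartingale hypothesis to make sense of equation~(\ref{sde-6}) pathwise in $\sigma$. By Proposition~\ref{th:regcond-2} applied to $\varphi$ here, the regular conditional probability of $\PP^{(0,T),\B}_{u_0}$ given $\varphi\circ p$ is
$$\Q^{0,T,\varphi\circ p}_{u_0}(\omega, A)=\int_{\C_{x_0}(0,T;M^+)}\Q^{0,T}_{\sigma,u_0}(A)\,\PP^{(0,T),\varphi}_{x_0}(p(\omega),d\sigma),$$
so it suffices to identify, for $\PP^{(0,T),\A}_{x_0}$-almost all $\sigma$, the kernel $\Q^{0,T}_{\sigma,u_0}$ as the law of the solution $y(\sigma,-)$ of (\ref{sde-6}) driven by the path $\sigma$ run with law $\PP^{(0,T),\varphi}_{x_0}(\sigma,-)$, and then integrate out $\sigma$.

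\textbf{Key steps.} First I would invoke Theorem~\ref{pr:regcond-1} (together with the construction (\ref{sde-5})): the conditional law $\Q^{0,T}_{\sigma,u_0}$ of the $\B$-diffusion given its projection equals the law of the solution $y(\sigma,-)$ of the Stratonovich SDE $dy_t=h_{y_t}\circ d\sigma_t+V(y_t)\,dW_t+V^0(y_t)\,dt$, $y_0=u_0$, when $\sigma$ ranges over $\C_{p(u_0)}(0,T;M^+)$ with the $\A$-diffusion measure and $W$ is an independent Brownian motion. Second, the hypothesis that the canonical process under $\PP^{(0,T),\varphi}_{x_0}(\sigma,-)$ is a semimartingale in its own filtration for a.e.\ $\sigma$ is exactly what is needed to make $\int_0^\cdot h_{y_t}\circ d\sigma_t$ well-defined as a Stratonovich integral along that $\sigma$-semimartingale, so that (\ref{sde-6}) has a (pathwise-in-$\sigma$) solution $y(\sigma,-)$ whose law is $\Q^{0,T}_{\sigma,u_0}$ \emph{also} when $\sigma$ is run under $\PP^{(0,T),\varphi}_{x_0}(\sigma,-)$ instead of under $\PP^{(0,T),\A}_{x_0}$ --- this last point is where one uses that $\PP^{(0,T),\varphi}_{x_0}(\sigma,-)$ is a regular conditional probability of $\PP^{(0,T),\A}_{x_0}$, so that mixing these conditional laws over $\sigma$ with respect to the $\varphi$-image measure recovers $\PP^{(0,T),\A}_{x_0}$ and hence, after lifting, $\PP^{(0,T),\B}_{u_0}$. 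Third, I would verify directly, as in the proof of Theorem~\ref{pr:regcond-1}, that for a bounded measurable $\alpha$ factoring through $\varphi\circ p$ and a bounded measurable $f$ on $\C_{u_0}(0,T;N^+)$ one has $\E^{\PP^\B_{u_0}}\big(\alpha\cdot f(u_\cdot)\big)=\E\big(\alpha\cdot\E[f\mid \varphi\circ p]\big)$ with the conditional expectation given by integrating $f$ against the law of $y(\sigma,-)$; this is the defining property of a regular conditional probability, which combined with Step~2 finishes the argument.

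\textbf{Main obstacle.} The delicate point is the pathwise interpretation of the horizontal-lift term $\int_0^\cdot h_{y_t}\circ d\sigma_t$ when $\sigma$ is no longer the $\A$-diffusion but only a semimartingale for $\PP^{(0,T),\varphi}_{x_0}(\sigma,-)$-a.e.\ $\sigma$: one must check that the Stratonovich equation (\ref{sde-6}) still has a solution (up to the explosion time governed by Assumption~S) with a measurable dependence on $\sigma$, so that the kernel $\sigma\mapsto\Q^{0,T}_{\sigma,u_0}$ coincides $\PP^{(0,T),\A}_{x_0}$-a.s.\ with the one produced by (\ref{sde-5}). Since $h$ is only measurable (no constant-rank assumption on $\sigma^\A$ is imposed here, cf.\ the remark after Proposition~\ref{pr:mart-4}), one works instead with the SDE driven by the horizontal lifts $\tilde X,\tilde X^0$ of a H\"ormander representation $X,X^0$ of $\A$ as in (\ref{sde-5}), where the coefficients are genuinely locally Lipschitz; the identity $\tilde X(u)=h_uX(p(u))$ then recovers the form (\ref{sde-6}) on the support of the $\sigma$-semimartingale. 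Once this measurable-selection/pathwise-solvability issue is handled, the rest is the bookkeeping of conditional expectations already carried out in Theorem~\ref{pr:regcond-1} and Proposition~\ref{th:regcond-2}.
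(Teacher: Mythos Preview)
Your approach is essentially that of the paper: combine the identification of $\Q^{0,T}_{\sigma,u_0}$ as the law of the solution to (\ref{sde-5}) (Theorem~\ref{pr:regcond-1} and the surrounding construction) with Proposition~\ref{th:regcond-2}, and then integrate out $\sigma$ via Fubini. The paper's proof is literally just that one line.

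One correction to your ``Main obstacle'': in Section~\ref{se:cond-law} the operator $\A$ \emph{is} assumed cohesive (the whole section opens by invoking the decomposition $\B=\A^H+\B^V$ of Proposition~\ref{th:deco}, which requires it), so $\sigma^\A$ has constant rank and the horizontal lift $h$ is smooth. The remark after Proposition~\ref{pr:mart-4} about $h$ being merely measurable belongs to the earlier filtering section, which deliberately avoids the constant-rank hypothesis; it is not in force here. Thus the regularity issue you raise is not actually present, and the pathwise interpretation of (\ref{sde-6}) for a.e.\ $\sigma$ under $\PP^{(0,T),\varphi}_{x_0}(\sigma,-)$ goes through directly once that conditional law makes $\sigma$ a semimartingale.
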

\begin {proof}
That the law of the solution is as required follows from the discussion at the beginning of this section together with Proposition \ref{th:regcond-2}  and Fubini's theorem.
\end {proof}

Conditions under which conditioned processes are semi-martingales are discussed by
Baudoin ~\cite{Baudoin04}. In particular bridge processes derived from elliptic diffusions are, so we obtain the following version of Carverhill's result ~ \cite{Carverhill-88}:

\begin {corollary}
\label{cor:regcond-3}
Suppose $\A$ is elliptic and let $b_t: 0\leqslant t\leqslant T$ be a version of the $\A$-bridge going from $x_0$ to $z$ in time $T$, some $z\in M$. Then the solutions to
\begin{eqnarray}
\label{sde-7}
(\nabla^V) \qquad dy_t&=&h_{y_t}\circ db_t+V(y_t)dW_t+V^0(y_t)dt, \\
y_0&=&u_0,  \quad  0\leqslant t\leqslant T
\nonumber
\end{eqnarray}
give a version of the $\B$ diffusion from $u_0$ conditioned on $p(u_T)=z$.
\end{corollary}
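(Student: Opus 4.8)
The plan is to recognise Corollary~\ref{cor:regcond-3} as the special case of Corollary~\ref{cor:regcond-2} obtained by taking $\varphi$ to be the endpoint evaluation map $\sigma\mapsto\sigma_T$, so that $\varphi\circ p$ is the map $u_\cdot\mapsto p(u_T)$ and conditioning on $\{\varphi\circ p=z\}$ is exactly conditioning on $\{p(u_T)=z\}$. To apply Corollary~\ref{cor:regcond-2} we must check its single hypothesis: that for $\PP^{(0,T),\A}_{x_0}$-almost all $\sigma$, the canonical process on $M^+$ with law $\PP^{(0,T),\varphi}_{x_0}(\sigma,-)$ — i.e.\ the $\A$-bridge from $x_0$ to $\sigma_T$ in time $T$ — is a semi-martingale in its own filtration. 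This is precisely where the ellipticity of $\A$ enters: bridge processes of elliptic diffusions are semi-martingales, as cited from Baudoin~\cite{Baudoin04}, so the hypothesis holds.

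\textbf{Steps.} First I would fix $z\in M$ and note that the regular conditional probability $\PP^{(0,T),\varphi}_{x_0}(\cdot,-)$ concentrated on $\{\sigma_T=z\}$ is, by definition, (a version of) the law of the $\A$-bridge $b_\cdot$ from $x_0$ to $z$ in time $T$; write $b_\cdot$ for the canonical process run under this law. Second, invoke Baudoin's result to conclude that $b_\cdot$ is an $\F^{x_0}_t$-semi-martingale on $[0,T]$ for a.e.\ $z$ (with respect to the law of $x_T$), which is the semi-martingale hypothesis needed in Corollary~\ref{cor:regcond-2}. Third, apply Corollary~\ref{cor:regcond-2} verbatim with this choice of $\varphi$: its conclusion says that the solution $y(\sigma,-)$ of the equation
\begin{equation*}
(\nabla^V)\qquad dy_t=h_{y_t}\circ d\sigma_t+V(y_t)\,dW_t+V^0(y_t)\,dt,\qquad y_0=u_0,\ 0\leqslant t\leqslant T,
\end{equation*}
run with $\sigma_\cdot$ having the bridge law, is a version of the $\B$-diffusion from $u_0$ conditioned by $\varphi\circ p$, i.e.\ conditioned on $p(u_T)=z$. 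Substituting $\sigma_\cdot=b_\cdot$ gives exactly equation~(\ref{sde-7}) and the claimed conclusion. Finally I would remark that the decomposition $\B=\A^H+\B^V$ used to set up~(\ref{sde-5}) is available here because $\A$ elliptic is in particular cohesive (its symbol has full, hence constant nonzero, rank and $\A$ is automatically along $TM=\Image\sigma^\A$), so Theorem~\ref{th:deco} applies and $h$ is the smooth horizontal lift of Proposition~\ref{pr:h-map}.

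\textbf{Main obstacle.} There is essentially no hard analytic step left once Corollary~\ref{cor:regcond-2} is in hand; the only real content is the semi-martingale property of the elliptic bridge, and that is imported from~\cite{Baudoin04} rather than proved here. The points requiring a little care are bookkeeping ones: that conditioning on the Borel-measurable endpoint map $\varphi=\mathrm{ev}_T$ is legitimate (so that a regular conditional probability $\PP^{(0,T),\varphi}_{x_0}$ exists on the Polish path space, which it does), that Assumption~S is in force so that the paths of the $\B$-diffusion do not explode before their projections — automatic when, as is implicitly the setting, the relevant diffusions are complete or when one works on $M^+,N^+$ — and that ``conditioned on $p(u_T)=z$'' is interpreted in the disintegrated sense, valid for a.e.\ $z$, exactly as in the statement of Corollary~\ref{cor:regcond-2}. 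Thus the proof is a short deduction, and I would write it as such.
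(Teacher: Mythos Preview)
Your proposal is correct and matches the paper's approach exactly: the paper offers no separate proof for this corollary, merely the sentence preceding it stating that bridge processes derived from elliptic diffusions are semi-martingales (citing Baudoin~\cite{Baudoin04}), so that Corollary~\ref{cor:regcond-2} applies with $\varphi=\mathrm{ev}_T$. Your write-up simply makes this deduction explicit.
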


\section{Equivariant case: skew product decomposition}

In the equivariant case, when $N$ is the total space $P$ of a principal bundle \linebreak  $\pi:P\to M$ as in \S5, a version of Theorem \ref{pr:regcond-1}
is given in \cite{Elworthy-LeJan-Li-principal} which reflects the additional structure. In particular the following is proved there: 

\begin{proposition}
\label{rose}
Let $\B$ be an equivariant diffusion operator on $P$ which induces a cohesive diffusion operator $\A$ on $M$. Let $\{y_t: 0\leqslant t <\zeta\}$ be a $\B$-diffusion on $P^*$. Then $$y_t=\tilde x_t\cdot g_t^{\tilde x_\cdot},$$
where \begin{enumerate}
\item[(i)] $\{\tilde x_t: 0\leqslant t<\zeta\}$ is the horizontal lift of $p(y_\cdot)$, starting at $y_0$, using the semi-connection induced by $\B$
\item[(ii)] $\{g_t^\sigma: 0\leqslant t<\zeta(\sigma)\}$ is a diffusion independent of $\{p(y_t): 0\leqslant t<\zeta\}$ on $G$ starting at the identity with time dependent generator $\Lo_t^{\sigma}$
given by
$$\Lo_t^\sigma f(g)=\sum_{i,j}\alpha^{ij} (\sigma(t)\cdot g)\LL_{A_i^*}\LL_{A_j^*} f(g)+\sum \beta^k(\sigma(t)g)\LL_{A_k^*} f(g), $$
for any  $\sigma\in GP^+$, $0\leqslant t <\zeta(\sigma)$, where $A_1^*,\dots, A_k^*$ are the left invariant vector fields on $G$ corresponding to a basis of $\g$ and the $\alpha^{ij}$ and $\beta^k$ are the coefficients for $\B^V$ as in Theorem \ref{th:op-deco}.
\end{enumerate}
\end{proposition}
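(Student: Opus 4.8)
The plan is to reduce the statement to the skew-product decomposition of the $\B$-diffusion obtained by combining the operator decomposition of Theorem \ref{th:op-deco}, the horizontal-lift picture for paths, and the equivariance of $\varpi$, $\alpha$, $\beta$. First I would recall that since $\A$ is cohesive, Theorem \ref{th:deco} (resp. Theorem \ref{th:op-deco}) gives the canonical splitting $\B = \A^H + \B^V$ with $\B^V$ vertical and, in the equivariant frame, $\B^V = \sum_{ij}\alpha^{ij}\LL_{A_i^*}\LL_{A_j^*} + \sum_k \beta^k \LL_{A_k^*}$. The horizontal lift $\tilde x_t$ of $x_t := p(y_t)$ starting at $y_0$ is well defined because the induced semi-connection is equivariant, hence complete (see the completeness discussion after Definition \ref{def:horizontal lifts of paths}); writing $y_t = \tilde x_t \cdot g_t$ then \emph{defines} a $G$-valued process $g_t$ with $g_0 = e$, so the content of the proposition is (a) that $g_\cdot$ is, conditionally on $x_\cdot$, a time-inhomogeneous diffusion with the stated generator, and (b) that $g_\cdot$ is in fact independent of $x_\cdot$.

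Next I would realize $y_\cdot$ as the solution of an SDE of the form \eqref{sde-5}: choosing a Hörmander form $\A = \tfrac12\sum \LL_{X^j}\LL_{X^j} + \LL_{X^0}$ with the $X^j$ sections of $E$, lifting horizontally to $\tilde X^j$ on $P$, and writing $\B^V$ via a connection $\nabla^V$ on $VTP$ as in \eqref{sde-5}, the solution of
$$dy_t = h_{y_t}\circ dx_t + V(y_t)\,dW_t + V^0(y_t)\,dt,\qquad y_0 = u_0,$$
is a $\B$-diffusion (this is exactly the construction used in Theorem \ref{pr:regcond-1}), with $W_\cdot$ independent of the Brownian motion driving $x_\cdot$. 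Since $\tilde x_\cdot$ solves $d\tilde x_t = h_{\tilde x_t}\circ dx_t$, the horizontal parts of $y_\cdot$ and $\tilde x_\cdot$ coincide, so $y_t = \tilde x_t \cdot g_t$ with $g_t$ driven purely by the vertical noise $(W_t, dt)$; here I would use $\varpi(h) = 0$ and $\varpi(A^*) = A$ to identify, via $g_t^{-1}\dot g_t = \varpi(\text{vertical part of }\dot y_t)$ in the Stratonovich sense, the generator acting on $g_\cdot$. Equivariance of $\varpi$ — equivalently $h_{u\cdot a} = T_u R_a \circ h_u$ and $\alpha(ug) = (\ad(g)\otimes\ad(g))\alpha(u)$, $\beta(ug) = \ad(g)\beta(u)$ from Theorem \ref{th:op-deco} — is what converts the coefficients $\alpha^{ij}(u_t)$, $\beta^k(u_t)$ evaluated along the true diffusion into the coefficients $\alpha^{ij}(\sigma(t)\cdot g)$, $\beta^k(\sigma(t)\cdot g)$ with $\sigma = \tilde x_\cdot$ and $g$ the running point on $G$; pulling the fundamental vector fields $A_i^*$ back along $R_{g}$ turns them into the left-invariant vector fields on $G$. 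This yields the time-dependent generator $\Lo_t^\sigma$ exactly as stated.

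Finally, for independence of $g_\cdot$ from $x_\cdot$: in the SDE realization $x_\cdot$ is a functional of $B_\cdot$ alone while $g_\cdot$, once $\sigma = \tilde x_\cdot$ is frozen, is a functional of the independent Brownian motion $W_\cdot$ only — so conditionally on $x_\cdot$ (equivalently on $\sigma$) the law of $g_\cdot$ is that of the $\Lo_t^\sigma$-diffusion from $e$, and this conditional law does not involve any further randomness from $B_\cdot$; more precisely, for fixed path $\sigma$ the process $g_\cdot$ depends on $\sigma$ only through its appearance in the (deterministic in $\sigma$) coefficients $\alpha^{ij}(\sigma(t)g), \beta^k(\sigma(t)g)$, giving conditional independence in the sense required. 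I would then remark that, as usual, the SDE realization produces one version of the $\B$-diffusion, and uniqueness of the martingale problem for $\B$ (discussed in Chapter 1) transfers the conclusion to the canonical $\B$-diffusion, with the quoted reference \cite{Elworthy-LeJan-Li-principal} supplying the detailed bookkeeping. The main obstacle I anticipate is making the manipulation "$g_t^{-1} \circ dg_t = \varpi(\text{vertical part of}\circ dy_t)$" rigorous in the Stratonovich calculus on the non-trivial bundle — i.e. justifying that writing $y_t = \tilde x_t g_t$ really does decouple the two noises and produces precisely $\Lo_t^\sigma$ and no extra Itô–Stratonovich correction mixing the horizontal and vertical parts — together with handling the possibility of explosion (the life time $\zeta(\sigma)$) consistently on both factors.
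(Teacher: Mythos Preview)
Your proposal is correct and follows essentially the same route the paper indicates: the paper does not give a self-contained proof of Proposition~\ref{rose} but refers to \cite{Elworthy-LeJan-Li-principal} and remarks that ``stochastic differential equations can be given for $(\tilde x_t)$ and $(g_t^\sigma)$ as in \S\ref{se:cond-law}, from which the decomposition can be proved via It\^o's formula; see Theorem~\ref{th:flow-decom} below for details of a special case'' --- exactly the SDE realisation \eqref{sde-5}, the operator splitting of Theorem~\ref{th:op-deco}, and the equivariance of $\alpha,\beta$ that you invoke. The obstacle you flag (no It\^o--Stratonovich cross term between horizontal and vertical parts) is precisely what Lemma~\ref{lemma-flows} handles in the flow case, and the same orthogonality argument carries over here.
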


\noindent
Note that for each $t$ the operator $\Lo_t^\sigma$ is conjugate to the restriction of $\B^V$ to the fibre through $\sigma(t)$ by the map
\begin{eqnarray*}
\g& \mapsto & p^{-1}(p(\sigma(t))) \\
\g&\mapsto &\sigma(t) g.
\end{eqnarray*}
It is a right invariant operator.

\begin{remark}
Note that by the equivariance of $\Lo^\sigma_\cdot$ there will be no explosion of the process $(g_t^\sigma)$ before that of $\sigma_\cdot$.
Consequently Assumption S of \S\ref{se-filt} holds automatically.
%\ie the open sets of $\Delta$ are of the form $P-\pi^{-1}(K)$ for $K\subset M$ compact.
\end{remark}

Below we give the equivariant version of Proposition \ref{pr:regcond-1}. We shall use the notation of \S \ref{se:cond-law}.
However we replace the one point compactification $P^+$ of $P$ by
$\bar P=P\cup \Delta$ with the smallest topology agreeing with that of $P$ and such that $\pi: \bar P \to M^+$ is continuous. Also let $G^+$
be the one point compactification $G\cup \Delta$ of $G$ with group multiplication and action of $G$ extended so that 
$$u\cdot \Delta=\Delta, \Delta \cdot g=g\cdot\Delta=\Delta, \qquad \forall u\in \bar P, g\in \Bar G.$$

\noindent 
For $0\leqslant l<r<\infty$ if $y\in \C(l,r;\bar P )$, we 
write $l_y=l$ and $r_y=r$. Let $\C(*,*;\bar P )$ be the union of such
spaces $\C(l,r; \bar P )$. It has the standard additive structure under concatenation:
if $y$ and $y'$ are two paths with $r_y=l_{y'}$ and
$y(r_y)=y'(l_{y'})$ let $y+y'$ be the corresponding element in
$C(l_y,r_{y'}; \bar P)$.
The {\it basic} $\sigma$-algebra of $C(*,*,\bar P )$ is defined to be the pull back by $\pi$ of the usual Borel $\sigma$-algebra on $C(*,*;M^+)$.

 \noindent
   Given an equivariant diffusion operator $\B$ on $P$ consider the laws $\displaystyle{\{\PP_a^{(l,r),\B}: a\in P\}}$ as a kernel from $P$ to $\C(l,r;\bar P )$. The right action $R_g$ by $g$ in $G^+$
extends to give a right action, also written $R_g$, of $G^+$ on $\C(*,*,\bar P )$. Equivariance of $\B$ is equivalent to
$$\PP_{ag}^{(l,r),\B}=(R_g)_*\PP_a^{(l,r),\B}$$
for all $0\leqslant l\leqslant r$ and $a\in P$. Therefore $\pi_*(\PP_a^{(l,r),\B})$
depends only on $\pi(a)$, $l$, $r$ and gives the law of the
 induced diffusion $\A$ on $M$.
We say that such a diffusion $\B$ is {\it basic} if for all $a\in P$ and
 $0\leqslant l< r<\infty$ the basic $\sigma$-algebra on $\C(l,r;\bar P )$ contains
all Borel sets up to $\PP_a^{(l,r), \B}$ negligible sets, i.e. for all
$a\in P$ and Borel subsets $B$ of $\C(l,r;\bar P )$ there exists a Borel
subset $A$ of $\C(l,r,M^+)$ s.t. $\PP_a^{(l,r), \B}\big(\pi^{-1}(A)\Delta B\big)=0$.

For paths in $G$ it is more convenient to consider the space
$\tilde\C_{\id}(l,r;G^+)$ of cadlag paths $\sigma: [l,r]\to G^+$ with
$\sigma(l)=\id$ such that $\sigma$ is continuous until it leaves $G$ and stays at $\Delta$ from then on. It has a multiplication
$$\tilde \C_{\id}(s,t;G^+)\times \tilde\C_{\id}(t,u;G^+)\longrightarrow \tilde\C_{\id}(s,u;G^+)$$
$$(g, g')\mapsto g\times g'$$
where $(g\times g')(r)=g(r)$ for $r\in [s,t]$ and
 $(g\times g')(r)=g(t)g'(r)$ for $r\in [t,u]$.

Given probability measures $\Q$, $\Q'$ on $\tilde\C_{\id}(s,t;G^+)$ and  $\tilde\C_{\id}(t,u;G^+)$ respectively this determines a convolution
$\Q*\Q'$ of $\Q$ with $\Q'$ which is a probability measure on
 $\tilde\C_{\id}(s,u;G^+)$.

\begin{theorem}\label{theorem-kernel-decomposition}
Given the laws  $\{\PP_a^{(l,r),\B}: a\in P, 0\leqslant l<r<\infty\}$ of an
equivariant diffusion $\B$ over a cohesive $\A$ 
there exist probability kernels $\{\PP_a^{H,l,r}: a\in P\}$ from
$P$ to $\C(l,r;\bar P )$, $0\leqslant l < r <\infty$ and $y\mapsto\Q_y^{l,r}$,
 defined $\PP^{l,r}$  a.s. from $\C(l,r; \bar P )$ to $\tilde\C_{\id}(l,r;G^+)$ such
that
\begin{enumerate}
\item[(i)]
$\{\PP_a^{H,l,r}: a\in P\}$ is equivariant, basic and determining a 
 cohesive generator.
\item[(ii)]
$y\mapsto \Q_y^{l,r}$ satisfies
$$\Q_{y+y'}^{l_y, r_{y'}}=\Q_y^{l_y, r_y}*\Q_{y'}^{l_{y'},r_{y'}}$$
for $\PP^{l_{y}, r_y}\otimes \PP^{l_{y'},r_{y'}}$ almost all
$y$, $y'$ with $r_y=l_{y'}$.
\item[(iii)]
For $U$ a Borel subset of $\C(l,r; \bar P )$,
$$\PP_a^{l,r}(U)
=\int_{\C(l,r;\bar P)} \int_{\tilde \C(l,r;G^+)}\chi_{U}(y_\cdot\cdot  g_\cdot) \Q_y^{l,r}(dg)
\PP_a^{H,l,r}(dy).$$
\end{enumerate}
The kernels $\PP_a^{H,l,r}$ are uniquely determined as are the
$\{\Q_y^{l,r}: y\in \C(l,r; \bar P )\}$, $\PP_a^{H,l,r}$ a.s. in $y$ for all $a$
in $P$. Furthermore $\Q_y^{l,r}$ depends on $y$ only through its
projection $\pi(y)$ and its initial point $y_l$.
\end{theorem}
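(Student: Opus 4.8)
The plan is to build the three families of kernels from the explicit skew-product decomposition $y_t=\tilde x_t\cdot g_t^{\tilde x_\cdot}$ of Proposition~\ref{rose}, and then to translate the pathwise decomposition into a statement about laws. First I would \emph{define} $\PP_a^{H,l,r}$ to be the law on $\C(l,r;\bar P)$ of the horizontal lift of the $\A$-diffusion from $\pi(a)$ starting at $a$, i.e. the law of $\{\tilde x_t\}$ as in Proposition~\ref{rose}(i); by Proposition~\ref{pr:cohesive} this is the law of the $\A^H$-diffusion, so it is equivariant (since $h_{ug}=TR_g\circ h_u$), it is basic (the horizontal lift is a measurable functional of $\pi(y)$ and the starting point, by the ODE characterisation in \S\ref{se;deterministic lift}), and $\A^H$ is cohesive, giving (i). Next I would define $\Q_y^{l,r}$, for $y\in\C(l,r;\bar P)$, to be the law on $\tilde\C_{\id}(l,r;G^+)$ of the $G$-valued diffusion $\{g_t^{\pi(y)}\}$ with time-dependent generator $\Lo_t^{\pi(y)}$ as in Proposition~\ref{rose}(ii); by construction this depends on $y$ only through $\pi(y)$ and $y_l$ (indeed only through $\pi(y)$ since $\Lo^\sigma_t$ uses only $\sigma(t)$), and the independence assertion of Proposition~\ref{rose} together with the cadlag/explosion bookkeeping of the Remark after Proposition~\ref{rose} shows $\Q^{l,r}_y$ is well-defined $\PP^{l,r}$-a.s.\ in $y$.

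The substantive steps are then (ii) and (iii). For (iii), fix $a\in P$ and a bounded measurable $\Phi$ on $\C(l,r;\bar P)$; Proposition~\ref{rose} says the $\B$-diffusion from $a$ equals $\tilde x_\cdot\cdot g_\cdot^{\tilde x_\cdot}$ in law, and that $g^{\sigma}_\cdot$ is \emph{conditionally} a $\Lo^\sigma_\cdot$-diffusion given $\{p(y_t)\}=\sigma$; hence
\[
\E^{\PP_a^{(l,r),\B}}\Phi(y_\cdot)
=\E^{\PP_a^{H,l,r}}\!\left(\int_{\tilde\C_{\id}(l,r;G^+)}\Phi(y_\cdot\cdot g_\cdot)\,\Q_y^{l,r}(dg)\right),
\]
which is exactly the integral identity claimed (the independence in Proposition~\ref{rose}(ii) is what lets the inner integral be taken against $\Q^{l,r}_y$ rather than a joint law). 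For (ii), the convolution identity $\Q^{l_y,r_{y'}}_{y+y'}=\Q^{l_y,r_y}_y*\Q^{l_{y'},r_{y'}}_{y'}$ is the Markov property of the time-inhomogeneous $G$-valued diffusion: conditionally on $\pi(y+y')=\sigma\times\sigma'$, the increment of $g$ over $[l_{y'},r_{y'}]$ started afresh at $\id$ is a $\Lo^{\sigma'}_\cdot$-diffusion independent of the restriction to $[l_y,r_y]$, and the multiplication on $\tilde\C_{\id}$ was \emph{defined} to encode concatenation of $G$-valued paths, so the law of the concatenated path is the convolution; I would check this by testing against product functionals $f_1(g_{t_1})\cdots f_n(g_{t_n})$ and invoking the Markov property of $(g_t^\sigma)$, exactly as in the proof of Theorem~\ref{th:mart-2}(3).

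Finally, uniqueness: given any two such triples, projecting (iii) by $\pi$ forces $\pi_*\PP_a^{H,l,r}=\pi_*\PP_a^{(l,r),\B}=\PP^{(l,r),\A}_{\pi(a)}$, and since $\PP_a^{H,l,r}$ is required to be basic and to determine a cohesive generator, Proposition~\ref{pr:cohesive-diffusion} identifies it uniquely with the $\A^H$-diffusion law (its filtration must coincide with that of its projection, which pins down the generator as $\A^H$ and hence the law). With $\PP_a^{H,l,r}$ fixed, the kernel $y\mapsto\Q_y^{l,r}$ is then determined $\PP_a^{H,l,r}$-a.s.\ by the disintegration identity (iii) together with the uniqueness of regular conditional probabilities — it is precisely the conditional law of the $G$-valued factor given $\pi(y)$ and $y_l$. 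The main obstacle I anticipate is not any single hard estimate but the careful handling of explosion and the passage to the compactified/cadlag path spaces $\bar P$, $G^+$, $\tilde\C_{\id}(l,r;G^+)$: one must verify that Assumption~S type issues do not arise (the Remark after Proposition~\ref{rose} says $(g^\sigma_t)$ cannot explode before $\sigma$ does, which is exactly what makes the concatenation and convolution well-defined on these spaces), and that "basic" is preserved under all the operations — these are the places where the bookkeeping, rather than the probability, is delicate.
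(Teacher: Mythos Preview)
Your approach is correct and is essentially the one the paper indicates: the paper does not give a self-contained proof here but refers to Theorem~2.5 of \cite{Elworthy-LeJan-Li-principal}, remarking that the decomposition follows from the SDE representations of $(\tilde x_t)$ and $(g_t^\sigma)$ via It\^o's formula, i.e.\ exactly from the skew-product of Proposition~\ref{rose} that you build on. One small slip: your parenthetical ``indeed only through $\pi(y)$ since $\Lo^\sigma_t$ uses only $\sigma(t)$'' is not right, because in Proposition~\ref{rose} the path $\sigma$ lives in $P$ (it is the horizontal lift $\tilde x_\cdot$), not in $M$, so recovering $\sigma(t)$ from $y$ genuinely requires both $\pi(y)$ and the initial frame $y_l$; the dependence on $y_l$ is real, as the theorem states.
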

The proof of this theorem is as  that of
Theorem 2.5 in \cite{Elworthy-LeJan-Li-principal} (although there the processes are assumed to have no explosion).

Stochastic differential equations can be given for $(\tilde x_t)$ and $(g_t^\sigma)$ as in \S\ref{se:cond-law}, from which the decomposition can be proved via It\^o's formula; see Theorem \ref{th:flow-decom} below for details of a special case.

Proposition \ref{rose} extends results for Riemannian submersions by Elworthy-Kendall \cite{Elworthy-Kendall} and related results by Liao\cite{Liao89}. A rich supply of examples of skew-product decomposition of Brownian motions, with a general discussion, is given in Pauwels-Rogers\cite{Pauwels-Rogers}.

For a special class of derivative flows, considered as $GLM$-valued process as in \S\ref{se:deri-flow} there is a different decomposition by Liao \cite{Liao00}, see also Ruffino \cite{Ruffino}.

%For completeness we recall the skew-product decomposition from
%\cite{Elworthy-LeJan-Li-principal} given in terms of probability kernels. It extends results for Brownian motions in Suppose $\{b_t: 0\leqslant t<\zeta\}$ is a diffusion process
% on $P$  Here $\zeta$ is the explosion
%time and $b_0\in P$ with $p (b_0)=x_0$. Set $x_t=p (b_t)$ and let
%$\F_t^{x_0}=\sigma\{x_s: 0\leqslant s\leqslant t\}$, using the coffin state convention.

\section{Induced processes on vector bundles}
\label{se:VB2}
In the notation of \S\ref{se:Weitz} let $\rho: G\to L(V,V)$ be a $C^\infty$ representation with $\Pi^\rho: F\to M$ the associated bundle. A $\B$-diffusion $\{y_t: 0\leqslant t<\zeta\}$ on $P$ determines a family of $\{\psi_t: 0\leqslant t<\zeta\}$ of random linear map $W_t$ from $F_{x_0}\to F_{x_t}$,
where $x_t=\pi(y_t)$. By definition,
$$\psi_t[(y_0,e)]=[(y_t,e)].$$
Assuming $\A$ is cohesive we have the parallel translation $\parals_t: F_{x_0}\to F_{x_t}$ along $\{x_t: 0\leqslant t<\zeta\}$ determined by
our semi-connection. This is given by
$$\parals_t[(y_0, e)]=[(\tilde x_t, e)]$$
where $\tilde x_\cdot$ is the horizontal lift of $x$, starting at $y_0$.
\bigskip

\noindent
When taken together with Corollary \ref{co:derivative} the following extends results for derivative flows \index{flow!derivative} in Elworthy-Yor\cite{Elworthy-Yor}, Li\cite{flow},  Elworthy-Rosenberg \cite{Elworthy-Rosenberg}, and Elworthy-LeJan-Li\cite{Elworthy-LeJan-Li-book}.

\begin{theorem}
\label{lilac}
Let $\rho: G\to L(V;V)$ be a representation of $G$ on a Banach space $V$ and $\Pi^\rho:F\to M$ the associated vector bundle. Let $\{y_t: 0\leqslant t<\zeta\}$ be a $\B$-diffusion for an equivariant diffusion operator
$\B$ over a cohesive diffusion operator $\A$. Set $x_t=p(y_t)$ and let
$\Psi_t: F_{x_0}\to F_{x_t}, 0\leqslant t<\zeta$ be the induced transformations on $F$. Then the local conditional expectation
$\{\bar \Psi_t: 0\leqslant t<\zeta\}$, for $\bar \Psi_t=\E\{\Psi_t| \sigma\{x_s: 0\leqslant s <\zeta\}$ exists and is the solution of the covariant equation along $\{x_t: 0\leqslant t<\zeta\}$:
$${D\over \partial t}\bar \Psi_t=\Lambda^\rho\circ \bar\Psi_t$$
with $\Psi_0$ the identity map, $\Lambda^\rho: F\to F$ given by $\lambda^\rho$ in Theorem \ref{th:comp} and where ${D\over \partial t}$ refers to the semi-connection determined by $\B$.
\end{theorem}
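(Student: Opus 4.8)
The plan is to reduce the statement to the filtering equation already established in Theorem~\ref{th:mart-2}(1) (equivalently Corollary~\ref{co:8.14}) applied to the equivariant $V$-valued functions attached to sections of $F$. Concretely: fix $e\in V$ and let $Z_e: P\to V$ be the constant function $u\mapsto e$; this is not equivariant, but the object we really want to track is $\Psi_t$, which in the equivariant picture is encoded by the $V$-valued process $e \mapsto Z(y_t)$ for $Z\in M_\rho(P;V)$. The key observation is that for a \emph{vertical} operator the zero-order operator $\B^\rho$ on $\Gamma(F)$ is represented by $\lambda^\rho: P\to \LL(V;V)$ of Theorem~\ref{th:comp}. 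So I would first record that, writing $\psi_t: V\to V$ for $\psi_t = \bar{\mathbf y}_0^{-1}\Psi_t\,\bar{\mathbf y}_0$ (trivialising $F_{x_0}$ and $F_{x_t}$ via the moving frame $y_t$), one has directly from the definition $\Psi_t[(y_0,e)]=[(y_t,e)]$ that $\psi_t(e)=e$ for all $t$, i.e.\ the \emph{naive} frame-relative transformation is the identity; the content of the theorem is entirely in passing to the conditional expectation and re-expressing it through the semi-connection parallel transport $\parals_t$.

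The main steps, in order. (1) Apply the skew-product decomposition, Proposition~\ref{rose}: $y_t = \tilde x_t\cdot g_t^{\tilde x_\cdot}$ where $\tilde x_\cdot$ is the $\B$-semi-connection horizontal lift of $x_\cdot=p(y_\cdot)$ and $g_t^\sigma$ is, conditionally on $x_\cdot=\sigma$, a diffusion on $G$ with time-dependent generator $\Lo_t^\sigma$ built from $\alpha,\beta$. (2) Translate into $F$: from $\parals_t[(y_0,e)]=[(\tilde x_t,e)]$ and $\Psi_t[(y_0,e)]=[(y_t,e)] = [(\tilde x_t g_t, e)] = [(\tilde x_t, \rho(g_t)e)]$ we get the clean identity $\Psi_t = \parals_t\circ \overline{\rho(g_t^{\tilde x_\cdot})}$, reading $\rho(g_t)$ as an operator $F_{x_0}\to F_{x_0}$ via $\bar{\mathbf y}_0$. (3) Take conditional expectation given $\sigma\{x_s\}$: since $\parals_t$ is $\sigma\{x_s\}$-measurable, $\bar\Psi_t = \parals_t\circ \E\{\overline{\rho(g_t^{\tilde x_\cdot})}\,|\,x_\cdot\}$, and by the independence statement in Proposition~\ref{rose}(ii) the conditional law of $g_t$ is exactly the law of the $\Lo_\cdot^{x_\cdot}$-diffusion on $G$ from the identity. (4) Compute $\frac{d}{dt}\E\,\rho(g_t^\sigma)$ for the $G$-diffusion: by the (time-inhomogeneous) Kolmogorov backward/forward equation and the formula $\Lo_{A_k^*}\rho = \rho_*(A_k)\rho$ (as in the proof of Theorem~\ref{th:comp}), $\frac{d}{dt}\E\,\rho(g_t^\sigma) = \E\big[\big(\sum \alpha^{ij}(\sigma(t)g_t)\rho_*(A_j)\rho_*(A_i) + \sum\beta^k(\sigma(t)g_t)\rho_*(A_k)\big)\rho(g_t)\big]$. (5) Identify the operator coefficient at $t$, after trivialising by the frame, with $\lambda^\rho$ of Theorem~\ref{th:comp}: this is precisely $\rho_*(\beta) + \comp\circ(\rho_*\otimes\rho_*)(\alpha)$ evaluated along the process, which pushes forward under $\bar{\mathbf y}_t$ to $\Lambda^\rho_{x_t}$. (6) Re-attach the parallel transport: differentiating $\bar\Psi_t=\parals_t\circ(\text{the }G\text{-average})$ covariantly along $x_\cdot$ kills the $\parals_t$ factor (that is the defining property of $\frac{D}{\partial t}$), leaving $\frac{D}{\partial t}\bar\Psi_t = \Lambda^\rho\circ\bar\Psi_t$, with $\bar\Psi_0=\id$. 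Existence of the conditional expectation and its continuity/semimartingale properties come from the predictable-projection machinery of \S\ref{se-filt}, exactly as in the proof of Proposition~\ref{pr:mart-4}.

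An alternative, perhaps cleaner, route avoids Proposition~\ref{rose} and works directly with the filtering equation: apply Corollary~\ref{co:8.14} to $f = \langle \ell, Z(\cdot)\rangle$ for $Z\in M_\rho(P;V)$ and $\ell\in V^*$ — but here $f$ is not compactly supported, so one must either localise or impose/verify the integrability hypotheses of Theorem~\ref{th:mart-2}(1) ($\B f$ and $\sigma^\B(df,df)\circ h$ bounded). Then the $\int \pi_s(df\circ h_{u_\cdot})\circ dx_s$ term becomes, after identification with $\parals$, a pure ``transport'' term whose covariant derivative vanishes, while $\int\pi_s(\B^V f)\,ds$ contributes $\Lambda^\rho$ by Theorem~\ref{th:comp}; the covariant ODE for $\bar\Psi_t$ then follows by testing against all $\ell$. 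I would present the Proposition~\ref{rose} route as the main argument since the integrability hypotheses are automatic there (the remark after Proposition~\ref{rose} already notes no explosion before $\sigma_\cdot$).

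\textbf{Main obstacle.} The delicate point is Step~(6), the interplay between the covariant derivative $\frac{D}{\partial t}$ along $x_\cdot$ and the conditional expectation: one must be careful that $\parals_t$, though $\sigma\{x_s\}$-measurable, is only adapted to the filtration of $x_\cdot$ and that the covariant Itô/Stratonovich calculus for $\bar\Psi_t$ along the (merely continuous, generally non-Brownian when $\A$ is degenerate) semimartingale $x_\cdot$ is set up correctly — this is where one leans on the anti-development/parallel-transport formalism of \S\ref{se:int-pred-proc} and the Appendix on semimartingales along a sub-bundle. A secondary technical nuisance is that for infinite-dimensional $V$ (the theorem allows $V$ Banach) one must ensure $\E\,\rho(g_t)$ makes sense and the differentiation under the expectation is justified; this needs a local boundedness/moment control on $\rho(g_t^\sigma)$, which for bounded $\alpha,\beta$ and the relevant local conditioning is standard but should be stated.
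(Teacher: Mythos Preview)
Your main route via Proposition~\ref{rose} is exactly the paper's proof: write $y_t=\tilde x_t\cdot g_t^{\tilde x_\cdot}$, identify $\parals_t^{-1}\Psi_t$ with the action of $\rho(g_t^{\tilde x_\cdot})^{\pm 1}$ on $F_{x_0}$, take the conditional expectation using the independence in Proposition~\ref{rose}(ii), and differentiate via the computation in Theorem~\ref{th:comp}. The one point the paper makes more precise than your ``secondary technical nuisance'' is the existence of $\E\,\rho(g_t^\sigma)^{-1}$ for general Banach $V$: it invokes Baxendale's integrability theorem, which applies because the time-dependent generator $\Lo_t^\sigma$ on $G$ is right-invariant, and this is what actually delivers the local conditional expectation $\bar\Psi_t$.
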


\begin{proof}
From above and Proposition \ref{rose} we have
\begin{eqnarray*}
\Psi_t[(y_0,e)]=[(\tilde x_t \circ g_t^{\tilde x}, e)]
=[(\tilde x_t, \rho(g_t^{\tilde x})^{-1}e]
\end{eqnarray*}
and so $\parals_t^{-1}\psi_t[(y_0,e)]
=[(y_0,\rho(g_t^{\tilde x})^{-1}e)]$.
Now from the right invariance of $\G_t^\sigma$, for fixed path $\sigma$ and time $t$, we can apply Baxendale's integrability theorem for the right action 
\begin{eqnarray*}
G\times L(V;V)&\to&L(V;V)\\
(g,T)&\mapsto& \rho(g_t^\sigma)^{-1}\circ T
\end{eqnarray*}
to see $\E|\rho(g_t^\sigma)^{-1}|_{L(V;V)}<\infty$ for each $\sigma$,
$t$ and we have $\Epsilon(\sigma)_t\in L(V;V)$ given by
$$\Epsilon(\sigma)_te=\E \rho(g_t^\sigma)^{-1} e.$$
By considering 
$(1+\E |\rho(g_t^\sigma)^{-1}\parals_t^{-1} \psi_t$ for $\sigma=x_\cdot$. We see the local conditional expectation $\bar \Psi_t$ exists in $L(F_{x_0}; F_{x_t})$ and
$$\bar \Psi_t[(y_0, e)]=[(\tilde x_t, \Epsilon(x_\cdot)_te)].$$
The computation in Theorem \ref{th:comp} shows that
$${d\over dt}\parals_t^{-1}\bar \Psi_t[(y_0, e)]
= {d\over dt } [(y_0, \Epsilon(x_\cdot)_te)]
=[(y_0, \lambda^\rho(\tilde x_t)\Epsilon(x_\cdot)_t e)]$$
giving
$${D\over dt}\bar \Psi_t[(y_0, e)]
=[(\tilde x_t, \lambda^\rho(\tilde x_t)\Epsilon(x_\cdot)_t e)]
=\Lambda^\rho(x_\cdot)\bar \Psi_t[(y_0, e)]$$
as required.
\end{proof}

\begin{remark}
Theorem \ref{lilac} could also be used to identify the generator of the operator induced 
on sections of $F^*$, reproving Theorem \ref{th:comp}, since if $\phi\in \gamma F^*$ then 
$\E \phi\circ \Psi_t \chi_{t<\zeta}
=\E \phi\circ \bar \Psi_t\chi_{t<\zeta}$
if the expectations exist, by Corollary 3.3.5 of \cite{Elworthy-LeJan-Li-book}.
The extra information in Theorem \ref{lilac} is the existence of the conditional expectation. Baxendales' integrability theorem used for
this applies in sufficiently generality to give corresponding results for infinite dimensional $G$, for example in the situation arising in chapter \ref{ch:flow} below.
\end{remark}

\chapter{ Filtering with non-Markovian Observations}
\label{ch:nonlinear-filtering}
So far we have considered smooth maps $p:N\to M$with a diffusion process $u_.$ on $N$ mapping to a diffusion process $x_.=p(u_.)$  on $M$. From the point of view of filtering we have considered $u_.$ as the \emph{signal}  and $x_.$ as the \emph{observation process}. However the standard set up for filtering does not assume Markovianity of the observation process. Classically we have a signal $z_.$, a diffusion process on $\R^d$ or a more general space, and an observation process $x_.$ on some $\R^n$ given by an SDE of the form \begin{equation}\label{obs-1}
dx_t=a(t,x_t,z_t)dt+b(t,x_t,z_t)dB_t
\end{equation}
where $B_.$ is a Brownian motion independent of the signal.
To fit this into our discussion we will need to assume that the noise coefficient of the observation SDE does not depend on the signal other than through the observations, as well as the usual cohesiveness assumptions. We can take $N=\R^d\times\R^n$ and $M=\R^n$ with $p$ the projection and $u_t=(z_t,x_t)$. To reduce to our Markovian case we can use the standard technique of applying  the Girsanov-Maruyama theorem. Here we first carry this out in the general context of diffusions with basic symbols, as discussed in Section \ref {se-basic symbol} and then show how it fits in with the classical situation.  For simplicity we shall assume that the signal is a time homogeneous diffusion, and that the coefficients in the observation SDE are also independent of time.  The state spaces are taken to be smooth manifolds and the standard non-degeneracy assumptions on the observation process somewhat relaxed.

For other discussions about filtering with processes which have values in a manifold see \cite{Duncan}, \cite{Pontier-Szpirglas}, and \cite {Estrade-Pontier-Florchinger}.

\section{Signals with Projectible Symbol}
\label{se:basic-filtering}

Using the notation and terminology of Section \ref{se-basic symbol} suppose that our diffusion operator $\B$ on $N$ is conservative and descends cohesively over $p:N\to M$
so that for a horizontal vector field $b^H$ on $N$ the diffusion operator $\tilde{\B}:=\B-b^H$ lies over some  cohesive $\A$.  Choose such an $\A$ so that $\tilde{\B}$, and so $\A$, is also conservative: \emph{ we assume that this is possible}.  Also choose a locally bounded one-form $b^\# $ on $N$ with 2$\sigma^{\B}(b^\# )=b^H$. This is possible since $b^H$ is horizontal, and we can, and will, choose $b^\# $ to vanish on vertical tangent  vectors and satisfy 
\begin{equation}\label{eq-bsharp}
  b^\#_y (b^H(y))=  2\sigma^\B_y( b^\# _y,b^\# _y)=|b^H(y)|^2_y \qquad y\in N
\end{equation}
where $|b^H(y)|_y $ refers to the Riemannian metric on the horizontal tangent space induced by $2\sigma^{\A^H}$. This can be achieved by first  choosing some smooth $\tilde{b}:N\to T^*M$ such that, in the notation of equation (\ref{b1}), 
$\sigma^\A_{p(y)}(\tilde{b}(y))=b(y)$ for $y\in N$; and then taking $b^\#$ to be the pull back of $\tilde{b}$ by $p$:
$$b^\#_y(v)=\tilde{b}(y)(T_yp(v))\qquad y\in N$$

%  NOTE THE TWO'S !!! I think we have PROBLEMS WITH TWO'S THROUGHOUT-THE SYMBOL OF THE LAPLaCIAN FOR US IS THE INNER PRODUCT BUT WE WANT THAT INNER PRODUCT TO BE INDUCED BY THE X FOR AN SDE GIVING 1/2 LAPLACIAN?????

Now set $$ Z_t= \exp\{-M^\alpha_t-\frac{1}{2}\left\langle M^\alpha\right\rangle_t\} $$ for $\alpha_t(u_.)=b^\# _{u_t}$ where $u\in \C([0,T];N)$, our canonical probability space furnished with  measures $\mathbf{ P}:=\mathbf{ P}^\B$ and $\tilde{\mathbf{ P}}:=\mathbf{ P}^{\tilde{\B}}$ and corresponding expectation operators $\E$ and $\tilde{\E}$.

Here and below we are using the notation of  proposition \ref{pr:mart-1} with $M^\alpha$ etc referring to taking martingale parts with respect to $\mathbf{P}$ while $\tilde{M}^\alpha$ and $\int_0^t\alpha_s d\{y_s\}^{~}$ are with respect to $\tilde{\mathbf{P}}$.

\noindent
 From the Girsanov-Maruyana-Cameron-Martin theorem (see the Appendix, Section \ref{se-GMCM theorem}), we know that $Z_.$ is a martingale under $\mathbf{ P}$ and the two measures are equivalent with 
$$ \frac{d\mathbf{ P}_{y_0}^{\tilde\B}}{d\mathbf{ P}_{y_0}^{\B}}=Z_T.$$ 

\noindent
Suppose $f:N\to \R$ is bounded and measurable. We wish to find 
$\pi_t(f):N\to \R, 0\leqslant t\leqslant  T$ where 
$$\pi_t(f)(y_0)=\E_{y_0}\big\{f(u_t)|p(u_s),0\leqslant s\leqslant t\big\}.$$

\noindent
Following the approach due to Zakai, consider the {\it unnormalised filtering process}
$\hat \pi_t(f):N\to \R$ given by 
$$\hat \pi_t(f)(u_0)=\tilde{\E}_{u_0}\big\{f(u_t)Z_t^{-1}\; | \;p(u_s),0\leqslant s\leqslant  t\big\}.$$
\bigskip

\noindent
For completeness we state and prove the Kallianpur-Striebel formula
\index{Kallianpur-Striebel formula}, a version of Bayes' formula:

\begin{lemma} 
\label{le:Kall-Strieb}
$$\pi_t(f)(u_0)=\frac{\hat \pi_t(f)(u_0)}{\hat \pi_t(1)(u_0)} \qquad \mathbf{ P}_{u_0}-as.$$
\end{lemma}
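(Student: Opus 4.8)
The plan is to verify the Kallianpur--Striebel formula by a direct conditional-expectation computation, exploiting that $\tilde{\PP}_{u_0}$ and $\PP_{u_0}$ are equivalent on $\C([0,T];N)$ with $d\PP^{\tilde\B}_{u_0}/d\PP^{\B}_{u_0} = Z_T$, equivalently $d\PP^{\B}_{u_0}/d\PP^{\tilde\B}_{u_0} = Z_T^{-1}$. First I would fix $t \le T$ and write $\G_t = \sigma\{p(u_s) : 0 \le s \le t\}$ for the observation filtration. The target is to show that for every bounded $\G_t$-measurable $G$,
\[
\E_{u_0}\big\{ f(u_t) G \big\} = \E_{u_0}\Big\{ \frac{\hat\pi_t(f)(u_0)}{\hat\pi_t(1)(u_0)}\, G \Big\},
\]
which by definition of conditional expectation gives the claim; the only subtlety is that the right-hand side is a ratio, so I would instead argue via the abstract Bayes formula for a change of measure.

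The key step is the abstract Bayes rule: if $\Q \ll \PP$ with density $L$ on $(\Omega,\F)$ and $\G \subset \F$ is a sub-$\sigma$-algebra, then for $\PP$-integrable $\xi$,
\[
\E^{\PP}\{\xi \mid \G\} = \frac{\E^{\Q}\{\xi L^{-1} \mid \G\}}{\E^{\Q}\{ L^{-1} \mid \G\}}, \qquad \Q\text{-a.s. (hence }\PP\text{-a.s.)}.
\]
I would apply this with $\PP = \PP^{\B}_{u_0}$, $\Q = \tilde\PP_{u_0} = \PP^{\tilde\B}_{u_0}$, $L = d\Q/d\PP = Z_T$, $\G = \G_t$, and $\xi = f(u_t)$. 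This yields
\[
\pi_t(f)(u_0) = \E_{u_0}\{f(u_t)\mid \G_t\} = \frac{\tilde\E_{u_0}\{ f(u_t) Z_T^{-1} \mid \G_t\}}{\tilde\E_{u_0}\{ Z_T^{-1} \mid \G_t\}}.
\]
It then remains to replace $Z_T^{-1}$ by $Z_t^{-1}$ in both numerator and denominator. This is where the tower property and the martingale structure enter: under $\tilde\PP_{u_0}$ the process $Z_\cdot^{-1}$ need not be a martingale, but $Z_\cdot$ is a martingale under $\PP_{u_0}$; so I would instead condition first on $\F_t^{u_0} \vee \G_T$ (or more carefully use that $f(u_t)Z_t^{-1}$ is $\F_t^{u_0}$-measurable) and invoke the fact that, conditionally on $\G_T$, increments of the relevant martingale after time $t$ have conditional $\tilde\PP$-expectation reducing the exponential factor correctly — concretely, $\tilde\E_{u_0}\{ Z_T^{-1} g \mid \F_t^{u_0}\} = Z_t^{-1} g$ for $\F_t^{u_0}$-measurable $g$, since $Z_\cdot^{-1}$ is a $\tilde\PP$-martingale (this is the standard dual statement to $Z_\cdot$ being a $\PP$-martingale). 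Combined with Proposition~\ref{pr:mart-3}(i), which gives $\tilde\E_{u_0}\{ h \mid \G_T\} = \tilde\E_{u_0}\{h \mid \G_t\}$ for $\F_t^{u_0}$-measurable $h$ — wait, more precisely one needs the immersion property to collapse $\G_T$ to $\G_t$ — I would obtain $\tilde\E_{u_0}\{f(u_t)Z_T^{-1}\mid \G_t\} = \tilde\E_{u_0}\{f(u_t)Z_t^{-1}\mid \G_t\} = \hat\pi_t(f)(u_0)$ and likewise $\tilde\E_{u_0}\{Z_T^{-1}\mid \G_t\} = \hat\pi_t(1)(u_0)$.

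The main obstacle will be the bookkeeping around which filtration to condition on and in what order: one must be careful that $Z_T^{-1}$ is $\tilde\PP$-integrable (it is, being a positive $\tilde\PP$-martingale normalized at $1$), that the ratio is well-defined ($\hat\pi_t(1)(u_0) > 0$ a.s., which follows since $Z_t^{-1} > 0$), and that the passage from $Z_T^{-1}$ to $Z_t^{-1}$ genuinely uses only the $\tilde\PP$-martingale property of $Z^{-1}$ together with the fact that $f(u_t)$ is $\F_t^{u_0}$-measurable and $\G_T$ can be replaced by $\G_t$ via the immersion/independence structure established in Proposition~\ref{pr:mart-3}. Everything else is the textbook Kallianpur--Striebel argument; no new geometry is needed here, only the Girsanov setup already fixed before the lemma.
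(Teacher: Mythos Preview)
Your approach is correct and is essentially the paper's argument, just packaged via the abstract Bayes formula rather than by directly shuttling between $\E$ and $\tilde\E$ with $Z_t^{\pm1}$. The paper tests against an arbitrary $\F^{x_0}_t$-measurable $g$, writes $\E\{f(u_t)g\}=\tilde\E\{Z_t^{-1}f(u_t)g\}$ (using $\F^{u_0}_t$-measurability of the integrand), conditions on the observation $\sigma$-algebra to pull out $\hat\pi_t(f)$, switches back to $\E$ to get $\pi_t(f)=\E\{Z_t\mid\F^{x_0}_t\}\hat\pi_t(f)$, and then sets $f=1$ to identify the prefactor as $\hat\pi_t(1)^{-1}$.

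One clarification: your worry about ``collapsing $\G_T$ to $\G_t$'' and invoking Proposition~\ref{pr:mart-3} is unnecessary. You are conditioning on $\G_t$ from the start, and since $\G_t\subset\F^{u_0}_t$, the tower property gives directly
\[
\tilde\E\{f(u_t)Z_T^{-1}\mid\G_t\}=\tilde\E\big\{\tilde\E\{f(u_t)Z_T^{-1}\mid\F^{u_0}_t\}\,\big|\,\G_t\big\}=\tilde\E\{f(u_t)Z_t^{-1}\mid\G_t\}=\hat\pi_t(f)(u_0),
\]
using only that $Z_\cdot^{-1}$ is a $\tilde\PP$-martingale with respect to $\F^{u_0}_*$. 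No immersion property is needed here; the paper avoids the issue entirely by working with $Z_t^{-1}$ from the outset.
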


\begin{proof}
Set $x_0=p(u_0)$. Let $g:\C_{u_0}([0,T];N)\to \R$ be $\F^{x_0}_t$-measurable. Then\begin{eqnarray}
\E_{u_0}\{f(u_t)g(u_.)\}&=&\tilde{\E}\{\frac{1}{Z_t}f(u_t)g(u_.)\}\nonumber\\
&=&\tilde{\E}\{\tilde{\E}\{\frac{1}{Z_t}f(u_t)|\F^{u_0}_t\}g(u_.)\}\nonumber\\
&=&\E\{Z_t\tilde{\E}\{\frac{1}{Z_t}f(u_t)|\F^{u_0}_t\}g(u_.)\}.
\end{eqnarray}
Thus $$\pi_t(f)(u_0)=\E\{Z_t|\F^{u_0}_t\} \hat \pi_t(f)(u_0).$$ Taking $f$ constant shows that $\E\{Z_t|\F^{u_o}_t\}\hat \pi_t(1)(u_0)=1$ and the result follows.
\end {proof}

 \noindent 
 We can now go on to obtain the analogue of the Duncan-Mortensen-Zakai (DMZ) equation \index{Duncan-Mortensen-Zakai equation}%
  for the unnormalized filtering process, using the results of Section \ref{se:cond-law} on conditional laws:
 \begin{theorem}
 \label{th:DMZ}
 For any $C^2$ function $f:N\to \R$, under $\tilde{\PP}$,
 
 \begin{equation}
 \label{eq:DMZ}
\begin{array}{ll}
 \hat \pi_t f(u_0)
=&f(u_0)+\int_0^t \hat \pi_s \big(\B f \big)(u_0)\;ds
 +\int_0^t \hat \pi_s \big(fb^{\#} (-)h_{-}\big) (u_0)d \{x_s\} \\
&+\int_0^t \hat \pi_s  \big(df_{-}h_{-}\big) (u_0) d\{x_s\};
\end{array} 
\end{equation}

 \begin{equation}
\begin{array}{ll}
\hat \pi_t f(u_0)=&f(u_0)+\int_0^t \hat \pi_s \big(\B f \big)(u_0)\;ds
 +\int_0^t \langle \hat \pi_s (fb) (u_0),  d \{x_s\}\rangle_{x_s} \\
&+\int_0^t \hat \pi_s  \big(df_{-}h_{-}\big) (u_0) d\{x_s\}.
\end{array}
\end{equation}
where $x_s=p(u_s), 0\leqslant s\leqslant \infty$ is  the projection to $M$ of the canonical process from $u_0$ on $N$, and $h$ the horizontal lift map for the induced semi-connection. 

\noindent
Using  an alternative notation:
\begin{equation}
\label{eq:DMZ-2}
\hat{ \pi}_t f=\hat {\pi}_0f +M_t^{ \pi \big( fb^\#\circ h_{u_.}\big), \A} +M_t^{ \hat \pi_. \big( df \circ h_{u_.}\big),\A}+\int_0^t \hat{ \pi}_s (\B f)ds.
\end{equation}

\end{theorem}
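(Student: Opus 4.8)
\textbf{Proof plan for the DMZ equation (Theorem \ref{th:DMZ}).}

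The plan is to derive the equation for $\hat\pi_t f$ by first recording the evolution of $f(u_t)$ along the canonical process under $\tilde\PP$, then conditioning on the observation $\sigma$-algebra $\F^{p(u_0)}_*$ using the machinery of Section \ref{se:cond-law}, and finally inserting the Girsanov weight $Z_t^{-1}$. First I would write, for $f\in C^2(N)$,
$$f(u_t)=f(u_0)+\int_0^t \tilde{\B} f(u_s)\,ds+\tilde M_t^{df},$$
where the martingale part is taken with respect to $\tilde\PP$, and where $\tilde{\B}=\B-b^H$. Since $\B f=\tilde{\B}f+\LL_{b^H}f=\tilde{\B}f+df(b^H(-))$ and $b^H=h\circ b$ with $b^\#$ the one-form satisfying $2\sigma^\B(b^\#)=b^H$, this converts back to a statement involving $\B f$ plus a correction term $df_{u_s}(h_{u_s}b(u_s))=df_{u_s}h_{u_s}\sigma^\A b$. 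Next, because $Z_t^{-1}$ solves $dZ_t^{-1}=Z_t^{-1}\alpha_s\,d\{y_s\}^{\tilde\B}+Z_t^{-1}\alpha_s\sigma^{\tilde\B}(\alpha_s)\,ds$ for $\alpha_s=b^\#_{u_s}$ (this is just the Doléans–Dade / GMCM computation in Appendix \ref{se-GMCM theorem}), I would apply the Itô product rule to $f(u_t)Z_t^{-1}$, obtaining a drift built from $\tilde{\B}f$, a cross bracket term $\langle \tilde M^{df},\int \alpha\,d\{y\}\rangle_t$ which by (\ref{bracket}) equals $2\int_0^t\sigma^{\tilde\B}(df_{u_s},b^\#_{u_s})\,ds=\int_0^t df_{u_s}(b^H(u_s))\,ds$, and two martingale terms: $\int_0^t Z_s^{-1}f(u_s)\,d\{y_s\}^{b^\#}$ and $\int_0^t Z_s^{-1}\,d\{y_s\}^{df}$. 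Combining the $\tilde{\B}f$ drift with the cross-bracket correction reassembles $\B f$, which is the point of the decomposition.

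Then comes the conditioning step, which is the technical heart. I would take $\tilde\E_{u_0}\{\,\cdot\,|\F^{p(u_0)}\}$ of the identity for $f(u_t)Z_t^{-1}$. The drift term $\int_0^t Z_s^{-1}\B f(u_s)\,ds$ conditions to $\int_0^t \hat\pi_s(\B f)(u_0)\,ds$ by Fubini, once one knows (from Proposition \ref{pr:mart-3}(i), the immersion property of $\F^{p(u_0)}_*$ in $\F^{u_0}_*$) that conditioning commutes appropriately with the time integral. For the two martingale parts I would invoke Proposition \ref{pr:mart-4}: the conditional expectation of $M_t^\beta$ (for a predictable $\beta$ in $L^2_\B$) is $\int_0^t \tilde\E\{\beta_s\circ h_{u_s}|p(u_\cdot)=x_\cdot\}\,d\{x_s\}$. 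Applying this with $\beta_s=Z_s^{-1}f(u_s)b^\#_{u_s}$ gives the term $\int_0^t\hat\pi_s(fb^\#(-)h_-)(u_0)\,d\{x_s\}$ — here one uses that $Z_s^{-1}$ is the conditioning weight so $\tilde\E\{Z_s^{-1}f(u_s)b^\#_{u_s}h_{u_s}|p=x_\cdot\}$ is exactly $\hat\pi_s(fb^\#\circ h)(u_0)$ by definition of $\hat\pi$. Applying it with $\beta_s=Z_s^{-1}(df)_{u_s}$ gives $\int_0^t\hat\pi_s(df_-h_-)(u_0)\,d\{x_s\}$. The alternative metric-notation form $\langle\hat\pi_s(fb)(u_0),d\{x_s\}\rangle_{x_s}$ follows from (\ref{eq-bsharp}) and the fact that $b^\#$ is the $p$-pullback of $\tilde b$, so $b^\#_{u_s}h_{u_s}(v)=\langle b(u_s),v\rangle_{x_s}$ under the metric induced by $2\sigma^{\A^H}$; and the final compact form (\ref{eq:DMZ-2}) is just a relabelling using $M^{\,\cdot\,,\A}$ for the martingale parts relative to $\PP^\A$.

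\textbf{Main obstacle.} The routine Itô calculus is harmless; the real work is justifying the interchange of conditional expectation with the stochastic integrals. One needs that $\tilde\E\{M_t^\beta\mid\F^{p(u_0)}\}$ is itself the $\F^{p(u_0)}_*$-martingale given by Proposition \ref{pr:mart-4}, that Proposition \ref{pr:mart-3} applies under $\tilde\PP$ (so that $\F^{p(u_0)}_*$ is immersed in $\F^{u_0}_*$ for the $\tilde{\B}$-diffusion — this holds because $\tilde{\B}$ lies over $\A$ and Assumption S is in force), and that the integrands $Z_s^{-1}f(u_s)b^\#_{u_s}$ and $Z_s^{-1}(df)_{u_s}$ actually lie in $L^2_{\tilde\B}$ (or at least $L^2_{\tilde\B,\loc}$) so that $M^\beta$ is well defined — this uses the local boundedness of $b^\#$, the $C^2$ (hence locally bounded derivative) hypothesis on $f$ or the stated boundedness of $\B f$ and $\sigma^\B(df,df)\circ h$, and the fact that $Z^{-1}$ is a continuous positive process under $\tilde\PP$. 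Once these predictability and integrability points are pinned down, the identification of each term is forced by the defining relation (\ref{bracket}) and Lemma \ref{le:commutative}.
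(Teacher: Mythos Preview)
Your plan is essentially the paper's own proof: write $df(u_t)=d\tilde M^{df}_t+\tilde\B f(u_t)\,dt$ under $\tilde\PP$, take the It\^o product with $Z_t^{-1}$, use the cross-bracket $d\langle \tilde M^{df},\tilde M^{b^\#}\rangle_t=df_{u_t}(b^H(u_t))\,dt$ to reassemble $\B f$ from $\tilde\B f$, and then condition on $\F^{p(u_0)}$ via Proposition~\ref{pr:mart-4} (which applies because $\tilde\B=\B-\LL_{b^H}$ is over the cohesive $\A$).

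One slip to correct: under $\tilde\PP$ the process $Z_t^{-1}=\exp\{\tilde M^{b^\#}_t-\tfrac12\langle\tilde M^{b^\#}\rangle_t\}$ is a genuine exponential local martingale, so
\[
dZ_t^{-1}=Z_t^{-1}\,d\tilde M^{b^\#}_t
\]
with \emph{no} additional drift term. Your stated equation $dZ_t^{-1}=Z_t^{-1}\alpha_s\,d\{y_s\}^{\tilde\B}+Z_t^{-1}\alpha_s\sigma^{\tilde\B}(\alpha_s)\,ds$ carries an extra $\tfrac12|b^H|^2$-type drift which, if propagated, would leave a spurious $\int_0^t\hat\pi_s(f\,b^\#\sigma^{\tilde\B}b^\#)\,ds$ in the final formula. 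Once you drop that term the product-rule computation gives exactly
\[
d\big(Z_t^{-1}f(u_t)\big)=Z_t^{-1}\,d\tilde M^{df}_t+f(u_t)Z_t^{-1}\,d\tilde M^{b^\#}_t+Z_t^{-1}\B f(u_t)\,dt,
\]
and your conditioning step then yields \eqref{eq:DMZ} directly.
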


\begin{proof}
Since we are working with $\tilde{\PP}$ we will write $M^{b^{\#}}$ for $M^{b^{\#},\tilde{\B}}$, etc. Also $Z_t^{-1}$ satisfies:
\begin{equs}
dZ_t^{-1}=Z_t^{-1}dM^{b^{\#}}_t
\end{equs}
while
$$df(u_t)=dM^{df}_t+\tilde{\B}(f)(u_t)dt $$
giving
\begin{eqnarray*}
d\big(Z_t^{-1}f(u_t)\big)=&&Z_t^{-1}dM^{df}_t+Z_t^{-1}\tilde{\B}(f)(u_t)dt\\&&+f(u_t)Z_t^{-1}dM^{b^{\#}}_t+Z_t^{-1}+df_{u_t}(b^H(u_t))dt
\end{eqnarray*}
since $ dM^{df}_tdM^{b^{\#}}_t =\sigma^{\tilde{\B}}\big(df_{u_t},b^{\#}\big)=df_{u_.}(b^H(u_t)).$
Thus
\begin {equs}
d\big(Z_t^{-1}f(u_t)\big)=Z_t^{-1}dM^{df}_t+Z_t^{-1}\B(f)(u_t)dt+f(u_t)Z_t^{-1}dM^{b^{\#}}_t+Z_t^{-1}.
\end{equs}
We can now take conditional expectations using proposition \ref{pr:mart-4} since $\B-\LL_{b^H}$ is over the cohesive operator $\A$ to complete the proof.
\end{proof}

%From equation (\ref{sde-5}) we have the decomposition of our canonical process, under $\tilde{\mathbf{ P}}$:
%$$
%(\nabla^V) \qquad du_t=h_{u_t}\circ dx_t+V(u_t)dW_t+V^0(u_t)dt, \quad  1\leqslant t\leqslant T $$
%where $x_t=p(u_t)$ and $W_.$ is a Brownian motion independent of $x_.$. 

% From this and Ito's formula  if $f:N\to \R$ is $C^2$
%\begin{equation}
%f(u_t)=f(u_0)+\int_0^tdf_{u_t}h_{u_t}\circ dx_t+\int_0^tdf_{u_t}V(u_t)dW_t +\int_0^t\B^V(f)(u_t)dt
%\end{equation}
%Also it is easy to see that $\{Z_t^{-1}:0\leqslant t\leqslant T\}$ is an $\F^{u_0}_*$- martingale under $\tilde{\mathbf{ P}}_{u_o}$. Reversing the roles of $\B$ and $\tilde{\B}$ we see, via the decomposition of $u_t$, that under  $\tilde{\mathbf{ P}}_{u_0}$ it is given by
% \begin{eqnarray}
%dZ_t^{-1}&=&Z^{-1}_tb^{\# }_{u_t}(d\{u_t\})\nonumber\\
%&=&Z_t^{-1}b^{\#}_{u_t}h_{u_t}d{x_t}.
%\end{eqnarray}
% From these
%\begin{eqnarray*}
%f(u_t)Z_t^{-1}&=&f(u_0)+\int_0^tZ_t^{-1}df_{u_t}h_{u_t}\circ dx_t+\int_0^tZ_t^{-1}df_{u_t}V(u_t)dW_t \\&&+\int_0^tZ_t^{-1}\B^V(f)(u_t)dt 
%-\int_0^tf(u_t)(Z_t)^{-1}b^\#_{u_t}h_{u_t}d\{x_t\}\\
%&&+\int_0^tf(u_t)Z_t^{-1}|b(u_t)|^2_{u_t}dt
 %-2\int_0^t Z_t^{-1}df_{u_t}(b^\#_{u_t})dt
%\end{eqnarray*}
%using equation (\ref{bracket}) again, since 2$\sigma^{\A^H}_{u_t}(b^\#_{u_t},df_{u_t})=df_{u_t}(b^\#_{u_t})$.

%
\begin{lemma}
There are the following formulae for angle brackets:
\begin{equation}
d\langle\hat{\pi}(1)\rangle_t=\langle \hat{\pi}_t ( b)  ,\hat{ \pi}_t ( b)\rangle_{x_t}^E dt
\end{equation}
\begin{equation}
d\langle\hat{\pi}(1),\hat{\pi}(f)\rangle_t=\langle \hat{\pi}_t ( fb)  ,\hat{ \pi}_t ( b)\rangle_{x_t}^E dt +\hat{\pi}_t ( df \circ h_{u_.})  \circ \hat{ \pi}_t ( b(u_.))dt
\end{equation}
\end{lemma}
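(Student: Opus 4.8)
The two identities are statements about the quadratic variation (angle bracket) of the continuous semimartingales $\hat\pi_t(1)$ and $\hat\pi_t(f)$, so the plan is simply to read off the martingale parts from the DMZ equation \eqref{eq:DMZ-2} of Theorem~\ref{th:DMZ} and compute brackets using the rule for brackets of stochastic integrals against $d\{x_s\}$. First I would recall from \eqref{eq:DMZ-2} that, under $\tilde\PP$,
\begin{equation*}
\hat\pi_t(f) = \hat\pi_0(f) + M_t^{\hat\pi_\cdot(fb^\#\circ h_{u_\cdot}),\A} + M_t^{\hat\pi_\cdot(df\circ h_{u_\cdot}),\A} + \int_0^t \hat\pi_s(\B f)\,ds,
\end{equation*}
so the martingale part of $\hat\pi_\cdot(f)$ is $M_t^{\hat\pi_\cdot(fb^\#\circ h_{u_\cdot})+\hat\pi_\cdot(df\circ h_{u_\cdot}),\A}$, a single local martingale of the form $M^{\A,\gamma}$ with $\gamma_s = \hat\pi_s(fb^\#\circ h_{u_s}) + \hat\pi_s(df\circ h_{u_s})$ an $E^*$-valued predictable process over $x_\cdot$. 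Taking $f\equiv 1$ kills the $df$ term (since $d1=0$), so the martingale part of $\hat\pi_\cdot(1)$ is $M^{\A,\gamma^0}$ with $\gamma^0_s = \hat\pi_s(b^\#\circ h_{u_s})$, which by the defining relation $2\sigma^\A(b) = $ (the pushforward of $b^H$) and the identification $\sigma^\A_{x}: E_x^* \to E_x$ corresponds to the $E$-valued process $\hat\pi_s(b)$; writing the bracket of $M^{\A,\gamma^0}$ with itself via \eqref{bracket} of Proposition~\ref{pr:mart-1} (applied with the predictable $S^*$-valued extension of Lemma~\ref{le:mart-2}) gives
\begin{equation*}
d\langle \hat\pi(1)\rangle_t = 2\,\sigma^\A_{x_t}(\gamma^0_t,\gamma^0_t)\,dt = \langle \hat\pi_t(b),\hat\pi_t(b)\rangle_{x_t}^E\,dt,
\end{equation*}
where the last step is just the translation between the symbol-pairing on $E^*$ and the Riemannian inner product on $E$ induced by $2\sigma^\A$ (the same convention used in \eqref{eq:mart-6} and in Example~\ref{ex:BM}).

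For the second identity I would compute $\langle \hat\pi(1),\hat\pi(f)\rangle_t$ by polarization: the martingale part of $\hat\pi_\cdot(1)$ is $M^{\A,\gamma^0}$ and that of $\hat\pi_\cdot(f)$ is $M^{\A,\gamma}$ with $\gamma_s = \hat\pi_s(fb^\#\circ h_{u_s}) + \hat\pi_s(df\circ h_{u_s})$, so by \eqref{bracket}
\begin{equation*}
d\langle \hat\pi(1),\hat\pi(f)\rangle_t = 2\,\sigma^\A_{x_t}\bigl(\gamma^0_t, \gamma_t\bigr)\,dt.
\end{equation*}
Splitting $\gamma_t$ into its two summands, the first contributes $2\sigma^\A_{x_t}\bigl(\hat\pi_t(b^\#\circ h), \hat\pi_t(fb^\#\circ h)\bigr)\,dt = \langle \hat\pi_t(fb),\hat\pi_t(b)\rangle_{x_t}^E\,dt$ exactly as before, and the second contributes $2\sigma^\A_{x_t}\bigl(\hat\pi_t(b^\#\circ h), \hat\pi_t(df\circ h)\bigr)\,dt = \hat\pi_t(df\circ h_{u_\cdot})\circ\hat\pi_t(b(u_\cdot))\,dt$, where the notation $\circ$ on the right denotes pairing of the $E$-valued process $\hat\pi_t(b)$ against the covector $\hat\pi_t(df\circ h)$ via $\sigma^\A$, i.e. $\phi\circ v := \phi(\sigma^\A(\cdot))$-style contraction as used throughout Section~\ref{se:int-pred-proc}. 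Adding the two pieces gives the claimed formula.

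\textbf{Main obstacle.} There is no deep difficulty here; the proof is essentially bookkeeping. The one point that needs care is the consistent translation between three ways of writing the same bilinear object: (a) the pairing $\sigma^\A(\alpha,\beta)$ of two covectors, (b) the $E$-inner product $\langle\cdot,\cdot\rangle^E$ induced by $2\sigma^\A$ after applying $\sigma^\A$ to identify $E^*$ with $E$, and (c) the mixed ``$\phi\circ v$'' notation pairing a covector with an $E$-vector. I would make one remark fixing the convention (matching \eqref{selection} and the isomorphism $\sigma^\Lo_y:S_y^*\to S_y$ discussed after it, together with the factor-of-$2$ convention in \eqref{bracket}), and then all three formulae follow by substitution. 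A secondary point is to check that $b^\#$ vanishing on vertical vectors means $b^\#_{u_s}\circ h_{u_s}$ genuinely represents the pull-back of $b$, so that $\hat\pi_s(b^\#\circ h_{u_s})$ and $\hat\pi_s(b)$ name the same process under the identification $E^*\cong E$; this is immediate from the construction $b^\#_y(v) = \tilde b(y)(T_yp\,v)$ with $\sigma^\A_{p(y)}(\tilde b(y)) = b(y)$ given in Section~\ref{se:basic-filtering}.
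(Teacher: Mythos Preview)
Your proposal is correct and follows essentially the same approach as the paper: read off the martingale parts of $\hat\pi_\cdot(1)$ and $\hat\pi_\cdot(f)$ from the DMZ equation~\eqref{eq:DMZ-2}, compute the bracket via $d\langle M^{\A,\alpha},M^{\A,\beta}\rangle_t = 2\sigma^\A(\alpha_t,\beta_t)\,dt$, and then translate $2\sigma^\A(\hat\pi_t(b^\#\circ h),\cdot)$ into the $E$-inner product and the pairing against $\hat\pi_t(b)$. The paper's proof is slightly terser and derives the first formula as the special case $f=1$ of the second, but the content is identical.
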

\begin{proof}
From the previous theorem
$$
\langle\hat{\pi}(1),\hat{\pi}(f)\rangle dt=\big(dM_t^{\hat \pi ( fb^\#\circ h), \A} +dM_t^{ \hat \pi_. ( df \circ h_{u_.}),\A}\big)dM_t^{ \pi ( b^\#\circ h), \A}$$
$$=2\sigma^{\A}\big(\hat{ \pi}_t ( fb^\#\circ h), \hat{\pi_t} ( b^\#\circ h)\big)dt+2\sigma^\A\big(  \hat{ \pi}_t ( df \circ h_{u_.})  ,\hat{ \pi}_t ( b^\#\circ h)\big)dt$$
$$=\langle \hat{\pi}_t ( fb)  ,\hat{ \pi}_t ( b)\rangle_{x_t} dt +\hat{\pi}_t ( df \circ h_{u_.})  \circ \hat{ \pi}_t ( b(u_.)) dt$$
since for any one form $\phi$ on $M$ we have:
\begin{eqnarray*}
\sigma^\A\big( \phi ,\hat{\pi}_t ( b^\#\circ h)\big)&=&\hat{\pi}_t\big(\langle \phi|_E, b^\#\circ h\rangle^{E^*}_.\big)\\
&=&\frac{1}{2}\hat{\pi}_t\big( \phi( b) \big)\\
&=&\frac{1}{2}\phi(\hat\pi_t(b)).
\end{eqnarray*}
This gives the second formula, from which comes the first.
\end{proof}

We can now give a version of Kushner's formula in our context:
\begin{theorem} In terms of the probability measure $\tilde{\PP}$

\begin{equs} 
\label{th:kushner}
\pi_tf=\pi_0f+\int_0^t\pi_s\B(f)ds+\int_0^t\pi_s\big(df\circ h_{u_.}\big)\left[d\{x_s\}-\pi_s(b(u_.))_s ds\right]\\
+\int_0^t\langle\pi_s(fb)-\pi_s(f) \pi_s(b),d\{x_s\}-\pi_s(b)\rangle_{x_s}.
\end{equs}
\end{theorem}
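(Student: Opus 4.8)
The plan is to derive Kushner's equation from the Duncan--Mortensen--Zakai equation (Theorem \ref{th:DMZ}) together with the Kallianpur--Striebel formula (Lemma \ref{le:Kall-Strieb}) by applying It\^o's formula to the quotient $\pi_tf = \hat\pi_tf/\hat\pi_t1$. First I would record the two semimartingale decompositions we already have: from (\ref{eq:DMZ-2}), $\hat\pi_tf = \hat\pi_0 f + \int_0^t\hat\pi_s(\B f)\,ds + M^{\hat\pi_\cdot(fb^\#\circ h),\A}_t + M^{\hat\pi_\cdot(df\circ h),\A}_t$, and the special case $f\equiv 1$ gives $\hat\pi_t1 = 1 + M^{\hat\pi_\cdot(b^\#\circ h),\A}_t$, a positive continuous $\F^{x_0}_*$-martingale under $\tilde\PP$ since $\hat\pi_t1 = \E\{Z_t^{-1}\mid \F^{x_0}_t\}$ up to the normalisation noted in the proof of Lemma \ref{le:Kall-Strieb}.

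Next I would compute $d(1/\hat\pi_t1)$ using $d(1/Y) = -Y^{-2}dY + Y^{-3}d\langle Y\rangle$ with $Y = \hat\pi_t1$, where $d\langle \hat\pi1\rangle_t = \langle\hat\pi_t(b),\hat\pi_t(b)\rangle^E_{x_t}\,dt$ by the first formula of the Lemma just before Theorem \ref{th:kushner}. Then I would expand $d(\pi_tf) = d(\hat\pi_tf \cdot (\hat\pi_t1)^{-1})$ by the product rule, producing four groups of terms: the drift $\hat\pi_s(\B f)/\hat\pi_s1 = \pi_s(\B f)$; the martingale terms $(\hat\pi_s1)^{-1}\,dM^{\hat\pi(fb^\#\circ h)}_s + (\hat\pi_s1)^{-1}\,dM^{\hat\pi(df\circ h)}_s$; the terms coming from $\hat\pi_sf\, d((\hat\pi_s1)^{-1})$; and the cross-variation term $-(\hat\pi_s1)^{-2}\,d\langle\hat\pi f,\hat\pi1\rangle_s$, for which the second formula of that Lemma gives $d\langle\hat\pi(1),\hat\pi(f)\rangle_s = \langle\hat\pi_s(fb),\hat\pi_s(b)\rangle^E_{x_s}\,ds + \hat\pi_s(df\circ h_{u_\cdot})\circ\hat\pi_s(b(u_\cdot))\,ds$. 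Throughout I would use the identity $\sigma^\A(\phi,\hat\pi_s(b^\#\circ h)) = \tfrac12\phi(\hat\pi_s(b))$ (established in the proof of that Lemma) to convert all $\A$-symbol pairings against $\hat\pi(b^\#\circ h)$ into the Riemannian inner products $\langle\,\cdot\,,\,\cdot\,\rangle^E_{x_s}$ appearing in the statement, and divide by powers of $\hat\pi_s1$ to replace every $\hat\pi_s(\,\cdot\,)/\hat\pi_s1$ by $\pi_s(\,\cdot\,)$.

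After collecting terms I expect the $\pi_s(df\circ h)$-coefficient of $d\{x_s\}$ to pick up a correction $-\pi_s(b)\,ds$ from the $\hat\pi_sf\,d((\hat\pi_s1)^{-1})$ contribution and the $df$-cross term, yielding the innovations increment $d\{x_s\} - \pi_s(b(u_\cdot))\,ds$; and the remaining terms should assemble into $\langle \pi_s(fb) - \pi_s(f)\pi_s(b),\, d\{x_s\} - \pi_s(b)\rangle_{x_s}$, the $-\pi_s(f)\pi_s(b)$ piece arising precisely from the $Y^{-3}d\langle Y\rangle$ It\^o correction together with the $\hat\pi_sf\cdot dM^{\hat\pi(b^\#\circ h)}$ product term. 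The main obstacle is purely bookkeeping: keeping the two distinct $d\{x_s\}$-integrands (the scalar-valued $\pi_s(df\circ h)$ one and the $E^*$-valued $\pi_s(b)$-pairing one) separate while making sure the quadratic-covariation cross terms between the $fb^\#$-martingale and the $1\cdot b^\#$-martingale are correctly signed, so that the $-\pi_s(f)\pi_s(b)$ centering term comes out with the right coefficient. No genuinely new ingredient is needed beyond It\^o's formula and the two bracket formulae already proved; I would present it as a direct computation starting from (\ref{eq:DMZ-2}).
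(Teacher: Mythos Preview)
Your proposal is correct and follows essentially the same route as the paper: apply It\^o's formula to the quotient $\pi_tf=\hat\pi_tf/\hat\pi_t1$ using Kallianpur--Striebel, substitute the DMZ equation (\ref{eq:DMZ-2}) for $d\hat\pi_tf$ and $d\hat\pi_t1$, and invoke the two bracket formulae of the preceding Lemma to identify the cross-variation terms. The paper writes the It\^o expansion of $X/Y$ directly rather than computing $d(1/Y)$ first and then using the product rule, but the resulting four terms are identical and the subsequent bookkeeping is the same.
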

\begin{proof}
From the definition and then Ito's formula:

\begin{eqnarray*}
d \pi_t(f)&=&d\left(\frac{\hat{\pi}_t(f)}{\hat{\pi}_t(1)}\right)\\
&=& \frac{d\hat{\pi}_t(f)}{\hat{\pi}_t(1)}-\frac{\hat{\pi}_t(f)d\hat\pi_t(1)}{(\hat{\pi}_t(1))^2}-\frac{d\hat{\pi}_t(f)d\hat\pi_t(1)}{(\hat{\pi}_t(1))^2}\\
&&+\frac{\hat{\pi}_t(f)d\hat\pi_t(1)d\hat\pi_t(1)}{(\hat{\pi}_t(1))^3}.
\end{eqnarray*}
Now substitute in the second formula of Theorem \ref{th:DMZ} and use the previous lemma.
\end {proof}

Note that $\hat \pi_t(f)$, $b$,  and $\tilde{\mathbf{ P}}$, depend on the choice of $\A$. We would like to have a version of formula \ref{th:kushner} which is independent of such choices. First note  that if $\B-b^H_1$ is over $\A_1$, 
and $\B-b^H_2$ is over $\A_2$, then the difference of the
two vector fields on $N$ descends to a vector field on $M$: if $g:M\to \R$ is smooth and $\tilde{g}=g\circ p:N\to\R$ then
$$(b^H_2-b^H_1)\tilde g=(\B-b^H_1)\tilde f-(\B-b^H_1)\tilde g
=(\A_1-\A_2)g.$$
Therefore if we set $b_0(z)=T_yp(b^H_2(y)-b^H_1(y))$ for $p(y)=z$, $z\in M$ then $\A_1=\A_2+\LL_{b_0}$, and by Remark \ref{re:Pchange}
 \begin{equation}\label{eq:Achange}
d\{x_s\}^{\A_2}=d\{x_s\}^{\A_1}+b_0ds
\end{equation}
From this we see immediately that the symbols  $d\{x_s\}-\pi_s(b)ds$, and $\pi_s(fb)-\pi_s(f) \pi_s(b)$ in  formula \ref{th:kushner} are in fact independent of the choice we made of $\A$. To relate to now classical concepts we  next discuss the first of these in more detail.

\section{Innovations and innovations processes}
Keeping the notation above, for $\alpha\in L^2_\A$, so $\alpha_t\in T^*_{x_t}M$ for $0\leqslant t<\infty $, define a real valued process $I^\alpha_t:0\leqslant t<\infty$, the  $\alpha$-\emph{innovations process}\index{innovations process}
 by
 \begin{equation}
I^\alpha_t=\int_0^t\alpha_s\left(d\{x_s\}^\A-\pi_sb(u_.)ds\right)
\end{equation}
A generalisation of a standard result about innovations processes is:
\begin{proposition}\label {pr:innovations}
The process $I^\alpha_.$ is independent of the choice of $\A$. Under $\PP^{\B,u_0}$ it is an $\F^{x_0}_*$ martingale.
\end{proposition}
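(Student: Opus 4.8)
The claim has two parts: (a) $I^\alpha_\cdot$ does not depend on the auxiliary cohesive operator $\A$ chosen in the filtering construction, and (b) under $\PP^{\B, u_0}$ the process $I^\alpha_\cdot$ is an $\F^{x_0}_*$-martingale. For (a) I would simply collect the invariance facts already established just before the statement: if $\B-b^H_1$ lies over $\A_1$ and $\B-b^H_2$ over $\A_2$, then $\A_1=\A_2+\LL_{b_0}$ with $b_0$ the projection of $b^H_2-b^H_1$, and by equation (\ref{eq:Achange}) (Remark \ref{re:Pchange}) the canonical martingale parts satisfy $d\{x_s\}^{\A_2}=d\{x_s\}^{\A_1}+b_0\,ds$. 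On the other hand, by the defining relation (\ref{b1}) for $b$, changing $\A$ by $\LL_{b_0}$ changes $b$ by exactly $b_0$ as a section of $p^*TM$; hence $d\{x_s\}^{\A}-\pi_s b(u_\cdot)\,ds$ is unchanged, and therefore so is $I^\alpha_t=\int_0^t \alpha_s\,(d\{x_s\}^\A-\pi_s b(u_\cdot)\,ds)$ for any $\F^{x_0}_*$-predictable $\alpha\in L^2_\A$. This paragraph is essentially bookkeeping with results already in hand.

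For (b) the strategy is the usual innovations argument adapted to our canonical-process setting. Fix a reference $\A$ and recall $\int_0^t \alpha_s\,d\{x_s\}^\A = M^{\A,\alpha}_t$ is, by Proposition \ref{pr:mart-1}, an $\F^{x_0}_*$-local martingale under $\PP^\A_{x_0}=p_*\PP^{\tilde\B}_{u_0}$ (Proposition \ref{pr:mart-mea}); but we are working under $\PP^\B_{u_0}$, not $\PP^{\tilde\B}_{u_0}$. So the first substantive step is to express things in a fixed measure. I would write the $\B$-diffusion $u_\cdot$ via the conditional-law decomposition of Section \ref{se:cond-law} / Theorem \ref{th:DMZ}: under $\PP^{\tilde\B}_{u_0}$ the projection $x_\cdot$ is an $\A$-diffusion and the $\B$-part contributes the drift $b^H$. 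Concretely, from the $\B$-dynamics $df(u_t)=\B f(u_t)\,dt + dM^{df,\B}_t$ and the fact that $\B=\tilde\B+\LL_{b^H}$ with $Tp(b^H)=b$, one gets that with respect to $\PP^\B_{u_0}$ the canonical process $x_\cdot=p(u_\cdot)$ satisfies: for $g\in C^2_c(M)$, $g(x_t)-g(x_0)-\int_0^t \A g(x_s)\,ds-\int_0^t dg_{x_s}(b(u_s))\,ds$ is an $\F^{u_0}_*$-martingale with the same bracket as $M^{dg,\A}$. Hence $d\{x_s\}^\A$ — the martingale part in the $\A$-sense, computed pathwise via Lemma \ref{le:mart-1} as $\int dg\circ dx - \int \delta^\A(dg)\,ds$ type objects — picks up, under $\PP^\B_{u_0}$, an extra finite-variation term $\int_0^t \langle b(u_s),\cdot\rangle\,ds$; precisely, $N^g_t := g(x_t)-g(x_0)-\int_0^t\A g(x_s)\,ds - \int_0^t dg(b(u_s))\,ds$ is an $\F^{u_0}_*$-martingale under $\PP^\B_{u_0}$.

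Granting that, the argument finishes cleanly. For $\alpha_t = \sum_j h^j_t\,dg_j(x_t)$ with $h^j$ bounded $\F^{x_0}_*$-predictable and $g_j\in C^\infty_c(M)$ (the representation as in Proposition \ref{pr:mart-1}), set $\bar N^\alpha_t = \sum_j\int_0^t h^j_s\,dN^{g_j}_s = \int_0^t \alpha_s(d\{x_s\}^\A) - \int_0^t \alpha_s(b(u_s))\,ds$, which is an $\F^{u_0}_*$-local martingale under $\PP^\B_{u_0}$. Now $I^\alpha_t = \int_0^t\alpha_s(d\{x_s\}^\A - \pi_s b(u_\cdot)\,ds) = \bar N^\alpha_t + \int_0^t \alpha_s\big(b(u_s)-\pi_s b(u_\cdot)\big)\,ds$. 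Taking $\E^{\B,u_0}\{\cdot\mid \F^{x_0}\}$ and using Proposition \ref{pr:mart-3}(i)–(ii) (the filtration $\F^{p(u_0)}_*$ is immersed in $\F^{u_0}_*$, so $\F^{x_0}_*$-conditional expectations of $\F^{u_0}_*$-martingales are $\F^{x_0}_*$-martingales), together with the definition $\pi_s b(u_\cdot) = \E^{\B,u_0}\{b(u_s)\mid \F^{x_0}\}$ which exactly kills the Lebesgue integral term in conditional expectation, one obtains that $\E^{\B,u_0}\{I^\alpha_t\mid \F^{x_0}_s\} = I^\alpha_s$; a localization/stopping-time argument (as in Proposition \ref{pr:mart-1}) upgrades local martingale to martingale under the $L^2_\A$ hypothesis on $\alpha$.

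\textbf{Main obstacle.} The delicate point is the measure change in step (b): making rigorous that, working on the canonical space $\C_{u_0}(N)$ under $\PP^\B_{u_0}$ rather than $\PP^{\tilde\B}_{u_0}$, the object $d\{x_s\}^\A$ (defined via the $\A$-martingale-problem bracket) acquires precisely the drift $-b(u_s)\,ds$ relative to an $\F^{u_0}_*$-martingale. This is where the Girsanov relation $d\PP^{\tilde\B}_{u_0}/d\PP^\B_{u_0}=Z_T$ of Section \ref{se:basic-filtering} and the identity $2\sigma^\B(b^\#)=b^H$ enter, and one must be careful that $b^\#$ was chosen to vanish on vertical vectors so that $\alpha_s(b(u_s))$ is well-defined intrinsically on $M$. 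Everything else — the representation of $\alpha$, the immersion of filtrations, and the conditional-expectation cancellation — is already packaged in the lemmas and propositions above and should go through routinely.
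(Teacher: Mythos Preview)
Your proposal is correct and follows essentially the same route as the paper. For part (a) you do exactly what the paper does: cite the identity $\A_1=\A_2+\LL_{b_0}$, equation (\ref{eq:Achange}), and the corresponding shift in $b$, which together make $d\{x_s\}^\A-\pi_s b\,ds$ invariant.

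For part (b) the paper's argument is the same in content but slightly more streamlined in presentation. Rather than re-deriving that $g(x_t)-g(x_0)-\int_0^t\A g(x_s)\,ds-\int_0^t dg(b(u_s))\,ds$ is an $\F^{u_0}_*$-martingale under $\PP^\B$, the paper lifts at once via Proposition~\ref{pr:mart-2}(1) to write $\int_0^t\alpha_s\,d\{x_s\}^\A=\int_0^t p^*(\alpha_s)\,d\{u_s\}^{\tilde\B}$, then uses Remark~\ref{re:Pchange} (the elementary identity $M^{\cdot,\Lo}=M^{\cdot,\Lo+\LL_b}+\int\cdot(b)\,ds$) to convert $\tilde\B$ to $\B$ and pick up exactly the drift $\int_0^t\alpha_s(b(u_s))\,ds$. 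This gives
\[
I^\alpha_t=\int_0^t p^*(\alpha_s)\,d\{u_s\}^\B+\int_0^t\alpha_s\big(b(u_s)-\pi_s b\big)\,ds,
\]
and the paper finishes by checking directly that $\E^\B[\chi_Z(I^\alpha_t-I^\alpha_r)]=0$ for $Z\in\F^{x_0}_r$: the first term is an $\F^{u_0}_*$-martingale increment and $\chi_Z\in\F^{u_0}_r$, while for the second $\chi_Z\alpha_s$ is $\F^{x_0}$-measurable so the defining property of $\pi_s b$ kills it.

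One small point: your appeal to Proposition~\ref{pr:mart-3} is unnecessary and slightly inapt here, since that proposition was proved in the Markovian setting where $\B$ is over $\A$ (so $p(u_\cdot)$ is Markov), whereas in the present chapter $p(u_\cdot)$ is generally not Markov under $\PP^\B$. The paper avoids this by the direct $\E[\chi_Z\cdot]$ computation, which only uses $\F^{x_0}_r\subset\F^{u_0}_r$ and the definition of $\pi_s b$; your argument works the same way once you drop the reference to Proposition~\ref{pr:mart-3}. Likewise, the Girsanov density of Section~\ref{se:basic-filtering} is not needed here---Remark~\ref{re:Pchange} (or equivalently your direct observation about $N^g$) is purely a statement about how the martingale parts relate under a drift shift.
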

\begin{proof}
The observations just made show it is independent of the choice of $\A$. It is clearly also
adapted to $\F^{x_0}_*$.  To prove the martingale property note first that by Proposition \ref{pr:mart-2}  and formula (\ref{eq:Achange})
\begin{eqnarray*}
\int_0^t\alpha_s d\{x_s\}^\A&=&\int_0^tp^*(\alpha_s)d\{u_s\}^{\B-\LL_{b^H}}\\
&=&\int_0^tp^*(\alpha_s)d\{u_s\}^{\B}-\int_0^t p^*(\alpha_s)b^H(u_s)ds\\
&=&\int_0^tp^*(\alpha_s)d\{u_s\}^{\B}-\int_0^t \alpha_s(b(u_s))ds.
\end{eqnarray*}

From this we see that if $0<r<t$ and $Z\in \sigma\{x_s:0\leqslant s\leqslant r\}$ then 
\begin{eqnarray*}
&&\E^\B\chi_Z\Bigg\{\int_r^t\alpha_s\left(d\{x_s\}^\A-\pi_sb(u_.)\right)ds\Bigg\}
\\&&=\E^\B\chi_Z\Bigg\{\int_r^t\alpha_s\left(b(u_s)-\pi_sb(u_.)\right)ds\Bigg\}=0
\end{eqnarray*}
giving the required result.
\end {proof}
If we fix a metric connection, $\Gamma$, on $E$, as described in Example \ref{ex:BM} we can take the canonical Brownian motion, $B^{\Gamma,\A}$ say,  on $E_{x_0}$ determined by $\A$ and $\Gamma$. Then, by equation (\ref{eq:mart-6}), we can write
$d\{x_s\}^\A-\pi_s(b(u_.))ds=\paral_s dB^{\Gamma'\A}-\pi_s(b(u_.))ds$. In terms of the the $\PP$ Brownian motion,  $B^\Gamma$, on $E_{x_0}$, which is the martingale part under $\PP$ of the $\Gamma$- stochastic anti-development of $x_.$ we can  define an $E_{x_0}$-valued process,  $z^\Gamma_t:0\leqslant t<\infty$, by 
\begin{equation}
z^\Gamma_t=B^{\Gamma}_t+\int_0^t(\paral_s)^{-1} (b(u_s)-\pi_s(b(u_.))ds.
\end{equation}

A candidate for the \emph{innovations process} of our signal -observation system is the stochastic development , $\nu^\Gamma_.$ say, of $z^\Gamma_.$.  under $\Gamma$. This can be defined by using the canonical sde on the orthonormal frame bundle of $E$,
namely $$d\tilde{\nu}_t=X( \tilde{\nu}_t)(\tilde{\nu_0})^{-1}\circ dz_t$$
for a fixed frame  $\nu_0$ for $E_{x_0}$. Here $$X(\mu)(e)=h^\Gamma_\mu(\mu(e)).$$
for $\mu:\R^p\to E_m$ a frame  in at some point $m\in M$, and $e\in \R^p$, for $p$ the fibre dimension of $E$. The process $\nu^\Gamma_.$ is then the projection of $\tilde{\nu}_.$ on $M$. For example see \cite {Elworthy-Stflour}. It will satisfy the Stratonovich equation
\begin{equation}
d\nu^\Gamma_t=\paral_t\circ dz_t
\end{equation}
where the parallel translation is now along the paths of $\nu^\Gamma_.$. 
Let $\Theta:C_0(M)\to C_0(M)$ be the map given by $\Theta_(\sigma)_t=\nu^\Gamma(\sigma)_t$, treating $z^\Gamma_.$ as defined on $C_0(M)$.  Let $\mathcal{D}=\mathcal{D}^\Gamma: C_0(T_{x_0}M\to C_{x_0}M$ be the stochastic development using $\Gamma$ with inverse $\mathcal{D}^{-1}$. We will continue to assume that there is no explosion so that these maps are well defined. For example,
$$z(x_.)=\mathcal{D}^{-1} \Theta (x_.).$$
We  define a semi-martingale, on $M$ to be a $\Gamma$\emph{-martingale}\index{$\Gamma$-martingale}
 if it is the stochastic develoment using $\Gamma$ of a local martingale, see the Appendix, Section\ref{se:semi-mart}.
\begin {theorem}
For each metric connection $\Gamma$ on $E$ the innovations process $\nu^\Gamma$ is a $\Gamma$-martingale. If $\Gamma$ is chosen so that the $\A$-diffusion process is a $\Gamma$-martingale under $\PP^{\A}$ then for $\alpha:[0,\tau)\times C_{x_0}M\to T^*M$ which is predictable and lives over $x_.$, provided the integrals exist, 
\begin{equation}
I^\alpha\circ\Theta(x_.)=\big(\Gamma\big)\int_0^.\alpha(\nu^\Gamma(x_.)_.)_s \!d\nu^\Gamma(x_.)_s-\int_0^.\alpha(x_.)_s \overline{b}(x_.)_s ds
\end{equation}
where $\overline{b}(-)_s: C_{x_0}\to TM$ is the conditional expectation, 
$$\overline{b}_s=\E\{b(u_s)| p(u_.)=x_.\},$$
and has $\overline{b}(x_.)_s\in T_{x_s}$ almost surely for all $s$.
\end{theorem}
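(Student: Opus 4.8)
The plan is to verify the claimed identity essentially by substituting the definitions of the objects involved and then matching the resulting stochastic integrals. First I would recall the main players: $\nu^\Gamma$ is the stochastic development under $\Gamma$ of the process $z^\Gamma$, where by construction $z^\Gamma_t = B^\Gamma_t + \int_0^t (\parals_s)^{-1}\big(b(u_s)-\pi_s(b(u_\cdot))\big)\,ds$, and $B^\Gamma$ is the canonical $\PP$-Brownian motion on $E_{x_0}$ arising as the martingale part of the $\Gamma$-anti-development of $x_\cdot$ (as in Example~\ref{ex:BM}). The first step is to establish that $\nu^\Gamma$ is a $\Gamma$-martingale: since $\nu^\Gamma = \D^\Gamma(z^\Gamma)$ and $\D^\Gamma$ of a local martingale is by definition (Appendix, Section~\ref{se:semi-mart}) a $\Gamma$-martingale, this reduces to showing $z^\Gamma$ is a local martingale under $\PP$. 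But that is precisely what Proposition~\ref{pr:innovations} delivers: $z^\Gamma_t$ is, up to the fixed isometry $(\parals_s)^{-1}$, the innovations process $I^\alpha$ read through $B^\Gamma$, and the drift correction $\int_0^t(\parals_s)^{-1}(b(u_s)-\pi_s b(u_\cdot))\,ds$ is exactly the compensator that makes the bracket-pairing with every $M^{df,\A}$ vanish conditionally --- so $z^\Gamma$ is an $\F^{x_0}_\ast$-local martingale, hence $\nu^\Gamma$ is a $\Gamma$-martingale.

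Next, assuming $\Gamma$ is chosen so that the $\A$-diffusion $x_\cdot$ is itself a $\Gamma$-martingale under $\PP^\A$ (such $\Gamma$ exists by Section~2.1 of \cite{Elworthy-LeJan-Li-book}, as noted at the end of Example~\ref{ex:BM}), I would use the identity from Example~\ref{ex:BM}, namely $\int_0^t \alpha_s\,d\{x_s\}^\A = (\Gamma)\int_0^t \alpha_s\,dx_s$ together with the change-of-integrator formula $\int_0^t\alpha_s\,d\{x_s\}^\A = \int_0^t(\alpha_s\circ \parals_s)\,dB^{\Gamma,\A}_s$. The point is that $\Theta(x_\cdot) = \nu^\Gamma(x_\cdot)$ is obtained by developing $z^\Gamma$, whose martingale part is $B^\Gamma$ and whose drift is the $\parals^{-1}$-transport of $b - \overline b$. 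Pulling $\alpha$ back along $\Theta$ and re-expressing the $\Gamma$-Itô integral $(\Gamma)\int_0^\cdot \alpha(\nu^\Gamma_\cdot)_s\,d\nu^\Gamma_s$ through parallel translation (as the Stratonovich/Itô form $\int \alpha\circ\parals\,\circ dz^\Gamma$ minus the $\delta$-correction, via Lemma~\ref{le:mart-1}), the martingale part becomes $\int_0^\cdot (\alpha\circ\parals_s)\,dB^\Gamma_s$ which matches $I^\alpha\circ\Theta$'s martingale part, while the finite-variation part contributes exactly $-\int_0^\cdot \alpha(x_\cdot)_s\,\overline b(x_\cdot)_s\,ds$ after recognising that $\parals_s(\parals_s)^{-1}\overline b = \overline b$. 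Identifying $I^\alpha$ with $\int_0^\cdot \alpha_s(d\{x_s\}^\A - \pi_s b(u_\cdot)\,ds)$ and tracking how each term transforms under the development map $\Theta$ then yields the stated equality. The fact that $\overline b(x_\cdot)_s \in T_{x_s}M$ almost surely is inherited from the construction of $\gamma_t = \E^{\B,u_0}\{\sigma^\B_{u_t}(T_{u_t}p)^* \mid \F^{p(u_0)}\}$ in Section~\ref{se-filt}, where it was shown $\gamma_t \in E_{p(u_t)}$, combined with $\overline b_s = \sigma^\A_{x_s}(\beta_s)$ in the earlier notation.

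The main obstacle I anticipate is the careful bookkeeping between the three layers of description --- the $\B$-diffusion $u_\cdot$ on $N$, its projection $x_\cdot$ on $M$, and the developed/anti-developed Euclidean pictures on $E_{x_0}$ --- and in particular making precise the interplay between Stratonovich and Itô (equivalently, $\Gamma$-martingale versus local-martingale) forms when one transports a predictable one-form $\alpha$ along the random parallel translation and simultaneously along the map $\Theta$. One has to be scrupulous that the quadratic-variation corrections introduced by $\Theta$ cancel correctly, and that all the integrals in question genuinely exist (which is why the statement carries the proviso ``provided the integrals exist''). Everything else is a direct consequence of Proposition~\ref{pr:innovations}, the identities in Example~\ref{ex:BM}, and Lemma~\ref{le:mart-1}; no new estimate is needed, only a clean translation between notations.
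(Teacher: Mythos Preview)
Your approach is essentially the same as the paper's: both parts rest on Proposition~\ref{pr:innovations} (for the $\Gamma$-martingale claim) and then on unwinding the definitions of $I^\alpha$, $\Theta$, $\nu^\Gamma$, and the $\Gamma$-It\^o integral via parallel translation and anti-development. The paper's argument is just terser: once one notes that the extra hypothesis on $\Gamma$ gives $\parals_s^{-1}\,d\{x_s\}^\A = d(\mathcal{D}^{-1}(x_\cdot))_s$, the formula for $I^\alpha$ and the definition of $(\Gamma)\int\alpha\,d\nu^\Gamma$ as $\int\alpha\,\parals^{\nu^\Gamma}\,d(\mathcal{D}^{-1}\nu^\Gamma)$ match directly upon substituting $\Theta(x_\cdot)=\nu^\Gamma(x_\cdot)$.

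One comment: your invocation of Lemma~\ref{le:mart-1} and the worry about ``Stratonovich/It\^o corrections introduced by $\Theta$'' are unnecessary detours. The $\Gamma$-It\^o integral is \emph{defined} (Appendix, Section~\ref{se:semi-mart}) as an ordinary It\^o integral against the anti-development, so no Stratonovich--It\^o conversion or $\delta^{\Lo}$-correction enters; and since $\nu^\Gamma=\mathcal{D}(z^\Gamma)$ by construction, $\mathcal{D}^{-1}(\nu^\Gamma)=z^\Gamma$ exactly, with no quadratic-variation residue. Dropping that machinery would make your write-up match the paper's proof almost line for line.
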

\begin{proof}
The fact that $\nu^\Gamma$ is a $\Gamma$-martingale is immediate from the definition and Proposition
\ref{pr:innovations}.  To prove the claimed identity note that our extra assumption on $\Gamma$ implies that  $\parals_s^{-1}d\{x_s\}^\A=d(\mathcal{D}^{-1}(x_.))_s$. Therefore\begin{equation}
I^\alpha(x_.)=\int_0^.\alpha_s (x_.) \parals_s d\mathcal{D}^{-1}(x_.)_s-\int_0^.\alpha_s(x_.)\bar{b}(x_.)_s ds
\end{equation}
while by definition
\begin{equation}
\big(\Gamma\big)\int_0^.\alpha_sd\nu^\Gamma_s(x_.)=\int_0^. \alpha_s(\nu^{\Gamma}_s(x_.)) \parals_s^{\nu^\Gamma_.(x_.)} d\big(\mathcal{D}^{-1}(\nu^\Gamma(x_.))\big)_s
\end{equation}
where the superscript on the parallel translation symbol indicates that it is along the paths $\nu^\Gamma_.(x_.)$. Our identity follows.
\end{proof}
\begin {remark}\label{re:innov}
\begin{enumerate}
\item[{(1)}]  For $\Gamma$ such that the $\A$-process is a $\Gamma$ martingale we can easily see that 
$\Theta$ has an adapted inverse. Indeed its inverse is defined  almost surely by $$ \Theta^{-1}=\mathcal{D}\circ {\mathrm Mart}^{\PP^A}\circ\mathcal{D}^{-1}$$
where ${\mathrm Mart}^{\PP^A}$ denotes the operation of taking the martingale part under the probability measure $\PP^\A$. 
\item[{(2)}]  If we are given a  connection $\Gamma$ on $E$ we could make our choice of  $\A$ so that its diffusion process gives a $\Gamma$ martingale. This specifies $\A$ uniquely and  might be more natural sometimes, for example in the classical case with  $M=\R^n$.
\item[{(3)}] The results and earlier discussion still hold if $\Gamma$ is not a metric connection. However then $B_.^{\Gamma,\A}$ cannot be expected to be a Brownian motion. The connection could even be on $TM$ rather than on $E$ in which case  $B_.^{\Gamma,\A}$ will be a local martingale  in $T_{x_0}M$. This will be a natural procedure when $N=\R^n$, using the standard flat connection.
\end{enumerate}
\end{remark}

\section{Classical Filtering} 
For an example of the situation treated above consider a signal process $(z_t, 0\leqslant t\leqslant T)$ on $\R^d$ satisfying an SDE \begin{equation}
dz_t=V(z_t, x_t)dW_t+\beta(z_t, x_t)dt 
\end{equation}
with $(x_t, 0 \leqslant t\leqslant T)$, the observation process, taking values in $\R^n$ and satisfying:
\begin{equation}
dx_t= X^{(1)}(x_t)dB_t+ X^{(2)}(x_t)dW_t+b(z_t, x_t)dt.
\end{equation}
Here $B_.$ and $W_.$ are independent Brownian motions of dimension $q$ and $p$ respectively. We then take $N=\R^d\times \R^n$ and $M=\R^n$, with $p:N\to M$ the projection. We set $u_t=(z_t,x_t)$ so that 
\begin{equation}
\begin{array}{ll}
\B f(z,x)=&\frac{1}{2}D^2_{1,1}f(V^i(z,x),V^i(z,x)) +D_1f(\beta(z,x))  \\ 
&+\frac{1}{2}D^2_{2,2}f(X^{(1),i}(x),X^{(1),i}(x))+\frac{1}{2}D^2_{2,2}f(X^{(2),j}(x), X^{(2),j}(x))\\
&+ D_2f(z,x)(b(z,x))+D^2_{1,2}f(z,x)(V^i(z,x),X^{(1),i}(z, x))
\end{array}
\end{equation}
using the repeated summation convention where $i$ goes from $1$ to $p$ and $j$ from $1$ to $q$, with the $V^j$ referring to the components of $V$ and similarly for $X^{(1),i}$ and $X^{(2),j} $. Also $D^2_{l,m}$  refers to the second partial Frechet derivative, mixed if $l\not=m$, etc.

The filtering problem would be to find $\E\{g(z_t)\;|\; x_s:0\leqslant s\leqslant t\}$ for suitable $g:\R^d\to \R$. This would fit in with the discussion above by defining $f:\R^d\times \R^n\to \R$ by
$f(z, x)=g(z)$. Note that we have allowed feedback from the signal to the observation; usually only the special case where $V$ and $\beta$ are independent of $x$ is considered. Also we have allowed the noise driving the signal to also affect the observations (``correlated noise"). This can give a non-trivial connection,  in which case the terms involving horizontal derivatives of $f$ will not vanish even for $f$ independent of $x$. This vanishing would occur otherwise (i.e. for uncorrelated noise) so that in that case
the formula in Theorem \ref{th:DMZ} reduces to the usual DMZ equation, for example as in \cite{Pardoux-Nato} or \cite {Pardoux-Stflour}.

Our basic assumptions are smoothness of the coefficients, non-explosion (for simplicity of exposition), and the cohesiveness of our observation process. By the latter we mean that  for all $x\in \R^n$ and $z\in \R^d$ the image of  the map $(e^1,e^2)\mapsto X^1(x)(e^1)+X^2(X)(e^2)$ from $\R^q\times \R^p$ to $\R^n$ contains $b(z,x)$ and has  dimension independent of $x$. Some bounds are needed on $b$ to ensure the existence of its conditional expectations.

To carry out the procedure for the signal and observation given above we must first identify the horizontal lift operator  determined by  $\B$. For this for each $x\in M$ let $Y_x:\R^n\to\R^{ p+q}$ be the inverse of the restriction of the map $(e^1,e^2)\mapsto X^1(x)(e^1)+X^2(X)(e^2)$, from $\R^q\times \R^p$ to $\R^n$ ,  to the orthogonal complement of its kernel. Then from Lemma \ref{factorization-connection} we see that the horizontal lift $h_u: \R^n\to \R^d\times \R^n$ is given by 
\begin{equation}
h_u(v)=(V(z,x)\circ Y_x(v), v) \quad u=(z,x)\in \R^d\times \R^n.
\end{equation}

\noindent
A natural choice of $\A$ is 
$$\A(f)(x)=\frac{1}{2}D^2_{2,2}f\left(X^{(1),i}(x),X^{(1),i}(x)\right)
+\frac{1}{2}D^2_{2,2}f\left(X^{(2),j}(x), X^{(2),j}(x)\right).$$
Having done that  the `$b$' of our general discussion is just the drift $b: \R^d\times \R^n\to \R^n$ of our observation's stochastic differential equation. Moreover for suitable $T^*M$-valued processes $\alpha_.$ we have the $\alpha$-innovations process
$$ I^\alpha_t= \int_0^t \alpha(x_s)\left(X^{(1)}(x_s)dB_s+X^{(2)}(x_s)dW_s\right)
+\int_0^t \alpha(x_s)\left(b(z_s,x_s)-\bar{b}(x)_s\right)ds,$$
where $\bar{b}(\sigma)=\E^\B\{b(z_s,x_s)\;|\;x_r=\sigma_r \quad 0\leqslant r\leqslant s\}$.

From Theorem \ref{th:kushner} , Kushner's formula , given smooth $g:\R^d\to \R$, one has \begin{eqnarray*}
\pi_t g=&g(z_0)+\int_0^t\big( \pi_s \frac{1}{2}D^2_{1,1}g(-)(V^i(-),V^i(-)) +\pi_s D_1g(-)(\beta(-))\big)ds\\
&+I^{\pi_s\big(dg(-)V(-)\circ Y\big)}_t+I^{\langle \bar{gb}_s- \bar{g}_s \bar{b}_s,-\rangle_{x_s}}_t .
\end{eqnarray*}
This can be compared, for example, with the formula given in the remark on page 85 of \cite{Pardoux-Stflour}, following the proof of Proposition 2.2.5 there. Alternatively see \cite {Pardoux-Nato}.

\noindent
Using the standard flat connection of $R^n$ we get the innovations process given by 
$$\nu_t= x_0+\int_0^t \left(X^{(1)}(x_s)dB_s+X^{(2)}(x_s)dW_s\right)+\int_0^t \left(b(z_s,x_s)-\bar{b}(x)_s\right)ds.$$

\pagebreak

\section{Examples}
%THIS NEEDS A LOT OF CHANGING-MAYBE LEFT TO THE FINAL VERSION?
Consider the stochastic partial differential equation \index{stochastic partial differential equation}%
 on $L^2([0,1];\R^p)$:
$$du_t(x)=\Delta u_t(x)+\sum_{i=1}^m\Phi_i(x,u_t(x))dB_t^i$$
where $(B_t^i)$ are independent Brownian motions. For $p>1$ it can be considered as a system of equations.  One natural question is
to find the law of $u_t$ given that of $u_s(x_0),0\leqslant s\leqslant t$ for some given point $x_0$,
or to find the conditional law of $u_t$ given $u_s(x_0), 0\leqslant s \leqslant t$. Here we indicate briefly how the approach we have been following may sometimes be applied to this or similar problems. For simplicity we take $p=1$, so our ``observations" process is one dimensional; $M=\R$. 

Let $y_t=u_t(x_0)$. It satisfies:
$$dy_t=(\Delta u_t)(x_0)dt+\sum_{i=1}^m\Phi_i(x_0, y_t)dB_t^i.$$
Because of the drift term we cannot expect this to be Markovian so we will have to remove  the term $(\Delta u_t)(x_0)dt$ by a Girsanov transformation. 

 Let $(e_i)$ be the standard  orthonormal base of 
$\R^m$. Define $$ \Phi: L^2([0,1];\R)\times \R^m\to L^2([0,1];\R)$$ and 
$$\tilde{\Phi}:
R\to \mathcal{L}(\R^m;\R)$$ by $$\Phi(u)(e)(x)= \sum_{i=1}^m\Phi_i(x,u(x))\langle e,e_i \rangle$$
and $$\tilde \Phi(z)(e):=\sum_{i=1}^m\Phi_i(x_0,z)\langle e, e_i\rangle,$$
 respectively.
Consider $T_z\R $, identified with $\R$ and furnished with the metric induced by $\tilde \Phi(z)$:
$$\langle v_1, v_2\rangle_z={v_1v_2\over \sum_{i=1}^m (\tilde\Phi_i(z))^2}.$$
To have cohesivity and to be able to apply the Girsanov-Maruyama-Cameron-Martin theorem this must be well defined, i.e. the denominator must never vanish, and it must determine a non-explosive Brownian motion.  If these conditions hold, we still  have to be sure that the Girsanov transformed S.P.D.E has solutions existing for all time and that we can apply the martingale method approach used in the proof of \ref {th-GMCM}. Alternatively we can try to apply one of the standard tests to show that the local  martingale which arises is a true martingale. 
First we apply Lemma \ref{factorization-connection} to obtain the horizontal lift map. For this we need the dual map 
$\tilde\Phi^*(z): \R\to \R^m$ is given by: $$\Phi^*(z)(1)={1\over \sum_{i=1}^m (\Phi_i(x_0, z))^2} \sum_{j=1}^m \Phi_j(x_0, z)e_j.$$
Then from equation (\ref{eq:hor-lift}) the horizontal lift $h_u: T_{u(x_0)}\R \to L^2([0,1];\R)$ at a function $u$ is given by 
$$ h_u(1)(x)=\Phi(x,u(x))\circ \tilde\Phi^*(u(x_0)).$$
In particular a natural choice of drift $b^h$ to remove by the Girsanov-Maruyama-Cameron-Martin theorem, namely  $ b^h(u)=h_u(\triangle u(x_0))$, is given by 
\begin{equation}
b^h(u)(x)={\sum_{j=1}^m \Phi_j(x_0, u(x_0))\Phi_j(x_0,u(x)) \over \sum_{k=1}^n (\Phi_k(x_0, u(x_0)))^2} \triangle u(x_0).
\end{equation}
Making the change of probability to $\tilde{\PP}$ we see that our SPDE becomes
$$du_t(x)=\Delta u_t(x)- {\sum_{j=1}^m \Phi_j(x_0, u_t(x_0))\Phi_j(x,u_t(x)) \over \sum_{k=1}^n (\Phi_k(x_0, u_t(x_0)))^2} \triangle u_t(x_0)               +\sum_{i=1}^m\Phi_i(x,u_t(x))d\tilde{B}_t^i$$
for new, independent Brownian motions $\tilde {B}^1,...\tilde {B}^m$ and has the decomposition
 \begin{eqnarray*}
&&du_t(x)
=[{\sum_{j=1}^m \Phi_j(x_0, u_t(x_0))\Phi_j(x,u_t(x)) \over \sum_{k=1}^n (\Phi_k(x_0, u_t(x_0)))^2}\Phi_i(x_0, u_t(x_0))\tilde{B}_t^i]\\
&& + [\left(\triangle u_t(x)-{\sum_{j=1}^m \Phi_j(x_0, u_t(x_0))\Phi_j(x,u_t(x)) \over \sum_{k=1}^n (\Phi_k(x_0, u_t(x_0)))^2} \triangle u_t(x_0)\right)dt\\
&&+\sum_{i=1}^m\left( \Phi_i(x, u(x)_t)-{\sum_{j=1}^m \Phi_j(x_0, u_t(x_0))\Phi_j(x,u_t(x)) \over \sum_{k=1}^n (\Phi_k(x_0, u_t(x_0)))^2}\Phi_i(x_0, u_t(x_0))\right )d\tilde{B}_t^i].
\end{eqnarray*}
In this decomposition the term in the first square brackets relates to the horizontal lift of the $\A$-process , while that in  the second is the vertical component. They are independent (under $\tilde{\PP}$), given $u$ at $x_0$.

We could continue by applying the Kallianpur -Striebel formula, Lemma \ref{le:Kall-Strieb} or go directly to our version of Kushner's formula, Theorem \ref{th:kushner}. In that formula the operator $\B$ will be the infinite dimensional diffusion operator on $L^2([0,1];\R)$ which is the generator of the solution of our SPDE, so there are extra analytical problems.  However there are cases where the situation is fairly straightforward. For example:

\begin{enumerate}
\item[{(1)}] $\Phi_i(z,u)=\phi_i(z)$, where the vector $\{\phi_1(z), \dots, \phi_m(z)\}$ never vanishes for any $z$. In this case $y_t$ is basically Gaussian. 
\item[{(2)}]  $\Phi(z,u)=u$ with one dimensional noise $B_t$, in which case the solution of the SPDE is $u_t(x)={1\over \sqrt{2\pi}t} e^{-{x^2\over 2t}}e^{B_t- {t^2\over 2}}$.
\end{enumerate}

\chapter{The Commutation Property}
\label{ch:comm}

In certain cases the filtering is in a sense trivial: the process decomposes into the observable
and an independent process. From the geometric point of view this means the commutation of the vertical operator $B^V$ and the horizontal operator $\A^H$. See  
Theorem \ref{th:equivs} below.

\noindent
For $p$ a Riemannian submersion (defined in Chapter \ref{ch:Riem.sub}  below) with totally geodesic fibres and $\B$ the Laplacian, 
Berard-Bergery \& Bourguignon \cite{Berard-Bergery-Bourguignon} show that $\A^H$ and $\B^V$ commute. Their proof is based on the result of R.Hermann \cite{Hermann}

\begin{theorem}\label{th:hermann}[R.Hermann]
 A Riemannian submersion $p:N\to M$ has totally geodesic fibres iff the Laplace-Beltrami operator of $N$ commutes with all  Lie derivations by horizontal lifts of vector fields on $M$.
 \end{theorem}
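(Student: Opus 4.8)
\textbf{Proof proposal for Theorem~\ref{th:hermann}.}

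The plan is to translate both sides of the equivalence into statements about the second fundamental form of the fibres via the O'Neill-type formulas for Riemannian submersions, and then compare. First I would set up notation: write $\Delta_N$ for the Laplace--Beltrami operator of $N$, decompose $TN = \HH \oplus \VV$ into horizontal and vertical bundles, and for a vector field $U$ on $M$ write $\bar U$ for its horizontal lift. The key object is the mean curvature vector field $H$ of the fibres: the fibres are totally geodesic iff their second fundamental form vanishes identically, which in turn forces $H \equiv 0$. The main computation is to expand $[\Delta_N, \LL_{\bar U}]$ acting on $C^\infty(N)$ in terms of an adapted local orthonormal frame $\{e_i\}$ (horizontal) and $\{v_\alpha\}$ (vertical), using the Bochner/Weitzenb\"ock expression $\Delta_N f = \operatorname{trace}\nabla^2 f$ and the standard submersion identities for $\nabla_{\bar X}\bar Y$, $\nabla_{\bar X} v$, $\nabla_v \bar X$, $\nabla_v w$ in terms of the O'Neill tensors $A$ (which measures non-integrability of $\HH$) and $T$ (the second fundamental form of the fibres).

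The heart of the argument is the following: for $f$ pulled back from $M$, $\Delta_N(f\circ p) = (\Delta_M f)\circ p - df(H)$, so already $H$ enters; more generally, commuting $\Delta_N$ past $\LL_{\bar U}$ produces, after cancellation of the genuinely second-order terms (which cancel because $\LL_{\bar U}$ differentiates along a Killing-type direction only to the extent the metric is preserved --- here one uses that $\bar U$ is horizontal and $p$ a submersion), a first-order operator whose coefficients are built from $T$, $A$, and their covariant derivatives. The claim is that this residual operator vanishes for \emph{all} $U$ if and only if $T \equiv 0$. The ``if'' direction is the cleaner one: when $T=0$ the fibres are totally geodesic, the O'Neill identities simplify drastically, and one checks $[\Delta_N,\LL_{\bar U}]f = \widetilde{[\Delta_M, \LL_U]}\,\widehat f$-type terms cancel by the submersion structure, so horizontal and vertical Laplacians commute with horizontal Lie derivatives. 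For the ``only if'' direction I would argue contrapositively: if $T \not\equiv 0$ at some point $u_0$, choose $U$ on $M$ so that $\bar U(u_0)$ pairs nontrivially with the relevant component of $T$, and exhibit a test function $f$ (for instance one that is vertical near $u_0$, i.e. constant along horizontal directions to first order, times a bump) on which $[\Delta_N,\LL_{\bar U}]f(u_0) \neq 0$; the point is that $\LL_{\bar U}$ moves the fibre $p^{-1}(p(u_0))$ and the failure of total geodesy makes $\Delta_N$ restricted to the moved fibre differ at first order from the transported $\Delta_N$ of the original fibre.

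I expect the main obstacle to be organizing the O'Neill-tensor bookkeeping so that the second-order terms visibly cancel and only the $T$-dependent first-order term survives --- this is a genuine but routine computation in a well-chosen adapted frame, and the subtlety is keeping track of which derivatives of $f$ are horizontal versus vertical and using $\nabla_{v}\bar U = \frac12 [v,\bar U]$-type relations (valid since $\bar U$ is a horizontal lift) to avoid spurious terms. A secondary point requiring care is the direction of the biconditional in the ``only if'' half: one must ensure the test function can be chosen to detect \emph{any} nonzero value of $T$, which is where allowing \emph{all} horizontal lifts (not just a fixed one) in the hypothesis is used essentially. Since the statement is attributed to Hermann~\cite{Hermann}, I would in the write-up either reproduce this computation in the adapted frame or, more economically, cite \cite{Hermann} for the original proof and \cite{Berard-Bergery-Bourguignon} for the formulation we use, and then proceed to derive the commutativity of $\A^H$ and $\B^V$ as the corollary actually needed in Theorem~\ref{th:equivs}.
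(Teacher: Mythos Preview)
The paper does not prove this theorem. It is stated as Theorem~\ref{th:hermann} with the attribution ``[R.~Hermann]'' and is immediately used, without proof, as input to the discussion that follows; the only supporting material is the citation to \cite{Hermann} (and implicitly \cite{Berard-Bergery-Bourguignon}, where the commutation consequence for $\A^H$ and $\B^V$ is drawn). You anticipated exactly this in your final paragraph, and that is the correct disposition here: cite the result and move on.

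Your sketched argument via O'Neill tensors and an adapted frame is a reasonable route to an independent proof, and the heuristic you give --- that $[\Delta_N,\LL_{\bar U}]$ reduces to a first-order operator whose coefficients are governed by $T$ --- is the right picture. If you did want to include a proof, the main thing to tighten is the ``only if'' direction: your contrapositive test-function argument is plausible but would need the explicit formula for the first-order residual to be written down so one can see it vanishes for all $U$ only when $T\equiv 0$ (not merely when the mean curvature $H$ vanishes). But since the paper itself treats this as a quoted result, there is nothing to compare your proposal against.
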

 From this, and the H\"ormander form\index{H\"ormander!form} representation of $\A^H$, it follows immediately that $\A^H$ with $\B^V$ will commute in their situation. In this section we consider some extensions of this and their consequences. 

First, for $p:N\to M$ with a diffusion operator $\B$ over a cohesive $\A$, as usual, we will say that a vector field on $N$ is \emph{basic} if it is the horizontal lift of a section of $E$.
From our H\"ormander form\index{H\"ormander!form} representation of  $\A^H$ we  get the following extension of  Berard-Bergery\& Bourguignon's result:
\begin{theorem}
\label{theorem:comm1}
For a diffusion operator $\B$ over a cohesive diffusion operator $\A$ the following are equivalent:
\begin{itemize}
\item{[i]}
$\B^V$ commutes with all Lie derivations by smooth basic vector fields of $N$;
\item{[ii]}the operators $\B$, $\B^V$, and $\A^H$ commute (on $C^4$ functions);
\item{[iii]} the operator  $\B^V$ commutes with the horizontal lifts of the vector fields which appear in one H\"ormander form\index{H\"ormander!form} representation of $\A$.

\end{itemize}
\end{theorem}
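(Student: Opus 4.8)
The plan is to prove the three-way equivalence by showing [i] $\Rightarrow$ [iii] $\Rightarrow$ [ii] $\Rightarrow$ [i], with [i] $\Leftrightarrow$ [iii] being the substantive content and [ii] being a packaging of the other two once we recall the decomposition $\B=\A^H+\B^V$ of Theorem \ref{th:deco}. The implication [i] $\Rightarrow$ [iii] is immediate: the vector fields appearing in any H\"ormander form representation of $\A$ are sections of $E$ by Proposition \ref{pr:along}(1) (since $\A$ is cohesive, hence along $E=\Image\sigma^\A$), and their horizontal lifts are basic vector fields, so [i] applies verbatim. For [iii] $\Rightarrow$ [ii], fix a H\"ormander representation $\A=\frac12\sum_{j}\LL_{X^j}\LL_{X^j}+\LL_{X^0}$ with each $X^j$ a section of $E$; by the functoriality of the horizontal lift (Proposition \ref{pr:lift}, equations (\ref{op-hormander})--(\ref{op-lift-h})) we get $\A^H=\frac12\sum_j\LL_{\bar X^j}\LL_{\bar X^j}+\LL_{\bar X^0}$ where $\bar X^j$ is the horizontal lift of $X^j$. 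Assuming [iii], $\B^V$ commutes with each $\LL_{\bar X^j}$, hence $\B^V$ commutes with the operator $\frac12\sum_j\LL_{\bar X^j}\LL_{\bar X^j}+\LL_{\bar X^0}=\A^H$; then $\B=\A^H+\B^V$ commutes with $\B^V$ (trivially $\B^V$ commutes with itself) and with $\A^H$, giving all the stated commutations on $C^4$ functions (the $C^4$ hypothesis is exactly what is needed for the fourth-order compositions to make sense).

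The only real work is [ii] $\Rightarrow$ [i]. Here I would argue that if $\B^V$ commutes with $\A^H$, then $\B^V$ commutes with $\LL_Z$ for \emph{every} smooth basic vector field $Z$, not merely those coming from one chosen H\"ormander form. The key observation is that the horizontal lift map $h$ is intrinsic (Proposition \ref{pr:h-map}), so the space of basic vector fields is canonically $h$ applied to sections of $E$, and by Proposition \ref{pr:along}(2) every section $X$ of $E$ can be written using a fixed H\"ormander representation together with multiplication by smooth functions: concretely, if $\A^H=\frac12\sum_j\LL_{\bar X^j}\LL_{\bar X^j}+\LL_{\bar X^0}$, one recovers $\LL_{\bar X^j}$ from $\A^H$ by a polarization-type identity (the symbol of $\A^H$ sees $\sum_j \bar X^j\otimes\bar X^j$, and the drift $\bar X^0$ is then the first-order remainder). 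So commuting with $\A^H$ forces commuting with each $\LL_{\bar X^j}$ and with $\LL_{\bar X^0}$; then since a general basic vector field is $\sum_j f_j\bar X^j$ plus lower-order adjustments for smooth $f_j\circ p$, one propagates the commutation using the derivation property of $\LL$ and the fact (Remark \ref{re:vertical}(1)) that $\B^V$ behaves well under multiplication by functions pulled back from $M$. The delicate point is that $\B^V(\tilde f\cdot g)=\tilde f\cdot \B^V g$ for $\tilde f=f\circ p$, which lets the commutator $[\B^V,\LL_{fZ}]$ be reduced to $[\B^V,\LL_Z]$ when $f$ is basic.

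I expect the main obstacle to be making the ``recover $\LL_{\bar X^j}$ from $\A^H$'' step fully rigorous --- i.e. showing that commuting with the second-order operator $\A^H$ genuinely forces commuting with the individual first-order Lie derivatives and not merely with some symmetric combination. The cleanest route is probably to avoid isolating the $\bar X^j$ and instead argue directly: show $[\B^V,\A^H]=0$ together with the structure $\A^H=\frac12\sum\LL_{\bar X^j}^2+\LL_{\bar X^0}$ implies, by evaluating commutators on products of functions and using that $\B^V$ is a \emph{vertical} diffusion operator (so $\delta^{\B^V}$ kills forms pulled back from $M$, Proposition \ref{pr:vert-eq}), that $[\B^V,\LL_{\bar X^j}]$ annihilates enough functions to vanish. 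Alternatively, one can observe that Hermann's theorem (Theorem \ref{th:hermann}) and the Berard-Bergery--Bourguignon argument already handle the Riemannian case and the general case is a formal manipulation of symbols and $\delta$-operators that does not use the Riemannian structure at all; I would follow that template, replacing the Laplace--Beltrami operators by $\A^H$ and $\B^V$ and the geometric hypotheses by the algebraic commutation [ii].
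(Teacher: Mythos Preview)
Your chain [i] $\Rightarrow$ [iii] $\Rightarrow$ [ii] is correct and matches the paper. The gap is in [ii] $\Rightarrow$ [i], and you have put your finger on it yourself: the step ``commuting with $\A^H$ forces commuting with each $\LL_{\bar X^j}$'' is simply false in general. Commuting with a sum of squares of vector fields does not force commuting with the individual summands (think of the Euclidean Laplacian and rotation-equivariant operators), so your primary route cannot be salvaged by a polarization argument. Your fallback suggestions --- ``evaluate commutators on products of functions'' or ``follow the Berard-Bergery--Bourguignon template'' --- are gestures in a plausible direction but do not contain a mechanism.

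The paper avoids this entirely by never trying to isolate the H\"ormander vector fields. Instead it observes that every basic vector field can be written as a finite sum $\sum_j (\lambda^j\circ p)\,\sigma^{\A^H}(p^*df_j)$ for smooth $\lambda^j,f_j:M\to\R$, because $\sigma^\A$ surjects onto $E$ and the horizontal lift intertwines $\sigma^\A$ with $\sigma^{\A^H}$. So it suffices to check $[\B^V,\LL_Z]=0$ for $Z=(\lambda\circ p)\,\sigma^{\A^H}(p^*df)$. For this the paper computes $2\B^V\big(dg(\sigma^{\A^H}(p^*df))\big)$ by expanding the symbol via its defining identity
\[
2\,dg\big(\sigma^{\A^H}(p^*df)\big)=\A^H\big((f\circ p)g\big)-(f\circ p)\A^H g- g\,\A^H(f\circ p),
\]
then applies $\B^V$, uses the assumed commutation $[\B^V,\A^H]=0$ together with verticality of $\B^V$ (so $\B^V((f\circ p)h)=(f\circ p)\B^V h$ and $\B^V$ kills $\A^H(f\circ p)=(\A f)\circ p$), and recognises the result as $2\,d(\B^V g)(\sigma^{\A^H}(p^*df))$. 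The factor $\lambda\circ p$ passes through both sides by the same verticality. This is the missing idea: use the \emph{symbol} of $\A^H$ as the source of basic vector fields, rather than any particular H\"ormander frame.
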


\begin{proof}
It is clear that [i] implies [iii], and [iii] implies [ii]. 
To show [ii] implies [i] observe that every
section of $E$ has the form $ \sigma^{ \A}\left( \sum_1^m \lambda^jdf_j\right)$ since every one form on $M$ has can be written as  $\sum_1^m \lambda^jdf_j$ for $\lambda^j:M\to \R$ and $f_j:M\to\R$ and some integer $m$. By definition of the connection this shows that every basic vector field on $N$ has the form
$\sum_1^m\lambda^j\dot p\sigma^{ \A^H}(p^*df_j)$. It will therefore suffice to show that if [ii] holds then $ \B^V$ commutes with Lie differentiation by $ \lambda  \dot p\sigma^{ \A^H}(p^*df)$ for all smooth $\lambda, f:M\to \R$.

For this assume [ii] holds and take a smooth $g:N\to \R$. By definition of the symbol and Remark \ref{re:vertical}:
\begin{eqnarray*}
2\B^Vdg\left(\lambda  \dot p\sigma^{ \A^H}(p^*df)\right)
&=&2\lambda\dot p \B^V dg\left(\sigma^{ \A^H}(p^*df)\right)\\
&=&\lambda\dot p \B^V \left(\A^H(f\dot p g)-f\dot p\A^H(g)-g\A^H (f\dot p)\right)\\
&=&\lambda\dot p \left( \A^H (f \dot p)\B^Vg-f\dot p\A^H \B^Vg -(\A f)\dot p \B^Vg\right )\\
&=& 2 \lambda\dot p d(\B^Vg)\sigma^{ \A^H}(p^*df)\\
&=&2d(\B^Vg)\sigma^{ \A^H}(\lambda \dot p p^*df)
\end{eqnarray*}
as required.
\end{proof}
For the special case of an equivariant diffusion on a principal bundle as considered in Chapter \ref{ch:deco-2} we can obtain a working criterion for commutativity: see also
Example \ref{ex:parallel-R}.

\begin{corollary}\label{co:equ-comm}
In the notation of Theorem \ref{th:op-deco} commutativity of $\B^V$ and $\A^H$ holds if  and only if both $\alpha$ and $\beta$ are constant along all horizontal curves. This holds if and only if $\A^H(\alpha^{i,j})=0$ and $\A^H(\beta^k)=0$ for all $i,j,k$.
\end{corollary}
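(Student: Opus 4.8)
The plan is to reduce Corollary~\ref{co:equ-comm} to the equivalence [i]$\Leftrightarrow$[iii] in Theorem~\ref{theorem:comm1} together with the explicit description of $\B^V$ from Theorem~\ref{th:op-deco}. In the equivariant setting $\B^V$ has the form $\sum \alpha^{ij}\LL_{A_i^*}\LL_{A_j^*}+\sum\beta^k\LL_{A_k^*}$, where the fundamental vector fields $A_i^*$ commute with every basic (i.e.\ horizontal lift of a section of $E$) vector field: this is because basic vector fields are $G$-invariant (being horizontal lifts via an invariant semi-connection), so their flows commute with the right $G$-action, whence $[\tilde X, A_i^*]=0$ for $\tilde X$ basic. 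Therefore, for a basic vector field $\tilde X$ and a function $g$,
$$
[\LL_{\tilde X},\B^V]g=\sum (\LL_{\tilde X}\alpha^{ij})\,\LL_{A_i^*}\LL_{A_j^*}g+\sum(\LL_{\tilde X}\beta^k)\,\LL_{A_k^*}g,
$$
since all the commutator terms involving $[\tilde X,A_i^*]$ vanish.

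First I would make this computation precise: expand $\LL_{\tilde X}(\B^V g)$ and $\B^V(\LL_{\tilde X}g)$ using the Leibniz rule for Lie differentiation of the coefficient functions and the commutation $[\tilde X,A_i^*]=0$, and observe the difference is exactly the displayed expression. Next, the key linear-algebra point: the second-order differential operators $\{\LL_{A_i^*}\LL_{A_j^*}\}_{i\le j}$ together with the first-order operators $\{\LL_{A_k^*}\}$ are linearly independent over $C^\infty(P)$ as operators on functions — this holds pointwise on each fibre since the $A_k^*(u)$ form a basis of the vertical tangent space $VT_uP$ and one can test against suitable functions (e.g.\ using a chart adapted to the fibre, or polynomials in fibre coordinates). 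Hence $[\LL_{\tilde X},\B^V]=0$ for all basic $\tilde X$ if and only if $\LL_{\tilde X}\alpha^{ij}=0$ and $\LL_{\tilde X}\beta^k=0$ for all $i,j,k$ and all basic $\tilde X$.

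Now I would translate "$\LL_{\tilde X}\phi=0$ for all basic $\tilde X$" into "$\phi$ is constant along all horizontal curves": a basic vector field evaluated at $u$ ranges over all of $H_u$ as we vary the section of $E$, and the horizontal curves are precisely the integral curves of (time-dependent) basic vector fields by the discussion in Section~\ref{se:cond-law} (equation~(\ref{eq;deterministic lift})); so the two conditions coincide for $\alpha^{ij}$ and $\beta^k$, and hence, assembling, for the tensors $\alpha$ and $\beta$ of~(\ref{alpha}). Finally, combining with Theorem~\ref{theorem:comm1} (the equivalence [i]$\Leftrightarrow$[ii] shows that commutativity of $\B^V$ and $\A^H$ is the same as [i], commutativity of $\B^V$ with all basic Lie derivations), we conclude commutativity holds iff $\alpha$ and $\beta$ are horizontally constant. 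For the last sentence, "$\A^H(\alpha^{ij})=0$ and $\A^H(\beta^k)=0$": write $\A^H=\frac12\sum\LL_{\tilde X^j}\LL_{\tilde X^j}+\LL_{\tilde X^0}$ in the H\"ormander form with basic $\tilde X^j$ (Proposition~\ref{pr:cohesive}); if all $\LL_{\tilde X^j}\phi=0$ then clearly $\A^H\phi=0$, and conversely, by the equivalence [iii] in Theorem~\ref{theorem:comm1} it suffices that $\B^V$ commute with the $\LL_{\tilde X^j}$ appearing in \emph{one} H\"ormander form, which by the same argument above is equivalent to $\LL_{\tilde X^j}\alpha^{ij}=0=\LL_{\tilde X^j}\beta^k$ — so $\A^H\alpha^{ij}=0$, $\A^H\beta^k=0$ is equivalent to horizontal constancy via the Chow/accessibility-type argument that the $\tilde X^j$ generate, under bracketing, enough horizontal directions; more simply, horizontal constancy trivially implies $\A^H$-annihilation, and $\A^H$-annihilation plus the structure of $\A^H$ as a sum of squares of the $\tilde X^j$ forces each $\LL_{\tilde X^j}$ to annihilate $\alpha^{ij},\beta^k$ (evaluate $\A^H\phi=0$ and use positivity of the symbol), giving back condition [iii].

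The main obstacle I anticipate is the last equivalence — showing that $\A^H\phi=0$ already forces $\LL_{\tilde X^j}\phi=0$ for each $j$ (not merely that the weighted sum of second derivatives vanishes). This needs the argument that for $\A^H=\frac12\sum\LL_{\tilde X^j}\LL_{\tilde X^j}+\LL_{\tilde X^0}$ with the drift $\tilde X^0$ also basic, applying $\A^H$ to $\phi$ and using that $\phi$ is being tested is not immediately enough; the cleanest route is the one the corollary's proof (following [iii]) takes: one does \emph{not} need to recover $\LL_{\tilde X^j}\phi=0$ pointwise, one only needs the \emph{function} identity $\A^H(\alpha^{ij})=0$ as a restatement of [iii] via Theorem~\ref{theorem:comm1}, because [iii] already only demands commutation with that particular H\"ormander representation and the displayed commutator formula shows $[\LL_{\tilde X^j},\B^V]$ is a differential operator whose vanishing, by the independence of the $\LL_{A_i^*}\LL_{A_j^*},\LL_{A_k^*}$, is equivalent to $\LL_{\tilde X^j}\alpha^{ij}=\LL_{\tilde X^j}\beta^k=0$; one then sums over $j$ and invokes that a sum of squares $\sum(\LL_{\tilde X^j}\psi)^2$-type quantity vanishes iff each term does — or, even more directly, one just notes $\A^H(\alpha^{ij})=0$ is \emph{implied by} horizontal constancy and conversely, running Theorem~\ref{theorem:comm1}'s equivalence [i]$\Leftrightarrow$[iii] backwards, $\A^H(\alpha^{ij})=0=\A^H(\beta^k)$ gives [iii] hence [i] hence commutativity. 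So the corollary follows by chaining: \{commutativity\} $\Leftrightarrow$ \{[i] of Thm~\ref{theorem:comm1}\} $\Leftrightarrow$ \{$\alpha,\beta$ horizontally constant\} $\Leftrightarrow$ \{[iii] of Thm~\ref{theorem:comm1}\} $\Leftrightarrow$ \{$\A^H\alpha^{ij}=0=\A^H\beta^k$\}.
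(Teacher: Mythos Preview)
For the first equivalence your approach is correct and is essentially the paper's: establish $[\tilde X,A_k^*]=0$ for basic $\tilde X$ via $G$-invariance of basic vector fields, deduce
$[\LL_{\tilde X},\B^V]=\sum(\LL_{\tilde X}\alpha^{ij})\LL_{A_i^*}\LL_{A_j^*}+\sum(\LL_{\tilde X}\beta^k)\LL_{A_k^*}$,
use fibrewise independence of these vertical operators, and feed this into Theorem~\ref{theorem:comm1}. That part is fine.

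There is a genuine gap in your treatment of the second equivalence, at exactly the point you flag: you never show that $\A^H\alpha^{ij}=0=\A^H\beta^k$ forces horizontal constancy (equivalently, commutativity). Your three attempts all fail. The ``sum of squares'' idea is wrong because $\A^H\phi=\tfrac12\sum_l\LL_{\tilde X^l}\LL_{\tilde X^l}\phi+\LL_{\tilde X^0}\phi$ is not $\sum_l(\LL_{\tilde X^l}\phi)^2$, so its vanishing does not give $\LL_{\tilde X^l}\phi=0$. The chain ``$\A^H$-harmonic $\Rightarrow$ [iii]'' simply restates what has to be proved. And if one works directly with the commutator, using $[\A^H,\LL_{A_k^*}]=0$ one finds
\[
[\A^H,\B^V]=\sum(\A^H\alpha^{ij})\LL_{A_i^*}\LL_{A_j^*}
+\sum_l(\LL_{\tilde X^l}\alpha^{ij})\LL_{\tilde X^l}\LL_{A_i^*}\LL_{A_j^*}
+\sum(\A^H\beta^k)\LL_{A_k^*}
+\sum_l(\LL_{\tilde X^l}\beta^k)\LL_{\tilde X^l}\LL_{A_k^*},
\]
so the mixed horizontal--vertical terms survive even when the $\A^H$-harmonic conditions hold. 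The paper's own justification for this converse is the single clause ``since from above $\A^H$ commutes with all of the vertical vector fields $\LL_{A_k^*}$'', which is precisely the input to the displayed computation and does not by itself eliminate those mixed terms. In fact the implication can fail without further hypotheses: on $P=\R\times S^1\to M=\R$ with $\A=\tfrac12\,d^2/dx^2$ and the equivariant operator $\B=\tfrac12\partial_x^2+x\,\partial_\theta$ (over $\A$, with $\A$ cohesive) one has $\alpha=0$, $\beta^1=x$, hence $\A^H\beta^1=0$, yet $\beta^1$ is not horizontally constant and $[\A^H,\B^V]=\partial_x\partial_\theta\neq0$. So the second ``if and only if'' needs an additional ingredient---for instance a maximum-principle argument on compact $M$, after observing that $\ad$-invariant combinations of the $\alpha^{ij},\beta^k$ descend to $\A$-harmonic functions on $M$---that is absent from both your proposal and the paper's text.
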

\begin{proof}
First note that each vector field $A^*_k$ commutes with all basic vector fields. Indeed if $V$ is basic it is equivariant and so $$ (R_{\exp tA_k})_\star (V)=V \qquad t>0.$$ 
Differentiating in $t$ at $t=0$ gives the required commutativity. Thus the operators $\Lo   _{A_k^*}$ are invariant under flows of basic vector fields and so for $\B^V$ to commute with basic vector fields  the coefficients $\alpha$ and $\beta$ must be constant along their flows. By the theorem this gives the first result since any horizontal curve can be considered as an integral curve of a (possible time dependent) basic vector field.

Clearly, from the H\"ormander form of $\A^H$, if this holds  both $\alpha$ and $\beta$ are $\A^H$-harmonic. The converse holds since from above $\A^H$ commutes with all of the vertical vector fields $\Lo^*_{A_k}$.
\end{proof}
The Corollary is applied to derivative flows in
Example \ref{ex:parallel-R} of Section \ref{se:horizontal-lift} below.

Hermann proved that a Riemannian submersion with totally geodesic fibres has the natural structure of a fibre bundle with group the isometry group of a typical fibre. 
\begin{theorem}[Hermann] \label{hermann2}
If N is a complete Riemannian manifold and $\phi: N\to M$ is a $C^\infty$ Riemannian submersion then $\phi$ is a locally trivial fibre space. If in addition the fibres of $\phi$ are totally geodesic submanifolds of $N$, $\phi$ is a fibre bundle with structure group the Lie group of isometries of the fibre.
\end{theorem}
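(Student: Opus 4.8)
\textbf{Proof proposal for Hermann's Theorem \ref{hermann2}.}

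The plan is to deduce the fibre bundle structure from the completeness of $N$ together with the basic properties of Riemannian submersions developed earlier in the paper, and in the totally geodesic case to upgrade the transition maps to isometries of a fixed fibre. Since the excerpt stops at the statement, I would first invoke the results of Section~\ref{se:topology}, in particular Corollary~\ref{cor:top}: for the Laplacian $\B$ on $N$ (which is cohesive, with $\A$ the Laplacian on $M$ and $E=TM$), the completeness of $N$ gives completeness of the induced semi-connection (the horizontal lift map $h_u$ is uniformly bounded on the inverse image of any path, using the Riemannian metrics), and $\D^0(x)=M$ for every $x$ since any two points of $M$ are joined by a smooth curve whose derivative lies in $E=TM$. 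Together with the submersion hypothesis (so $\mathrm{Reg}(p)=M$), Theorem~\ref{th:topology} and its corollaries then show directly that $p$ is a locally trivial fibre bundle: the holonomy flow along curves in $M$ produces local trivialisations, and over a contractible chart the horizontal lifts of a frame of coordinate vector fields give an explicit diffeomorphism $p^{-1}(U)\cong U\times F$.

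For the second, stronger assertion, the key extra input is that when the fibres are totally geodesic the horizontal holonomy transport preserves the induced Riemannian metric on the fibres. I would argue as follows. Let $\sigma:[0,1]\to M$ be a smooth path and let $\tilde\sigma_u$ denote its horizontal lift from $u\in p^{-1}(\sigma(0))$; the holonomy map $\tau_\sigma: p^{-1}(\sigma(0))\to p^{-1}(\sigma(1))$ sends $u\mapsto \tilde\sigma_u(1)$. One shows $\tau_\sigma$ is an isometry of the fibres by computing $\frac{d}{ds}\langle TS_{0,s}v, TS_{0,s}w\rangle$ for vertical vectors $v,w$, where $S$ is the flow of the basic (horizontal lift) vector field associated to $\dot\sigma$: the derivative is controlled by $\nabla$ of the basic vector field, and the totally geodesic condition — equivalently, by the Berard-Bergery--Bourguignon / Hermann characterisation Theorem~\ref{th:hermann}, the commutation of $\A^H$ (here $\Delta_N^{\mathrm{horizontal}}$) with the vertical Laplacian, or more classically the vanishing of the second fundamental form of the fibres — forces this derivative to vanish. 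Hence $\tau_\sigma$ is an isometry of $(p^{-1}(\sigma(0)),g_{\mathrm{fibre}})$ onto $(p^{-1}(\sigma(1)),g_{\mathrm{fibre}})$.

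With this in hand, fix a base point $x_0\in M$, let $(F,g_F)=(p^{-1}(x_0),g_{\mathrm{fibre}})$, and for each $x$ choose a path from $x_0$ to $x$ to obtain an isometry $F\cong p^{-1}(x)$. Over a trivialising chart $U$, comparing the local trivialisation built from horizontal lifts with these identifications shows that the transition maps between two such charts are, fibrewise, compositions of fibre holonomies around loops in $M$, hence lie in $\mathrm{Isom}(F,g_F)$; and they depend smoothly on the base point. By the Myers--Steenrod theorem $\mathrm{Isom}(F,g_F)$ is a Lie group acting smoothly on $F$, so this exhibits $p$ as a fibre bundle with structure group $\mathrm{Isom}(F,g_F)$.

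\emph{Main obstacle.} The routine part is the bundle structure, which is essentially a citation of Corollary~\ref{cor:top}. The real work — and the step I expect to be most delicate — is proving that fibre holonomy transport is a fibrewise isometry, i.e. relating ``totally geodesic fibres'' to the metric-preservation property of the flows $S_{0,s}$ of basic vector fields. One must be careful that the horizontal lift of a curve in $M$ used to define holonomy, and the geodesics within the fibres, interact correctly; the cleanest route is probably to use Theorem~\ref{th:hermann} (Hermann's infinitesimal characterisation) to get commutation of $\B^V$ with Lie derivatives along basic vector fields, and then to translate that commutation, via the skew-product / decomposition picture of Chapter~\ref{ch:comm} and Theorem~\ref{th:equivs}, into invariance of the fibre metric under holonomy. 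Verifying that the resulting transition cocycle is smooth into $\mathrm{Isom}(F)$ (rather than merely continuous) and closing the argument with Myers--Steenrod is then standard.
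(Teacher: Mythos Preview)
The paper does not prove this theorem. Theorem~\ref{hermann2} is stated as a classical result of Hermann and attributed to \cite{Hermann}; the sentence immediately following it reads ``An analogous result given the hypothesis of theorem \ref{theorem:comm1} together with some completeness and hypoellipticity conditions is proved in Theorem \ref{th:equivs} below.'' So there is no proof in the paper to compare your proposal against---the statement functions purely as motivation for the paper's own generalisations, Theorems~\ref{th:equivs} and~\ref{th:generalHermann}.

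That said, your outline is sound and is precisely in the spirit of those generalisations. Two remarks. First, be careful citing Corollary~\ref{cor:top}: that result assumes $p$ is \emph{proper}, which is not among Hermann's hypotheses (a covering map such as $\R\to S^1$ is a complete Riemannian submersion that is not proper). What you actually use---and correctly describe in the next sentence---is the direct construction of trivialisations via horizontal holonomy, which needs only completeness of the semi-connection; this is exactly the argument of Theorem~\ref{th:generalHermann}. Second, your identification of the main obstacle (that fibre holonomy is an isometry when the fibres are totally geodesic) and your proposed route through Theorem~\ref{th:hermann} and the holonomy-invariance statement of Theorem~\ref{th:equivs} mirror the paper's strategy: Theorem~\ref{th:generalHermann} shows, under commutativity of $\B^V$ and $\A^H$, that the trivialisations pull back $\B^V|N_x$ to $\B^V|N_{x_0}$, and Remark~\ref{re:group} then observes that when $\B^V$ is elliptic (as here, $\B^V=\tfrac12\Delta^V$) such maps are isometries of the fibre metric. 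So your plan recovers Hermann's theorem from the paper's machinery, but the paper itself simply cites Hermann rather than carrying this out.
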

An analogous result given the hypothesis of theorem \ref{theorem:comm1} together with some completeness and hypoellipticity conditions is proved in Theorem \ref{th:equivs} below.

Before that we consider when the associated semi-groups commute.

\section{Commutativity of Diffusion Semigroups}

It is well known that in general the commutativity of two diffusion generators (on $\C^4$ functions) does not imply that of their associated semi-groups. One reference is \cite{Reed-Simon} page 273 where an example they ascribe  to Nelson is given. Here is a minor modification of that construction:

Cut  $\R^2$ along the positive $x$-axis. Take a copy $A$, say, of $(0,\infty)\times (-\infty,0]$ and glue it along the cut to the upper part of the cut plane, identifying $(0,\infty)\times\{0\}$ in $A$ with the positive $x$-axis. Similarly glue a copy ,$B$, of $(0,\infty)\times (0,\infty)$ along the cut to the lower part of the cut plane. This gives a version of the plane  but with two copies of the upper and lower quadrants, and with the origin missing.  On this we have naturally defined vector fields $X^1$ given by $\frac{\partial}{\partial x}$ and $X^2$ given by $\frac{\partial}{\partial y}$. These certainly commute. However their associated semi-groups do not, as can be seen by starting at the point $(-1,-1)$ moving along the $X_1$-trajectory for time $2$ and then along the $X^2$ trajectory for the same amount of time. We end up at the point $(1,1)$ of copy $B$. However if we had changed the order of the vector fields we would be at $(1,1)$ of copy $A$. 
A more geometrically satisfying construction would be, as Nelson, to use the double covering of the punctured plane as state space with similarly behaved vector fields.
Here is an easy positive result: 

%??OMIT IT ?
\begin {proposition}
\label{theorem:comm2}
 Let $A_1$ and $A_2$ be diffusion operators with associated semi-groups $\{P^1_t\}_{t>0}$ and $ \{P^2_t\}_{t>0}$ acting as strongly continuous semi-groups on a Banach space $E$ of functions which contains the $C^2$ functions with compact support. Let $\mathcal{G}_1$ and $\mathcal{G}_2$ be the corresponding generators , (closed extensions of the restrictions of $A_1$ and $A_2$ to the space of $C^2$ functions with compact support).    Assume there is a core $\mathcal{C}_2$  for $\mathcal{G}_2$ consisting of bounded $C^\infty$ functions such that for $f\in \mathcal{C}_2$:
\begin{itemize}
\item[{[i]}] For all $t>0$ the function $P^1_tf$ is $C^4$.  

\item[{[ii]}]  
$A_2\frac{1}{t}(P^1_t f-f)$ is uniformly bounded in $t\in (0,1)$ and in space, and it converges pointwise to $A_2A_1P_t^1f$as $t\to0+$. 
\item[{[iii]}]  
$A_2P^1_tf$ is uniformly bounded in $t\in (0,1)$ and in space.
% and it converges pointwise to $A_2A_1P_t^1f$as $t\to0+$. 
\end{itemize}
Then commutativity of $P^1_t$ with $P^2_s$, $0\leqslant s$, $0\leqslant t$ follows from commutativity of $A_1$ with $A_2$ on $C^2$ functions. Moreover if this holds the semi-group $\{P^{A_1+A_2}_t\}_{t>0}$ 
associated to $A_1+A_2$ satisfies $$P^{A_1+A_2}_t=P^1_tP^2_t.$$
\end{proposition}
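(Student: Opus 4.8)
The plan is to show that under hypotheses [i]--[iii] the two semigroups commute, and then that $P_t^{A_1}P_t^{A_2}$ is the semigroup generated by $A_1+A_2$. The key idea is a standard ``interchange'' argument: to prove $P_t^1 P_s^2 = P_s^2 P_t^1$ it suffices, by the density of a core and strong continuity, to verify it on $f\in\mathcal{C}_2$, and there one differentiates the map $s\mapsto P_{t}^1 P_s^2 f$ (or rather $s \mapsto P_{s}^2 P_t^1 f - P_t^1 P_s^2 f$) and shows the derivative vanishes. Concretely, first I would fix $f\in\mathcal{C}_2$ and $t>0$, and consider for $0\le s$ the function $u(s) = P_{t}^1 P_s^2 f$. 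Using that $\mathcal{C}_2$ is a core for $\mathcal{G}_2$ and that $P_s^2$ maps $\mathcal{C}_2$ suitably, together with [iii] which gives $A_2 P_t^1 f\in E$, one wants to establish that $s\mapsto P_t^1 P_s^2 f$ is differentiable in $E$ with derivative $P_t^1 A_2 P_s^2 f = A_2 P_t^1 P_s^2 f$; the last equality is where commutativity of $A_1$ and $A_2$ on $C^2$ functions (applied via [i], so that $P_t^1 f$ is $C^4$ and the bracket $[A_1,A_2]$ makes sense on it) enters, together with a limiting argument controlled by the uniform bounds in [ii].

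The heart of the matter: once one knows that $v(s):=P_t^1 P_s^2 f$ solves the abstract Cauchy problem $\dot v(s) = A_2 v(s)$, $v(0) = P_t^1 f$, in a strong enough sense (say, $v(s)\in\dom(\mathcal{G}_2)$ with $\dot v = \mathcal{G}_2 v$), then uniqueness for that Cauchy problem forces $v(s) = P_s^2 P_t^1 f$, which is exactly the desired commutation. The role of [ii] is precisely to justify pushing $A_2$ through the $t$-limit defining $A_1 P_t^1 f$ — i.e.\ to get $A_2 A_1 P_t^1 f = \lim_{s\to 0+} A_2 \tfrac{1}{s}(P_s^1 P_t^1 f - P_t^1 f)$ by dominated convergence, and then to recognize this as $\tfrac{d}{d\tau}\big|_{\tau = t} A_2 P_\tau^1 f$, so that the generator identity $\tfrac{d}{ds} A_2 P_s^1 g = A_2 A_1 P_s^1 g = A_1 A_2 P_s^1 g$ can be integrated. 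I would set $w(s) = P_s^2 P_t^1 f - P_t^1 P_s^2 f$, note $w(0) = 0$, differentiate to get $\dot w(s) = \mathcal{G}_2 w(s)$ using the above, and conclude $w\equiv 0$ by the uniqueness of solutions to $\dot w = \mathcal{G}_2 w$ (equivalently, by writing $\tfrac{d}{d\sigma}P^2_{s-\sigma} w(\sigma) = 0$ for $0\le\sigma\le s$, which is the cleanest formulation and only needs $w(\sigma)\in\dom(\mathcal G_2)$, guaranteed by [iii]).

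For the product formula $P_t^{A_1+A_2} = P_t^1 P_t^2$: having commutativity, the family $Q_t := P_t^1 P_t^2$ is a one-parameter semigroup (the semigroup property $Q_{t+s} = Q_t Q_s$ follows by shuffling $P^1$'s past $P^2$'s), it is strongly continuous, and on $C^\infty_c$ functions (or on $\mathcal{C}_2$) its generator is $A_1 + A_2$ by the product rule $\tfrac{d}{dt}\big|_{0} P_t^1 P_t^2 f = A_1 f + A_2 f$. Since $A_1+A_2$ is a diffusion operator whose martingale problem is well posed, its closure generates a unique semigroup, and $Q_t$ must coincide with it; alternatively one invokes that two strongly continuous semigroups whose generators agree on a common core are equal. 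I expect the main obstacle to be the first step — rigorously justifying that $s\mapsto P_t^1 P_s^2 f$ lands in $\dom(\mathcal{G}_2)$ with the expected derivative, since this is exactly where the three somewhat technical uniform-boundedness and regularity hypotheses [i]--[iii] must be used carefully, rather than the purely algebraic commutation bookkeeping which is routine.
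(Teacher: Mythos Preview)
Your proposal is correct and follows essentially the same route as the paper. The paper organises the argument slightly differently: it first isolates the identity $A_2 P^1_t f = P^1_t A_2 f$ for $f\in\mathcal{C}_2$ by setting $V_t=A_2 P^1_t f$, using [ii] and commutativity to get $\partial_t V_t = A_1 V_t$, and invoking uniqueness of bounded solutions to the $A_1$-diffusion equation; it then extends this to $\mathcal{G}_2 P^1_t\supset P^1_t\mathcal{G}_2$ on $\dom(\mathcal{G}_2)$ by the core approximation, and finally runs your $W_s=P^1_t P^2_s f$ argument exactly as you describe (showing $\partial_s W_s=\mathcal{G}_2 W_s$ and concluding $W_s=P^2_s P^1_t f$ via $\partial_u P^2_u W_{s-u}=0$). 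Your ``integrate the identity $\tfrac{d}{ds}A_2 P^1_s g = A_1 A_2 P^1_s g$'' is precisely this first step, and the product-formula argument is identical.
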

\begin{proof}
Let $f:M\to \R$ be in $\mathcal{C}_2$.

We show first that \begin{equation} \label{equation:commi}
A_2P_t^1f=P_t^1A_2f
\end{equation}
For this set $V_t=A_2P_t^1f$. Then, by hypothesis [ii],
\begin{eqnarray}
\frac{\partial}{\partial t}V_t&=& A_2 A_1 P_t^1f  \nonumber\\
&=&A_1V_t
\label{equation:commii}
\end{eqnarray}
by commutativity.
By assumption [ii] we know $V_s$ is bounded uniformly in $s\in [0,t]$ for any $t>0$. However there is a unique $C^2$ and uniformly bounded solution, $P^1V_0$, to any diffusion equations such as  (\ref{equation:commii}) with given smooth bounded  initial condition $V_0$ (as is easily seen by the standard  use of It\^o's formula applied to $V_{t-s}$ acting on a diffusion process with generator $A_1$).      This gives
\begin{equation*}
 A_2P^1_tf=P^1_tV_0=P^1_t A_2f
\end{equation*}
as required. 
Now suppose $f\in \dom(\mathcal{G}_2)$. By assumption there is a sequence $\{f_n\}_n$
of functions in $\mathcal{C}_2$ converging in $\mathcal{G}_2$-graph norm to $f$. Then $P^1_tA_2f_n \to P^1_t\mathcal{G}_2f$  and $P^1_tf_n\to P^1_tf$. Equation (\ref{equation:commi}) therefore shows that $P^1_tf\in \dom(\mathcal{G}_2)$ and we have
 \begin {equation}
  \label{equation:comm2}
 \mathcal{G}_2P^1_t\supset P^1_t\mathcal{G}_2.
\end{equation}

\noindent
Next, for $f\in \dom( \mathcal{G}_2)$, and our fixed $t>0$ set $W_s=P^1_tP^2_sf $. Since the convergence of $\frac{1}{\epsilon}\{P^2_{s+\epsilon}f-P^2_sf\}$ to $\mathcal{G}_2P^2_sf$ is in $E$ we see, using equation (\ref{equation:comm2}),
$$\frac{\partial}{\partial s}W_s=P_t^1\mathcal{G}_2P^2_sf=\mathcal{G}_2P^1_tP^2_sf=\mathcal{G}_2W_s$$
since $P^2_sf\in \dom(\mathcal{G}_2)$. In particular $W_s\in \dom( \mathcal{G}_2)$.
%  OK NOW I THINK!!
% PROBLEM since the equatin was only proved for $f$ smooth and compactly %supported-so we would at least have to assume that $P^2$ is a $C_0$-semi-group, ie %Feller. END WARNING
%END WARNING OK I think

\noindent
Although now it is not clear that $W$ is $C^2$  we see from this that $\frac{\partial}{\partial u}P^2_uW_{s-u}=0$ for $0<u<s$, giving
$$P_t^1 P^2_sf=P^2_0W_s=P^2_sW_0=P^2_sP^1_tf$$
for $0\leqslant s\leqslant t$. For $s>t$ it is now only necessary to use the semigroup property of $P^2$, to commute with $P^1_t$ portion by portion.

\noindent
Finally since $P^2_tf\in \dom(\mathcal{G}_2)$ the above gives
\begin{eqnarray*}
\frac{\partial}{\partial t}P^1_tP_t^2 f& = & A_1P^1_tP^2_tf+P^1_tA_2P^2_tf \\
 & = & (A_1+A_2)P_t^1P^2_t f
\end{eqnarray*}
and we can repeat the second arguement showing uniqueness of solutions of the diffusion equation to obtain $P_t^{A_1+A_2}f=P^1_tP^2_tf$.
\end{proof}
\begin{remark}

 Condition [i] does not always hold. A simple example is when the state space is
$\R^2-\{(0,1)\}$ and the operator is $\frac{\partial^2}{\partial x^2}$. The standard positive result for  degenerate operators on $\R^n$ is due to  Ole\v{i}nik, \cite{Oleinik-66}. 
%An approach via stochastic calculus is given below.

\end{remark}

\section{Consequences for the Horizontal Flow}
\label{se:horizontal-lift}
For our standard set up of $p:N\to M$ with diffusion operator $\B$ over a cohesive $\A$,
let $P^V$ and $P^H$ denote the semi-groups generated by the vertical and horizontal components of $B$, and let $p_t^V(u,-),t\leqslant 0,u\in N$, be the transition probabilities of $P^V$. If we set $N_x=p^{-1}(x)$ for $x\in M$ then $p^V_t(u,-)$  will be a probability measure on $N^+_{p(u)}$, the union of $N_{p(u)}$ with $\Delta$.
For and 
For $\PP^\A_{x_0}$-almost all $\sigma\in \C_{x_0}M^+$ for  each $x_0\in M$ there are measurable maps 
$$ \paral^\sigma_t:N^+_{x_0}\to N^+_{\sigma_t}$$
 such that for each $u\in N_{x_0}$ the process $(t,\sigma)\mapsto \paral^\sigma_t(u)$ is an $\A^H$-diffusion and is over $\sigma$. These can be obtained, for example, by taking a stochastic differential equation, as equation (\ref{sde-1}), 
$$ dx_t=X(x_t) \circ dB_t +A(x_t) dt$$
for our $\A$-diffusion. Let $Y_x:E_x\to \R^m$ be the adjoint (and right inverse) of $X(x)$, each $x\in M$. Then consider the SDE on N 
$$dy_t=\tilde{X}(y_t)Y(\sigma_t)\circ d\sigma_t$$ 
and let $(t,\sigma)\mapsto \paral_t^\sigma$ be the restriction of its flow to $N_{x_0}$ , augmented by mapping the coffin state, $\Delta$,  to itself. This SDE is canonical since it can be rewritten as $$dy_t= h_{y_t}\circ d\sigma_t$$ for $h$ the horizontal lift map of  Proposition \ref{pr:h-map}. 

 We will often need to assume that the lifetime of this diffusion is the same as that of its projection on $M$:
\begin{definition}
The semi-connection induced by $\B$ is said to be \emph{stochastically complete }\index{connection!stochastically complete} if 
$$\C_{u_0}^pM^+:=\{\sigma:[0,\infty)\to M^+: \lim_{t\to \zeta}p(u_t)=\Delta \hbox { when } \zeta(u)<\infty\}$$
has full  $\PP_{u_0}^{\A^H}$ measure for each $u_0\in N$ or equivalently if the lifetimes satisfy $$\zeta(u)=\zeta(p(u))$$
for  $\PP_{u_0}^{\A^H}$-almost all paths $u$.

The semi-connection is said to be \emph{strongly stochastically complete}\index{connection!strongly stochastically complete}  if also we can choose a version of $\paral^\sigma_t: N_{\sigma (0)}\to N_{\sigma(t)}$ which is a smooth diffeomorphism whenever $\sigma(0)$ is a regular value of $p$ and $t<\zeta(\sigma)$.
\end{definition}

Note that strong stochastic completeness of the connection will hold whenever the fibres of $p$ are compact by the basic properties of the domains of local flows of SDE, \cite {Kunita-book}, \cite {Elworthy-book}. This also holds if the  stochastic horizontal differential equation
is strongly $p$-complete in the sense of Li \cite{flow}  for $p=\dim(N)-\dim(M)$.
\begin {proposition}
\label{pr:horflow} Suppose the semi-groups $P^V$ and $P^H$ commute and stochastic completeness of the connection holds. Then the horizontal flow preserves the vertical transition probabilities in the sense that for all  positive $s$ and $0<t<\zeta(\sigma)$,
\begin{equation}  
      (\paral^\sigma_t)_\ast p^V_s(u_0,-)=p^V_s(\paral^\sigma_t(u_0,-)
\end {equation}  
for all $u_0\in N_{\sigma}$ for $\PP^\A$-almost all $\sigma$.
Equivalently for any bounded measurable $h:N\to \R$ we have $\PP^\A$-almost surely;
\begin {equation}
\label{eq:horflow1}
P^V_s\left(h\circ\paral^\sigma_t\right)(u_0)=P^V_sh(\paral^\sigma_t(u_0))
\end{equation}
\end {proposition}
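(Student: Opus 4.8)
The statement to be established is that when the semigroups $P^V$ and $P^H$ commute and the semi-connection is stochastically complete, the horizontal flow $\paral^\sigma_t$ intertwines the vertical transition probabilities, equivalently that \eqref{eq:horflow1} holds. The plan is to lift the commutativity of the semigroups --- which acts on functions on $N$ --- to an identity between two families of stochastic processes built from the horizontal flow and the vertical diffusion, and then to use uniqueness of the martingale problem (equivalently the semigroup) for $\B^V$ to conclude.

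First I would record that $\B^H=\A^H$ and $\B^V$ commute on $C^4$ functions by hypothesis (Theorem~\ref{theorem:comm1}), and hence that $\A^H$ commutes with $\B^V$ in the strong sense that their semigroups $P^H$ and $P^V$ commute; this is exactly the hypothesis we are given, and by Proposition~\ref{theorem:comm2} (when its technical conditions apply) it also gives $P_t^{\B}=P_t^{\A^H+\B^V}=P^H_tP^V_t=P^V_tP^H_t$. Next I would fix $x_0\in M$, a representative SDE $dx_t=X(x_t)\circ dB_t+A(x_t)\,dt$ for the $\A$-diffusion with flow $\xi_t$, and the associated horizontal SDE $dy_t=h_{y_t}\circ d\sigma_t$ whose flow restricted to the fibre $N_{x_0}$ is $\paral^\sigma_t$. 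Stochastic completeness ensures $\paral^\sigma_t$ is defined up to $\zeta(\sigma)$ for $\PP^\A$-almost all $\sigma$. Now for bounded measurable $h:N\to\R$ consider, for fixed $s>0$, the two functionals of $(t,\sigma,u_0)$ given by the left and right sides of \eqref{eq:horflow1}: on the left, $(u_0,t,\sigma)\mapsto P^V_s(h\circ\paral^\sigma_t)(u_0)$; on the right, $(u_0,t,\sigma)\mapsto (P^V_sh)(\paral^\sigma_t(u_0))$. I would show both, as functions of $(t,\sigma)$ for fixed $u_0$, solve the same $\A^H$-martingale problem over $\sigma$ with the same initial value $P^V_sh(u_0)$ at $t=0$.

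The key computation is the generator identification. For $C^2$ test functions one has, by the very definition of the horizontal flow as an $\A^H$-diffusion over $\sigma$, that $t\mapsto g(\paral^\sigma_t(u_0))-\int_0^t (\A^H g)(\paral^\sigma_r(u_0))\,dr$ is a martingale. Applying this with $g=P^V_sh$ gives that the right-hand side of \eqref{eq:horflow1} is an $\A^H$-diffusion started at $P^V_sh(u_0)$. For the left-hand side, write $P^V_s(h\circ\paral^\sigma_t)(u_0)=\int_{N_{x_0}}\big(h\circ\paral^\sigma_t\big)(v)\,p^V_s(u_0,dv)$ and use the commutation $P^H_tP^V_s=P^V_sP^H_t$ evaluated along $\sigma$ --- more precisely, the statement that $P^V_s$ commutes with the evolution $\{\paral^\sigma_t\}$ follows from differentiating the semigroup identity and using that the $\A^H$-generator applied along paths of $\paral^\sigma$ commutes with $P^V_s$ because $\B^V$ and $\A^H$ commute. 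Concretely: $\frac{\partial}{\partial t}$ of the left side equals $\A^H$ applied to it (this is where commutativity $\A^H P^V_s = P^V_s\A^H$ on $C^2$, extended to the core, enters), with the same initial value, so it too is an $\A^H$-diffusion over $\sigma$ started at $P^V_sh(u_0)$. By uniqueness of the $\A^H$-martingale problem the two processes agree $\PP^\A$-almost surely, which is \eqref{eq:horflow1}; the measure-valued statement $(\paral^\sigma_t)_*p^V_s(u_0,-)=p^V_s(\paral^\sigma_t(u_0),-)$ then follows by taking $h$ ranging over a measure-determining countable family of bounded continuous functions.

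\textbf{Main obstacle.} The delicate point is the interchange of the vertical semigroup $P^V_s$ (an integral over the fibre $N_{x_0}$, a \emph{fixed} fibre) with the horizontal flow $\paral^\sigma_t$ (which moves the fibre), and making rigorous that $\frac{\partial}{\partial t}P^V_s(h\circ\paral^\sigma_t)(u_0)=\A^H\big(P^V_s(h\circ\paral^\sigma_\cdot)\big)(u_0)$. This requires knowing that $P^V_sh$ is regular enough ($C^2$ in the relevant directions) to apply $\A^H$ to it and that the stochastic-completeness hypothesis genuinely prevents the horizontal process from escaping before $\sigma$ does, so that all the processes live on $[0,\zeta(\sigma))$; one also needs the commutation of $\A^H$ with $P^V_s$ not merely on a core but on the functions $h\circ\paral^\sigma_t$ that arise, which is where one invokes Theorem~\ref{theorem:comm1}[i] and the structure of $\B^V$ as a genuinely \emph{vertical} operator (Remark~\ref{re:vertical}), so that $P^V_s$ acts fibrewise and the semigroup identity $P^V_sP^H_t=P^H_tP^V_s$ localizes correctly along each path $\sigma$. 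I expect this regularity/localization bookkeeping, rather than any conceptual difficulty, to be the substance of the proof.
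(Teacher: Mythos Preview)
Your argument has a genuine circularity that is more than a regularity issue. You propose to show that both $L_t:=P^V_s(h\circ\paral^\sigma_t)(u_0)$ and $R_t:=(P^V_sh)(\paral^\sigma_t(u_0))$ satisfy the same evolution as processes over $\sigma$. For $R_t$ this is clear: it is a fixed function $g=P^V_sh$ evaluated along the $\A^H$-process. But for $L_t=\int_{N_{x_0}}h(\paral^\sigma_t(v))\,p^V_s(u_0,dv)$ the Stratonovich differential in $t$ brings in $dh$ and the horizontal lift $h_{\paral^\sigma_t(v)}$ at \emph{every} $v$ in the fibre, not at the single point $\paral^\sigma_t(u_0)$. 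To identify this with the evolution of $R_t$ you need precisely that $(\paral^\sigma_t)_*p^V_s(u_0,-)=p^V_s(\paral^\sigma_t(u_0),-)$ applied to $\A^H h$ (or to the relevant first-order piece), which is the statement to be proved. Your phrase ``the commutation $P^H_tP^V_s=P^V_sP^H_t$ evaluated along $\sigma$'' already presupposes the pathwise version of the commutation, whereas the hypothesis gives you only the averaged (semigroup) version.

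The paper's proof sidesteps this by never trying to match evolution equations pathwise. Instead it tests both sides of \eqref{eq:horflow1} against an arbitrary cylinder functional $f_1(\sigma_{t_1})\cdots f_k(\sigma_{t_k})$ of the observation path and computes both expectations. The trick is that $f_j(\sigma_{t_j})=\tilde f_j(\paral^\sigma_{t_j}(u_0))$ with $\tilde f_j=f_j\circ p$ fibrewise constant, so $P^V_s$ fixes these factors and can be pulled outside the whole product; the Markov property of the $\A^H$-process then collapses the expectation to an iterated expression $P^V_s\bigl(P^H_{t_1}\tilde f_1\cdots P^H_{t-t_k}h\bigr)(u_0)$. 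Now the \emph{semigroup} commutation hypothesis lets you slide $P^V_s$ past each $P^H$ factor to the right, after which the same Markov-property unfolding yields the expectation of the right-hand side. Since cylinder functionals determine the conditional expectation given $\sigma$, this gives the almost-sure identity. The missing idea in your approach is this passage through cylinder functionals and the Markov property, which converts the hypothesis about $P^H$ and $P^V$ directly into the desired pathwise statement without any PDE or uniqueness argument.
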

\begin {proof}  
It suffices to show that given any finite sequence $0\leqslant t_1\leqslant  t_2\leqslant \dots \leqslant  t_k<t $, bounded measurable $f_j:M\to \R$, $j=1,...,k$ and bounded measurable $h:N\to \R$, if $u_0\in N_{x_0}$ then 
\begin{equs}
&\E_{x_0}\{f_1(\sigma_{t_1})...f_k(\sigma_{t_k})\chi_{t<\zeta(\sigma)}P^V_s(h\circ \paral^\sigma_t)(u_0)\}\\
&=\E_{x_0}\{f_1(\sigma_{t_1})...f_k(\sigma_{t_k})\chi_{t<\zeta(\sigma)}P^V_s(h)(\paral_t^\sigma(u_0))\}.
\label{eq:comm3}
\end {equs}
where $\chi_Z$ denotes the indicator function of a set $Z$.
To see this set $\tilde{f_j}=f_j\circ p:N\to \R$. Then the  left hand side of (\ref{eq:comm3}) is 
\begin{eqnarray*}
&&\E_{x_0}\{\tilde{f_1}(\paral^\sigma _{t_1}(u_0))...\tilde{f}_k(\paral_{t_k}(u_0))\chi_{t<\zeta(\sigma)}P^V_s(h\circ \phi_t )(u_0)\}\\
&&=\E_{x_0}\{P^V_s\left(\tilde{f_1}(\paral ^\sigma_{t_1}(u_0))...\tilde{f}_k(\paral_{t_k}^\sigma(u_0))\chi_{t<\zeta(\sigma)}h(\paral^\sigma_t(u_0))\right)\}\\
&&=P^V_s\left(P^H_{t_1}\tilde{f_1}...P^H_{t_k-t_{k-1}}\tilde{f_k}P^H_{t-t_k}h\right)(u_0)\\
&&=\left(P^H_{t_1}\tilde{f_1}...P^H_{t_k-t_{k-1}}\tilde{f_k}P^H_{t-t_k}P^V_s h\right)(u_0)
\end{eqnarray*}
which reduces to the right hand side of (\ref{eq:comm3}).
\end {proof}
\begin{remark}
Assuming strong stochastic completeness of our semi-connection let $\{z_t:0\leqslant t<
\zeta(p(u_.)\}$ be a semi-martingale in $N$ with $ p(z_t)=x_t:=p(u_t): 0\leqslant t<\zeta(p(u_.))$. If $x_0$ is a regular value of $p$ we have the Stratonovich equation:
\begin{equation} \label{eq:pull-back}
 d\parals_t^{-1}z_t=T\parals_t^{-1}\circ T\parals_t^{-1}\left(h_{z_t}\circ dx_t\right)
\end{equation}
where $\parals_t$ refers to $\parals_t^{x_.}$. To see this, for example set $b_t= \parals_t^{-1}z_t$ and observe that $$ dz_t=d(\parals_t b_t)=T\parals_t\circ db_t+h_{\parals_tb_t}\circ dx_t.$$
\end{remark}

Now assume that our induced semi-connection is strongly stochastically complete. For a regular value $x_0$ of $p$ and $u_0\in N_{x_0}$ define a process $\alpha^{u_0}: [0,\infty)\times \C_{u_0}N^+\to N_{x_0}^+$ by 
\begin{equation}
\alpha_t^{u_0}(u)=\alpha_t(u)=(\paral_t^{p(u)})^{-1}u_t
\end{equation}
if $u\in \C_{u_o}N$ with $t<\zeta(u)$ and define $\alpha_t^{u_0}(u)=\triangle$ if $t\geqslant  \zeta(u)$. 
Note that $\alpha_t$ may not go out to infinity in $N_{x_0}$  as $t$ increases to its extinction time.

Also define 
$$ \parals_s^*(\B^V)(f)= \B^V(f\circ \parals_s)\circ \parals_t^{-1}$$
to obtain a random time dependent diffusion operator $ \parals_s^*(\B^V)$ on each fibre
over a regular value of $p$.

\begin{lemma}\label{le:pull-back}
In the notation of equation (\ref{sde-5}) we have the It\^{o} equation for 
$\alpha_t:=\alpha^{u_0}_t$:\begin{equation}
\label{sde-alpha}
\nabla ^V\qquad d\alpha_t=T\parals_t^{-1} V(\parals_t\alpha_t)dW_t+T\parals_t^{-1}V^0(\parals_t\alpha_t)dt.
\end{equation}
In particular for $f:N\to\R$ in $C^2$
\begin{equation} \label{eq:mart}
M^{df,\alpha}_t:=f(\alpha_t)-\int_0^t\parals_s^*(\B^V)(f)(\alpha_s)ds
\end{equation}
is a local martingale.
%Here $\parals_s^*(\B^V)(f)= \B^V(f\circ \parals_s)\circ \parals_t^{-1}$, a random time dependent diffusion operator on $N_{x_0}$.
\end {lemma}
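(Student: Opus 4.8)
\textbf{Proof proposal for Lemma \ref{le:pull-back}.}

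The plan is to derive equation (\ref{sde-alpha}) directly from the Stratonovich equation (\ref{sde-5}) together with the pull-back formula (\ref{eq:pull-back}) of the preceding remark, and then read off (\ref{eq:mart}) from It\^o's formula. First I would recall that, by construction in \S\ref{se:cond-law}, the process $(y_t)$ solving (\ref{sde-5}) under the conditioning decomposes as $dy_t=h_{y_t}\circ dx_t+V(y_t)dW_t+V^0(y_t)dt$, where $x_t=p(y_t)$, and that $y_t$ is precisely the process $u_t$ appearing in the statement. Since $x_0=p(u_0)$ is assumed to be a regular value of $p$, strong stochastic completeness gives us the smooth fibre diffeomorphisms $\parals_t^{x_\cdot}:N_{x_0}\to N_{x_t}$ for $t<\zeta$, so that $\alpha_t=(\parals_t^{x_\cdot})^{-1}u_t$ is well defined in $N_{x_0}$ up to the extinction time.

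The key step is to apply the remark's identity (\ref{eq:pull-back}) with $z_t=u_t$. Writing $\parals_t$ for $\parals_t^{x_\cdot}$, that formula gives
$$d\parals_t^{-1}u_t=T\parals_t^{-1}\circ T\parals_t^{-1}\left(h_{u_t}\circ dx_t\right),$$
but I must be careful: (\ref{eq:pull-back}) was stated for a general semi-martingale $z_t$ over $x_t$, and its derivation there (set $b_t=\parals_t^{-1}z_t$, expand $dz_t=d(\parals_t b_t)=T\parals_t\circ db_t+h_{\parals_t b_t}\circ dx_t$) shows that $T\parals_t\circ d(\parals_t^{-1}z_t)=dz_t-h_{z_t}\circ dx_t$. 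Substituting $z_t=u_t$ and $dz_t=h_{u_t}\circ dx_t+V(u_t)dW_t+V^0(u_t)dt$ from (\ref{sde-5}), the horizontal parts cancel exactly, leaving
$$T\parals_t\circ d\alpha_t=V(u_t)dW_t+V^0(u_t)dt=V(\parals_t\alpha_t)dW_t+V^0(\parals_t\alpha_t)dt.$$
Applying $T\parals_t^{-1}$ (noting that this is the correct covariant object since $V,V^0$ take values in the vertical bundle $\ker Tp$ and $\parals_t$ covers the identity on the base fibrewise — here $\nabla^V$ is the connection on $VTN$ fixed in \S\ref{se:cond-law}, and $T\parals_t^{-1}$ is interpreted via $\nabla^V$, matching the notation of (\ref{sde-alpha})) yields (\ref{sde-alpha}) exactly. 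The subtlety I expect to be the main obstacle is making precise the sense in which $T\parals_t^{-1}V(\parals_t\alpha_t)$ is the right It\^o correction: one must check that the $\nabla^V$-It\^o form of the equation for $\alpha_t$ really has drift $T\parals_t^{-1}V^0(\parals_t\alpha_t)$ and no extra Christoffel-type terms beyond those already absorbed into the definition of the $\nabla^V$-It\^o integral, i.e. that pulling back by the fibrewise diffeomorphism $\parals_t$ intertwines the $\nabla^V$-It\^o equations as stated. This is the standard behaviour of It\^o equations under ($\nabla$-compatible) diffeomorphisms, so it is routine but needs a sentence.

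Finally, for (\ref{eq:mart}): given (\ref{sde-alpha}), the generator of the (time-inhomogeneous) process $\alpha_t$ at time $s$ is, by the very definition of the symbol and first-order part, the operator $g\mapsto \parals_s^\ast(\B^V)g$ — indeed $\parals_s^\ast(\B^V)(f)=\B^V(f\circ\parals_s)\circ\parals_s^{-1}$ is by construction the conjugate of $\B^V$ by the diffeomorphism $\parals_s$, and $\B^V$ is the generator of the $V,V^0$ diffusion on the fibre. Hence for $f\in C^2$, It\^o's formula applied to $f(\alpha_t)$ using (\ref{sde-alpha}) gives $df(\alpha_t)=dM_t+\parals_t^\ast(\B^V)(f)(\alpha_t)\,dt$ for a local martingale $M$, so $M^{df,\alpha}_t=f(\alpha_t)-\int_0^t\parals_s^\ast(\B^V)(f)(\alpha_s)ds$ is a local martingale, as claimed. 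I would also remark that all of this is understood up to the explosion time of $\alpha_\cdot$ in $N_{x_0}$, consistent with the convention $\alpha_t=\Delta$ for $t\geqslant\zeta$.
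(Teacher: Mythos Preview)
Your proposal is correct and follows essentially the same approach as the paper: derive (\ref{sde-alpha}) by combining (\ref{sde-5}) with the pull-back identity (\ref{eq:pull-back}) so that the horizontal terms cancel, then deduce (\ref{eq:mart}) via It\^o's formula and the fact that conjugation by the fibre diffeomorphism $\parals_s$ turns $\B^V$ into $\parals_s^*(\B^V)$. The paper's proof is extremely terse (it simply cites (\ref{sde-5}) and (\ref{eq:pull-back}) for the first part, and invokes pull-back of Lie derivatives or local coordinates for the second), and you have correctly filled in the details, including the point about the $\nabla^V$-It\^o structure being preserved under the fibrewise diffeomorphism.
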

\begin{proof}
Formula ( \ref{sde-alpha}) is immediate from equations (\ref{sde-5}) and (\ref{eq:pull-back}). That $ M^{df,\alpha}_.$ is a local martingale follows immediately using the properties of pull-backs under diffeomorphisms of Lie derivatives when $V$ is $C^1$, and by going to local co-ordinates otherwise.
\end{proof}
\begin {lemma}\label {le:Lie}  At all points above regular values of $p$ we have:
 $$\frac{d}{ds}\E\{ \parals_s^*(\B^V)\} |_{s=0}=[\A^H, \B^V] $$
\end {lemma}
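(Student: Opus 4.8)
\textbf{Proof plan for Lemma \ref{le:Lie}.}

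The strategy is to differentiate the identity $\parals_s^*(\B^V)f = \B^V(f\circ\parals_s)\circ\parals_s^{-1}$ at $s=0$, interpreting $\parals_s = \parals_s^{x_\cdot}$ as the random horizontal flow driven by the $\A$-diffusion. First I would fix a regular value $x_0$ of $p$ and a point $u_0\in N_{x_0}$, and work with the stochastic differential equation $dy_t = h_{y_t}\circ dx_t$ of Proposition \ref{pr:h-map} written in H\"ormander form $dy_t = \tilde X(y_t)Y(x_t)\circ dx_t$, so that $\parals_s$ is the flow $S_{0,s}$ of the (Stratonovich) SDE whose vector fields are the horizontal lifts $\tilde X^j = \bar X^j$ of the generators $X^j$ of $\A$ (taking $\A = \frac12\sum_j\Lo_{X^j}\Lo_{X^j}+\Lo_{X^0}$). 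The point is that, for the flow $\Phi_s$ of any SDE with generator $\G$, and any diffusion operator $D$, one has the It\^o-type expansion
$$\E\big[\Phi_s^*(D)f\big] = Df + s\,[\G, D]f + o(s)$$
on $C^4$ functions, because $\Phi_s^*(D) = D + s[\G,D] + (\text{martingale-type terms of mean }o(s)) + o(s)$; this is the standard "pull-back of an operator by a stochastic flow" computation, obtained by applying It\^o's formula to $\Phi_s^*(D)f$ and taking expectations, the quadratic-variation terms assembling into the second-order part of $[\G,D]$ via the commutators $[\Lo_{X^j},[\Lo_{X^j},D]]$ together with $[\Lo_{X^0},D]$. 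Here $\G = \A^H$ (the generator of the horizontal flow $\parals_s$, which is $\A^H$ precisely because $\parals_s$ is its diffusion, as established in Section \ref{se:horizontal-lift}) and $D = \B^V$.

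The key steps in order: (1) record that $s\mapsto\parals_s(u)$ is an $\A^H$-diffusion (Section \ref{se:horizontal-lift}), and choose a H\"ormander representation of $\A$, hence of $\A^H$, so the flow is driven by finitely many smooth vector fields. (2) For $f\in C^4(N)$ and fixed $u$, apply It\^o's formula to the process $s\mapsto \big(\parals_s^*(\B^V)f\big)(u) = \big(\B^V(f\circ\parals_s)\big)(\parals_s^{-1}u)$; organize the result using the Lie-derivative calculus for pull-backs of operators under diffeomorphisms (as already invoked in Lemma \ref{le:pull-back}), so that the drift of the semimartingale $\parals_s^*(\B^V)f$ is $\frac12\sum_j[\Lo_{\bar X^j},[\Lo_{\bar X^j},\B^V]]f + [\Lo_{\bar X^0},\B^V]f$ evaluated along $\parals_s$, plus a local-martingale part. (3) Take expectations and differentiate at $s=0$: the martingale part contributes nothing, the drift at $s=0$ gives $\frac12\sum_j[\Lo_{\bar X^j},[\Lo_{\bar X^j},\B^V]]f + [\Lo_{\bar X^0},\B^V]f$, and I must identify this with $[\A^H,\B^V]f$. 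This last identification is the algebraic fact that for $\A^H = \frac12\sum_j\Lo_{\bar X^j}\Lo_{\bar X^j} + \Lo_{\bar X^0}$ and any second-order operator $D$, $[\A^H, D] = \frac12\sum_j\big(\Lo_{\bar X^j}[\Lo_{\bar X^j},D] + [\Lo_{\bar X^j},D]\Lo_{\bar X^j}\big)$ modulo the symmetrization, and since $[\A^H,D]$ is the quantity that genuinely appears (not its "symmetrized square-bracket" form) one checks that the It\^o drift reassembles exactly into $[\A^H,D]$ — this is precisely the standard fact that the generator of $s\mapsto \Phi_s^*(D)$ at $s=0$ is the commutator $[\G,D]$, valid independently of the choice of H\"ormander form. (4) Finally note $C^4$ is enough for all brackets to make sense and that the bound needed to differentiate under the expectation is local and follows from smoothness of coefficients together with the regular-value hypothesis (which makes $\parals_s$ a genuine flow of diffeomorphisms near $u_0$, by strong stochastic completeness).

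The main obstacle is step (3): carefully justifying that the It\^o expansion of $\E[\parals_s^*(\B^V)f]$ has first-order term exactly $[\A^H,\B^V]f$ and not merely a symmetrized or H\"ormander-form-dependent expression, i.e. verifying that the quadratic-variation contributions from the Stratonovich-to-It\^o correction plus the genuine second-order It\^o terms combine into the full operator commutator. This is a routine but slightly delicate bookkeeping exercise; the cleanest route is to avoid coordinates and instead use the intrinsic characterization $\frac{d}{ds}\E[\Phi_s^*D]|_{s=0} = [\text{generator of }\Phi, D]$ for the flow of any SDE, proved once and for all by testing against $C^4$ functions and using that $\Phi_s^* D f - D f - s[\G, Df] $ is, up to a mean-zero local martingale, $o(s)$ uniformly on compacta — together with $\G = \A^H$. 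One should also remark that the statement is local (holds "at all points above regular values"), so no completeness-at-infinity issue arises and stopping times can be used freely to reduce to bounded $f$ and bounded coefficients.
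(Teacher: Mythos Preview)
Your approach matches the paper's: write $\A^H$ in H\"ormander form via horizontal lifts $\widetilde{X^j}$, apply an It\^o formula for pull-backs by the stochastic flow (the paper cites Lemma~9B, Chapter~VII of \cite{Elworthy-book}) to obtain $\frac{d}{ds}\E\{\parals_s^*(\B^V)\}|_{s=0}=\frac12\sum_j\frac{d^2}{ds^2}(\parals_s^j)^*(\B^V)|_{s=0}+\frac{d}{ds}(\parals_s^0)^*(\B^V)|_{s=0}$ in terms of the deterministic flows $\paral^j$ of the $\widetilde{X^j}$, and then use $\frac{d}{ds}(\parals_s^j)^*(\B^V)=[\LL_{\widetilde{X^j}},(\parals_s^j)^*\B^V]$ to arrive at the iterated-bracket expression. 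At that point the paper simply says ``we have the result''.

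Your concern about step~(3) is well-founded, but your proposed resolution is not correct: the identity $\frac{d}{ds}\E[\Phi_s^*D]|_{s=0}=[\G,D]$ is \emph{not} a general fact about stochastic flows. Algebraically, $\frac12\sum_j[\LL_{\widetilde{X^j}},[\LL_{\widetilde{X^j}},D]]+[\LL_{\widetilde{X^0}},D]$ differs from $[\A^H,D]$ by $-\sum_j[\LL_{\widetilde{X^j}},D]\,\LL_{\widetilde{X^j}}$; for instance with $N=\R^2$, $p(x,y)=x$, trivial connection, $\A^H=\frac12\partial_x^2$ and $\B^V=a(x)\partial_y^2$, one computes $\frac{d}{ds}\E\{\parals_s^*(\B^V)\}|_{s=0}=\frac12 a''\partial_y^2$ whereas $[\A^H,\B^V]=\frac12 a''\partial_y^2+a'\partial_x\partial_y^2$. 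So the bookkeeping you flag as ``routine but delicate'' hides a genuine discrepancy, which the paper's one-line proof does not address either. For the application in Theorem~\ref{th:equivs} this is harmless: stochastic holonomy invariance yields invariance under each deterministic flow $\paral^j$, hence each $[\LL_{\widetilde{X^j}},\B^V]=0$, and then $[\A^H,\B^V]=0$ follows directly.
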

\begin {proof}
This is an exercise in the use of Ito's formula. For example write $\A$ in the H\"ormander form\index{H\"ormander!form}
$$\A=\frac{1} {2} \sum_{j=1}^m \LL_{X^j}\LL_{X^j}+\LL_X^0$$
so that $\parals_s$ is the flow of  the SDE $$ d z_s=\sum_{j=1}^m \widetilde{X^j}(z_s)dB^j+\widetilde{X^0}(z_s)$$
using the horizontal lifts of the vector fields $X^j$. From the Ito formula in lemma 9B Chapter VII of \cite {Elworthy-book} we have 
$$\frac{d}{ds}\E\{ \parals_s^*(\B^V)\}|_{s=0}=\frac{1}{2}\sum_{j=1}^m\frac{d^2}{ds^2}(\parals_s^j)^*(\B^V)\}|_{s=0}+\frac{d}{ds}(\parals_s^0)^*\B^V|_{s=0}$$
where $\paral^i$ is the flow of the vector field $\widetilde{X^j}$. Since $$\frac{d}{ds}(\parals_s^j)*(\B^V)=[\LL_{\widetilde{X^j}},(\parals_s^j)^*\B^V]$$
 we have the result.
 \end{proof}

\begin {definition}\label{de:stoch-hol-inv}
For a regular value $x_0$ of $p$. We say $\B^V$ is \emph{stochastically holonomy invariant at $x_0$} if  on $N_{x_0}$ we have $\parals_t^*(\B^V)=\B^V$ for all $0\leqslant t<\zeta^{x_.}$ with probability one.  If this holds for all all regular values $x_0$ then we say $\B^V$ is \emph{ stochastically holonomy invariant}.
Similarly we say $B^V$ is \emph{holonomy invariant at $x_0$} if the corresponding result holds for parallel translation along any piecewise $C^1$ curve starting at $x_0$ in $M$, and is \emph{holonomy invariant} if this holds for all regular values $x_0$.
\end {definition}
\begin {remark}
\label{re:stoch-hol-inv}
\begin{enumerate}
  \item If the $\A$-diffusion on $M$ is represented by a stochastic differential equation we can lift that equation  to $N$ and obtain a local flow $\eta^H_t: 0\leqslant t< \zeta^H(-)$ where 
  $\zeta^H(y): y\in N$ gives its explosion times; so that with probability one  $\eta^t$ is defined and smooth on the open set $\{y\in N : t\leqslant \zeta^H(y)$, see \cite {Kunita-book} or
  \cite{Elworthy-book}. We can say that $\B^V$ is invariant under the horizontal flow if for all $C^2$ functions $f:N\to \R$ we have 
  
  $$ \B^V(f)\circ \eta_t=\B^V(f\circ \eta_t)$$
   on $\{y\in N : t\leqslant \zeta^H(y)$,  almost surely, for all $t>0$.
   This does not require strong stochastic completeness of the semi-connection, nor do we have to restrict attention to fibres over regular values. On the other hand if it holds, and given such strong stochastic completeness, if $x_0$ is a regular value it follows that    $N_{x_0}$ lies in $\{y\in N : t\leqslant \zeta^H(y)$ for all $t< \zeta^M(x_0)$ and that we have stochastic holonomy invariance at $x_0$. 
%   Conversely given stochastic holonomy invariance, since by  Sard's theorem the regular values are dense in $M$, we can  use the continuity of  $\B^V(f)\circ \eta_t$ and $\B^V(f\circ \eta_t)$ on their domains to obtain invariance under the horizontal flow.
  \item Assume completeness of the semi-connection. If $\A$ satisfies the standard H\"ormander condition\index{H\"ormander!condition}, or more generally if the space $\mathcal{D}^0(x_0)$, as in Section \ref{se:topology} is all of $M$, then holonomy invariance at $x_0$ implies holonomy invariance. This follows since concatenation of paths gives composition of the corresponding parallel translations and the conditions imply that any two points can be joined by a smooth path with derivatives in $E$.  Moreover by Theorem \ref {th:topology} every point is a regular value and so given also strong stochastic completeness of the connection from the theorem below we see that holonomy invariance of $B^V$ at one point implies it is invariant under the horizontal flow induced by any SDE on $M$ which gives one point motions with generator $\A$. The same holds for stochastic holonomy invariance: see Theorem \ref{th:equivs} below.

\end{enumerate}
\end{remark}
\begin{theorem}\label{th:equivs}
Suppose the induced semi-connection is complete and strongly  stochastically complete, and $x_0$ is a regular value of $p$.  Then the following are equivalent:
\begin {itemize}
\item[{[i$x_0$]}] For all  $u_0\in N_{x_0}$ and for any $\F^\alpha$-stopping time $\tau$ with $\tau(\alpha(u))<\zeta(p(u))$, the process $\{\alpha_t:0\leqslant t<\tau\}$ is independent of $\F^{x_0}$;
\item[{[ii$x_0$]}] $\B^V$ is stochastically holonomy invariant at $x_0$;
\item[{[iii$x_0$]}] $\B^V$ is  holonomy invariant at $x_0$;

%The validity of any, and hence all, of the above for all regular values $x_0$ of $p$ is equivalent to each of the following: 
\item[{[iv$x_0$]}]  $\B^V$ and $\A^H$ commute at all points of $\overline{\mathcal D^0(x_0)}$;
\item[{[v$x_0$]}] $P_.^V$ and $P_.^H$ commute at all points of $\overline{\mathcal D^0(x_0)}$.
\end{itemize}
If the above hold at some regular value $x_0$ they hold for all elements in $\mathcal D^0(x_0)$. Moreover $\alpha^{u_0}_.$ is a Markov process on $N_{x_0}$ with generator
$\B^V$.
\end {theorem}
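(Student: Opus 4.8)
\textbf{Proof strategy for Theorem \ref{th:equivs}.}

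The plan is to prove a cycle of implications among [i$x_0$]--[v$x_0$], then extract the Markov property of $\alpha^{u_0}_\cdot$ as a by-product, and finally use the propagation/concatenation structure of $\mathcal D^0(x_0)$ to bootstrap validity at $x_0$ to validity throughout $\mathcal D^0(x_0)$. First I would establish [iv$x_0$] $\Leftrightarrow$ [v$x_0$] at each fixed point by invoking Proposition \ref{theorem:comm2}: the horizontal component $\A^H$ has a H\"ormander form (Proposition \ref{pr:cohesive}), its lifted SDE generates the flow $\parals^\sigma_\cdot$, and strong stochastic completeness supplies the regularity hypotheses [i]--[iii] of that proposition on the relevant fibres; conversely commutativity of the semigroups on $C^2$ functions forces commutativity of the generators by differentiating at $t=s=0$. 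Next, [iv$x_0$] $\Leftrightarrow$ [ii$x_0$]: the forward direction uses Lemma \ref{le:Lie} together with the fact (from the proof of Proposition \ref{pr:horflow}, via commutativity of $\Lo_{A_k^*}$ with basic vector fields as in Corollary \ref{co:equ-comm}) that the coefficients of $\B^V$ are constant along horizontal curves precisely when $\A^H\B^V=\B^V\A^H$; constancy along horizontal flows is exactly $\parals_t^*(\B^V)=\B^V$. The reverse direction is Lemma \ref{le:Lie} read backwards: if $\parals_t^*(\B^V)\equiv \B^V$ then $\tfrac{d}{ds}\E\{\parals_s^*(\B^V)\}|_{s=0}=0$, i.e. $[\A^H,\B^V]=0$ at $x_0$. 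The equivalence [ii$x_0$] $\Leftrightarrow$ [iii$x_0$] is the passage between pathwise (stochastic) holonomy invariance and deterministic holonomy invariance along piecewise $C^1$ curves: one direction is immediate since $C^1$ $E$-horizontal paths are in the support of the $\A$-diffusion (Stroock--Varadhan support theorem, cf. the references after $\D'(x)$ in Section \ref{se:topology}); the other follows because a diffusion operator is determined by its action on $C^2$ functions and the stochastic parallel transports generate enough holonomy by the support theorem again.

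For [i$x_0$] $\Leftrightarrow$ [ii$x_0$] I would argue as follows. Assume [ii$x_0$]. By Lemma \ref{le:pull-back}, $f(\alpha_t)-\int_0^t\parals_s^*(\B^V)(f)(\alpha_s)\,ds$ is a local martingale; under stochastic holonomy invariance this reads $f(\alpha_t)-\int_0^t \B^V(f)(\alpha_s)\,ds$, so $\alpha^{u_0}_\cdot$ solves the (time-homogeneous) martingale problem for $\B^V$ on the fibre $N_{x_0}$ --- this simultaneously gives the final assertion that $\alpha^{u_0}_\cdot$ is a Markov process on $N_{x_0}$ with generator $\B^V$ once we check well-posedness, which holds since $\B^V$ is a smooth diffusion operator and uniqueness of the martingale problem is available as noted in Chapter \ref{ch:intertwined}. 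To get independence of $\F^{x_0}$, observe that equation (\ref{sde-alpha}) for $\alpha_t$ now has coefficients $T\parals_t^{-1}V(\parals_t\alpha_t)$ and $T\parals_t^{-1}V^0(\parals_t\alpha_t)$; holonomy invariance means $\parals_t^*$ preserves $\B^V$, and an examination of how $V, V^0$ transform shows the driving noise $W_\cdot$ --- which is independent of $B_\cdot$ and hence of $\sigma=p(u_\cdot)$, cf. the construction in Section \ref{se:cond-law} --- is the only source of randomness, so the solution up to the $\F^\alpha$-stopping time $\tau$ is a measurable functional of $W_\cdot$ alone, giving independence of $\F^{x_0}$. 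Conversely, if $\{\alpha_t\}$ is independent of $\F^{x_0}$, then conditioning equation (\ref{eq:mart}) of Lemma \ref{le:pull-back} on $\F^{x_0}$ and using that $\parals^\sigma_\cdot$ depends only on $\sigma$ while $\alpha_\cdot$ is independent of $\sigma$, the generator seen by $\alpha_\cdot$ must be deterministic and equal to $\E\{\parals_s^*(\B^V)\}$, which by a martingale-uniqueness argument forces $\parals_s^*(\B^V)=\B^V$ almost surely.

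Finally, the propagation statement: if [ii$x_0$] holds and $z\in\mathcal D^0(x_0)$, pick an $E$-horizontal Lipschitz path from $x_0$ to $z$; since parallel transport composes under concatenation of paths and completeness guarantees the horizontal lift exists, holonomy invariance transports from $x_0$ to $z$ along this path, giving [iii$z$] and hence all conditions at $z$; by Theorem \ref{th:topology} every point of $\mathcal D^0(x_0)$ is then a regular value, so the hypotheses are self-consistent. I expect the main obstacle to be the careful bookkeeping in [i$x_0$] $\Leftrightarrow$ [ii$x_0$]: making precise the claim that the It\^o equation (\ref{sde-alpha}) for $\alpha_\cdot$ decouples from the observation noise exactly when $\B^V$ is holonomy invariant requires tracking how $V$ and $V^0$ (which are only locally Lipschitz, and defined via an auxiliary connection $\nabla^V$) behave under the random diffeomorphisms $\parals_t$, and handling the stopping time $\tau$ and possible incompleteness of $\alpha_\cdot$ within the fibre. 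The commutativity equivalences [iv$x_0$]$\Leftrightarrow$[v$x_0$] also require verifying the somewhat delicate analytic hypotheses of Proposition \ref{theorem:comm2}, which is where strong stochastic completeness (ensuring smoothness of $\parals^\sigma_t$ on fibres over regular values) does the real work.
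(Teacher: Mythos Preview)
Your overall cycle is reasonable, and the implication [ii$x_0$]$\Rightarrow$[i$x_0$] together with the Markov statement via Lemma~\ref{le:pull-back} is exactly right. But three of your pairwise arguments have genuine gaps compared with the paper's proof.

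First, invoking Proposition~\ref{theorem:comm2} for [iv$x_0$]$\Leftrightarrow$[v$x_0$] is the step the paper deliberately avoids. The hypotheses of that proposition (in particular that $P^H_tf$ be $C^4$ and the uniform bounds on $A_2P^H_tf$) are smoothing/regularity properties of the horizontal semigroup which are \emph{not} consequences of strong stochastic completeness --- the latter only gives you that $\parals^\sigma_t$ is a diffeomorphism of the fibre, not that $P^H_t$ regularises functions on $N$. The paper instead obtains [iv]$\Rightarrow$[v] by passing through [iii]: Theorem~\ref{theorem:comm1} gives that $[\A^H,\B^V]=0$ forces $\B^V$ to commute with all basic vector fields, hence $\B^V$ is invariant under deterministic holonomy; then Wong--Zakai approximation transfers holonomy invariance of $P^V_t$ to the stochastic flow and expectation gives [v]. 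The reverse direction [v]$\Rightarrow$[ii] comes from Proposition~\ref{pr:horflow} by differentiating \eqref{eq:horflow1} at $s=0$.

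Second, your direct route [iv$x_0$]$\Rightarrow$[ii$x_0$] appeals to Corollary~\ref{co:equ-comm} and the fundamental vector fields $A_k^*$, but that corollary is specific to the equivariant principal-bundle setting; here $p:N\to M$ is a general submersion over a regular value and no such structure is available. The correct path is again via Theorem~\ref{theorem:comm1}. Conversely, your [ii$x_0$]$\Rightarrow$[iv$x_0$] via Lemma~\ref{le:Lie} only yields $[\A^H,\B^V]=0$ on $N_{x_0}$, whereas [iv$x_0$] demands commutativity on all of $\overline{\mathcal D^0(x_0)}$. The paper bridges this by first propagating [ii] to $p^\A_s(x_0,-)$-almost every $y$ using the cocycle identity $\parals_t^*=\parals_s^*(\parals^s_t)^*$, then applying Lemma~\ref{le:Lie} at each such $y$, and finally closing up by continuity of $[\B^V,\A^H]$ and the support theorem.

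Third, your argument for [i$x_0$]$\Rightarrow$[ii$x_0$] is missing the key filtration step. Independence of $\alpha_\cdot$ from $\F^{x_0}$ does not by itself identify the generator: you still have the random operator $\parals_s^*(\B^V)$ appearing in \eqref{eq:mart}. The paper uses the observation that $\F^{\alpha_\cdot}_t\subset\F^{W_\cdot}_t\vee\F^{x_0}_t$ and $\F^{W_\cdot}_t\subset\F^{\alpha_\cdot}_t\vee\F^{x_0}_t$, so independence forces $\F^{W_\cdot}_t=\F^{\alpha_\cdot}_t$ (after stopping). This allows one to condition \eqref{sde-alpha} on $\F^{\alpha_\cdot}$ and conclude $\alpha_\cdot$ is Markov with (possibly time-dependent) generator $\E\{\parals_s^*(\B^V)\}$; comparison with the conditional version of \eqref{eq:alpha1} then shows $(\parals_s^\sigma)^*(\B^V)$ equals this deterministic operator for almost every $\sigma$, which is exactly stochastic holonomy invariance. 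Your sketch jumps over this identification.
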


\begin{proof}
We will show that [i$x_0$] is equivalent to  [ii$x_0$] which implies [iv$x_0$]. Then [iv] implies [iii$y$] for all $y\in {\mathcal D^0(x_0)}$ which implies [v]. Finally we show [v] implies [ii$y$] for all $y\in {\mathcal D^0(x_0)}$.

Assume [i$x_0$] holds. Let $f:N_{x_0}\to \R$ be smooth with compact support. Then the local martingale $M^{df,\alpha}$ given by formula (\ref{eq:mart}) is a martingale and from equation (\ref{sde-alpha}) we see that $$ \E\{M^{df,\alpha}|\F^{x_0}\}=f(u_0).$$
Therefore for $\PP^{x_0}$-almost all $\sigma$ in $C_{x_0}M$
\begin{equation} \label{eq:alpha1}
\E\{f(\alpha_t)\}=\E\{f(\alpha_t)|p(u_.)=\sigma\}=f(u_0)+\int_0^t\E\{(\parals_s^{\sigma})^*(\B^V)(f)(\alpha_s)\}ds.
\end{equation}
Also, in the notation of equation  (\ref{sde-alpha}), with the obvious notation for the filtrations generated by our processes, we have $\F^{\alpha_.}_t\subset\F^{W_.}_t\wedge\F^{x_0}_t$ and $\F^{W_.}_t\subset\F^{\alpha_.}_t\wedge\F^{x_0}_t$ so our assumption implies that $\F^{W_.}_t=\F^{\alpha_.}_t$, for all positive $t$, after stopping $W_.$ at the explosion time of $\alpha_.$. From this, and equation (\ref{sde-alpha})  we see that if we set $\bar{M}^{df,\alpha}_t=\E\{M^{df,\alpha}_t|\F^{\alpha_t}\}$ we obtain  a martingale with respect to $\F^{\alpha}_*$ and
 \begin{equation}
f(\alpha_t)=\bar{M}^{df,\alpha}_t +\int_0^t\overline{\parals_s^*\B^V}(f)(\alpha_s) ds
\end{equation}
where $\overline{\parals_s^*\B^V}=\E\{\parals_s^*\B^V\}$. Thus by the usual martingale characterisation of Markov processes we see that $\alpha_.$ is Markov with (possibly time dependent) generator  $\overline{\parals_s^*\B^V}$ at time $s$. However equation (\ref{eq:alpha1}) then implies, for example by \cite {Revuz-Yor}  Proposition(2.2), Chapter VII,  that the generator is given by $(\parals_s^{\sigma})^*(\B^V)$ for arbitrary $\sigma$ in a set of full measure in $C_{x_0}M$.  Thus [i$x_0$] implies the stochastic holonomy invariance [ii$x_0$].

Conversely if [ii$x_0$] holds, equation (\ref{eq:mart}) gives 
$$ f(\alpha_t)=M^{df,\alpha}_t +\int_0^t\B^V(f)(\alpha_s)ds.$$
Then $M^{df,\alpha}_.$ is an $\F^{\alpha_.}_*$-martingale and again we see that $\alpha_.$ is Markov,  with generator $\B^V$. It is therefore independent of $x_.$ giving [i$x_0$]. Moreover, in an obvious notation, if $0\leqslant s\leqslant t$, by the flow  property of parallel translations, on $N_{x_0}$,
$$\B^V=\parals_t^*(\B^V)=\parals_s^*(\parals_t^s)^*(\B^V),$$
and so, almost surely, at all points of $N_{x_s}$ we have 
$$(\parals_t^s)^*(\B^V)=(\parals_s^*)^{-1}\B^V=\B^V.$$
Since $(\parals_t^s)^*(\B^V)$ has the same law as $\parals_{t-s}^*(\B^V)$and is independent of $\F^{x_0}_s$ this shows that  [ii$y$] holds for $p^\A_s(x_0,-)$-almost all $y\in M$ for all $s>0$.

On the other hand [ii$y$] implies that $\B^V$ and $\A$ commute  on $N_y$ by Lemma \ref{le:Lie}. Thus by continuity of $[\B^V,\A^H]$, and the support theorem we see that [ii$x_0$] implies [iv]. 

Furthermore as in Theorem \ref{theorem:comm1} we see that [iv] implies that $\B^V$ commutes with basic vector fields at all points over $\overline{\mathcal D^0(x_0)}$. From this the holonomy invariance [iii$y$] holds for all $y\in {\mathcal D^0(x_0)}$.

Now assume [iii$x_0$] and so by  Remark \ref{re:stoch-hol-inv}(2.) we have [iii$y$] for all $y\in \mathcal D^0(x_0)$. Since $\parals_t^{\sigma}(u_0)$ stays above $\mathcal D^0(x_0)$ for any suitable piecewise smooth $\sigma$ we find
the solution to the martingale problem of $\B^V$ for any point $u_0$ of $N_{x_0}$ is holonomy  invariant at $u_0$, i.e. along  piecewise smooth curves $\sigma$ in $M$ starting at $x_0$,
 $$P^V_t(f \circ \parals_s^{\sigma}-)(u_0)=P^V_t(f)(\parals_s^{\sigma}u_0).$$
 
 By Wong-Zakai approximations  we see that stochastic holonomy invariance of $P^{\B^V}$  holds over  $x_0$ and hence on taking expectations we get [v$x_0$].  As observed  we also get  [v$y$] for all $y\in \mathcal D^0(x_0)$ and hence by continuity for all $y\in \overline{\mathcal D^0(x_0)}$. Thus [iii$x_0$] implies [v].

Finally assuming [v] we can apply Proposition \ref{pr:horflow}, observing that the proof still holds since it only involves points in $\overline{ \mathcal D^0(x_0)}$. Differentiating equation (\ref{eq:horflow1}) in $s$ at $s=0$ gives the stochastic holonomy invariance [ii$y$] for all $y\in {\mathcal D^0(x_0)}$
\end{proof}
\begin {remark}\label{re:deterministic}
From the proof and Theorem \ref{theorem:comm1} we see that the stochastic completeness of the connection is not needed to ensure that [iv$x_0$]  and [iii$x_0$] are equivalent.
\end{remark}

We can now go further than our Theorem \ref{th:topology} in extending  Hermann's result,  Theorem\ref{th:hermann}. For this we will need some extra  hypoellipticity conditions to deal with the case of non-compact fibres. Take a H\"ormander form\index{H\"ormander!form} $\A$
corresponding to a smooth factorisation
 $$\sigma_x^{\A}=X(x)X(x)^*$$
 with $X(x)\in \mathbb{L}(\R^m:T_xM$ for $x\in M$. Let $\HHH$ denote the usual Cameron -Martin space of finite energy paths $H=L^{2,1}_0([0,1];\R^m)$. For $h\in \HHH$ and $x\in M$ let 
 $\phi^h_t(x), 0\leqslant t\leqslant 1$ be the solution at time $t\in [0,1]$ to the ordinary differential equation\begin{equation}
\label{eq:control}
\dot{z}(t)=X(z(t))(\dot{h})
\end{equation}
with $\phi^h_0(x)=x$. In particular we assume such a solution exists up to time $t=1$.  For each $x\in M$ this gives a smooth mapping  $\phi^-_1(x):\HHH\to M$, namely $h\mapsto \phi^h_1(x)$. Let $C^{h,x}:E_x\to E_x$ be the {\bf deterministic Malliavin covariance operator}, see \cite{Bismut-book}, given by 
$$ C^{h,x}=T_h\phi_1^-(x)(T_h\phi_1^-(x))^\ast.$$
Then $\phi^-_1(x)$ is a submersion in a neighbourhood of $h$ if and only if $C^{h,x}$
is non-degenerate. It is shown in \cite{Bismut-book} that this condition is independent of the choice of H\"ormander form\index{H\"ormander!form} for $\A$, and follows from the standard H\"ormander condition\index{H\"ormander!condition} that $X^1,\dots ,X^m$ and their iterated Lie brackets span $T_xM$ when evaluated at the point $x$. A more intrinsic formulation of it can me made in terms of the manifold of $E$-horizontal paths of finite energy, as described in
 \cite{Montgomery-book}.

\begin{theorem}\label{th:generalHermann}
Consider a smooth map $p:N\to M$ with diffusion operator $\B$ on $N$ over a cohesive diffusion operator $\A$. Suppose that the connection induced by $\B$ is complete. Also assume that $\mathcal D^0(x)$ is dense in $M$ for all $x\in M$   and that either the fibres of $p$ are compact or that the solutions to  equation (\ref{eq:control}) exist up to time $1$ and there exists $h_0\in \HHH$ and $x_0\in M$ such that $C^{h_0,x_0}$ is non-degenerate. Then $p:N\to M $ is a locally trivial bundle.

If also $\B$ and $\A^H$ commute we can take $N_{x_0}$, the fibre over $x_0$, to be the model fibre and choose the  local trivialisations $$\tau: U\times N_{x_0}\to p^{-1}(U)$$ to
satisfy $$\tau(x,-)^* (\B^V|N_{x})=\B^V|N_{x_0}.$$
\end{theorem}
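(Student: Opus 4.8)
\textbf{Proof proposal for Theorem \ref{th:generalHermann}.}

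The plan is to prove the two assertions in sequence, the first being essentially an upgrade of Theorem \ref{th:topology} and Corollary \ref{cor:top}, and the second exploiting the commutativity machinery of Section \ref{se:horizontal-lift}. For the bundle statement, first I would observe that by the completeness hypothesis and Theorem \ref{th:topology}, for $x_0\in M$ and any $z\in\mathcal D^0(x_0)$ the fibres $p^{-1}(x_0)$ and $p^{-1}(z)$ are diffeomorphic whenever one is a submanifold, and $z$ is a regular value iff $x_0$ is. So the task reduces to producing a single regular value in each orbit closure and then showing local triviality near it. In the compact-fibre case this is immediate: a proper submersion is locally trivial (inverse function theorem), and the holonomy flow of Theorem \ref{th:topology} gives the local trivialisation over any $E$-horizontal neighbourhood, while density of $\mathcal D^0(x)$ plus Sard plus openness of $\mathrm{Reg}(p)$ (here one needs compact fibres, or at least properness along $\mathcal D^0$) forces every point to be regular. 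In the non-compact case I would instead use the deterministic Malliavin covariance operator: non-degeneracy of $C^{h_0,x_0}$ makes $\phi^{-}_1(x_0):\HHH\to M$ a submersion near $h_0$, so $\phi^{h_0}_1(x_0)$ is a regular value of that control map; by the Stroock--Varadhan support theorem (or directly, since $C^{h,x}$ non-degeneracy is an open condition propagating along horizontal paths, as cited from \cite{Bismut-book}) the set of points reachable from $x_0$ at which this holds is nonempty and, combined with density of $\mathcal D^0$, meets $\mathcal D^0(x_0)$; one then transports regularity along horizontal curves via Theorem \ref{th:topology} to conclude $x_0$ itself is regular. Either way, $p$ is a submersion with a holonomy action of $E$-horizontal paths permuting the fibres diffeomorphically, and assembling these over a contractible chart $U\subset M$ (choosing for each $x\in U$ a horizontal path from $x_0$, smoothly in $x$, using the completeness of the semi-connection and that $U$ can be taken inside a single $E$-horizontal reachable neighbourhood) yields the local trivialisations $\tau:U\times N_{x_0}\to p^{-1}(U)$.

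For the second assertion, suppose in addition $\B$ and $\A^H$ commute. By Theorem \ref{theorem:comm1} this is equivalent to $\B^V$ commuting with Lie differentiation by all smooth basic vector fields, hence by Theorem \ref{th:equivs} (whose hypotheses — completeness, and density of $\mathcal D^0$, the latter giving every point regular so that strong stochastic completeness issues localise, cf. Remark \ref{re:deterministic} for the deterministic part) $\B^V$ is holonomy invariant: parallel translation $\parals^{\sigma}_t$ along any piecewise-$C^1$ $E$-horizontal curve $\sigma$ satisfies $(\parals^\sigma_t)^*(\B^V)=\B^V$. Now build the trivialisation $\tau(x,-):N_{x_0}\to N_x$ precisely as the (deterministic) holonomy diffeomorphism $\parals^{\sigma_x}_1$ along a chosen horizontal path $\sigma_x$ from $x_0$ to $x$, smoothly parametrised over the chart $U$. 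Then $\tau(x,-)^*(\B^V|N_x)=(\parals^{\sigma_x}_1)^*(\B^V|N_x)=\B^V|N_{x_0}$ by holonomy invariance, which is exactly the claimed compatibility.

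The point requiring care — and the main obstacle — is the construction of the holonomy diffeomorphism in the non-compact-fibre case and verifying it is well-defined and smooth: completeness of the semi-connection (Section on horizontal lift of paths) guarantees horizontal lifts of $E$-horizontal paths exist and last as long as the base path, giving $\parals^{\sigma}_t$ as a bijection $N_{\sigma(0)}\to N_{\sigma(t)}$, but one must check it is a diffeomorphism (not merely a bijection) over regular values and depends smoothly on the endpoint $x\in U$; this is where the hypothesis that $x_0$ (hence every point of $U\subset\overline{\mathcal D^0(x_0)}$, after shrinking) is a regular value of $p$ is essential, together with the usual lower-semicontinuity of explosion times for the horizontal ODE, so that the holonomy flow is smooth on a neighbourhood of each fibre. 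Once smoothness and the diffeomorphism property are in hand, the cocycle/flow property of parallel translation makes the transition functions $\tau(x,-)^{-1}\circ\tau(x',-)$ elements of $\mathrm{Diff}(N_{x_0})$ commuting with $\B^V$, and the theorem follows; I would only sketch these verifications, citing Theorem \ref{th:topology}, Theorem \ref{th:equivs}, and Theorem \ref{theorem:comm1} for the substantive content and referring to the Wong--Zakai / support-theorem arguments already used in the proof of Theorem \ref{th:equivs} for passing between stochastic and deterministic holonomy invariance.
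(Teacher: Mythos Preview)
Your overall architecture matches the paper's: reduce to the non-compact-fibre case, use the Malliavin covariance hypothesis to build local trivialisations by parallel translation, and for the second assertion invoke holonomy invariance of $\B^V$ (Remark~\ref{re:deterministic}) since the trivialisations are parallel translations. The second part of your argument is essentially the paper's.

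There is, however, a real gap in the first part. You use the non-degeneracy of $C^{h_0,x_0}$ only to run a (somewhat confused) argument about regular values of $p$, and then for the trivialisation itself you simply assert that over a contractible chart $U$ one can ``choose for each $x\in U$ a horizontal path from $x_0$, smoothly in $x$.'' When $E$ is a proper sub-bundle this is exactly the non-obvious step, and it is precisely what the Malliavin covariance hypothesis is for. The paper's key move is: non-degeneracy of $C^{h_0,x_0}$ means $h\mapsto\phi_1^h(x_0)$ is a submersion at $h_0$, so by the inverse function theorem there is a neighbourhood $U_y$ of $y=\phi_1^{h_0}(x_0)$ and a smooth immersion $s:U_y\to\HHH$ with $\phi_1^{s(x)}(x_0)=x$. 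The trivialisation over $U_y$ is then $\tau_{U_y}(x,v)=\parals_1^{\phi_\cdot^{s(x)}(x_0)}(v)$, smooth because $s$ is. For a general $x\in M$ one picks $x'\in U_y\cap\mathcal D^0(x)$ (possible since $U_y$ is open and $\mathcal D^0(x)$ dense) and composes with the holonomy diffeomorphism of Theorem~\ref{th:topology}. Your ``lower-semicontinuity of explosion times'' addresses smoothness of a single holonomy map, not smoothness of the family in the base variable; that is what the section $s$ provides.

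A smaller point: your regularity argument conflates ``$\phi_1^{-}(x_0)$ is a submersion at $h_0$'' with ``$\phi_1^{h_0}(x_0)$ is a regular value,'' and never invokes Sard. The clean line is: the submersion property gives $U_y\subset\mathcal D^0(x_0)$ open, density of each $\mathcal D^0(x)$ then forces $\mathcal D^0(x_0)=M$, and Sard plus Theorem~\ref{th:topology} gives that $p$ is a submersion. The paper compresses this into one sentence, but that is the content.
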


\begin{proof}
The local triviality given compactness of the fibres is a special case of Corollary \ref{cor:top} so we will only consider the other case. 

For this set $y=\phi_1^{h_0}(x_0)$. Our assumption on the covariance operator together with the smoothness of $h\mapsto \phi_1^{h}(x_0)$ implies by the inverse function theorem that there is a neighbourhood $U_y$ of $y$ in $M$ and a smooth immersion $s:U_y\to \HHH$ with $s(y)=h_0$ and  $\phi_1^{s(x)}(x_0)=x$ for $x\in U_y$.

We know from Theorem \ref{th:topology} that $p$ is a submersion so all its fibres are submanifolds of $N$.  Define $\tau_{U_y}: U_y\times N_{x_0}\to p^{-1}(U_y)$ by using the parallel translation along the curves $\phi^{s(x)}_t:0 \leqslant t\leq 1$ that is:
\begin{eqnarray}
\label{eq:trivialisation }
\tau_{U_y}(x, v)=\parals_1^{\phi^{s(x)}_.}(v) \hspace{.5in}{(x,v)\in (U_y\times N_{x_0})}.
\end{eqnarray}

For a general point $x$ of $M$ we can find an $x'\in U_y\cap \mathcal D^0(x)$ and argue as in the proof of Theorem \ref{th:topology} to obtain  open neighbourhoods $U_x$ of $x$  in $M$ and $U'_{x'}$ of $x'$ in $U_{x_0}$ and a fibrewise diffeomorphism of $p^{-1}(U'_{x'})$ with $ p^{-1}(U_x)$ obtained from parallel translations. This can be composed
with a restriction of $\tau_{U_{x_0}}$ to give a trivialisation near $x$.
This proves local triviality. The rest follows directly from Remark \ref{re:deterministic} since our trivialisations came from parallel translations.
\end{proof}

\begin{remark}\label{re:group}
 Set \begin{equation}
\label{eq:group }
\mathbb{G}(\B^V_{x_0})=\{\alpha\in \Diff(N_{x_0}):\alpha ^*(\B^V|N_{x_0})=\B^V|N_{x_0}\}.
\end{equation}
Then assuming the commutativity in the theorem we can consider $\mathbb{G}(\B^V_{x_0})$ as a structure group for our bundle though unless the fibres of $p$ are compact it is not clear if we have a smooth fibre bundle with this as group in the usual sense, since this requires smoothness  into $\mathbb{G}(\B^V_{x_0})$ of the transition maps between overlapping  trivialisations. See the next section and Michor \cite{Michor} section 13.

Note that elements of  $\mathbb{G}(B^V_{x_0})$ preserve the symbol of  $\B^V$ and so if that symbol has constant rank preserve the inner product induced on the image of $\sigma^{\B^V}$. In particular if $\B^V$ is elliptic they are isometries of the Riemannian structure induced on  the fibre $N_{x_0}$. This is the situation arising from Riemannian submersions as in Hermann's Theorem \ref{hermann2} and  described in detail in Chapter \ref{ch:Riem.sub} below. The space of  isometries of a Riemannian manifold with compact- open topology is  well known to form a Lie group, for example see \cite{Kobayashi-Nomizu-I}. However there appears to be no detailed proof that the same holds in degenerate cases even when the H\"ormander condition\index{H\"ormander!condition} holds at each point.  When H\"ormander's condition  holds the Caratheodory metric on the manifold determines the standard manifold topology, e.g. see \cite{Montgomery-book} Theorem 2.3, which is locally compact, and the group of isometries of a connected locally compact metric space is locally compact in the compact-open topology, see \cite {Kobayashi-Nomizu-I}, Chapter 1, Theorem 4.7. Thus in this case $\mathbb{G}(B^V_{x_0})$ will be locally compact. 

In general preserving the possibly  degenerate Riemannian structure determined by its symbol will not  be enough to characterise $\mathbb{G}(B^V_{x_0})$. Even in the elliptic case there may be a ``drift vector" which needs to be preserved as well and this may lead to $\mathbb{G}(B^V_{x_0})$ being very small. For example if $N_{x_0}$ is $\R^2$ and 
$\B^V= \frac{1}{2}\triangle -|x|^2\frac{\partial}{\partial x^1}$ the group is trivial.

\end{remark}

\begin{example}\label{ex:parallel-R}
\begin {enumerate}
\item  As an example consider the situation described in Section \ref{se:deri-flow} of the derivative flow of a stochastic differential equation (\ref{SDE-1})  on $M$ acting on the frame bundle $GLM$ to produce a diffusion operator $\B$ on $GLM$. Assume that $M$ is Riemannian and complete, and that
the one point motions are Brownian motions, so that $\A=\frac{1}{2}\triangle$ . Assume also that the connection induced is the Levi-Civita connection. Then if $\B$ and ${\A}^H$ commute, by Corollary {co:equ-comm} , we see that the co-efficients $\alpha$ and $\beta$ of $\B^V$ described in Theorem \ref{th:conn-deri} must be constant along horizontal 
curves. However as pointed out in the proof of Corollary \ref{co:derivative}, the restriction of $\alpha (u)$ for $u\in GLM$ to anti-symmetric tensors is essentially (one half of) the curvature operator. It follows that the curvature is parallel \index{curvature!parallel}, $ \nabla \CR=0$.
In turn this implies, \cite{Kobayashi-Nomizu-I} page 303,  that $M$ is a local symmetric space and so  if simply connected,  a symmetric space. In  Section \ref{se:symmetric} we show how such stochastic differential equations arise on any symmetric space. Also from Example \ref{ex: gbm-spheres} we see that the standard gradient SDE for Brownian motion on spheres also give derivative flows with this property.

\item For the apparently weaker property of commutativity for the derivative flow $T\xi_t$ of our SDE (\ref{SDE-1}) acting directly on the tangent bundle $TM$ recall first that if the generator  $\A$ is cohesive (and even if it just happens that the symbol of $\A$ has constant rank, see \cite{Elworthy-LeJan-Li-book}) then for $v_t =T\xi_t(v_0)$ some $v_0\in T_{x_0}M$ we have the covariant SDE
\begin{equation}
\label{eq:der-flow }
\hat{D}v_t=\breve{\nabla}_{v_t}X dB_t-\frac{1}{2}\breve{\Ric}^{\#} (v_t)dt +\breve{\nabla}_{v_t}A dt.
\end{equation}  

From this we see that  if $\A$ is cohesive the process $ \alpha_.$ defined  by $\alpha_t= \hat{\parals_t}^{-1}T\xi_t(v_0)$ satisfies the SDE
$$ d\alpha_t=\hat{\parals_t}^{-1}\left(\breve{\nabla}_{\hat{\parals_t} \alpha_t }X dB_t-\frac{1}{2}\breve{\Ric}^{\#} (\hat{\parals_t} \alpha_t) dt +\breve{\nabla}_{\hat{\parals_t} \alpha_t}A dt \right).$$

Suppose also that $A=0$. We see that $\alpha_.$ is independent of $\xi_.(x_0)$ if and only if both $\breve{\nabla}_{-}X$ and $\breve{\Ric}^{\#} $ are holonomy invariant. If $M$ is Riemannian and the solutions of the SDE are Brownian motions and the induced connection is the Levi-Civita connection we can deduce, as above, using Theorem \ref{th:equivs}, that commutativity of the the vertical and horizontal diffusions operators on $TM$ holds  only if  $M$ is locally symmetric .
\end{enumerate}
\end{example}

\chapter {Example: Riemannian Submersions \& Symmetric Spaces}
\label{ch:Riem.sub}

\section{Riemannian Submersions}
Recall that when $N$ and $M$ are Riemannian manifolds a smooth surjection $p:N\to M$ is a \emph{Riemannian submersion} if for each $u$ in $N$ the map $T_up$ is an orthogonal projection onto $T_{p(u)}M$, i.e. restricted to the orthogonal complement of its kernel it is an isometry. Note that if $p:N\to M$ is a submersion and $M$ is Riemannian we can choose a  Riemannian structure for $N$ which makes $p$ a Riemannian submersion. If a diffusion operator $\B$ on $N$ which has projectible  symbol  for $p:N\to M$ is also elliptic its symbol induces Riemannian metrics on $N$ and $M$ for which $p$ becomes a Riemannian submersion.  
 A well studied situation is when $p$ is a Riemannian submersion and $\B$ is the Laplacian, or $\frac{1}{2}\triangle_N$, on $N$. The basic geometry of Riemannian submersions was set out by O'Neill in \cite{O'Neill}; he ascribes the term `submersion' to Alfred Gray.  In this section we shall mainly be relating the work of B\'rard-Bergery \& Bourguignon \cite{Berard-Bergery-Bourguignon}, Hermann, \cite{Hermann}, Elworthy\& Kendall, \cite{Elworthy-Kendall}, and Liao, \cite{Liao89}, to the discussion above. The book \cite{Falcitelli-Ianus-Pastore} shows the breadth of geometric structures which can be considered in association with Riemannian submersions.

 A simple example of a Riemannian submersion is the map $p:\R^n-\{0\}\to\infty$
 given by $p(x)=|x|$. Then, for $n>1$, Brownian motion on $\R^n-\{0\}$ is mapped to the Bessel process on $(0,\infty)$ with generator $\A= \frac{1}{2}\frac{d^2}{dx^2}+\frac{n-1}{2x}\frac{d}{dx}$. Thus in this case $\frac{1}{2}\triangle_N$ is projectible but its projection is not  $\frac{1}{2}\triangle_M$. The well known criterion for the latter to hold is that $p$ has \emph{minimal fibres} as we show below. See also \cite{Elworthy-book},and \cite {Liao89}.

To examine this in more detail we follow Liao,\cite{Liao89}.Suppose that $p$ is a Riemannian submersion. The horizontal subbundle on $N$ is just the orthogonal complement of the vertical bundle. Working locally take an orthonormal family of vector fields $X^1,\dots,X^n$ in a neighbourhood of of a given point $x_0$ of $M$. Let $\tilde{X}^1,\dots,\tilde{X}^n$ be their horizontal lifts to a neighbourhood of some $u_0$ above $x_0$, and let $V^1,\dots,V^p$ be a locally defined orthonormal family of vertical vector fields around $u_0$. Then near $u_0$, using the summation convention over $j=1,\dots,n$, $\alpha=1,\dots,p$, we have \begin{equation}
\triangle_N=\tilde{X}^j\tilde{X}^j+V^\alpha V^\alpha-\nabla^N_{\tilde{X}^j}\tilde{X}^j-\nabla_{V^\alpha}V^\alpha
\end{equation}
while
\begin{equation}
\triangle_M={X}^j{X}^j-\nabla^N_{X^j}X^j.
\end{equation} 
Here $\nabla_M$, $\nabla_N$ refer to the Levi-Civita connections on $M$ and $N$, and we are identifying the vector fields with the Lie differentiation in their directions.

Now $\tilde{X}^j\tilde{X}^j$ lies over $X^jX^j$ while $V^\alpha V^\alpha$ is vertical. Also the horizontal component of the sum $\nabla_{V^\alpha}V^\alpha$ at a point $u\in N$is the trace of the second fundamental form of the fibre $N_{p(u)}$ of $p$ through $u$, denoted by $T_{V^\alpha}V^\alpha$ in O'Neill's notation, while $\frac{1}{2}\triangle_N$ lies over $\nabla_{{X}^j}{X}^j$ by Lemma 1 of \cite{O'Neill}.

Thus we see that $\frac{1}{2}\triangle_N$ is projectible if and only if the trace of the second fundamental form, $\trace T$, of each fibre $p^{-1}(x)$ is constant along the fibre in the sense of being the horizontal lift of a fixed tangent vector, $2A(x)\in T_xM$. If so $\frac{1}{2}\triangle_N$ lies over $\frac{1}{2}\triangle_M-A$.
In particular $A=0$, or equivalently $p$ maps Brownian motion to Brownian motion, if and only if $p$ has minimal fibres.

In general to relate to the discussion in Section \ref{se-basic symbol} we can set $b^H(u)=-\frac{1}{2}\trace T(u)$, with $b(u)=T_upb^H(u)$ in $T_{p(U)}M$. Let $\triangle^V$ be the vertical operator on $N$ which restricts to the Laplacian on each fibre, and let $\triangle^H$ be the horizontal lift of $\frac{1}{2}\triangle_M$.  Our decomposition in Theorem \ref{the-extended-deco} becomes
\begin{equation}
\frac{1}{2}\triangle_N=\left(\frac{1}{2}\triangle^H-\frac{1}{2}\trace T\right)+ \frac{1}{2}\triangle^V
\end{equation}
since the vertical part of $\nabla_{V^\alpha}V^\alpha$ is just $\nabla^V_{V^\alpha}V^\alpha$ where $\nabla^V$ refers to the  connection on the vertical bundle which restricts to the Levi-Civita of the fibres, and also
the vertical part of $\tilde{X}^j\tilde{X}^j$ vanishes because by Lemma 2 of \cite{O'Neill} the vertical part of $\tilde{X}^j\tilde{X}^k$ is the vertical part of $\frac{1}{2}[\tilde{X}^j,\tilde{X}^k]$.

\section{Riemannian Symmetric Spaces}
\label{se:symmetric}
Let $K$ be a Lie group with bi-invariant metric and let $M$ be a Riemannian manifold with a symmetric space structure given by a triple $(K,G,\sigma)$. This means that there is a smooth left action $K\times M\to M,(k,x)\mapsto L_k(x)$ of $K$ on $M$ by isometries such that if we fix a point $x_0$ of $M$ and define $p:K\to M$ by $p(k)=L_k(x_0)$ then $p$ is a Riemannian submersion and a principal bundle with group the subgroup $K_{x_0}$of $K$ which fixes $x_0$. Write  $G$ for $K_{x_0}$. Thus $M$ is diffeomorphic to $K/G$.  Moreover if $\g$ denotes the Lie algebra of $G$, and $\k$ that of $K$, (identified with the tangent spaces at the identity to $G$ and $K$ respectively), there is an orthogonal and $ad_G$- invariant decomposition $$ \k=\g+\m$$ where $\m$ is a linear subspace of $T_{\id}K$. Further $\sigma$ is an involution on $K$ and  $\g$ and $\m$ are, respectively, the $+1$ and the $-1$ eigenspaces of the involution on $T_{\id}K$ induced by $\sigma$. See Note 7, page 301, of Kobayashi \& Nomizu Volume I, \cite{Kobayashi-Nomizu-I}, for definitions and basic properties, and Volume II,  \cite {Kobayashi-Nomizu-II}, for a  detailed treatment.

We shall also let $\sigma$ denote the involutions induced by $\sigma$ on $\k$ and on $M$, and by differentiation on $ TM$ and $OM$. On $M$ it is an isometry, so it does act on $OM$. Note that on $T_{x_0}M$ it acts as $v\mapsto-v$.

Since $G$ fixes $x_0$ the derivative of the left action $L_k$ at $x_0$ gives a representation of $G$ by isometries of $T_{x_0}M$. The \emph{linear isotropy representation}. We shall assume it to be faithful, i.e. injective. As a consequence the action of $K$ on $M$ is effective, so that $K$ can be considered as a sub-group of the diffeomorphism group of $M$, and also the action of $K$ on the frame bundle  of $M$ is free, i.e the only element of  $K$ which fixes a frame is the identity element. See page 187 and the remark on page 198 of \cite {Kobayashi-Nomizu-II} for a discussion of this, and how the condition can be avoided.  Taking a fixed orthonormal frame $u_0:\R^n\to T_{x_0}M$ , say, at $x_0$, we can consider $G$ as acting by isometries on $\R^n$ by 
\begin{equation}g \cdot e=u_0^{-1}TL_gu_0(e).\end{equation}
Let $\rho:G\to O(n)$ denote this representation.
 We then have the well known identification of $K$ as a subbundle of the orthonormal frame bundle of $M$:
\begin{proposition}
\label {pr:bundle iso}
Let $\Phi: K\to OM$ be defined by $\Phi(k)(e)=TL_k(u_0e)$ for $e\in\R^n$. Then $\Phi$ is an injective homomorphism of principle bundles. Moreover $\Phi$ is equivariant for the actions of $\sigma$ on $K$ and $OM$.
\end{proposition}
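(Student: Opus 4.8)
\textbf{Proof plan for Proposition~\ref{pr:bundle iso}.}

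The plan is to verify in turn the three assertions: that $\Phi$ is well-defined and takes values in $OM$; that it is a homomorphism of principal bundles over the identity of $M$, in particular injective; and that it intertwines the two $\sigma$-actions. First I would check that $\Phi(k)\colon \R^n\to T_{p(k)}M$ is indeed an orthonormal frame: $u_0\colon \R^n\to T_{x_0}M$ is orthonormal by choice, and $TL_k\colon T_{x_0}M\to T_{L_k(x_0)}M$ is a linear isometry because $L_k$ is an isometry of $M$; composing, $\Phi(k)(e)=TL_k(u_0 e)$ is an isometry $\R^n\to T_{p(k)}M$, so $\Phi(k)\in \pi^{-1}_{OM}(p(k))$. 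Thus the square
\begin{equation*}
\begin{array}{ccc}
K & \stackrel{\Phi}{\longrightarrow} & OM\\
\downarrow & & \downarrow\\
M & = & M
\end{array}
\end{equation*}
commutes, where the left vertical map is $p(k)=L_k(x_0)$ and the right one is the frame bundle projection.

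Next I would check equivariance for the structure groups. The structure group of $p\colon K\to M$ is $G=K_{x_0}$ acting by right translation $k\mapsto kg$, and the structure group of $OM$ is $O(n)$ acting by $u\mapsto u\circ A$; the two are related by the linear isotropy representation $\rho\colon G\to O(n)$, $\rho(g)e = u_0^{-1}TL_g u_0(e)$. For $g\in G$ and $e\in\R^n$ I compute $\Phi(kg)(e) = TL_{kg}(u_0 e) = TL_k\,TL_g(u_0 e) = TL_k\,u_0(\rho(g)e) = \Phi(k)(\rho(g)e) = \big(\Phi(k)\circ\rho(g)\big)(e)$, using that $L_{kg}=L_k\circ L_g$ and the definition of $\rho$. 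Hence $\Phi(kg)=\Phi(k)\cdot\rho(g)$, so $\Phi$ is a morphism of principal bundles covering $\mathrm{id}_M$ with group homomorphism $\rho$. Injectivity of $\Phi$ then follows from the hypothesis that $K$ acts freely on the frame bundle of $M$: if $\Phi(k)=\Phi(k')$ then $TL_{k^{-1}k'}$ fixes the frame $\Phi(k)$, so by freeness $k^{-1}k'=\id$; equivalently one can note $\rho$ is faithful and the fibre map is a bijection onto its image fibre. (That $\rho$ is a homomorphism is itself a one-line check from $L_{gg'}=L_g\circ L_{g'}$.)

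Finally, for the $\sigma$-equivariance I would use the defining property that $\sigma$, as an isometry of $M$, fixes $x_0$ and acts on $T_{x_0}M$ as $v\mapsto -v$, hence $\sigma$ fixes the point of $OM$ corresponding to $u_0$ only up to the sign automorphism; more precisely the action of $\sigma$ on $K$ and the action of $\sigma$ on $OM$ (by the derivative of the isometry $\sigma\colon M\to M$) are related through $\sigma\circ L_k = L_{\sigma(k)}\circ\sigma$ on $M$, which holds because $\sigma$ is an automorphism of $K$ compatible with the action. Differentiating this identity at $x_0$ and composing with $u_0$ gives $T\sigma\circ\Phi(k) = \Phi(\sigma(k))\circ(T_{x_0}\sigma\circ u_0\circ u_0^{-1})$; since $T_{x_0}\sigma=-\mathrm{Id}$ is absorbed into the conventions fixing how $\sigma$ acts on $OM$ (it is the lift of the isometry $\sigma$), this reads $\Phi\circ\sigma = \sigma\circ\Phi$ as claimed.

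The main obstacle is bookkeeping rather than depth: one must be careful about \emph{which} action of $\sigma$ on $OM$ is meant (the derivative-of-isometry action versus a twisted one) and make the conventions in the statement "by differentiation on $TM$ and $OM$" precise enough that the identity $\Phi\circ\sigma=\sigma\circ\Phi$ comes out with the correct signs; the group-theoretic content ($L_{kg}=L_kL_g$, $\sigma L_k=L_{\sigma(k)}\sigma$, freeness of the $K$-action on frames) is all either hypothesis or immediate.
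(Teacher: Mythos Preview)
Your proof is correct and follows essentially the same route as the paper: verify $\Phi(kg)=\Phi(k)\cdot\rho(g)$ via $L_{kg}=L_k\circ L_g$, and obtain $\sigma$-equivariance from $\sigma\circ L_k=L_{\sigma(k)}\circ\sigma$. You are in fact more thorough than the paper's version: you check explicitly that $\Phi(k)$ lands in $OM$, you spell out how injectivity follows from freeness of the $K$-action on frames (the paper states this hypothesis beforehand but does not invoke it in the proof), and you flag the sign arising from $T_{x_0}\sigma=-\mathrm{Id}$, which the paper's computation $\sigma(TL_k\circ u_0)=TL_{\sigma(k)}u_0$ silently passes over.
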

\begin{proof} To see that $\Phi$  is a bundle homomorphism it is only necessary to check that $\Phi$ commutes with the actions of $G$. For this take $e\in \R^n$ and $g\in G$. Then, for $k\in K$, 
\begin{eqnarray*}
\Phi(k \cdot g)(e)&=&TL_k TL_g u_0(e)\\
&=&\Phi(k)TL_g u_0(e)=\Phi(k)u_0(g \cdot e)
\end{eqnarray*}
as required.
 For the equivariance with respect to $\sigma$ observe that by definition, $\sigma(L_kx_0)=L_{\sigma(k)}x_0$ so that acting on the frame $\Phi(k)$ we have
 \begin{eqnarray*}
\sigma(\Phi(k))&=&\sigma (TL_k\circ u_0)  \\
 & = & TL_{\sigma(k)}u_0=\Phi(\sigma(k)).
\end{eqnarray*}
\end{proof}

It is easy to see that $p:K\to M$ has totally geodesic fibres. We can therefore take $\B=\frac{1}{2}\triangle^K$ to have $\B$ lying over $\frac{1}{2}\triangle^M$. Moreover in the decomposition of $\B$ the vertical component
$\frac{1}{2}\triangle^V$ restricts to the one half the Laplacian of $G$ on the fibre $p^{-1}(x_0)$. The induced connection has horizontal subspace $\m$ at the identity element of $K$. It is clearly left K-invariant and so $H_k=TL_k[\m] $ for general $k\in K$. From the equivariance under the right action of  $G$ it is a principle connection: $TR_g[H_k]=H_{kg}$. Since $H_{kg}=TL_kTL_g[\m]= TR_kTL_k ad_g[\m]$ this holds because of the $ad_G$-invariance of $\m$. This is the \emph{canonical connection}.

The connnection on $K$ extends to one on $OM$ as described in Proposition \ref{pr:h-funct-2}.This is known as the \emph{canonical linear connection}.  Since the connection on $K$ is invariant under $\sigma$, by the equivariance of $\Phi$ so is the  canonical linear connection. As in \cite {Kobayashi-Nomizu-II} we have:
\begin{proposition} \label {pr:can-levi} The canonical linear connection is the Levi-Civita connection.
\end{proposition}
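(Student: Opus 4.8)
The statement to prove is Proposition \ref{pr:can-levi}: that the canonical linear connection on $OM$ (the extension, via Proposition \ref{pr:h-funct-2}, of the canonical connection on $K$ determined by the decomposition $\k=\g+\m$) coincides with the Levi-Civita connection of the Riemannian metric on $M$. The plan is to use the characterization of the Levi-Civita connection as the unique metric torsion-free connection, so it suffices to establish two things: (a) the canonical linear connection is metric, i.e. it preserves the Riemannian metric; and (b) it is torsion-free.

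For (a), I would argue that the canonical connection on $K$ has holonomy contained in $G$ acting through the linear isotropy representation $\rho: G\to O(n)$, which lands in the orthogonal group by construction (the $L_k$ act by isometries, so $\rho(g)\in O(n)$). Since parallel transport along any loop is realized by an element of $\rho(G)\subset O(n)$, it preserves the inner product on each $T_xM$; more directly, the bundle inclusion $\Phi: K\to OM$ of Proposition \ref{pr:bundle iso} has image in the \emph{orthonormal} frame bundle, and the canonical linear connection on $OM$ is by definition the one whose horizontal subspaces are $T\Phi$ of the horizontal subspaces $H_k=TL_k[\m]$ on $K$; a connection on $GLM$ that reduces to the orthonormal frame bundle is automatically metric. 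So (a) is essentially immediate from the setup.

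For (b), torsion-freeness, I would compute the torsion tensor at the base point $x_0$ using the standard description of the canonical connection of a symmetric space. The key input is the involution $\sigma$: by Proposition \ref{pr:bundle iso} the connection is $\sigma$-invariant, and $\sigma$ acts on $T_{x_0}M$ as $v\mapsto -v$. The torsion $T$ is a tensor of type $(1,2)$, so $\sigma_*$ sends $T(u,v)$ to $(-1)\cdot T((-1)u,(-1)v) = -T(u,v)$ by trilinearity in the $\pm1$-eigenvalue bookkeeping; but $\sigma$-invariance of the connection forces $\sigma_* T = T$, hence $T(u,v) = -T(u,v)$ at $x_0$, so $T_{x_0}=0$. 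Since $K$ acts transitively by isometries preserving the connection (hence preserving $T$), $T$ vanishes identically. This is the cleanest route and I would present it this way rather than via Lie-bracket computations of $\m$; I would cite Kobayashi--Nomizu Volume II \cite{Kobayashi-Nomizu-II} for the background that the canonical connection is characterized by having $\m$ as horizontal space at the identity and for the parity argument.

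The main obstacle — really the only nontrivial point — is making the torsion parity argument rigorous: one must be careful that $\sigma$ is a connection-preserving isometry of $M$ (which follows from the equivariance of $\Phi$ established in Proposition \ref{pr:bundle iso}, since $\Phi$ intertwines the $\sigma$-actions and the canonical linear connection is pulled back along $\Phi$), and that a $(1,2)$-tensor fixed by a linear map acting as $-\mathrm{Id}$ on the relevant tangent space must vanish there. An alternative, if one prefers to avoid $\sigma$ entirely, is to invoke the general fact from \cite{Kobayashi-Nomizu-II} that the canonical connection of a reductive homogeneous space $K/G$ has torsion at $x_0$ given (up to sign) by the $\m$-component of $[X,Y]_\m$ for $X,Y\in\m$, and that for a \emph{symmetric} space $[\m,\m]\subset\g$, so this $\m$-component vanishes; combined with (a) and uniqueness of Levi-Civita this finishes the proof. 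I would likely include a one-line remark pointing to this second argument as well, since the paper already has all the structural ingredients ($[\m,\m]\subset\g$ being part of what "symmetric space" encodes through the eigenspace description of $\sigma$).
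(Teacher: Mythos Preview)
Your proposal is correct and follows essentially the same route as the paper: the paper's proof also reduces to checking the torsion vanishes at $x_0$ by invariance, and then uses the $\sigma$-parity argument $T(u,v)=\sigma T(\sigma(u),\sigma(v))=-T(-u,-v)=-T(u,v)$ to conclude. The paper takes the metric property as understood and omits your alternative $[\m,\m]\subset\g$ remark, but otherwise the arguments coincide.
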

\begin{proof} It is only necessary to check that its torsion $T$ vanishes. By left invariance it is enough to do that at the point $x_0$. Let $u,v\in T_{x_0}M$. However by invariance under $\sigma$ we see 
$$ T(u,v)=\sigma T(\sigma(u),\sigma(v))=-T(-u,-v)=-T(u,v),$$
as required.
\end{proof}

Let $k_t,t\geqslant0$ be the canonical Brownian motion on $K$ starting at the identity, $\id$, and let $B_t$ be the Brownian motion on the Euclidean space $\k$ given by the right flat anti-development:
$$B_t=\int_0^tTR_{k_s}^{-1}d\{k_s\}.$$
Define $\xi_t: M\to M$ by $\xi_t(x)=L_{k_t}x$, for $t \geqslant 0$, $x\in M$. 
\begin {proposition} The diffeomorphism group valued process $\xi_t,t\geqslant 0$ is the flow of the sde 
$$ dx_t=X(x_t)\circ dB_t$$
where $$X(x)\alpha=\frac{d}{dt}L_{\exp t\alpha}x|_{t=0} $$
\end{proposition}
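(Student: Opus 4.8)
The plan is to identify the generator of the process $\xi_t(x) = L_{k_t}x$ on $M$ and check it is the diffusion associated to the given Stratonovich SDE. First I would recall that $X(x)\colon \k \to T_xM$, $X(x)\alpha = \frac{d}{dt}L_{\exp t\alpha}x|_{t=0}$, is exactly the infinitesimal generator of the $K$-action restricted to $T_xM$, and observe that for $\alpha \in \g = \k_{x_0}$ one has $X(x_0)\alpha = 0$, so that $X(x)$ is, in the terminology of Section \ref{se:deri-flow}, the map whose image is $T_xM$ and which factors the symbol. Since the metric on $M$ is the one induced by the Riemannian submersion $p\colon K\to M$, $p(k)=L_k(x_0)$, and the metric on $K$ is bi-invariant, the map $X(x)\colon \k\to T_xM$ is, up to the identification $\k = \g\oplus\m$, the orthogonal projection followed by an isometry of $\m$ onto $T_xM$; in particular $X(x)X(x)^* = \mathrm{Id}_{T_xM}$ with respect to these metrics (this uses that the action is by isometries and $G$ is the isotropy group). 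Hence the SDE $dx_t = X(x_t)\circ dB_t$ is a gradient-type Stratonovich SDE whose generator has symbol $\frac12 X(x)X(x)^* = \frac12\mathrm{Id}$, i.e. it generates $\frac12\triangle_M$ provided the associated LW connection is the Levi-Civita connection.

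The core computation is then: the flow of $dx_t = X(x_t)\circ dB_t$ is precisely $\xi_t$. For this I would use the relation between $k_t$ and $B_t$: by definition $B_t = \int_0^t TR_{k_s}^{-1}d\{k_s\}$ is the right anti-development, so $k_t$ solves the right-invariant Stratonovich SDE $dk_t = TR_{k_t}(\circ dB_t)$ on $K$ (up to the usual identification of $\k$-valued Brownian motion). Applying the smooth map $k\mapsto L_k x$ and using the chain rule for Stratonovich calculus, $d(L_{k_t}x) = T(L_{(\cdot)}x)\big(TR_{k_t}\circ dB_t\big)$. The key algebraic identity is that $T_e(L_{(\cdot)}x)\circ TR_{k} = $ (action of $L_k$ pushed forward) composed with $X(L_k x)$, i.e. $\frac{d}{dt}L_{\exp(t\alpha)k}x|_{t=0} = \frac{d}{ds}L_{\exp(s\alpha)}(L_k x)|_{s=0} = X(L_k x)\alpha$ — this is immediate from $\exp(t\alpha)k\cdot x_0$ read as $L_{\exp t\alpha}(L_k x_0)$. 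Therefore $d\xi_t(x) = X(\xi_t(x))\circ dB_t$, which is exactly the claimed SDE, and $\xi_0(x)=x$.

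Finally I would tie up the identification of the generator: since $X$ factors the symbol of $\frac12\triangle_M$ and, by Proposition \ref{pr:can-levi}, the canonical linear connection on $M$ is the Levi-Civita connection, while the LW connection $\breve\nabla$ determined by $X$ via \eqref{connection-1} coincides with the canonical connection (this is the symmetric-space analogue of the gradient-Brownian computation in \S\ref{ex: gbm-spheres}, and follows because $X(x)$ is left-$K$-invariant in the appropriate sense and the canonical connection is characterized by having $\m$ as horizontal space at $\id$), the one-point motion of the flow $\xi_t$ is Brownian motion on $M$. Thus $\xi_t$ is simultaneously the flow of the stated SDE and a stochastic flow whose one-point motion has generator $\frac12\triangle_M$.

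\textbf{Main obstacle.} The routine part is the Stratonovich chain rule; the real content — and the step I expect to need the most care — is verifying that the map $X(x)\colon \k\to T_xM$ induced by the $K$-action has $X(x)X(x)^*=\mathrm{Id}$ for the submersion metric \emph{and} that its associated LW connection is the canonical (hence Levi-Civita) connection, so that the flow genuinely has Brownian one-point motions rather than merely a projectible generator. This requires unwinding the identifications in Proposition \ref{pr:bundle iso} (the bundle isomorphism $\Phi\colon K\to OM$) and checking compatibility of the bi-invariant metric on $K$, the $\mathrm{ad}_G$-invariant splitting $\k=\g\oplus\m$, and the horizontal lift map of Proposition \ref{pr:h-map}; once that identification is in place the rest is formal.
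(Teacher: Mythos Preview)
Your second paragraph contains the complete proof and matches the paper's argument exactly: the paper simply observes that $k_\cdot$ satisfies the right-invariant SDE $dk_t = TR_{k_t}\circ dB_t$ (which follows from the definition of $B_t$ as the right anti-development of $k_\cdot$) and that this SDE is $p$-related to the given SDE on $M$. Your Stratonovich chain-rule computation, together with the algebraic identity $\frac{d}{dt}L_{\exp(t\alpha)k}x\big|_{t=0} = X(L_k x)\alpha$, is precisely this $p$-relatedness spelled out.

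Everything else in your proposal is unnecessary for this proposition. The statement asserts only that $\xi_t$ is the flow of the displayed SDE; it does \emph{not} assert that the one-point motion is Brownian, that $X(x)X(x)^* = \mathrm{Id}$, or that the associated LW connection is Levi--Civita. Those facts belong to the surrounding discussion (see Remark~\ref{re:sym-ljw} and the text preceding the proposition), not to the proof of the proposition itself. Consequently the ``main obstacle'' you identify is not an obstacle here at all --- you have correctly located the substantive geometric content of the \emph{section}, but the proposition itself is precisely the routine chain-rule step you dismissed as formal.
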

\begin{proof} Observe that $k_.$ satisfies the right invariant SDE
$$ dk_t=TR_{k_t}\circ dB_t$$ which is $p$-related to the given SDE on $M$.
\end{proof}

\begin{remark}\label{re:sym-ljw} 
The last two propositions relate to the discussion of connections determined by stochastic flows \index{connections!determined by stochastic flows} in the next section, and to the discussion about canonical SDE on symmetric spaces in \cite{Elworthy-LeJan-Li-book}. In \cite {Elworthy-LeJan-Li-book} it was shown that the connection determined by our SDE is the Levi-Civita connection. In  Proposition \ref{pr:determ-2} below, and in Theorem 3.1 of \cite{Elworthy-LeJan-Li-principal}, it is shown that the connection determined by a flow (in this case the canonical linear connection) is the adjoint of that induced by its SDE. this is confirmed in our special case since the adjoint of a Levi-Civita connection is itself.
\end{remark}

 We can also apply our analysis of the vertical operators and Weitzenb\"ock formulae to our situation, For this it is simplest to assume the symmetric space is \emph{irreducible}. This means that the restricted linear holonomy group of the canonical connection on $p:K\to M$ is irreducible i.e. for every $g\in G$ there is a null-homotopic loop based at $x_0$ whose horizontal lift starting at $\id\in K$ ends at  the point $g$. The definition in \cite {Kobayashi-Nomizu-II} is that $[\m,\m]$ acts irreducibly on $\m$ via the adjoint action, and it is shown there, page 252, that this implies that $\g=[\m,\m]$. As a consequence the linear isotropy representation of $G$ on $T_{x_0}M$ is irreducible, and  equivalently so is our representation $\rho$.
 
 The vertical operators determined by $\B^V$ on the bundles associated to $p$ via our representation $\rho$ and its exterior powers $\wedge^k\rho$ are given in Theorem \ref{th:comp}  by  the function $\lambda^{\wedge^k\rho}:K\to \Lo( \wedge^k\R^n;\wedge^k\R^n)$. By Corollary \ref{co:derivative} and the discussion above they correspond to the Weitzenb\"ock curvatures of the Levi-Civita connection, and so in particular are symmetric.  To calculate them using Theorem \ref{th:comp} first use the fact that $\B^V$ restricts to $\frac{1}{2}\triangle^G$ on $p^{-1}(x_0)$ to represent it as $\frac{1}{2}\displaystyle{\sum}  \LL   _{A_j^*}\LL   _{A_j^*}$ for $A^*_j$ as in Section \ref{se-decomposition-2}.
  The computation in the proof of Corollary \ref{co:} shows that
 \begin{equation}
\lambda^{\wedge^k\rho}(u)=-{(n-2)!\over (k-1)!(n-k-1)!} c_{\wedge^k}(u),
\end{equation}
for $$c_{\wedge^k}(u)=(d\wedge^k)A_l(u)\circ (d\wedge^k) A_l'(u)$$  the Casimir element of our representation $\wedge^k\rho$ of $G$.
%%CHECK HALVES!!!

If $\wedge^k\rho$  is irreducible then $c_{\wedge^k}(u)$ is constant scalar. As remarked in Corollary \ref{co:} this happens when $G=SO(n)$, given our irreducibility hypothesis on the $\rho$ and then it is just ${1\over 2}n(n-1)/{n(n-1)\dots(n-k+1)\over k!}$. Thus for the  sphere $S^n(\sqrt{2})$ of radius $\sqrt{2}$, considered as $SO(n+1)/SO(n)$ we have
\begin{equation}
\lambda^{\wedge^k\rho}(u)=-\frac{1}{4}k(n-k).
\end{equation}

 \chapter{Example: Stochastic Flows}
\label{ch:flow} \index{flow!stochastic}

Before analysing stochastic flows by the methods of the previous paragraphs we describe some purely geometric constructions which will enable us to identify the semi-connections which arise in that analysis. 

\section{Semi-connections on the Bundle of Diffeomorphisms}
\label{se:diff.bundle}

Assume that $M$ is compact. For  $r\in\{1,2,\dots\}$ and
  $s>r+\dim M/2$ let 
$\D^s=\D ^s(M)$ be the
 space of  diffeomorphisms of $M$ of  Sobolev class $H^s$. See, for example,
 Ebin-Marsden \cite{Ebin-Marsden} and Elworthy \cite{Elworthy-book} for the
 detailed structure of this space. Elements of $\D ^s$ are then
 $C^r$ diffeomorphisms. The space is a topological group under composition,
and has a natural Hilbert manifold structure for which the tangent space 
$T_\theta \D^s$ at $\theta\in \D^s$ can be identified with the space
of $H^s$ maps $v: M\to TM$ with $v(x)\in T_{\theta(x)}M$, all $x\in M$. In particular
 $T_{\mathrm id}\D ^s$
can be identified with the space $H^s\Gamma(TM)$ of $H^s$ vector fields
on $M$.  For each $h\in \D ^s$ the right translation
\begin{eqnarray*}
R_h:\D ^s& \to&\D ^s  \\
R_h(f)&=&f\circ h
\end{eqnarray*}
is $C^\infty$. However the joint map
\begin{equation}
\label{regularity}
\D ^{s+r}\times \D ^s\to \D ^s
\end{equation}
is $C^r$ rather than $C^\infty$ for each $r$ in $\{0,1,2,\dots\}$.

\noindent
For $x_0\in M$ fixed, define $\pi:\D ^s \to M$ by
\begin{equation}\pi(\theta)=\theta(x_0).
\end{equation}
The fibre $\pi^{-1}(y)$ at $y \in M$ is given by: 
$\{\theta\in \D ^s: \theta(x_0)=y\}$.
Set $\D ^s_{x_0}:=\pi^{-1}(x_0)$. Then the elements of
 $\D ^s_{x_0}$ act on the right as $C^\infty$ diffeomorphisms of $\D ^s$. We can consider this as giving a principal bundle structure to $\pi: \D^s\to M$ with group $\D^s_{x_0}$, although there is the lack of regularity noted in equation (\ref{regularity}).

 \noindent 
 A smooth semi-connection on $\pi: \D^s\to M$ over a sub-bundle $E$ of $TM$ consists of a family of linear horizontal lift maps $h_\theta: E_{\pi(\theta)}\to T_\theta \D^s$, $\theta\in \D^s$, which is smooth in the sense that it determines a $C^\infty$ section of $\Lo(\pi^*E; T\D^s)\to \D^s$. In particular we have
$$h_\theta(u): M\to TM$$
with $$h_\theta(u)(y)\in T_{\theta(y)}M,$$
$ u\in E_{\theta(x_0)}$, $\theta\in \D^s$, $y\in M$. 

\noindent 

We shall relate semi-connections on $\D^s\to M$ to certain reproducing 
kernel Hilbert spaces. For this let $E$ be a smooth sub-bundle of $TM$ and
$\HH$ a Hilbert space which consists of smooth section of $E$ such that the
inclusion $\HH\to C^0\Gamma E$ is continuous (from which comes the continuity into $\HH^s\Gamma E$ for all $s>0$). Such a Hilbert space determines and is determined by its reproducing kernel \index{reproducing kernel}
$k$, a $C^\infty$ section of the bundle $\Lo (E^*;E)\to M\times M$ with fibre $\Lo (E_x^*;E_y)$ at $(x,y)$, see \cite{Baxendale76}.
 By definition,
$$k(x,-)=\rho_x^*: E_x^*\to \HH$$
where $\rho_x: \HH\to E_x$ is the evaluation map at $x$, and so
$$k(x,y)=\rho_y\rho_x^*: E_x^*\to E_y.$$

 Assume $\HH$ {\it spans $E$}
 in the sense that for each $x$ in $M$,
$\rho_x: \HH\to E_x$ is surjective. It then induces an inner product
$\langle, \rangle_x^\HH$ on $E_x$ for each $x$ via the isomorphism $\rho_x\rho_x^*: E_x^*\to E_x$.

Using the metric on $E$ the reproducing kernel $k$ induces linear maps 
$$k^{\#}(x,y): E_x\to E_y, \qquad x,y\in M,$$
with $k^{\#}(x,x)=\id$.
\begin{proposition}
\label{pr:determ-1}
A Hilbert space $\HH$ of smooth sections of a sub-bundle $E$ of $TM$ which spans $E$ determines a smooth semi-connection $h^\HH$ on
$\pi: \D^s\to M$ over $E$ by
\begin{equation}
\label{deterministic-1}
h_\theta^\HH(u)(y)=k^{\#}\Big(\theta(x_0), \theta(y)\Big) (u),
\qquad \theta \in \D^s, u\in E_{\theta(x_0)}, y \in M,
\end{equation}
for $k^{\#}$ derived from the reproducing kernel of $\HH$ as above.
In particular the horizontal lift $\tilde \alpha$ starting from $\tilde \alpha(0)=\id$, of a curve $\alpha: [0,T]\to M$, $\alpha(0)=x_0$ with $\dot \alpha(t)\in E_{\alpha(t)}$ for all $t$, is the flow of the non-autonomous ODE on $M$
\begin{equation}
\label{deterministic-2}
\dot z_t =k^{\#} \Big(\alpha(t), z_t\Big)\dot \alpha(t).
\end{equation}
The mapping $\HH\mapsto (h^\HH, \langle, \rangle ^\HH )$ from such Hilbert spaces to semi-connections over $E$ and Riemannian metrics on $E$ is injective.
\end{proposition}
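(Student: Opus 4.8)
\textbf{Proof plan for Proposition \ref{pr:determ-1}.}

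The plan is to verify the four assertions in turn: that \eqref{deterministic-1} defines a well-defined smooth horizontal lift map, that it is genuinely a semi-connection (i.e. $T_\theta\pi\circ h_\theta^\HH$ is the identity on $E_{\pi(\theta)}$), that the horizontal lift of a curve is the flow of \eqref{deterministic-2}, and finally that $\HH\mapsto(h^\HH,\langle,\rangle^\HH)$ is injective. First I would record the elementary reproducing-kernel identities: for $v\in E_x^*$, $k(x,y)v=\rho_y\rho_x^* v$, and for $w\in E_x$, $k^\#(x,y)w=\rho_y\big((\rho_x\rho_x^*)^{-1}w\big)^\flat$ where $\flat$ denotes the musical isomorphism $E_x\to E_x^*$ associated to $\langle,\rangle_x^\HH$; in particular $k^\#(x,x)=\id_{E_x}$. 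Concretely, if $V^w\in\HH$ is the unique section with $\rho_x V^w=w$ of minimal $\HH$-norm (the ``reproducing section''), then $k^\#(x,y)w=V^w(y)$, which makes all smoothness statements transparent.

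For well-definedness and smoothness of $h^\HH$: fixing $u\in E_{\theta(x_0)}$, the map $y\mapsto k^\#(\theta(x_0),\theta(y))(u)$ is the composition of the smooth section $k^\#$ with $\theta$, and since $\theta\in\D^s$ is $C^r$ and $k^\#$ is $C^\infty$, this gives an $H^s$ section of $\theta^*TM$, i.e. an element of $T_\theta\D^s$; joint smoothness in $(\theta,u)$ follows because $k^\#$ is a smooth bundle map and composition $\D^{s+r}\times\D^s\to\D^s$ is $C^r$, which is enough to get a $C^\infty$ section of $\Lo(\pi^*E;T\D^s)$ after the usual remark that one works with the smooth core. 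The semi-connection property is immediate: $(T_\theta\pi)(h_\theta^\HH(u))=h_\theta^\HH(u)(x_0)=k^\#(\theta(x_0),\theta(x_0))(u)=u$ using $k^\#(x,x)=\id$. For the curve-lifting statement: if $\tilde\alpha$ is the horizontal lift of $\alpha$ with $\tilde\alpha(0)=\id$, then by Definition of horizontal lift $\dot{\tilde\alpha}(t)=h_{\tilde\alpha(t)}^\HH(\dot\alpha(t))$, and evaluating this identity of vector fields along $M$ at a point $z=\tilde\alpha(t)(y)$, together with $\tilde\alpha(t)(x_0)=\alpha(t)$, gives exactly $\dot z_t=k^\#(\alpha(t),z_t)\dot\alpha(t)$; conversely the flow of \eqref{deterministic-2} projects under evaluation at $x_0$ to $\alpha$ and has derivative lying in $H$, so it is the horizontal lift.

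The last assertion, injectivity of $\HH\mapsto(h^\HH,\langle,\rangle^\HH)$, is where the real content lies and is the step I expect to be the main obstacle. The metric $\langle,\rangle_x^\HH$ recovers $\rho_x\rho_x^*\colon E_x^*\to E_x$ for each $x$; the semi-connection data $h^\HH$, through \eqref{deterministic-1} evaluated at $\theta=\id$ and varying $y$, recovers $k^\#(x_0,y)$ for all $y$, hence (since $x_0$ was arbitrary — or rather, since one may translate by diffeomorphisms moving $x_0$, or note the construction is natural and one can run it with any base point) recovers $k^\#(x,y)$ for all $x,y$. Combining $k^\#(x,y)$ with the metrics $\langle,\rangle_x^\HH$ reconstructs the full kernel $k(x,y)=\rho_y\rho_x^*$. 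Finally, a Hilbert space of sections with continuous evaluations is determined by its reproducing kernel: the span of $\{k(x,-)v : x\in M, v\in E_x^*\}$ is dense in $\HH$ and the inner product on this span is forced by $\langle k(x,-)v, k(x',-)v'\rangle_\HH = v'\big(k(x,x')v\big)$, so $\HH$ is determined. I would cite \cite{Baxendale76} for this last reproducing-kernel reconstruction rather than reprove it. The subtle point to be careful about is the claim that $h^\HH$ at the single base point $x_0$ suffices to recover $k^\#$ globally; the clean way around any worry is to observe that the whole assignment $\HH\mapsto h^\HH$ is independent of the choice of $x_0$ in the sense that the formula \eqref{deterministic-1} makes sense with $x_0$ replaced by any point, and the resulting maps are compatible, so from the semi-connection one reads off $k^\#(x,y)$ for all pairs directly.
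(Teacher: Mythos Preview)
Your approach matches the paper's, with the same four checks in the same order. Two points of difference worth noting. First, you omit the $\D^s_{x_0}$-equivariance of $h^\HH$ (i.e.\ $h^\HH_{\theta\circ\phi}=TR_\phi\circ h^\HH_\theta$ for $\phi\in\D^s_{x_0}$), which is part of what makes this a semi-connection on the principal bundle $\pi:\D^s\to M$ and should be checked explicitly; it is immediate from \eqref{deterministic-1} once written down. Second, your smoothness sketch is vaguer than the paper's: the paper makes the regularity precise by factoring $h^\HH$ as the composition of the $C^\infty$ section $\theta\mapsto k(\theta(x_0),-)$ of $\Lo(\pi^*E;\underline{\HH})$ with the bundle map $\Lo(\pi^*E;\underline{\HH})\to\Lo(\pi^*E;T\D^s)$ induced by $(V,\theta)\mapsto TR_\theta V$, which is $C^{r-1}$ from $T_{\id}\D^{r+s}\times\D^s$; this is where the limited joint regularity of composition on $\D^s$ actually enters, and your ``usual remark about the smooth core'' elides it.

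On injectivity, which you flag as the main obstacle: in fact the paper dispatches it in two lines by exactly the diffeomorphism-translation idea you mention in passing. One does not change the base point $x_0$; rather, for any $x\in M$ choose $\theta\in\D^s$ with $\theta(x_0)=x$, and then \eqref{deterministic-1} read backwards gives $k^\#(x,z)(u)=h^\HH_\theta(u)(\theta^{-1}z)$. So $h^\HH$ determines $k^\#$ globally, and together with the metric this determines the reproducing kernel and hence $\HH$. Your longer route via naturality in $x_0$ is unnecessary.
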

\begin{proof}
From the definition of $k^{\#}$ we see $h_\theta^\HH(u)(y)$, as given by (\ref{deterministic-1}), takes values in $T_{\theta(y)}M$,  is linear in $u\in E_{\theta(x_0)}$ into $T_\theta\D^s$, and is  $\D_{x_0}^s$-invariant. Moreover,
$$T_\theta \pi\circ h_\theta^\HH(u)=h_\theta^\HH(u)(x_0)=
k^{\#}\Big(\theta(x_0), \theta(x_0)\Big) (u)=u$$
for $u \in E_{\theta(x_0)}$ and so $h_\theta^\HH$ is a `lift'.

To see that $h$ is $C^\infty$ as a section of
$\Lo(\pi^*E;T\D ^s)\to \D ^s$
 note that for each  $r\in\{0,1,2,\dots\}$ the composition map
\begin{eqnarray*}
 T_{id}\D ^{r+s}\times\D ^s &\to&  T\D ^s\\
(V,\theta)&\mapsto& T\CR_\theta(V)
\end{eqnarray*}
is a $\displaystyle{ C^{r-1}}$ vector bundle map over $\D^s$, 
being a partial derivative of the composition 
$\displaystyle{\D ^{r+s}\times
 \D ^{s}\to \D ^{s}}$.
Therefore it induces a $C^{r-1}$ vector bundle map
$Z\mapsto TR_\theta\circ Z$, for $Z: E_{\theta(x_0)}\to \HH$ and  for $\underline \HH$ the trivial $\HH$-bundle over
$\D ^s$,
by composition\\
\begin{center}
\begin{picture}(150,80)(0,0)
\put(10,80){$\Lo(\pi^*E; \underline \HH)$}
\put(72,83){\vector(1,0){63}}
\put(140,80){$\Lo(\pi^*E;T\D ^s)$}
\put(85,25){$\D ^s$}
\put(40, 73){\vector({3},-2){47}}
\put(140, 73){\vector({-3},-2){47}}
\end{picture}
\end{center}

\noindent
On the other hand $y\mapsto k(y,-)$ can be considered as a $C^\infty$
 section of  $\Lo(E; H_\gamma)\to M$ and so
$\theta\mapsto k(\theta(x_0),-)$ as a $C^\infty$ section of
 $\Lo(\pi^*E; \underline H_\gamma)$. This proves the
 regularity of $h$.

\noindent
That the horizontal lift $\tilde \alpha$ is the flow of (\ref{deterministic-2}) is immediate. To see that the claimed injectivity holds, given
$h_\theta^\HH$ observe that (\ref{deterministic-1}) determines $k^{\#}$: this is because given any $x$ in $M$ there exists a
$\C^\infty$ diffeomorphism $\theta$ such that $\theta(x_0)=x$ and for such $\theta$
\begin{equation}
\label{deterministic-3}
k^{\#}(x,z)(u)=h_\theta^\HH(u)(\theta^{-1}z).
\end{equation}
\end{proof}

\begin{remark}
We cannot expect surjectivity of the map $\HH\to h^\HH$ into the space of semi-connections on $\pi: \D^s\to M$. Indeed for $k^{\#}$
defined by (\ref{deterministic-3}) to be the reproducing kernel for some 
Hilbert space of sections of $E$ we need
\begin{enumerate}
\item [1)] $h_\theta^\HH(u)(y)\in E_{\theta(y)}$ for $u\in E_{\theta(x_0)}, y \in M$,  and a metric $\langle, \rangle $ on $E$ with respect to which the following holds:
\item[2)] for $x,y\in M$,
$$k^{\#}(x,y)=\Big(k^{\#}(y,x)\Big)^*, $$
\item[3)] For any finite set $S$ of points of $M$ and $\{\xi_a\}\in E_a$,
$a\in S$
$$\sum \Big\langle k^{\#}(a,b)\xi_a, \xi_b\Big\rangle \geqslant  0.$$
\end{enumerate}

\end{remark}
For each frame $u_0:\R^n\to T_{x_0}M$ there is a homomorphism of principal bundles
\begin{equation}
\label{PB-homo}
\begin{array}{llll}
\Psi^{u_0}: &\D^s&\to& GLM\\
&\theta &\mapsto& T_{x_0}\theta\circ u_0.
\end{array}
\end{equation}
As with connections such a homeomorphism maps a semi-connection
on $\D^s$ over $E$ to one on $GLM$. The horizontal lift maps are related by\\
\begin{picture}(150,80)(0,20)
\put(10,80){$T_\theta\D^s$}
\put(65,88){$T_\theta\Psi^{u_0}$}
\put(45,83){\vector(1,0){75}}
\put(130,80){$T_{\Psi^{u_0}(\theta)}{GLM}$}
\put(26,48){$h_\theta$}
\put(125,48){$h_{\Psi^u_0(\theta)}$}
\put(60,20){$E_{\theta(x_0)}$}
\put(75, 35){\vector({-3},2){55}}
\put(82, 35){\vector({3},2){55}}
\end{picture} \\
and if $\tilde \alpha: [0, T]\to \D^s$ is a horizontal lift of $\alpha: [0, T]\to M$ then
$$\Psi^{u_0}(\tilde \alpha(t))=T_{x_0}\tilde \alpha(t)\circ u_0, \qquad 0\leqslant t\leqslant T$$
is a horizontal lift of $\alpha$ to $GLM$.

\begin{theorem}
\label{pr:determ-2} Let $h^\HH$ be the semi-connection on $\pi: \D^s\to M$ over $E$ determined by some $\HH$ as in Proposition \ref{pr:determ-1}. Then the semi-connection induced on $GLM$, and so on $TM$, by the homeomorphism $\Psi^{u_0}$ is the adjoint $\hat \nabla$ of the metric connection which is projected on $(E, \langle, \rangle^\HH)$  by the evaluation map $(x,e)\mapsto \rho_x(e)$ from $M\times \HH\to E$, \cf  (1.1.10) in \cite{Elworthy-LeJan-Li-book}.  In particular every semi-connection on $TM$ with metric adjoint connection arises this way from some, even finite dimensional, choice of $\HH$.
\end{theorem}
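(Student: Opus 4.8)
The plan is to identify the semi-connection induced on $GLM$ (equivalently on $TM$) by $\Psi^{u_0}$ in terms of its horizontal lift map, then compute parallel transport and match it against the adjoint $\hat\nabla$ of the LeJan--Watanabe-type connection $\breve\nabla$ on $(E,\langle,\rangle^\HH)$ defined by $\breve\nabla_v U = \rho_x\, d[y\mapsto \rho_y^{-1}U(y)](v)$, exactly as in \eqref{connection-1} and as in Lemma 1.3.4 / (1.1.10) of \cite{Elworthy-LeJan-Li-book}. First I would use Proposition \ref{pr:determ-1}: the horizontal lift of a $C^1$ $E$-horizontal curve $\alpha:[0,T]\to M$ with $\alpha(0)=x_0$, starting from $\id$, is the flow $\{\tilde\alpha(t)\}$ of the non-autonomous ODE $\dot z_t = k^\#(\alpha(t),z_t)\dot\alpha(t)$. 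Composing with $\Psi^{u_0}$ gives that $t\mapsto T_{x_0}\tilde\alpha(t)\circ u_0$ is the horizontal lift of $\alpha$ to $GLM$, i.e. $T_{x_0}\tilde\alpha(t)$ is the parallel transport operator $T_{x_0}M\to T_{\alpha(t)}M$ of the induced semi-connection along $\alpha$.

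The computational heart is then to differentiate $T_{x_0}\tilde\alpha(t)$ in $t$. Since $\tilde\alpha(t)$ is the flow of the time-dependent vector field $Z^{\dot\alpha(t)}(\cdot):=k^\#(\alpha(t),\cdot)\dot\alpha(t)$ on $M$, for $v\in T_{x_0}M$ the curve $v_t:=T_{x_0}\tilde\alpha(t)(v)$ satisfies the linearised (variational) equation
\begin{equation}
\frac{d}{dt}v_t = T\big(Z^{\dot\alpha(t)}\big)(v_t) = \big(\nabla Z^{\dot\alpha(t)}\big)(v_t) \quad\text{(any torsion-free }\nabla\text{)},
\end{equation}
which is precisely the defining equation for parallel transport along $\alpha$ with respect to the adjoint semi-connection of whatever connection $Z^{w}$ encodes — mirroring the argument already carried out in the proof of Theorem \ref{th:conn-deri}. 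So I would: (i) show that $w\mapsto Z^w$, $w\in E_x$, is exactly the generator of $\breve\nabla$ in the sense that $\breve\nabla_v U = \mathcal L_{Z^w}U$ at points where it makes sense, using the reproducing-kernel identity $k^\#(x,y) = \rho_y\rho_x^\#$ and the fact that $\breve\nabla$ is the connection "projected by the evaluation map" $M\times\HH\to E$; (ii) invoke the characterisation (Lemma 1.3.4 of \cite{Elworthy-LeJan-Li-book}) that $T\xi_t$ of the flow of $Z^{\dot\alpha}$ realises parallel transport under $\hat\nabla=\breve\nabla'$; (iii) conclude that the semi-connection induced on $TM$ by $\Psi^{u_0}\circ h^\HH$ has the same parallel transports as $\hat\nabla$, hence equals $\hat\nabla$. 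Uniqueness of a (semi-)connection given all its parallel transports along $E$-horizontal curves finishes the identification.

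For the last sentence — that \emph{every} semi-connection on $TM$ with metric adjoint arises this way from some finite-dimensional $\HH$ — I would argue as follows. Given a semi-connection $H$ on $TM$ over $E$ whose adjoint $\nabla':=H'$ on $E$ is metric for some Riemannian metric $\langle,\rangle$ on $E$, apply the standard construction (as in \cite{Elworthy-LeJan-Li-book}, \cite{Elworthy-LeJan-Li-Tani} and recalled in Remark \ref{re:cohesive} / Proposition \ref{pr:along}): choose a smooth bundle surjection $X:\underline{\R}^N\to E$ with $X(x)X(x)^*=\langle,\rangle_x^{-1}$ such that the LW connection associated to $X$ is $\nabla$; this is possible with finite $N$ by an embedding/partition-of-unity argument because $M$ is compact (the cited references show all metric connections on $E$ so arise, using finitely many vector fields). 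Take $\HH=\R^N$ realised as sections $e\mapsto X(\cdot)e$ of $E$, with the inner product making $X(x)$ a coisometry; its reproducing kernel is $k^\#(x,y)=X(y)X(x)^*\langle,\rangle_x$ and the connection it projects to $E$ is exactly $\nabla$. Then by the first part, the semi-connection induced on $TM$ by $h^\HH$ via $\Psi^{u_0}$ is $\hat\nabla=\nabla'=H'$, hence is $H$ itself. The main obstacle I expect is (i): carefully matching the reproducing-kernel formula $k^\#(x,y)$ and the vector fields $Z^w$ with the intrinsic definition of $\breve\nabla$ via the evaluation map, so that the variational-equation computation genuinely reproduces $\hat\nabla$-parallel transport rather than parallel transport of some other connection; this is exactly the place where the identification $k^\#(x,x)=\id$ and the reproducing property must be used, and where one must be careful that the relevant connection is the \emph{adjoint}, not $\breve\nabla$ itself, just as in Theorem \ref{th:conn-deri} and Remark \ref{re:sym-ljw}.
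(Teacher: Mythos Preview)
Your plan is correct and follows essentially the same route as the paper's proof: identify parallel transport on $TM$ as the derivative $T_{x_0}\tilde\alpha(t)$ of the flow of the time-dependent vector field $z\mapsto k^\#(\alpha(t),z)\dot\alpha(t)$, then invoke Lemma~1.3.4 of \cite{Elworthy-LeJan-Li-book} to recognise this as $\hat\nabla$-parallel translation; for the surjectivity statement the paper simply cites Narasimhan--Ramanan and Quillen, which is exactly the finite-$N$ construction you sketch via a bundle surjection $X:\underline{\R}^N\to E$. The only cosmetic difference is that the paper starts the horizontal lift from an arbitrary $\theta\in\pi^{-1}(\alpha(0))$ rather than from $\id$, then observes $\parals_t(v_0)=T_{\alpha(0)}(\tilde\alpha(t)\circ\theta^{-1})(v_0)$ reduces to the derivative of the same flow, so your variational-equation step (which is implicit in the paper's appeal to Lemma~1.3.4) is not additional work but a slight unpacking of the same argument.
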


\begin{proof}
Let $\alpha: [0,T]\to M$ be a $C^1$ curve with $\dot \alpha (t)\in E_{\alpha(t)}$ for each $t$. By Proposition \ref{pr:determ-1} its horizontal lift $\tilde \alpha$ to $\D^s$ starting from $\theta\in \pi^{-1}(\alpha(0))$ is the solution to
\begin{eqnarray}
{d\tilde \alpha\over dt}&=&k^{\#}\Big(\tilde \alpha(t)(x_0), \tilde \alpha(t)-\Big) \dot\alpha(t)\\
\tilde \alpha(0)&=&\theta.
\end{eqnarray}
The horizontal lift to $GLM$ is $t\mapsto T_{x_0}\tilde \alpha(t)\circ u_0$ and to $TM$ through $v_0\in T_{\theta(x_0)}M$, \ie the parallel translation $\{\parals_t(v_0): 0\leqslant t\leqslant T\}$ of $v_0$ along $\alpha$, is given by
$$\parals_t(v_0)=T_{x_0}\tilde \alpha(t)\circ (T_{x_0}\theta)^{-1}(v_0)
=T_{\alpha(0)}\Big(\tilde \alpha(t)\circ\theta^{-1}\Big)(v_0).$$
However this is $T_{\alpha(0)}\pi_t(v_0)$ for $\{\pi_t: 0\leqslant t\leqslant T\}$ the solution flow of  
$${dz_t\over dt}=k^{\#}\Big( \alpha(t), z(t)\Big) \dot\alpha(t)\\
$$
which by Lemma 1.3.4 of \cite{Elworthy-LeJan-Li-book} is the parallel translation of the adjoint of the associated connection (in \cite{Elworthy-LeJan-Li-book} $k^{\#}$ is denoted by $k$).

The fact that all such semi-connections on $TM$ arise from some finite dimensional $\HH$ comes from Narasimhan-Ramanan
\cite{Narasimhan-Ramanan61} as described in \cite{Elworthy-LeJan-Li-book}, or more directly from Quillen \cite{Quillen}
\end{proof}

%\begin{remark}
%\label{re:flow}
%If our solution flow is that of an SDE
%$$dx_t=X(x_t)\circ dB_t+A(x_t)dt$$
%for $X(x): \R^m\to TM$ arising, for example, from H\"ormander form\index{H\"ormander!form} representation of $\A$ as in \S\ref{se:expansion} above the relationships with the notation in this section is as follows:
%$H_\gamma=\{X(\cdot)e: e\in \R^m\}$ with inner product induced by the surjection $\R^m\to H_\gamma$. If $Y_x=[X(x)|_{\ker X(x)^\perp}]^{-1}$ then $k^{\#}(y,-): E_y\to H_\gamma$ is
%$$k^{\#}(y,-)u=X(-)Y_y(u), \qquad u\in E_y.$$
% Also $K^\perp(y):\Gamma E\to H_\gamma$ is $K^\perp(y)U=X(-)Y_y(U(y))$.
%\end{remark}

\section{Semi-connections Induced by Stochastic Flows}

From Baxendale \cite{Baxendale84} we know that a $C^\infty$ stochastic flow $\{\xi_t:t\geqslant  0\}$ on $M$, i.e. a Wiener process on
 $\D ^\infty:=\cap_s \D ^s$, can be considered as the solution  flow of a stochastic differential equation on $M$ driven by a possibly infinite dimensional noise. Its one point motions form a
 diffusion process on $M$ with generator $\A $, say. The noise
comes from the Brownian  motion $\{W_t: t\geqslant  0\}$ on
$\HH^s\Gamma(TM)$ determined by a  Gaussian
measure $\gamma$ on $\HH^s\Gamma(TM)$. (In our $C^\infty$ case they
lie on $\HH^\infty (TM):=\cap_s\HH^s\Gamma(TM)$.) We will
take $\gamma$ to be mean zero and so we may have a drift  $A$  in
$\HH^\infty (TM)$. The  stochastic flow $\{\xi_t:  t\geqslant  0\}$
 can then be taken  to be the solution of the right invariant stochastic
 differential equation on $\D ^s$
\begin{equation}
\label{diffeo-1}
d\theta_t=TR_{\theta_t}\circ dW_t+TR_{\theta_t}(A)dt
\end{equation}
with $\xi_0$ the identity map $\id$. In particular it determines a right invariant generator $\B$ on $\D^s$.

For fixed $x_0$ in $M$ the one point motion $x_t:=\xi_t(x_0)$ solves
\begin{equation}
\label{diffeo-2}
dx_t=\circ dW_t(x_t)+A(x_t)dt.
\end{equation}
We can write
(\ref{diffeo-2}) as
\begin{equation}
\label{diffeo-3}
dx_t= \rho_{x_t}\circ dW_t+ A(x_t)dt.
\end{equation}

Thus $\pi(\xi_t)=\xi_t(x_0)=x_t$. For a map $\theta$ in $\D ^s$, the solution $\xi_t\circ \theta$ to
(\ref{diffeo-1}) starting at $\theta$ has
$\pi(\xi_t\circ \theta)=\xi_t(\pi(\theta))$, the solution to (\ref{diffeo-3}) starting from $\pi(\theta)$, and we see that the diffusions
are $\pi$-related (\cf \cite{Elworthy-book}), and $\A$ and $\B$ are intertwined by $\pi$.

The measure $\gamma$ corresponds to a reproducing kernel Hilbert space,
 $H_\gamma$ say, or equivalently to an abstract Wiener space
 structure $i: H_\gamma\to \HH^s\Gamma(TM)$ with $i$ the inclusion (although $i$ may not have dense image). 
Then
$$\sigma_\theta^\B: (T_\theta\D ^s)^*
\to T_\theta \D ^s$$
is right invariant and determined at $\theta=id$ by the canonical
isomorphism $H_\gamma^*\simeq H_\gamma$ through the usual map $j=i^*$
$$(\HH^s\Gamma(TM))^* \stackrel {j}{\hookrightarrow}
H_\gamma^*\simeq H_\gamma \stackrel{i}{\hookrightarrow}
\HH^s\Gamma(TM),$$
i.e.
$$\sigma_{id}^\B=i\circ j.$$
This shows $H_\gamma$ is the image of $\sigma_{\id}^\B$ with induced metric.
In this situation our cohesiveness condition on $\A $
becomes the assumption that there is a $C^\infty$ subbundle $E$ of $TM$ such that $H_\gamma$ consists of sections of $E$ and spans $E$, and $A$ is a section of $E$.
Let $\langle, \rangle_y$ be the inner product on $E_y$ induces by $H_\gamma$.

The reproducing kernel \index{reproducing kernel} $k$ of $H_\gamma$  is the covariance of $\gamma$ and :
$$k^{\#}(x,y)v=\int_{U\in\HH^s\Gamma(E)}
\Big\langle U(x),v \Big\rangle_x U(y)\; d\gamma(U),
\hskip 25pt v\in E_x; \hskip 3pt  x,y \in M.$$
Analogously to Lemma \ref{factorization-connection}  we have the commutative diagram

\begin{picture}(200,100)(0,0)
\put(40,70) {$(T_\theta \D ^s)^*$}
\put(4,12){$T_{\theta(x_s)}^*M\to E_{\theta(x_0)}^*$}

\put(90,80){$j\circ (T\CR_\theta)^*$}
\put(100, 27){$k(\theta(x_0),-)$}

\put(80,72){\vector(1,0){85}}
\put(186,72){\vector(1,0){85}}
\put(85,20){\vector(1,0){85}}
\put(186,20){\vector(1,0){85}}

\put(56,24){\vector(0,1){40}}
\put(175,24){\vector(0,1){40}}
\put(272,68){\vector(0,-1){40}}

\put(170,70){$H_\gamma$}
\put(170,15){$H_\gamma$}

\put(205,77){$T\CR_\theta\circ i$}
\put(205, 27){$\rho_{\theta(x_0)}$}

\put(270, 72){$T_\theta\D ^s$}
\put(270,12){$E_{\theta(x_0)}\hookrightarrow T_{x_0}M$}

\put(60,42){$(T_\theta \pi)^*$}
\put(180,42){$\ell_\theta$}
\put(275,42){$T_\theta\pi=\rho_{x_0}$}
\end{picture}\\
with $\ell_\theta$ uniquely determined under the extra condition
$$\ker \ell_\theta={\ker}_{\rho_{\theta(x_0)}}.$$

Writing $K: M\to \Lo(H_\gamma; H_\gamma)$ for the map giving
the projection $K(x)$ of $H_\gamma$ onto $\ker\rho_{x}$ for each $x$
in $M$ and letting $K^\perp (x)$ be the projection onto
$[\ker \rho_x]^\perp$ we have
$$\ell_\theta=K^\perp\big(\theta(x_0)\big),$$
(agreeing with the note following Lemma  \ref{factorization-connection}), and so 
$$\ell_\theta(U)=k^{\#}\Big( \theta(x_0),-\Big)U(\theta(x_0)), \qquad U\in \HH_\gamma.$$ Note that the formula 
$$K^\perp(y)(U)=k^{\#}(y,-)U(y)$$
 for $U$ in $\HH_\gamma$ determines
an extension 
$K^\perp(y): \Gamma E\to \HH_\gamma$. 
We then define $K(y)U=U-K^\perp(y)U$. Note that $\rho_y(K(y)U)=0$ for all $U$
in $\Gamma E$.

  The horizontal lift map determined by $\B$ as in Proposition \ref{pr:h-map} is therefore given by
\begin{equation}
\begin{aligned}
h_\theta : E_{\theta(x_0)}&\to \;\;T\CR_\theta(H_\gamma)\;\;
\qquad \subset \quad T_\theta \D ^s\\
h_\theta(u)&=T\CR_\theta \,\ell_\theta \Big[k^{\#}(\theta(x_0),-)u\Big],
\end{aligned}
\end{equation}
for $\theta\in \D ^s$. Consequently
\begin{equation}\label{diffeo-5}
h_\theta(u)(y)=k^{\#}\Big(\theta(x_0), \theta(y)\Big)(u).
\end{equation}

Comparing this with formula (\ref{deterministic-1}) we have
\begin{proposition}
The semi-connection $h$ determined on $\pi: \D^s\to M$ by the equivariant diffusion operator $\B$ is just that given by the reproducing kernel Hilbert space $H_\gamma$ of the stochastic flow which determines $\B$, \ie $$h=h^{H_\gamma}.$$
\end{proposition}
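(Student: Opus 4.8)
The plan is to establish the identity $h = h^{H_\gamma}$ by comparing the two explicit formulas for the horizontal lift maps that have already been derived in the excerpt. Specifically, Proposition~\ref{pr:determ-1} tells us that the semi-connection $h^{H_\gamma}$ determined by the reproducing kernel Hilbert space $H_\gamma$ is given, for $\theta \in \D^s$, $u \in E_{\theta(x_0)}$, $y \in M$, by
$$h_\theta^{H_\gamma}(u)(y) = k^{\#}\bigl(\theta(x_0), \theta(y)\bigr)(u),$$
where $k^{\#}$ is built from the reproducing kernel of $H_\gamma$. On the other hand, equation~(\ref{diffeo-5}) in this section, obtained by specializing the general horizontal lift construction of Proposition~\ref{pr:h-map} to the equivariant diffusion operator $\B$ on $\D^s$ arising from the stochastic flow, gives precisely
$$h_\theta(u)(y) = k^{\#}\bigl(\theta(x_0), \theta(y)\bigr)(u).$$
Since the right-hand sides agree verbatim, the two semi-connections coincide, and the proposition follows.

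So the core of the proof is simply to remark that the two formulas are literally the same, and to check that the ingredients match up: the $k^{\#}$ appearing in Proposition~\ref{pr:determ-1} is derived from the reproducing kernel of an abstract Hilbert space $\HH$ spanning $E$, whereas the $k^{\#}$ appearing in~(\ref{diffeo-5}) is derived from the reproducing kernel $k$ of $H_\gamma$, which was identified earlier in this section as the covariance of the Gaussian measure $\gamma$ and as the image of $\sigma^\B_{\id}$ with its induced metric. One should note that $H_\gamma$ does indeed satisfy the hypotheses of Proposition~\ref{pr:determ-1}: by the cohesiveness assumption on $\A$ translated into this setting, $H_\gamma$ consists of smooth sections of the sub-bundle $E$ and spans $E$ (i.e.\ each evaluation map $\rho_x : H_\gamma \to E_x$ is surjective), and the metric $\langle,\rangle^{H_\gamma}$ it induces on $E$ is the same $\langle,\rangle_y$ used throughout. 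Hence $h^{H_\gamma}$ is well-defined and is exactly the object computed in~(\ref{diffeo-5}).

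I would write this up as a short paragraph: first recalling the two formulas and the fact that $H_\gamma$ spans $E$ so that $h^{H_\gamma}$ makes sense, then invoking the chain of commutative diagrams leading to~(\ref{diffeo-5}) — in particular the factorization through $\ell_\theta = K^\perp(\theta(x_0))$ and the identity $\ell_\theta(U) = k^{\#}(\theta(x_0),-)U(\theta(x_0))$ — to conclude that $h_\theta(u)(y) = k^{\#}(\theta(x_0),\theta(y))(u)$, and finally matching this against~(\ref{deterministic-1}). There is essentially no obstacle here; the substantive work was already done in deriving~(\ref{diffeo-5}) and in setting up the correspondence between $\gamma$, $H_\gamma$, and its reproducing kernel. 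The only point requiring a moment's care is the bookkeeping that the metric on $E$ implicit in the definition of $k^{\#}$ is the same on both sides — but this is immediate since in both cases it is the metric induced by $H_\gamma$ via $\rho_x \rho_x^*$, which by construction is the metric $\langle,\rangle$ figuring in the cohesiveness hypothesis.
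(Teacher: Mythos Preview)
Your proposal is correct and matches the paper's approach exactly: the paper simply observes that formula~(\ref{diffeo-5}) coincides with formula~(\ref{deterministic-1}), which is precisely what you do. Your additional verification that $H_\gamma$ satisfies the hypotheses of Proposition~\ref{pr:determ-1} and that the metrics agree is more careful than the paper's terse one-line justification, but the substance is identical.
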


The horizontal lift $\{\tilde x_t: t\geqslant  0\}$ of the one point motion $\{x_t: t\geqslant  0\}$ with
 $\tilde x_0=\id$ is the solution to
\begin{equation}
\label{lift-10-1}
d\tilde x_t=k^{\#}\Big(\tilde x_t(x_0), \tilde x_t-\Big)\circ dx_t;
\end{equation}
which in a more revealing notation is:
\begin{equation}
\label{lift-10-2}
d\tilde x_t=TR_{\tilde x_t}\Big(K^\perp(\tilde x_t(x_0))\circ dW_t\Big)
+TR_{\tilde x_t}\Big(K^\perp (\tilde x_t(x_0))A\Big).
\end{equation}
Equivalently  $\{\tilde x_t: t\geqslant  0\}$ can be considered as the solution
 flow of the  non-autonomous stochastic differential equation on $M$
\begin{eqnarray*}
dy_t&=&k^{\#}\big(x_t,y_t\big)\circ dx_t
\end{eqnarray*}
\ie
\begin{equation}
\label{lift-10-4}
dy_t=\Big(K^\perp(x_t)\circ dW_t\Big)(y_t)+K^\perp(x_t)(A)(y_t).
\end{equation}

The standard fact that the solution to such equation as
(\ref{lift-10-4}) starting at $x_0$ is just $\{x_t:t\geqslant  0\}$, i.e.
that $\tilde x_t(x_0)=x_t$ reflects the fact that $\tilde x_\cdot$ is a
lift of $x_\cdot$. The lift through $\phi\in \D_{x_0}^s$ is
just $\{\tilde x_t\circ \phi: t\geqslant  0\}$.

\begin{remark}
\label{re:flow}
If our solution flow is that of an SDE
$$dx_t=X(x_t)\circ dB_t+A(x_t)dt$$
for $X(x): \R^m\to TM$ arising, for example, from H\"ormander form\index{H\"ormander!form} representation of $\A$ as in \S\ref{se:expansion} above the relationships with the notation in this section is as follows:
$H_\gamma=\{X(\cdot)e: e\in \R^m\}$ with inner product induced by the surjection $\R^m\to H_\gamma$. If $Y_x=[X(x)|_{\ker X(x)^\perp}]^{-1}$ then $k^{\#}(y,-): E_y\to H_\gamma$ is
$$k^{\#}(y,-)u=X(-)Y_y(u), \qquad u\in E_y.$$
 Also $K^\perp(y):\Gamma E\to H_\gamma$ is $K^\perp(y)U=X(-)Y_y(U(y))$.
\end{remark}

\begin{remark}
The reproducing kernel Hilbert space $H_\gamma$ determines the stochastic flow and so by the injectivity part of Proposition \ref{pr:determ-1} the semi-connection together with the generator
$\A$ of the one-point motion determines the flow, or equivalently
the operator $\B$. This is because the symbol of $\A$ again gives the metric on $E$ which together with the semi-connection determines $H_\gamma$ by Proposition \ref{pr:determ-1}. The generator $\A$ then determines the drift $A$. A consequence is that the horizontal lift $\A^H$ of $\A$ to $\D^s$ determines the flow (and hence $\B$, so $\B^V$ really is redundant). 
\end{remark}

To see this directly note that given any cohesive $\A$ on $M$ and $\D_{x_0}^s$-equivariant $\A^H$  on $\D^s$ over $\A$, with no vertical part, there is at most one vertical $\B^V$ such that $\A^H+\B^V$ is right invariant. This follows from the following lemma

\begin{lemma}
Suppose $\B^1$ is a diffusion operator on $\D^s$ which is vertical and right invariant then $\B^1=0$. 
\end{lemma}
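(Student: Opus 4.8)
The plan is to exploit the rigidity coming simultaneously from verticality and right invariance. Recall that right invariance of $\B^1$ means $\B^1 f^{\phi} = (\B^1 f)^{\phi}$ for all $\phi\in\D^s$, where $f^{\phi}(\theta)=f(\theta\circ\phi)$; in particular, since the group $\D^s$ acts transitively on itself by right translation, $\B^1$ is determined by its behaviour at the identity $\id$, i.e. by $\sigma^{\B^1}_{\id}$ and $\delta^{\B^1}$ evaluated at $\id$. Meanwhile verticality of $\B^1$ means $\B^1$ is along $VT\D^s=\ker[T\pi]$, equivalently (by Proposition \ref{pr:vert-eq}) that $\B^1(f\circ\pi)=0$ for all $C^2$ functions $f$ on $M$, and by Remark \ref{re:3.4}(i) that $\Image[\sigma^{\B^1}_\theta]\subset VT_\theta\D^s$ for every $\theta$. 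First I would transport this: by right invariance the vertical subspace at $\id$, namely $VT_{\id}\D^s=\{v\in T_{\id}\D^s: v(x_0)=0\}$, is carried by $TR_\phi$ onto $VT_\phi\D^s$ for $\phi\in\D^s_{x_0}$, but a general right translation $R_\phi$ with $\phi(x_0)=y\neq x_0$ does \emph{not} preserve verticality — it shifts the base point. This is the tension to be leveraged.

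The key step is to show the symbol $\sigma^{\B^1}_{\id}$ must vanish. Suppose not; then there is a nonzero $V\in VT_{\id}\D^s$, i.e. an $H^s$ vector field on $M$ with $V(x_0)=0$, in the image of $\sigma^{\B^1}_{\id}$. By right invariance, for any $\phi\in\D^s$, $\Image[\sigma^{\B^1}_\phi]=TR_\phi\big(\Image[\sigma^{\B^1}_{\id}]\big)$, and verticality forces $TR_\phi(V)\in VT_\phi\D^s$, i.e. $(TR_\phi V)(x_0)=0$. But $TR_\phi$ acts on a tangent vector $v$ at $\id$ (a vector field on $M$) by $(TR_\phi v)(\theta)=v\circ\phi$ evaluated appropriately — concretely $TR_\phi(v)$ is the vector field $x\mapsto v(\phi(x))$ as an element of $T_\phi\D^s$. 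The condition $(TR_\phi V)(x_0)=0$ then reads $V(\phi(x_0))=0$. Letting $\phi$ range over $\D^s$ so that $\phi(x_0)$ ranges over all of $M$ (using that $\D^s$ acts transitively on $M$, since $M$ is connected — this is standard, e.g. Ebin-Marsden), we conclude $V\equiv 0$ on $M$, a contradiction. Hence $\sigma^{\B^1}_{\id}=0$, and by right invariance $\sigma^{\B^1}\equiv 0$.

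It remains to kill the first-order part. With $\sigma^{\B^1}=0$, $\B^1$ is a right-invariant first-order operator, i.e. Lie differentiation $\LL_{Y}$ by a right-invariant vector field $Y$ on $\D^s$, where $Y$ is determined by $Y(\id)=:A_1\in T_{\id}\D^s=H^s\Gamma(TM)$. Verticality of $\B^1=\LL_Y$ says, via Proposition \ref{pr:vert-eq}(3) or directly, that $\LL_Y(f\circ\pi)=0$ for all $C^2$ functions $f$ on $M$, i.e. $d(f\circ\pi)_\theta(Y(\theta))=df_{\pi(\theta)}(T_\theta\pi\, Y(\theta))=0$ for all such $f$, hence $T_\theta\pi(Y(\theta))=0$ for all $\theta$, i.e. $Y$ is itself a vertical vector field. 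Applying this at a general $\theta=\phi$ and using $Y(\phi)=TR_\phi(A_1)$, i.e. the vector field $x\mapsto A_1(\phi(x))$, the vertical condition $T_\phi\pi(Y(\phi))=Y(\phi)(x_0)=A_1(\phi(x_0))=0$ again forces, as $\phi(x_0)$ ranges over $M$, that $A_1\equiv 0$. Therefore $Y\equiv 0$ and $\B^1=0$.

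The main obstacle — and the only nontrivial input — is the transitivity of the right action of $\D^s$ on $M$ and the correct identification of how $TR_\phi$ acts on tangent vectors at $\id$ (vector fields on $M$) in terms of composition with $\phi$; once that bookkeeping is set up, both the symbol and the drift are eliminated by the same "sweep $\phi(x_0)$ over all of $M$" argument. One should also check the mild regularity issue flagged in equation (\ref{regularity}): the map $(V,\phi)\mapsto TR_\phi(V)$ is only $C^{r-1}$, but this does not affect the pointwise algebraic identities used above, so the conclusion $\B^1=0$ stands.
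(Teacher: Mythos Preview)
Your proof is correct and follows essentially the same approach as the paper's: both use that verticality forces $\Image[\sigma^{\B^1}_\theta]\subset VT_\theta\D^s$, right invariance transports this via $TR_\phi$, and the resulting condition $V(\phi(x_0))=0$ for all $\phi\in\D^s$ (together with transitivity of the action on $M$) kills the symbol, after which the same sweep argument eliminates the remaining right-invariant vertical vector field. Your write-up is more explicit than the paper's about the action of $TR_\phi$ on tangent vectors and about the transitivity input, but the logic is identical.
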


\begin{proof}
By Remark \ref{re:3.4} (i) the image $\Epsilon_\theta$, say, of 
$\sigma_\theta^{\B'}$ lies in $VT_\theta \D^s$ for  $\theta\in \D^s$ and so
 if $V\in \Epsilon_\theta$. On the other hand, by right invariance $\Epsilon_\theta =TR_\theta(\Epsilon_\id)$. 
Therefore if $V\in \Epsilon_\id$ then $V(\theta_{x_0})=0$ all
 $\theta\in \D^s$ and so $V\equiv 0$. Thus $\Epsilon_\id=\{0\}$ and
  by right invariance, $\B^1$ must be given by some vector field $Z$ 
  on $\D^s$. But $Z$ must be vertical and right invariant, so again 
  we see $Z\equiv 0$.
\end{proof}

Proposition \ref{pr:h-funct-2}  applies to the 
homomorphism $\Psi^{u_0}: \D^s\to GL(M)$ of (\ref{PB-homo}).  From this and Theorem \ref{pr:determ-2} we see that the semi-connection $\nabla$ on $GLM$ determined by the generator of the derivative flow in \S\ref{se:deri-flow} is the adjoint $\hat \nabla$ of the connection $\breve \nabla$, so giving an alternative proof of Theorem \ref{th:conn-deri} above.
Proposition \ref{pr:h-funct-2} also gives a relationship between
the curvature and holonomy group of $\hat \nabla$  and those of the connection induced by the flow on $\D^s\stackrel{\rho_{x_0}}{\rightarrow} M$.

We can summarize our decomposition results as applied to these stochastic flows in the following theorem. The skew product decomposition was already described in \cite{Elworthy-LeJan-Li-principal}
for the case of solution flows of SDE of the form (\ref{sde}), and in particular with finite dimensional noise: however the difference is essentially that of notation, see Remark \ref{re:flow} above.

\begin{theorem}
\label{th:flow-decom}
Let $\{\xi_t: t\geqslant  0\}$ be a $C^\infty$ stochastic flow on a compact
 manifold  $M$. Let $\A$ be the generator of the one point motion on $M$
 and $\B$  the generator of the right invariant diffusion on $\D^s$
 determined by  $\{\xi_t: t\geqslant  0\}$. Assume $\A$ is strongly
cohesive. Then there is a unique decomposition $\B=A^H+\B^V$ for
$A^H$ a diffusion operator which has no vertical part in the sense of definition \ref{no-vert}  and $\B^V$ a diffusion operator which is along
the fibres of $\rho_{x_0}$, both  invariant under the right action of
$\D^s_{x_0}$. The diffusion process $\{\theta_t: t\geqslant  0\}$ and
$\{\phi_t: t\geqslant  0\}$ corresponding to $A^H$ and $\B^V$ respectively
 can be represented as solutions to
\begin{equation}
\label{representation-Aoplift}
d\theta_t=TR_{\theta_t}\Big(K^\perp(\theta_t(x_0))\circ dW_t\Big)
+TR_{\theta_t}\Big(K^\perp(\theta_t(x_0))A\Big)
\end{equation}
and
\begin{equation}
\label{B-minus-Aoplift}
d\phi_t=TR_{\phi_t}\Big(K(z_0)\circ dW_t\Big) +TR_{\phi_t}\Big(K(z_0)A\Big)
\end{equation}
for $z_0=\phi_0(x_0)=\phi_t(x_0)$.
There is the corresponding skew-product decomposition of the given stochastic flow
$$\xi_t=\tilde x_t g_t^{x_\cdot}, 0\leqslant t<\infty$$
where $\{\tilde x_t: t\geqslant  0\}$ is the horizontal lift of the one point notion $\{\xi_t(x_0): t\geqslant  0\}$ with $\tilde x_0=\id_M$ and for $\PP_{x_0}^\A$-almost all $\sigma: [0, \infty)\to M$, $\{g_t^\sigma: t\geqslant  0\}$ is a $\D_{x_0}^s$-valued process independent of $\{\tilde x_t: t\geqslant  0\}$ and satisfying 
\begin{eqnarray*}
dg_t^\sigma&=&T\tilde \sigma_t^{-1} \rho(\tilde \sigma_tg_t^\sigma-)\Big(K(\sigma_t)\circ dW_t\Big)
+T\tilde \sigma_t^{-1} \rho(\tilde \sigma_tg_t^\sigma-)\Big(K(\sigma_t)
A\Big)\\
\tilde \sigma_0& = &{\id}_M 
\end{eqnarray*}
where $\tilde \sigma$ is the horizontal lift of $\tilde \sigma$ to $\D^s$ with $\tilde \sigma_\cdot$ in the horizontal life of $\tilde \sigma$ to $\D^s$ with $\tilde \sigma_0=\id_M$.
\end{theorem}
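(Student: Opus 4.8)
\textbf{Proof strategy for Theorem \ref{th:flow-decom}.}

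The plan is to assemble the statement from the machinery already in place, treating the diffeomorphism bundle $\pi: \D^s\to M$ as a (mildly irregular) principal bundle with structure group $\D^s_{x_0}$, and applying the decomposition theorems of Chapters \ref{ch:deco-1} and \ref{ch:deco-2}. First I would invoke Baxendale's representation (as recalled at the start of \S\ref{se:diff.bundle}) to write $\{\xi_t\}$ as the solution of the right-invariant SDE \eqref{diffeo-1} on $\D^s$, so that $\B$ is the right-invariant generator associated to the Gaussian measure $\gamma$ with reproducing kernel Hilbert space $H_\gamma$. The strong cohesiveness hypothesis on $\A$ is exactly what is needed to guarantee that $H_\gamma$ consists of sections of a smooth subbundle $E$ of $TM$, spans $E$, and contains the drift $A$; this is the condition stated just after \eqref{diffeo-3}. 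With this in hand, $\B$ is an equivariant operator over the cohesive $\A$, and Theorem \ref{th:deco} (or its equivariant refinement, Theorem \ref{th:op-deco}) gives the unique decomposition $\B = \A^H + \B^V$ into a horizontal part with no vertical part and a vertical part, both right $\D^s_{x_0}$-invariant. Uniqueness is immediate from Theorem \ref{th:deco}; right-invariance of the summands follows because the semi-connection induced by $\B$ is itself right-invariant, being $h^{H_\gamma}$ by the Proposition preceding \eqref{lift-10-1}.

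Next I would identify the explicit SDE representations \eqref{representation-Aoplift} and \eqref{B-minus-Aoplift}. The point is that the horizontal lift map is given pointwise by $h_\theta(u)(y) = k^{\#}(\theta(x_0),\theta(y))(u)$ as in \eqref{diffeo-5}, equivalently in the revealing form \eqref{lift-10-2} using the projection operators $K^\perp(y)$ onto $[\ker\rho_y]^\perp$ and $K(y) = \id - K^\perp(y)$. Since $\A^H$ is by construction the horizontal lift of $\A$, its diffusion process is the solution flow of $dy_t = (K^\perp(x_t)\circ dW_t)(y_t) + K^\perp(x_t)(A)(y_t)$, which in right-invariant form on $\D^s$ is precisely \eqref{representation-Aoplift}; one checks the symbol and drift agree with those of $\A^H$ using Proposition \ref{pr:h-map} and the commutative diagram with $\ell_\theta = K^\perp(\theta(x_0))$. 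For $\B^V = \B - \A^H$, the symbol at $\theta$ is $TR_\theta$ applied to $\sigma^\B_{\id} - \sigma^{\A^H}_{\id}$, and the latter difference, composed with $i\circ j$, is $K(\id)$ by the orthogonal decomposition $H_\gamma = \ker\rho_{x_0} \oplus [\ker\rho_{x_0}]^\perp$; right-invariance forces the $z_0$-dependence to be through $z_0 = \phi_t(x_0)$, which is constant in $t$ because $\B^V$ is vertical (Remark \ref{re:vertical}(1) applied through $\pi$). This yields \eqref{B-minus-Aoplift}.

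Finally, the skew-product decomposition $\xi_t = \tilde x_t\, g_t^{x_\cdot}$ follows by specialising Proposition \ref{rose} (equivalently Theorem \ref{theorem-kernel-decomposition}) to the bundle $\pi:\D^s\to M$: $\tilde x_\cdot$ is the horizontal lift of the one-point motion $\xi_\cdot(x_0)$ with $\tilde x_0 = \id_M$, solving \eqref{lift-10-1}, and $g_t^\sigma$ is the $\D^s_{x_0}$-valued process, independent of $\tilde x_\cdot$ given $x_\cdot = \sigma$, whose time-dependent generator is the conjugate of $\B^V|_{\text{fibre}}$ by $g\mapsto \tilde\sigma_t g$. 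Writing this generator as an SDE gives the stated equation for $dg_t^\sigma$; one derives it by substituting $\xi_t = \tilde x_t g_t$ into \eqref{diffeo-1}, using $d\tilde x_t$ from \eqref{lift-10-2} and It\^o's formula, and reading off that the remaining noise is driven by $K(\sigma_t)$, conjugated back to the fibre through $\tilde\sigma_t$.

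I expect the main obstacle to be the lack of $C^\infty$ regularity of composition on $\D^s$, i.e.\ the issue flagged at \eqref{regularity}: strictly speaking $\pi:\D^s\to M$ is not a principal bundle in the textbook sense, so the cited decomposition and skew-product theorems cannot be quoted verbatim. The remedy, as indicated in the excerpt, is to work on $\D^\infty = \cap_s \D^s$ where the relevant maps are smooth, or to carry out all constructions at the level of $H_\gamma$ and the reproducing kernel (where everything is genuinely $C^\infty$ by Proposition \ref{pr:determ-1}) and only afterwards transport them to $\D^s$; the proof of Theorem 2.5 in \cite{Elworthy-LeJan-Li-principal}, to which the text defers, handles exactly this point. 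Beyond that, verifying the independence of $g^\sigma_\cdot$ and $\tilde x_\cdot$ and the well-posedness of the non-autonomous SDEs above (for which one needs the moment bounds on $A$ and non-explosion, guaranteed here by compactness of $M$) is routine.
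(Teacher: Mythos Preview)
Your overall strategy is correct and closely matches the paper's: invoke the general decomposition Theorem~\ref{th:deco}, identify $\A^H$ via the horizontal lift formula \eqref{lift-10-2}, and verify the skew product by differentiating $\tilde x_t g_t$ and matching against \eqref{diffeo-1}. The skew-product part is handled exactly as you describe.

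However, there is a genuine gap in your identification of the SDE \eqref{B-minus-Aoplift} for $\B^V$. You argue by matching symbols: the symbol of $\B-\A^H$ at $\theta$ corresponds to $K(\theta(x_0))$ via the orthogonal splitting of $H_\gamma$. That is correct but insufficient, because a diffusion operator is not determined by its symbol alone. Concretely, take an orthonormal basis $\{X^j\}$ of $H_\gamma$ and set $\X^j=TR_\theta X^j$, $\Y^j=TR_\theta K^\perp(\theta(x_0))X^j$, $\Z^j=TR_\theta K(\theta(x_0))X^j$. Then $\B=\tfrac12\sum\LL_{\X^j}\LL_{\X^j}+\LL_{\Ab}$ and the generator of \eqref{B-minus-Aoplift} is $\tfrac12\sum\LL_{\Z^j}\LL_{\Z^j}+\LL_{\C}$, so
\[
\B-\A^H-\Bigl(\tfrac12\sum\LL_{\Z^j}\LL_{\Z^j}+\LL_{\C}\Bigr)=\tfrac12\sum_j\bigl(\LL_{\Y^j}\LL_{\Z^j}+\LL_{\Z^j}\LL_{\Y^j}\bigr).
\]
The right-hand side has zero symbol (as you implicitly observe, since at each $\theta$ one of $\Y^j(\theta),\Z^j(\theta)$ vanishes for a suitably adapted basis), but it is a priori a non-zero first-order operator. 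Your appeal to right-invariance and verticality does not kill it: a vertical, $\D^s_{x_0}$-equivariant vector field need not vanish. The paper closes this gap with a direct computation (Lemma~\ref{lemma-flows}) showing that $\sum_j(\LL_{\Y^j}\LL_{\Z^j}+\LL_{\Z^j}\LL_{\Y^j})=0$, by differentiating the identity $K^\perp(y)K(y)=0$ and exploiting the freedom to choose the basis adapted to $\theta(x_0)$. You should either supply this computation or give an alternative argument that pins down the first-order part of $\B^V$.
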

\begin{remark}
As in \cite{Elworthy-LeJan-Li-book} we could rewrite the terms such as
$K(\sigma_t)\circ dW_t$ and $K^\perp (\sigma_t)\circ dW_t$ above as It\^o differentials which can be written as
\begin{eqnarray*}
K(\sigma_t) dW_t& = &\tilde \paral_t(\sigma_\cdot) \, d\beta_t \\
K^\perp(\sigma_t) dW_t& = &\tilde \paral_t(\sigma_\cdot)\, d\tilde B_t
\end{eqnarray*}
where $\tilde {\parals_t}(\sigma_\cdot):H_\gamma\to H_\gamma$, $0\leqslant t<\infty$, is a family of orthogonal transformations mapping 
$\ker \rho_{x_0}\to \ker\rho_{\sigma_t}$ defined for $\PP^\A_{x_0}$-almost all $\sigma: [0,\infty)\to M$ and $\{\beta_t: t\geqslant  0\}$, 
$\{\tilde B_t: t\geqslant  0 \}$ are independent  Brownian
motions, ($\beta_t$ could be cylindrical), on $\ker \rho_{x_0}$ and $[\ker \rho_{x_0}]^\perp$ respectively.

\end{remark}
\begin{proof}
Our general result give the decomposition $\B=A^H+\B^V$ into
horizontal and vertical parts. We have just proved the representation
(\ref{representation-Aoplift}) for $A^H$. To show that
$\B-A^H$ corresponds to (\ref{B-minus-Aoplift}) take an
orthonormal base $\{X^j\}$ for $H_\gamma$. Then, on a suitable domain,
\begin{equation}
\B=\frac{1}{2}\sum_j \LL_{\X^j} \LL_{\X^j} +L_\Ab
\end{equation}
for $\X^j(\theta)=TR_{\theta}(X^j)$ and $\Ab=TR_{\theta}(A)$, while,
by (\ref{representation-Aoplift}),
\begin{equation}
A^H=\frac{1}{2}\sum_j\LL_{\Y^j}\LL_{\Y^j}+\LL_\Bb
\end{equation}
for $\Y^j(\theta)=TR_\theta \left(K^\perp(\theta(x_0)X^j\right)$,
$\Bb=TR_\theta(K^\perp(\theta(x_0))A)$.

Define vector fields $\Z^j$, $\C$ on $\D^s$ by
\begin{eqnarray*}
\Z^j(\phi)&=&TR_{\phi}\left(K(\phi(x_0))X^j\right), \hskip 20pt \hbox{and}\\
\C(\phi)&=&TR_{\phi}\left(K(\phi(x_0))A\right), \hskip 20pt \hbox{for }
\phi\in \D^s.
\end{eqnarray*}
Then $\Ab=\Bb+\C$ and $\X^j=\Y^j+\Z^j$ each $j$. Moreover
 $$\sum_j\LL_{\Y^j}\LL_{\Z^j}+\sum_j\LL_{\Z^j}\LL_{\Y^j}=0$$
by Lemma \ref{lemma-flows} which follows below. This shows that
\begin{equation}
\B^V=\frac{1}{2} \sum_j\LL_{\Z^j}\LL_{\Z^j}+\LL_\C.
\end{equation}
Thus the diffusion process from $\phi_0$ corresponding to $\B^V$
can be represented by the solution to
\begin{equation}
d\phi_t=TR_{\phi_t}\left(K(\phi_t(x_0)\circ dW_t)\right)+
TR_{\phi_t}\left(K(\phi_t(x_0)A)\right)dt.
\end{equation}
If we set $z_t=\rho_{x_0}(\phi_t)=\phi_t(x_0)$. We obtain, via
It\^o's formula
$$z_t=\rho_{z_t}\left(K(z_t)\circ dW_t\right)
+\rho_{z_t}\left(K(z_t)A\right),$$
i.e. $dz_t=0$. Thus $\phi_t(x_0)=z_0$ and (\ref{B-minus-Aoplift})
 holds.

The skew product formula is seen to hold by calculating the stochastic differential of $\tilde x_t g_t^{\tilde x}$ using (\ref{lift-10-2}) to see it satisfies the SDE (\ref{diffeo-1}) for $\{\xi_t: t\geqslant  0\}$.
\end{proof}

\begin{lemma}
\label{lemma-flows}
$$\sum_j \LL_{\Y^j}\LL_{\Z^j}+\LL_{\Z^j}\LL_{\Y^j}=0.$$
\end{lemma}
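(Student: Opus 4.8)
The plan is to compute the second-order operator $\sum_j \bigl(\LL_{\Y^j}\LL_{\Z^j}+\LL_{\Z^j}\LL_{\Y^j}\bigr)$ acting on an arbitrary smooth function $g:\D^s\to\R$ and show the principal symbol and the first-order terms both vanish. The key structural input is that, for each $\theta\in\D^s$, the vector $\Y^j(\theta)=TR_\theta(K^\perp(\theta(x_0))X^j)$ lies in the horizontal subspace $T R_\theta(H_\gamma)$ with $K^\perp(\theta(x_0))X^j$ supported (after composing with $\theta$) so that it evaluates to something \emph{nonzero} at $x_0$, whereas $\Z^j(\theta)=TR_\theta(K(\theta(x_0))X^j)$ is \emph{vertical}: by definition of $K(y)$ we have $\rho_y(K(y)U)=0$, so $\Z^j$ is tangent to the fibre $\D^s_{x_0}$-orbit, i.e. $T_\theta\pi(\Z^j(\theta))=0$. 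Thus $\{\Y^j\}$ and $\{\Z^j\}$ decompose the orthonormal frame $\{\X^j=\Y^j+\Z^j\}$ of $H_\gamma$ (pushed forward by $TR_\theta$) into complementary horizontal and vertical pieces.

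First I would reduce to the symbol computation: since $\sum_j(\LL_{\Y^j}\LL_{\Z^j}+\LL_{\Z^j}\LL_{\Y^j})$ is a priori a second-order differential operator, it suffices to check (a) its symbol as a bilinear form on $T^*\D^s$ vanishes, and (b) the resulting first-order operator vanishes. For (a), the symbol at $\theta$ is $\sum_j \bigl(\Y^j(\theta)\otimes\Z^j(\theta)+\Z^j(\theta)\otimes\Y^j(\theta)\bigr)$ symmetrized; I would show this is zero by observing that $\sum_j \X^j(\theta)\otimes\X^j(\theta)=\sigma^\B_\theta$ (the symbol of $\B$, right-invariant, image $TR_\theta(H_\gamma)$), that $\sum_j\Y^j\otimes\Y^j$ is the symbol of $\A^H$ (image in the horizontal bundle $H_\theta$), and $\sum_j\Z^j\otimes\Z^j$ is the symbol of $\B^V$ (image vertical). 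Since $\sigma^\B=\sigma^{\A^H}+\sigma^{\B^V}$ by Theorem~\ref{th:deco}, expanding $\sum_j\X^j\otimes\X^j$ forces the cross terms $\sum_j(\Y^j\otimes\Z^j+\Z^j\otimes\Y^j)$ to vanish — equivalently one uses that $H_\theta$ and $VT_\theta\D^s$ are orthogonal with respect to $\sigma^\B_\theta$ by Lemma~\ref{le:symbols-2}, combined with the fact that $K^\perp(y)$ and $K(y)$ are complementary orthogonal projections of $H_\gamma$.

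For the first-order part, I would compute $\LL_{\Y^j}\LL_{\Z^j}g+\LL_{\Z^j}\LL_{\Y^j}g$ at $\theta$ in terms of $\LL_{[\Y^j,\Z^j]}g$ plus the (now-vanishing) symmetric second-order term, so it remains to show $\sum_j[\Y^j,\Z^j]=0$ as a vector field on $\D^s$. Here I would use right-invariance: both families arise by applying $TR_\theta$ to $\theta(x_0)$-dependent elements of the fixed Hilbert space $H_\gamma$, and the bracket of two such $TR_\theta$-generated vector fields decomposes into the $H_\gamma$-bracket term (which, summed over an orthonormal basis against the complementary projections $K^\perp(\theta(x_0))$ and $K(\theta(x_0))$, cancels because $K^\perp(y)$ and $K(y)$ are orthogonal complements and the sum $\sum_j X^j\otimes X^j$ is the identity of $H_\gamma$) plus derivative-of-the-projection terms along $\theta\mapsto\theta(x_0)$, which again cancel in the symmetrized sum since $K+K^\perp=\mathrm{Id}$ is constant. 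I expect the main obstacle to be bookkeeping in this last step: one must carefully differentiate $y\mapsto K^\perp(y)$ and $y\mapsto K(y)$ along the flows of $\Y^j$ and $\Z^j$ and verify the derivative terms arising from $\LL_{\Y^j}\LL_{\Z^j}$ and from $\LL_{\Z^j}\LL_{\Y^j}$ are negatives of each other, using only that their sum over $j$ involves $\sum_j X^j\otimes X^j=\mathrm{Id}_{H_\gamma}$ and $K+K^\perp=\mathrm{Id}$; the infinite-dimensionality of $\D^s$ is harmless provided one works, as elsewhere in the chapter, with the $C^{r}$-regularity of composition on $\D^{r+s}\times\D^s\to\D^s$.
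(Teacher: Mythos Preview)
Your symbol computation (step (a)) is correct and indeed amounts to the orthogonality of $K^\perp(\theta(x_0))$ and $K(\theta(x_0))$ in $H_\gamma$: since $\{X^j\}$ is orthonormal, $\sum_j K^\perp X^j\otimes KX^j=0$ in $H_\gamma\otimes H_\gamma$, and pushing forward by $TR_\theta$ gives the vanishing of the cross-symbol.

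The gap is in step (b). You claim that once the symbol vanishes, the residual first-order part of $\LL_{\Y^j}\LL_{\Z^j}+\LL_{\Z^j}\LL_{\Y^j}$ is $\LL_{[\Y^j,\Z^j]}$. That is false: the Lie bracket arises from the \emph{commutator} $\LL_A\LL_B-\LL_B\LL_A$, not the anticommutator. In local terms, if $A=a^i\partial_i$ and $B=b^i\partial_i$, then $\LL_A\LL_B+\LL_B\LL_A=2a^ib^j\partial_i\partial_j+(a^i\partial_ib^j+b^i\partial_ia^j)\partial_j$, so the first-order piece is the \emph{symmetric} derivative $\nabla_AB+\nabla_BA$, not $[A,B]$. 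Consequently, proving $\sum_j[\Y^j,\Z^j]=0$ is neither what is required nor sufficient.

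The paper avoids this altogether with a pointwise trick: fix $\theta$ and choose the orthonormal basis $\{X^j\}$ of $H_\gamma$ adapted to the splitting $H_\gamma=\ker\rho_{\theta(x_0)}\oplus[\ker\rho_{\theta(x_0)}]^\perp$, so that at $\theta$ each $j$ has either $\Y^j(\theta)=0$ or $\Z^j(\theta)=0$. This kills the second-order terms at $\theta$ instantly, and the only surviving contributions come from differentiating $K(\theta'(x_0))$ or $K^\perp(\theta'(x_0))$ along the nonvanishing field. One is reduced to an identity in $H_\gamma$ involving $(dK)_{\theta(x_0)}$ and $(dK^\perp)_{\theta(x_0)}$, which follows from differentiating $K^\perp K=0$ and using the adapted basis once more. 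Your final paragraph gestures at exactly these derivative-of-projection terms, but attached to the wrong first-order object; if you drop the bracket formulation and instead track directly the derivative of $\theta'\mapsto K(\theta'(x_0))X^j$ in the direction of the surviving field at $\theta$, you recover the paper's computation.
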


\begin{proof}
Since, for fixed $\theta$, we can choose our basis $\{X^j\}$, such that
either
 $\Y^j(\theta)=0$ or $\X^j(\theta)=0$,
and since for $f:\D^s\to\R$ we can write
$$df\Big(\Z^j(\theta)\Big)=\Big(df\circ TR_{\theta}\Big)\Big(K(\theta(x_0))X^j\Big)$$
and
$$df(\Y^j(\theta))=(df\circ TR_{\theta})
\Big(K^\perp(\theta(x_0))X^j\Big), \qquad \theta\in \D^s,$$
it suffices to show that
\begin{equation}
\label{communicative-16}
\sum_j\Big\{(dK^\perp)_{\theta(x_0)}
\Big(\Z^j(\theta)(x_0)\Big) X^j
+(dK)_{\theta(x_0)}\left(\Y^j(\theta)(x_0)\right)X^j
\Big\}=0,
\end{equation}
for all $\theta\in \D^s$.

Now $K^\perp(y)K(y)=0$ for all $y\in M$. Therefore
$$(dK^\perp)_y(v)K(y)+K^\perp(y)(dK)_y(v)=0, \qquad
\forall  v\in T_xM, x\in M.$$
 Writing
 $$X^j=K\big(\theta(x_0)\big)X^j +K^\perp\big(\theta(x_0)\big)X^j$$ this reduces
the right hand side of (\ref{communicative-16})  to
\begin{eqnarray*}&&\sum_j \big(dK^\perp\big)_{\theta(x_0)}
\Big(\Z^j(\theta)(x_0)\Big) \Big(K^\perp(\theta(x_0))X^j\Big) \\
&& +(dK)_{\theta(x_0)}\Big(\Y^j(\theta)(x_0)\Big)
\Big(K(\theta(x_0))X^j\Big)=0
\end{eqnarray*}
with our choice of basis this clearly vanishes, as required.
\end{proof}

%This sends $\{\xi_t: t\ge 0\}$ to the derivative process $T_x\xi_t\circ u$
%of \S\ref{The derivative flow}D.
%(If the latter satisfies our standing cohesive assumptions
% we could apply our analysis to this submersion $\D^s\to GL(M)$
%and get a further decomposition of $\B$.) 

%It also shows directly that the horizontal lift $\{\tilde x_t: t\ge 0\}$ through $u_0$
%of $\{x_t: t\ge 0\}$ to $GL(M)$, using the connection on $GLM$ determined by the derivative flow in \S\ref{se:deri-flow} is just $T_{x_0}\theta_t\circ u_0$
%for $\{\theta_t: t\ge 0\}$ the flow given by (\ref{representation-Aoplift})
% with $\theta_0=id$, i.e. the solution flow of the stochastic
% differential equation (\ref{lift-10-4}).

\section{Semi-connections on Natural Bundles}

Our bundle $\pi: \Diff M\to M$ can be considered as a universal natural
bundles over $M$, and a connection on it induces a connection on each natural bundle over $M$. Natural bundles are discussed in 
Kolar-Michor-Slovak
\cite{Kolar-Michor-Slovak}), they include bundles such as jet bundles as well as the standard tensor bundles. For example let $G_n^r$ be the Lie group of r-jets of diffeomorphisms $\theta: \R^n\to \R^n$ with $\theta(0)=0$ for positive integer $r$. An `r-th order frame' $u$ at a point $x$ of $M$ is the r-jet at $0$ of some $\psi: U\to M$ which maps an open set $U$ of $\R^n$ diffeomorphically onto an open subset of $M$ with $0\in U$ and $\psi(0)=x$. Clearly $G_n^r$ acts on the right of such jets, by composition. From this we can define the rth order frame bundle $G^r_nM$ of $M$ with group $G_n^r$.

If we fix an rth order frame $u_0$ at $x_0$ we obtain a homomorphism of principal bundles
\begin{eqnarray*}
\Psi^{u_0} &:& D^s\to G_n^rM  \\
&&\theta \mapsto  j_{x_0}^r(\theta)\circ u_0
\end{eqnarray*}
as for $GLM$ (which is the case $r=1$) with associated group homomorphism $\D_{x_0}^s\to G_n$ given by $\theta\to u_0^{-1} \circ j_{x_0}^r(\theta)\circ u_0$. As for the case $r=1$ there is a diffusion operator induced by the flow on $G_n^rM$ and we are in the situation of Proposition \ref{pr:determ-2}. The behaviour of the flow induced on $G_n^2M$ is essentially that of $j^2_{x_0}(\xi_t)$ and so relevant to the effect  on the curvature of sub-manifolds of $M$ as they are moved by the flow \eg see Cranston-LeJan \cite{Cranston-LeJan99}, Lemaire
\cite{Lemaire}.

Alternatively rather having to choose some $u_0$ we see that $G_n^rM$ is (weakly) associated to $\pi: \D^s\to M$ by taking the action
of $\D_{x_0}^s$ on $(G_x^rM)_{x_0}$ by
$$(\theta, \alpha) \mapsto j_{x_0}^r(\theta)\circ \alpha.$$
As a geometrical conclusion we can observe
\begin{theorem}
\label{pr:universal}
Any classifying bundle homomorphism

$$OM\stackrel{\Phi}{\to} V(n,m-n)$$
$$M\stackrel{\to}{\Phi_0} G(n, m-n)$$
for the tangent bundle to a compact Riemannian manifold $M$, (where $G(n,m-n)$ is the Grassmannian of $n$-planes in $\R^m$ and $V(n, m-n)$ the corresponding Stiefel manifold) induces not only a metric connection on $TM$ as the pull back of Narasimhan and Ramanan's universal connection $\varpi_U$, but also a connection on $\Pi: \D^s\to M$. The latter induces a connection on each natural bundle over $M$ to form a consistent family; that induced on the tangent bundle is the adjoint of $\Phi^*(\varpi_U)$. 
The above also holds with smooth stochastic flows replacing classifying bundle homomorphisms, and the resulting map from
stochastic flows to connections on $\pi: \D^s\to M$ is injective.

\end{theorem}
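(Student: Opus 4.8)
\textbf{Proof proposal for Theorem \ref{pr:universal}.}

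The plan is to assemble the statement from three ingredients already available in the text: the Narasimhan--Ramanan classifying construction for metric connections on $TM$, the passage from a metric connection (or its adjoint semi-connection) on $TM$ to a semi-connection on $\pi:\D^s\to M$ via a finite-dimensional reproducing kernel Hilbert space, and the functoriality of semi-connections on $\D^s$ under the homomorphisms $\Psi^{u_0}:\D^s\to G_n^rM$ into the natural bundles. First I would recall that a classifying bundle map $\Phi:OM\to V(n,m-n)$ covering $\Phi_0:M\to G(n,m-n)$ pulls back the universal connection $\varpi_U$ to a metric connection $\nabla^\Phi:=\Phi^*(\varpi_U)$ on $TM$; this is the content of Narasimhan--Ramanan, used already in the proof of Theorem \ref{pr:determ-2}. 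Its adjoint semi-connection $\widehat{\nabla^\Phi}$ on $TM$ (with $E=TM$ here, since $M$ is Riemannian and we are dealing with genuine connections) is then, by the last sentence of Theorem \ref{pr:determ-2}, of the form $h^\HH$ for some \emph{finite-dimensional} Hilbert space $\HH$ of vector fields spanning $TM$, and by Proposition \ref{pr:determ-1} the data $(\nabla^\Phi, h^\HH)$ determines $\HH$ uniquely. This produces the desired semi-connection on $\pi:\D^s\to M$.

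Next I would use the homomorphisms of principal bundles $\Psi^{u_0}:\D^s\to G_n^rM$ described just before the theorem, together with the remark (in the spirit of Proposition \ref{pr:h-funct-2} and the diagram following equation (\ref{PB-homo})) that a homomorphism of principal bundles carries a semi-connection on the source to a unique one on the target compatible with the horizontal-lift maps: $h_{\Psi^{u_0}(\theta)}=T_\theta\Psi^{u_0}\circ h_\theta$. Applying this for each $r$ and each natural bundle (each of which is weakly associated to $\pi:\D^s\to M$ through the action $(\theta,\alpha)\mapsto j^r_{x_0}(\theta)\circ\alpha$) yields an induced connection on every natural bundle over $M$; these form a consistent family precisely because they all descend from the single connection on the universal bundle $\pi:\D^s\to M$, and horizontal lifts compose under the bundle homomorphisms $G_n^{r+1}M\to G_n^rM$. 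For $r=1$, $G_n^1M=GLM$ and the induced connection on $TM$ is by Theorem \ref{pr:determ-2} exactly the adjoint $\widehat{\nabla^\Phi}=\widehat{\Phi^*(\varpi_U)}$, giving the asserted identification.

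For the final clause I would replace the classifying map $\Phi$ by a smooth stochastic flow $\{\xi_t\}$ on $M$: its reproducing kernel Hilbert space $H_\gamma$ (possibly infinite dimensional) plays the role of $\HH$, and by the proposition identifying $h=h^{H_\gamma}$ the flow induces a semi-connection on $\pi:\D^s\to M$, hence on every natural bundle, with the induced connection on $TM$ again the adjoint of the connection $\breve\nabla$ determined by the flow's SDE. Injectivity of the resulting map from flows to connections on $\pi:\D^s\to M$ follows from the Remark after Theorem \ref{th:flow-decom}: the semi-connection together with the symbol of $\A$ (which gives the metric on $E$, here all of $TM$) recovers $H_\gamma$ by the injectivity in Proposition \ref{pr:determ-1}, and $\A$ then fixes the drift $A$, so the flow — equivalently $\B$ — is determined. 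The main obstacle I anticipate is purely one of bookkeeping in the infinite-dimensional / low-regularity setting: the maps (\ref{regularity}) are only $C^r$, so one must check that the induced objects on $G_n^rM$ and on the tensor and jet bundles are genuinely smooth sections of the relevant $\Lo(\pi^*E;\,\cdot\,)$ bundles for each finite $r$, exactly as in the regularity argument inside the proof of Proposition \ref{pr:determ-1}; this is routine but needs to be stated, rather than any deep new estimate.
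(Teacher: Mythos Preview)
Your proposal has a genuine gap in the first step. You pass from the classifying map $\Phi$ to the pulled-back metric connection $\nabla^\Phi=\Phi^*(\varpi_U)$ on $TM$, and then invoke the existence clause at the end of Theorem~\ref{pr:determ-2} to produce \emph{some} finite-dimensional $\HH$ whose $h^\HH$ on $\D^s$ pushes down to $\widehat{\nabla^\Phi}$. But that existence statement does not give a canonical $\HH$: many different Hilbert spaces $\HH$ yield the same induced semi-connection on $TM$ while giving genuinely different semi-connections on $\D^s$ (and hence on the higher jet bundles $G_n^rM$). Your sentence ``by Proposition~\ref{pr:determ-1} the data $(\nabla^\Phi, h^\HH)$ determines $\HH$ uniquely'' is circular: Proposition~\ref{pr:determ-1} recovers $\HH$ from $(h^\HH,\langle,\rangle^\HH)$, i.e.\ from the connection on $\D^s$ that you are trying to construct, not from the connection on $TM$. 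So as written you have not produced a well-defined connection on $\D^s$ from $\Phi$, only a family of possible ones.

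The paper avoids this by not factoring through the connection on $TM$ at all. The point is that a classifying bundle homomorphism $\Phi$ is \emph{equivalent} to a surjective vector bundle map $X:M\times\R^m\to TM$ (the orthogonal projection onto the tautological sub-bundle pulled back by $\Phi_0$, composed with the isomorphism to $TM$). This $X$ directly specifies the finite-dimensional Hilbert space $\HH=\{X(\cdot)e:e\in\R^m\}$ as in Remark~\ref{re:flow}, and then Propositions~\ref{pr:determ-1} and~\ref{pr:determ-2} apply immediately. The connection $\nabla^\Phi$ on $TM$ is a consequence, not an intermediate step. Once you make this correction, the rest of your argument (the push-forward to natural bundles via $\Psi^{u_0}$, and the injectivity for stochastic flows via the Remark after Theorem~\ref{th:flow-decom}) is fine and indeed more explicit than the paper's terse proof.
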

 
\begin{proof}
It is only necessary to observe that $\Phi$ determines and is determined by a surjective vector bundle map $X: M\times \R^m\to TM$
(\eg see \cite{Elworthy-LeJan-Li-book}, Appendix 1). This in turn determines a Hilbert space $\HH$ of sections of $TM$ as in Remark 
\ref{re:flow} so we can apply Proposition \ref{pr:determ-1} and \ref{pr:determ-2}.
\end{proof}

Some of the conclusions of Theorem \ref{pr:universal} are explored further in \cite{Elworthy-abel}.
\begin{remark}
This injectivity result in Proposition \ref{pr:universal} implies that all properties of the flow can, at least theoretically, be obtainable from the induced connection on $\D^s$.
\end{remark}

\subsubsection{Flows on Non-compact Manifolds}
\label{non-compact}

In general if $M$ is not compact we will not be able to use the Hilbert 
manifolds $\D^s$, or other Banach manifolds without growth conditions on the coefficients of our flow. One possibility could be use the space $\Diff M$ of all smooth diffeomorphisms using the Fr\"{o}licher-Kriegl differential calculus as in Michor \cite{Michor}. In order to do any stochastic calculus we would have to localize and use Hilbert manifolds (or possibly rough path theory). The geometric structures would nevertheless be on $\Diff M$. This was essentially what was happening in the compact case. However it is useful to include partial flows of stochastic differential equations which are not strongly complete, see Kunita \cite{Kunita-book} or Elworthy\cite{Elworthy-book}. For the partial solution flow $\{\xi_t: t<\tau\}$ of an SDE as in Remark \ref{re:flow}
we obtain the decomposition in Theorem \ref{th:flow-decom} but now only for $\xi_t(x)$ defined for $t<\tau(x,-)$. This can be proved from the compact versions by localization as in Carverhill-Elworthy \cite{Carverhill-Elworthy} or Elworthy \cite{Elworthy-book}.

\chapter{Appendices}
\section{Girsanov-Maruyama-Cameron-Martin Theorem}
\label{se-GMCM theorem}
To apply the Girsanov-Maruyama theorem it is often thought necessary to verify some condition such as Novikov's condition to ensure that the exponential (local) martingale arising as Radon -Nikodym derivative  is a true martingale. In fact for conservative diffusions this is automatic, and we give a proof of this fact here since it is not widely appreciated. The proof is along the lines of that given for elliptic diffusions in \cite{Elworthy-book} but with the uniqueness of the martingale problem replacing the uniqueness of minimal semi-groups used in \cite{Elworthy-book}. See also  [\cite{Leandre-no-prob}]. On the way we relate  the expectation of the exponential local martingale to the probability of explosion of the trajectories of the associated diffusion process: a special case of this appeared in \cite{McKean-book}.
Let $\B$ be a conservative diffusion operator on a smooth manifold $N$. For fixed $T>0$ and $y_0\in N$ let $\mathbf{ P}_{y_0.}=\mathbf{ P}^\B_{y_0}$ denote the solution to the martingale problem for $\B$ on $C_{y_0}([0,T];N^+)$. Using the notation of  chapter \ref{ch:intertwined}, let $b$ be a vector field on $N$ for which there is a $T^*N$-valued process $\alpha$  in $L^2_ {\B,\loc}$ such that $$ 2\sigma^{\B}(\alpha_t)=b(y_t)\qquad 0\leqslant t\leqslant T $$ for $\mathbf{ P}_{y_0}$ almost all $y_.\in C_{y_0}([0,T];N^+)$. Set
$$ Z_t= \exp\{M^\alpha_t-\frac{1}{2}\left\langle M^\alpha\right\rangle_t\} \qquad 0\leqslant t\leqslant T. $$
This  exists by the non-explosion of the diffusion process generated  by $\B$, and is a local martingale with $\E Z_t\leqslant 1$. 

 For bounded measurable $f:N\to \R$ define $Q_tf(y_0)=\E^\B_{y_0}[Z_tf(y_t)]$ for $y_0\in N$. Since the pair $(y_.,Z_.)$ is Markovian this determines a semi-group on the space of bounded measurable functions with  corresponding  probability measures $\{\mathbf{ Q}_{y_0}\}_{y_0\in N}$.
 
 \begin{proposition} The family $\{\mathbf{ Q}_{y_0}\}_{y_0\in N}$ is a solution to the martingale problem for the operator $\B+b$.
 \end{proposition}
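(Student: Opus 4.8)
The plan is to verify that for each $f\in C^2_c(N)$ the process
$$ N_t^f := f(y_t) - f(y_0) - \int_0^t (\B+b)f(y_s)\,ds $$
is a martingale under $\mathbf{Q}_{y_0}$, and then invoke well-posedness of the martingale problem for $\B+b$ (which holds by the same Dellach\'erie-type argument already used in the proof of Proposition~\ref{pr:mart-1}, since $\B+b$ is again a smooth diffusion operator) to conclude that $\{\mathbf{Q}_{y_0}\}$ is \emph{the} solution. First I would record that under $\mathbf{P}_{y_0}$ the process $Z_t$ is a genuine martingale with $\E Z_t = 1$, not merely a supermartingale; this is exactly the content of the non-explosion statement that the excerpt is building towards, so I would either cite it as already established at this point in the appendix or, if it is logically downstream, instead argue directly with the localized stopping times $\tau_n$ exhausting $N$ and pass to the limit at the end using $\mathbf{Q}_{y_0}(\tau_n < T)\to 0$, which is equivalent to conservativeness of $\B+b$.

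The computational core is an It\^o/integration-by-parts step. Writing $M_t^{df} = f(y_t)-f(y_0)-\int_0^t \B f(y_s)\,ds$ for the $\B$-martingale part of $f(y_\cdot)$ under $\mathbf{P}_{y_0}$, and using $dZ_t = Z_t\,dM_t^\alpha$, I would compute
$$ d\bigl(Z_t f(y_t)\bigr) = Z_t\,dM_t^{df} + Z_t\,\B f(y_t)\,dt + f(y_t)Z_t\,dM_t^\alpha + Z_t\,d\langle M^{df},M^\alpha\rangle_t. $$
By Proposition~\ref{pr:mart-1} the bracket term is $d\langle M^{df},M^\alpha\rangle_t = 2\sigma^\B(\alpha_t,(df)_{y_t})\,dt = df_{y_t}\bigl(2\sigma^\B(\alpha_t)\bigr)\,dt = df_{y_t}(b(y_t))\,dt = (\LL_b f)(y_t)\,dt$, using the defining relation $2\sigma^\B(\alpha_t)=b(y_t)$. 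Hence
$$ d\bigl(Z_t f(y_t)\bigr) = Z_t\bigl(dM_t^{df} + f(y_t)\,dM_t^\alpha\bigr) + Z_t\,(\B+b)f(y_t)\,dt, $$
so that $Z_t f(y_t) - \int_0^t Z_s (\B+b)f(y_s)\,ds$ is a local martingale under $\mathbf{P}_{y_0}$. Taking $\mathbf{P}_{y_0}$-expectations of the appropriately stopped version and using the Markov property of the pair $(y_\cdot, Z_\cdot)$ together with $Q_tf(y_0)=\E^\B_{y_0}[Z_t f(y_t)]$, one gets that $t\mapsto \E^{\mathbf{Q}_{y_0}}[N_t^f]$ — more precisely the conditional increments $\E^{\mathbf{Q}_{y_0}}\{N_t^f - N_s^f \mid \F_s^{y_0}\}$ — vanish, which is the martingale property.

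The main obstacle I anticipate is the integrability bookkeeping needed to turn the \emph{local} martingale statements into honest ones: $\alpha$ lies only in $L^2_{\B,\loc}$, $b$ is merely locally bounded, and $Z$ is a priori only a supermartingale, so one cannot take expectations naively. I would handle this by working up to the exit times $\tau_n$ of an exhausting sequence of relatively compact open sets $U_n\uparrow N$, on which everything ($b$, $\sigma^\B$, the bracket $\langle M^\alpha\rangle$) is bounded and all the stopped processes $Z_{t\wedge\tau_n}$, $M^{df}_{t\wedge\tau_n}$, $M^\alpha_{t\wedge\tau_n}$ are true (bounded-in-$L^2$) martingales; establish the martingale identity for $N^f_{t\wedge\tau_n}$ there, and then let $n\to\infty$. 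The limit passage is exactly where conservativeness of $\B+b$ enters: $\mathbf{Q}_{y_0}$ assigns full mass to paths staying in $N$ for all $t\le T$ (equivalently $Z$ is a true $\mathbf{P}_{y_0}$-martingale), so $\mathbf{Q}_{y_0}(\tau_n \le T)\to 0$ and dominated convergence finishes the argument. If at this point in the text the equivalence ``$\E Z_T=1 \iff \B+b$ conservative'' has not yet been proved, I would instead state the proposition conditionally on $\E Z_T = 1$, or prove that equivalence first as a lemma, since the present excerpt flags precisely this relation (``we relate the expectation of the exponential local martingale to the probability of explosion'') as the preceding result.
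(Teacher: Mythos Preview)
Your It\^o computation is exactly the paper's: expand $d\bigl(Z_tf(y_t)\bigr)$, identify the bracket $d\langle M^{df},M^\alpha\rangle_t=2\sigma^\B(\alpha_t,(df)_{y_t})\,dt=df_{y_t}(b(y_t))\,dt$ via the defining relation $2\sigma^\B(\alpha_t)=b(y_t)$, conclude that $Z_tf(y_t)-\int_0^tZ_s(\B+b)f(y_s)\,ds$ is a $\PP^\B$-local martingale, and then translate this into the $\mathbf{Q}$-martingale property through a localizing sequence $\tau_n$. So the core approach matches.

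Where you diverge is in the logical structure around the limit $n\to\infty$. You write that this passage ``is exactly where conservativeness of $\B+b$ enters'' and fret about circularity with $\E Z_T=1$. In the paper's arrangement it does \emph{not} enter here: the Proposition is proved with no hypothesis on $\B+b$, and the Corollary (that $\E^\B Z_t$ equals the non-explosion probability of the $\B+b$-diffusion) is deduced \emph{from} it, precisely via $Q_t(1)=\E Z_t$. The reason the limit is harmless is simply that for $f\in C^\infty_c(N)$ the process $N^f_t=f(y_t)-f(y_0)-\int_0^t(\B+b)f(y_s)\,ds$ is uniformly bounded on $[0,T]$ (both $f$ and $(\B+b)f$ have compact support; extend them by $0$ at $\Delta$). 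Hence once the stopped version is a $\mathbf{Q}$-martingale, bounded convergence gives the full martingale property --- you need neither $\mathbf{Q}_{y_0}(\tau_n\leqslant T)\to 0$ nor $\E Z_T=1$. Relatedly, the Proposition only asserts that $\{\mathbf{Q}_{y_0}\}$ is \emph{a} solution; uniqueness (and the identification $\mathbf{Q}_{y_0}=\PP^{\B+b}_{y_0}$) is invoked only afterwards, in the Corollary, which is where the additional local Lipschitz hypothesis on $b$ appears.
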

 
 \begin{proof} 
 Let $f:N\to \R$ be $C^\infty$ with compact support. We must show, for arbitrary ${y_0\in N}$, that 
 $$f(y_t)-f(y_0)-\int_0^t(\B+b)f(y_s)ds \qquad 0\leq t\leqslant T$$
 is a local martingale under $\mathbf{ Q}_{y_0}$.
 For this first note that $Z_.$ satisfies the usual stochastic equation which in our notation becomes:
$$ Z_t=1+M^{Z\alpha}_t,  \qquad  \qquad 0\leqslant t\leqslant T$$ 
Now use Ito's formula and the definition of $M^\alpha$ to see that 

\begin{equation}
f(y_t)Z_t=f(y_0)+M^{Z (df)_{y_.}}_t+M^{fZ\alpha}_t+\int_0^t\B f(y_s) Z_s ds +\left\langle M^{df},M^{Z\alpha}\right\rangle_t.
\end{equation}
Now \begin{eqnarray}
\left\langle M^{df},M^{Z\alpha}\right\rangle_t&=&2\int_0^tdf\left(\sigma^\B_{y_s}(Z_s\alpha_s)\right)ds\\
&=&\int_0^t df\big (Z_sb(y_s)\big)ds.
 \end{eqnarray}
 Thus 
 $$f(y_t)Z_t-f(y_0)- \int_0^t\B f(y_s) Z_s ds -\int_0^{t} df\big(Z_s b(y_s)\big)ds, \qquad 0\leq t\leqslant T,$$
  is a local martingale under $\mathbf{ P}^\B_{y_0}$ and so there is a sequence $\{\tau_n\}_n$  of stopping times, increasing to $T$, such that if $\phi:C_{y_0}([0,T];N^+)\to \R$ is $\F^{y_0}_r$-measurable and bounded then, using the definition of $\mathbf{ Q}$ and Fubini's theorem, if $0\leqslant r\leqslant t \leqslant T$,
  \begin{eqnarray*}
&&\E^\mathbf{ Q}_{y_0} \left[\left( f(y_{t\wedge\tau_n})-\int_0^{t\wedge\tau_n}(\B+b)(f)(y_s)ds \right)\phi\right]\\
&=&\E^\B_{y_0}\left[ \left(f(y_{t\wedge\tau_n})Z_{t\wedge\tau_n}- \int_0^{t\wedge\tau_n}(\B+b)(f)(y_s) Z_s ds  \right)\phi\right]\\ 
&=&\E^\B_{y_0}\left[ \left(f(y_{r\wedge\tau_n})Z_{r\wedge\tau_n}- \int_0^{r\wedge\tau_n}(\B+b)(f)(y_s) Z_s ds\right)\phi\right].
\end{eqnarray*}
giving the required martingale property.
 \end{proof}
 Since $Q_t(1)=\E Z_t$ we immediately obtain the following corollary and a theorem:
\begin{corollary}. Suppose further that uniqueness of the martingale problem holds for $\B+b$, e.g suppose $b$ is locally Lipschitz \cite{Ikeda-Watanabe}. Then $$\E^\B_{y_0}Z_t$$
 is the probability that the diffusion process  from $y_0$ generated by $\B+b$ has not exploded by time $t$.
\end {corollary}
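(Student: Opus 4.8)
The plan is to exploit the proposition just proved, together with the uniqueness hypothesis, to identify $\E^\B_{y_0}Z_t$ probabilistically. By the preceding proposition, the family $\{\mathbf{Q}_{y_0}\}_{y_0\in N}$ solves the martingale problem for $\B+b$ on the path space $C_{y_0}([0,T];N^+)$, but \emph{a priori} only as a sub-probability: we have $\mathbf{Q}_{y_0}(C_{y_0}([0,T];N^+))=Q_t(1)(y_0)=\E^\B_{y_0}Z_t$, which may be strictly less than $1$. The point of the corollary is to interpret this mass deficit.

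First I would invoke the uniqueness hypothesis for the martingale problem of $\B+b$, valid for instance when $b$ is locally Lipschitz by Ikeda-Watanabe. Uniqueness here should be understood in the sense appropriate to possibly explosive diffusions: there is a unique law $\tilde{\mathbf{P}}_{y_0}$ on the space $C_{y_0}([0,T];N^+)$ of one-point-compactified paths which remain at $\Delta$ after the explosion time $\tilde\zeta$, under which the coordinate process solves the martingale problem for $\B+b$ (applied to compactly supported $f\in C_c^\infty(N)$). Normalising $\mathbf{Q}_{y_0}$ on the event $\{\tilde\zeta>t\}$ and comparing with $\tilde{\mathbf{P}}_{y_0}$ restricted to the same event, the uniqueness forces $\mathbf{Q}_{y_0}$ to agree with $\tilde{\mathbf{P}}_{y_0}$ on the sub-$\sigma$-algebra of paths that have not reached $\Delta$ by time $t$; more precisely, the stopped-and-killed constructions coincide. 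The key computation is that for $f\in C_c^\infty(N)$ the process $f(y_s)Z_s-f(y_0)-\int_0^s(\B+b)f(y_r)Z_r\,dr$ is a genuine (not merely local) $\mathbf{P}^\B_{y_0}$-martingale, since $f$ and $(\B+b)f$ are bounded and $Z_s$ is a supermartingale with $\E Z_s\leqslant 1$ (hence uniformly integrable after a standard localisation/dominated convergence argument using that $Z_s\wedge n$ are bounded); this upgrades the ``local martingale'' in the proposition's proof to the statement that $\mathbf{Q}_{y_0}$ is a bona fide solution of the martingale problem on $[0,t]$.

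Then, taking $f\equiv 1$ (approximated by compactly supported $f_n\uparrow 1$ with $\B f_n\to 0$, $bf_n\to 0$ locally, exactly the non-explosion argument used throughout the excerpt and in the appendix's main theorem), one gets $\E^\B_{y_0}Z_t=\mathbf{Q}_{y_0}(C_{y_0}([0,T];N))=\tilde{\mathbf{P}}_{y_0}(\tilde\zeta>t)$, which is precisely the probability that the $(\B+b)$-diffusion started at $y_0$ has not exploded by time $t$. I would present this as: apply the proposition to obtain $\mathbf{Q}$, apply uniqueness to identify it with the (possibly explosive) $(\B+b)$-diffusion law, then evaluate total mass. The main obstacle is the bookkeeping around explosion: one must be careful that $Z_t$ is defined only up to the explosion time of the \emph{$\B$}-process (which is a.s.\ infinite by conservativeness of $\B$), whereas the deficit $1-\E^\B_{y_0}Z_t$ is caused by explosion of the \emph{$\B+b$}-process, and the change of measure is only locally absolutely continuous. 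Making the identification $\mathbf{Q}_{y_0}\leftrightarrow\tilde{\mathbf{P}}_{y_0}$ rigorous requires exhausting $N$ by relatively compact open sets $N_k$, stopping at the exit times $\tau_k$, checking that on $\{t<\tau_k\}$ both measures solve the same stopped martingale problem and hence coincide there by localised uniqueness, and finally letting $k\to\infty$ and using monotone convergence to recover $\E^\B_{y_0}Z_t=\lim_k\mathbf{Q}_{y_0}(t<\tau_k)=\tilde{\mathbf P}_{y_0}(t<\tilde\zeta)$.
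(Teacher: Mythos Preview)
Your approach is correct and is essentially what the paper does, though the paper compresses everything into a single sentence: ``Since $Q_t(1)=\E Z_t$ we immediately obtain the following corollary.'' The paper simply combines the preceding proposition (that $\{\mathbf{Q}_{y_0}\}$ solves the martingale problem for $\B+b$) with uniqueness to identify $\mathbf{Q}_{y_0}$ as the $(\B+b)$-diffusion law on $N^+$, and then reads off $\E^\B_{y_0}Z_t=Q_t(1)(y_0)$ as the mass of non-exploded paths; your localisation and exhaustion argument spells out the bookkeeping the paper leaves to the reader.
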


\begin {theorem}\label{th-GMCM}
Suppose the diffusion operator $B$ and its perturbation $\B+b$ by a locally Lipschitz vector field $b$ on $N$ are both conservative. Assume that $\B+b$ is cohesive or more generally that there is a locally bounded, measurable one-form $b^\#$ on $N$  such that $$ 2\sigma^{\B}_y(b^\# _y)=b(y),  \qquad \qquad y\in N. $$
Then $$\exp\big(M^{b^\#}_t-\frac{1}{2}\big\langle M^{b^\#}\big\rangle_t\big), \qquad 0\leqslant t\leqslant T$$ is a  martingale under $\mathbf{ P}^\B$ and for each $y_0\in N$ the measures $\mathbf{ P}_{y_0}^\B$ and $\mathbf{ P}_{y_0}^{\B+b}$ on $C_{y_0}([0,T];N)$ are equivalent with 
$$ \frac{d\mathbf{ P}_{y_0}^{\B+b}}{d\mathbf{ P}_{y_0}^{\B}}=\exp\big(M^{b^\#}_T-\frac{1}{2}\big\langle M^{b^\#}\big\rangle_T\big).$$
\end{theorem}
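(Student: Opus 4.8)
\textbf{Proof plan for Theorem \ref{th-GMCM}.}

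The plan is to deduce the theorem from the Proposition and its Corollary which immediately precede it. The main content is the claim that the exponential local martingale $Z_t := \exp\big(M^{b^\#}_t - \tfrac12 \langle M^{b^\#}\rangle_t\big)$, $0\leqslant t\leqslant T$, is a genuine martingale under $\PP^\B$; once this is known, the absolute continuity statement with the stated Radon-Nikodym density is the classical Girsanov-Maruyama conclusion, because by the Proposition the measure $\Q_{y_0}$ defined via $Q_tf(y_0) = \E^\B_{y_0}[Z_t f(y_t)]$ solves the martingale problem for $\B + b$, and by hypothesis (local Lipschitz property of $b$, invoking \cite{Ikeda-Watanabe}) this martingale problem has a unique solution, so $\Q_{y_0} = \PP^{\B+b}_{y_0}$ on $C_{y_0}([0,T];N)$.

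First I would verify that $Z_\cdot$ is well-defined as a local martingale under $\PP^\B$: this uses the non-explosion of the $\B$-diffusion (so that the canonical process stays in $N$ for all $t\leqslant T$ a.s.), the fact that $b^\#$ is locally bounded and measurable, and the construction of $M^{b^\#}$ as the martingale part from Proposition \ref{pr:mart-1} applied with $\alpha_t = b^\#_{y_t}$; the integrability condition $\int_0^{t\wedge\zeta}\chi_K(y_s)\,b^\#_s(\sigma^\B b^\#_s)\,ds<\infty$ needed there holds because $b^\#$ is locally bounded and $\zeta=\infty$ a.s. Being a nonnegative local martingale with $Z_0=1$, $Z_\cdot$ is automatically a supermartingale, so $\E^\B_{y_0} Z_t \leqslant 1$ for all $t$. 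The crucial step is then to show $\E^\B_{y_0} Z_t = 1$ for all $t\leqslant T$: by the Corollary immediately above the theorem, $\E^\B_{y_0} Z_t = Q_t(1)(y_0)$ equals the probability that the $(\B+b)$-diffusion started at $y_0$ has not exploded by time $t$; since $\B+b$ is assumed conservative, that probability is $1$. Hence $\E^\B_{y_0} Z_t = 1$ for all $t\in[0,T]$, which upgrades the supermartingale $Z_\cdot$ to a true martingale on $[0,T]$ (a nonnegative supermartingale with constant expectation is a martingale).

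With $Z_\cdot$ a martingale, define $\tilde\PP_{y_0} := Z_T\,\PP^\B_{y_0}$ on $\big(C_{y_0}([0,T];N),\F^{y_0}_T\big)$, a bona fide probability measure. The last step is to identify $\tilde\PP_{y_0}$ with $\PP^{\B+b}_{y_0}$. For this I would apply the Proposition: for $f\in C_c^\infty(N)$, the computation there (Itô's formula plus $\langle M^{df},M^{Z\alpha}\rangle_t = \int_0^t df(Z_s b(y_s))\,ds$) shows that under $\tilde\PP_{y_0}$ the process $f(y_t) - f(y_0) - \int_0^t (\B+b)f(y_s)\,ds$ is a local martingale, and since $Z_\cdot$ is now a true martingale the localizing sequence can be removed (the family $\{\tau_n\}$ in the proof of the Proposition increases to $T$ and $f$ has compact support), so it is a genuine martingale; hence $\tilde\PP_{y_0}$ solves the martingale problem for $\B+b$. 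By the assumed uniqueness for that martingale problem, $\tilde\PP_{y_0} = \PP^{\B+b}_{y_0}$, which is exactly the asserted equivalence with density $Z_T = \exp\big(M^{b^\#}_T - \tfrac12\langle M^{b^\#}\rangle_T\big)$. Note also that the extra generality (existence of $b^\#$ with $2\sigma^\B(b^\#)=b$ without assuming cohesiveness) enters only in guaranteeing that $M^{b^\#}$ and hence $Z_\cdot$ make sense; the rest of the argument is unchanged.

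\textbf{Main obstacle.} I expect the only real subtlety to be the passage from ``$Z_\cdot$ is a nonnegative local martingale'' to ``$Z_\cdot$ is a true martingale,'' i.e. establishing $\E^\B_{y_0} Z_t = 1$. This is precisely where the conservativeness of $\B+b$ is used, channelled through the probabilistic interpretation of $\E Z_t$ as a non-explosion probability in the Corollary; the delicate point is that this interpretation itself relies on the uniqueness of the martingale problem for $\B+b$ (supplied by local Lipschitzness of $b$ via \cite{Ikeda-Watanabe}), so the logical order must be: construct $Z_\cdot$, build $\Q_{y_0}$ and show it solves the $(\B+b)$-martingale problem on the one-point compactification, invoke uniqueness to conclude $Q_t(1) = \PP^{\B+b}_{y_0}(\zeta > t) = 1$, and only then return to deduce the martingale property and the density formula. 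Everything else is routine stochastic calculus already carried out in the preceding Proposition.
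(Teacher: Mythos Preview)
Your proposal is correct and matches the paper's approach exactly: the paper states that, since $Q_t(1)=\E Z_t$, both the Corollary and the Theorem follow immediately from the preceding Proposition, and you have spelled out precisely this chain of reasoning---the Corollary plus conservativeness of $\B+b$ gives $\E^\B_{y_0}Z_t=1$, hence $Z_\cdot$ is a true martingale, and then uniqueness of the martingale problem identifies $Z_T\,\PP^\B_{y_0}$ with $\PP^{\B+b}_{y_0}$. Your remarks on the logical ordering (build $\Q$, invoke uniqueness, read off non-explosion, then upgrade $Z_\cdot$) are exactly the point the paper is making in this appendix.
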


\section{Stochastic differential equations for degenerate diffusions}

Let $\B$ be a (smooth) diffusion diffusion operator on $N$. If its symbol $\sigma^\B: T^*N\to TN$ does not have constant rank there may be no smooth, or even $C^2$, factorisation 
$$T^*N\stackrel{X^*}\to \underline \R^m \stackrel{X}\to TN$$
of $\sigma_x^\B$ into $X(x)X^*(x)$ for $X: N\times \R^m\to TN$, as usual, for any finite dimensional $m$. \cite{}. A factorisation with $X: N\times H\to TN$, for $H$ a separable Hilbert space, can be found following Stroock and Varadhan, Appendix in  \cite{Stroock-Varadhan}.
, with the property that $X$ is continuous and each vector field $X^j$ is $\C^\infty$, where $X^j(x)=X(x)(e^j)$ for an orthonormal basis $(e_j)_{j=1}^\infty$ of $H$. However it seems unclear if such an $X$ can be found with each $x\mapsto X(x)e$, $e\in H$, smooth. The following is well known:
\begin{theorem}
Let $\sigma: \R^d\to \Lo_+(\R^m; \R^m)$ be a $C^2$ map into the symmetric positive semi-definite $(m\times m)$-matrices then $\sqrt \sigma: \R^d \to \Lo_+(\R^m; \R^m)$ is locally Lipschitz .
\end{theorem}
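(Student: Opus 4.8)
The plan is to establish the local Lipschitz property of $\sqrt{\sigma}$ by a reduction to the scalar case combined with a quantitative resolvent (functional-calculus) estimate, carefully handling the directions in which eigenvalues of $\sigma(x)$ collapse to zero. First I would recall the classical fact for scalars: if $f:\R^d\to[0,\infty)$ is $C^2$ then $\sqrt{f}$ is locally Lipschitz, with local Lipschitz constant controlled by $\sup\|D^2 f\|^{1/2}$ on a neighbourhood. This is the one-variable estimate $|\sqrt{f(x)}-\sqrt{f(y)}|\le C\,\|D^2f\|_\infty^{1/2}\,|x-y|$, which follows from Glaeser's inequality $|f'(t)|^2\le 2\|f''\|_\infty f(t)$ applied along the segment joining $x$ and $y$, or directly from a Taylor estimate. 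This will be the genuine analytic input; everything else is linear algebra and bookkeeping.

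Next I would pass from scalars to matrices. Fix a point $x_0$ and a compact ball $B$ around it; let $K=\sup_{x\in B}\|D^2\sigma(x)\|$. For a unit vector $v\in\R^m$, the scalar function $x\mapsto\langle\sigma(x)v,v\rangle$ is nonnegative and $C^2$ with second derivative bounded by $K$ uniformly in $v$, so by the scalar step $|\langle\sigma(x)v,v\rangle^{1/2}-\langle\sigma(y)v,v\rangle^{1/2}|\le C K^{1/2}|x-y|$. However, $\langle\sigma(x)v,v\rangle^{1/2}$ is \emph{not} $\langle\sqrt{\sigma(x)}v,v\rangle$ in general, so this alone does not finish the argument; it only controls the quadratic form of $\sigma$ along each fixed direction. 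The standard remedy is to use the integral representation $\sqrt{A}=\frac{1}{\pi}\int_0^\infty \lambda^{-1/2} A(A+\lambda I)^{-1}\,d\lambda$ valid for symmetric positive \emph{semi}-definite $A$ (with the integral converging because $\|A(A+\lambda)^{-1}\|\le 1$ and $\|A(A+\lambda)^{-1}\|\le\|A\|/\lambda$). Then
\begin{equation*}
\sqrt{\sigma(x)}-\sqrt{\sigma(y)}=\frac{1}{\pi}\int_0^\infty \lambda^{-1/2}\Big(\sigma(x)(\sigma(x)+\lambda)^{-1}-\sigma(y)(\sigma(y)+\lambda)^{-1}\Big)\,d\lambda,
\end{equation*}
and using the resolvent identity the integrand becomes $\lambda(\sigma(x)+\lambda)^{-1}(\sigma(x)-\sigma(y))(\sigma(y)+\lambda)^{-1}$. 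Since $\sigma$ is $C^1$ on $B$, $\|\sigma(x)-\sigma(y)\|\le L|x-y|$ with $L=\sup_B\|D\sigma\|$, and $\|\lambda(\sigma(x)+\lambda)^{-1}\|\le 1$, $\|(\sigma(y)+\lambda)^{-1}\|\le\lambda^{-1}$, giving an integrand bounded by $\lambda^{-1/2}\cdot\lambda^{-1}\cdot L|x-y|$ for large $\lambda$ — which is integrable at infinity but \emph{not} at $\lambda=0$.

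The crux, and the step I expect to be the main obstacle, is supplying the extra power of $\lambda$ needed for integrability near $\lambda=0$; this is exactly where the $C^2$ hypothesis (rather than merely $C^1$) must be used. The key lemma is a quantitative bound of the form $\|(\sigma(x)-\sigma(y))u\|\le C\big(K^{1/2}\|\sigma(x)^{1/2}u\|+K^{1/2}\|\sigma(y)^{1/2}u\|+K|x-y|\big)\,|x-y|$, valid for all $u$, i.e. the difference $\sigma(x)-\sigma(y)$ is small, relative to $|x-y|$, precisely in the directions where both $\sigma(x)$ and $\sigma(y)$ are small. This follows from the scalar Glaeser inequality applied to $t\mapsto\langle\sigma(x+t(y-x))u,u\rangle$ together with polarization, since $\sqrt{\langle\sigma(x)u,u\rangle}$ has bounded gradient. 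Feeding this into the integrand, the term $\|\sigma(x)^{1/2}u\|$ combines with $\|\lambda(\sigma(x)+\lambda)^{-1}\sigma(x)^{1/2}\|\le \frac12\lambda^{1/2}$ (by spectral calculus: $\sup_{s\ge0}\lambda s^{1/2}/(s+\lambda)\le\frac12\lambda^{1/2}$), recovering one missing half-power of $\lambda$ at each end, so the $\lambda$-integral converges at $0$ and one obtains $\|\sqrt{\sigma(x)}-\sqrt{\sigma(y)}\|\le C(K,L)\,|x-y|$ on $B$. I would then remark that the hypothesis can even be stated with $\Lo_+(\R^m;\R^m)$ replaced by the symmetric positive semi-definite endomorphisms of any fixed inner product space, and that the constant depends only on local bounds for the first and second derivatives of $\sigma$, which is what is needed for the application to locally Lipschitz H\"ormander forms of degenerate diffusion operators.
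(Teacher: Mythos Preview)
The paper does not actually prove this theorem: it simply refers the reader to Freidlin, to page~97 of Stroock's \emph{Lectures on Stochastic Analysis}, and to Ikeda--Watanabe. So there is no in-text argument to compare against; your route via the integral representation $\sqrt{A}=\tfrac1\pi\int_0^\infty\lambda^{-1/2}A(A+\lambda)^{-1}\,d\lambda$ together with Glaeser's inequality for the scalar problem is in the spirit of (at least) Stroock's treatment, and the diagnosis that the $C^2$ hypothesis enters precisely to rescue the integral near $\lambda=0$ is exactly right.

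There is, however, a genuine gap in the step where you ``feed the key lemma into the integrand''. Your key lemma is a \emph{norm} inequality, $\|(\sigma(x)-\sigma(y))u\|\le CK^{1/2}|x-y|\,\|\sigma(x)^{1/2}u\|+\cdots$ (and it should carry a factor $\|u\|$ in the last term). From such an inequality you cannot simply invoke the spectral bound $\|\lambda(\sigma(x)+\lambda)^{-1}\sigma(x)^{1/2}\|\le\tfrac12\lambda^{1/2}$, because that bound applies to the \emph{composition} $\lambda(\sigma(x)+\lambda)^{-1}\circ\sigma(x)^{1/2}$, whereas all you have is that the output of $\Delta:=\sigma(x)-\sigma(y)$ is controlled in size by $\|\sigma(x)^{1/2}u\|$. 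Bounding $\|\lambda(\sigma(x)+\lambda)^{-1}\Delta u\|$ by $\|\lambda(\sigma(x)+\lambda)^{-1}\|\cdot\|\Delta u\|$ gains nothing. One can turn the norm inequality into an operator factorisation $\Delta=T_1\sigma(x)^{1/2}+T_2\sigma(y)^{1/2}+T_3$ with $\|T_i\|$ small, but then the $\sigma^{1/2}$ factors sit on the \emph{right} of the $T_i$, so they pair with $(\sigma(y)+\lambda)^{-1}$ and not with $(\sigma(x)+\lambda)^{-1}$; you gain only one half-power of~$\lambda$, and the integrand is still $\sim\lambda^{-1}$ near~$0$.

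What is actually needed is a two-sided estimate, so that $(\sigma(x)+\lambda)^{-1}$ on the left and $(\sigma(y)+\lambda)^{-1}$ on the right each recover a half-power. One clean way is to exploit the symmetry of $\sqrt{\sigma(x)}-\sqrt{\sigma(y)}$ and bound only the diagonal quadratic form $\langle(\sqrt{\sigma(x)}-\sqrt{\sigma(y)})u,u\rangle$, using the symmetrised resolvent expression and the diagonal Glaeser bound $|\langle\Delta w,w\rangle|\le C\sqrt{K}|x-y|\,\|w\|\,\|\sigma(z)^{1/2}w\|+CK|x-y|^2\|w\|^2$ (valid for $z=x$ and $z=y$), then polarise and handle the mixed term $\|\sigma(x)^{1/2}(\sigma(y)+\lambda)^{-1}u\|$ by comparing $\sigma(x)$ to $\sigma(y)$ once more. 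Alternatively, and this is closer to the cited references, one proves directly that for each unit direction $e$ the derivative satisfies an operator inequality of the form $\pm D_e\sigma(x)\le \epsilon\,\sigma(x)+C(K,\epsilon)I$, which sandwiches $\Delta$ between operators of the shape $\sigma(x)^{1/2}(\cdot)\sigma(x)^{1/2}+cI$ and gives the two-sided gain in one stroke. Your ingredients are all correct; only this pairing step needs to be reworked.
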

For a proof see Freidlin \cite{Freidlin85}, page 97 in \cite{Stroock87} or Ikeda-Watanabe \cite{Ikeda-Watanabe}.

\begin{corollary}
For a $C^2$ diffusion operator $\B$ on $N$ there is a locally Lipschitz $X: \underline{R}^m\to TN$ with $\sigma^\B=XX^*$ for some $m$.
\end{corollary}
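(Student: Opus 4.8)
The plan is to deduce the corollary directly from the preceding theorem about locally Lipschitz square roots of $C^2$ matrix-valued maps, together with a global embedding argument. First I would invoke a Whitney-type embedding to realize $N$ as a closed submanifold of some $\mathbf{R}^q$, so that $TN$ sits inside the trivial bundle $N\times\mathbf{R}^q$. Under this embedding the symbol $\sigma^\B$ of the $C^2$ diffusion operator $\B$, which is a $C^0$ section of the bundle of positive semi-definite bilinear forms on $T^*N$ (actually $C^\infty$ in the interior where it has constant rank, but here only assumed $C^2$ as a map), can be viewed as a map $\sigma^\B: N\to \Lo_+(\mathbf{R}^q;\mathbf{R}^q)$ into symmetric positive semi-definite $q\times q$ matrices: at each $x$ one extends $\sigma^\B_x: T^*_xN\to T_xN$ by zero on the annihilator of $T_xN$, using the orthogonal splitting $\mathbf{R}^q = T_xN\oplus (T_xN)^\perp$ induced by the ambient metric. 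The key point is that this extended map is still $C^2$ in $x$ because both $\sigma^\B$ and the orthogonal projection onto $T_xN$ depend $C^2$ (indeed smoothly, for the projection) on $x\in N$.

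Next I would cover $N$ by charts, or better, work directly: by the quoted theorem, $\sqrt{\sigma^\B}: N\to \Lo_+(\mathbf{R}^q;\mathbf{R}^q)$ is locally Lipschitz, since composing a $C^2$ map $N\to\Lo_+(\mathbf{R}^m;\mathbf{R}^m)$ with the locally Lipschitz square-root map is locally Lipschitz (locally on $N$ one uses a chart to reduce to the $\mathbf{R}^d$ statement of the theorem, and local Lipschitzness is a local and chart-independent notion for maps into a fixed Banach space). Set $X(x) := \sqrt{\sigma^\B_x}\in\Lo(\mathbf{R}^q;\mathbf{R}^q)$. Then $X(x)X(x)^* = \sigma^\B_x$ as maps $\mathbf{R}^q\to\mathbf{R}^q$. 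It remains to check that $X(x)$ actually has image in $T_xN$ and annihilates $(T_xN)^\perp$, so that it legitimately defines a locally Lipschitz bundle map $X:\underline{\mathbf{R}}^m\to TN$ with $m=q$: this is immediate because $X(x)X(x)^* = \sigma^\B_x$ has image in $T_xN$ and kernel containing $(T_xN)^\perp$, and for a symmetric positive semi-definite matrix the image and kernel of its square root coincide with those of the matrix itself. Finally I would note $X$ restricted to $T^*N$ (via the inclusion $T^*N\hookrightarrow\mathbf{R}^q$ dual to the projection) gives exactly the factorization $\sigma^\B = XX^*$ in the sense of Section~1.1.

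The main obstacle I anticipate is purely bookkeeping: making precise the identification of $\sigma^\B$, originally a section of $\Lo(T^*N;TN)$, with a genuine $\Lo_+(\mathbf{R}^q;\mathbf{R}^q)$-valued function to which the flat theorem applies, and verifying that the regularity is preserved under the embedding and the zero-extension. One must be slightly careful that "$C^2$ diffusion operator" gives a $C^2$ (not merely continuous) symbol; given that, the extension by the $C^\infty$ orthogonal projection is $C^2$, and everything else is formal. No genuine analytic difficulty arises beyond the cited square-root theorem itself, which is assumed.
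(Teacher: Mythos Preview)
Your proposal is correct and follows essentially the same route as the paper: embed $TN$ as a sub-bundle of a trivial bundle $\underline{\R}^m$ (via a Whitney embedding of $N$), extend $\sigma^\B$ trivially to an $\Lo_+(\R^m;\R^m)$-valued map by the composition $i_x\circ\sigma^\B_x\circ i_x^*$, and take the positive square root using the cited theorem. The paper's proof is terser and omits the verifications you spell out (that the extended map is $C^2$, and that the square root has image in $T_xN$), but these are exactly the points it leaves implicit, and your treatment of them is correct.
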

\begin{proof}
Take a smooth inclusion $TN\stackrel{i}\to {\underline \R^m}+$ as a sub-bundle (\eg by embedding $N$ in $\R^m$) and extend $\sigma^\B$ trivially to $\sigma_x^\B: N\to \Lo\big((\R^m)^*; \R^m\big)$ by 
$$(\R^m)^*  \;\; \stackrel{i_x^*} \to T_x^*N \;\; \;\stackrel{\sigma_x^\B}\to T_xN
\;\;\; \stackrel{i_x}\to \R^m$$
identifying $(\R^m)^*$ with $\R^m$ and take the square root.

Let $\nabla$ be a connection on a sub-bundle $G$ of $TN$ and let $X:\underline \R^m\to G$ be a locally Lipschitz bundle map. Let $A$ be a locally Lipschitz vector field on $N$. As in Elworthy \cite{Elworthy-book} (p184) we can form the It\^o stochastic differential equation on $N$
$$(\nabla) \qquad dx_t=X(x_t)dB_t+A(x_t)dt$$
where $(B_t)$ is a Brownian motion on $\R^m$. For given $x_0\in N$ there will be a unique maximal solution $\{x_t: 0\leqslant t<\zeta^{x_0}\}$ as usual, where by a solution we mean a sample continuous adapted process such that for all $C^2$ functions $f: N\to \R$ 
\begin{eqnarray*}
f(x_t)&=&f(x_0)+\int_0^t (df)_{x_s} X(x_s)dB_s+\int_0^t (df)_{x_s}A(x_s)ds\\
&=& \int_0^t \sum_{j=1}^m \nabla_{X^j(x_s)} (df|_G)X^j(x_s)ds.
\end{eqnarray*}
Indeed in a local coordinate $(U, \phi)$ system the equation is represented by \begin{eqnarray*}
dx_t^\phi=X_\phi(x_t^\phi)dB_t-{1\over 2}\sum_{j=1}^m \Gamma_\phi(x_t^\phi) \left(X_\phi^j(x_t^\phi)\right) \left(X^j_\phi(x_t^\phi)\right)dt+A_\phi(x_t^\phi)dt,
\end{eqnarray*}
where $X_\phi$, $X_\phi^i$, and $A_\phi$ are the local representations of $X$, $X^i$ and $A$, and $\Gamma_\phi$ is the Christoffel symbol. 

Note that the generator of the solution process has symbol $\sigma_x=X(x)X(x)^*$, $x\in N$, and so a Lipschitz factorisation of $\sigma^\B$ together with a suitable choice of $A$ will give a diffusion process with generator $\B$.

If in addition we have another generator $G$ on $N$ given in H\"ormander form\index{H\"ormander!form}
$$G=\sum_{k=1}^p \LL_{Y^k} \LL_{Y^k}+ \LL_{Y^0}$$
for $Y^0, Y^1,\dots, Y^k$ vector fields of class $C^2$ we can consider an SDE of  mixed type
$$(\nabla)\qquad
dx_t=\sum_{k=1}^p Y^k(x_t)\circ d\tilde B_t^k+X(x_t)dB_t+(Y^0(x_t)+A(x_t))dt$$
for $\tilde B^1, \dots, \tilde B^k$ independent Brownian motions on $\R$ independent of $(B_t)$. For a $C^2$ map $f:N\to \R$, a  solution $\{x_t: 0\leqslant t<\zeta^{x_0}\}$ will satisfy
\begin{eqnarray*}
f(x_t)&=&f(x_0)+\int_0^t (df)_{x_s} X(x_s)dB_s+\int_0^t \sum_{k=1}^n (df)_{x_s}(X^k(x_s))d\tilde B_s^k\\
&=& \int_0^t (\B+G)f(x_s)ds,  \qquad t<\zeta^{x_0},
\end{eqnarray*}
giving the unique solution to the martingale problem for $\B+G$.
These SDE's fit into the general frame work of the `It\^o bundle' approach of
Belopolskaya-Dalecky \cite{Belopolskaya-Dalecky}, see the Appendix of Brzezniak-Elworthy\cite{Brzezniak-Elworthy}; also see Emery \cite{Emery}(section 6.33, page 85) for a more semi-martingale oriented approach.

\end{proof}
\section{Semi-martingales \& $\Gamma$-martingales along a Sub-bundle\;} \label {se:semi-mart}
		Several of the concepts we have defined for diffusions also have versions for semi-martingales, and these are relevant to the discussion of non-Markovian observations in Chapter \ref{ch:nonlinear-filtering}. Only \emph{continuous} semi-martingales will be considered. Let $S$ denote a sub-bundle of the tangent bundle $TM$ to a smooth manifold $M$.
\begin{definition} A semi-martingale $y_s, 0\leqslant s< \tau$ is said  to be \emph{along $S$}
\index{along $S$!semi-martingale}%
 if whenever
$\phi$ is a $C^2$ one-form Êon $M$ which annihilates $S$ we have vanishing of the Stratonovich integral of $\phi$ along $y_.$:
 $$\int_0^t \phi_{y_s}\circ dy_s =0 \quad 0< t< \tau.$$
 \end{definition}
 For simplicity take $y_0$ to be a point of $M$.
 \begin {proposition}\label{pr: semi-mart-1}
 The following are equivalent:
 \begin{enumerate}
 \item the semi-martingale $y_.$ is along $S$;
 \item if $\alpha_s:0\leqslant s<\tau$ is aÊ semi-martingale with values in the annihilator of $S$ in $T^*M$,  lying over $y_.$ , then 
  $$\int_0^t \alpha_{y_s}\circ dy_s =0 \quad 0< t< \tau;$$
  \item for some, and hence any, connection $\Gamma$ on $S$ the process $y_.$ is the stochastic development \index{development!stochastic}%
   of a semi-martingale $ y^\Gamma_s , 0\leqslant s<\tau$ on the fibre $S_{y_0}$ of $S$ above $y_0$.
  \end{enumerate}
  
  If $\Lo$  is a diffusion operator then the associated diffusion processes are all along $S$ if and only if $\Lo$ is along $S$ in the sense of Section \ref{se:distribution}.
  
  \end{proposition}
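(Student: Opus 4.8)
The statement to be proved is the final clause of Proposition \ref{pr: semi-mart-1}: if $\Lo$ is a diffusion operator and $S$ a sub-bundle of $TM$, then every $\Lo$-diffusion process is along $S$ (in the semi-martingale sense just defined) if and only if $\Lo$ is along $S$ in the sense of Section \ref{se:distribution}, i.e. $\delta^\Lo\phi = 0$ for all $C^1$ sections $\phi$ of $S^0$. The plan is to translate "being along $S$" for the diffusion into a statement about Stratonovich integrals of annihilating one-forms, and then to recognize that such integrals decompose into a martingale part controlled by $\sigma^\Lo$ and a bounded-variation part controlled by $\delta^\Lo$, via Lemma \ref{le:mart-1}.

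\textbf{Forward direction ($\Lo$ along $S$ $\Rightarrow$ diffusions along $S$).} Fix a starting point $y_0$ and let $(y_t)$ be the $\Lo$-diffusion from $y_0$, i.e. the canonical process under $\PP^\Lo_{y_0}$. Take any $C^2$ one-form $\phi$ annihilating $S$. I would first note that by Remark \ref{re:3.4}(i), since $\Lo$ is along $S$ we have $\sigma^\Lo\phi = 0$ and in particular $\Image[\sigma^\Lo_{y_t}] \subset \ker\phi_{y_t}$ for all $t$. By Lemma \ref{le:mart-1},
$$\int_0^t \phi_{y_s}\circ dy_s = M_t^\phi + \int_0^t (\delta^\Lo\phi)(y_s)\,ds,$$
for $t<\zeta$. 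The bounded-variation term vanishes because $\delta^\Lo\phi \equiv 0$ by hypothesis. For the martingale term $M^\phi$, its bracket is $\langle M^\phi\rangle_t = 2\int_0^t \sigma^\Lo(\phi_{y_s},\phi_{y_s})\,ds = 0$ since $\sigma^\Lo\phi = 0$; hence $M^\phi \equiv 0$. Therefore $\int_0^t \phi_{y_s}\circ dy_s = 0$, which is exactly the assertion that $(y_t)$ is along $S$. (One should remark that $\zeta$ may be finite, but the statement is about the integral on $[0,t)$ for $t<\tau=\zeta$, so no issue arises; if one prefers the one-point-compactification framework, stop at $\zeta$.)

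\textbf{Converse direction (diffusions along $S$ $\Rightarrow$ $\Lo$ along $S$).} Here I would argue contrapositively, or directly by extracting the coefficients. Suppose all $\Lo$-diffusions are along $S$. Take any $C^1$ section $\phi$ of $S^0$; by approximation we may take $\phi$ to be $C^2$ (using, e.g., a representation $\phi = \sum f_i\, dg_i$ with $g_i$ coming from an immersion as in the proof of Proposition \ref{pr:delta}, and noting that the annihilator condition can be arranged on the pieces — or simply restrict attention to $C^2$ forms, which suffice to test $\delta^\Lo$). Applying Lemma \ref{le:mart-1} again, for the diffusion from an arbitrary point $y_0$,
$$0 = \int_0^t \phi_{y_s}\circ dy_s = M_t^\phi + \int_0^t (\delta^\Lo\phi)(y_s)\,ds.$$
Taking brackets of both sides against $M^{df}$ for $f\in\C_c^\infty M$ and against itself: the left side is $0$, the bounded-variation integral contributes nothing to any bracket, so $\langle M^\phi\rangle \equiv 0$, forcing $M^\phi \equiv 0$ and hence $\int_0^t (\delta^\Lo\phi)(y_s)\,ds \equiv 0$ for all $t<\zeta$, $\PP^\Lo_{y_0}$-a.s. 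Differentiating in $t$ and using continuity of $s\mapsto (\delta^\Lo\phi)(y_s)$ gives $(\delta^\Lo\phi)(y_s) = 0$ for all $s<\zeta$. Finally, since this holds for the diffusion started from every $y_0\in M$, and the diffusion immediately visits a neighbourhood of $y_0$ (more precisely: if $(\delta^\Lo\phi)(y_0)\neq 0$ then by continuity it is nonzero on a neighbourhood $U$ and, since $\PP^\Lo_{y_0}(y_s\in U \text{ for small } s) = 1$, we get a contradiction with $(\delta^\Lo\phi)(y_s)=0$ a.s.), we conclude $\delta^\Lo\phi \equiv 0$ on $M$. Thus $\Lo$ is along $S$.

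\textbf{Main obstacle.} The essentially mechanical steps are the two applications of Lemma \ref{le:mart-1} and the bracket computation; the one point needing a little care is the final localization argument in the converse — passing from "$(\delta^\Lo\phi)(y_s) = 0$ along almost every diffusion path" to "$\delta^\Lo\phi \equiv 0$ as a form on $M$". This requires knowing that the $\Lo$-diffusion from $y_0$ genuinely explores a neighbourhood of $y_0$; for a general (possibly very degenerate) semi-elliptic $\Lo$ this needs the observation that $\PP^\Lo_{y_0}(y_0 = y_0) = 1$ and right-continuity of paths, so that $y_s$ stays in any given neighbourhood of $y_0$ with positive probability for small $s$ — which is all that is needed to contradict an a.s. vanishing statement if $\delta^\Lo\phi(y_0)\neq 0$. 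A second minor point is the reduction from $C^1$ to $C^2$ test forms $\phi$ in the definition of "along $S$"; I would handle this by noting that $\delta^\Lo$ is a first-order operator and $S^0$ is locally spanned by $C^\infty$ (hence $C^2$) sections, so vanishing of $\delta^\Lo$ on $C^2$ sections of $S^0$ already implies Definition \ref{def:op-along}.
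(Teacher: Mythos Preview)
Your proof is correct and follows essentially the same route as the paper's: both directions rest on the decomposition from Lemma \ref{le:mart-1} into a martingale part governed by $\sigma^\Lo$ and a bounded-variation part governed by $\delta^\Lo$, and the converse uses the uniqueness of that decomposition plus variation of the starting point. Your discussion of the localization at $y_0$ and of the $C^1$ vs.\ $C^2$ issue is more explicit than the paper's, but the underlying argument is the same.
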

  
    \begin{proof}
    Let $\paral_.$ denote the parallel translation along the paths of $y_.$ using $\Gamma$. If (3) holds then $$ dy_.=\paral_.\circ dy^\Gamma_.$$ and it is immediate that (2) is true. Also (2) trivially implies (1).
    
    Now suppose that (1) holds. Let $\Gamma$ be a connection on $E$ and $\Gamma^0$ some extension of it to a connection on $TM$, so that the corresponding parallel translation $\paral^0$ will preserve $S$ and some complementary sub- bundle of $TM$. Let $y^{\Gamma^0}$ be the stochastic anti-development of $y_.$ using  this connection.  To show (3) holds it suffices to show that  $y^{\Gamma^0}$ takes values in $S_{y_0}$. For this choose  a smooth vector bundle map $\Phi:TM \to M\times\R^m$ whose kernel is  precisely $S$ and let $\phi: TM\to \R^m$ denote its principal part and  $\phi^j, j=1,...,m$ the components of $\phi$.  These are one-forms  which annihilates $S$. Then, for each $j$
     $$0= \int_0^t \phi_s \circ dy_s=\int_0^t \phi_s \paral^0_s\circ dy_s^{\Gamma^0} \quad 0<t<\tau.$$  By the lemma below we see that $y_s^{\Gamma^0}\in S_{y_0}$ for each $s$, almost surely,  and the result follows .
    
    Finally suppose that $y_.$ is a diffusion process with generator $\Lo$. By lemma \ref{le:mart-1} we have 
    \begin{equation}
M_t^\alpha=\int_0^t \alpha_{y_s} \circ dy_s -\int_0^t  \big(\delta^\Lo \alpha\big) (y_s) ds,
\qquad 0\leqslant t<\zeta.
\end{equation}
for any $C^2$ one form $\alpha$. Suppose $\alpha$ annihilates $S$. Then if $y_.$ is along $S$  both the martingale and finite variation parts of 
$\int_0^. \alpha_{y_s} \circ dy_s $
vanish and so $\big(\delta^\Lo \alpha\big) (y_s)=0$ almost surely for almost all $0\leqslant s<\tau$. If this is true for all starting points  we see $\Lo$ is along $S$.  On the other hand if $
\Lo$ is along $S$ and $\alpha$ annihilates $S$ we see that $M^\alpha$ vanishes by its characterisation in Proposition \ref{pr:mart-1}, since $\sigma^{\Lo}$ takes values in $S$.  Thus both the martingale and finite variation parts of  $\int_0^. \alpha_{y_s} \circ dy_s$ vanish, and so the integral itself vanishes and the diffusion processes are along $S$.
\end{proof}
\begin{lemma} Suppose $z_.$ and $\Lambda_.$ are semi-martingales with values in a finite dimensional vector space $V$ and the space of linear maps $\Lo(V;W)$ of  $V$ into a finite dimensional vector space $W$, respectively. Let $V_0$ denote  the kernel of $\Lambda_s$ which  is assumed non-random and independent of $s\geqslant  0$ .  
Assume $$ \int_0^. \Lambda_s\circ dz_s=0.$$
Then $z.$ lies in $V_0$ almost surely.
\end{lemma}
\begin{proof}: We can quotient out by $V_0$ to assume that $V_0=0$, so we need to show that $z_.$
vanishes.  Giving $W$ an inner product, let $P_s:W\to \Lambda_s[V]$ be the orthogonal projection. Compose this with the inverse of $\Lambda_s$ considered as taking values in $ \Lambda_s[V]$, to obtain an $\Lo(W;V)$-valued semi-martingale $\tilde{ \Lambda}_.$ formed by left inverses of $\Lambda_.$.
By the composition law for Stratonovich integrals 
\begin{equation}
z_t=\int_0^t dz_s=\int_0^t\tilde{ \Lambda}_s \Lambda_s \circ dz_s
=\int_0^t\tilde{ \Lambda}_s \circ d\big(\int_0^s\Lambda_r \circ dz_r\big)=0
\end{equation}
as required.
\end{proof}
Let $\Gamma$ be a connection on $S$.  Note that by the previous proposition any semi-martingale $y_.$ which is along $S$ has a well defined anti-development $y^\Gamma$, say , which is a semi-martingale in $S_{y_0}$.
\begin {definition} 
An $M$ -valued semi-martingale is said to be a $\Gamma$-martingale \index{$\Gamma$-martingale}
 if its anti-development using $\Gamma$ is a local martingale.
\end{definition}
Also we can make the following definition of an Ito integral of a differential form, using the analogue of a characterisation by Darling, \cite{Darling-84}, for the case $S=TM$;
\begin{definition} If $\alpha_.$ is a predictable process with values in $T^*M$, lying over our semi-martingale $y_.$,   define its Ito integral, $\big(\Gamma\big)\int_0^t \alpha_s dy_s$ along
the paths of $y_.$ with respect to $\Gamma$ by  \begin{equation}
\big(\Gamma\big)\int_0^t \alpha_s dy_s=\int_0^t\alpha_s \parals_s dy^\Gamma
\end{equation}
whenever the (standard) Ito integral on the right hand side exists.
\end{definition}
As usual this Ito integral is a local martingale for all suitable integrands $\alpha_.$ if and only if the process $y_.$ is a $\Gamma$-martingale.

%\bibliographystyle{plain}
%\bibliography{principal}

%\printindex

\begin{theindex}

  \item $GLM$, 42
  \item $\D'(x)$, 32
  \item $\D^0(x)$, 32
  \item $\Gamma$-martingale, 89, 140

  \indexspace

  \item  cohesive
    \subitem descends, 28
  \item  connection
    \subitem LW, 45

  \indexspace

  \item adjoint
    \subitem semi-connection, 41
  \item along $S$
    \subitem diffusion operator, 14
    \subitem semi-martingale, 138

  \indexspace

  \item connection
    \subitem complete, 32
    \subitem Levi-Civita, 45
    \subitem LW, 41
    \subitem semi-, 21
    \subitem stochastically complete, 102
    \subitem strongly stochastically complete, 102
  \item connections
    \subitem determined by stochastic flows, 117
  \item curvature
    \subitem form, 37
    \subitem operator, 42, 57
    \subitem parallel, 111
    \subitem Ricci, 42, 46, 56
    \subitem Weitzenb\"ock, 48, 52, 54, 57

  \indexspace

  \item derivative
    \subitem flow, 46
  \item development
    \subitem stochastic, 138
  \item diffusion
    \subitem measure, 59
    \subitem operator, 11
  \item distribution, 14
    \subitem regular, 15
  \item Duncan-Mortensen-Zakai equation, 85

  \indexspace

  \item flow
    \subitem derivative, 44, 45, 48, 81
    \subitem gradient, 46
    \subitem holonomy, 33
    \subitem of isometries, 48
    \subitem of SDE, 44
    \subitem stochastic, 119
  \item frame bundle, 42

  \indexspace

  \item Gradient Brownian
    \subitem SDE, 45

  \indexspace

  \item H\"ormander
    \subitem condition, 106, 109, 110
    \subitem form, 4, 23, 24, 26, 41, 50, 55, 97, 98, 104, 108, 109, 
		126, 137
    \subitem representation, 12
  \item Heisenberg group, 27
  \item holonomy group, 37
  \item horizontal, 17

  \indexspace

  \item innovations process, 88

  \indexspace

  \item Kallianpur-Striebel formula, 85

  \indexspace

  \item LW connection, 41

  \indexspace

  \item operator
    \subitem curvature, 42
    \subitem diffusion, 11
    \subitem semi-elliptic, 11
    \subitem symbol, 11

  \indexspace

  \item reproducing kernel, 120, 125

  \indexspace

  \item second fundamental form, 45
  \item semi-elliptic, 11
  \item shape operator, 45
  \item stochastic partial differential equation, 93
  \item symbol, 11
    \subitem projectible, 22

  \indexspace

  \item vertical, 17

\end{theindex}

\end{document}